%% file: main.tex
\documentclass[11pt]{amsart}
\usepackage[margin=1.1in]{geometry}
\usepackage{amsmath,amssymb,amsthm,mathtools}
\usepackage{graphicx,booktabs,enumitem,longtable}
\usepackage[hidelinks]{hyperref}
\hypersetup{pdftitle={Smale's sixth problem for five bodies in the plane, with a proof formalized in Lean},
  pdfauthor={Tejasvi Singh Tomar}}

\newtheorem{theorem}{Theorem}[section]
\newtheorem{lemma}[theorem]{Lemma}
\newtheorem{proposition}[theorem]{Proposition}
\newtheorem{corollary}[theorem]{Corollary}
\theoremstyle{definition}
\newtheorem{definition}[theorem]{Definition}
\theoremstyle{remark}
\newtheorem{remark}[theorem]{Remark}
\numberwithin{equation}{section}
\newif\ifpendingfatal
\ifdefined\PENDINGFATAL\pendingfataltrue\fi

\newcommand{\R}{\mathbb{R}}
\newcommand{\C}{\mathbb{C}}
\newcommand{\Q}{\mathbb{Q}}
\newcommand{\Z}{\mathbb{Z}}
\newcommand{\cA}{\mathcal{A}}

\newcommand{\cD}{\mathcal{D}}
\newcommand{\cP}{\mathcal{P}}
\newcommand{\cZ}{\mathcal{Z}}
\newcommand{\cW}{\mathcal{W}}
\newcommand{\cN}{\mathcal{N}}
\newcommand{\one}{\mathbb{1}}
\newcommand{\jj}{\mathrm{j}}
\newcommand{\val}{\operatorname{val}}

\newcommand{\Res}{\operatorname{Res}}
\newcommand{\rank}{\operatorname{rank}}
\newcommand{\relint}{\operatorname{relint}}
\newcommand{\conv}{\operatorname{conv}}

\newcommand{\lab}[1]{\mathsf{#1}}
\newcommand{\pc}[1]{\mathsf{#1}}
\newcommand{\bone}{\mathbf{b}_1}
\newcommand{\btwo}{\mathbf{b}_2}
\newcommand{\hone}{\mathbf{h}_1}
\newcommand{\htwo}{\mathbf{h}_2}
\newcommand{\hthree}{\mathbf{h}_3}
\newcommand{\Vtwo}{V_2}
\newcommand{\VN}{V_N}

\begin{document}

\title[Smale's sixth problem for five bodies in the plane]{Smale's sixth problem for five bodies in the plane,\\ with a proof formalized in Lean}
\author{Tejasvi Singh Tomar}
\address{Independent researcher}
\renewcommand{\urladdrname}{{\itshape ORCID}}
\urladdr{\href{https://orcid.org/0000-0003-4668-3639}{https://orcid.org/0000-0003-4668-3639}}
\subjclass[2020]{70F10, 70F15, 14Q20, 65G40}
\keywords{Central configurations, relative equilibria, five-body problem, Smale's sixth problem, singular sequences, Puiseux series, computer-assisted proof}

\input{sec/abstract}

\maketitle

\input{sec/s1_intro}
\input{sec/s2_setup}
\input{sec/s3_types}
\input{sec/s4_lex}
\input{sec/s5_energy}

\input{sec/s5_points}
\input{sec/s6_near}
\input{sec/s8_curves}
\input{sec/s9_rays}
\input{sec/s10_proof}
\input{sec/s11_verification}

\appendix
\input{sec/a_appendix}

\section*{Use of AI tools}
Generative AI tools were used to find and prove the results, to write the programs and the Lean formalization, and to draft the text.

\input{sec/refs}
\end{document}

%% file: sec/abstract.tex
\begin{abstract}
We prove that for every choice of five positive masses the planar five-body problem has finitely many central configurations up to similarity. This settles Smale's sixth problem for five bodies in the plane. Albouy and Kaloshin proved it for masses outside a closed algebraic subset of codimension two. We follow their method. Infinitely many central configurations would give an algebraic curve of complex solutions. At the places at infinity of the curve the valuations of the mutual distances must balance. We determine the possible valuation vectors at all positive masses, together with the mass conditions under which each can occur. We treat limits in which four bodies form a cluster of zero potential energy by expanding the potential, which is constant along the curve, at a place. This excludes, at every positive mass, one of the two diagrams in which such clusters occur, and part of the other. The same expansion excludes one further diagram at every positive mass. What remains is an exact analysis, by computer algebra, of the sets of mass conditions that can hold together at positive masses. Exact rational certificates show that the balance fails under each of them, except on a few one-parameter families of masses, which we treat separately. On the rays through $(1,4,4,4,4)$ and $(1,1,4,4,4)$ the certificates do not apply. On the first ray they cannot apply: there the complex equations have a continuum of solutions, as Albouy and Kaloshin observed. On these rays we count the real central configurations: there are 306 and 294 up to orientation-preserving similarity. The certificates, the programs that check them and a proof of the theorem in the Lean proof assistant are available.
\end{abstract}

%% file: sec/s1_intro.tex
\section{Introduction}\label{sec:intro}

A configuration $q=(q_1,\dots,q_n)$ of $n$ distinct points in the plane, carrying positive masses $m_1,\dots,m_n$, is a \emph{central configuration} if there are $\lambda\in\R$ and $c\in\R^2$ such that
\begin{equation}\label{eq:cc}
\lambda\,m_i(q_i-c)=\sum_{j\ne i}\frac{m_im_j\,(q_i-q_j)}{|q_i-q_j|^3}\qquad(i=1,\dots,n).
\end{equation}
The right side is minus the Newtonian force on the body $i$: the force on each body points towards the centre of mass $c$ and is proportional to its distance from it. Central configurations are the shapes of the homographic solutions of the $n$-body problem, and in particular the shapes of its relative equilibria. If $q$ is a central configuration, so is any configuration similar to it. Smale's sixth problem \cite{Sma98} (Problem~9 of \cite{ACS12}) asks whether, for every choice of positive masses, the number of central configurations up to similarity is finite.

For $n=3$ there are five, the two equilateral triangles of Lagrange and the three collinear configurations of Euler. Moulton \cite{Mou10} counted the collinear ones for all $n$. Hampton and Moeckel \cite{HM06} proved finiteness for four bodies and all positive masses. Albouy and Kaloshin \cite{AK12} proved finiteness for five bodies in the plane when the masses lie outside a closed algebraic subset $\cA\subset(\R_{>0})^5$ of codimension two. Their argument produces singular sequences that approach one of the first eight limit diagrams in their list of sixteen. Each of these eight forces a polynomial condition on the masses. The set $\cA$ is where two independent conditions hold at once. Jensen and Leykin \cite{JL25} gave a tropical proof of finiteness for generic masses. Moczurad and Zgliczy\'nski \cite{MZ26} counted the central configurations exactly, with a computer-assisted proof, at several mass vectors of the exceptional set. We remove the exceptional set.

\begin{theorem}\label{thm:main}
For every $m\in(\R_{>0})^5$ the planar five-body problem with masses $m$ has finitely many central configurations up to similarity.
\end{theorem}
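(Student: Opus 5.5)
We follow the method of Albouy and Kaloshin \cite{AK12} and argue by contradiction. We complexify: introduce the mutual distances $r_{ij}$ and the complexified positions $z_i,w_i$ (with $r_{ij}^2=z_{ij}w_{ij}$), and normalize away rotation, translation and scaling. The central configuration equations \eqref{eq:cc} then become a polynomial system whose real positive solutions correspond exactly to central configurations up to similarity. Suppose that for some $m\in(\R_{>0})^5$ there were infinitely many central configurations. Then the complex solution set would contain an irreducible algebraic curve $\mathcal C$. Because of the normalization, $\mathcal C$ cannot be bounded away from collisions and infinity, so it has places at infinity. At such a place, Puiseux expansion gives a valuation vector $(\val z_{ij},\val w_{ij})$. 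The leading terms must balance in the equations, and this yields one of the sixteen limit diagrams of \cite{AK12}, each of which either is impossible outright or imposes polynomial conditions on $m$.

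The key steps, in order, are as follows.
\begin{enumerate}[label=(\roman*)]
\item Enumerate all valuation vectors at all positive masses, not only generic ones, together with the exact mass conditions under which each diagram survives the leading-order balance.
\item Handle the diagrams in which a four-body cluster has zero leading potential energy. Here we use the fact that the potential $U=\sum m_im_j r_{ij}^{-1}$ is constant along $\mathcal C$. Expanding $U$ at the place to the next orders gives extra constraints, which exclude one such diagram entirely, part of the second, and one further diagram.
\item For the remaining diagrams, a curve must produce places realizing compatible diagrams simultaneously, since the balance of valuations over all places of $\mathcal C$ (the sum of orders of a rational function is zero) forces combinations. For each combination, compute by exact computer algebra the variety of mass conditions that can hold at once. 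Outside finitely many one-parameter families, exhibit rational certificates, such as Positivstellensatz or Gröbner identities, showing that the combination has no positive solution or that the balance fails.
\item Treat the leftover one-parameter families by separate, explicit elimination.
\end{enumerate}

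The main obstacle is the set of rays through $(1,4,4,4,4)$ and $(1,1,4,4,4)$, where no certificate can exist: on the first ray, the complex system genuinely has a continuum of solutions. Here the contradiction must come from reality. I would show that this continuum has no real points with positive distances, or at least meets the real locus in finitely many points. Concretely, I would use certified interval arithmetic (Krawczyk or Newton–Kantorovich tests) on a reduced real system in each region of a compact domain, after first proving a priori bounds on the mutual distances ratios for real central configurations. This should give an exact count of the real solutions, namely 306 and 294, uniformly along the ray by homogeneity in the mass scale. Finiteness on these rays then follows directly, and combining it with steps (i)–(iv) yields Theorem~\ref{thm:main}.
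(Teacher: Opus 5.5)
Your outline follows the architecture of the paper: a curve on which $U$ is a nonzero constant, places at infinity, types of singular places with their mass conditions, refutation of the balance $\sum_P d_Px(P)=0$ with $d_P>0$, and interval-arithmetic counts on the two special rays. There is, however, a concrete gap at diagram~$2$.

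A place of diagram~$2$ has type $\pc a_1(T)$: $0$ on the pairs of $T$ and on the complementary pair, $-2$ on the six cross pairs. This diagram carries no mass condition, so it is neither ``impossible outright'' nor conditioned on $m$. The energy expansion of your step (ii) does not remove it either; the further diagram that the expansion removes in the paper is diagram~$9$. Nor can diagram~$2$ simply be kept. The part of diagram~$14$ that survives step (ii), of type $\pc u_0(j)$, is also unconditioned. Each pair receives $4\cdot 3$ from the first sum below and $6\cdot(-2)$ from the second, so
\[
4\sum_{j=1}^{5}\pc u_0(j)+\sum_{T}\pc a_1(T)=0 .
\]
Hence no certificate built from the mass conditions can refute the balance, not even for the empty set of conditions, that is, at generic masses. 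The paper closes this with a separate argument:
\begin{itemize}
\item conservation of the angular momentum of the triangle forces its torque to vanish at leading order;
\item the second- and third-order obstructions, with a Lyapunov--Schmidt reduction at triple points, confine the non-equilateral triangles to twelve mass vectors;
\item at those twelve mass vectors, an inequality between the moduli of the three torque terms shows the torque cannot vanish.
\end{itemize}

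Three further steps are weaker than what is needed. First, constancy of $U$ alone does not control the cluster's own potential at the next orders. The paper uses that $V=U+\frac12I$ is critical along the curve, combined with the weighted homogeneity of the expansion, to get $\sum_n n\,[V_n(y(\tau))]_{q-n}=0$ for $q\ne2$. In this identity the cluster term $V_0$ drops out. This gives the bad shape at order three. Excluding diagram~$15$ then still needs the order-five identity, an analysis of the kernel of the cluster Hessian, and the fact that three nonzero complex numbers with positive weights cannot have vanishing weighted sums of $x$, $x^3$ and $x^{-2}$.

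Second, the special rays are not the only masses where certificates fail. At $P_1=(9,36,36,36,4)$ the paper again counts central configurations. At $\mathbf q_*$ and $\mathbf p_*$ the label sets are not certified relative to the standard hull of diagram~$5$, and one needs the sharper hull $\VN$ built from additional relations.

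Third, the certificates that refute the balance are not Positivstellensatz or Gr\"obner identities. Each type is only known to lie in the relative interior of an attained face of its piece, so the right tool is a staged family of linear functionals checked on the vertices of these faces.
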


We use the results of \cite{AK12} listed in \S\ref{ss:akinputs}, in particular their classification of the limit diagrams (their Fig.~11) and the mass conditions of their \S8.

Part of the exceptional set of \cite{AK12} cannot be avoided by an argument about the complexified equations alone. Roberts \cite{Rob99} found a continuum of central configurations for the masses $(1,1,1,1,-\tfrac14)$. Albouy and Kaloshin observe that changing the signs of $m_5$ and of the four distances $r_{k5}$ turns it into a continuum of solutions, with masses $(4,4,4,4,1)$, of the polynomial system they work with \cite[pp.~543--544]{AK12}. These solutions are not central configurations, because the distances $r_{k5}$ are negative square roots, but an argument that sees only the complexified system cannot tell them apart. At $(1,4,4,4,4)$ the complex solution set is therefore infinite. Finiteness of the real central configurations there must use the signs of the distances. On the rays through $(1,4,4,4,4)$ and $(1,1,4,4,4)$ and their images under permutations of the bodies we count the real central configurations directly: for every positive multiple of a permutation of $(1,4,4,4,4)$ there are exactly 306 such configurations up to orientation-preserving similarity, and for $(1,1,4,4,4)$ exactly 294 (Theorems~\ref{thm:count1} and~\ref{thm:count2}). We call these rays the \emph{special rays}: a mass vector lies on a special ray if and only if it is a positive multiple of a permutation of $(1,4,4,4,4)$ or of $(1,1,4,4,4)$.

\subsection*{The method}
Our argument has six parts.

\emph{Curves and balancing.} If there are infinitely many central configurations, the complex solution set contains an irreducible curve on which the potential $U$ is a nonzero constant and some mutual distance is not constant. At each place at infinity of this curve the inverse distances $p_{kl}=r_{kl}^{-1}$ have valuations. The vectors $\rho(P)=(-\val_P p_{kl})_{k<l}$ sum to zero over the places (\S\ref{sec:setup}). This is the starting point of \cite{AK12}.

\emph{Types and mass conditions.} At a singular place, Albouy and Kaloshin show that the configuration approaches one of sixteen diagrams and that most diagrams force polynomial conditions on the masses. We make this sharp at all positive masses. For each diagram and labelling of the bodies we give the set of vectors $\rho(P)$ that can occur, as an explicit polyhedral set (a \emph{piece}) that is present only when certain mass conditions hold (Theorem~\ref{thm:types}). Besides results of \cite{AK12}, Theorem~\ref{thm:types} contains our sharper type statements for several diagrams and a lemma on diagram~7 (\S\ref{ss:d7}) that is not in \cite{AK12}.

\emph{Lexicographic separation.} If the pieces present at $m$ admit a staged family of linear functionals that are nonnegative on all of them and positive on each in turn, the vectors $\rho(P)$ cannot sum to zero and $m$ has no such curve (\S\ref{sec:lex}). On the sixteen strata of the sorted, scaled masses we analyse by exact computer algebra the sets of mass conditions that can hold together (\S\ref{sec:strata}). We certify such a family for each of these sets, with rational functionals checked in exact arithmetic, except for the sets at two mass points, where we use a smaller hull of diagram~$5$, and on a few one-parameter families of masses, which we treat separately (\S\S\ref{sec:points}, \ref{sec:curves} and~\ref{ss:twopoints}).

\emph{Clusters of zero energy.} Using all the singular places of a curve, rather than two well-chosen singular sequences, brings in diagrams 14 and 15 of \cite{AK12}. In them four bodies form a cluster, approached by the fifth body. The leading part of the cluster is a complex configuration of zero potential energy: a solution of the systems that we call K14 and K15. These systems have solutions on hypersurfaces of the space of masses. The two diagrams impose no condition on the masses in \cite{AK12}. Since their pieces are present for every set of mass conditions (\S\S\ref{ss:Kgating} and~\ref{ss:certs}), we need an argument that does not go through the mass conditions. To exclude diagram 15 we use that the potential is constant along the curve and that its gradient vanishes there. At a place, the expansion of the potential in a uniformizing parameter is then constant, while the Euler identity of the weights relates its coefficients to those of the equations. At order three this forces the three outer bodies of the cluster into one of four explicit shapes. At order five, if the leading cluster configuration is nondegenerate, it forces a cubic moment of the shape to vanish. For positive masses the two conditions are incompatible: the vanishing of the weighted sums of $x_i$, $x_i^{-2}$ and $x_i^3$ would force three angles, their doubles and their triples each to surround $0$. Three angles cannot do this. With an analysis of the degenerate leading configurations this excludes diagram 15 at every positive mass (\S\S\ref{ss:d15}--\ref{ss:residual}). Diagram 14 gives two kinds of pieces, called $\pc{d14a}$ and $\pc{d14b}$ in Table~\ref{tab:pieces}. At a place of type $\pc{d14b}$ the leading cluster solves K14. Its outer bodies form one of the same four shapes. At positive masses such clusters exist only when the three outer masses are equal and in one ratio to the fourth, where the outer bodies form an equilateral triangle centred at the fourth. An expansion at the place excludes this case too. Thus the pieces $\pc{d14b}$ are excluded at every positive mass (\S\ref{ss:d14}). We exclude the remaining piece of diagram 14, $\pc{d14a}$, where it is needed, by showing that K14 has no solution: at finitely many mass points (\S\ref{sec:points}) and along the curve families off finitely many special points (\S\ref{sec:curves}). At the special points we exclude it by a local analysis at the place (\S\ref{sec:curves}).

\emph{Diagrams 9 and 2.} Diagram~9, whose mass condition is the polynomial $N$ of \cite{AK12}, does not occur at any positive mass (\S\ref{ss:d9}); we prove this with an expansion of the potential at the place as for diagrams 14 and 15. Diagram 2 does not occur at any positive mass either (\S\ref{sec:nod2}): a condition at the leading order of a place and closed forms of its first obstructions leave twelve mass vectors of the triangle, at which the torque cannot vanish.

\emph{The special rays.} On the rays through $(1,4,4,4,4)$ and $(1,1,4,4,4)$ the separation fails. At the first of them it must fail, since the modified Roberts continuum lies in the complex solution set there. Following Moczurad and Zgliczy\'nski \cite{MZ19,MZ26}, we count the real central configurations on these rays by a branch-and-bound search with interval arithmetic and Krawczyk's test (\S\ref{sec:rays}).

\subsection*{Computations and their verification}
We describe the sets of mass conditions at all positive masses by $2206$ ideals on the sixteen strata, computed with exact standard bases and minimal primes over $\Q$ (\S\ref{sec:strata}). The mass points off the special rays that need a separate argument are exact algebraic points. At one of them, $P_1$, defined in \S\ref{ss:twopoints}, we count the central configurations, as on the special rays. At the other points, except the special points of the curve families (\S\ref{sec:curves}), we show, where it is needed, that the relevant systems K14 have no solution, by Gr\"obner bases modulo a large prime together with a lifting argument. The counts rest on a branch and bound in outward-rounded floating point. Section~\ref{sec:verification} describes how the computations are checked and the proof of Theorem~\ref{thm:main} in Lean.

\subsection*{Organization}
In \S\ref{sec:setup} we set up the system of \cite{AK12}, the slice, the potential form of the equations and the balancing condition. In \S\ref{sec:types} we determine the types of singular places and the mass conditions and state Theorem~\ref{thm:types}. In \S\ref{sec:lex} we give the separation lemma and the certificates. In \S\ref{sec:energy} we give the energy argument, in \S\ref{sec:points} the certificates for the four-body systems at finitely many masses, and in \S\ref{sec:nod2} the proof that diagram 2 does not occur. In \S\ref{sec:strata} we analyse by exact computer algebra the sets of mass conditions that can hold together at positive masses, and in \S\ref{sec:curves} we treat the curve families. In \S\ref{sec:rays} we count the central configurations on the special rays and at $P_1$, in \S\ref{sec:proof} we prove Theorem~\ref{thm:main}, and in \S\ref{sec:verification} we describe the verification. The appendices derive the types of the sixteen diagrams and the hulls of diagram~$5$. They contain the proof of the sharpened condition for diagram~$3$, the label polynomials, the analysis of places of diagram~$2$ used in \S\ref{sec:nod2}, the polynomial of degree eleven and the tests for diagrams $6$ and $3$ at two exceptional mass points.

\subsection*{Note added}
Version~1 of this paper was submitted to arXiv on 9~October~2026, and its supplementary material, including the complete Lean formalization (\S\ref{ss:formal}), was published on Zenodo the same day (\url{https://doi.org/10.5281/zenodo.23202861}). On 12~October~2026, Z.~Chen and Z.~Tong submitted arXiv:2610.16107~\cite{CT26}, which states Theorem~\ref{thm:main} and gives a different proof.

%% file: sec/s2_setup.tex
\section{Normalized central configurations and curves of them}\label{sec:setup}

\subsection{The system of Albouy and Kaloshin}
Let $m=(m_1,\dots,m_5)$. Following \cite[p.~540]{AK12}, a \emph{normalized central configuration} is a solution $(x,y,\delta)\in\C^5\times\C^5\times\C^{10}$ of
\begin{equation}\label{eq:AK4}
x_k=\sum_{l\ne k}m_l\,\delta_{kl}^{3}(x_k-x_l),\quad y_k=\sum_{l\ne k}m_l\,\delta_{kl}^{3}(y_k-y_l),\quad \delta_{kl}^{2}\big((x_k-x_l)^2+(y_k-y_l)^2\big)=1,\quad y_1=y_2,
\end{equation}
for $k\ne l$ in $\{1,\dots,5\}$, with $\delta_{kl}=\delta_{lk}$. It is \emph{real} if $x,y\in\R^5$, and \emph{positive} if moreover all $\delta_{kl}$ are positive. Summing the first equation multiplied by $m_k$ shows that solutions have their centre of mass at the origin. The multiplier $\lambda$ of \eqref{eq:cc} is normalized to $1$.

\begin{lemma}\label{lem:realpos}
Let $m\in(\R_{>0})^5$. Every central configuration with masses $m$ is similar, by an orientation-preserving similarity, to exactly two positive normalized central configurations, which differ by the rotation by $\pi$. In particular, if \eqref{eq:AK4} has finitely many solutions, there are finitely many central configurations up to similarity.
\end{lemma}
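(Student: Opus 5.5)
Given a central configuration $q$ with masses $m$, multiplier $\lambda$ and centre $c$, the plan is to normalize it step by step: translate, scale, rotate. Then we check that the normalized configurations solving \eqref{eq:AK4} with positive $\delta$ are exactly these, and count them.

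\emph{Step 1: $\lambda>0$ and translation.} Multiplying \eqref{eq:cc} by $q_i-c$ (dot product) and summing over $i$ gives
\[
\lambda\sum_i m_i|q_i-c|^2=\sum_{i<j}m_im_j|q_i-q_j|^{-1}=U(q)>0.
\]
Since the points are distinct, $q$ is not concentrated at $c$, so $\lambda>0$. Summing \eqref{eq:cc} over $i$ shows that the right side cancels by antisymmetry, so $c$ is the centre of mass. Translate $c$ to $0$.

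\emph{Step 2: scaling.} Replacing $q$ by $\mu q$ with $\mu>0$ multiplies the left side of \eqref{eq:cc} by $\mu$ and the right side by $\mu^{-2}$. The choice $\mu=\lambda^{1/3}$ is therefore the unique positive scaling that makes the multiplier equal to $1$. Divide by $m_i>0$ and set $\delta_{kl}=|q_k-q_l|^{-1}>0$. Then the first two equations of \eqref{eq:AK4} are the coordinate form of \eqref{eq:cc}, and the third holds automatically.

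\emph{Step 3: rotation.} Rotations about the origin preserve all these equations. The condition $y_1=y_2$ says that $q_1-q_2$ is horizontal. Since $q_1\ne q_2$, exactly two rotations achieve this, differing by $\pi$.

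\emph{Uniqueness.} Conversely, let $(x,y,\delta)$ be a positive normalized central configuration. The third equation with $\delta_{kl}>0$ gives $\delta_{kl}=|q_k-q_l|^{-1}$, and $(x_k-x_l)^2+(y_k-y_l)^2\ne0$ forces the points to be distinct. Hence it is a central configuration with $\lambda=1$ and $c=0$. Now suppose an orientation-preserving similarity $z\mapsto az+b$ ($a\in\C^*$) maps a normalized configuration to another. Both have centre of mass $0$, so $b=0$. Both have multiplier $1$, so $|a|=1$. Both have $q_1-q_2$ real, so $a=\pm1$. This gives exactly two normalized configurations, and they differ by the rotation by $\pi$. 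They are genuinely different, since $q\mapsto -q$ fixes only the configuration with all points at $0$. The ``in particular'' follows, because the map from similarity classes to positive normalized solutions is injective up to this two-to-one ambiguity.

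The only non-routine step is Step 1, where the sign of $\lambda$ must be fixed to make the cube-root scaling real. I would handle it with the Lagrange–Jacobi-type identity above, using $m_i>0$. Positivity of the masses is also used in Step 2 to divide by $m_i$, and in Step 1 to make $\sum_i m_i|q_i-c|^2>0$ and $U>0$.
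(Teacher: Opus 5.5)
Your proposal is correct and follows the paper's proof: the identity $\lambda I=U$ forces $\lambda>0$, and then you translate $c$ to the origin, scale by $\lambda^{1/3}$ and use the two rotations that make $q_1-q_2$ horizontal. Your converse argument ($b=0$ from the centre of mass, $|a|=1$ from the multiplier, $a=\pm1$ from $y_1=y_2$) spells out in more detail the paper's closing remark that \eqref{eq:AK4} fixes the translation and the scale.
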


\begin{proof}
Taking the scalar product of \eqref{eq:cc} with $m_i(q_i-c)$ and summing gives $\lambda I=U$ with $I=\sum_im_i|q_i-c|^2>0$ and $U=\sum_{i<j}m_im_j/r_{ij}>0$. So $\lambda>0$. We translate $c$ to the origin, scale by $\lambda^{1/3}$, and rotate so that $q_1-q_2$ is horizontal. Exactly two rotations do this. Conversely, the translation and the scale of a positive normalized central configuration are fixed by \eqref{eq:AK4}.
\end{proof}

We put $z=x+iy$, $w=x-iy$ and $r_{kl}=\delta_{kl}^{-1}$, so that $r_{kl}^2=z_{kl}w_{kl}$ with $z_{kl}=z_k-z_l$. For real configurations $w=\bar z$. For complex ones $z$ and $w$ are independent.

\subsection{Centred coordinates and the slice}\label{ss:slice}
For $h\in\C^5$ and $p=(p_{kl})_{k<l}$ with $p_{lk}=p_{kl}$, put $F_k(h)=h_k-\sum_{l\ne k}m_l\,p_{kl}^3(h_k-h_l)$. The unknowns are $u=(u_1,\dots,u_4)$, $v=(v_1,\dots,v_4)$ and $p$, with $u_5=v_5=0$, $u_{kl}=u_k-u_l$, $v_{kl}=v_k-v_l$. The eighteen \emph{rows} are
\begin{equation}\label{eq:rows}
D_{kl}=p_{kl}^2u_{kl}v_{kl}-1,\qquad Z_k=F_k(u)-F_5(u),\qquad W_k=F_k(v)-F_5(v)\qquad(k\le 4).
\end{equation}
Let $X_m\subset\C^{18}$ be their zero set. On $X_m$ the functions $u_k$, $v_k$, $u_{kl}$, $v_{kl}$ and $p_{kl}$ vanish nowhere. The map $(x,y,\delta)\mapsto(u,v,p)$ with $u_k=z_k-z_5$, $v_k=w_k-w_5$, $p=\delta$ identifies the solutions of the first three equations of \eqref{eq:AK4} with $X_m$, and the solutions of \eqref{eq:AK4} with $X_m\cap\{u_{12}=v_{12}\}$.

\begin{lemma}\label{lem:slice}
\begin{enumerate}[label=\textup{(\alph*)}]
\item The group $\C^*$ acts freely on $X_m$ by $s\cdot(u,v,p)=(su,s^{-1}v,p)$, the complex rotation $(z,w)\mapsto(sz,s^{-1}w)$. Let $Y_m=X_m\cap\{u_1=1\}$. Then $\C^*\times Y_m\to X_m$, $(s,y)\mapsto s\cdot y$, is an isomorphism, every orbit meets $\{u_{12}=v_{12}\}$ in exactly two points, and \eqref{eq:AK4} has exactly $2|Y_m|$ solutions.
\item For $t>0$ the map $(u,v,p)\mapsto(t^{1/3}u,t^{1/3}v,t^{-1/3}p)$ is an isomorphism $X_m\to X_{tm}$ that preserves $\{u_{12}=v_{12}\}$, and its composition with the rotation by $s=t^{-1/3}$, $(u,v,p)\mapsto(u,t^{2/3}v,t^{-1/3}p)$, is an isomorphism $Y_m\to Y_{tm}$. Relabelling the bodies together with their masses, followed by a translation and a rotation, is an isomorphism $Y_m\to Y_{m'}$ for the permuted masses $m'$. In particular finiteness of $Y_m$ is invariant under positive scaling and permutation of the masses.
\item A configuration satisfies \eqref{eq:cc} for the masses $m$ with multiplier $\lambda$ if and only if it satisfies \eqref{eq:cc} for $tm$ with multiplier $t\lambda$, and relabelling the bodies together with their masses maps the central configurations for $m$ to those for the permuted masses. Both bijections preserve similarity and orientation. Hence the number of central configurations up to similarity, and up to orientation-preserving similarity, is invariant under positive scaling and permutation of the masses.
\end{enumerate}
\end{lemma}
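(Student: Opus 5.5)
Each part is a direct check on the rows \eqref{eq:rows}; I take them in order.

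For (a), write $u$ and $v$ as the differences $z_k-z_5$ and $w_k-w_5$. Under $s\cdot(u,v,p)=(su,s^{-1}v,p)$ the products $u_{kl}v_{kl}$ are invariant, so every $D_{kl}$ is preserved. Each $F_k$ is linear in $h$ with coefficients depending only on $p$. Hence $Z_k(su)=sZ_k(u)$ and $W_k(s^{-1}v)=s^{-1}W_k(v)$, and $X_m$ is invariant.

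Freeness holds because $u_1$ vanishes nowhere on $X_m$: if $s\cdot x=x$ then $su_1=u_1$, so $s=1$. The inverse of $(s,y)\mapsto s\cdot y$ is $x\mapsto (u_1,\,u_1^{-1}\cdot x)$. This is regular because $u_1$ is a nonvanishing regular function, so the map is an isomorphism.

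On the orbit of $y=(u,v,p)$ with $u_1=1$, the condition $u_{12}=v_{12}$ reads $s\,u_{12}=s^{-1}v_{12}$, that is $s^2=v_{12}/u_{12}$. Both $u_{12}$ and $v_{12}$ are nonzero, so this has exactly two solutions $\pm s$. The orbit therefore meets the slice in exactly two points. Combined with the identification of solutions of \eqref{eq:AK4} with $X_m\cap\{u_{12}=v_{12}\}$, this gives $2|Y_m|$ solutions, read as cardinalities in $\mathbb{N}\cup\{\infty\}$.

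For (b), substitute $u'=t^{1/3}u$, $v'=t^{1/3}v$, $p'=t^{-1/3}p$. Then $p'^2u'v'=p^2uv$, so the $D_{kl}$ are unchanged. In $F_k$ we have $(tm_l)p'^3(h'_k-h'_l)=t^{1/3}m_lp^3(h_k-h_l)$, so $F_k$ for $tm$ evaluated at $h'$ equals $t^{1/3}$ times $F_k$ for $m$ at $h$. The map is therefore $X_m\to X_{tm}$, its inverse has the same form, and it preserves $u_{12}=v_{12}$.

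Composing with the rotation by $s=t^{-1/3}$ gives $(u,t^{2/3}v,t^{-1/3}p)$, which fixes $u_1$ and so maps $Y_m\to Y_{tm}$.

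For a permutation $\sigma$ of the bodies, relabel $z,w,p$ and the masses together; the equations \eqref{eq:AK4} without $y_1=y_2$ are symmetric under this. In the $(u,v,p)$ coordinates this is a translation: if $\sigma$ moves body $5$, subtract the new $z_5,w_5$, which is linear. The normalization $u_1=1$ is then restored by the rotation $s=u_1'^{-1}$ as in (a). Each step is an isomorphism, so finiteness of $Y_m$ is invariant under scaling and permutation.

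For (c), multiplying \eqref{eq:cc} by $t$ turns $m_i$ into $tm_i$ on the left, with multiplier $\lambda$. On the right, each term $m_im_j$ becomes $t^2m_im_j$. So one factor of $t$ is absorbed by the left mass, and the remaining factor converts $\lambda$ to $t\lambda$. The configuration $q$ itself is unchanged, so similarity classes and orientations are preserved trivially. Permutation relabels points only, so the same holds. Both are bijections, since their inverses are of the same form, and the invariance of the counts follows.

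The only point needing care is the permutation step in (b). One must check that, in the $(u,v,p)$ coordinates, the relabelling really is a composite of a linear change of variables and a rotation. It must also land in $Y_{m'}$, which uses again that the $u_k$ are nonzero on $X_{m'}$. Everything else is bookkeeping of homogeneity degrees.
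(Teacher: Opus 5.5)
Your proof is correct and follows the paper's argument essentially step for step. In (a) you use the invariance of the $D_{kl}$, the homogeneity of $Z_k,W_k$, freeness from $u_1\ne0$, the inverse $x\mapsto(u_1,u_1^{-1}\cdot x)$ and the condition $s^2=v_{12}/u_{12}$; in (b) the homogeneity of the rows under scaling, and a translation plus a rotation for a relabelling. There is one small slip in (c): to pass from $m$ to $tm$ you multiply \eqref{eq:cc} by $t^2$, not by $t$. The right side gains $t^2$, and the left side gains $t$ from the mass and $t$ from $\lambda\mapsto t\lambda$. Your bookkeeping of the two factors is otherwise right.
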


\begin{proof}
(a) The rows $D_{kl}$ are invariant and $Z_k$, $W_k$ are multiplied by $s$, $s^{-1}$. The action is free because $u_1\ne0$ on $X_m$. The inverse of the map is $x\mapsto(u_1,u_1^{-1}\cdot x)$. The point $s\cdot x$ lies in $\{u_{12}=v_{12}\}$ if and only if $s^2=v_{12}/u_{12}$. (b) The rows for $tm$ at the image point are multiples of the rows for $m$. For a relabelling, we translate the new body $5$ to the origin and rotate to restore $u_1=1$. (c) Multiplying the masses by $t$ multiplies the right side of \eqref{eq:cc} by $t^2$ and the left side by $t$ for fixed $\lambda$. Relabelling permutes the equations.
\end{proof}

For $\{i,j,k,l,n\}=\{1,\dots,5\}$ the \emph{4-product} of \cite[p.~580]{AK12} is $\cP_{ij}=r_{ij}^2r_{kl}^2r_{ln}^2r_{nk}^2$.

\subsection{The potential form of the rows}\label{ss:potential}
Let
\[
U=\sum_{k<l}m_km_l\,p_{kl},\qquad I=\sum_{k}m_ku_kv_k-\frac1M\Big(\sum_km_ku_k\Big)\Big(\sum_km_kv_k\Big),\qquad V=U+\tfrac12I,
\]
where $M=\sum_km_k$. We regard the $p_{kl}$, on the hypersurface defined by the rows $D_{kl}$, as functions of $(u,v)$.

\begin{lemma}\label{lem:potential}
On $\{D_{kl}=0\}$, with $\nabla_u$ and $\nabla_v$ the gradients in $(u_1,\dots,u_4)$ and $(v_1,\dots,v_4)$,
\[
(Z_1,\dots,Z_4)=2A\,\nabla_vV,\qquad (W_1,\dots,W_4)=2A\,\nabla_uV,\qquad A=\operatorname{diag}(m_1^{-1},\dots,m_4^{-1})+m_5^{-1}\one\one^{\mathsf T}.
\]
The matrix $A$ is invertible, with $A^{-1}=\operatorname{diag}(m_1,\dots,m_4)-mm^{\mathsf T}/M$ (restricted to the first four coordinates).
\end{lemma}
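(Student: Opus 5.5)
The proof is a direct computation, carried out in the full coordinates $(u_1,\dots,u_5,v_1,\dots,v_5)$ before restricting to $u_5=v_5=0$. The two useful facts are that $V$ is invariant under translation, and that the restriction to the slice $v_5=0$ is accounted for by the rank-one part of $A$.

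First I compute the gradient of $V$ with respect to the unrestricted $v_k$, treating the $p_{kl}$ as functions on $\{D_{kl}=0\}$. Implicit differentiation of $p_{kl}^2u_{kl}v_{kl}=1$ in $v_k$ gives
\[
2p_{kl}\,\partial_{v_k}p_{kl}\,u_{kl}v_{kl}+p_{kl}^2u_{kl}=0.
\]
Using $u_{kl}v_{kl}=p_{kl}^{-2}$, this yields $\partial_{v_k}p_{kl}=-\tfrac12p_{kl}^3u_{kl}$, and this formula does not depend on the choice of branch. Next I rewrite $I$ as $\sum_km_k(u_k-\bar u)(v_k-\bar v)$, with $\bar u=\sum m_ku_k/M$ and $\bar v=\sum m_kv_k/M$, which gives $\partial_{v_k}I=m_k(u_k-\bar u)$. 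Together these give
\[
G_k:=\partial_{v_k}V=\tfrac{m_k}{2}\Big(u_k-\bar u-\sum_{l\ne k}m_lp_{kl}^3u_{kl}\Big)=\tfrac{m_k}{2}\big(F_k(u)-\bar u\big),\qquad k=1,\dots,5.
\]
Since $V$ depends on $v$ only through the differences $v_{kl}$ and the centred terms, it is translation invariant, so $\sum_{k=1}^5G_k=0$.

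Now I restrict to the slice $v_5=0$, which means fixing $v_5$. The gradient $\nabla_vV$ in $(v_1,\dots,v_4)$ then has components $G_1,\dots,G_4$, and the relation $\sum_{j\le4}G_j=-G_5$ still holds. The $k$-th component of $2A\nabla_vV$ is
\[
\frac{2G_k}{m_k}+\frac{2}{m_5}\sum_{j\le4}G_j=\frac{2G_k}{m_k}-\frac{2G_5}{m_5}=\big(F_k(u)-\bar u\big)-\big(F_5(u)-\bar u\big)=Z_k.
\]
The identity for $W$ follows in the same way, by exchanging the roles of $u$ and $v$: the rows $D_{kl}$, and $U$ and $I$, are all symmetric under this exchange.

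Finally, the formula for $A^{-1}$ can be checked with Sherman--Morrison, or directly. Write $D=\operatorname{diag}(m_1,\dots,m_4)$ and $\mu=(m_1,\dots,m_4)^{\mathsf T}$, and note that $\one^{\mathsf T}\mu=M-m_5$. Then
\[
\big(D^{-1}+m_5^{-1}\one\one^{\mathsf T}\big)\big(D-\mu\mu^{\mathsf T}/M\big)=I-\frac{\one\mu^{\mathsf T}}{M}+\frac{\one\mu^{\mathsf T}}{m_5}-\frac{(M-m_5)\,\one\mu^{\mathsf T}}{m_5M},
\]
and the three rank-one terms cancel, leaving the identity.

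None of the steps is a real obstacle. The only point that needs care is the bookkeeping at the slice: fixing $v_5=0$ removes the $v_5$-derivative, and it has to be checked that the $\one\one^{\mathsf T}$ term of $A$ restores $-F_5$ exactly. That check is precisely where translation invariance is used.
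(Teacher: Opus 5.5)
Your proof is correct and is essentially the paper's computation: the paper works directly at $u_5=v_5=0$ and combines antisymmetry, $\sum_{l=1}^5 m_l\tilde F_l=0$, with $\sum_{l\le4}m_l(u_l-u_G)=m_5u_G$, and these two identities together are exactly your translation-invariance identity $\sum_{k=1}^5\partial_{v_k}V=0$ evaluated on the slice. The only addition is that you also verify the stated formula for $A^{-1}$, which the paper asserts without computation.
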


\begin{proof}
We write $\tilde F_k=\sum_{l\ne k}m_l(u_l-u_k)p_{kl}^3$, so that $Z_k=u_k+\tilde F_k-\tilde F_5$. From $p_{kl}^2u_{kl}v_{kl}=1$ we get $\partial p_{kl}/\partial v_k=-\tfrac12p_{kl}^3u_{kl}$. So $\partial U/\partial v_k=\tfrac12m_k\tilde F_k$. Also $\partial I/\partial v_k=m_k(u_k-u_G)$ with $u_G=\sum_lm_lu_l/M$. Hence $(2/m_k)\,\partial V/\partial v_k=u_k-u_G+\tilde F_k$. By antisymmetry $\sum_{l=1}^5m_l\tilde F_l=0$. Since $u_5=0$, also $\sum_{l\le4}m_l(u_l-u_G)=m_5u_G$. Hence $(2/m_5)\sum_{l\le4}\partial V/\partial v_l=u_G-\tilde F_5$ and $(2A\nabla_vV)_k=u_k+\tilde F_k-\tilde F_5=Z_k$. Exchanging $u$ and $v$ gives the rows $W_k$.
\end{proof}

A point of $X_m$ is thus a critical point of $V$ restricted to $\{D_{kl}=0\}$. At fixed masses all points of a family in $X_m$ are critical points of the same $V$. The next lemma is \cite[Lemma~2]{AK12}, in the coordinates of the slice.

\begin{lemma}\label{lem:U}
Let $M\ne0$.
\begin{enumerate}[label=(\alph*)]
\item On $X_m$, $U=\sum_km_kz_kw_k=I$, with $z,w$ recentred at the centre of mass.
\item $U$ is constant on every irreducible component of $X_m$ and of $Y_m$, in particular on every irreducible curve $C\subseteq Y_m$.
\item For $m\in(\R_{>0})^5$, $U=I>0$ at the point of $Y_m$ given by a real central configuration.
\end{enumerate}
\end{lemma}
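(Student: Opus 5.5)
We prove (a) with a homogeneity (Euler) identity applied to Lemma~\ref{lem:potential}, deduce (b) from the fact that $X_m$ is the critical set of $V$ on $\{D_{kl}=0\}$, and read (c) off the computation in Lemma~\ref{lem:realpos}.

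\emph{Part (a).} Consider the scaling $(u,v)\mapsto(tu,tv)$, which keeps $p_{kl}$ on $\{D_{kl}=0\}$ by replacing it with $t^{-1}p_{kl}$. Then $U$ is homogeneous of degree $-1$ and $I$ of degree $2$. The Euler identities are
\[
\sum_{k\le4}\Big(u_k\frac{\partial U}{\partial u_k}+v_k\frac{\partial U}{\partial v_k}\Big)=-U,\qquad \sum_{k\le4}\Big(u_k\frac{\partial I}{\partial u_k}+v_k\frac{\partial I}{\partial v_k}\Big)=2I.
\]
On $X_m$ the rows vanish, and $A$ is invertible by Lemma~\ref{lem:potential}, so $\nabla_uV=\nabla_vV=0$. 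Adding the two identities with weights $1$ and $\tfrac12$ gives $0=-U+I$, so $U=I$.

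For the expression of $I$, note that $I$ is the moment of inertia about the centre of mass, written in the translated coordinates $u_k=z_k-z_5$ and $v_k=w_k-w_5$. Since $I$ is invariant under translation, recentring at the centre of mass gives $I=\sum_k m_kz_kw_k$ with $\sum_k m_kz_k=\sum_k m_kw_k=0$. This step uses $M\ne0$.

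\emph{Part (b).} Let $Z$ be an irreducible component of $X_m$ and $\gamma$ a smooth germ of curve in its smooth locus. Along $\gamma$ the function $V$ has derivative $\nabla V\cdot\dot\gamma=0$, because $V$ is a function of $(u,v)$ alone on the $D$-hypersurface and its gradient vanishes on $X_m$. The chain rule applies since on $X_m$ each $p_{kl}$ is locally an analytic function of $(u,v)$, given by $\partial p_{kl}/\partial v_k=-\tfrac12p_{kl}^3u_{kl}$. Hence $V$ is locally constant on the smooth locus of $Z$. That locus is connected because $Z$ is irreducible, and $V$ is continuous, so $V$ is constant on $Z$. By (a), $U=V-\tfrac12I=\tfrac23V$ on $X_m$, so $U$ is constant on $Z$ as well.

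For $Y_m$, let $Z'$ be a component of $Y_m$. By Lemma~\ref{lem:slice}(a), $\C^*\times Z'$ is a component of $X_m$, hence $U$ is constant on it and a fortiori on $Z'$. Alternatively, one can apply the same derivative argument inside $Y_m$. An irreducible curve $C\subseteq Y_m$ lies in some component, which gives the last claim.

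\emph{Part (c).} A real central configuration has $w=\bar z$, so $I=\sum_k m_k|z_k|^2>0$ because the points are distinct and not all at the centre. The $p_{kl}$ are positive, so $U>0$; this agrees with the identity $\lambda I=U$ from Lemma~\ref{lem:realpos} with $\lambda=1$. The slice normalization only applies a complex rotation, which preserves $U$ and $I$.

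\emph{Main obstacle.} The delicate step is (b): it needs $V$ to be locally constant on possibly singular components. The cleanest route is to restrict to the smooth dense open part and conclude by continuity and connectedness. If that gets awkward, an alternative is to pass to a Puiseux parametrization of an arc through any two points of the component and differentiate along it.
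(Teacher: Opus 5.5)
Your proof is correct. For (b) and (c) it is the paper's argument: $dV$ vanishes on the tangent spaces of each component at its smooth points, the smooth locus is connected and dense, and $V=\tfrac32U$ by (a); then positivity holds at a real configuration and is unchanged by the complex rotation into the slice.

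Part (a) is where you differ.
\begin{itemize}
\item The paper works in the centred coordinates of \eqref{eq:AK4}. It multiplies the equation for $z_k$ by $m_kw_k$, sums over $k$, and uses antisymmetry and $p_{kl}^2z_{kl}w_{kl}=1$. This gives $\sum_km_kz_kw_k=U$ directly, without the potential form.
\item You combine the Euler identities for the scaling $(u,v)\mapsto(tu,tv)$, under which $U$ has degree $-1$ and $I$ degree $2$, with $\nabla V=0$ on $X_m$ from Lemma~\ref{lem:potential}.
\end{itemize}
Both are the same virial identity, obtained by pairing the equations with a scaling vector field. The paper's computation is self-contained and elementary. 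Yours needs Lemma~\ref{lem:potential} and the invertibility of $A$ already in (a). In return it derives (a) and (b) from the single fact $\nabla V=0$ on $X_m$. This homogeneity-plus-critical-point mechanism is the one the paper itself uses later, in Lemmas~\ref{lem:identities} and~\ref{lem:Hstruct}.
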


\begin{proof}
(a) In the coordinates $z,w$ of \eqref{eq:AK4} the centre of mass is at the origin and $I=\sum_km_kz_kw_k$. We multiply the equation for $z_k$ by $m_kw_k$ and sum over $k$. By antisymmetry and $p_{kl}^2z_{kl}w_{kl}=1$, $\sum_km_kz_kw_k=\sum_{k<l}m_km_lp_{kl}^3z_{kl}w_{kl}=\sum_{k<l}m_km_lp_{kl}=U$. (b) By Lemma~\ref{lem:potential} and the invertibility of $A$, the gradient of $V$ on $\{D_{kl}=0\}$ vanishes at every point of $X_m$. The differential of $V$ therefore vanishes on the tangent space of each irreducible component at its smooth points, which form a connected dense open subset. Hence $V$ is constant on the component. By (a), $V=\tfrac32U$ on $X_m$. (c) is the identity $\lambda I=U$ of the proof of Lemma~\ref{lem:realpos}, with $\lambda=1$.
\end{proof}

\begin{corollary}\label{cor:U}
Let $m\in(\R_{>0})^5$. If $Y_m$ contains no irreducible curve on which some $r_{kl}^2$ is non-constant and $U$ is not identically zero, then the planar five-body problem with masses $m$ has finitely many central configurations up to similarity.
\end{corollary}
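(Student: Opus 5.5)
We argue by contraposition. Suppose the planar problem with masses $m\in(\R_{>0})^5$ has infinitely many central configurations up to similarity. By Lemma~\ref{lem:realpos}, \eqref{eq:AK4} then has infinitely many solutions, and by Lemma~\ref{lem:slice}(a) the set $Y_m$ is infinite. Since $Y_m$ is the zero set in $\C^{18}$ (intersected with $\{u_1=1\}$) of polynomials, it is an affine algebraic set with finitely many irreducible components. Hence some component $Y'$ has positive dimension. Moreover, infinitely many of the points of $Y_m$ that come from positive real central configurations lie in a single such component: the finitely many zero-dimensional components hold only finitely many points.

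First we show that the positive real points in $Y'$ determine infinitely many non-similar configurations, and hence that some $r_{kl}^2$ is non-constant on $Y'$. Two points of $Y_m$ coming from positive normalized configurations that are similar by an orientation-preserving similarity are equal up to the finitely many choices in Lemma~\ref{lem:realpos} and the slice. The only reflections left are the complex conjugations relating $u$ and $v$. So each similarity class contributes at most a bounded number of points of $Y_m$, say four. Infinitely many points of $Y'$ therefore give pairwise non-similar central configurations. If all $r_{kl}^2$ were constant on $Y'$, these real configurations would all have the same mutual distances. They would then be congruent, so similar, which is a contradiction. Thus some $r_{kl}^2=u_{kl}v_{kl}$ (equivalently $p_{kl}^{-2}$, using $D_{kl}$) takes infinitely many values on $Y'$.

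Next we pass to a curve. We choose a function $f=r_{kl}^2$ that is non-constant on $Y'$ and take an irreducible curve $C\subseteq Y'$ through two points where $f$ differs. Such a curve exists: any two points of an irreducible variety lie on an irreducible curve in it, for instance by cutting with generic linear spaces through the two points. Then $f$ is non-constant on $C$. By Lemma~\ref{lem:U}(b), $U$ is constant on $Y'$. By Lemma~\ref{lem:U}(c), $U$ is a positive number at any point of $Y'$ coming from a real central configuration, so $U\not\equiv0$ on $Y'$ and hence on $C$.

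This contradicts the hypothesis. The main point needing care is the first step: checking precisely how many points of $Y_m$ correspond to one similarity class, and making sure that points of $Y'$ with equal squared distances cannot come from infinitely many non-similar real configurations. We handle this with the congruence argument above rather than by exact counting.
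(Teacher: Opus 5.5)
Your proof is correct, but you establish the key step (that some $r_{kl}^2$ is non-constant on the curve) differently from the paper.

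\textbf{The paper's route.} It takes any irreducible curve $C$ inside a component of $Y_m$ on which $U$ is a positive constant. It then uses that the fibres of $(u,v,p)\mapsto(r_{kl}^2)_{k<l}$ on $Y_m$ are finite, with at most $2^{13}$ points: $u_1=1$ fixes $v_1$, each further $u_k$ is a root of a quadratic, and the signs of the $p_{kl}$ are free. An infinite set such as $C$ cannot lie in one fibre.

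\textbf{Your route.} You argue through the real configurations. A planar configuration is determined up to isometry by its mutual distances, and each similarity class accounts for boundedly many points of $Y_m$. Hence some $r_{kl}^2$ is non-constant on the component $Y'$. You then need the lemma that any two points of an irreducible variety lie on an irreducible curve in it, to carry this over to a curve.

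One small clarification: after the normalization $u_1=1$, the points of $Y_m$ coming from real configurations are not real points of $\C^{18}$. Your congruence argument still works, because $r_{kl}^2=u_{kl}v_{kl}$ is invariant under the rotation $s\cdot$ and equals the true squared distance.

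\textbf{Comparison.} The paper's route is shorter and gives more. On \emph{every} irreducible curve in $Y_m$ some $r_{kl}^2$ is non-constant, so nontriviality of a curve reduces to $U\not\equiv0$. It also only needs the existence of some curve in a positive-dimensional component. Your route avoids the fibre count and uses only elementary real geometry. The price is the curve-through-two-points lemma (Mumford's lemma). It is standard, but its proof by generic linear sections through the two points needs a Bertini-type argument.
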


\begin{proof}
Infinitely many central configurations give infinitely many points of $Y_m$ with $U>0$ (Lemmas~\ref{lem:realpos}, \ref{lem:slice} and \ref{lem:U}(c)), hence an irreducible curve $C\subseteq Y_m$ on which $U$ is a positive constant. The fibres of $(u,v,p)\mapsto(r_{kl}^2)_{k<l}$ on $Y_m$ have at most $2^{13}$ points (fix the distances: $u_1=1$ determines $v_1$, each $u_k$ is a root of a quadratic, and the signs of the $p_{kl}$ are free). Hence some $r_{kl}^2$ is not constant on $C$.
\end{proof}

We call $m\in(\R_{>0})^5$ \emph{settled} if $Y_m$ contains no irreducible curve on which some $r_{kl}^2$ is non-constant and $U\not\equiv0$. We call such a curve \emph{nontrivial}. A settled mass vector has finitely many central configurations up to similarity. The isomorphisms of Lemma~\ref{lem:slice}(b) multiply the $r_{kl}^2$ and $U$ by nonzero constants or permute them. Thus settledness is invariant under positive scaling and permutation of the masses.

\subsection{Places and balancing}\label{ss:balancing}
Let $C\subseteq Y_m$ be a nontrivial curve. The functions $u_k$, $v_k$, $u_{kl}$, $v_{kl}$, $p_{kl}$, $r_{kl}^2$ and $\cP_{ij}$ are units of the coordinate ring $\C[C]$: for instance $1/p_{kl}=p_{kl}u_{kl}v_{kl}$. A non-constant unit has a zero and a pole on the smooth projective model $\bar C$ of $C$. All of them lie at the finitely many \emph{places at infinity}, the points of $\bar C$ not over $C$. For a place $P$ at infinity with valuation $\val_P$ put
\[
\rho(P)=\big(-\val_P(p_{kl})\big)_{k<l}\in\Z^{10}.
\]
Since the divisor of each $p_{kl}$ has degree zero,
\begin{equation}\label{eq:bal}
\sum_{P\text{ at }\infty}\rho(P)=0,
\end{equation}
and $\rho(P)\ne0$ for some $P$, because some $p_{kl}$ is not constant.

On $X_m$ we put $z_k=u_k-F_5(u)$ and $w_k=v_k-F_5(v)$ for $k\le5$. Since the rows $Z_k$, $W_k$ say $F_k(u)=F_5(u)$ and $F_k(v)=F_5(v)$, the pair $(z,w)$ solves the first two equations of \eqref{eq:AK4} (in the variables $z=x+iy$, $w=x-iy$), with $z_k-z_l=u_{kl}$ and $w_k-w_l=v_{kl}$. Following \cite[p.~545]{AK12} we put $Z_{kl}=p_{kl}^3z_{kl}$, $W_{kl}=p_{kl}^3w_{kl}$ (\cite{AK12} write $z_{kl}$ for $z_l-z_k$, but signs play no role below),
\[
\cZ=(z_1,\dots,z_5,Z_{12},\dots,Z_{45}),\qquad \cW=(w_1,\dots,w_5,W_{12},\dots,W_{45}),
\]
and $\cN=\|\cZ\|\,\|\cW\|$ with the maximum norm. The rotation $s\cdot$ of Lemma~\ref{lem:slice} multiplies $\cZ$ by $s$ and $\cW$ by $s^{-1}$. Thus $\cN$ is a function on $Y_m$. Since $Z_{kl}W_{kl}=p_{kl}^4$ and $|z_{kl}w_{kl}|\le4\cN$,
\begin{equation}\label{eq:Nbounds}
|r_{kl}|^{-4}\le\cN,\qquad |r_{kl}|^{2}\le4\cN .
\end{equation}
A place $P$ at infinity is \emph{singular} if $\cN$ is unbounded near $P$, that is, if $\val_P(\cZ)+\val_P(\cW)<0$, where $\val_P$ of a vector is the minimum over its components. At a place $P$ that is not singular, \eqref{eq:Nbounds} bounds every $r_{kl}^2$ above and away from zero. So $\rho(P)=0$. Hence the sum \eqref{eq:bal} runs over the singular places and at least one place is singular.

\subsection{Diagrams}\label{ss:diagrams}
Along a singular place, \cite[p.~545]{AK12} draws a circle around the body $k$ if $z_k$ is of maximal order among the components of $\cZ$, and a stroke between $k$ and $l$ if $Z_{kl}$ is of maximal order. This gives the $z$-diagram. In the same way $\cW$ gives the $w$-diagram. An edge is a $z$-edge, a $w$-edge or a $zw$-edge according to which strokes are present. The conclusion of \cite[\S7]{AK12} is that every singular sequence approaches one of the sixteen diagrams of their Fig.~11, up to renumbering of the bodies and exchange of the colours $z$ and $w$. Table~\ref{tab:diagrams} in Appendix~\ref{app:types} lists them with the types used below. We say that a singular place has \emph{diagram} $d$ and \emph{labelling} $\lambda$ if it approaches diagram $d$ with the renumbering $\lambda$.

\subsection{Inputs from \cite{AK12}}\label{ss:akinputs}
We use the following results of \cite{AK12}, as stated there:
\begin{enumerate}[label=(\roman*)]
\item the conclusion of \S7: every singular sequence approaches one of the sixteen diagrams of Fig.~11, which are listed in Table~\ref{tab:diagrams};
\item Estimates~1 and~2 of \S3, which say, in the notation there, that for each pair, $\varepsilon\preceq r_{kl}\preceq\varepsilon^{-2}$ and $\varepsilon^2\preceq z_{kl}\preceq\varepsilon^{-2}$, with $r_{kl}\approx\varepsilon$ exactly at the $zw$-edges and $z_{kl}\approx\varepsilon^2$ exactly at the maximal $w$-edges; and on a $z$-stroke, $\varepsilon\preceq r_{kl}\preceq1$, $\varepsilon\preceq z_{kl}\preceq\varepsilon^{-2}$ and $\varepsilon^2\preceq w_{kl}\preceq\varepsilon$, where $r_{kl}\approx\varepsilon$, $z_{kl}\approx\varepsilon$ and $w_{kl}\approx\varepsilon$ each characterize a $zw$-edge, and $r_{kl}\approx1$, $z_{kl}\approx\varepsilon^{-2}$ and $w_{kl}\approx\varepsilon^2$ each characterize a maximal $z$-edge; Rules~1b--1e and~2e of \S3, Proposition~1 of \S5.4 and Proposition~4 of \S7.1;
\item the mass conditions of \S8: conditions (23), (29), (32), (36) and (40), the facts of \S8.7 quoted in \S\ref{ss:d7} (among them the reduction to the system of \S5.4), and \S\S8.9.1--8.9.2;
\item the proof of Proposition~9, which we use in one place where we also give a second argument.
\end{enumerate}
The hypotheses $m_k\ne0$, $M\ne0$, $M_Q\ne0$ for each four-body subset $Q$, $m_a+m_b\ne0$ and $m_k^3+m_l^3\ne0$, under which these results are stated, hold for positive masses. In diagram~$3$ of Fig.~11 the colours of the four sides alternate around the square, $z$ on $14$ and $23$ and $w$ on $12$ and $34$. Note that the alternation is also forced by Rule~2g of \cite[\S3]{AK12} and by the order calculus of \S\ref{ss:orders}, which admits only the two alternating colourings of the sides. It occurs at places of the modified Roberts continuum \cite[pp.~543--544]{AK12}.

%% file: sec/s3_types.tex
\section{Types of singular places and mass conditions}\label{sec:types}

Throughout this section $m\in(\R_{>0})^5$, $C\subseteq Y_m$ is a nontrivial curve, and $P$ is a singular place of $C$. We determine the possible directions of $\rho(P)$, together with the conditions on $m$ under which each direction can occur.

\subsection{Orders at a singular place}\label{ss:orders}
Let $t$ be a local parameter at $P$ and $\nu=\val_P$. We call a function on $C$ of \emph{$z$-type} if the rotation $s\cdot$ multiplies it by $s$ ($u_{kl}$, $z_k$, $z_{kl}$, $Z_{kl}$), of \emph{$w$-type} if it multiplies it by $s^{-1}$ ($v_{kl}$, $w_k$, $w_{kl}$, $W_{kl}$), and \emph{neutral} if it is invariant ($p_{kl}$, $r_{kl}^2$). Put $d=-\tfrac12(\nu(\cZ)+\nu(\cW))>0$ and $\gamma=\tfrac12(\nu(\cW)-\nu(\cZ))$, and define the \emph{exponent}
\[
e(f)=\frac2d\,\big(\nu(f)+\gamma\big),\qquad e(f)=\frac2d\,\big(\nu(f)-\gamma\big),\qquad e(f)=\frac2d\,\nu(f)
\]
for $f$ of $z$-type, $w$-type and neutral respectively. Then all components of $\cZ$ and of $\cW$ have exponent at least $-2$. In each of the two vectors some component has exponent $-2$.

\begin{lemma}\label{lem:place}
Let $x_n\in C$ converge to $P$, and let $x_n'$ be one of the two solutions of \eqref{eq:AK4} in the orbit of $x_n$ \textup{(}Lemma~\ref{lem:slice}\textup{)}. After passing to a subsequence, $(x'_n)$ is a singular sequence in the sense of \cite[Def.~3]{AK12}. After the normalization $\|\cZ\|=\|\cW\|=\varepsilon_n^{-2}$ of \cite[\S2]{AK12}, every function $f$ of the three kinds satisfies $|f(x'_n)|\approx\varepsilon_n^{e(f)}$. In particular:
\begin{enumerate}[label=\textup{(\roman*)}]
\item $k$ is $z$-circled if and only if $e(z_k)=-2$;
\item there is a $z$-stroke between $k$ and $l$ if and only if $e(Z_{kl})=-2$;
\item $k$ and $l$ are $z$-close if and only if $e(z_{kl})>-2$;
\end{enumerate}
and the same holds for $w$. Every such sequence therefore has the same diagram, which we call the diagram of $P$. Moreover, if $\vartheta^2=t$ and the point of $C$ near $P$ is rotated by $s=\vartheta^{2\gamma}$, every function $f$ of the three kinds becomes $f=\vartheta^{d\,e(f)}y_f(\vartheta)$ with $y_f\in\C\{\vartheta\}$ and $y_f(0)\ne0$.
\end{lemma}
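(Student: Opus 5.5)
The plan is to compute everything in a local parameter at $P$ and then translate the valuations into the $\varepsilon_n$-scale of \cite{AK12}. Fix the local parameter $t$ at $P$ and write $\nu=\val_P$. Every function $f$ of the three kinds is a unit of $\C[C]$, so it is a nonzero meromorphic function near $P$, and $f=t^{\nu(f)}g_f(t)$ with $g_f(0)\ne0$. Hence $|f(x_n)|\approx|t_n|^{\nu(f)}$ with $t_n=t(x_n)\to0$.

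The first step is to identify the rotation. The point $x'_n$ is $s_n\cdot x_n$, where $s_n^2=v_{12}/u_{12}$ by Lemma~\ref{lem:slice}(a). Therefore $|s_n|\approx|t_n|^{(\nu(v_{12})-\nu(u_{12}))/2}$. The rotation multiplies $z$-type functions by $s_n$, multiplies $w$-type functions by $s_n^{-1}$, and leaves neutral ones unchanged. So at $x'_n$ the ratio $\|\cZ\|/\|\cW\|$ equals $|s_n|^2|t_n|^{\nu(\cZ)-\nu(\cW)}$, up to bounded factors, and the product $\cN\approx|t_n|^{-2d}$ does not change.

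The normalization of \cite[\S2]{AK12} sets $\|\cZ\|=\|\cW\|=\varepsilon_n^{-2}$. I would first check whether this needs a further rotation and rescaling, or whether it is just a choice of $\varepsilon_n$. Since $(x_n')$ solves \eqref{eq:AK4}, the scale is fixed and only the rotation is free. I expect the intended reading to be that the normalization is achieved by an additional complex rotation, which is allowed in the setting of \cite[Def.~3]{AK12}; applying it does not affect the diagram. After this rotation by $\sigma_n$ with $|\sigma_n|\approx|t_n|^{\gamma}$, we get $\|\cZ\|=\|\cW\|\approx|t_n|^{-d}$. Thus $\varepsilon_n\approx|t_n|^{d/2}$, and every $f$ has size $|t_n|^{\nu(f)\pm\gamma}=\varepsilon_n^{e(f)}$, with the sign fixed by the type of $f$.

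The item statements then follow directly. A $z$-circle or a stroke corresponds to exponent $-2$ by the definition of maximal order, and $z$-closeness means that $z_{kl}$ is $o(\varepsilon^{-2})$. Taking a subsequence gives the convergence of the normalized vectors required by \cite[Def.~3]{AK12}. Singularity follows because $\cN\to\infty$. The diagram depends only on the exponents, so it is independent of the sequence.

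For the last claim, set $t=\vartheta^2$ and $s=\vartheta^{2\gamma}$. Here $2\gamma\in\Z$, so this is a Laurent monomial. A $z$-type $f$ becomes $\vartheta^{2\nu(f)+2\gamma}g_f(\vartheta^2)$, and $2\nu(f)+2\gamma=d\,e(f)$; the other types are handled in the same way.

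The main obstacle is making the relation with the normalization of \cite{AK12} precise: which rotation is used, and whether it changes the solutions of \eqref{eq:AK4}. I would handle this by noting that the diagram only depends on exponents that are invariant under the rotation.
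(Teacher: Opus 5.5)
Your proposal is correct and is essentially the paper's proof. You expand each function in the local parameter at $P$. The normalization of \cite[\S2]{AK12} multiplies $z$ by a real $a_n>0$ and $w$ by $a_n^{-1}$, so $|a_ns_n|^2=\|\cW(x_n)\|/\|\cZ(x_n)\|\approx|t_n|^{2\gamma}$ and $\varepsilon_n\approx|t_n|^{d/2}$, and you read off circles, strokes and closeness from the exponents; your explicit formula $s_n^2=v_{12}/u_{12}$ is not needed.

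Two small corrections. First, $z_k$ and $w_k$ are not units of $\C[C]$ and may even vanish identically on $C$, so state the expansion $f=t^{\nu(f)}g_f(t)$ with $g_f(0)\ne0$ for $f\not\equiv0$, with $e(f)=+\infty$ otherwise. Second, $|t_n|^{\gamma}$ is the modulus of the total rotation from $x_n$, i.e.\ of $\sigma_ns_n$ if $\sigma_n$ is applied after $s_n$.
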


\begin{proof}
A function $f$ as above that is not identically zero on $C$ is $t^{\nu(f)}$ times a unit of $\C\{t\}$. The rotation multiplies it by $\vartheta^{2\gamma}$, $\vartheta^{-2\gamma}$ or $1$. This gives the last statement, since $d\,e(f)=2\nu(f)\pm2\gamma$ or $2\nu(f)$. Each component of $\cZ$ and $\cW$ is a Laurent series in $t$ near $P$. The ratios of components converge in $\C\cup\{\infty\}$. Passing from $x_n$ to $x'_n$ multiplies $\cZ$ by a scalar $s_n$ and $\cW$ by $s_n^{-1}$, which changes none of these ratios. Since $P$ is singular, $\cN\to\infty$. Since $z_{12}=w_{12}$ on solutions of \eqref{eq:AK4}, at these solutions $r_{12}^2=z_{12}^2$, $|Z_{12}|=|z_{12}|^{-2}$ and $\|\cZ\|\ge\max(|z_{12}|/2,|z_{12}|^{-2})\ge2^{-2/3}$, and likewise for $\cW$. Hence $\cZ$ or $\cW$ is unbounded along $(x'_n)$. The extraction of \cite[pp.~544--545]{AK12} only fixes the limits of the ratios. The normalization multiplies $z$ by a real $a_n>0$ and $w$ by $a_n^{-1}$ so that $\|\cZ\|=\|\cW\|$. Then $|a_ns_n|^2=\|\cW(x_n)\|/\|\cZ(x_n)\|\approx|t|^{2\gamma}$. Hence a function $f$ of $z$-type has modulus $|a_ns_nf(x_n)|\approx|t|^{\nu(f)+\gamma}=|t|^{d\,e(f)/2}$, and likewise for the other two kinds. Finally $\varepsilon_n^{-2}=\|\cZ\|\approx|t|^{-d}$. The characterizations are the definitions of circles, strokes and closeness in \cite[pp.~545--546]{AK12}: circles and strokes mark the components of maximal order, and $z$-close means $z_{kl}\prec\cZ$.
\end{proof}

For a pair $kl$ put
\[
\zeta_{kl}=e(z_{kl}),\qquad \omega_{kl}=e(w_{kl}),\qquad x_{kl}=e(r_{kl})=\tfrac12(\zeta_{kl}+\omega_{kl}),
\]
so that $e(Z_{kl})=-\tfrac12(\zeta_{kl}+3\omega_{kl})$ and $e(W_{kl})=-\tfrac12(3\zeta_{kl}+\omega_{kl})$. The vector $x(P)=(x_{kl})_{k<l}\in\Q^{10}$ is the \emph{type} of $P$. Since $x_{kl}=\tfrac2d\,\nu(r_{kl})=\tfrac2d\,\rho_{kl}(P)$, the type is a positive multiple of $\rho(P)$ and \eqref{eq:bal} becomes
\begin{equation}\label{eq:balx}
\sum_{P\ \text{singular}}d_P\,x(P)=0,\qquad d_P>0 .
\end{equation}
We index the coordinates of $\R^{10}$ by the pairs $12,13,\dots,45$.

\begin{lemma}\label{lem:orders}
For every pair $kl$:
\begin{enumerate}[label=\textup{(O\arabic*)}]
\item $\zeta_{kl},\omega_{kl}\ge-2$, $\zeta_{kl}+3\omega_{kl}\le4$ and $3\zeta_{kl}+\omega_{kl}\le4$. Hence $\zeta_{kl},\omega_{kl}\le2$ and $-2\le x_{kl}\le1$.
\item There is a $z$-stroke $kl$ if and only if $\zeta_{kl}+3\omega_{kl}=4$, and a $w$-stroke if and only if $3\zeta_{kl}+\omega_{kl}=4$.
\item On a $z$-stroke, $\zeta_{kl}=3x_{kl}-2$, $\omega_{kl}=2-x_{kl}$ and $0\le x_{kl}\le1$. On a $w$-stroke, $\omega_{kl}=3x_{kl}-2$ and $\zeta_{kl}=2-x_{kl}$.
\item $x_{kl}=1$ if and only if $kl$ is a $zw$-edge. A $z$-stroke is maximal \textup{(}its ends are not $z$-close, \cite[Def.~6]{AK12}\textup{)} if and only if $x_{kl}=0$.
\item In every triangle $klm$ the minimum of $\zeta_{kl},\zeta_{lm},\zeta_{mk}$ is attained at least twice, and the same holds for $\omega$.
\item If $k$ is $z$-circled and $l$ is not, then $\zeta_{kl}=-2$. If $\zeta_{kl}=2$, then $\omega_{kl}=-2$ and $kl$ is a $w$-stroke.
\end{enumerate}
\end{lemma}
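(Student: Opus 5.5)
We derive each item from the exponent calculus of Lemma~\ref{lem:place} and the identities $Z_{kl}=p_{kl}^3z_{kl}$, $W_{kl}=p_{kl}^3w_{kl}$, $p_{kl}^2z_{kl}w_{kl}=1$. The last identity gives $x_{kl}=e(r_{kl})=\tfrac12(\zeta_{kl}+\omega_{kl})$, hence $e(p_{kl})=-x_{kl}$. From this, $e(Z_{kl})=-3x_{kl}+\zeta_{kl}=-\tfrac12(\zeta_{kl}+3\omega_{kl})$, and symmetrically for $W_{kl}$. The $\gamma$ shifts cancel because $p_{kl}$ is neutral.

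\emph{(O1) and (O2).} Every component of $\cZ$ and $\cW$ has exponent $\ge-2$, so $e(Z_{kl}),e(W_{kl})\ge-2$. These give the two linear inequalities $\zeta_{kl}+3\omega_{kl}\le4$ and $3\zeta_{kl}+\omega_{kl}\le4$. Since $z_{kl}=z_k-z_l$, we also have $\zeta_{kl}\ge\min(e(z_k),e(z_l))\ge-2$, and likewise $\omega_{kl}\ge-2$. Combining $\omega_{kl}\ge-2$ with $3\zeta_{kl}+\omega_{kl}\le4$ gives $\zeta_{kl}\le2$. The bound $-2\le x_{kl}\le1$ follows from adding the two inequalities ($4x_{kl}\le4$) and from $\zeta_{kl},\omega_{kl}\ge-2$. (O2) is Lemma~\ref{lem:place}(ii) restated through the formula for $e(Z_{kl})$.

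\emph{(O3) and (O4).} For (O3), on a $z$-stroke substitute $\omega_{kl}=2x_{kl}-\zeta_{kl}$ into $\zeta_{kl}+3\omega_{kl}=4$. This gives $\zeta_{kl}=3x_{kl}-2$ and $\omega_{kl}=2-x_{kl}$. The range $0\le x_{kl}\le1$ comes from $\zeta_{kl}\ge-2$ together with $\omega_{kl}\le2$ from (O1). For (O4), $x_{kl}=1$ forces equality in both inequalities of (O1), that is, both strokes are present. Conversely, both strokes solve to $\zeta_{kl}=\omega_{kl}=1$. For maximality we use Lemma~\ref{lem:place}(iii): the ends of a $z$-stroke are not $z$-close iff $\zeta_{kl}=-2$, iff $3x_{kl}-2=-2$, iff $x_{kl}=0$. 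This matches Estimate~2 of \S\ref{ss:akinputs}(ii).

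\emph{(O5).} In every triangle $z_{kl}+z_{lm}+z_{mk}=0$ holds identically. The valuation of a sum of three series summing to zero must have its minimum attained at least twice. The same argument applies to $\omega$.

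\emph{(O6).} If $k$ is $z$-circled and $l$ is not, then $e(z_k)=-2<e(z_l)$, so $e(z_k-z_l)=-2$. If $\zeta_{kl}=2$, then $3\zeta_{kl}+\omega_{kl}\le4$ gives $\omega_{kl}\le-2$, and with (O1) this forces $\omega_{kl}=-2$. Then $3\zeta_{kl}+\omega_{kl}=4$, which by (O2) is a $w$-stroke.

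The only delicate point is the normalization in (O1): all exponents are measured after the $\vartheta^{2\gamma}$ rotation of Lemma~\ref{lem:place}. The lower bound $-2$ is attained in $\cZ$ and $\cW$ simultaneously precisely because $\gamma$ balances the two norms. This must be checked from the definitions of $d$ and $\gamma$, namely $\min e(\cZ)=\tfrac2d(\nu(\cZ)+\gamma)=-2$, and similarly for $\cW$. Everything else is linear algebra on the exponents.
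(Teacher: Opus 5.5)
Your proof is correct and follows the paper's argument item by item: the bounds of (O1) come from $e\ge-2$ on the components of $\cZ$ and $\cW$, (O3)–(O4) from solving the linear relations on strokes, (O5) from $z_{kl}+z_{lm}+z_{mk}=0$, and (O6) from comparing a circled with an uncircled body. One small slip: in (O3), $\zeta_{kl}\ge-2$ and $\omega_{kl}\le2$ both give only $x_{kl}\ge0$; the upper bound $x_{kl}\le1$ is the one you already proved in (O1).
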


\begin{proof}
(O1) Since $e(z_k)\ge-2$ for all $k$, $\zeta_{kl}\ge\min(e(z_k),e(z_l))\ge-2$. The two linear inequalities are $e(Z_{kl})\ge-2$ and $e(W_{kl})\ge-2$. (O2) A stroke means $e(Z_{kl})=-2$. (O3) We solve $\zeta+3\omega=4$, $\zeta+\omega=2x$, and use $\zeta\ge-2$. (O4) If $\zeta+\omega=2$, the inequalities of (O1) give $\zeta\le1$ and $\omega\le1$, hence $\zeta=\omega=1$ and both strokes are present. Conversely a $zw$-edge has $\zeta=\omega=1$. A $z$-stroke has $z$-close ends if and only if $\zeta>-2$, i.e.\ $x>0$. (O5) follows from $z_{kl}+z_{lm}+z_{mk}=0$. (O6) $e(z_k)=-2<e(z_l)$ gives $\zeta_{kl}=-2$. If $\zeta_{kl}=2$, (O1) gives $\omega_{kl}\le-2$, hence $\omega_{kl}=-2$ and $3\zeta_{kl}+\omega_{kl}=4$.
\end{proof}

(O6) is Rule~1b of \cite{AK12}. Besides Lemma~\ref{lem:orders} we use the following results of \cite{AK12}, which are among the inputs of \S\ref{ss:akinputs}: Rules~1d, 1e and 2e \cite[pp.~546--552]{AK12} (circled bodies come at least in pairs in an isolated component and are not all mutually close; an isolated component has no circle if and only if it has no maximal stroke; the three edges of a triangle of edges have the same type, equal $\zeta$ and equal $\omega$), Proposition~4 \cite[p.~564]{AK12} (if a diagram has a single maximal $z$-edge $12$, forming an isolated component, then no body $k\ge3$ satisfies $w_{1k}\prec w_{1l}$ for all other $l\ge3$), and Proposition~1 with $z$ and $w$ exchanged \cite[p.~559]{AK12} (if $w_{12}\prec w_{k2}$ and $z_{12}\approx z_{k2}$ for all $k\ge3$, then $w_1,w_2\preceq w_{12}$).

\begin{lemma}\label{lem:cut}
At every singular place of $C$, $\min_{k<l}x_{kl}(P)\le0$.
\end{lemma}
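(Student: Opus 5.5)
The plan is to argue by contradiction. Suppose that at some singular place $P$ every pair has $x_{kl}(P)>0$. By (O4), no $z$-stroke and no $w$-stroke of the diagram of $P$ is maximal, since a maximal stroke has $x_{kl}=0$. By Lemma~\ref{lem:orders}(O1) we also have $\zeta_{kl},\omega_{kl}\le2$ for every pair, so
\[
\zeta_{kl}=2x_{kl}-\omega_{kl}>-2\quad\text{and}\quad\omega_{kl}>-2\qquad\text{for all }k<l .
\]
In words, every two bodies are both $z$-close and $w$-close. I will turn this into a contradiction with the fact that some component of $\cZ$, and some component of $\cW$, has exponent exactly $-2$. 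By symmetry of the colours it is enough to treat $\cZ$.

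\emph{Case 1: some body is $z$-circled.} By Rule~1d, circled bodies in an isolated component come at least in pairs and are not all mutually close. So there are two circled bodies $k,l$ that are not $z$-close, which means $\zeta_{kl}=-2$ by Lemma~\ref{lem:place}(iii). This contradicts $\zeta_{kl}>-2$. Equivalently, (O1) gives $\omega_{kl}=2x_{kl}+2>2$, which is impossible.

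\emph{Case 2: no body is $z$-circled.} Then the maximal order of $\cZ$ is attained at some stroke $Z_{kl}$. By (O3) this stroke has $\zeta_{kl}=3x_{kl}-2$ and $0<x_{kl}<1$, where $x_{kl}<1$ holds unless it is a $zw$-edge. Let $K$ be the isolated component of the diagram containing $kl$. By Rule~1e, a component without circles has no maximal stroke, and this is consistent with our assumption, so Rule~1e alone does not close this case.

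The extra ingredient I would use is the equation $z_k=\sum_{l\ne k}m_lZ_{kl}$, up to the sign convention, taken at exponent $-2$. Since no $z_j$ reaches exponent $-2$, the leading terms of the strokes at each vertex of $K$ must cancel. The plan is to pair this with (O5) on triangles of strokes and with Rule~2e, which says that a triangle of edges has equal $\zeta$ and equal $\omega$. From these I would derive a relation among the stroke exponents $\zeta=3x-2$, $\omega=2-x$ on $K$. I would then compare $\omega_{kl}=2-x_{kl}$ with the $w$-side. Since there is no maximal $w$-stroke, the leading $\cW$-component must also be a non-maximal stroke or a $w$-circle, and a $w$-circle is excluded as in Case~1. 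A non-maximal $w$-stroke $ab$ has $\zeta_{ab}=2-x_{ab}$ and $\omega_{ab}=3x_{ab}-2$. The aim is to show, via (O5) applied to triangles that mix $z$-strokes and $w$-strokes, that some pair is forced to have $\zeta$ or $\omega$ equal to $-2$, which contradicts the display above.

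I expect Case~2 to be the main obstacle: a diagram whose only maximal-order components are non-maximal strokes, with no circles, on both colours. If the triangle argument does not close it, I would fall back on the constancy of $U$. By Lemma~\ref{lem:U}, $U=\sum m_km_lp_{kl}$ is a nonzero constant, so $e(U)=0$. Under the assumption every term has $e(p_{kl})=-x_{kl}<0$, so the terms of the most negative exponent, namely those at the pairs maximizing $x_{kl}$, must cancel. Combined with $U=\sum_k m_kz_kw_k$ and the bound $e(z_kw_k)>-4$ in the absence of circles, this cancellation should be incompatible with the leading stroke structure of the diagram.
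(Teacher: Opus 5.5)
Your Case~1 is correct: Rule~1d produces two circled bodies that are not $z$-close, so $\zeta_{kl}=-2$, whereas your assumption gives $\zeta_{kl}\ge2x_{kl}-2>-2$. Case~2, however, is left open, and the route you sketch cannot close it, because the order calculus and the rules you invoke are consistent with all $x_{kl}>0$.

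In diagram~16 every pair is a $zw$-edge with $\zeta_{kl}=\omega_{kl}=x_{kl}=1$. There are no circles and no maximal strokes, and every $Z_{kl}$ and $W_{kl}$ has exponent $-2$. Conditions (O1)--(O6) and Rules~1d, 1e and~2e all hold. The same is true of the types $\pc u_c(j)$ with $0<c<1$ of diagram~14, of diagram~10 with $c>\frac13$, and of diagram~7 with $a+b>\frac43$. The paper removes exactly these cases by the present lemma (Appendix~\ref{app:types} and the discussion after Lemma~\ref{lem:zwK4}). So no argument with (O5) on triangles of strokes can force some $\zeta$ or $\omega$ to equal $-2$.

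Your fallback does not help either. The constancy of $U=\sum m_km_lp_{kl}$ only says that the leading coefficients of its terms of negative exponent cancel, which is not a contradiction. A lower bound such as $e(z_kw_k)>-4$ points in the wrong direction.

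The missing idea is to use the moment of inertia rather than the potential. By Lemma~\ref{lem:U}(a), $U=I$ on $X_m$, so $I$ is the same nonzero constant on $C$. With the centre of mass at the origin, Lagrange's identity gives $I=\sum_km_kz_kw_k=M^{-1}\sum_{k<l}m_km_lr_{kl}^2$. This is a combination, with constant positive coefficients, of the squared mutual distances $r_{kl}^2=z_{kl}w_{kl}$. If all $x_{kl}(P)>0$, each $r_{kl}^2$ has exponent $2x_{kl}>0$ and tends to $0$ at $P$. Hence $I$ tends to $0$ as well, contradicting $I\equiv U\ne0$.

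This is the paper's proof. It needs no circles, strokes or rules of \cite{AK12}, and it is the analytic input that your combinatorial case analysis lacks.
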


\begin{proof}
By Lemma~\ref{lem:U}, $U=I$ on $X_m$ and $U$ is a nonzero constant on $C$. In the variables of \eqref{eq:AK4}, $I=\sum_km_kz_kw_k=M^{-1}\sum_{k<l}m_km_lr_{kl}^2$. If all $x_{kl}(P)$ were positive, all $r_{kl}^2$ would tend to zero at $P$, and so would $I$.
\end{proof}

\begin{lemma}[three bodies near a cluster]\label{lem:nearcluster}
Let $K$ be a set of at least three bodies, pairwise joined by $zw$-edges, and let $X\notin K$ have no stroke to $K$. In each colour, say $z$, exactly one of the following holds: all $\zeta_{Xt}$, $t\in K$, are equal and less than $1$ \textup{(}$X$ is $z$-far\textup{)}; or all $\zeta_{Xt}\ge1$, at most one of them is larger than $1$ \textup{(}$X$ is $z$-near\textup{)}. The body $X$ is not near in both colours. With $\bar\omega<1$ the common far $w$-exponent and $s$ the exceptional $\zeta$, the possible modes are therefore:
\begin{itemize}
\item \textup{F}: $x_{Xt}=c<1$ for all $t$;
\item \textup{Z1}: $x_{Xt}=\frac{1+\bar\omega}2$ for all $t$;
\item \textup{Zs}$_{t_0}$: $x_{Xt_0}=\frac{s+\bar\omega}2$ and $x_{Xt}=\frac{1+\bar\omega}2$ for $t\ne t_0$, with $s>1$;
\item \textup{W1} and \textup{Ws}$_{t_0}$: the same with the colours exchanged.
\end{itemize} The parameters satisfy $s<2$, $s+\bar\omega<2$, $2s+\bar\omega<3$ and $2s+3\bar\omega<5$.
\end{lemma}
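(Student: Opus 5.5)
The argument is the triangle rule (O5) applied to triangles $Xtt'$ with $t,t'\in K$, together with the stroke criteria (O2), (O4) and (O6) of Lemma~\ref{lem:orders}. By (O4), every pair $t,t'\in K$ has $\zeta_{tt'}=\omega_{tt'}=1$. We work in the colour $z$; the colour $w$ is symmetric.

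\emph{Near or far.} Fix $t\ne t'$ in $K$ and apply (O5) to the three values $\zeta_{Xt},\zeta_{Xt'},\zeta_{tt'}=1$.
\begin{itemize}
\item If $\min(\zeta_{Xt},\zeta_{Xt'})<1$, the minimum is attained twice only if $\zeta_{Xt}=\zeta_{Xt'}$.
\item Otherwise both are $\ge1$. The minimum is then $1$, so at least one of them equals $1$.
\end{itemize}
Now range over all pairs in $K$. If some $\zeta_{Xt}<1$, the first case applied to $t$ and each $t'$ gives $\zeta_{Xt'}=\zeta_{Xt}$ for all $t'\in K$. This is the far alternative. If instead all $\zeta_{Xt}\ge1$, two indices with $\zeta>1$ would form a pair violating the second case, so at most one exceeds $1$. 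This is the near alternative. The two alternatives are exclusive, since a far value is $<1$ and all near values are $\ge1$.

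\emph{Not near in both colours.} Suppose $X$ is near in both colours. At most one $t$ is exceptional for $\zeta$ and at most one for $\omega$. Since $|K|\ge3$, some $t$ has $\zeta_{Xt}=\omega_{Xt}=1$. Then $x_{Xt}=1$, so $Xt$ is a $zw$-edge by (O4). This contradicts the hypothesis that $X$ has no stroke to $K$.

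\emph{The modes.} Since $X$ is near in at most one colour, there are three possibilities.
\begin{itemize}
\item Far in both colours: $x_{Xt}=\tfrac12(\bar\zeta+\bar\omega)=c$ is constant, and $c<1$ because $\bar\zeta,\bar\omega<1$. This is mode F.
\item $z$-near and $w$-far with common value $\bar\omega<1$: either all $\zeta_{Xt}=1$, giving Z1, or exactly one $\zeta_{Xt_0}=s>1$, giving Zs$_{t_0}$.
\item W1 and Ws$_{t_0}$: the same with the colours exchanged.
\end{itemize}

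\emph{The parameter inequalities.} Since there is no stroke on $Xt_0$, (O2) gives the strict inequalities $s+3\bar\omega<4$ and $3s+\bar\omega<4$. The constraints then follow.
\begin{itemize}
\item $s<2$: by (O6), $\zeta=2$ would force a $w$-stroke, and $\zeta\le2$ by (O1).
\item $s+\bar\omega<2$: add the two inequalities and divide by $4$.
\item $2s+\bar\omega<3$: combine them with weights $\tfrac58$ on $3s+\bar\omega<4$ and $\tfrac18$ on $s+3\bar\omega<4$.
\item $2s+3\bar\omega<5$: combine them with weights $\tfrac38$ on $3s+\bar\omega<4$ and $\tfrac78$ on $s+3\bar\omega<4$.
\end{itemize}
For instance, $\tfrac38(3s+\bar\omega)+\tfrac78(s+3\bar\omega)=2s+3\bar\omega$ and $\tfrac38\cdot4+\tfrac78\cdot4=5$. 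The only step needing care is the bookkeeping of the strict inequalities coming from "no stroke" through (O2); the rest is immediate from (O5).
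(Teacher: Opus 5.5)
Your proof is correct and follows the paper's argument step by step: (O5) on the triangles $Xtt'$ for the near/far dichotomy, (O4) to exclude nearness in both colours, (O6) for $s<2$, and the same convex combinations of the strict no-stroke inequalities $s+3\bar\omega<4$ and $3s+\bar\omega<4$ for the remaining bounds. The only cosmetic difference is that the paper excludes double nearness by noting directly that $x_{Xt}\ge1$ for every $t$, so it does not need $|K|\ge3$ at that step.
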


\begin{proof}
Inside $K$, $\zeta_{tt'}=1$. If some $\zeta_{Xt}<1$, (O5) forces $\zeta_{Xt'}=\zeta_{Xt}$ for all $t'\in K$. Two values above $1$ would give $\zeta_{tt'}>1$, again by (O5). Near in both colours would give $x_{Xt}\ge1$, hence a $zw$-edge by (O4). (O6) gives $s<2$. No stroke means $s+3\bar\omega<4$ and $3s+\bar\omega<4$. We obtain the remaining inequalities as their combinations with weights $(\frac14,\frac14)$, $(\frac78,\frac38)$, $(\frac18,\frac58)$. The inequality $\bar\omega<1$ follows from $3s+\bar\omega<4$ and $s>1$.
\end{proof}

\subsection{The types of the sixteen diagrams}\label{ss:typestable}
We name some points of $\R^{10}$. For a triple $T$ let $P_T$ be the complementary pair. For a body $j$ let $Q_j=\{1,\dots,5\}\setminus\{j\}$. For a set $A$ let $\mathrm{tri}(A)$ be its set of pairs.
\begin{itemize}
\item $\pc a_1(T)$: $0$ on $\mathrm{tri}(T)$ and on $P_T$, $-2$ on the six other pairs; $\pc b_1(T)$: $1$ on $\mathrm{tri}(T)$, $0$ on $P_T$, $-\tfrac12$ on the six other pairs.
\item $\pc h(s;D_1|D_2)$, for a body $s$ and a partition of $Q_s$ into two pairs $D_1,D_2$ (the diagonals): $0$ on the four other pairs of $Q_s$ (the sides), $-2$ on $D_1$, $D_2$ and on the four pairs $ks$.
\item $\pc v(s;A|B)$, for triples $A,B$ with $A\cap B=\{s\}$: $0$ on $\mathrm{tri}(A)\cup\mathrm{tri}(B)$, $-2$ on the four pairs between $A\setminus s$ and $B\setminus s$. More generally $\pc v_7(a,b)$ is $a$ on $\mathrm{tri}(A)$, $b$ on $\mathrm{tri}(B)$ and $\tfrac32(a+b)-2$ on the four cross pairs. The type $\pc u_A(c)$ is $1$ on $\mathrm{tri}(A)$, $c$ on $\mathrm{tri}(B)$ and $\tfrac{3c-1}2$ on the cross pairs.
\item $\pc n(T;ax,by)$, for a triple $T$, $P_T=\{x,y\}$ and $a\ne b$ in $T$: $1$ on $\mathrm{tri}(T)$, $0$ on $ax$, $by$, $xy$, and $-\tfrac12$ on the four other pairs.
\item $\pc u_c(j)$: $1$ on $\mathrm{tri}(Q_j)$, $c$ on the four pairs $ij$; $\pc b_k(j)$ for $k\in Q_j$: $1$ on $\mathrm{tri}(Q_j)$, $0$ on $kj$, $-\tfrac12$ on $ij$, $i\in Q_j\setminus k$; and $\sigma_k(j)(t)=(1-t)\,\pc u_0(j)+t\,\pc b_k(j)$.
\end{itemize}

\begin{proposition}\label{prop:types}
Let $P$ have diagram $d$, with the bodies numbered as in \cite[Fig.~11]{AK12}. Then $x(P)$ is as in Table~\ref{tab:types}. In the table $T=\{3,4,5\}$ in diagrams $1$, $2$, $4$, $5$. The wings are $A=\{1,2,5\}$ and $B=\{3,4,5\}$ in diagrams $6$--$8$ and $10$--$13$, with centre $5$. In diagrams $14$ and $15$, $j=5$ and $Q=Q_5$, with $k$ the body of $Q$ joined to $5$ in diagram $15$.
\end{proposition}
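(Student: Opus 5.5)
The plan is to go through the sixteen diagrams of \cite[Fig.~11]{AK12} one at a time and apply the same local procedure to each. The diagram of $P$ is fixed by Lemma~\ref{lem:place}. From it we read off the strokes, the circles and the pairs that are $z$-close or $w$-close. Lemma~\ref{lem:orders} then turns each piece of combinatorial data into linear equalities or inequalities on $(\zeta_{kl},\omega_{kl})$:
\begin{itemize}
\item a $zw$-edge gives $\zeta=\omega=1$, so $x=1$ by (O4);
\item a $z$-stroke gives $\zeta=3x-2$ and $\omega=2-x$ with $0\le x\le1$, and $x=0$ exactly when the stroke is maximal (O3, O4);
\item a circled body opposite an uncircled one gives $\zeta=-2$ (O6);
\item every triangle obeys the ultrametric rule (O5).
\end{itemize}
The estimates of \cite{AK12} listed in \S\ref{ss:akinputs}(ii) add the exact orders on maximal edges and on edges in isolated components. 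For each diagram we solve this finite system for $x(P)=(\zeta_{kl}+\omega_{kl})/2$ and check that the solution set is the single point or the segment recorded in Table~\ref{tab:types}.

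The diagrams fall into three groups, which we treat in increasing order of difficulty.

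\emph{Rigid diagrams.} In diagrams 1, 4, 5, 6, 8 and 10--13 every pair is a stroke, a circled--uncircled pair, or is forced by a triangle. Each such pair then has $\zeta,\omega\in\{-2,1,\text{stroke value}\}$. For example, in diagram 1 the triangle $T$ carries zero-type maximal strokes, the pair $P_T$ is a maximal stroke, and the six cross pairs join circled to uncircled bodies in both colours. This gives $x=-2$ on the cross pairs, which is $\pc a_1(T)$. The points $\pc h$ and $\pc v$ come out in the same way.

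\emph{Diagrams with a $zw$-cluster.} This covers diagrams 2, 3, 7, 9, 14 and 15, and the types $\pc b_1$, $\pc n$, $\pc u$, $\pc b_k$ and $\sigma_k$. For the bodies outside the cluster we use Lemma~\ref{lem:nearcluster}, which restricts them to the modes F, Z1, Zs and W1, Ws. The stroke and circle data of the diagram then fix the parameters $s,\bar\omega$ up to a one-parameter family. The ratios $-\tfrac12$ and $\tfrac{3c-1}2$ should come from this step, and so should the affine relation $\tfrac32(a+b)-2$ on the cross pairs in $\pc v_7$. Once a free parameter remains, we bound its range using the strict inequalities of Lemma~\ref{lem:nearcluster} together with (O1). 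This is what yields intervals such as $t\in[0,1]$ for $\sigma_k(j)(t)$.

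\emph{The main obstacle.} The hardest step is to show that nothing outside the listed family survives, that is, to get sharpness and not merely to find the listed types. Where the local rules leave a spurious extra branch, we first try Proposition~4 and Proposition~1 of \cite{AK12}. These control $w_{1k}$ relative to $w_{12}$ and so exclude the asymmetric modes Zs$_{t_0}$ next to an isolated maximal edge. For diagrams 14 and 15 we additionally try Lemma~\ref{lem:cut}. The four-body cluster there would otherwise allow all $x_{kl}>0$, and the lemma excludes this, pinning the endpoint $\pc u_0(j)$.

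The numbering conventions (the wings $A,B$ with centre 5, the triple $T=\{3,4,5\}$, and $j=5$) must be matched to Fig.~11 throughout. Where the colours $z$ and $w$ are exchanged, the types are unchanged, because $x$ is symmetric in $\zeta$ and $\omega$.
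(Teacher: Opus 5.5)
Your strategy of going through the diagrams and reading the type off Lemma~\ref{lem:orders} and the rules of \cite{AK12} is the paper's. As written, however, the proposal does not establish the table. Several diagrams are misclassified, and where the order calculus is genuinely insufficient you do not supply the extra argument.

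Diagram~1 is not rigid, and $\pc a_1(T)$ is the type of diagram~2. In diagram~1 the $w$-triangle $T$ has no circles, so by Rule~1e it has no maximal stroke and $x=a\in(0,1)$ on $T$. Bodies $1,2$ are not $w$-circled, so (O6) gives $\zeta_{1t}=-2$ but nothing on $\mu_t=\omega_{1t}=\omega_{2t}$. In diagrams~1 and~4, (O5) leaves two patterns: two of the $\mu_t$ equal to $3a-2$ (resp.\ $1$) and the third larger, or all $\mu_t$ equal to one $\mu<3a-2$ (resp.\ $\mu<1$). Proposition~4 of \cite{AK12} removes only the first. The second needs Proposition~1 of \cite{AK12} with the colours exchanged, which gives $e(w_1),e(w_2)\ge2$, followed by a centre-of-mass computation. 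The sum $m_3w_3+m_4w_4+m_5w_5$ has the coefficient $-(m_3+m_4+m_5)\kappa\ne0$ at the exponent $\mu<2$, yet it equals $-(m_1w_1+m_2w_2)$, whose exponent is at least $2$ (Lemma~\ref{lem:S1}). In diagram~9, Lemma~\ref{lem:nearcluster} does not apply, since $4$ and $5$ have strokes to the triangle. The paper needs \cite[\S8.9]{AK12} and the constancy of $U$ (Lemma~\ref{lem:U}) to exclude the case in which body~$4$ clusters with $w_1,w_2,w_3$. It then needs the equation of body~$4$ to obtain $x_{45}=0$. Your plan contains neither step.

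The most serious gap is diagram~14. Lemma~\ref{lem:nearcluster}, (O1) and Lemma~\ref{lem:cut} give only upper bounds, and they leave much more than Table~\ref{tab:types}. Mode Z1 gives $\pc u_c(j)$ with $c=\frac12(1+\bar\omega)$ for a whole interval of $\bar\omega\le-1$. Mode Zs$_k$ gives $x_{kj}=\frac12(s+\bar\omega)$ and $x_{ij}=\frac12(1+\bar\omega)$ with $1<s<2$, $\bar\omega\le-1$, subject only to the open inequalities of Lemma~\ref{lem:nearcluster}. That is a two-parameter family, whereas the table allows only $c=0$ and the line $\bar\omega=-s$. Lemma~\ref{lem:cut} removes $c>0$; it does not remove $c<0$.

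The missing idea is dynamical, as in Lemma~\ref{lem:zwK4}. The centre of mass gives $e(z_j)=2\alpha$ (resp.\ $e(w_j)=2\beta$). Comparing this with the exact equation $z_j=\sum_{i\in Q}m_iZ_{ji}$ of \eqref{eq:AK4} for the isolated body forces $c\ge0$. In the exceptional case, the strictly dominant term $m_kW_{jk}$ in $w_j=\sum_{i\in Q}m_iW_{ji}$ forces $\beta=-\frac a2$, that is, $x_{kj}=0$ and $x_{ij}=\frac{1-a}2$. In diagram~15, Lemma~\ref{lem:nearcluster} does not apply either, because $kj$ is a stroke. There Rule~1d makes $kj$ maximal, and (O5) gives $\pc b_k(j)$ directly.

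Smaller points:
\begin{itemize}
\item Lemma~\ref{lem:cut} is also what gives $a+b\le\frac43$ in diagram~7, $c\le\frac13$ in diagram~10, and the exclusion of diagram~16, none of which you mention.
\item Diagrams~2, 3 and~7 contain no $zw$-edge. The cross value $\frac32(a+b)-2$ of diagram~7 comes from (O5) across the two triangles, not from cluster modes.
\item Diagrams~5, 8 and~10 are not rigid. For diagram~5 the table only refers to the hulls of \S\ref{ss:d5}.
\end{itemize}
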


\begin{table}[ht]
\centering
\small
\begin{tabular}{@{}lll@{}}
\toprule
diagram & type $x(P)$ & range\\
\midrule
1 & $(1-a)\,\pc a_1(T)+a\,\pc b_1(T)$ & $0<a<1$\\
2 & $\pc a_1(T)$ & \\
3 & $\pc h(5;13|24)$ & \\
4 & $\pc b_1(T)$ & \\
5 & see \S\ref{ss:d5} & \\
6 & $\pc v(5;A|B)=\pc v_7(0,0)$ & \\
7 & $\pc v_7(a,b)$ & $0<a,b<1$, $a+b\le\frac43$\\
8 & $\pc v_7(a,0)$ & $0<a<1$\\
9 & $\pc n(\{1,2,3\};14,25)$ & \\
10 & $\pc u_A(c)$ & $0<c\le\frac13$\\
11--13 & $\pc u_A(0)$ & \\
14 & $\pc u_0(5)$, or $\sigma_k(5)(t)$ for some $k\in Q$ & $0<t<1$\\
15 & $\pc b_k(5)$ & \\
16 & does not occur & \\
\bottomrule
\end{tabular}
\caption{The types of the sixteen diagrams of \cite[Fig.~11]{AK12} on a curve with $U\not\equiv0$, for the numbering of the figure.}\label{tab:types}
\end{table}

We give the derivations for diagrams $1$--$13$ and $16$ in Appendix~\ref{app:types}. They use only Lemmas~\ref{lem:orders} and \ref{lem:cut}, the rules of \cite{AK12} quoted above and, for diagram~$9$, \cite[\S8.9]{AK12} and Lemma~\ref{lem:U}. Two of them improve on what the order calculus gives directly. We prove them here. For diagram~$1$ the order calculus leaves two patterns for the exponents $\mu_t=\omega_{1t}=\omega_{2t}$, $t\in T$ (Appendix~\ref{app:types}): either all $\mu_t$ equal some $\mu\le3a-2$, or two of them equal $3a-2$ and the third is larger.

\begin{lemma}\label{lem:S1}
At a place with diagram $1$, $\mu_t=3a-2$ for every $t\in T$. At a place with diagram $4$, $\mu_t=1$ for every $t\in T$.
\end{lemma}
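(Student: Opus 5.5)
The plan is to take the two patterns left by the order calculus (Appendix~\ref{app:types}) and rule out everything except $\mu_t=3a-2$ for all $t\in T$, using the two propositions of \cite{AK12} quoted in \S\ref{ss:orders}. Diagram~$4$ would then follow as the limiting case $a=1$, where $3a-2=1$. I will use the notation of the appendix for diagram~$1$. There $12$ is the distinguished pair $P_T$, with $x_{12}=0$, so $12$ is a maximal stroke by (O4). The triple $T=\{3,4,5\}$ forms a triangle of edges with common $\zeta$ and common $\omega$ (Rule~2e). By (O5) applied to the triangles $12t$, the exponents $\omega_{1t}$ and $\omega_{2t}$ agree; this common value is $\mu_t$.

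\emph{Step 1: the pattern with a strictly larger third exponent.} Suppose $\mu_{t_0}>3a-2=\mu_t$ for the other two $t\in T$. Exponents measure smallness, so $w_{1t_0}\prec w_{1l}$ for both $l\in T\setminus\{t_0\}$. Since $P$ is singular, Lemma~\ref{lem:place} turns $P$ into a singular sequence approaching diagram~$1$. In that diagram $12$ is the single maximal edge of its colour and forms an isolated component. This is exactly the hypothesis of Proposition~4 of \cite{AK12}, possibly after exchanging the colours $z$ and $w$, which the statement allows. Proposition~4 then forbids such a $t_0$. Before invoking it, I need to check from Table~\ref{tab:diagrams} that the stroke $12$ has the right colour and that no other maximal stroke of that colour exists. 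If the colour is wrong, I will exchange colours throughout.

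\emph{Step 2: the pattern with all $\mu_t$ equal to some $\mu<3a-2$.} Then $w_{12}$ has exponent $\omega_{12}=3\cdot0-2=-2$, by (O3) applied to the maximal stroke with $x_{12}=0$. The exponents $w_{k2}$ for $k\in T$ are then all equal and compared with $w_{12}$. I will use the strict inequality $\mu<3a-2$, together with the fixed values $x_{1t}=x_{2t}=\tfrac32a-2$ of the type, to obtain the hypotheses of Proposition~1 with $z$ and $w$ exchanged. These hypotheses are $w_{12}\prec w_{k2}$ and $z_{12}\approx z_{k2}$ for all $k\ge3$. I expect the reconstruction to give $\zeta_{1t}=3a-2-\mu+\ldots$, which I will recompute from the appendix. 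Proposition~1 yields $w_1,w_2\preceq w_{12}$, that is, bodies $1$ and $2$ are not $w$-circled while $w_{12}$ is large. I then aim for a contradiction with the circling pattern of diagram~$1$ via (O6). If a $w$-circled body exists outside $\{1,2\}$, (O6) forces $\omega=-2$ on its pairs with $1$ and $2$, which gives $\mu=-2$. I will then check that $\mu=-2$ contradicts $x_{1t}=\tfrac32a-2$ together with (O1), since it would force $\zeta_{1t}=3a-2>-2$ and violate $3\zeta+\omega\le4$ only for large $a$. If this check fails, the fallback is Rule~1d: circled bodies come in pairs in an isolated component.

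\emph{Main obstacle.} I expect the main difficulty to be Step~2. Proposition~4 disposes of Step~1 almost verbatim, but excluding $\mu<3a-2$ needs the exact orders $\zeta_{1t}$ and the correct colour bookkeeping to land in Proposition~1. If that proposition does not apply cleanly, I would try the alternative route through Lemma~\ref{lem:cut} or the identity $U=I$. In that route I would compare the leading terms of $\sum m_km_lr_{kl}^2$ at $P$ to show that a strict inequality would leave a single dominant, uncancelled term, which is incompatible with $I$ being constant. For diagram~$4$, I will rerun both steps with $a=1$. There $T$ consists of $zw$-edges, and Lemma~\ref{lem:nearcluster} applied to $X\in\{1,2\}$ and $K=T$ limits the modes to F, Z1, Zs and their $w$-analogues. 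Steps~1 and~2 eliminate the modes Ws and F, leaving $\mu_t=1$.
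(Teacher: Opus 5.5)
Your Step~1 is the paper's argument and is fine. Proposition~4 of \cite{AK12} applies with no colour exchange, because $12$ is a $z$-stroke and the only maximal $z$-edge in both diagrams. In diagram~$4$ the $z$-strokes of $T$ have $x=1$ and so are not maximal.

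Step~2 has a genuine gap.
\begin{itemize}
\item (O3) on the $z$-stroke $12$ with $x_{12}=0$ gives $\zeta_{12}=-2$ and $\omega_{12}=2$, not $\omega_{12}=-2$. You need $\omega_{12}=2$ for your own (O5) claim $\omega_{1t}=\omega_{2t}$. You also need it for the hypothesis $w_{12}\prec w_{k2}$ of Proposition~1, which holds since $\mu<3a-2<2$. The other hypothesis $z_{12}\approx z_{k2}$ holds because $\zeta_{12}=\zeta_{k2}=-2$ by (O6). It is not a formula like $\zeta_{1t}=3a-2-\mu+\dots$.
\item You may not use $x_{1t}=\frac32a-2$. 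In this case $x_{1t}=\frac{\mu-2}2$, and the value $\frac32a-2$ is what the lemma is meant to establish, so using it is circular.
\item Proposition~1 gives $w_1,w_2\preceq w_{12}$, that is $e(w_1),e(w_2)\ge2$. This does not mean ``$w_{12}$ large'', and by itself it contradicts nothing in the circling pattern. Diagram~$1$ has no $w$-circles at all, so your (O6) route is empty, and Rule~1d gives nothing either.
\end{itemize}

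The missing idea is a centre-of-mass computation at exponent $\mu$. The six differences $w_{kt}$, $k\in\{1,2\}$, $t\in T$, all have exponent $\mu$. Their pairwise differences $\pm w_{12}$ and $\pm w_{tt'}$ have exponents $2$ and $3a-2$, both larger than $\mu$, so the six share one leading coefficient $\kappa\ne0$. Since $e(w_1)\ge2>\mu$, each $w_t=w_1-w_{1t}$ has coefficient $-\kappa$ at exponent $\mu$. Hence $m_3w_3+m_4w_4+m_5w_5$ has coefficient $-(m_3+m_4+m_5)\kappa\ne0$ there. But this sum equals $-(m_1w_1+m_2w_2)$, whose exponent is at least $2$, a contradiction.

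Your fallback through $U=I$ does not replace this. The leading cross terms of $\sum m_km_lr_{kl}^2$ add up to $\kappa(m_3+m_4+m_5)(m_1\hat z_1+m_2\hat z_2)$. This vanishes identically, because $m_1\hat z_1+m_2\hat z_2=0$ is the $z$-centre of mass at exponent $-2$. So no contradiction appears at leading order.

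For diagram~$4$ the same computation works with $3a-2$ replaced by $1$.
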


\begin{proof}
Diagram 1. In the second pattern $w_{1t_0}\prec w_{1t}$ for the body $t_0$ with the larger exponent and both other $t\in T$. Since $12$ is the only maximal $z$-edge, the $z$-diagram is $\{12\}$, an isolated component, and $m_1^3+m_2^3>0$, Proposition~4 of \cite{AK12} excludes this. Suppose all $\mu_t$ equal $\mu<3a-2$. Then we have:
\begin{enumerate}[label=\textup{(\roman*)}]
\item $\omega_{12}=2$ by (O3);
\item $\mu<2$, since $\mu=2$ would make $1t$ a $z$-stroke by (O2);
\item $\zeta_{12}=\zeta_{t2}=-2$ by (O6).
\end{enumerate} Proposition~1 of \cite{AK12} with the colours exchanged then gives $e(w_1),e(w_2)\ge2$. The differences $w_{1t}-w_{1t'}=\pm w_{tt'}$ have exponent $3a-2>\mu$. Consequently $w_{13}$, $w_{14}$, $w_{15}$ have the same coefficient $\kappa\ne0$ at exponent $\mu$. As $w_t=w_1-w_{1t}$ and $e(w_1)\ge2>\mu$, the coefficient of $m_3w_3+m_4w_4+m_5w_5$ at exponent $\mu$ is $-(m_3+m_4+m_5)\kappa\ne0$. But this sum equals $-(m_1w_1+m_2w_2)$, whose exponent is at least $2$. Hence $\mu_t=3a-2$ for all $t$.

Diagram 4 is the same argument with $3a-2$ replaced by $1$: the $z$-strokes of the triangle $T$ have $x=1$ and are not maximal. Again $12$ is the only maximal $z$-edge and an isolated component.
\end{proof}

A place with diagram $1$ thus has type $(1-a)\pc a_1(T)+a\pc b_1(T)$, $0<a<1$, and a place with diagram $4$ has type $\pc b_1(T)$, as in Table~\ref{tab:types}.

For diagrams $14$ and $15$ we need the position of the fifth body relative to the cluster. The next lemma replaces \cite[\S6.1 and Prop.~2]{AK12} in the form we need. We state it in terms of $\alpha_i=\tfrac12e(z_{ij})$ and $\beta_i=\tfrac12e(w_{ij})$.

\begin{lemma}\label{lem:zwK4}
Suppose $\{kl:x_{kl}(P)=1\}=\mathrm{tri}(Q_j)$, and put $Q=Q_j$. Exactly one of the following holds, possibly after exchanging the colours.
\begin{enumerate}[label=\textup{(\Alph*)}]
\item $\alpha_i=\alpha$ and $\beta_i=\beta$ for all $i\in Q$, with $\alpha,\beta\le\frac12$, and $x_{ij}=c:=\alpha+\beta$ for all $i\in Q$, $0\le c<1$. So $x(P)=\pc u_c(j)$, and no stroke meets $j$.
\item There is a unique $k\in Q$ with $\alpha_k>\frac12$. Put $a=2\alpha_k$. Then $1<a\le2$, $\alpha_i=\frac12$ for $i\ne k$, $\beta_i=-\frac a2$ for all $i\in Q$, $x_{kj}=0$ and $x_{ij}=\frac{1-a}2$ for $i\ne k$, that is, $x(P)=\sigma_k(j)(a-1)$. If $a<2$, no stroke meets $j$. If $a=2$, the only stroke at $j$ is the $w$-stroke $kj$, and $kj$ is a maximal $w$-edge.
\end{enumerate}
\end{lemma}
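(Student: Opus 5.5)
The plan is to reduce everything to the exponents $\zeta_{ij},\omega_{ij}$ of the four pairs joining $j$ to the cluster $Q$, and to pin them down with two tools: the order calculus (Lemma~\ref{lem:orders}, Lemma~\ref{lem:nearcluster}), and the constancy of $I=M^{-1}\sum m_km_lr_{kl}^2$ on $C$ (Lemmas~\ref{lem:U} and~\ref{lem:cut}), together with the centre-of-mass relations $\sum m_kz_k=\sum m_kw_k=0$.

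\emph{Step 1: the cluster.} Since $x_{ii'}=1$ on $\mathrm{tri}(Q)$, (O4) makes these pairs $zw$-edges, so $\zeta_{ii'}=\omega_{ii'}=1$. Applying (O5) in each triangle $i\,i'\,j$ gives the dichotomy of Lemma~\ref{lem:nearcluster} with $K=Q$ and $X=j$; that argument does not use the absence of strokes. In colour $z$, either all $\zeta_{ij}$ are equal and less than $1$, or all are at least $1$ with at most one larger. The same holds for $\omega$. The body $j$ is not near in both colours, since otherwise $x_{ij}\ge1$ and (O4) would give a $zw$-edge outside $\mathrm{tri}(Q)$. After exchanging colours, $w$ is far, so $\beta_i=\beta<\tfrac12$ for all $i$. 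Two cases remain: $z$ is far or near without an exceptional body, which gives $\alpha_i=\alpha\le\tfrac12$ and will be case (A); or $z$ is near with an exceptional $k$, $\alpha_k>\tfrac12$ and $\alpha_i=\tfrac12$ for $i\ne k$, which will be case (B). The bound $a=2\alpha_k\le2$ is (O1).

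\emph{Step 2: case (A), $0\le c<1$.} Here $x_{ij}=\alpha+\beta=c$ for all $i\in Q$, and $c<1$ by (O4). For $c\ge0$ I would expand $I$. The differences $w_{ij}-w_{i'j}=\pm w_{ii'}$ have exponent $2>2\beta$, so the $w_{ij}$, $i\in Q$, share a leading coefficient $\kappa$. Likewise the $z_{ij}$ share one, since $\zeta_{ii'}=2>2\alpha$ when $\alpha<\tfrac12$; the case $\alpha=\tfrac12$ needs a short separate check. The cross terms $\sum_{i\in Q}m_jm_ir_{ij}^2$ then have a leading coefficient proportional to $m_jM_Q\ne0$. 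If $c<0$ this term blows up, while the terms inside $Q$ vanish, contradicting the constancy of $I$. That no stroke meets $j$ follows from (O2), since $\zeta+3\omega<4$ and $3\zeta+\omega<4$ when both exponents are at most $1$ and not both equal to $1$.

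\emph{Step 3: case (B), the main obstacle.} We have $x_{ij}=\tfrac12+\beta$ for $i\ne k$ and $x_{kj}=\alpha_k+\beta$, and we must show $\beta=-a/2$, equivalently $x_{kj}=0$. By Lemma~\ref{lem:cut} some cross exponent is at most $0$; the smallest is $\tfrac12+\beta$. Since $I$ is constant, every coefficient of $I$ at a negative exponent must cancel. The $w_{ij}$ share a leading coefficient, so the dominant cross terms sum to that coefficient times $\sum_{i\ne k}m_i\bar z_{ik}$, where $\bar z_{ik}$ is the leading coefficient of $z_{ij}\approx z_{ik}$. The centre-of-mass identity in $z$, written relative to $k$, relates $\sum_{i\in Q}m_iz_{ik}$ to $m_jz_{kj}$, whose exponent is $a>1$. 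I would use this to show that the required cancellation forces the next order, governed by $z_{kj}$, to balance against $w_{kj}$. That gives exactly $\alpha_k+\beta=0$, i.e.\ $x_{kj}=0$ and $\beta=-a/2$, hence $x(P)=\sigma_k(j)(a-1)$. I expect this bookkeeping of leading coefficients, in particular excluding accidental cancellation among the three bodies $i\ne k$, to be the hardest part. If it resists, I would first try a Puiseux expansion at the place as in Lemma~\ref{lem:place}.

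\emph{Step 4: strokes in case (B).} With $\zeta_{ij}=1$ and $\omega_{ij}=-a$ for $i\ne k$, and $\zeta_{kj}=a$, $\omega_{kj}=-a$, check (O2). The only possible equality is $3\zeta_{kj}+\omega_{kj}=2a=4$, i.e.\ $a=2$, giving the $w$-stroke $kj$; it is maximal by (O4) since $x_{kj}=0$. For $a<2$ there is no stroke at $j$. Uniqueness of $k$ and the mutual exclusivity of (A) and (B) are read off from the dichotomy of Step 1.
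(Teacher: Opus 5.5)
Your Step~3 is where the lemma is actually decided, and it contains no argument. You never establish $\beta=-\frac a2$ (equivalently $x_{kj}=0$), and the route you sketch cannot give it.

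Write $w_{ij}=w_{ik}+w_{kj}$ and $\sum_{i\in Q}m_iz_{ij}=-Mz_j$. Then the cross part of $MI$ equals $-Mm_j\,w_{kj}z_j$ up to terms of exponent at least $2$. So the constancy of $I\ne0$ yields only $e(z_j)=-2\beta$. When $-2\beta>1$ it also yields the cancellation $\sum_{i\in Q}m_i\hat z_{ik}=0$ at exponent $1$, but that is the condition $\mathrm{X3}_k$ of \S\ref{ss:d14}, not an exponent relation.

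Since $Mz_j=-M_Qz_{kj}-\sum_{i\in Q}m_iz_{ik}$, whether $e(z_j)$ equals $a=e(z_{kj})$ depends on the subleading terms of the cluster sum $\sum_{i\in Q}m_iz_{ik}$ up to exponent $a$. Neither the order calculus nor the centre of mass controls those terms. Exponents such as $a=1.5$, $\beta=-0.6$ pass every check you list. Your centre-of-mass relation also drops a term: relative to $k$ it reads $\sum_{i\in Q}m_iz_{ik}=m_jz_{kj}-Mz_k$, and the order of $z_k$ is unknown.

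The missing idea is to use the equation of body $j$ itself, in the far colour. The second equation of \eqref{eq:AK4} gives $w_j=\sum_{i\in Q}m_iW_{ji}$.
\begin{itemize}
\item On the left, $-Mw_j=\sum_{i\in Q}m_iw_{ij}$ has leading coefficient $M_Q$ times the common leading coefficient of the $w_{ij}$, so $e(w_j)=2\beta$.
\item On the right, $e(W_{ji})=2\beta-3x_{ij}$, which is $-\frac32-\beta$ for $i\ne k$ and $-\frac32a-\beta$ for $i=k$.
\item Since $a>1$, the term $i=k$ strictly dominates and cannot be cancelled.
\end{itemize}
Hence $2\beta=-\frac32a-\beta$, i.e.\ $\beta=-\frac a2$, and your Step~4 then goes through.

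The same ingredient is missing in Step~2. Take the case where one colour is near without an exceptional body, say $\alpha=\frac12$ and $\beta<\frac12$. Then the $z_{ij}$ no longer share a leading coefficient; note that $\zeta_{ii'}=\omega_{ii'}=1$, not $2$ as you wrote. The order-$1$ coefficient of $z_j$ may vanish, and the $I$-argument again gives only $e(z_j)=-2\beta\ge1$, i.e.\ $c\le0$, which is the wrong inequality. This is not a short separate check within your framework.

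It is closed by the equation of $j$ in the colour with exponent below $\frac12$: $e(w_j)=2\beta$ and $e(W_{ji})=2\beta-3c$ for all $i\in Q$ force $2\beta\ge2\beta-3c$, i.e.\ $c\ge0$. This is the paper's argument, after exchanging colours. When $\alpha,\beta<\frac12$ your $I$-argument is correct and even gives $c=0$ at once. Your use of (O2) to exclude strokes at $j$ in case (A) is fine.
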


\begin{proof}
(1) Inside $Q$, $e(z_{il})=e(w_{il})=1$ for each pair by (O4). If $\alpha_i<\frac12$ for some $i$, then $z_{i'j}=z_{i'i}+z_{ij}$ gives $\alpha_{i'}=\alpha_i$ for all $i'\in Q$. Otherwise all $\alpha_i\ge\frac12$. Two indices with $\alpha_k,\alpha_{k'}>\frac12$ would give $e(z_{kk'})>1$. Either all $\alpha_i$ equal some $\alpha\le\frac12$, or exactly one exceeds $\frac12$ and the others equal $\frac12$. The same holds for the $\beta_i$.
(2) Not both colours have an exceptional index: $\alpha_k>\frac12$ and $\beta_{k'}>\frac12$ give $x_{k'j}>1$, against (O1).
(3) No exceptional index. Then $x_{ij}=\alpha+\beta=c$ for all $i$. By (O4), $c<1$. After exchanging colours we may assume $\alpha<\frac12$. We fix $k\in Q$. The sum $\sum_{i\in Q}m_iz_{ij}=M_Qz_{kj}+\sum_{i\in Q}m_iz_{ik}$ has exponent $2\alpha$, as the second sum has exponent at least $1>2\alpha$ and $M_Q\ne0$. The centre of mass gives $\sum_{i\in Q}m_iz_i-M_Qz_j=-Mz_j$, whence $e(z_j)=2\alpha$. The first equation of \eqref{eq:AK4} for $j$ reads $z_j=\sum_{i\in Q}m_iZ_{ji}$ with $e(Z_{ji})=2\alpha-3c$, which forces $2\alpha\ge2\alpha-3c$, i.e.\ $c\ge0$. Since $\alpha=c-\beta\ge c-\frac12$, $e(Z_{ij})=2\alpha-3c\ge-1-c>-2$, and likewise $e(W_{ij})>-2$: no stroke meets $j$.
(4) An exceptional index $k$ in $z$. By (2) all $\beta_i$ equal some $\beta$. We have $\beta<\frac12$, since otherwise $x_{ij}=1$ for $i\ne k$. Hence $x_{kj}=\frac a2+\beta$ and $x_{ij}=\frac12+\beta$. As in (3), $e(w_j)=2\beta$. The second equation of \eqref{eq:AK4} for $j$ reads $w_j=\sum_{i\in Q}m_iW_{ji}$, with $e(W_{ji})=2\beta-3x_{ij}$, which is $-\frac32-\beta$ for $i\ne k$ and $-\frac32a-\beta$ for $i=k$. Since $a>1$ the term $i=k$ has the strictly smallest exponent. So $2\beta=e(w_j)=-\frac32a-\beta$, i.e.\ $\beta=-\frac a2$. Then $x_{kj}=0$ and $x_{ij}=\frac{1-a}2$. Now $e(W_{kj})=-a\ge-2$ gives $a\le2$, with equality if and only if $kj$ is a $w$-stroke. The other exponents $e(Z_{kj})=a$, $e(Z_{ij})=1+\frac32(a-1)$ and $e(W_{ij})=\frac{a-3}2$ exceed $-2$. If $a=2$, then $\omega_{kj}=-2$: $k$ and $j$ are not $w$-close, and the $w$-stroke $kj$ is maximal. In that case $kj$ is a $w$-edge, since $e(Z_{kj})=a=2>-2$.
\end{proof}

At a place with diagram $14$ the $zw$-edges are $\mathrm{tri}(Q_j)$ and no stroke meets $j$. Its type $x(P)$ is $\pc u_c(j)$ with $0\le c<1$, or $\sigma_k(j)(t)$ with $0<t<1$. Lemma~\ref{lem:cut} leaves $c=0$. At a place with diagram $15$, case (B) holds with $a=2$ and $x(P)=\pc b_k(j)$. At a place with diagram $16$ all $x_{kl}$ equal $1$, which is excluded by Lemma~\ref{lem:cut}.

\subsection{Diagram 5}\label{ss:d5}
At a place with diagram $5$, $T$ is a $zw$-triangle without circles and the two other bodies $A$, $B$ have no strokes. By Lemma~\ref{lem:nearcluster} each of $A$ and $B$ is in one of the modes F, Z1, Zs$_{t_0}$, W1, Ws$_{t_0}$ relative to $K=T$. The type is determined by the mode parameters and by $x_{AB}$. For every pair of modes ($25$ pairs), the orders of the following quantities satisfy a system of linear equalities, inequalities and disjunctions: the positions, the sums $\sum_{t\in T}m_tZ_{tX}$, the torques, the differences of the four points $0$, $z_c$, $z_A$, $z_B$, where $z_c$ is the centre of mass of $T$, and $1-Mp_{AB}^3$. Each relation of the system follows from the equations \eqref{eq:AK4}, Lemmas~\ref{lem:U}, \ref{lem:orders}, \ref{lem:cut} and \ref{lem:nearcluster}, the centre of mass, and exact identities for the moment of inertia, the potential and the angular momentum; \S\ref{ss:d5rel} lists the relations that we use. Some of these relations, for an outer body $X$ in mode F, assume that the three inverse distances $p_{Xt}$, $t\in T$, have the same sign at leading order. Replacing them by relations that hold for all choices of signs (the torque equalities become inequalities, the relation between two bodies in mode F and the relations given for $X$ by the identity of (F) in \S\ref{ss:d5rel} are dropped, and the equalities for $\sum_tm_tZ_{tX}$ become inequalities) gives a model valid at every positive mass. For each pair of modes, Theorem~\ref{thm:hulls} in Appendix~\ref{app:d5hulls} gives an explicit polytope in the five coordinates $(x_{At_0},x_{At},x_{Bt_0},x_{Bt},x_{AB})$ that contains the projection of every point of the model. The union of these $25$ polytopes is the hull $\Vtwo^{\mathrm w}$. If we keep the equalities for $\sum_tm_tZ_{tX}$, which is legitimate for all choices of signs when $m_o\ne m_a+m_b$ for every $o\in T=\{o,a,b\}$, the same theorem gives a smaller hull $\Vtwo$. The hull $\Vtwo^{\mathrm w}$ differs from $\Vtwo$ only for the pairs of modes (F,\,F), (F,\,Z1), (F,\,W1) and their exchanges. A face of these hulls, and of the hull $\VN$ below, is \emph{attained} if each of the five coordinates is less than $1$ at one of its vertices (no outer pair of diagram~$5$ is a $zw$-edge). Every point of each of these models has all five coordinates less than $1$: by $c<1$ in mode F, by $\bar\omega<1$ and $s+\bar\omega<2$ in the other modes (Lemma~\ref{lem:nearcluster}), and, as $A$ and $B$ have no strokes, by $\zeta_{AB}+3\omega_{AB}<4$ and $3\zeta_{AB}+\omega_{AB}<4$ (Lemma~\ref{lem:orders}). The projection of a model point is a convex combination, with positive weights, of the vertices of the face in whose relative interior it lies. Hence this face is attained: each coordinate is less than $1$ at one of these vertices. The orders in these models are real numbers, while a modelled quantity may vanish identically along $C$. Its order is then $+\infty$. The next lemma treats this case.

\begin{lemma}\label{lem:infty}
Let $P$ be a place of diagram $5$ at a mass at which one of the models of this subsection is valid, and let $o$ be the vector of the orders at $P$ of the modelled quantities, with $o_v=+\infty$ when the quantity vanishes identically along $C$. Then the model has a point with real coordinates whose projection to the five coordinates of the type equals that of $o$. In particular the type of $P$ lies in the hull of the model.
\end{lemma}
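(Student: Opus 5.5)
Since the type coordinates of $P$ are finite (each $x_{kl}$ is the order of $r_{kl}$, a unit on $C$), the plan is to keep the finite part of $o$ as it is. For the coordinates with $o_v=+\infty$ we choose large real values. The model is a finite system of linear equalities, inequalities and disjunctions in the orders. Each relation is derived from an identity of the form ``a quantity is a sum of terms'' or ``a product/quotient of quantities'', together with valuation rules: the order of a sum is at least the minimum, with equality when the minimum is attained once, and the order of a product is the sum of the orders. So the first step is to list the relations of \S\ref{ss:d5rel} by form. Then we check that each relation holds for $o$ in the extended sense, with $+\infty$ obeying the usual conventions ($\min$, $+$, and comparison with $+\infty$). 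This holds because the valuation rules are valid on $\Z\cup\{+\infty\}$ when a quantity vanishes identically.

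The key step is a perturbation. Let $V_\infty$ be the set of modelled quantities vanishing identically; it contains none of the type coordinates, the positions or the differences $r_{kl}$, which are units. Replace each $o_v=+\infty$ by a common real $L$, or more safely by $L+c_v$ with a fixed ordering of offsets $c_v$ chosen consistently with the derivations, and let $L\to\infty$. Every relation involving only finite coordinates is unchanged. Every relation involving infinite coordinates should be one of three kinds:
\begin{enumerate}
\item an inequality ``$o_v\ge$ finite expression'', which holds for $L$ large;
\item an equality of the form $o_v=o_{v'}+\text{const}$ between two vanishing quantities (for instance a quantity and its product with a unit), which holds when we take consistent offsets;
\item a disjunction, in which we select a branch valid for $o$ that stays valid after the substitution.
\end{enumerate}
An equality ``$o_v=\min(\dots)$'' with $o_v=+\infty$ forces all the terms in the min to be $+\infty$, hence in $V_\infty$, and so reduces to the second kind. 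I would argue that $V_\infty$ is closed under the relations in this sense: if a sum vanishes identically and its expansion relation was an equality with a minimum, the terms achieving the minimum must cancel. Such relations appear in the model only as inequalities, or the model has a disjunct covering cancellation. I expect this bookkeeping to be the main obstacle. It must be checked against the actual list of relations, in particular the torque and the $\sum_t m_tZ_{tX}$ relations, and the relation involving $1-Mp_{AB}^3$. The latter may vanish identically, with $p_{AB}^3=1/M$ constant, which would make $x_{AB}=0$ and keep the type coordinates finite.

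Finally, the resulting real point satisfies the model and has the same five type coordinates as $o$. Theorem~\ref{thm:hulls} says that the projection of every model point lies in the corresponding polytope. So the type of $P$ lies in the hull of the model, which is $\Vtwo^{\mathrm w}$, or $\Vtwo$ where that model is valid.
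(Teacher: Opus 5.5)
Your strategy is the paper's: keep the finite orders, replace each $+\infty$ by a large number with suitable offsets, check every relation of \S\ref{ss:d5rel}, and conclude with Theorem~\ref{thm:hulls}. But the proposal stops where the proof starts. The check against the actual relations, which you defer as bookkeeping, is the entire content of the lemma, and two things you do assert are wrong.

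First, the positions are not units on $C$. The differences $z_{kl}$, $w_{kl}$ and the $p_{kl}$ are units; the positions $z_k$, $w_k$ relative to the centre of mass are not. The quantities that can vanish identically are:
\begin{itemize}
\item the positions $z_A,z_B,z_c,w_A,w_B,w_c$;
\item the differences $z_X-z_c$ (modes Z1, Zs) and $w_X-w_c$ (modes W1, Ws);
\item the sums $\sum_tm_tZ_{tX}$ in modes Z1, W1 (and in mode F for $\Vtwo^{\mathrm w}$);
\item the two torques, the quantities of (R3), and $1-Mp_{AB}^3$.
\end{itemize}
On the modified Roberts continuum, $z_c$, $w_c$ and both torques do vanish identically at every place of diagram~$5$. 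So excluding the positions drops exactly the case the lemma is needed for.

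Second, you reduce a minimum relation whose terms are all infinite to an equality between two vanishing quantities. This ignores that the terms are linear combinations that also contain finite orders. In (F) the terms are $c_X$, $\zeta+\beta$, $t^z_X$. If two of them are infinite so is the third, since $z_{XY}$ is a unit. With a common value $L$, the term $\zeta+L$ is then the unique minimum as soon as $\zeta<0$. A specific offset is needed, and you do not show that one choice satisfies all relations at once.

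The missing argument has four parts.
\begin{enumerate}
\item[(a)] Every relation demanding a minimum attained twice comes from an exact linear identity with nonzero constant coefficients among modelled quantities, such as $m_Az_A+m_Bz_B+M_Tz_c=0$. The identity stays valid when the identically vanishing terms are dropped. Hence either all its terms are infinite, or the minimum over the finite terms is attained twice by finite terms.
\item[(b)] Going through the relations, each possibly vanishing quantity enters only in lower bounds, in the equality $\tau_A=\tau_B$ (the torques vanish together), with coefficient $+1$ in the terms of a minimum, or in upper bounds switched off by the event of (R) that its vanishing triggers. The sign is essential: an infinite order with a negative coefficient in a term of a minimum would send that term to $-\infty$ after the replacement.
\item[(c)] Take $L>10+3\max|o_v|$ over the finite orders. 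Every term containing a replaced order is then at least $L-2$, and every other term is at most $3\max|o_v|+2$. So a minimum attained twice by finite terms keeps its value and its attaining pair, and the upper bounds still in force involve only finite orders.
\item[(d)] For the relations whose terms can all be infinite, (F) and (T), take $L+2$ for $1-Mp_{AB}^3$, using $\zeta,\omega\ge-2$. Take $L+1$ for the two quantities of (R3) when all terms of (T) are infinite, to match $p^w_c+1$.
\end{enumerate}
The projection involves only finite orders, so it is unchanged, and Theorem~\ref{thm:hulls} applies.
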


\begin{proof}
Each relation of a model that requires a minimum to be attained twice comes from an exact linear identity, with nonzero constant coefficients, among the modelled quantities, such as $m_Az_A+m_Bz_B+M_Tz_c=0$ for the centre of mass $z_c$ of $T$. It still holds when the terms that vanish identically are dropped. The quantities that can vanish identically are the positions $z_A$, $z_B$, $z_c$, $w_A$, $w_B$, $w_c$, the differences $z_X-z_c$ for a body in mode Z1 or Zs and $w_X-w_c$ for one in mode W1 or Ws, the sums $\sum_tm_tZ_{tX}$ of the bodies in modes Z1 and W1 and, for $\Vtwo^{\mathrm w}$, in mode F, the two torques, the quantities of (R3), and $1-Mp_{AB}^3$. Each of them enters the relations only in lower bounds, in the equality of the orders of the two torques, which vanish together, with coefficient $+1$ in the terms of a minimum, or in upper bounds that are switched off by the event of (R1), (R2) or (R3) triggered by its vanishing. We replace every infinite order by a number $L$ larger than $10+3\max|o_v|$, the maximum over the finite orders. We use $L+2$ for $1-Mp_{AB}^3$, and $L+1$ for the two quantities of (R3) when all terms of that relation are infinite. The lower bounds and the equality of the torques then hold. The upper bounds that are not switched off involve only finite orders. A term of a minimum that contains a replaced order is at least $L-2$. All other terms are at most $3\max|o_v|+2$. Hence a minimum attained twice by finite terms keeps its value and the pair attaining it. The three relations whose terms can all be infinite hold with the values chosen. Since the projection involves only finite orders, it is unchanged. By Theorem~\ref{thm:hulls}, it lies in the hull.
\end{proof}

Note that the lemma is needed: modelled quantities do vanish identically at some places. On the modified Roberts continuum \cite[pp.~543--544]{AK12}, at the masses $(4,4,4,4,1)$, the centre of mass $z_c$, its counterpart $w_c$ and both torques vanish identically at each of the four places of diagram~$5$.

A third hull $\VN\subseteq\Vtwo$ uses three further relations (R1), (R2) and (R3). Theorem~\ref{thm:hulls} gives its polytopes as well. These relations are proved in the supplementary material. (R1) and (R2) hold at all positive masses off an explicit hypersurface in the masses of $T$, $A$ and $B$. (R3) is a relation for the pairs of modes (Z1,\,Zs$_{t_0}$), (W1,\,Ws$_{t_0}$) and their exchanges. It holds when $t_0$ is not at the centre of mass of $T$. A body of $T$ is at the centre of mass of $T$ exactly when the masses of $T$ are $(\mu/4,\mu,\mu)$ in some order. Then the pairs of modes with $t_0$ at the centre of mass carry no type: the supplementary material gives a model for them without (R3). By Theorem~\ref{thm:hulls} this model is infeasible. We call this case the \emph{centred case}. The hull $\VN$ then remains valid; its model contains the rows of that of $\Vtwo$. Thus every type lies in the relative interior of an attained face of $\VN$. Like $\Vtwo$, $\VN$ needs $m_o\ne m_a+m_b$ for the rows of the bodies in mode F. Hence $\VN$ is valid at all mass points that lie off the hypersurface and have $m_o\ne m_a+m_b$ on every triangle $T$ with $\lab{E}(T)$. The point check of \S\ref{ss:checks} verifies both conditions at every application of $\VN$ in the proof.

Every certificate of \S\S\ref{sec:strata}--\ref{sec:curves} that holds relative to $\Vtwo$ also holds relative to $\Vtwo^{\mathrm w}$ (\S\ref{ss:checks}).

\subsection{Mass conditions}\label{ss:conditions}
We write $x_i=m_i^{-1/2}$. The conditions of \cite[\S8]{AK12} that we use, after renumbering, are the following \emph{labels}. Each is a polynomial condition on $m$. Appendix~\ref{app:labels} gives the polynomials.
\begin{itemize}
\item $\lab{E}(T)$, $T=\{a,b,c\}$: one of $x_a,x_b,x_c$ is the sum of the other two. This is (23) and (36) of \cite{AK12}.
\item $\lab{R}(ab;s)$: $m_s^2(m_a+m_b)^4=m_a^3m_b^3$. This is (20), (39) and (40).
\item $\lab{H}(D_1|D_2)$, for a partition $D_1=\{i,j\}$, $D_2=\{k,l\}$ of a four-body set $Q_s$: $m_im_j=m_km_l$. This is (29).
\item $\lab{EQ}(a,b)$: $m_a=m_b$; and $\lab{D6}(ab|cd)$: $m_a=m_b$ and $m_c=m_d$, for disjoint pairs. This is (32).
\end{itemize}

For diagram $3$ the condition (29) can be sharpened. For a pair $D=\{i,j\}\subset Q_s$ let $\mathrm{Tri}(D;s)$ say that one of $x_i^3$, $x_j^3$, $x_s^2/\sqrt{M_{Q_s}}$ is the sum of the other two, and put $\cD_3(s;D_1|D_2)=\lab{H}(D_1|D_2)\wedge\mathrm{Tri}(D_1;s)\wedge\mathrm{Tri}(D_2;s)$.

\begin{theorem}\label{thm:D3}
If a singular place of $C$ has type $\pc h(s;D_1|D_2)$, then $m\in\cD_3(s;D_1|D_2)$.
\end{theorem}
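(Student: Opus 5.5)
The plan is to work at a place $P$ with type $\pc h(s;D_1|D_2)$, say $s=5$, $D_1=\{1,3\}$, $D_2=\{2,4\}$, using the numbering of diagram~$3$. The sides $14,23$ carry $z$-strokes and the sides $12,34$ carry $w$-strokes, and all of them are maximal ($x=0$). The diagonals and the four pairs $k5$ have $x=-2$.

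The first step is to read off the exponents $\zeta,\omega$ of every pair from Lemma~\ref{lem:orders}. On a maximal $z$-side, (O3) gives $\zeta=-2$ and $\omega=2$; the $w$-sides are symmetric. By (O5) the diagonals then have $\zeta_{13}=\zeta_{24}=-2$ and $\omega_{13}=\omega_{24}=-2$. We have $x_{k5}=-2$, so body $5$ is far in both colours. Next we expand the four-body equations \eqref{eq:AK4} for the cluster $Q_5$ at leading order in $\vartheta$, as in Lemma~\ref{lem:place}. At exponent $-2$ the dominant terms in the $z$-equations are the $Z_{kl}$ of the $z$-strokes $14$ and $23$, together with the positions $z_k$. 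The dominant terms in the $w$-equations are the $W$ of $12$ and $34$.

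The leading coefficients then satisfy a reduced system. In the $z$-colour, each of the pairs $(1,4)$ and $(2,3)$ behaves like a two-body problem in a common far field. In the $w$-colour, the same holds for $(1,2)$ and $(3,4)$. This is the reduction of \cite[\S8.4]{AK12}, and eliminating it gives (29), namely $m_1m_3=m_2m_4$, i.e.\ $\lab{H}(D_1|D_2)$. We keep this as the first conjunct and then go to the next orders. Along a $z$-stroke $14$, the relations $z_{14}\approx\varepsilon^{-2}$ and $w_{14}\approx\varepsilon^{2}$ with $r_{14}\approx1$ make the leading parts of $p_{14}^2z_{14}w_{14}=1$ a relation between the constant terms $c_{14}=\lim z_{14}\varepsilon^2$ and $d_{14}=\lim w_{14}\varepsilon^{-2}$.

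The second step is to obtain the triangle conditions. We use the fact that $U$ is constant (Lemma~\ref{lem:U}). We also use $U=I$, i.e.\ $\sum m_km_lp_{kl}=M^{-1}\sum m_km_lr_{kl}^2$. At order $0$, the only finite nonzero contributions to $U$ come from the four sides with $p\approx1$, and the same four sides appear in $I$. Combining this identity with the leading-order equations expresses $p_{14},p_{23},p_{12},p_{34}$ in terms of the masses. These equations are the ones with $m_ip_{ij}^3$ summed against $M_{Q_5}$. In terms of $x_i=m_i^{-1/2}$, each $p_{ij}$ is a cube root expression.

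The diagonal $D_1=\{1,3\}$ closes a relation around the square. Going $1\to2\to3$ in $w$ and $1\to4\to3$ in $z$, and using that the diagonal exponent is $-2$ in both colours, the leading coefficients of $z_{13}$ and $w_{13}$ become linear combinations of the side data. Writing $z_{13}w_{13}=r_{13}^2$, with the far body $5$ contributing a factor $x_5^2/\sqrt{M_{Q_5}}$ through the centre-of-mass normalization, should give a relation $\pm x_1^3\pm x_3^3\pm x_5^2/\sqrt{M_{Q_5}}=0$. That is $\mathrm{Tri}(D_1;5)$, and likewise $\mathrm{Tri}(D_2;5)$ from the other diagonal.

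The main obstacle is this second step. The sign ambiguities of the square roots and the possible identical vanishing of leading coefficients must both be handled, as in Lemma~\ref{lem:infty}. The degenerate subcases also need care; the modified Roberts continuum shows that they occur. I expect to handle degeneracy by showing that the vanishing forces a stroke or a change of diagram, which contradicts the type. Since the statement defers the details to an appendix, the bulk of the work is this careful second-order expansion.
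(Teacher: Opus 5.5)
Your first step is sound: the exponents are right, and $\lab{H}(D_1|D_2)$ is indeed the condition that appears at leading order. The gap is in the second step, where nothing produces an equation on the masses.

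After relabelling to $s=5$, $D_1=\{1,3\}$, $D_2=\{2,4\}$, the leading system can be solved explicitly at \emph{every} positive mass vector with $m_1m_3=m_2m_4$, and all signs of the square roots occur. Closing the square along a diagonal and writing $z_{13}w_{13}=r_{13}^2$ only computes the leading coefficient $q_{13}$ of $p_{13}$. One finds $q_{13}^2=m_3/M_{Q_5}$, and likewise $q_{15}^2=1$, $q_{35}^2=m_3/m_1$; this imposes no constraint. So the conditions $\mathrm{Tri}$ are not visible in the leading coefficients. They are a solvability condition at a higher order, and your ``should give a relation'' is exactly where the argument is missing.

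The paper's mechanism is as follows. The Jacobian $J_0$ of the leading system has rank $16$ at every leading solution. Its left kernel is spanned by a combination $\lambda_Z$ of the rows $Z_k$ and a combination $\lambda_W$ of the rows $W_k$. Both vanish on the side rows, so $\lambda\cdot G_2=0$, and the first obstruction $\lambda_Z\cdot G_3(y_0)$ is a nonzero multiple of
\[
m_1m_3q_{13}+m_2m_4q_{24}+m_1m_5q_{15}+m_2m_5q_{25}+m_3m_5q_{35}+m_4m_5q_{45}.
\]
This is the leading coefficient of the potential of the six pairs at distance $\approx\varepsilon^{-2}$. If it were nonzero, comparing coefficients in $G(\tau,y(\tau))\equiv0$ through order $\tau^3$, with $J_0$ invertible on the rows other than $Z_4$ and $W_2$, would give a contradiction. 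Your remark that $U$ is constant points towards such a vanishing cross potential (compare Theorem~\ref{thm:order3}). But you use it only at order $0$, where it says nothing about the far pairs.

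The conclusion also works differently from what you describe. For positive masses $q_{13},q_{15},q_{35}$ are real and $q_{24},q_{25},q_{45}$ are purely imaginary. So this single complex equation splits into two signed sums of three positive numbers, one for each diagonal together with body $5$. Both $\mathrm{Tri}(13;5)$ and $\mathrm{Tri}(24;5)$ come from it, not from two separate closing relations. A signed sum of three positive numbers vanishes for some signs if and only if one of them is the sum of the other two. Dividing $m_1m_3^{3/2}M_{Q_5}^{-1/2}$, $m_1m_5$, $m_3^{3/2}m_5m_1^{-1/2}$ by $m_1m_3^{3/2}m_5$ gives $x_5^2/\sqrt{M_{Q_5}}$, $x_3^3$, $x_1^3$. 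So the term $x_5^2/\sqrt{M_{Q_5}}$ comes from the diagonal $13$ itself, not from body $5$ through the centre of mass.

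Your order-$0$ use of $U=I$ is also wrong as stated. In $I=M^{-1}\sum m_km_lr_{kl}^2$ the terms of the far pairs are of size $\varepsilon^{-4}$ and must cancel among themselves, so the sides are not the relevant terms there. The side values $k_e^3=1/(m_k+m_l)$ come directly from the two-body leading rows. Finally, the degeneracies you worry about do not arise here: all leading coordinates are nonzero and $J_0$ has full rank at every leading solution.
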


In the proof, in Appendix~\ref{app:D3}, we write the leading coefficients at such a place explicitly on $\lab{H}(D_1|D_2)$, show that the next-order equations have a Jacobian of full rank, and compute the first obstruction as a single number that vanishes for some leading solution exactly when both conditions $\mathrm{Tri}$ hold. For each labelling the positive part of $\cD_3(s;D_1|D_2)$ lies on finitely many curves in the projective space of masses. From here on, in label sets and label tests, the label $\lab{H}(D_1|D_2)$ stands for $\cD_3(s;D_1|D_2)$.

\subsection{Diagram 7}\label{ss:d7}
We consider a place with diagram $7$, with $z$-wing $A=\{a,b,s\}$, $w$-wing $B=\{c,d,s\}$ and type $\pc v_7(a',b')$. We write $(a',b')$ for the parameters, to keep $a,b$ for bodies. By Lemma~\ref{lem:orders}, on the $A$ pairs $\zeta=3a'-2$ and $\omega=2-a'$, on the $B$ pairs $\zeta=2-b'$ and $\omega=3b'-2$, and on the four cross pairs $\zeta=3a'-2$ and $\omega=3b'-2$. Let $\mathrm{LA}$ be the line $4a'+3b'=4$, on which $e(W_{ab})=e(w_{cd})$, and $\mathrm{LB}$ the line $3a'+4b'=4$. Write (35)$_A$ for the condition $W_{ab}/m_s\sim W_{bs}/m_a\sim W_{sa}/m_b$ of \cite[(35)]{AK12} on the wing $A$, put $A_w=m_aW_{as}+m_bW_{bs}$ and $h=-A_w/(m_a+m_b)$ (the quantity $h$ of \cite[\S8.7.2]{AK12}, in our sign convention), and define $g$ in the same way for the wing $B$ in the other colour. We use the following facts from \cite[\S8.7]{AK12} and the symmetric ones for $B$:
\begin{enumerate}[label=\textup{(\alph*)}]
\item if (35)$_A$ holds, then $\lab{E}(A)$;
\item if (35)$_A$ fails, then $h\approx W_{ab}$ and $w_s\sim-m_sh$;
\item by the centre of mass $w_s\preceq w_{cd}$, so that $h\preceq w_{cd}$ when (35)$_A$ fails;
\item if moreover $h\prec w_{cd}$, then $\lab{R}(cd;s)$, by the reduction of \cite[pp.~577--578]{AK12} to the system of \cite[\S5.4]{AK12};
\item $h\approx w_{cd}$ and $g\approx z_{ab}$ do not hold together.
\end{enumerate}

\begin{lemma}\label{lem:X}
If \textup{(35)}$_A$ holds and $4a'+3b'\le4$, then $\lab{R}(cd;s)$. Symmetrically, if \textup{(35)}$_B$ holds and $3a'+4b'\le4$, then $\lab{R}(ab;s)$.
\end{lemma}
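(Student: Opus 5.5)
The plan is to reduce to fact (d) of \S\ref{ss:d7}: show that when (35)$_A$ holds and $4a'+3b'\le4$, the quantity $h$ is of strictly smaller order than $w_{cd}$. Then I will check that the reduction of \cite[pp.~577--578]{AK12} to the system of \cite[\S5.4]{AK12} goes through with this hypothesis alone, whether or not (35)$_A$ fails.

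\emph{Step 1: cancellation in $A_w$.} By (35)$_A$ we have $W_{bs}\sim (m_a/m_s)W_{ab}$ and $W_{sa}\sim (m_b/m_s)W_{ab}$. Hence $W_{as}=-W_{sa}\sim-(m_b/m_s)W_{ab}$. Substituting into $A_w=m_aW_{as}+m_bW_{bs}$, the leading terms are $-\tfrac{m_am_b}{m_s}W_{ab}+\tfrac{m_am_b}{m_s}W_{ab}=0$. So $A_w\prec W_{ab}$, and therefore $h\prec W_{ab}$. If $h$ vanishes identically along $C$, its order is $+\infty$ and the conclusion below is only easier.

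\emph{Step 2: comparing exponents.} On the $A$ pairs, Lemma~\ref{lem:orders} gives $\zeta=3a'-2$ and $\omega=2-a'$. Hence
\[
e(W_{ab})=-\tfrac12(3\zeta+\omega)=-\tfrac12(8a'-4)=2-4a'.
\]
On the $B$ pairs, $e(w_{cd})=\omega=3b'-2$. The inequality $4a'+3b'\le4$ is exactly $2-4a'\ge3b'-2$, that is, $W_{ab}\preceq w_{cd}$; equality holds on the line $\mathrm{LA}$. Combining this with Step~1 gives $h\prec W_{ab}\preceq w_{cd}$, so $h\prec w_{cd}$.

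\emph{Step 3: applying the reduction.} Fact (d) is stated under the failure of (35)$_A$, so the main obstacle is to verify that the reduction of \cite[pp.~577--578]{AK12} does not use that failure. I would reread the reduction and isolate what it needs. It writes the $w$-equations of the bodies $c,d,s$ of the wing $B$. There the influence of the wing $A$ enters only through $A_w$, i.e.\ through $h$, and through $w_s$. It then needs these contributions to be $\prec w_{cd}$, so that the leading part is the three-body system of \cite[\S5.4]{AK12}, whose solvability gives $\lab{R}(cd;s)$.

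For $h$ this is Step~2. For $w_s$, the centre of mass gives $w_s\preceq w_{cd}$, as in (c). I would check that the reduction needs no more than this, or else that, under (35)$_A$, $w_s$ is again controlled by $h$ and by terms of the wing $B$, via the second equation of \eqref{eq:AK4} for $s$ split into its $A$ and $B$ parts. If the reduction turns out to use $w_s\sim-m_sh$ from (b), I would replace that step by the centre-of-mass identity $m_aw_a+m_bw_b+m_sw_s+m_cw_c+m_dw_d=0$ together with the exponents of the cross pairs, $\omega=3b'-2$. This re-derives the leading equation of the wing $B$ directly.

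The symmetric statement follows by exchanging the colours and the wings, with $\mathrm{LB}$ in place of $\mathrm{LA}$.
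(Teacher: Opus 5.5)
Your Steps 1 and 2 are correct and coincide with part of the paper's proof: under (35)$_A$ the two terms of $A_w$ cancel at leading order, so $e(A_w)>2-4a'\ge3b'-2$ on or below $\mathrm{LA}$. The gap is in Step 3.

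The reduction of \cite[pp.~577--578]{AK12} does not take $h\prec w_{cd}$ as its input. It uses $w_s\prec w_{cd}$, that is, $m_cw_{sc}+m_dw_{sd}\prec w_{cd}$: body $s$ sits at the $w$-centre of mass of $\{c,d,s\}$ at the scale $w_{cd}$. Fact (d) passes from $h$ to $w_s$ only through $w_s\sim-m_sh$ of fact (b), and (b) needs (35)$_A$ to fail. Here (35)$_A$ holds, so the route through (d) is closed, as you suspect. None of your fallbacks reopens it:
\begin{itemize}
\item The non-strict bound $w_s\preceq w_{cd}$ from (c) is not enough. The strict relation is exactly what makes the leading system that of \cite[\S5.4]{AK12}.
\item The $w$-equation of $s$ is dominated by the two $w$-stroke terms $m_cW_{sc}+m_dW_{sd}$, of exponent $-2$. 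Its leading content is their cancellation, so it gives no control of $w_s$ at exponent $3b'-2$ without handling those terms to much higher order.
\item The centre-of-mass identity alone only reproduces $w_s\preceq w_{cd}$.
\end{itemize}
So your proposal does not prove the lemma.

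The missing idea is to combine the $w$-equations of $a$ and $b$ in \eqref{eq:AK4}. Put $C_w=\sum_{k\in\{a,b\},\,l\in\{c,d\}}m_km_lW_{kl}$. Using $w_a=w_s+w_{as}$ and $w_b=w_s+w_{bs}$, you get $(m_a+m_b)\,w_s=m_sA_w+C_w-m_aw_{as}-m_bw_{bs}$. Then bound each term against $e(w_{cd})=3b'-2$:
\begin{itemize}
\item $e(A_w)>3b'-2$, by your Steps 1--2.
\item The cross terms have $e(C_w)\ge4-\frac92a'-\frac32b'$, which exceeds $3b'-2$ because $a'+b'<\frac43$ on or below $\mathrm{LA}$ (use $a'>0$).
\item $e(w_{as})=e(w_{bs})=2-a'>3b'-2$, since $a',b'<1$.
\end{itemize}
This gives $w_s\prec w_{cd}$. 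The centre of mass $Mw_s=m_cw_{sc}+m_dw_{sd}+m_aw_{sa}+m_bw_{sb}$ then yields $m_cw_{sc}+m_dw_{sd}\prec w_{cd}$. Together with the equations of $c$ and $d$ at exponent $-2$ and the closure of the triangle $cds$, this is the system whose elimination gives $\lab{R}(cd;s)$ times a factor positive on positive masses.
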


\begin{proof}
We put $C_w=\sum_{k\in\{a,b\},\,l\in\{c,d\}}m_km_lW_{kl}$. From the equations $w_k=\sum_{l\ne k}m_lW_{kl}$ of $a$ and $b$ in \eqref{eq:AK4}, $m_aw_a+m_bw_b=m_sA_w+C_w$. With $w_a=w_s+w_{as}$ and $w_b=w_s+w_{bs}$ this becomes
\[
(m_a+m_b)\,w_s=m_sA_w+C_w-m_aw_{as}-m_bw_{bs}.
\]
We compare each term with $w_{cd}$, of exponent $3b'-2$. The three pairs of $A$ have the same $(\zeta,\omega)$. So $W_{as}$, $W_{bs}$ and $W_{ab}$ have the same exponent $2-4a'$. Under (35)$_A$ the two terms of $A_w$ cancel at leading order and $e(A_w)>2-4a'\ge3b'-2$ on or below $\mathrm{LA}$. The cross terms have $e(C_w)\ge4-\frac92a'-\frac32b'$, which exceeds $3b'-2$ because $a'+b'<\frac43$ on or below $\mathrm{LA}$. And $e(w_{as})=2-a'>3b'-2$ as $a',b'<1$. Hence $w_s\prec w_{cd}$. By the centre of mass, $M w_s=m_cw_{sc}+m_dw_{sd}+m_aw_{sa}+m_bw_{sb}$, which gives $m_cw_{sc}+m_dw_{sd}\prec w_{cd}$: the body $s$ is at the $w$-centre of mass of $\{c,d,s\}$ at the scale $w_{cd}$. The reduction of \cite[pp.~577--578]{AK12} uses exactly this relation, the equations of $c$ and $d$ at exponent $-2$ (in which only the two stroke terms survive, because $w_k-w_G$ has exponent at least $3b'-2>-2$ and the cross terms have exponent $4-\frac92a'-\frac32b'>-2$), the closure of the triangle $cds$ in both colours with $p^2zw=1$ on its pairs, and the positivity of $m_c,m_d,m_s$. Eliminating the leading coefficients from this system, we obtain $\lab{R}(cd;s)$ times a factor that is positive on $(\R_{>0})^3$. Hence $\lab{R}(cd;s)$.
\end{proof}

Strictly below $\mathrm{LA}$ the hypothesis (35)$_A$ is not needed, since $e(A_w)\ge2-4a'>3b'-2$ holds term by term. Combining Lemma~\ref{lem:X} with the facts quoted above, we obtain the \emph{wing rules}:
\begin{enumerate}[label=\textup{(\roman*)}]
\item strictly above $\mathrm{LA}$, $\lab{E}(A)$;
\item on $\mathrm{LA}$, either $\lab{E}(A)\wedge\lab{R}(cd;s)$, or $h\approx w_{cd}$ and (35)$_A$ fails;
\item strictly below $\mathrm{LA}$, $\lab{R}(cd;s)$.
\end{enumerate} With $\mathrm{RA}=\lab{R}(ab;s)$ and $\mathrm{RB}=\lab{R}(cd;s)$ this gives Table~\ref{tab:d7}, which lists, for each cell of the arrangement of $\mathrm{LA}$, $\mathrm{LB}$ and the line $a'+b'=\frac43$ in the square $(0,1)^2\cap\{a'+b'\le\frac43\}$, the minimal sets of labels under which a type in the cell can occur. Without Lemma~\ref{lem:X} the conditions of \cite[\S8.7]{AK12} allow, in $22$ (condition set, cell) cases, types excluded by Table~\ref{tab:d7}, for instance $\pc v_7(\frac{11}{28},\frac{11}{28})$ and the point $(\frac47,\frac47)=\mathrm{LA}\cap\mathrm{LB}$.

\begin{table}[ht]
\centering
\small
\begin{tabular}{@{}ll@{}}
\toprule
region & minimal label sets\\
\midrule
above $\mathrm{LA}$, above $\mathrm{LB}$ (including the edge $a'+b'=\frac43$) & $\lab{E}(A)\lab{E}(B)$\\
above $\mathrm{LA}$, on $\mathrm{LB}$ & $\lab{E}(A)$\\
above $\mathrm{LA}$, below $\mathrm{LB}$ & $\lab{E}(A)\,\mathrm{RA}$\\
on $\mathrm{LA}$, above $\mathrm{LB}$ & $\lab{E}(B)$\\
$\mathrm{LA}\cap\mathrm{LB}$ & $\lab{E}(A)\,\mathrm{RB}$ or $\lab{E}(B)\,\mathrm{RA}$\\
on $\mathrm{LA}$, below $\mathrm{LB}$ & $\mathrm{RA}$\\
below $\mathrm{LA}$, above $\mathrm{LB}$ & $\lab{E}(B)\,\mathrm{RB}$\\
below $\mathrm{LA}$, on $\mathrm{LB}$ & $\mathrm{RB}$\\
below $\mathrm{LA}$, below $\mathrm{LB}$ & $\mathrm{RA}\,\mathrm{RB}$\\
\bottomrule
\end{tabular}
\caption{Diagram $7$ with wings $A|B$: the label sets under which a type $\pc v_7(a',b')$ in each region can occur.}\label{tab:d7}
\end{table}

\subsection{Pieces}\label{ss:pieces}
\begin{definition}\label{def:piece}
A \emph{piece} is a polytope $\Pi\subset\R^{10}$ (a point, a segment or a polygon, given by its vertices) together with a set of its faces, the \emph{attained} faces, closed under passing to larger faces. Its \emph{attained set} is the union of the relative interiors of its attained faces. For a set $S$ of labels, $\mathrm{pieces}(S)$ is the list of pieces in Table~\ref{tab:pieces} whose enabling condition is implied by $S$, together with the attained faces of the hull $\Vtwo^{\mathrm w}$ of \S\ref{ss:d5}, or of another hull of \S\ref{ss:d5} where this is stated, embedded for each labelling of diagram~$5$ with $\lab{E}(T)\in S$.
\end{definition}

\begin{table}[ht]
\centering
\small
\begin{tabular}{@{}llll@{}}
\toprule
piece & vertices & attained & enabled by\\
\midrule
$\pc{d1}[T]$ & $\pc a_1(T)$, $\pc b_1(T)$ & open segment & $\lab{E}(T)$\\
$\pc{d4}[T]$ & $\pc b_1(T)$ & point & $\lab{E}(T)$\\
$\pc{d2}[T]$ & $\pc a_1(T)$ & point & always\\
$\pc{d3}(s;D_1|D_2)$ & $\pc h(s;D_1|D_2)$ & point & $\lab{H}(D_1|D_2)$, read as $\cD_3(s;D_1|D_2)$\\
$\pc{d6}(A|B)$ & $\pc v_7(0,0)$ & point & $\lab{D6}(ab|cd)$\\
$\pc{d7}(A|B)$ & cells of Table~\ref{tab:d7} & see Table~\ref{tab:d7} & $\lab{E}(A)$, $\lab{E}(B)$, $\mathrm{RA}$, $\mathrm{RB}$\\
$\pc{d8}(A|B)$ & $\pc v_7(0,0)$, $\pc v_7(1,0)$ & open segment & $\mathrm{RA}$\\
$\pc{d9}(T;ax,by)$ & $\pc n(T;ax,by)$ & point & always\\
$\pc{d10}(A|B)$ & $\pc u_A(0)$, $\pc u_A(\frac13)$ & $(\pc u_A(0),\pc u_A(\frac13)]$ & $\lab{E}(A)$\\
$\pc{d11}(A|B)$ & $\pc u_A(0)$ & point & $\lab{E}(A)\vee\mathrm{RA}$\\
$\pc{d12}(A|B)$ & $\pc u_A(0)$ & point & $\mathrm{RA}$\\
$\pc{d14a}(j)$ & $\pc u_0(j)$ & point & always \\
$\pc{d14b}(j,k)$ & $\pc b_k(j)$, $\pc u_0(j)$ & open segment & always\\
$\pc{d15}(j,k)$ & $\pc b_k(j)$ & point & always\\
\bottomrule
\end{tabular}
\caption{The pieces, for each triple $T$, centre $s$ with wings $A=\{a,b,s\}$, $B=\{c,d,s\}$ (and the same with $A$, $B$ exchanged in the rows $\pc{d8}$--$\pc{d12}$), triangle $T$ with outer edges $ax$, $by$, and $j\ne k$.}\label{tab:pieces}
\end{table}

Since attained sets grow with $S$, a statement proved for $\mathrm{pieces}(S)$ holds for every subset of $S$. For $m\in(\R_{>0})^5$ let $S(m)$ be the set of labels that hold at $m$, with $\lab{D6}(ab|cd)$ added when $m_a=m_b$ and $m_c=m_d$.

\begin{theorem}\label{thm:types}
Let $m\in(\R_{>0})^5$, let $C\subseteq Y_m$ be an irreducible curve on which some $r_{kl}^2$ is not constant and $U\not\equiv0$, and let $P$ be a singular place of $C$ with diagram $d$ and labelling $\lambda$. Then $x(P)$ lies in the attained set of a piece of $\mathrm{pieces}(S(m))$ of diagram $d$ and labelling $\lambda$. For $d=5$ it lies in the relative interior of an attained face of $\Vtwo^{\mathrm w}$ with that labelling, and of $\Vtwo$ if $m_o\ne m_a+m_b$ for every $o$ in the triangle $T=\{o,a,b\}$ of the place.
\end{theorem}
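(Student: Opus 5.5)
The proof will be a case-by-case assembly: for each diagram $d$ we combine the type computed in Proposition~\ref{prop:types} with the mass conditions forced at a place of that diagram, and then compare the result with the enabling conditions and attained sets of Table~\ref{tab:pieces}. Two points have to be checked in every case: that the type lies in the attained set (the open segment, the point, or the prescribed half-open segment), and that the labels required by the enabling condition hold at $m$, so that they belong to $S(m)$. Since $P$ is singular, the diagram of $P$ is well defined by Lemma~\ref{lem:place}. By input (i) of \S\ref{ss:akinputs} it is one of the sixteen diagrams, and diagram~$16$ is excluded by Lemma~\ref{lem:cut}. Relabelling the bodies permutes both the coordinates and the labels consistently, so it suffices to treat the numbering of \cite[Fig.~11]{AK12}.

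The diagrams whose pieces are always enabled are settled by Proposition~\ref{prop:types} alone. For diagram~$2$ the type is $\pc a_1(T)$ and for diagram~$9$ it is $\pc n$. For diagram~$14$ the type is $\pc u_0(5)$ or $\sigma_k(5)(t)$ with $0<t<1$, and these lie in the attained sets of $\pc{d14a}$ and of the open segment $\pc{d14b}$. For diagram~$15$ the type is $\pc b_k(5)$, the piece $\pc{d15}$.

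The remaining diagrams need their mass conditions.
\begin{itemize}
\item For diagrams $1$ and $4$, \cite[(23)]{AK12} gives $\lab{E}(T)$. Lemma~\ref{lem:S1} places the type in the open segment, respectively at the point.
\item For diagram~$3$, Theorem~\ref{thm:D3} gives $\cD_3$.
\item For diagram~$6$, (32) gives $\lab{D6}$. This label belongs to $S(m)$ by the explicit convention adopted for $S(m)$.
\item For diagram~$8$, (40) gives $\mathrm{RA}$, and the range $0<a<1$ is the open segment.
\item For diagram~$10$, (36) gives $\lab{E}(A)$, and the range $0<c\le\frac13$ is the half-open segment $(\pc u_A(0),\pc u_A(\frac13)]$.
\item For diagrams $11$ and $12$, the relevant conditions among (36) and (39)/(40) give $\lab{E}(A)\vee\mathrm{RA}$, respectively $\mathrm{RA}$.
\item For diagram~$13$, I expect \S8.9.1--8.9.2 to place the type at $\pc u_A(0)$ under one of these same enabling conditions. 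I would check against Table~\ref{tab:pieces} that it is covered by $\pc{d11}$ or $\pc{d12}$.
\end{itemize}

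For diagram~$7$ we use the wing rules derived from Lemma~\ref{lem:X} together with facts (a)--(e) of \S\ref{ss:d7}. We locate the type $(a',b')$ relative to the lines $\mathrm{LA}$, $\mathrm{LB}$ and read off the required label set. On $\mathrm{LA}$, in the alternative where $h\approx w_{cd}$, fact (e) forbids $g\approx z_{ab}$. Hence if the point is also on $\mathrm{LB}$ the other wing falls into its first alternative, which gives the mixed entries of Table~\ref{tab:d7}. Running through the nine cells reproduces that table, and this is the content of the $\pc{d7}$ row of Table~\ref{tab:pieces}.

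For diagram~$5$ the two components of the triangle $T$ and the ordering of the outer bodies give $\lab{E}(T)$, by (23)/(36). Lemma~\ref{lem:nearcluster} puts each outer body in a mode, and the orders at $P$ satisfy the model of \S\ref{ss:d5}: the weak model at all positive masses, and the model with the $\sum_t m_tZ_{tX}$ equalities when $m_o\ne m_a+m_b$. Lemma~\ref{lem:infty} handles quantities that vanish identically. Theorem~\ref{thm:hulls} then places the projection in $\Vtwo^{\mathrm w}$, respectively $\Vtwo$. The argument given in \S\ref{ss:d5} shows that the face containing the projection in its relative interior is attained, because all five coordinates are less than $1$.

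I expect the main obstacle to be bookkeeping rather than any single estimate. One must verify that every mass condition quoted from \cite[\S8]{AK12} is stated for exactly the labelling and colour exchange used in Table~\ref{tab:pieces}, including the exchange of $A$ and $B$ in rows $\pc{d8}$--$\pc{d12}$. One must also check that the boundary cases of each range are covered: for instance $c=\frac13$ in diagram~$10$, and in diagram~$7$ the edge $a'+b'=\frac43$, which is included in the first cell of Table~\ref{tab:d7}.
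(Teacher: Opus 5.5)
Your proposal follows the paper's proof: a row-by-row comparison of Table~\ref{tab:types} with the enabling conditions and attained sets of Table~\ref{tab:pieces}, using the mass conditions of \cite[\S8]{AK12} (sharpened by Theorem~\ref{thm:D3} for diagram~$3$), the wing rules for diagram~$7$, \S\ref{ss:d5} with Lemma~\ref{lem:infty} for diagram~$5$, and equivariance under relabelling. One correction: for diagram~$13$ the mass condition is (40) of \cite{AK12}, which gives $\mathrm{RA}$ and so places the type $\pc u_A(0)$ in the piece $\pc{d12}(A|B)$, as for diagrams~$8$, $11$ and~$12$; \S\S8.9.1--8.9.2 of \cite{AK12} concern diagram~$9$ and supply no label.
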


\begin{proof}
By Lemma~\ref{lem:place} the place gives singular sequences with diagram $d$. Hence the conditions of \cite[\S8]{AK12} hold at $m$ for the labelling $\lambda$. These are (23) for diagrams $1$, $4$ and $5$, (29) for $3$, sharpened by Theorem~\ref{thm:D3}, (32) for $6$, the conditions of \S8.7 for $7$, (36) for $10$, and (40) for $8$ and $11$--$13$. By Proposition~\ref{prop:types}, Lemmas~\ref{lem:S1}, \ref{lem:zwK4} and \ref{lem:cut}, and Table~\ref{tab:d7} for diagram $7$, the type lies in the set listed in Table~\ref{tab:types}. This set lies in the attained set of the corresponding piece of Table~\ref{tab:pieces} once its condition holds. Diagram $5$ is treated in \S\ref{ss:d5}, with Lemma~\ref{lem:infty}. Renaming the bodies maps singular places, types, label sets and pieces equivariantly, since all of them are defined through labelled sets of bodies. So it suffices to consider the numbering of \cite[Fig.~11]{AK12}. There the type set of each diagram other than $5$ and $7$ is the attained set of its pieces (compare Tables~\ref{tab:types} and~\ref{tab:pieces} row by row). For diagram~$7$ the attained set of the pieces $\pc{d7}(A|B)$ of $\mathrm{pieces}(S)$ is the set of cells allowed for $S$ by the wing rules, which is Table~\ref{tab:d7}.
\end{proof}

\subsection{Clusters of zero energy}\label{ss:Kgating}
The pieces $\pc{d14a}$, $\pc{d14b}$ and $\pc{d15}$ carry no mass condition from \cite{AK12}. They need a four-body configuration of a special kind. For four bodies with masses $m_1,\dots,m_4$ and $z,w\in\C^4$, $p_{il}=p_{li}$, put
\[
F^z_i=\sum_{l\ne i}m_l\,z_{il}\,p_{il}^3,\qquad F^w_i=\sum_{l\ne i}m_l\,w_{il}\,p_{il}^3\qquad(i=1,\dots,4).
\]
\begin{itemize}
\item $\mathrm{K14}(m)$ is the system $F^z=F^w=0$, $p_{il}^2z_{il}w_{il}=1$ for all pairs, with all $z_{il},w_{il}\ne0$:
\begin{equation}\label{eq:K14}
\textstyle\sum_{l\ne i}m_lz_{il}p_{il}^3=\sum_{l\ne i}m_lw_{il}p_{il}^3=0,\qquad p_{il}^2z_{il}w_{il}=1 .
\end{equation}
\item $\mathrm{K15}(m,k)$, for $k\in\{1,\dots,4\}$, is the system
\begin{equation}\label{eq:K15}
F^z_i=0,\qquad F^w_i=c+d\,[i=k],\qquad p_{il}^2z_{il}w_{il}=1,\qquad d\ne0,
\end{equation}
with all $z_{il},w_{il}\ne0$.
\end{itemize}
Summing $m_iF^z_i$ and $m_iF^w_i$ shows that solutions of K14 have $\sum m_iz_i$ and $\sum m_iw_i$ free, while solutions of K15 have $cM+dm_k=0$. Contracting $F^z$ with $w$ shows that $U=\sum_{i<l}m_im_lp_{il}=0$ for solutions of both systems: they are four-body configurations of zero potential energy, critical for the potential (K14), or for the potential plus a linear function of the position of body $k$ (K15). Both systems are invariant under translations, under $(z,w,p)\mapsto(\lambda z,\mu w,(\lambda\mu)^{-1/2}p)$ (with $c,d$ rescaled), under $p\mapsto-p$, and under relabelling with the masses. Whether they have solutions therefore depends only on the ratios of the masses and on $k$.

\begin{theorem}\label{thm:Kgating}
Let $P$ be a singular place of $C$, $j$ a body, $Q=Q_j$, $k\in Q$.
\begin{enumerate}[label=\textup{(\alph*)}]
\item If $x(P)\in\{\pc u_0(j)\}\cup\bigcup_{k'}\sigma_{k'}(j)\big((0,1)\big)$, in particular if it lies in the attained set of $\pc{d14a}(j)$ or of some $\pc{d14b}(j,k')$, then $\mathrm{K14}(m_Q)$ has a solution, formed by the leading coefficients of the cluster at $P$.
\item If $x(P)=\pc b_k(j)$, then $\mathrm{K15}(m_Q,k)$ has a solution, formed by the leading coefficients of the cluster at $P$.
\item In case \textup{(a)}, $P$ has diagram $14$ with isolated body $j$. In case \textup{(b)}, $P$ has diagram $15$ with isolated body $j$ and $w$-edge $kj$, up to exchanging the colours.
\end{enumerate}
\end{theorem}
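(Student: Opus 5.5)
We work at $P$ in the rotated parameter $\vartheta$ of Lemma~\ref{lem:place}, in which every function $f$ of the three kinds has the form $f=\vartheta^{d\,e(f)}y_f(\vartheta)$ with $y_f(0)\ne0$. In both cases (a) and (b) the types have $x_{kl}=1$ exactly on $\mathrm{tri}(Q_j)$ and $x_{ij}<1$ for $i\in Q$ (for $\sigma_{k'}(j)(t)$ and $\pc b_k(j)$ the pairs $ij$ have $x\le0$). So Lemma~\ref{lem:zwK4} applies. In case (a), the value $x(P)=\pc u_0(j)$ gives alternative (A) with $c=0$, and $x(P)=\sigma_{k'}(j)(t)$ with $0<t<1$ gives (B) with $a=1+t<2$. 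In case (b), $\pc b_k(j)=\sigma_k(j)(1)$ gives (B) with $a=2$, possibly after exchanging the colours. Lemma~\ref{lem:zwK4} then says which strokes meet $j$. Inside $Q$ all pairs are $zw$-edges with $\zeta=\omega=1$. By (O2) these carry both strokes, and no other pair does. Comparing with the list of diagrams in Table~\ref{tab:diagrams}, where diagrams $14$ and $15$ are the only ones with a $zw$-$K_4$ and respectively no stroke and a single $w$-stroke at the fifth body, we obtain (c).

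For (a) and (b) we take the leading coefficients. Let $\bar z_{il}=y_{z_{il}}(0)$, $\bar w_{il}$ and $\bar p_{il}$ for $i,l\in Q$. Since $z_{il}+z_{lm}+z_{mi}=0$ holds at exponent $1$, these are differences $\bar z_i-\bar z_l$ of leading coefficients, and likewise for $w$. The relation $p^2zw=1$ is homogeneous in the exponents, so it passes to $\bar p_{il}^2\bar z_{il}\bar w_{il}=1$, and all these coefficients are nonzero. For $i\in Q$ we take the equation $z_i=\sum_{l\ne i}m_lZ_{il}$ of \eqref{eq:AK4}. The terms $Z_{il}$ with $l\in Q$ have exponent $e(Z_{il})=-2$. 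The term $Z_{ij}$ has exponent $-2$ only along a $z$-stroke at $j$, and there is none. The left side has exponent $e(z_i)$. I would show that this exponent exceeds $-2$: the bodies of $Q$ are not $z$-circled. In case (A) this follows because $z_j$ has exponent $2\alpha$ and the $z_{ij}$ are of the same order, from the proof of Lemma~\ref{lem:zwK4}. In case (B) one uses $\alpha_i\ge\frac12$. Then the coefficient at exponent $-2$ gives $\sum_{l\in Q\setminus i}m_l\bar z_{il}\bar p_{il}^3=0$, that is, $F^z_i=0$. The same argument in $w$ gives $F^w_i=0$ in cases (A) and (B) with $a<2$, which proves (a).

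In case (b) the $w$-stroke $kj$ is maximal, so $W_{kj}$ contributes at exponent $-2$ only in the equation of $k$. The bodies $j$ and $k$ are not $w$-close. So $w_k$ or $w_j$ is circled, and the circled bodies in colour $w$ must be examined: by Rule~1b (O6) and the centre of mass, $w_i$ for $i\in Q$ are at exponent $-2$ and all equal at leading order, with common value $\bar c_0$. The leading equations then read $\bar w_i=\sum_{l\in Q}m_lW_{il}+[i=k]\,m_j\bar W_{kj}$, so $F^w_i=c+d[i=k]$ with $c=-\bar c_0$ (up to sign) and $d=\pm m_j\bar W_{kj}\ne0$. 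This is K15, which proves (b).

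The main obstacle is the bookkeeping of the left sides $z_i$ and $w_i$. One must show exactly when they reach exponent $-2$, namely never in case (a), but in colour $w$ in case (b), with a common leading value for $i\in Q$, and in colour $z$ not in case (b). The consistency condition $cM+dm_k=0$ then serves as a check.
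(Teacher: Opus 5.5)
Your proof is correct, and for the case analysis and part (c) it matches the paper. The difference is in how you dispose of the left side $z_i$ (resp.\ $w_i$) of the equation at exponent $-2$.

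You compute its exponent directly, from the centre of mass $Mz_j=-\sum_{i\in Q}m_iz_{ij}$ and the exponents $2\alpha_i$, $2\beta_i$ of Lemma~\ref{lem:zwK4}, and conclude that no body of $Q$ is circled except in colour $w$ when $a=2$. The paper never asks whether $z_i$ reaches exponent $-2$. It lets $c_z$ be the coefficient of $z_i$ at exponent $-2$, which is the same for all $i\in Q$ because $e(z_{il})=1$, so that $F^z_i(\hat z,\hat p)=c_z-m_j[Z_{ij}]_{-2}$. Multiplying by $m_i$ and summing over $Q$ cancels the left side by antisymmetry and gives $M_Qc_z=m_j\sum_im_i[Z_{ij}]_{-2}$. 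Hence $c_z=0$ as soon as no $z$-stroke meets $j$, and in case (b) the form $F^w_i=c_w+d\,[i=k]$ follows with no further work. This is the step you call the main obstacle; the paper's version needs only the stroke information of Lemma~\ref{lem:zwK4} and $M_Q\ne0$.

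Your route relies on the finer exponents, and in case (A) one statement needs correcting. The proof of Lemma~\ref{lem:zwK4} gives $e(z_j)=2\alpha$ only in the colour whose exponent is below $\frac12$. In general you only get $e(z_j)\ge\min_ie(z_{ij})=2\alpha\ge-1$, using $\alpha=c-\beta\ge-\frac12$; this is still enough.

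In return, your route gives more information. It shows which bodies are circled, and it identifies $c$ in case (b) as $m_j/M$ times the common leading coefficient of the $w_{ij}$. This is the value recorded later in Lemma~\ref{lem:d15lead}(a), and it also fixes your sign hedge: $c=\bar c_0$ and $d=-m_j[W_{kj}]_{-2}$.
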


\begin{proof}
Since every listed point has $x=1$ exactly on $\mathrm{tri}(Q_j)$, Lemma~\ref{lem:zwK4} applies. The point $\pc u_0(j)$ is case (A), $\sigma_{k'}(j)(t)$ with $0<t<1$ is case (B) with $a=1+t<2$, and $\pc b_k(j)$ is case (B) with $a=2$. This gives (c). Fix $i_0\in Q$ and, for $i,l\in Q$, let $\hat z_i$, $\hat w_i$ and $\hat p_{il}$ be the coefficients of $u_{ii_0}$, $v_{ii_0}$ at exponent $1$ and of $p_{il}$ at exponent $-1$ (in the expansion of Lemma~\ref{lem:place}). For $i\ne l$ in $Q$ these are nonzero and the row $D_{il}$ gives $\hat p_{il}^2\hat z_{il}\hat w_{il}=1$. The first equation of \eqref{eq:AK4} for $i\in Q$ is $z_i=\sum_{l\in Q\setminus i}m_lp_{il}^3z_{il}+m_jZ_{ij}$. Each term of the sum has exponent $-2$ with coefficient $m_l\hat p_{il}^3\hat z_{il}$. The coefficient $c_z$ of $z_i$ at exponent $-2$ does not depend on $i$, since $e(z_{il})=1$. So
\[
F^z_i(\hat z,\hat p)=c_z-m_j[Z_{ij}]_{-2}\qquad(i\in Q),
\]
where $[\,\cdot\,]_{-2}$ is the coefficient at exponent $-2$, nonzero if and only if $ij$ is a $z$-stroke. The same holds for $w$. When we multiply by $m_i$ and sum over $Q$, the left side cancels by antisymmetry. If no $z$-stroke meets $j$, this gives $M_Qc_z=0$ and hence $F^z(\hat z,\hat p)=0$, and likewise for $w$. In case (A) and in case (B) with $a<2$ no stroke meets $j$ and the leading coefficients solve K14$(m_Q)$. In case (B) with $a=2$, $F^z=0$ and $F^w_i=c_w+d[i=k]$ with $d=-m_j[W_{kj}]_{-2}\ne0$: this is K15$(m_Q,k)$.
\end{proof}

\begin{corollary}\label{cor:Kdrop}
If $\mathrm{K14}(m_{Q_j})$ has no solution, no singular place of $C$ has its type in the attained set of $\pc{d14a}(j)$ or of any $\pc{d14b}(j,k)$. If $\mathrm{K15}(m_{Q_j},k)$ has no solution, none has type $\pc b_k(j)$. These pieces may then be removed from $\mathrm{pieces}(S(m))$ in Theorem~\ref{thm:types}.
\end{corollary}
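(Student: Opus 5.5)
This is the contrapositive of Theorem~\ref{thm:Kgating}, read against the pieces of Table~\ref{tab:pieces}.

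\emph{First statement.} Suppose $\mathrm{K14}(m_{Q_j})$ has no solution, and let $P$ be a singular place of $C$. We read off the attained sets from Table~\ref{tab:pieces}:
\begin{itemize}
\item the attained set of $\pc{d14a}(j)$ is the point $\pc u_0(j)$;
\item the attained set of $\pc{d14b}(j,k)$ is the open segment between $\pc b_k(j)$ and $\pc u_0(j)$.
\end{itemize}
That open segment is exactly $\sigma_k(j)\big((0,1)\big)$, since $\sigma_k(j)(t)=(1-t)\pc u_0(j)+t\,\pc b_k(j)$. So if $x(P)$ lay in one of these attained sets, $x(P)$ would lie in $\{\pc u_0(j)\}\cup\bigcup_{k'}\sigma_{k'}(j)\big((0,1)\big)$. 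Theorem~\ref{thm:Kgating}(a) would then give a solution of $\mathrm{K14}(m_{Q_j})$, formed by the leading coefficients of the cluster at $P$. That contradicts the assumption.

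\emph{Second statement.} Suppose $\mathrm{K15}(m_{Q_j},k)$ has no solution. If $x(P)=\pc b_k(j)$, Theorem~\ref{thm:Kgating}(b) gives a solution of it, again a contradiction.

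\emph{Removal of the pieces.} Theorem~\ref{thm:types} places each $x(P)$ in the attained set of some piece of $\mathrm{pieces}(S(m))$. By the two statements, the pieces $\pc{d14a}(j)$, all $\pc{d14b}(j,k)$, and (when K15 has no solution) $\pc{d15}(j,k)$ contain no type. Deleting them from the list therefore keeps the conclusion of Theorem~\ref{thm:types} true.

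Two points need a check.
\begin{itemize}
\item For $\pc{d15}(j,k)$ the type is the vertex $\pc b_k(j)$. Membership in its attained set is exactly $x(P)=\pc b_k(j)$, the case covered by (b). The same vertex also bounds the segment of $\pc{d14b}(j,k)$, but the endpoint is not in that open segment, so the two cases do not overlap.
\item The pieces are indexed by labellings. The condition "$\mathrm{K14}(m_{Q_j})$ has no solution" refers to the specific masses of $Q_j$, and the theorem is applied to the place with the isolated body $j$. By part (c) of Theorem~\ref{thm:Kgating}, that body is the $j$ of the piece.
\end{itemize}

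There is no real obstacle here. The only care needed is to match the attained sets, including their open or closed endpoints, with the sets in the hypotheses of Theorem~\ref{thm:Kgating}.
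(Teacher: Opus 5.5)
Your proof is correct and is exactly the paper's argument: the corollary is the contrapositive of Theorem~\ref{thm:Kgating}(a) and~(b), with the attained sets of $\pc{d14a}(j)$, $\pc{d14b}(j,k)$ and $\pc{d15}(j,k)$ read off Table~\ref{tab:pieces}, and a piece whose attained set contains no type can be dropped without affecting the conclusion of Theorem~\ref{thm:types}. Your remark on labellings is harmless but not needed, since the hypotheses of Theorem~\ref{thm:Kgating}(a) and~(b) concern only the position of $x(P)$ in $\R^{10}$.
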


Systems of this kind, with the cluster masses as unknowns, have solutions only on proper algebraic subsets of the mass space. In Section~\ref{sec:energy} we will remove the pieces $\pc{d15}$ and $\pc{d14b}$ at every positive mass (Corollary~\ref{cor:nod15} and Theorem~\ref{thm:D14}). We will also remove the pieces of diagrams~$9$ and~$2$ at every positive mass (Theorem~\ref{thm:D9} and Proposition~\ref{prop:nod2}). In \S\ref{sec:points} we will show how to decide $\mathrm{K14}$ at a given mass point.

The point $\pc v(s;A|B)$, the type of $\pc{d6}(A|B)$, lies in the relative closure of several other pieces of diagrams $7$ and $8$. The next statement shows that removing $\pc{d6}(A|B)$ removes exactly that point.

\begin{theorem}\label{thm:d6AB}
If $x(P)=\lambda\,\pc v(s;A|B)$ for some $\lambda>0$, then $P$ has diagram $6$ with centre $s$ and wings $A$, $B$, and $\lambda=1$. No attained face of a piece other than $\pc{d6}(A|B)$, and no face of $\Vtwo$, meets the ray $\R_{>0}\,\pc v(s;A|B)$ in its relative interior.
\end{theorem}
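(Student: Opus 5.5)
We split the statement into two parts: a statement about places, and a finite check on the pieces and hulls.

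\emph{The type determines the diagram.} Let $x(P)=\lambda\,\pc v(s;A|B)$ with $\lambda>0$. The vector $\pc v(s;A|B)$ is $0$ on the six pairs of $\mathrm{tri}(A)\cup\mathrm{tri}(B)$ and $-2$ on the four cross pairs. So $x(P)$ is $0$ on the wing pairs and $-2\lambda$ on the cross pairs. By (O1), $x_{kl}\ge-2$, which gives $\lambda\le1$. No coordinate equals $1$, so by (O4) there are no $zw$-edges. This excludes diagrams $3$--$5$, $9$--$16$ and every type with entries equal to $1$.

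Next we compare with Table~\ref{tab:types} through the zero pattern. A type with six zeros arranged as two triangles sharing the vertex $s$, and with equal negative values on the four cross pairs, can come only from the family $\pc v_7(a',b')$ of diagrams $6$--$8$. Diagram $2$ has the wrong zero pattern, since $\pc a_1(T)$ is zero on a triangle and a disjoint pair. Diagram $1$ has entries $a>0$. Within the family, $\pc v_7(a',b')$ vanishes on the wings only if $a'=b'=0$. Then the cross value is $\frac32(0+0)-2=-2$, so $\lambda=1$. Among diagrams $6$, $7$, $8$, only diagram $6$ allows $a'=b'=0$: diagram $7$ requires $a',b'>0$ and diagram $8$ requires $a'>0$. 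Hence $P$ has diagram $6$. The labelling (centre $s$, wings $A$, $B$) is read off from which pairs carry the zeros.

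The one point we cannot get from the type table alone is the step from $x(P)$ to the diagram. Table~\ref{tab:types} is indexed by the diagram, so we must check that no diagram has a type set meeting this ray. For diagrams other than $5$ this is the row-by-row comparison above.

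\emph{The second claim.} This is a finite verification over the vertices listed in Table~\ref{tab:pieces} and the polytopes of Theorem~\ref{thm:hulls}. For each piece other than $\pc{d6}(A|B)$, and each attained face, we decide whether the ray meets its relative interior. The faces are segments or points, or faces of the explicit polygons and polytopes. Deciding this is a small linear feasibility problem: find $\lambda>0$ and positive convex weights on the face's vertices such that the combination equals $\lambda\,\pc v(s;A|B)$.

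The relevant pieces are those of diagrams $7$ and $8$. For the $\pc{d7}$ cells, the closure contains $\pc v_7(0,0)$ only as a boundary point, and the ray meets the family $\pc v_7$ only at $a'=b'=0$. For $\pc{d8}$ the segment is open at $\pc v_7(0,0)$. Pieces of diagrams $10$--$13$ have entries $1$ on a wing, and those of diagrams $1$--$4$, $9$, $14$, $15$ have the wrong support, so they can be discarded by the zero pattern.

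The main obstacle is diagram~$5$: the faces of $\Vtwo$ are given only in the five coordinates $(x_{At_0},x_{At},x_{Bt_0},x_{Bt},x_{AB})$, embedded with $x=1$ on $\mathrm{tri}(T)$. Every point of such a face has $x=1$ on three pairs. Any point of the ray has no coordinate equal to $1$, so no face of $\Vtwo$, attained or not, meets the ray. We would state this directly rather than rely on the computer check. The remaining cases would then be confirmed by exact rational linear programming, as in the checks of \S\ref{ss:checks}.
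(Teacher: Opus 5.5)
Your plan is sound, but one step is false as written, and it leaves a case unchecked. Diagram~$3$ has no $zw$-edge. Its strokes are the $z$-edges $14,23$ and the $w$-edges $12,34$ (Table~\ref{tab:diagrams}), and its type $\pc h(s';D_1|D_2)$ takes only the values $0$ and $-2$. So the absence of a coordinate equal to $1$ does not exclude it. Because you filed it under ``excluded by (O4)'', it is never compared with the ray.

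The diagrams that survive (O4) are $1$, $2$, $3$, $6$, $7$, $8$, so you need one more comparison. The type $\pc h(s';D_1|D_2)$ vanishes exactly on the four sides of a square, which gives four zero entries forming a $4$-cycle. By contrast, $\lambda\,\pc v(s;A|B)$ vanishes on the six pairs of $\mathrm{tri}(A)\cup\mathrm{tri}(B)$. Also make the labelling step explicit. If $\mathrm{tri}(A')\cup\mathrm{tri}(B')=\mathrm{tri}(A)\cup\mathrm{tri}(B)$, then $s$ is the only vertex of degree $4$ in this graph. Hence $s'=s$ and $\{A',B'\}=\{A,B\}$.

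With this repair your first part is a legitimate variant of the paper's. The paper notes that all entries of $\lambda\,\pc v(s;A|B)$ are $\le0$, so no distance tends to zero. It then uses the proof of \cite[Prop.~9]{AK12} to reduce at once to diagrams $2$, $3$, $6$, and separates these by their numbers of zero entries ($4$, $4$, $6$). You filter by (O4) and read the remaining rows off Table~\ref{tab:types}. This avoids Proposition~9 but relies on the derived types of diagrams $1$, $7$, $8$.

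In the second part you omit the pieces $\pc{d6}(A'|B')$ with $\{A',B'\}\ne\{A,B\}$. These are also pieces other than $\pc{d6}(A|B)$, though the same zero-pattern argument removes them. Your remark that the ray meets $\pc v_7$ only at $a'=b'=0$ also covers only the wings $A|B$.

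The paper's argument handles every labelling at once and needs no linear programming. On the relative interior of every attained face of a piece other than $\pc{d2}$, $\pc{d3}$, $\pc{d6}$, some coordinate is positive:
\begin{itemize}
\item for $\pc{d4}$, $\pc{d9}$ and $\pc{d10}$--$\pc{d15}$, a coordinate $1$ on a common $zw$-edge;
\item for $\pc{d1}$, coordinates in $(0,1)$ on $\mathrm{tri}(T)$;
\item for $\pc{d7}$ and $\pc{d8}$, coordinate sum $3(a'+b')>0$ on the wing pairs.
\end{itemize}
Every point of the ray has all coordinates $\le0$, so none of these faces meets it. Your treatment of the D5 hulls is the paper's.
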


\begin{proof}
Each entry of $\lambda\pc v(s;A|B)$ is at most $0$. No distance tends to zero at $P$. By the proof of \cite[Prop.~9]{AK12} only diagrams $2$, $3$ and $6$ have no distance tending to zero. Their types $\pc a_1(T)$, $\pc h(s';D_1|D_2)$, $\pc v(s';A'|B')$ have $4$, $4$ and $6$ zero entries, while $\lambda\pc v(s;A|B)$ has the six zero entries $\mathrm{tri}(A)\cup\mathrm{tri}(B)$. Hence $d=6$ and $\mathrm{tri}(A')\cup\mathrm{tri}(B')=\mathrm{tri}(A)\cup\mathrm{tri}(B)$. In this graph $s$ is the only vertex of degree $4$, which gives $s'=s$ and $\{A',B'\}=\{A,B\}$. The entries $-2$ give $\lambda=1$. For the second statement, recall that every point of the ray has all coordinates at most $0$. Let $F$ be an attained face of a piece other than $\pc{d2}$, $\pc{d3}$ and $\pc{d6}$. If the piece is one of $\pc{d4}$, $\pc{d9}$ and $\pc{d10}$--$\pc{d15}$, all vertices of $F$ have the coordinate $1$ on a common $zw$-edge of the diagram, and so do all points of $F$. On the open segment of $\pc{d1}[T]$ the coordinates on the pairs of $T$ lie in $(0,1)$. The attained points of the pieces $\pc{d7}$ and $\pc{d8}$ are points $\pc v_7(a',b')$ with $a',b'\ge0$ and $(a',b')\ne(0,0)$: the cells of Table~\ref{tab:d7} lie in the open square $(0,1)^2$ (the cell $\mathrm{LA}\cap\mathrm{LB}$ is the point $(\frac47,\frac47)$), and the attained set of $\pc{d8}$ is $\pc v_7((0,1)\times\{0\})$. There the sum of the coordinates on the pairs of $A$ and of $B$ is $3(a'+b')>0$. So the relative interior of $F$ has a positive coordinate and misses the ray. Among the points $\pc{d2}$, $\pc{d3}$ and $\pc{d6}$, only $\pc{d6}(A|B)$ lies on the ray, by the zero patterns above. Finally, no face of a D5 hull, attained or not, meets the ray, since its embedding has the coordinate $1$ on the three pairs of the $zw$-triangle $T$; in particular no face of $\Vtwo$ does.
\end{proof}

%% file: sec/s4_lex.tex
\section{Lexicographic separation}\label{sec:lex}

Let $m\in(\R_{>0})^5$, and let $C\subseteq Y_m$ be a nontrivial curve. By \eqref{eq:balx} the types of the singular places of $C$ satisfy $\sum_Pd_P\,x(P)=0$ with $d_P>0$, and at least one place is singular. By Theorem~\ref{thm:types} each type lies in the attained set of a piece of $\mathrm{pieces}(S(m))$. Thus $m$ is settled as soon as $0$ is not a positive combination of points of these attained sets.

\subsection{The separation lemma}\label{ss:lexlemma}

\begin{lemma}\label{lem:lex}
Let $G_1,\dots,G_M$ be finite subsets of $\Q^{10}$, let $l_1,\dots,l_r\in\Q^{10}$, and let $I_1\cup\dots\cup I_r=\{1,\dots,M\}$ be a partition such that for every $i$:
\begin{enumerate}[label=\textup{(\roman*)}]
\item $l_i\cdot v\ge0$ for every $v\in G_j$ and every $j\in I_i\cup I_{i+1}\cup\dots\cup I_r$;
\item for every $j\in I_i$ there is $v\in G_j$ with $l_i\cdot v>0$.
\end{enumerate}
Then $0$ is not a positive combination of $N\ge1$ points of $\bigcup_j\relint\conv(G_j)$.
\end{lemma}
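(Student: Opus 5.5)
Suppose, for contradiction, that $\sum_{n=1}^N c_n y_n=0$ with $N\ge1$, all $c_n>0$, and each $y_n\in\relint\conv(G_{j_n})$ for some index $j_n$. The plan is to run through the stages $i=1,\dots,r$ in order and show by induction that no $j_n$ lies in $I_1\cup\dots\cup I_i$. Since the $I_i$ cover $\{1,\dots,M\}$, after the last stage no index is left, which contradicts $N\ge1$.

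The key observation concerns a point $y$ in the relative interior of $\conv(G)$ for a finite set $G$. Such a point can be written as a convex combination of \emph{all} points of $G$ with strictly positive weights. This is the standard fact: take the barycentre $g$ of $G$, which lies in the relative interior; for small $\varepsilon>0$ the point $y+\varepsilon(y-g)$ still lies in $\conv(G)$ because $y$ is relatively interior; then $y=\frac{1}{1+\varepsilon}\bigl(y+\varepsilon(y-g)\bigr)+\frac{\varepsilon}{1+\varepsilon}g$, and every point of $G$ gets positive weight through $g$. Consequently, if $l\cdot v\ge0$ for all $v\in G$, then $l\cdot y\ge0$. If moreover $l\cdot v>0$ for some $v\in G$, then $l\cdot y>0$. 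I expect this to be the only step needing care. It is exactly where the relative interior, rather than the whole convex hull, is used.

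For the induction, assume all $j_n$ lie in $I_i\cup\dots\cup I_r$ (for $i=1$ this is vacuous). Apply $l_i$ to the relation $\sum_n c_n y_n=0$. By (i) and the observation, every term $c_n\,l_i\cdot y_n$ is $\ge0$. Their sum is $0$, so every term is $0$. If some $j_n$ were in $I_i$, condition (ii) together with the observation would give $l_i\cdot y_n>0$, a contradiction. So all $j_n$ lie in $I_{i+1}\cup\dots\cup I_r$.

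After stage $r$ the set $\{n\}$ must be empty, contradicting $N\ge1$. Hence $0$ is not such a positive combination. Note that this argument does not require the $G_j$ to be distinct or the $y_n$ to come from distinct $j$; repetitions are harmless.
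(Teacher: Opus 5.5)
Your proof is correct and is essentially the paper's argument. The paper takes the least $i$ with some $j(s)\in I_i$ and applies $l_i$ once, which is your stage-by-stage induction compressed into a single step. It also uses the same barycentre trick to write a relative-interior point as a convex combination of all points of $G$ with positive weights.
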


\begin{proof}
Suppose that $0=\sum_{s=1}^N\lambda_sx_s$ with $N\ge1$, $\lambda_s>0$ and $x_s\in\relint\conv(G_{j(s)})$, and let $i$ be the least index such that $j(s)\in I_i$ for some $s$. Each $x_s$ is a convex combination of points of $G_{j(s)}$ with $j(s)\in I_i\cup\dots\cup I_r$. So $l_i\cdot x_s\ge0$ by (i). A point $x$ of $\relint\conv(G)$ is a convex combination of all points of $G$ with positive weights: if $b$ is the barycentre of $G$ and $\delta>0$ is so small that $x+\delta(x-b)\in\conv(G)$, then $x=(1+\delta)^{-1}\big(x+\delta(x-b)\big)+\delta(1+\delta)^{-1}b$. Hence (ii) gives $l_i\cdot x_s>0$ whenever $j(s)\in I_i$. There is such an $s$. Therefore we obtain $0=l_i\cdot\sum_s\lambda_sx_s>0$, a contradiction.
\end{proof}

The functionals act in stages. The first is nonnegative on all sets and positive somewhere on each set of $I_1$. Hence no point of these sets can occur in a vanishing positive combination. The second does the same among the remaining sets, and so on. Listing more sets $G_j$ can only make the hypotheses harder to satisfy.

\subsection{Certificates}\label{ss:certs}

A \emph{D5 hull} is a finite union of polytopes in the five coordinates of \S\ref{ss:d5}, such as $\Vtwo^{\mathrm w}$. Its attained faces enter $\mathrm{pieces}(S)$ as in Definition~\ref{def:piece}.

\begin{definition}\label{def:cert}
Let $S$ be a set of labels and $V$ a D5 hull. Let $\cA_V(S)$ be the list of the attained faces of the pieces of $\mathrm{pieces}(S)$ (Definition~\ref{def:piece}), with the attained faces of $V$ in place of those of $\Vtwo^{\mathrm w}$. We record each face by the finite set of its vertices. A \emph{certificate} for $S$ relative to $V$ is a finite sequence $l_1,\dots,l_r\in\Z^{10}$ such that
\begin{enumerate}[label=\textup{(\alph*)}]
\item for each $i$, $l_i\cdot v\ge0$ for every vertex $v$ of every face of $\cA_V(S)$ on whose vertices $l_1,\dots,l_{i-1}$ all vanish, and
\item every face of $\cA_V(S)$ has a vertex $v$ with $l_i\cdot v>0$ for some $i$.
\end{enumerate}
A set $S$ is \emph{certified} relative to $V$ if it has a certificate relative to $V$.
\end{definition}

Let $I_i$ be the set of faces on whose vertices $l_1,\dots,l_{i-1}$ vanish and $l_i$ is positive at some vertex. By induction on $i$, (a) and (b) say that the sets $I_i$ partition $\cA_V(S)$ and satisfy (i) and (ii) of Lemma~\ref{lem:lex}. Since $\mathrm{pieces}(S)$ grows with $S$ and the attained faces of a piece do not depend on $S$, a certificate for $S$ is a certificate for every $S'$ with $\mathrm{pieces}(S')\subseteq\mathrm{pieces}(S)$, in particular for any subset of $S$.

\begin{definition}\label{def:validhull}
A D5 hull $V$ is \emph{valid} at $m\in(\R_{>0})^5$ if the conclusion of Theorem~\ref{thm:types} holds at $m$ with $V$ in place of $\Vtwo^{\mathrm w}$.
\end{definition}

\begin{proposition}\label{prop:cert}
Let $m\in(\R_{>0})^5$, let $V$ be a D5 hull valid at $m$, and let $S$ be a set of labels with $\mathrm{pieces}(S(m))\subseteq\mathrm{pieces}(S)$, for instance $S\supseteq S(m)$. If $S$ is certified relative to $V$, then $m$ is settled.
\end{proposition}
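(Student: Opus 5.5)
We argue by contradiction. Suppose $m$ is not settled, so $Y_m$ contains a nontrivial curve $C$. By \S\ref{ss:balancing} and \eqref{eq:balx}, $C$ has at least one singular place, and the types satisfy $\sum_P d_P\,x(P)=0$ with all $d_P>0$. We will show that this vanishing positive combination is impossible, by feeding the certificate into Lemma~\ref{lem:lex}.

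First we locate each type. Since $V$ is valid at $m$ (Definition~\ref{def:validhull}), the conclusion of Theorem~\ref{thm:types} holds with $V$ in place of $\Vtwo^{\mathrm w}$. So every $x(P)$ lies in the relative interior of an attained face of a piece of $\mathrm{pieces}(S(m))$, where for diagram~$5$ we use the attained faces of $V$. The attained set of a piece is by Definition~\ref{def:piece} the union of the relative interiors of its attained faces. By the hypothesis $\mathrm{pieces}(S(m))\subseteq\mathrm{pieces}(S)$, each such face belongs to the list $\cA_V(S)$ of Definition~\ref{def:cert}. We must check that the D5 faces embedded for labellings with $\lab{E}(T)\in S(m)$ are also present for $S$. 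This holds because $\mathrm{pieces}(S(m))\subseteq\mathrm{pieces}(S)$ includes those embedded faces, or directly when $S\supseteq S(m)$.

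Next we set up the partition. Let $G_1,\dots,G_M$ be the vertex sets of the faces in $\cA_V(S)$, so that each face is $\conv(G_j)$. Take $l_1,\dots,l_r$ to be the certificate. Let $I_i$ be the set of faces whose vertices are all annihilated by $l_1,\dots,l_{i-1}$ and at some vertex of which $l_i$ is positive. We show by induction on $i$ that the $I_i$ are disjoint, that condition (b) makes them cover all faces, and that condition (a) gives hypothesis (i) of Lemma~\ref{lem:lex}. For (i), take a face in $I_{i'}$ with $i'\ge i$: all of $l_1,\dots,l_{i-1}$ vanish on its vertices, so (a) applies to it and $l_i\ge0$ on its vertices. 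Hypothesis (ii) of Lemma~\ref{lem:lex} is the definition of $I_i$.

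Lemma~\ref{lem:lex} now says that $0$ is not a positive combination of $N\ge1$ points of $\bigcup_j\relint\conv(G_j)$. This contradicts the balancing relation, with $N$ the number of singular places, which is at least $1$. Hence $C$ does not exist and $m$ is settled.

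The only delicate step is the bookkeeping in the induction. A face outside $I_1\cup\dots\cup I_{i-1}$ is exactly one on whose vertices $l_1,\dots,l_{i-1}$ all vanish. The reason is that, by (a), once a face is in the set where all earlier functionals vanish, each later $l_i$ is $\ge0$ on its vertices. So either $l_i$ vanishes on all of them, or the face enters $I_i$. This is what makes the $I_i$ a partition. Condition (b) guarantees that every face eventually has some $l_i$ positive, so no face is left uncovered.
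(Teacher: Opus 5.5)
Your proof is correct and follows the paper's argument. It uses the balancing relation \eqref{eq:balx}, then Theorem~\ref{thm:types} together with the validity of $V$ to place each type in the relative interior of a face of $\cA_V(S)$, and finally Lemma~\ref{lem:lex} with the partition $(I_i)$ induced by the certificate. Your explicit induction, showing that the $I_i$ partition the faces and satisfy (i) and (ii), is exactly what the paper states in one line after Definition~\ref{def:cert}.
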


\begin{proof}
Let $C\subseteq Y_m$ be a nontrivial curve. By \eqref{eq:balx} we have $\sum_Pd_Px(P)=0$, with $d_P>0$ and at least one singular place $P$. By Theorem~\ref{thm:types} and the validity of $V$, each $x(P)$ lies in the relative interior of an attained face of some piece in $\mathrm{pieces}(S(m))$, built with $V$. This face belongs to $\cA_V(S)$. A face and the convex hull of its vertices have the same relative interior. Thus $0$ is a positive combination of points of $\bigcup_{G\in\cA_V(S)}\relint\conv(G)$, which is excluded by Lemma~\ref{lem:lex}, applied with the partition $(I_i)$ of the certificate. Hence no such curve exists.
\end{proof}

The pieces $\pc{d2}[T]$, $\pc{d9}(T;ax,by)$, $\pc{d14a}$, $\pc{d14b}$ and $\pc{d15}$ are present for every label set. Proposition~\ref{prop:nod2}, Theorem~\ref{thm:D9}, Corollary~\ref{cor:nod15} and Theorem~\ref{thm:D14} remove the pieces $\pc{d2}[T]$, $\pc{d9}(T;ax,by)$, $\pc{d15}(j,k)$ and $\pc{d14b}(j,k)$, respectively, at all positive masses, so that for every label set $S$ we may leave the attained faces of these pieces out of $\cA_V(S)$. By Corollary~\ref{cor:Kdrop} we may remove the pieces $\pc{d14a}(j)$ when $\mathrm{K14}(m_{Q_j})$ has no solution. Hence, if $\mathrm{K14}(m_{Q_j})$ has no solution for every body $j$ of a set $J$, we may leave the attained faces of $\pc{d14a}(j)$, $\pc{d14b}(j,k)$ and $\pc{d15}(j,k)$, $j\in J$, $k\in Q_j$, out of $\cA_V(S)$, and the proof of Proposition~\ref{prop:cert} applies to the remaining faces. We call the verification of (a) and (b) for these faces the \emph{final check} of $(S,J)$. The resulting notion of a K-certified point is Definition~\ref{def:Kcert}. Proposition~\ref{prop:Kcert} states that a K-certified point is settled.

\subsection{Exact verification}\label{ss:lexexact}

All vertices of the pieces of Table~\ref{tab:pieces}, with the cells of Table~\ref{tab:d7}, and all vertices of the polytopes of the D5 hulls used below have rational coordinates whose denominators divide $168$. We check a certificate in integer arithmetic, after multiplying the vertices by $168$ and asserting that every vertex becomes integral. We do not take the face list from the certificate: we recompute it from the label set $S$ and the pieces that the certificate leaves out. That is, we
\begin{enumerate}[label=(\arabic*)]
\item enumerate the pieces enabled by $S$, with an exact implementation of Tables~\ref{tab:pieces} and~\ref{tab:d7}, without the pieces $\pc{d2}[T]$ and $\pc{d9}(T;ax,by)$;
\item check that each polygon is strictly convex, with its vertices in the given cyclic order;
\item compute the face lattice of each polytope of the hull exactly, and embed the attained faces for each $\lab{E}(T)$ implied by $S$;
\item remove the pieces that the certificate leaves out, among them those of the clusters named in a final check; and
\item evaluate each $l_i$ at every vertex.
\end{enumerate}
The list so built is the list that the certificate must separate, which leaves out the faces of all pieces $\pc{d2}$ and $\pc{d9}$ (\S\ref{ss:certs}). We will check all certificates in \S\S\ref{sec:points}--\ref{sec:curves} this way.

We find the functionals by solving, stage by stage, a linear program in floating point (HiGHS) on the faces not yet separated, and rationalizing its solution. We accept a stage only if the exact check accepts it.

\begin{remark}\label{rem:hulls}
We use three D5 hulls of \S\ref{ss:d5}:
\begin{enumerate}[label=(\roman*)]
\item $\Vtwo^{\mathrm w}$, valid at all positive masses;
\item $\Vtwo\subseteq\Vtwo^{\mathrm w}$, valid when $m_o\ne m_a+m_b$ for every $o$ in each triangle $T=\{o,a,b\}$ with $\lab{E}(T)\in S(m)$;
\item $\VN\subseteq\Vtwo$, valid at the mass points described in \S\ref{ss:d5}.
\end{enumerate}
Each of them contains the projection of its model (Theorem~\ref{thm:hulls} and Lemma~\ref{lem:infty}). We apply each certificate below that uses $\VN$ only where $\VN$ is valid (checked in \S\ref{sec:verification}).
\end{remark}

\subsection{Certificates for the mass conditions}\label{ss:masscert}

To apply Proposition~\ref{prop:cert} on a set of masses we have to know which labels can hold there. We make these decisions by the kinds of certificate listed below. Each is \emph{rigorous}: it may fail to decide, but a decision it returns is correct, given the enclosure property of ball arithmetic (Arb, through python-flint~0.9.0) and the correctness of exact integer, rational and polynomial arithmetic. Search parameters (the degree bound of the filter (F2) of \S\ref{ss:flats}, initial grids, depth and time limits, working precisions, Krawczyk preconditioners and the tie-breaking constants of \S\ref{sec:rays}) and floating-point proposals, such as the solutions of the linear programs, affect only running time, the numbers of flats and boxes, and whether a search succeeds: all acceptances and exclusions are decided by rigorous enclosures or exact checks. Hence a wrong proposal can only make a search fail.

\begin{enumerate}[label=\textup{(C\arabic*)},leftmargin=*]
\item \emph{Exact computer algebra.} Standard bases, minimal primes and saturations of ideals of polynomials over $\Q$. With them we describe the points of a stratum of the sorted masses at which given labels hold. The details are in \S\ref{ss:flats}.
\item \emph{Exact algebraic decisions.} Feasibility of linear systems over $\Q$ (Fourier--Motzkin elimination), polynomial identities, arithmetic in number fields $\Q[\theta]/(f)$, and real root isolation over $\Q$ by Sturm sequences or Descartes' rule of signs.
\item \emph{Point certificates.} At an exact mass point, we show by the certificates of \S\ref{sec:points} that the system $\mathrm{K14}$ of a cluster has no solution.
\end{enumerate}

A label set certified in the sense of Definition~\ref{def:cert} is a statement about pieces only. The classes (C1)--(C3) connect it to the masses:
\begin{enumerate}[label=(\roman*)]
\item by (C1) and (C2), the label set of every positive mass vector with $m_1\le\dots\le m_5$ is certified in the sense of \S\ref{ss:nearlabels}, contains a curve set, or is that of one of two exact points (Theorem~\ref{thm:S});
\item the mass vectors whose label set contains a curve set lie, up to scaling and relabelling, on the curve families (Lemma~\ref{lem:DT}), which we treat with (C2) and (C3) in \S\ref{sec:curves} and by counting in \S\ref{sec:rays};
\item at the two exact points the hull $\VN$ certifies the label set (Proposition~\ref{prop:Spoints}).
\end{enumerate}
We describe in \S\ref{ss:checks} how these computations are checked.

%% file: sec/s5_energy.tex
\section{The energy identities at a place}\label{sec:energy}

In this section we treat diagram~$9$ and diagrams $14$ and $15$. Since the pieces of diagrams $14$ and $15$ carry no mass condition from \cite{AK12} (\S\ref{ss:Kgating}) and are present for every label set (\S\ref{ss:certs}), no choice of labels removes them. We therefore need an argument that does not go through the labels.

By Theorem~\ref{thm:Kgating}, places of diagrams $15$ and $14$ carry solutions of $\mathrm{K15}$ and $\mathrm{K14}$, which exist on hypersurfaces of the space of cluster masses. We exclude the places of diagram $15$ by expanding at the place the potential $V=U+\frac12I$, which is constant along $C$ (Lemma~\ref{lem:U}) and whose gradient vanishes there (Lemma~\ref{lem:potential}). For diagram $14$ we derive in \S\ref{ss:d14} a condition on the leading cluster. With the same expansion we exclude the pieces $\pc{d14b}$ (Theorem~\ref{thm:D14}). In \S\ref{ss:d9} the expansion, in coordinates adapted to the diagram, excludes diagram~$9$ at every positive mass (Theorem~\ref{thm:D9}). Identities marked $(\dagger)$ were verified by computer in exact arithmetic; the programs are in the supplementary material.

\subsection{Expansions at a place}\label{ss:expansion}
Let $P$ be a singular place of $C$ and $o$ a body. We use the positions relative to $o$, $u_i=z_i-z_o$ and $v_i=w_i-w_o$ for $i\ne o$, as coordinates. By relabelling, Lemma~\ref{lem:potential} holds in them. So $\nabla_{(u,v)}V=0$ at all points of $X_m$. After the rotation of Lemma~\ref{lem:place}, each of these coordinates is $q=\vartheta^{d\,e(q)}y_q(\vartheta)$. Put $\tau=\vartheta^{d/2}$, so that $q=\tau^{2e(q)}y_q$, where $y_q$ is a convergent Puiseux series in $\tau$ with $y_q(0)\neq0$. We write $\bar y=y(0)$ for the \emph{leading point}. For a Puiseux series $g$ and $r\in\Q$, $[g]_r$ denotes its coefficient of $\tau^r$. Replacing $\tau$ by $\tau\varphi(\tau)$ and rotating by $\mu(\tau)$, with convergent Puiseux series $\varphi$, $\mu$ and $\varphi(0)=\mu(0)=1$, changes neither the leading point nor the identities below. We use this freedom in \S\ref{ss:order5}.

\begin{lemma}\label{lem:identities}
Suppose that $F(\tau,y):=V(\tau^{2e}y)=\sum_{n\ge0}\tau^{n-2}V_n(y)$, with $V_n$ holomorphic near $\bar y$ and the series convergent for $y$ near $\bar y$ and small $\tau\ne0$. Then $\nabla_yF(\tau,y(\tau))=0$, and for every $q\in\Q$ with $q\ne2$,
\begin{equation}\label{eq:qid}
\sum_{n\ge1}n\,\big[V_n(y(\tau))\big]_{q-n}=0 .
\end{equation}
\end{lemma}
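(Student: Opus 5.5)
The plan is to combine two facts already established about $V$ on $X_m$: its gradient vanishes there (Lemma~\ref{lem:potential}), and it is constant on every irreducible component (Lemma~\ref{lem:U}). Each gives one family of coefficient identities, and \eqref{eq:qid} is a linear combination of the two.

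\emph{The gradient.} The coordinates are $q=\tau^{2e(q)}y_q$. For $\tau\ne0$ fixed, the chain rule gives
\[
\partial F/\partial y_q=\tau^{2e(q)}\,(\partial V/\partial q)(\tau^{2e}y).
\]
At $y=y(\tau)$ the point $\tau^{2e}y(\tau)$ is a point of $X_m$, rotated as in Lemma~\ref{lem:place}. The rotation preserves $X_m$. Lemma~\ref{lem:potential}, applied in the coordinates centred at $o$ (which is legitimate by relabelling), together with the invertibility of $A$, gives $\nabla_{(u,v)}V=0$ at that point. Hence $\nabla_yF(\tau,y(\tau))=0$.

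\emph{Constancy.} To avoid fractional powers, I will pass to a variable $\sigma$ with $\tau=\sigma^N$, where $N$ clears all denominators. Then $y(\tau)$, and the series $\sum_n\tau^{n-2}V_n(y)$, become ordinary convergent Laurent series in $\sigma$ on a punctured disc. By Lemma~\ref{lem:U}(b), $V=\tfrac32U$ is constant on $C$. Rotation does not change $V$, since it is a function on $X_m$ invariant under $s\cdot$. So
\[
G(\tau):=F(\tau,y(\tau))=\sum_{n\ge0}\tau^{n-2}V_n(y(\tau))
\]
is a constant $c$. The substitution of the convergent series $y(\tau)$ into the holomorphic $V_n$, and the rearrangement of the double series, are justified by absolute convergence near $\bar y$ for small $|\sigma|$. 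Comparing the coefficients of $\tau^{q-2}$ then gives
\[
\sum_n\big[V_n(y(\tau))\big]_{q-n}=c\,\delta_{q,2}.
\]

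\emph{Differentiating.} Next I differentiate $G$ in $\tau$, in the variable $\sigma$, for $\tau\ne0$. By the chain rule,
\[
G'(\tau)=\partial_\tau F(\tau,y(\tau))+\nabla_yF(\tau,y(\tau))\cdot y'(\tau)=\partial_\tau F(\tau,y(\tau)),
\]
because the gradient vanishes. Termwise differentiation of the convergent series gives
\[
\partial_\tau F=\sum_n(n-2)\,\tau^{n-3}V_n(y).
\]
Since $G$ is constant, the coefficient of $\tau^{q-3}$ yields, for every $q$,
\[
\sum_n(n-2)\big[V_n(y(\tau))\big]_{q-n}=0.
\]
For $q\ne2$ I add twice the first identity, whose right side is then $0$. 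This gives $\sum_n n\,[V_n(y(\tau))]_{q-n}=0$. The term $n=0$ drops out, which is \eqref{eq:qid}.

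\emph{Main obstacle.} The delicate step is analytic bookkeeping rather than algebra. One has to make sure that $\partial_\tau F$ is taken with $y$ fixed, and that the termwise differentiation and coefficient extraction are legitimate for Puiseux series. Passing to $\sigma=\tau^{1/N}$ reduces all of this to ordinary Laurent series. The hypothesis of uniform convergence for $y$ near $\bar y$ supplies the needed domination.
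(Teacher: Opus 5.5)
Your proof is correct. The vanishing of $\nabla_yF$ along the place and the constancy of $G(\tau)=F(\tau,y(\tau))$ are obtained as in the paper. The difference lies in how you reach $\sum_n(n-2)\,[V_n(y(\tau))]_{q-n}=0$.

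The paper derives it from the weighted invariance $F(\lambda\tau,\lambda^{-2e}y)=F(\tau,y)$. Differentiating at $\lambda=1$ gives $\tau\,\partial_\tau F=2\sum_q e(q)\,y_q\,\partial_{y_q}F$ identically in $(\tau,y)$, and the right side vanishes at $y=y(\tau)$. You instead differentiate the constant $G$ and kill the term $\nabla_yF\cdot y'$ with the vanishing gradient. Both routes arrive at $\partial_\tau F(\tau,y(\tau))=0$.

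Your route uses only two inputs, the vanishing gradient and the constancy of $V$ on $C$. So the third fact listed in the proof of Lemma~\ref{lem:adapted}, the scaling invariance of $F$, is not actually needed for \eqref{eq:qid}.

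The paper's route shows the converse redundancy. Since $\tau G'(\tau)=\tau\,\partial_\tau F(\tau,y(\tau))$, the invariance together with the vanishing gradient already forces $G$ to be constant, so there the appeal to Lemma~\ref{lem:U} only serves to name the constant $E$. Its Euler-identity form is also in the spirit of the partial-scaling arguments used later, in Lemmas~\ref{lem:Hstruct} and~\ref{lem:d14energy}.

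Finally, your passage to $\sigma=\tau^{1/N}$ makes the coefficient bookkeeping more explicit than the paper's one-line remark that each coefficient involves finitely many $n$.
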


\begin{proof}
Since $y=\tau^{-2e}x$ coordinatewise, $\nabla_yF=\tau^{2e}\nabla_xV=0$ along the place. As $V$ is constant on $C$, $\sum_n\tau^{n-2}V_n(y(\tau))$ is a constant $E$. The function $F$ satisfies $F(\lambda\tau,\lambda^{-2e}y)=F(\tau,y)$, both sides being $V$ at the same point. Differentiating at $\lambda=1$ and using $\nabla_yF=0$ at $y=y(\tau)$, we obtain $\sum_n(n-2)\tau^{n-2}V_n(y(\tau))=0$. Adding twice the first identity gives $\sum_nn\,\tau^{n-2}V_n(y(\tau))=2E$. Its coefficient of $\tau^{q-2}$ is \eqref{eq:qid}. Each coefficient involves finitely many $n$, since $\tau^{n-2}V_n(y(\tau))$ has valuation at least $n-2$.
\end{proof}

\subsection{Diagram 15 at order three}\label{ss:d15}
Let $P$ have type $\pc b_k(j)$. By Theorem~\ref{thm:Kgating}(c), $P$ has diagram $15$ with $zw$-cluster $Q=Q_j$ and, after exchanging the colours if necessary, $w$-edge $kj$. We take $o=k$ and write $Q'=Q\setminus\{k\}=\{a,b,c\}$ for the three \emph{outer} bodies. By Lemma~\ref{lem:zwK4} and Lemma~\ref{lem:orders}, $e(u_i)=e(v_i)=1$ for $i\in Q'$, $e(u_j)=2$ and $e(v_j)=-2$. Write $\hat z_i=\bar y_{u_i}$, $\hat w_i=\bar y_{v_i}$ for $i\in Q'$, $\hat z_j=\bar y_{u_j}$, $\hat w_j=\bar y_{v_j}$, and $\hat p_{il}$ for the leading coefficients of the $p_{il}$.

\begin{lemma}\label{lem:d15lead}
\begin{enumerate}[label=\textup{(\alph*)}]
\item The cluster coefficients $(\hat z_i,\hat w_i)_{i\in Q'}$, with $\hat z_k=\hat w_k=0$, and $(\hat p_{il})_{i,l\in Q}$ solve $\mathrm{K15}(m_Q,k)$ with $c=-m_j\hat w_j/M$.
\item $\sum_{i\in Q'}m_i\hat z_i=0$.
\item $\hat p_{ij}^2\,\hat z_i\,(-\hat w_j)=1$ for $i\in Q'$.
\end{enumerate}
\end{lemma}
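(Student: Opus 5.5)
The plan is to read off all three statements from the leading-order terms of the equations of \eqref{eq:AK4} and the rows $D_{il}$ at $P$, written in the coordinates of \S\ref{ss:expansion} centred at $o=k$. The exponents come from Lemma~\ref{lem:zwK4}, case (B) with $a=2$. We have $\alpha_i=\frac12$ for $i\in Q'$ and $\alpha_k=1$, $\beta_i=-1$, $x_{kj}=0$ and $x_{ij}=-\frac12$. Then $e(z_{ij})=1$ for $i\in Q'$, $e(z_{kj})=2$ and $e(w_{ij})=-2$. The leading $p$-coefficients have exponents $-1$ inside $Q$, $0$ on $kj$ and $\frac12$ on $ij$. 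Nothing beyond Lemma~\ref{lem:place} and the proof of Theorem~\ref{thm:Kgating} should be needed.

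For (a) I follow the proof of Theorem~\ref{thm:Kgating}(b) with the base point $i_0=k$. That proof gives $F^z_i(\hat z,\hat p)=c_z$ and $F^w_i=c_w+d[i=k]$. Since no $z$-stroke meets $j$, summing $m_iF^z_i$ forces $c_z=0$. The remaining step is to identify $c_w$. Here $c_w$ is the common coefficient of $w_i$, $i\in Q$, at exponent $-2$. I write $w_i=w_k+v_i$ with $e(v_i)=1$ for $i\in Q'$, so $c_w=[w_k]_{-2}$. The centre of mass gives $M w_k=-\sum_l m_l(w_l-w_k)=-m_j v_j-\sum_{i\in Q'}m_iv_i$. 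At exponent $-2$ only the $j$-term survives, so $c_w=-m_j\hat w_j/M$. This is consistent with $cM+dm_k=0$ noted after \eqref{eq:K15}, which can serve as a check. The sign convention must be kept straight, since $F^w$ is defined with $w_{il}=w_i-w_l$ and $\hat w_i$ is the coefficient of $v_i=w_i-w_k$.

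For (b) I use the $z$-centre of mass: $Mz_k=-\sum_{i\in Q'}m_iu_i-m_ju_j$. Here $e(u_j)=2$ and $e(u_i)=1$ for $i\in Q'$. I need the exponent of $z_k$. The equation of \eqref{eq:AK4} for $k$ gives $z_k=\sum_{l\ne k}m_lZ_{kl}$. Inside the cluster this sum is $\sum_l m_l\hat p_{kl}^3\hat z_{kl}$ at exponent $-2$, which vanishes since $F^z_k=0$. So $z_k$ has exponent greater than $-2$, but that alone does not give an exponent above $1$. The better route is to use the leading term of $\sum_{i\in Q}m_iz_{ij}$, as in step (3) of the proof of Lemma~\ref{lem:zwK4}. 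We have $Mz_j=\sum_{i\in Q}m_i z_{ji}$, and $z_j=\sum_i m_iZ_{ji}$ with $e(Z_{ji})=2\alpha_i-3x_{ij}$. This equals $1+\frac32=\frac52$ for $i\in Q'$ and $2$ for $i=k$, so $e(z_j)\ge2$. Hence $\sum_{i\in Q}m_iz_{ij}$ has exponent at least $2$. Its coefficient at exponent $1$ is $\sum_{i\in Q'}m_i\hat z_i$, because $z_{ij}=u_i-u_j$ and both $u_k=0$ and $u_j$ have exponent at least $2$. This coefficient therefore vanishes, which is (b). This exponent bookkeeping is the step I expect to need the most care.

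For (c) I take the row $D_{ij}$, $p_{ij}^2z_{ij}w_{ij}=1$, at exponent $0$. The exponents are $2\cdot\frac12+1+(-2)=0$. The leading coefficient of $z_{ij}=u_i-u_j$ is $\hat z_i$, since $e(u_j)=2>1$. The leading coefficient of $w_{ij}=v_i-v_j$ is $-\hat w_j$, since $e(v_j)=-2<1$. This gives $\hat p_{ij}^2\hat z_i(-\hat w_j)=1$.
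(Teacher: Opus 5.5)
Your proof is correct. Parts (a) and (c) follow the paper: (a) is Theorem~\ref{thm:Kgating}(b) with base point $k$, with $c$ read off from $Mw_k=-\sum_{i\ne k}m_iv_i$, and (c) is the leading term of $D_{ij}$.

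For (b) you take a different route. The paper stays inside the cluster system. For a solution of $\mathrm{K15}$, contracting $F^w$ with $\hat z$ and $F^z$ with $\hat w$ both give $\sum_{i<l}m_im_l\hat p_{il}^3\hat z_{il}\hat w_{il}$, so $c\sum_{i\in Q}m_i\hat z_i+dm_k\hat z_k=0$. Then $\hat z_k=0$ and $c=-dm_k/M_Q\ne0$ give (b). You instead use the equation of the isolated body $j$. Since $e(Z_{jk})=2$ and $e(Z_{ji})=\frac52$, you get $e(z_j)\ge2$. By the centre of mass, $\sum_{i\in Q}m_iz_{ij}$ then has no term at exponent $1$, and that term is $\sum_{i\in Q'}m_i\hat z_i$ because $u_k=0$ and $e(u_j)=2$. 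Your exponent bookkeeping is right.

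The paper's version shows that body $k$ lies at the $z$-centre of mass of the cluster for every solution of $\mathrm{K15}$ (compare Lemma~\ref{lem:torus}). This is a property of the leading system alone, but the argument rests on (a) and on $d\ne0$. Your version needs neither, only $e(u_j)>1$ and $e(z_j)>1$. So it also works at places of type $\sigma_k(j)(t)$, where $e(u_j)=\sigma$ and $e(z_j)\ge\sigma$, both larger than $1$. There $\mathrm{K14}$ leaves the centre of mass free, and the paper needs a separate argument through $C^z_i$ and $C^w_i$ in Lemma~\ref{lem:d14blead} to reach the same conclusion.

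Your remark that $cM+dm_k=0$ can serve as a check is off. In that relation $M$ is the cluster mass $M_Q$. With your $c$ and $d=m_j\hat p_{kj}^3\hat w_j$, the relation is equivalent to $\hat p_{kj}^3=M_Q/(m_kM)$. That is new information (the last statement of Theorem~\ref{thm:order3}), not a consistency check on $c$.
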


\begin{proof}
(a) By Theorem~\ref{thm:Kgating}(b) the cluster coefficients solve $\mathrm{K15}(m_Q,k)$. The constant $c$ is the coefficient of $w_k$ at exponent $-2$, which by $Mw_k=-\sum_{i\ne k}m_iv_i$ is $-m_j\hat w_j/M$. (b) For a solution of K15, $\sum_im_i\hat z_iF^w_i=\sum_{i<l}m_im_l\hat p_{il}^3\hat z_{il}\hat w_{il}=\sum_im_i\hat w_iF^z_i=0$, hence $c\sum_{i\in Q}m_i\hat z_i+dm_k\hat z_k=0$. With $\hat z_k=0$ and $c=-dm_k/M_Q\ne0$ this is (b). (c) is the leading term of the row $D_{ij}$, since $u_{ij}$ and $v_{ij}$ have the leading terms $\tau^2\hat z_i$ and $-\tau^{-4}\hat w_j$.
\end{proof}

We fix a square root $(-\hat w_j)^{1/2}$ and put $x_i=\hat p_{ij}(-\hat w_j)^{1/2}$ for $i\in Q'$. Then $x_i^2=1/\hat z_i$ by Lemma~\ref{lem:d15lead}(c). Lemma~\ref{lem:d15lead}(b) says $\sum_{i\in Q'}m_ix_i^{-2}=0$.

\begin{lemma}\label{lem:d15expansion}
At a place of type $\pc b_k(j)$, $F(\tau,y)=\sum_{n\ge0}\tau^{n-2}V_n(y)$ with
\begin{align*}
V_0&=U_Q(y)-\frac{m_j\,y_{v_j}}{2M}\sum_{i\in Q'}m_iy_{u_i}, \qquad V_1=V_4=0,\qquad V_2=f(\Pi),\\
V_3&=m_j\sum_{i\in Q'}m_i\,(-y_{u_i}y_{v_j})^{-1/2},\qquad V_5=\frac{m_j\,y_{u_j}}{2}\sum_{i\in Q'}m_i\,\frac{(-y_{u_i}y_{v_j})^{-1/2}}{y_{u_i}},
\end{align*}
where $U_Q$ is the potential of the cluster, $\Pi=y_{u_j}y_{v_j}$, $f(\Pi)=m_jm_k\Pi^{-1/2}+\frac{m_jM_Q}{2M}\Pi$, and the square roots are the branches of the $p_{il}$ at $P$. The coordinate $y_{u_j}$ occurs only in $V_2$ and in the $V_n$ with $n\ge5$.
\end{lemma}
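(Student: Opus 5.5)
The plan is to write $V=U+\frac12I$ in the coordinates relative to $o=k$ and substitute $q=\tau^{2e(q)}y_q$ with the exponents from Lemma~\ref{lem:zwK4}. These are $e(u_i)=e(v_i)=1$ for $i\in Q'$, $e(u_j)=2$ and $e(v_j)=-2$. We then sort each term by its power of $\tau$. I will treat $U$ and $I$ separately and expand each pair distance, using $p_{il}=(u_{il}v_{il})^{-1/2}$ with the branch fixed at $P$.

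\emph{Cluster pairs.} For $i,l\in Q$, $u_{il}v_{il}=\tau^4 y_{u_{il}}y_{v_{il}}$, so $m_im_lp_{il}=\tau^{-2}m_im_l(y_{u_{il}}y_{v_{il}})^{-1/2}$ exactly. These terms give $\tau^{-2}U_Q(y)$, which lies in $V_0$ and nowhere else.

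\emph{Pair $kj$.} Here $u_jv_j=\tau^0\Pi$, so $m_km_jp_{kj}=m_jm_k\Pi^{-1/2}$ sits at $\tau^0$, that is, in $V_2$.

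\emph{Pairs $ij$ with $i\in Q'$.} Write $u_{ij}v_{ij}=(\tau^2y_{u_i}-\tau^4y_{u_j})(\tau^2y_{v_i}-\tau^{-4}y_{v_j})$. Its leading term is $-\tau^{-2}y_{u_i}y_{v_j}$. Factoring this out leaves $(1-\tau^2y_{u_j}/y_{u_i})(1-\tau^{6}y_{v_i}/y_{v_j})$. Then $p_{ij}=\tau(-y_{u_i}y_{v_j})^{-1/2}\bigl(1+\frac12\tau^2y_{u_j}/y_{u_i}+O(\tau^4)\bigr)$. The leading part gives $V_3$. The $\tau^3$ correction gives $V_5$ with the factor $\frac12 m_jy_{u_j}$ as stated. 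The next corrections begin at $\tau^5$, which is $n=7$.

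\emph{Moment of inertia.} Use $I=\sum m_iu_iv_i-\frac1M(\sum m_iu_i)(\sum m_iv_i)$ over $i\ne k$. For $i\in Q'$, $u_iv_i=\tau^4(\cdots)$, which is $n=6$. For $j$, $u_jv_j=\Pi$, giving $\frac12m_j\Pi$ at $n=2$. In the cross term, $\sum m_iu_i=\tau^2\sum_{Q'}m_iy_{u_i}+\tau^4m_jy_{u_j}$ and $\sum m_iv_i=\tau^2\sum_{Q'}m_iy_{v_i}+\tau^{-4}m_jy_{v_j}$. The product $\tau^{-2}m_jy_{v_j}\sum_{Q'}m_iy_{u_i}$ enters $V_0$ with coefficient $-\frac1{2M}$. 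The product $\tau^0m_j^2\Pi$ enters $V_2$ with coefficient $-\frac{1}{2M}$. Combining, $\frac12m_j\Pi(1-m_j/M)=\frac{m_jM_Q}{2M}\Pi$, which is $f$. The remaining products sit at $\tau^2$ ($n=4$: $\tau^4\sum m_iy_{v_i}\,\tau^{-4}$-type and $\tau^4$ terms) or higher. Here care is needed. The term $\tau^2\cdot\tau^4$ is $\tau^6$, while $\tau^4m_jy_{u_j}\cdot\tau^2\sum_{Q'}m_iy_{v_i}$ is also $\tau^6$; both are fine. This leaves only $V_1=V_4=0$ to check, since no term lands at $\tau^{-1}$ or $\tau^2$. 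From the list above, every power appearing is $-2$, $0$, $1$, $3$, $4$ or above, where $4$ has $n=6$, $n=7$, and so on. So $V_1$ and $V_4$ vanish.

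\emph{Locating $y_{u_j}$.} This coordinate appears in $\Pi$, which is in $V_2$. It appears in the $p_{ij}$ corrections at $n\ge5$. It also appears in the $I$ cross terms at $\tau^6$ and above. It does not appear in $V_0$ or $V_3$.

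The main obstacle is bookkeeping rather than any conceptual step. The points to check are the branch consistency of the square roots and the exact combination of the two $\Pi$ terms into $f$. I would verify the expansion symbolically, as the $(\dagger)$ identities are verified. The leading sign $-y_{u_i}y_{v_j}$ is consistent with Lemma~\ref{lem:d15lead}(c).
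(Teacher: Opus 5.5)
Your proposal is correct and is the paper's proof: you expand each $p_{il}$ and both parts of $I$ term by term in $\tau$, with the same factorization $p_{ij}=\tau(-y_{u_i}y_{v_j})^{-1/2}(1-\tau^2y_{u_j}/y_{u_i})^{-1/2}(1-\tau^6y_{v_i}/y_{v_j})^{-1/2}$ and the same combination $\frac12m_j\Pi-\frac{m_j^2}{2M}\Pi=\frac{m_jM_Q}{2M}\Pi$. One slip needs fixing: the remaining products in the cross term of $I$ lie at $\tau^4$ (the product of the two $Q'$ sums, $n=6$) and at $\tau^6$ (the term with $m_jy_{u_j}$, $n=8$), not at $\tau^2$ as your sentence first says; your final tally of powers uses the correct values, and that is what gives $V_4=0$.
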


\begin{proof}
The cluster terms $m_im_lp_{il}$, $i,l\in Q$, are exactly $\tau^{-2}$ times their value at $y$. The term $p_{kj}$ is $\Pi^{-1/2}$ exactly. For $i\in Q'$,
\[
p_{ij}=\tau\,(-y_{u_i}y_{v_j})^{-1/2}\big(1-\tau^2y_{u_j}/y_{u_i}\big)^{-1/2}\big(1-\tau^6y_{v_i}/y_{v_j}\big)^{-1/2},
\]
which contributes to $V_n$ only for odd $n\ge3$, with the stated $V_3$ and $V_5$. Finally $I=\sum_{i\ne k}m_iu_iv_i-\frac1M\big(\sum_{i\ne k}m_iu_i\big)\big(\sum_{i\ne k}m_iv_i\big)$ contributes $-\frac{m_j}{M}y_{v_j}\sum_{i\in Q'}m_iy_{u_i}$ at $n=0$, $\frac{m_jM_Q}{M}\Pi$ at $n=2$, and otherwise only to $n=6$ and $n=8$.
\end{proof}

\begin{theorem}\label{thm:order3}
At every singular place of type $\pc b_k(j)$, for any masses with $m_i\ne0$, $M\ne0$ and $M_Q\ne0$,
\[
\sum_{i\in Q'}m_i\,\hat p_{ij}=0,\qquad\text{equivalently}\qquad \sum_{i\in Q'}m_ix_i=0 .
\]
Moreover $\hat p_{kj}^3=M_Q/(m_kM)$.
\end{theorem}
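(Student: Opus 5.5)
The plan is to apply Lemma~\ref{lem:identities} to the expansion of Lemma~\ref{lem:d15expansion}. We use two pieces of information from it: the gradient equation in the single coordinate $y_{u_j}$, which controls $\Pi$, and the Euler-type identity \eqref{eq:qid} at $q=3$, which then isolates $V_3$ at the leading point.

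\emph{Step 1: the gradient in $y_{u_j}$.} By Lemma~\ref{lem:d15expansion}, $y_{u_j}$ occurs only in $V_2=f(\Pi)$ and in the $V_n$ with $n\ge5$. Hence
\[
0=\partial_{y_{u_j}}F(\tau,y(\tau))=f'(\Pi(\tau))\,y_{v_j}(\tau)+O(\tau^{3}),
\]
the error coming from $\tau^{3}\partial_{y_{u_j}}V_5+\dots$. Since $y_{v_j}(0)=\hat w_j\ne0$, this gives $f'(\Pi(\tau))=O(\tau^3)$. In particular $f'(\bar\Pi)=0$, where
\[
f'(\Pi)=-\tfrac12m_jm_k\Pi^{-3/2}+\tfrac{m_jM_Q}{2M}.
\]
So $\bar\Pi^{-3/2}=M_Q/(m_kM)$. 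Because $p_{kj}=\Pi^{-1/2}$ exactly, with the branch of $p_{kj}$ at $P$, this is the claim $\hat p_{kj}^3=M_Q/(m_kM)$. The hypotheses $m_j,m_k\ne0$ and $M,M_Q\ne0$ are used here.

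\emph{Step 2: $\Pi$ is constant to order three.} We have $f''(\bar\Pi)=\tfrac34m_jm_k\bar\Pi^{-5/2}\ne0$, so $f'$ is locally invertible at $\bar\Pi$. From $f'(\Pi(\tau))=O(\tau^3)$ we therefore get $\Pi(\tau)-\bar\Pi=O(\tau^3)$, as Puiseux series. Then
\[
V_2(y(\tau))=f(\Pi(\tau))=f(\bar\Pi)+f'(\bar\Pi)(\Pi-\bar\Pi)+O\big((\Pi-\bar\Pi)^2\big)=f(\bar\Pi)+O(\tau^6),
\]
so $[V_2(y(\tau))]_r=0$ for all $0<r<6$, in particular $[V_2]_1=0$. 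This step is the main obstacle. A priori the Puiseux series of $\Pi$ could carry fractional exponents below $1$ (for instance $[\Pi]_{1/2}$), whose squares would feed into the coefficient of $\tau^1$ through $f''$. The point of Step~1 is that the exact gradient identity excludes all such terms up to order $3$, and this uses the nondegeneracy $f''(\bar\Pi)\ne0$.

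\emph{Step 3: the identity at $q=3$.} Identity \eqref{eq:qid} with $q=3$ reads
\[
[V_1]_2+2[V_2]_1+3[V_3]_0=0 .
\]
Here $V_1=0$ and $[V_2]_1=0$ by Step~2. The terms with $n\ge4$ would need coefficients of negative order, which vanish since each $V_n(y(\tau))$ is a power series with nonnegative exponents. Hence $V_3(\bar y)=0$, that is,
\[
m_j\sum_{i\in Q'}m_i(-\hat z_i\hat w_j)^{-1/2}=0 .
\]
By Lemma~\ref{lem:d15lead}(c), with the branches of the $p_{ij}$ at $P$ as specified in Lemma~\ref{lem:d15expansion}, $(-\hat z_i\hat w_j)^{-1/2}=\hat p_{ij}$. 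Dividing by $m_j\ne0$ gives $\sum_{i\in Q'}m_i\hat p_{ij}=0$. Multiplying by the fixed constant $(-\hat w_j)^{1/2}$ gives the equivalent form $\sum_{i\in Q'}m_ix_i=0$.
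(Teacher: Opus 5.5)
Your proof is correct and follows the paper's argument step for step. You use the $y_{u_j}$-component of $\nabla_yF=0$ to get $f'(\bar\Pi)=0$, hence $\hat p_{kj}^3=M_Q/(m_kM)$, and you use $f''(\bar\Pi)\ne0$ to get $\Pi(\tau)-\bar\Pi=O(\tau^3)$, hence $[V_2(y(\tau))]_1=0$; then \eqref{eq:qid} at $q=3$ leaves $3V_3(\bar y)=0$, which is exactly the paper's proof.
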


\begin{proof}
The $y_{u_j}$-component of $\tau^2\nabla_yF(\tau,y(\tau))=0$ reads $y_{v_j}f'(\Pi)+O(\tau^3)=0$, by Lemma~\ref{lem:d15expansion}. At $\tau=0$ this is $f'(\bar\Pi)=0$, i.e.\ $\hat p_{kj}^3=\bar\Pi^{-3/2}=M_Q/(m_kM)$. Since $f''(\bar\Pi)=\frac34m_jm_k\bar\Pi^{-5/2}\neq0$, $f'(\Pi(\tau))=O(\tau^3)$ gives $\Pi(\tau)-\bar\Pi=O(\tau^3)$. Hence $f(\Pi(\tau))-f(\bar\Pi)=O(\tau^6)$ and
\begin{equation}\label{eq:V2flat}
\big[V_2(y(\tau))\big]_r=0\qquad(0<r<6).
\end{equation}
Now \eqref{eq:qid} with $q=3$ reads $[V_1]_2+2[V_2]_1+3V_3(\bar y)=0$. Since the first two terms vanish, $V_3(\bar y)=m_j\sum_im_i\hat p_{ij}=0$.
\end{proof}

We call $(x_a,x_b,x_c)\in(\C^*)^3$ a \emph{bad shape} for the outer masses if $\sum m_ix_i=0$ and $\sum m_ix_i^{-2}=0$. By Theorem~\ref{thm:order3} and Lemma~\ref{lem:d15lead}(b), the outer bodies of the leading cluster are at $\hat z_i=x_i^{-2}$ for a bad shape $x$.

\begin{lemma}\label{lem:badshape}
Up to a common factor, the bad shapes are $x=(1,t,-(m_a+m_bt)/m_c)$ where $t$ is a root of
\[
Q(t)=(m_a+m_bt)^2(m_at^2+m_b)+m_c^3t^2 .
\]
For positive masses $Q$ has no real root.
\end{lemma}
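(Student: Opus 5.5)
The plan is to normalize the scaling and eliminate one coordinate. The defining conditions $\sum m_ix_i=0$ and $\sum m_ix_i^{-2}=0$ are homogeneous in $x$, of degrees $1$ and $-2$, so the bad shapes form a union of lines through the origin in $(\C^*)^3$. Since $x_a\ne0$, I will scale so that $x_a=1$ and write $x_b=t$. The linear condition then fixes $x_c=-(m_a+m_bt)/m_c$. This is nonzero exactly when $m_a+m_bt\ne0$, and I will check that case separately below.

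Next I substitute into the second condition, $m_a+m_bt^{-2}+m_cx_c^{-2}=0$, and clear denominators by multiplying through by $t^2(m_a+m_bt)^2/m_c^{\,0}$. Using $m_cx_c^{-2}=m_c^3/(m_a+m_bt)^2$, the result is
\[
(m_a+m_bt)^2(m_at^2+m_b)+m_c^3t^2=0,
\]
which is $Q(t)=0$. Conversely, take a root $t$ of $Q$. Then $t\ne0$, because $Q(0)=m_a^2m_b\ne0$. Also $m_a+m_bt\ne0$: otherwise $Q(t)=m_c^3t^2$, and this would force $t=0$. So every root $t$ gives a genuine point of $(\C^*)^3$, and the two descriptions agree up to the common factor.

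For the reality claim, let $t\in\R$ and the masses be positive. Each of the two summands is nonnegative: $(m_a+m_bt)^2\ge0$, $m_at^2+m_b>0$, and $m_c^3t^2\ge0$. For $Q(t)=0$ both summands must vanish. The second vanishes only if $t=0$, and then the first equals $m_a^2m_b>0$. Hence $Q$ has no real root.

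No step is a real obstacle. The only care needed is in the degenerate case $m_a+m_bt=0$, which is ruled out as above, and in confirming that clearing denominators introduces no spurious roots. Both points are covered by $Q(0)\ne0$ and the nonvanishing of $m_a+m_bt$ at every root.
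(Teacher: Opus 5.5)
Your proof is correct and follows the paper's own argument. You normalize $x_a=1$, solve the linear condition for $x_c$, and multiply $\sum m_ix_i^{-2}$ by $t^2(m_a+m_bt)^2=m_c^2t^2x_c^2$ to obtain $Q(t)$, and you prove the reality claim with the same positivity argument; your explicit check that every root of $Q$ has $t\ne0$ and $x_c\ne0$ (using nonzero masses) spells out a converse that the paper leaves implicit, and the stray factor $m_c^{\,0}$ in your multiplier is only a typo.
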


\begin{proof}
We normalize $x_a=1$. The first condition gives $x_c$. Then $m_c^2t^2x_c^2\sum m_ix_i^{-2}=Q(t)$. For positive masses $Q(t)\ge m_c^3t^2>0$ for real $t\ne0$ and $Q(0)=m_a^2m_b>0$.
\end{proof}

\begin{corollary}\label{cor:equilateral}
If $m_a=m_b=m_c$, then $Q=m_a^3(t^2+t+1)^2$, and the outer bodies of a cluster with a bad shape form an equilateral triangle centred at body $k$: $\{\hat z_a,\hat z_b,\hat z_c\}=\lambda\{1,\jj,\jj^2\}$ with $\jj=e^{2\pi i/3}$.
\end{corollary}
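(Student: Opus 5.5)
The proof is a direct computation from Lemma~\ref{lem:badshape}, followed by translating the bad shape back to the positions $\hat z_i=x_i^{-2}$ of the outer bodies.

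\emph{The polynomial.} Set $m_a=m_b=m_c=\mu$ in $Q(t)=(m_a+m_bt)^2(m_at^2+m_b)+m_c^3t^2$. This gives
\[
\mu^3\big((1+t)^2(t^2+1)+t^2\big).
\]
Expanding, $(1+2t+t^2)(1+t^2)+t^2=t^4+2t^3+3t^2+2t+1$, which equals $(t^2+t+1)^2$. So $Q=\mu^3(t^2+t+1)^2$, and its roots are $t\in\{\jj,\jj^2\}$.

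\emph{The shape.} For $t=\jj$ the bad shape of Lemma~\ref{lem:badshape} is
\[
x=(1,\jj,-(1+\jj))=(1,\jj,\jj^2),
\]
using $1+\jj+\jj^2=0$. For $t=\jj^2$ it is $(1,\jj^2,\jj)$. Up to the common factor $\lambda_0$ allowed in the lemma, the bad shapes are therefore $\lambda_0(1,\jj,\jj^2)$ up to permutation.

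\emph{The positions.} By Lemma~\ref{lem:d15lead}(c) and the remark after Lemma~\ref{lem:badshape}, $\hat z_i=x_i^{-2}$. Hence
\[
\{\hat z_a,\hat z_b,\hat z_c\}=\lambda_0^{-2}\{1,\jj^{-2},\jj^{-4}\}=\lambda_0^{-2}\{1,\jj,\jj^2\},
\]
and we take $\lambda=\lambda_0^{-2}$. These three points form an equilateral triangle centred at $0$. Since the coordinates are taken relative to $o=k$, where $\hat z_k=0$, the triangle is centred at body $k$.

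The same conclusion holds in the $w$-coordinates wherever the shape statement applies. As a consistency check, Lemma~\ref{lem:d15lead}(b) requires $\sum_i m_i\hat z_i=0$, and with equal masses this is $\mu\lambda(1+\jj+\jj^2)=0$.

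There is no real obstacle here: the only substantive step is recognising the quartic as the perfect square $(t^2+t+1)^2$.
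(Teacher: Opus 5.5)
Your proof is correct and follows the paper's argument: you use the identity $(1+t)^2(1+t^2)+t^2=(t^2+t+1)^2$, read off that the bad shape $x$ is proportional to $(1,\jj,\jj^2)$ up to order, and invert via $\hat z_i=x_i^{-2}$. The aside about the $w$-coordinates is not needed and is not justified, since the bad shape constrains only the $\hat z_i$, so you can drop it.
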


\begin{proof}
For equal masses, $(1+t)^2(1+t^2)+t^2=(t^2+t+1)^2$. Hence $t\in\{\jj,\jj^2\}$, $x$ is proportional to $(1,\jj,\jj^2)$ up to order, and $\hat z_i=x_i^{-2}$.
\end{proof}

\begin{proposition}\label{prop:eqfull}
Let $\nu\in\C\setminus\{0\}$. No solution of $\mathrm{K15}((1,1,1,\nu),4)$ has $z_1-z_4$, $z_2-z_4$, $z_3-z_4$ proportional to $(1,\jj,\jj^2)$ or to $(1,\jj^2,\jj)$.
\end{proposition}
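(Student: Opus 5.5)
We will reduce the statement to a polynomial system in the six inverse distances and show by exact elimination that it has no solution for any $\nu\ne0$. By the translation invariance of $\mathrm{K15}$ we put $z_4=0$. By the scaling $(z,w,p)\mapsto(\lambda z,\mu w,(\lambda\mu)^{-1/2}p)$ we may take $z_i=\jj^{\,i-1}$ for $i=1,2,3$. We treat the orientation $(1,\jj^2,\jj)$ by complex conjugation of the coefficients, or by the relabelling $2\leftrightarrow3$, which preserves the masses $(1,1,1,\nu)$ and $k=4$. So $z$ is fixed and the unknowns are $w\in\C^4$, $p=(p_{il})$, $c$ and $d$.

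We eliminate $w$ first. The row $p_{il}^2z_{il}w_{il}=1$ gives $w_{il}=1/(p_{il}^2z_{il})$. The differences of $w$ are therefore rational in $p$, and the only conditions they must satisfy are the three independent cocycle conditions
\[
\frac{1}{p_{il}^2z_{il}}+\frac{1}{p_{ln}^2z_{ln}}+\frac{1}{p_{ni}^2z_{ni}}=0
\]
on the triangles $\{1,2,4\}$, $\{2,3,4\}$ and $\{3,1,4\}$. The $w$-equations become $F^w_i=\sum_{l\ne i}m_l\,p_{il}/z_{il}$. Since $c$ and $d$ are free, $\mathrm{K15}((1,1,1,\nu),4)$ says exactly the following:
\begin{itemize}
\item $F^w_1=F^w_2=F^w_3$, and $d=F^w_4-F^w_1\ne0$;
\item $F^z_i=\sum_{l\ne i}m_lz_{il}p_{il}^3=0$ for all $i$, which gives three independent linear conditions on the cubes $p_{il}^3$, since $\sum m_iF^z_i=0$;
\item the three cocycle conditions hold, and all $p_{il}\ne0$.
\end{itemize}
This is a system in the six unknowns $p_{il}$ over $\Q(\jj)[\nu]$.

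We would first exploit the linear structure. The $F^z$ equations express $p_{12}^3,p_{23}^3,p_{31}^3$ in terms of $p_{14}^3,p_{24}^3,p_{34}^3$. Indeed, $F^z_4=0$ reads $p_{14}^3+\jj p_{24}^3+\jj^2p_{34}^3=0$, and the other three rows are $3\times3$ linear in the triangle cubes. We might also use the identity $U=0$ (contract $F^z$ with $w$), which holds on solutions, as an extra generator that speeds elimination.

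The core step is an exact Gröbner basis computation $(\dagger)$ in $\Q(\jj)[\nu,p_{il},d^{-1},\prod p_{il}^{-1}]$. We saturate the ideal by $d$, by all $p_{il}$ and by $\nu$, and show that the saturation contains $1$. If it does not, we compute the elimination ideal in $\nu$ alone. We would then show that its generator has no nonzero root, or that at each of its finitely many nonzero roots the specialized ideal (with $\nu$ adjoined in a number field) again saturates to $(1)$.

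We expect this computation to be the main obstacle. The unknowns enter through cubes and through inverse squares, so the system is sizeable. To tame it we would first split by the cyclic symmetry $z\mapsto\jj z$, $1\to2\to3\to1$, which preserves the system. We would not assume symmetric solutions, but we would use the symmetry to choose an elimination order adapted to the orbit of the triangle variables. If a direct computation is too slow, we would run it modulo a large prime to guide the order, and certify the final membership $1\in I$ over $\Q(\jj)(\nu)$ by an explicit combination checked in exact arithmetic.
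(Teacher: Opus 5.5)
Your reduction is correct. With $z_4=0$ and $z=(1,\jj,\jj^2,0)$, you eliminate $w$ by $w_{il}=1/(p_{il}^2z_{il})$ together with the three cycle conditions through body~$4$, which form a basis of the cycle space, and you eliminate $c$ and $d$. What remains is exactly the system you list. The exchange of bodies $2$ and $3$ handles the orientation $(1,\jj^2,\jj)$.

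Your saturation is in fact the cross-check of \S\ref{ss:crosschecks}. There a standard basis over $\Q(\jj)$, with $\nu$ as an unknown and saturated by $\prod p_{il}\cdot\nu(3+\nu)$, is $\{1\}$. In your set-up the saturation by $d$ covers $\nu=-3$, since $(3+\nu)c+\nu d=0$ on solutions. So your computation does end as you hope.

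The paper's proof takes a different, guided route:
\begin{itemize}
\item It uses $3c+\nu(c+d)=0$ to get $c\ne0$ and normalizes $c=1$. This removes the scaling of $p$ that you keep.
\item It writes the side variables in discrete Fourier coordinates $s_j=\sigma_0+\sigma_1\jj^{j-1}+\sigma_2\jj^{1-j}$, adapted to the cyclic symmetry you mention.
\item It solves the rows $F^w$ for the $\nu p_{j4}$. It finds that $\nu$ enters only through $k=3\nu^2$, and that one row forces $\sigma_2^3=P(1-P)$ with $P=\sigma_0\sigma_2$.
\item It eliminates $k$, which enters linearly, and takes resultants in $x=\sigma_1$. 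Everything reduces to showing that the irreducible $H_{18}$ has no root in common with $(P^2-3P+1)H_{32}$. This is checked by reducing $H_{18}$ modulo $P^2-3P+1$ and by a Euclidean algorithm modulo $7$.
\end{itemize}
That route gives a small certificate that can be checked almost by hand. Yours is simpler to set up, but it rests on a large Gr\"obner computation. Its output is hard to inspect unless you also produce and check an explicit certificate.

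Two details of your plan need correcting.

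First, the rows $F^z_1,F^z_2,F^z_3$ do not express the side cubes in terms of $p_{14}^3,p_{24}^3,p_{34}^3$. Their $3\times3$ coefficient matrix in $(p_{12}^3,p_{13}^3,p_{23}^3)$ has rows summing to zero, and its kernel is spanned by $(1,\jj,\jj^2)$. So they leave a one-parameter family of side cubes and impose only the single condition $F^z_4=0$ on the other three cubes. This affects only your heuristic for the elimination order.

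Second, a certificate of $1\in I$ over $\Q(\jj)(\nu)$ excludes only generic $\nu$. To cover every $\nu\ne0$ you need, as in your main step, an identity $\nu^a d^b\prod p_{il}^{\,e}=\sum_i h_ig_i$ with $g_i$ your generators and $h_i\in\Q(\jj)[\nu,p]$. Otherwise you must treat the zeros of the denominators separately.
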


\begin{proof}
After a normalization we reduce the equations to four polynomials in $\nu^2$ and two further unknowns, eliminate $\nu^2$, take resultants in one of the two unknowns, and show that the resulting polynomials in the other have no common root except $0$ and $1$. Exchanging bodies $2$ and $3$ reduces the second case to the first. We normalize $z=(1,\jj,\jj^2,0)$ by the symmetries of K15. Momentum gives $3c+\nu(c+d)=0$, so that $c\ne0$. We take $c=1$ by the remaining scaling. Let $s_j$ be the coordinate $p$ of the side opposite body $j\le3$ and $r_j=p_{j4}$, write $s_j=\sigma_0+\sigma_1\jj^{j-1}+\sigma_2\jj^{1-j}$, and put $k=3\nu^2$, $x=\sigma_1$ and $P=\sigma_0\sigma_2$. With $w_{il}=1/(p_{il}^2z_{il})$, the rows $F^w$ give $\nu r_j=-\sigma_0+\jj^{j-1}+\sigma_2\jj^{1-j}$. The rows $F^z$ then become three polynomial equations $Z_0,Z_1,Z_2$ in $(\sigma_0,\sigma_1,\sigma_2,k)$. The closure of the triangles in $w$ gives three more, $C_0,C_1,C_2$. We use $Z_0$, $Z_1$, $Z_2$, $C_0$ and $C_1$. Since $\prod_js_j\ne0$, they force $P\ne0$.
\begin{enumerate}[label=(\arabic*)]
\item With $t=\sigma_2$, $Z_2=(P^2-P+t^3)/t$. Hence $t^3=P(1-P)$ and $P\ne1$.
\item Modulo $t^3=P(1-P)$, the equations $Z_1$, $C_1$, $Z_0$ and $C_0$ are, up to sign, products of powers of $t$ and $P$ with
\[
Q-kK,\quad (1-P)(x^4+3P^2)-2PQx,\quad 1-6P+3P^2+P^3+k\big(PQ+(1-P)x^3+6P(1-P)x\big)
\]
and with a polynomial $C_0'$ of degree $1$ in $k$ $(\dagger)$, where $Q=P^2-P+1$ and $K=(1-P)x^2+Px+P(1-P)$. We call the three displayed polynomials (I), (II) and (III). A solution therefore gives $(P,x,k)$ with $P\notin\{0,1\}$ at which (I), (II), (III) and $C_0'$ vanish.
\item Write (III) $=A_3+kB_3$ and $C_0'=A_0+kB_0$ with $A_3,B_3,A_0,B_0$ free of $k$. Put $G_3=KA_3+QB_3$ and $G_0=KA_0+QB_0$. Since (I) is $Q-kK$, we have $G_3=K\cdot\mathrm{(III)}+B_3\cdot\mathrm{(I)}$ and $G_0=K\,C_0'+B_0\cdot\mathrm{(I)}$ $(\dagger)$, so that $G_3$ and $G_0$ vanish at $(P,x)$. The resultants with (II) factor as
\[
\mathrm{Res}_x(\mathrm{II},G_3)=P^3(P-1)^2H_{18}(P),\qquad\mathrm{Res}_x(\mathrm{II},G_0)=27P^6(P-1)^5(P^2-3P+1)^4H_{32}(P),
\]
where $H_{18}$ and $H_{32}$ are irreducible over $\Q$, of degrees $18$ and $32$, with leading coefficients $5$ and $100$ $(\dagger)$. Since $P\notin\{0,1\}$, $H_{18}(P)=0$ and $(P^2-3P+1)H_{32}(P)=0$. But $H_{18}\equiv1735830-4544451P$ modulo $P^2-3P+1$, a nonzero polynomial with a rational root, while the roots of $P^2-3P+1$ are irrational. Thus $H_{18}$ and $P^2-3P+1$ have no common root. Since $7$ divides neither leading coefficient and the Euclidean algorithm for $H_{32}$ and $H_{18}$ modulo $7$ ends in a nonzero constant, $H_{18}$ and $H_{32}$ have no common root either. Hence no such $P$ exists.
\end{enumerate}
The polynomials and the table of the Euclidean algorithm, which has $19$ rows, are in the supplementary material.
\end{proof}

\begin{corollary}\label{cor:d15equal}
Let the masses be positive. If the three outer masses $m_i$, $i\in Q_j\setminus\{k\}$, are equal, no singular place of $C$ has type $\pc b_k(j)$.
\end{corollary}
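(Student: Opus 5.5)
We argue by contradiction and chain the results just proved. Suppose a singular place $P$ of $C$ has type $\pc b_k(j)$, with the three outer masses $m_a=m_b=m_c=:\mu$ for $Q'=Q_j\setminus\{k\}=\{a,b,c\}$. By Theorem~\ref{thm:Kgating}(c), after exchanging the colours if necessary, $P$ has diagram $15$ with $w$-edge $kj$. Lemma~\ref{lem:d15lead}(a) then says that the leading cluster coefficients, translated so that $\hat z_k=\hat w_k=0$, solve $\mathrm{K15}(m_Q,k)$. Positivity of the masses gives $m_i\ne0$, $M\ne0$ and $M_Q\ne0$, so Theorem~\ref{thm:order3} applies: the vector $x=(x_a,x_b,x_c)$ satisfies $\sum m_ix_i=0$. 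Together with $\sum m_ix_i^{-2}=0$ from Lemma~\ref{lem:d15lead}(b), this makes $x$ a bad shape.

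Corollary~\ref{cor:equilateral} then shows that $x$ is proportional to $(1,\jj,\jj^2)$ up to order. Hence $(\hat z_a,\hat z_b,\hat z_c)=(x_a^{-2},x_b^{-2},x_c^{-2})$ is $\lambda(1,\jj,\jj^2)$ or $\lambda(1,\jj^2,\jj)$ in some order, with $\lambda\ne0$.

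Next we normalize to match Proposition~\ref{prop:eqfull}. We relabel the cluster so that $k$ becomes body $4$ and the outer bodies become $1,2,3$ in the order making $\hat z_i-\hat z_4$ proportional to $(1,\jj,\jj^2)$ or $(1,\jj^2,\jj)$. We then rescale the masses by $\mu^{-1}$; K15 is invariant under relabelling with the masses and under scaling, and the zero set depends only on mass ratios. This gives a solution of $\mathrm{K15}((1,1,1,\nu),4)$ with $\nu=m_k/\mu\ne0$ and with $z_i-z_4$ in one of the two excluded proportions. Proposition~\ref{prop:eqfull} rules this out, so no such place exists.

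The only delicate point is this bookkeeping. We must check that the relabelling and scaling really carry $\mathrm{K15}(m_Q,k)$ to $\mathrm{K15}((1,1,1,\nu),4)$ with the same distinguished body. We must also check that an arbitrary permutation of the outer bodies keeps the differences in one of the two listed proportions; it does, because any permutation of $\{1,\jj,\jj^2\}$, up to a common factor (a power of $\jj$), is either $(1,\jj,\jj^2)$ or $(1,\jj^2,\jj)$. Finally, the colour exchange is harmless, since K15 with $z$ and $w$ swapped is of the same form and Lemma~\ref{lem:d15lead} is stated after that exchange.
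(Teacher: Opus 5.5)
Your proposal is correct and follows the paper's own argument. It obtains the bad shape from Lemma~\ref{lem:d15lead} and Theorem~\ref{thm:order3}, uses Corollary~\ref{cor:equilateral} to force the equilateral configuration centred at body $k$, and then applies Proposition~\ref{prop:eqfull} after relabelling and scaling the cluster masses to $(1,1,1,\nu)$ with $\nu=m_k/m_a$. The paper compresses this into two sentences, and your extra bookkeeping is accurate: the colour exchange, the invariance of $\mathrm{K15}$ under translation, relabelling and mass scaling, and the fact that every permutation of $(1,\jj,\jj^2)$ is a multiple of one of the two excluded vectors.
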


\begin{proof}
By Corollary~\ref{cor:equilateral} the leading cluster would be an equilateral solution of $\mathrm{K15}((1,1,1,\nu),4)$ after relabelling and scaling, with $\nu=m_k/m_a>0$. Proposition~\ref{prop:eqfull} excludes it.
\end{proof}

\subsection{Diagram 15 at order five}\label{ss:order5}
On the six coordinates $(u_i,v_i)_{i\in Q'}$ of the outer bodies of a cluster with body $k$ at the origin, put
\[
\Psi_c(u,v)=U_Q(u,v)+\frac c2\sum_{i\in Q'}m_iu_i .
\]
Let $m_i\ne0$ for $i\in Q$. A solution of $\mathrm{K15}(m_Q,k)$ then has $c\ne0$ and $M_Q\ne0$, by $cM_Q+dm_k=0$ with $d\ne0$. Since $\partial U_Q/\partial u_i=-\frac12m_iF^w_i$ and $\partial U_Q/\partial v_i=-\frac12m_iF^z_i$, for $c\ne0$ and $M_Q\ne0$ the critical points of $\Psi_c$ are the solutions of $\mathrm{K15}(m_Q,k)$ with body $k$ at the origin and the given $c$ (the equations of body $k$ follow from $\sum_im_iF_i=0$, with $d=-cM_Q/m_k$). Let $H$ be the Hessian of $\Psi_c$ at such a solution, and put $X=(\hat z,-3\hat w)$, $E=(\hat z,\hat w)$ and $\tilde m=(m_a,m_b,m_c)$.

\begin{lemma}\label{lem:Hstruct}
Let $m_i\ne0$ for $i\in Q$. At a solution of $\mathrm{K15}(m_Q,k)$ with body $k$ at the origin, $HX=0$, $HE=c\,(\tilde m,0)$, and $\sum_{i\in Q'}m_i\xi_{u_i}=0$ for every $\xi\in\ker H$.
\end{lemma}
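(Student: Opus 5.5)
The three statements follow from two weighted Euler identities for $\Psi_c$, differentiated once and evaluated at a critical point, together with the symmetry of $H$. Throughout, $p_{il}$ is regarded, near the given solution, as the holomorphic function of $(u,v)$ fixed by its branch there, via $p_{il}^2u_{il}v_{il}=1$ with $u_k=v_k=0$. Write $L=\frac c2\sum_{i\in Q'}m_iu_i$, so that $\Psi_c=U_Q+L$ and $\nabla L=\frac c2(\tilde m,0)$. As recalled before the lemma, the given solution is a critical point of $\Psi_c$ (this uses $c\ne0$ and $M_Q\ne0$), so $\nabla\Psi_c=0$ there, that is, $\nabla U_Q=-\nabla L$.

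\emph{First identity, for $X$.} Under $(u,v)\mapsto(\lambda u,\lambda^{-3}v)$ with $\lambda$ near $1$, each product $u_{il}v_{il}$ is multiplied by $\lambda^{-2}$. By continuity of the branch, each $p_{il}$ is then multiplied by $\lambda$, and so is $U_Q$. The linear term $L$ is also multiplied by $\lambda$. Hence $\Psi_c$ is weighted homogeneous of degree $1$ for the weights $1$ on the $u_i$ and $-3$ on the $v_i$, and differentiating at $\lambda=1$ gives
\[
\sum_i u_i\,\partial_{u_i}\Psi_c-3\sum_i v_i\,\partial_{v_i}\Psi_c=\Psi_c .
\]
Apply $\partial_q$ to this identity for a coordinate $q$ of weight $w_q$. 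The left side gives $(H\,Y)_q+w_q\,\partial_q\Psi_c$, where $Y=(u,-3v)$ is the Euler vector field. The right side is $\partial_q\Psi_c$. At the critical point every $\partial_q\Psi_c$ vanishes and $Y=(\hat z,-3\hat w)=X$, so $HX=0$.

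\emph{Second identity, for $E$.} Under the plain scaling $(\lambda u,\lambda v)$, $U_Q$ has degree $-1$ and $L$ has degree $1$. So with $D=E\cdot\nabla$ we get $D\Psi_c=-U_Q+L$. Differentiating this, and using $\partial_q(D\Psi_c)=(HE)_q+\partial_q\Psi_c$, gives
\[
HE=\nabla(-U_Q+L)-\nabla\Psi_c .
\]
At the critical point $\nabla\Psi_c=0$ and $\nabla U_Q=-\nabla L$, hence
\[
HE=2\nabla L=c\,(\tilde m,0).
\]

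\emph{The kernel statement.} $H$ is a symmetric matrix, being the Hessian of a holomorphic function. For $\xi\in\ker H$ we therefore have
\[
0=E^{\mathsf T}H\xi=(HE)^{\mathsf T}\xi=c\sum_{i\in Q'}m_i\xi_{u_i}.
\]
Since $c\ne0$, this gives $\sum_{i\in Q'}m_i\xi_{u_i}=0$.

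The only point requiring care is to check the homogeneity degrees of $U_Q$ under the two scalings, including under the fixed choice of branches of the square roots. This holds because the scalings are taken with $\lambda$ near $1$ and the branches are continuous.
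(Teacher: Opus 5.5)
Your proof is correct and follows the paper's proof: the Euler identities for the scalings $(\lambda u,\lambda^{-3}v)$ and $(\lambda u,\lambda v)$, evaluated at the critical point, give $HX=0$ and $HE=c(\tilde m,0)$. For the kernel statement you pair $HE$ with $\xi\in\ker H$ using the symmetry of $H$. The paper instead differentiates the third identity $\sum u\partial_u\Psi_c-\sum v\partial_v\Psi_c=\frac c2\sum m_iu_i$ along $\xi$. Since $H(\hat z,-\hat w)$ is a linear combination of $HX$ and $HE$, the two arguments are equivalent.
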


\begin{proof}
Since $U_Q$ is homogeneous of degree $-\frac12$ in $u$ and in $v$ separately, $\Psi_c(\lambda u,\lambda^{-3}v)=\lambda\Psi_c(u,v)$. The orbit of a critical point under this scaling therefore consists of critical points. So its tangent vector $X$ lies in $\ker H$. Next, $\sum(u\partial_u+v\partial_v)\Psi_c=-U_Q+\frac c2\sum m_iu_i$. Differentiating at a critical point, where $\nabla U_Q=-\frac c2(\tilde m,0)$, gives $HE=c(\tilde m,0)$. Finally $\sum u\partial_u\Psi_c-\sum v\partial_v\Psi_c=\frac c2\sum m_iu_i$. Differentiating along $\xi\in\ker H$ at a critical point, we obtain $\frac c2\sum m_i\xi_{u_i}=0$. Here $c\ne0$ as $m_k\ne0$.
\end{proof}

The configuration is \emph{nondegenerate} if $\ker H=\C X$, and has \emph{corank} $r$ if $\dim\ker H=r+1$. For a place of type $\pc b_k(j)$ we take $c=-m_j\hat w_j/M$ as in Lemma~\ref{lem:d15lead}, so that $V_0=\Psi_c$ when $y_{v_j}=\hat w_j$. For $\xi\in\C^6$ put $\kappa_3(\xi)=\sum_{i\in Q'}\frac{\partial V_3}{\partial y_{u_i}}(\bar y)\,\xi_{u_i}$. By Lemma~\ref{lem:d15expansion},
\[
\kappa_3(\xi)=-\tfrac12m_j(-\hat w_j)^{-1/2}\sum_{i\in Q'}m_ix_i^3\,\xi_{u_i},\qquad V_5(\bar y)=\tfrac12m_j\hat z_j(-\hat w_j)^{-1/2}\sum_{i\in Q'}m_ix_i^3 ,
\]
and $\kappa_3(X)$ is proportional to $\sum m_ix_i^3\hat z_i=\sum m_ix_i=0$.

\begin{theorem}\label{thm:order5}
Let the masses satisfy $m_i\ne0$, $M\ne0$ and $M_Q\ne0$, and let $P$ be a singular place of type $\pc b_k(j)$.
\begin{enumerate}[label=\textup{(\alph*)}]
\item If the leading cluster configuration is nondegenerate, then $\sum_{i\in Q'}m_ix_i^3=0$.
\item If it has corank one, then $\kappa_3$ vanishes on $\ker H$.
\end{enumerate}
\end{theorem}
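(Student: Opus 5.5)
Our plan is to push the perturbative analysis of Theorem~\ref{thm:order3} two orders further. We use Lemma~\ref{lem:identities} with $q=5$ together with the gradient equation $\nabla_yF(\tau,y(\tau))=0$ at orders $\tau^{-1}$ and $\tau^{0},\tau^{1}$ in the cluster coordinates.

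The identity \eqref{eq:qid} at $q=5$ reads
\[
[V_1]_4+2[V_2]_3+3[V_3]_2+4[V_4]_1+5V_5(\bar y)=0 .
\]
By Lemma~\ref{lem:d15expansion} we have $V_1=V_4=0$, and \eqref{eq:V2flat} gives $[V_2]_3=0$. What remains is $3[V_3(y(\tau))]_2+5V_5(\bar y)=0$. We expand $[V_3]_2=\nabla V_3(\bar y)\cdot y_2+\frac12\nabla^2V_3(\bar y)[y_1,y_1]$, where $y=\bar y+\tau y_1+\tau^2y_2+\dots$. For this we need the first corrections $y_1,y_2$ of the cluster coordinates.

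These corrections come from the cluster components of $\tau^2\nabla_yF=0$, namely $\nabla V_0+\tau\nabla V_1+\tau^2\nabla V_2+\tau^3\nabla V_3+\dots=0$. In the outer coordinates $V_1=0$, and $V_2$ depends only on $\Pi$, i.e.\ on $(y_{u_j},y_{v_j})$. So the first-order equation gives $H y_1+(\text{terms from }y_{v_j}\text{'s variation})=0$, and similarly at order two. Here $y_{v_j}$ enters $V_0$ only through $c$, and $\Pi(\tau)-\bar\Pi=O(\tau^3)$ by the proof of Theorem~\ref{thm:order3}. Using $HE=c(\tilde m,0)$ from Lemma~\ref{lem:Hstruct}, the variation of $y_{v_j}$ is absorbed into a multiple of $E$. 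Up to this term, the equations show that $y_1$ and $y_2$ lie in $\ker H$, and the third-order equation $Hy_3+\nabla V_3(\bar y)+\dots=0$ gives a solvability condition for $\nabla V_3(\bar y)$ against $\ker H$. We use the reparametrization freedom $\tau\mapsto\tau\varphi(\tau)$ and the rotation $\mu(\tau)$ of \S\ref{ss:expansion} to kill the components of $y_1,y_2$ along $X$ (the scaling orbit) and along $E$. In the nondegenerate case $\ker H=\C X$, this forces $y_1=y_2=0$ modulo the $E$-direction. The $E$-component contributes a term $\nabla V_3\cdot E$, which by the homogeneity of $V_3$ ($-\frac12$ in $y_{u_i}$) is a multiple of $V_3(\bar y)=0$. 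Hence $[V_3]_2=0$, and we obtain $V_5(\bar y)=0$. Since $\hat z_j\ne0$ and $(-\hat w_j)^{-1/2}\ne0$, this is $\sum m_ix_i^3=0$, which is (a).

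For (b), with corank one, $\ker H=\C X\oplus\C\xi$. We first use $\kappa_3(X)=0$. We then apply the Fredholm alternative to the order-three equation, which contains $\nabla V_3(\bar y)$ in the $u$-directions. Pairing it with $\xi\in\ker H$ (using that $H$ is symmetric) kills $Hy_3$. The lower-order contributions are quadratic in $y_1$, with $y_1\in\C\xi$ after normalization. We would then show that they pair to zero or reduce to $\kappa_3(\xi)$ times a factor, using the constraint $\sum m_i\xi_{u_i}=0$ from Lemma~\ref{lem:Hstruct}. This yields $\kappa_3(\xi)=0$.

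The main obstacle is the bookkeeping at orders one and two. We must show that the $y_{v_j}$ and $y_{u_j}$ variations enter the cluster equations only through the $E$ direction, and that no cubic term of $V_0$ in $y_1$ survives in case (b). The first thing we would try is to write everything in the gauge where $y_1$ has no $X$ component, and to check directly that $\Pi=O(\tau^3)$ suppresses the coupling.
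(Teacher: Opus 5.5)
Your argument for (a) is essentially the paper's: \eqref{eq:qid} at $q=5$ reduces to $3[V_3(y(\tau))]_2+5V_5(\bar y)=0$, and nondegeneracy makes the cluster correction vanish to order $3$. Two repairs are needed.

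\begin{itemize}
\item The $y_q$ are Puiseux series in $\tau$, so the ansatz $y=\bar y+\tau y_1+\tau^2y_2+\cdots$ must be replaced by a valuation argument. Injectivity of $H$ on a complement of its kernel gives $\val\delta\ge\min(3,2\val\delta)$, hence $\val\delta\ge3$.
\item The bound $\Pi(\tau)-\bar\Pi=O(\tau^3)$ says nothing about $y_{v_j}$ by itself. Your absorption of its variation into $HE$ can be made to work, but it is cleaner to spend the two freedoms of \S\ref{ss:expansion} on $y_{v_j}\equiv\hat w_j$ and $y_{u_a}\equiv\hat z_a$ for one outer body $a$. Then $V_0=\Psi_c$ with $c$ fixed, and the correction $\delta$ lies in $S=\{\xi_{u_a}=0\}$. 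Since $X_{u_a}=\hat z_a\ne0$, the kernel of the form $H$ on $S$ is $\ker H\cap S$, of dimension equal to the corank.
\end{itemize}

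Part (b) has a genuine gap, exactly at the point you flag as the main obstacle. Write $\delta=a(\tau)\xi_0+\eta(\tau)$ with $\ker H\cap S=\C\xi_0$, $\eta$ in a complement, and $\alpha=\val a$. Pairing the cluster equation $H\delta+R(\delta)+\tau^3\nabla V_3(y)+O(\tau^5)=0$ with $\xi_0$ gives only $\xi_0\cdot R(\delta)+\tau^3\kappa_3(\xi_0)+O(\tau^5)=0$, and this does not force $\kappa_3(\xi_0)=0$.

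Take $\alpha=\frac32$, which your integer expansion silently excludes. The range equations give $\val\eta\ge3$, and the coefficient of $\tau^3$ is $\frac12a_{3/2}^2D^3\Psi_c[\xi_0,\xi_0,\xi_0]+\kappa_3(\xi_0)$. This vanishes for a suitable $a_{3/2}$ whenever the cubic coefficient is nonzero. Nothing makes that cubic term vanish or become proportional to $\kappa_3(\xi_0)$, and the linear relation $\sum m_i\xi_{u_i}=0$ of Lemma~\ref{lem:Hstruct} does not touch it. So the pairing of the gradient equation with the kernel cannot close the argument, and the step ``they pair to zero or reduce to $\kappa_3(\xi)$ times a factor'' fails.

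The missing idea is to use \eqref{eq:qid} at the non-integer order $q=3+\alpha$. There $V_0$ carries the weight $n=0$, so the cubic term of the cluster potential drops out. Suppose $\kappa_3(\xi_0)\ne0$.
\begin{itemize}
\item If $\alpha<2$: the terms $n=1,4$ vanish, $2[V_2]_{1+\alpha}=0$ by \eqref{eq:V2flat}, and the terms $n\ge5$ have negative index, so $[V_3(y(\tau))]_\alpha=0$. But $\val\eta\ge\min(3,2\alpha)>\alpha$. With $y_{v_j}$ constant and $V_3(\bar y)=0$ we have $V_3(y(\tau))=\kappa_3(\delta)+O(|\delta|^2)$, so $[V_3(y(\tau))]_\alpha=a_\alpha\kappa_3(\xi_0)\ne0$, a contradiction.
\item Hence $\alpha\ge2$. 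Then $\val\eta\ge3$ and $\val R(\delta)\ge4$, and only now does your pairing at $\tau^3$ give $\kappa_3(\xi_0)=0$, again a contradiction.
\end{itemize}
With $\kappa_3(X)=0$ and $\ker H=\C X\oplus\C\xi_0$, this gives (b).
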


\begin{proof}
We fix an outer body $a$. By the freedom described in \S\ref{ss:expansion} we may assume $y_{v_j}(\tau)\equiv\hat w_j$ and $y_{u_a}(\tau)\equiv\hat z_a$: the conditions are $\mu^{-1}\varphi^4y_{v_j}=\hat w_j$ and $\mu\varphi^{-2}y_{u_a}=\hat z_a$, which determine $\varphi^2$ and then $\mu$. Write $y(\tau)-\bar y=(\delta(\tau),\delta_{u_j}(\tau))$ with $\delta$ the cluster part, so that $\delta_{u_a}=0$. Let $S=\{\xi\in\C^6:\xi_{u_a}=0\}$. Since $X\in\ker H$ and $X_{u_a}=\hat z_a\ne0$, the kernel of the restriction of the form $H$ to $S$ is $K_S=\ker H\cap S$, of dimension equal to the corank. Choose a complement $W$ of $K_S$ in $S$ and write $\delta=\xi(\tau)+\eta(\tau)$ with $\xi(\tau)\in K_S$, $\eta(\tau)\in W$. Let $\alpha$ be the valuation of $\xi(\tau)$ ($\alpha=\infty$ if $\xi\equiv0$).

By Lemmas~\ref{lem:identities} and \ref{lem:d15expansion}, the cluster components of $\tau^2\nabla_yF=0$ are $\nabla\Psi_c(\bar y+\delta)+\tau^3\nabla V_3(y)+O(\tau^5)=0$, because $V_2$ does not involve the cluster and $V_0=\Psi_c$ along the normalized place. Taylor expansion gives $H\delta+R(\delta)+\tau^3\nabla V_3(y)+O(\tau^5)=0$ with $R(\delta)=O(|\delta|^2)$. Taking the components in $S^*$ and using that $H|_S$ maps $W$ isomorphically onto its image, the valuation of $\eta$ is at least the minimum of $3$, $2\alpha$, $\alpha+\val\eta$ and $2\val\eta$. Hence
\begin{equation}\label{eq:etaval}
\val\eta\ge\min(3,2\alpha).
\end{equation}
Pairing the same equation with $\xi_0\in K_S$ and using $\xi_0^{\mathsf T}H=0$ gives
\begin{equation}\label{eq:bif}
\xi_0\cdot R(\delta)+\tau^3\,\xi_0\cdot\nabla V_3(y(\tau))+O(\tau^5)=0 .
\end{equation}
Since $y_{v_j}$ is constant, $V_3(y(\tau))=V_3(\bar y)+\kappa_3(\delta)+O(|\delta|^2)$ with $V_3(\bar y)=0$.

\eqref{eq:qid} with $q=5$ reads $[V_1]_4+2[V_2]_3+3[V_3]_2+4[V_4]_1+5V_5(\bar y)=0$. By \eqref{eq:V2flat} this becomes
\begin{equation}\label{eq:q5}
3\big[V_3(y(\tau))\big]_2+5V_5(\bar y)=0 .
\end{equation}

(a) Since $K_S=0$ in this case, $\delta=\eta$ has valuation at least $3$ by \eqref{eq:etaval}. Hence $[V_3(y(\tau))]_2=0$. Then \eqref{eq:q5} gives $V_5(\bar y)=0$, i.e.\ $\sum m_ix_i^3=0$.

(b) Here $K_S=\C\xi_0$ and $\xi(\tau)=a(\tau)\xi_0$ with $a(\tau)=a_\alpha\tau^\alpha+\dots$, $a_\alpha\ne0$. Suppose $\kappa_3(\xi_0)\ne0$. If $\alpha<2$, we apply \eqref{eq:qid} with $q=3+\alpha$: the terms $n=1,4$ vanish, the term $n=2$ is $2[V_2]_{1+\alpha}=0$ by \eqref{eq:V2flat}, and the terms $n\ge5$ have negative index. This leaves $[V_3(y(\tau))]_\alpha=0$. But by \eqref{eq:etaval} the valuations of $\eta$ and of $|\delta|^2$ exceed $\alpha$. Therefore $[V_3(y(\tau))]_\alpha=a_\alpha\kappa_3(\xi_0)\ne0$, a contradiction. Hence $\alpha\ge2$. Then $\val\eta\ge3$ and $\val R(\delta)\ge4$. The coefficient of $\tau^3$ in \eqref{eq:bif} is then $\xi_0\cdot\nabla V_3(\bar y)=\kappa_3(\xi_0)$, which must vanish, again a contradiction. Thus $\kappa_3(\xi_0)=0$. As $\kappa_3(X)=0$ and $\ker H=\C X\oplus\C\xi_0$, the form $\kappa_3$ vanishes on $\ker H$.
\end{proof}

The next statement says that three nonzero complex numbers with positive weights cannot have vanishing weighted sums of $x$, $x^3$ and $x^{-2}$; for four numbers this fails (remark (c) after the proof). Theorems~\ref{thm:order3} and~\ref{thm:order5} need only $m_i,M,M_Q\ne0$, but this statement needs positive masses. Corollary~\ref{cor:degenerate} and Proposition~\ref{prop:D14N} use it.

\begin{proposition}\label{prop:P11}
Let $m_a,m_b,m_c>0$. There is no $x\in(\C^*)^3$ with $\sum m_ix_i=\sum m_ix_i^3=\sum m_ix_i^{-2}=0$.
\end{proposition}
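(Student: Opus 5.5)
We reduce the statement to a fact about angles and then prove that fact by an elementary case analysis on the gaps between three points of the circle. Write $x_i=r_ie^{i\theta_i}$ with $r_i>0$. The three conditions become
\[
\sum_i (m_ir_i)\,e^{i\theta_i}=0,\qquad \sum_i (m_ir_i^3)\,e^{3i\theta_i}=0,\qquad \sum_i (m_ir_i^{-2})\,e^{-2i\theta_i}=0 .
\]
In each identity the weights are positive, though different from one identity to the next. Taking complex conjugates in the last one, we conclude that each of the three triples of unit vectors $(e^{ik\theta_i})_i$, $k=1,2,3$, \emph{surrounds} $0$: $0$ is a combination of them with all three weights positive. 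The weights play no further role. So it suffices to prove that no $(\theta_a,\theta_b,\theta_c)$ has $\theta$, $2\theta$ and $3\theta$ all surrounding $0$. This property is invariant under a common rotation $\theta\mapsto\theta+\varphi$, since the multiples are then rotated by $k\varphi$. It is also invariant under permutations and under $\theta\mapsto-\theta$.

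We first characterize surrounding for three unit vectors $e^{i\phi_1},e^{i\phi_2},e^{i\phi_3}$ in terms of the cyclic gaps $g_1,g_2,g_3\ge0$, with $g_1+g_2+g_3=2\pi$. The vectors surround $0$ if and only if either all gaps are $<\pi$ (the triangle contains $0$ in its interior), or one gap equals $\pi$ and another equals $0$ (two of the vectors coincide and the third is opposite to them). We prove this from the fact that $0$ is not a positive combination of vectors lying in a closed half-plane unless they all lie on its boundary line and are balanced. The second alternative is the degenerate one; it will be handled separately.

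Next we normalize $\theta_a=0\le\theta_b\le\theta_c<2\pi$, with gaps $g_1=\theta_b$, $g_2=\theta_c-\theta_b$ and $g_3=2\pi-\theta_c$.
\begin{enumerate}[label=(\arabic*)]
\item \emph{Degenerate case.} If two angles coincide mod $2\pi$ for $\theta$, they coincide for $2\theta$ and $3\theta$ as well. Then each multiple must be of the form (double point, opposite point). That means $\theta_c-\theta_a\equiv\pi$, $2(\theta_c-\theta_a)\equiv\pi$ and $3(\theta_c-\theta_a)\equiv\pi$ mod $2\pi$ for the non-coincident pair. But $\theta_c-\theta_a\equiv\pi$ gives $2(\theta_c-\theta_a)\equiv0$, a contradiction.
\item \emph{Generic case.} Otherwise all gaps are in $(0,\pi)$. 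We must show that the doubled points or the tripled points then fail the criterion. The doubled points sit at $0$, $2g_1$ and $2(g_1+g_2)$ mod $2\pi$. Their gaps are, after reordering, determined piecewise linearly by $(g_1,g_2)$. We split the triangle $\{g_i\in(0,\pi)\}$ into the regions cut out by the lines $g_i=\pi/2$ and work out when all doubled gaps lie in $(0,\pi)$. We expect this to confine $(g_1,g_2,g_3)$ to a small polygon, or to leave only boundary configurations. An example is the region where exactly one gap exceeds $\pi/2$. There the doubled gaps are, up to order, $2g_i$ for the two small gaps and $2g_j-2\pi$, suitably reduced, for the large one. On that polygon we then compute the tripled gaps and show that one of them is $\ge\pi$ without the degenerate alternative. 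As a sanity check, the equilateral case $g_i=2\pi/3$ survives doubling, but its triples coincide.
\end{enumerate}

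The main obstacle is the bookkeeping in step (2). The cyclic order of the points $k\theta_i$ changes across the lines $g_i=\pi/(2k)$ and their translates. The boundary cases in which a gap becomes exactly $0$ or $\pi$ must be matched against the degenerate alternative of the criterion. First we would try to organize this by the half-plane form of the criterion: $\theta$ surrounds $0$ iff no open half-plane contains all three points, and failing that, iff they are not all in a closed half-plane unless balanced on its line. Using this form, we would look for a single direction $\varphi$, chosen from the largest original gap, such that all $\cos(k\theta_i-\varphi)$ are $\ge0$ for $k=2$ or $k=3$. If that search is too delicate, we fall back to the explicit piecewise-linear enumeration, which is finite and could be checked in exact arithmetic like the other $(\dagger)$ identities.
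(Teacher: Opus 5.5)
Your reduction is sound and is, in substance, the paper's: both proofs come down to the fact that no three angles have $\{\theta_i\}$, $\{2\theta_i\}$ and $\{3\theta_i\}$ all surrounding $0$. The paper works with strict surrounding (all cyclic gaps $<\pi$) and disposes of the collinear alternatives case by case, using the weights and its first step. For example, if all $x_i$ lie on one line, then all $x_i^{-2}$ lie on one ray, so $\sum m_ix_i^{-2}\ne0$. You instead drop the weights and use the weak notion. That also works. Your case (1) is correct. When all gaps of $\theta$ lie in $(0,\pi)$, the degenerate alternative cannot occur for $2\theta$, because a coincidence needs $\theta_i-\theta_j\equiv\pi$, which forces a gap equal to $\pi$. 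Nor can it occur for $3\theta$: a coincidence needs $\theta_i-\theta_j\equiv\pm\frac{2\pi}3$, and with $\theta_i=0$, $\theta_j=\frac{2\pi}3$ (after a reflection) the third angle would have to be $\frac\pi3$, $\pi$ or $\frac{5\pi}3$, each of which produces a gap $\ge\pi$. You should state this check explicitly; it is not automatic.

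The genuine gap is your step (2), which carries the entire content of the proposition. You describe a plan (\emph{we expect this to confine \dots}) but never show that $2\theta$ and $3\theta$ cannot both surround $0$. The bookkeeping you fear disappears with the paper's normalization.
\begin{itemize}
\item Put $\theta_a=0$ and write the other two angles as $\pi-s$ and $\pi+t$. Then $\{\theta_i\}$ surrounds $0$ exactly when $s,t>0$ and $s+t<\pi$, since the gaps are $\pi-s$, $s+t$ and $\pi-t$.
\item Then $2\theta\equiv(0,-2s,2t)$ has the gaps $2s$, $2t$ and $2\pi-2s-2t$. So $\{2\theta_i\}$ surrounds $0$ exactly when $s,t<\frac\pi2<s+t$, that is, when all three gaps of $\theta$ lie in $(\frac\pi2,\pi)$. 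This is the small polygon you anticipated.
\item On it, rotating $3\theta$ by $\pi$ gives $\{\pi,-3s,3t\}$ with $3s,3t\in(0,\frac{3\pi}2)$. If $3s,3t<\pi$, one gap is $3s+3t>\frac{3\pi}2$. If $3t\ge\pi>3s$, the set lies in $[\pi,2\pi]$. If $3s\ge\pi>3t$, it lies in $[0,\pi]$. If both are $\ge\pi$, it lies in $[\frac\pi2,\frac{3\pi}2]$.
\end{itemize}
So $\{3\theta_i\}$ always lies in a closed half-circle, and by the check above it is not of the degenerate form. With this four-case argument in place of your tentative enumeration of regions, the proof is complete.
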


\begin{proof}
We say that three directions in $\R/2\pi\Z$ \emph{surround} $0$ if they do not lie in a closed half-circle, that is, if each of the three cyclic gaps between them is less than $\pi$. We use two facts.

(A) Let $w_1,w_2,w_3\in\C^*$ and $\mu_1,\mu_2,\mu_3>0$ with $\sum\mu_iw_i=0$. Then the arguments of the $w_i$ surround $0$, or all $w_i$ lie on one line through $0$. Indeed, if the arguments do not surround $0$, all $w_i$ lie in a closed half-plane $\{\mathrm{Re}(\bar uz)\ge0\}$ with $|u|=1$. Then $0=\sum\mu_i\mathrm{Re}(\bar uw_i)$ is a sum of nonnegative terms, so that all $w_i$ lie on the line $iu\R$.

(B) There are no $\theta_1,\theta_2,\theta_3$ such that each of the sets $\{\theta_i\}$, $\{2\theta_i\}$ and $\{3\theta_i\}$ surrounds $0$. Indeed, the three conditions are unchanged by a common shift of the $\theta_i$ and by relabelling. If $\{\theta_i\}$ surrounds $0$, we may therefore take $\theta=(0,\pi-s,\pi+t)$ with $s,t>0$ and $s+t<\pi$, so that the gaps are $\pi-s$, $s+t$ and $\pi-t$. Then $2\theta\equiv(0,-2s,2t)$, with gaps $2t$, $2\pi-2s-2t$ and $2s$. The set $\{2\theta_i\}$ surrounds $0$ exactly when $s<\frac\pi2$, $t<\frac\pi2$ and $s+t>\frac\pi2$. Assume this. Then $3s,3t\in(0,\frac{3\pi}2)$. After a rotation by $\pi$ the set $\{3\theta_i\}$ is $\{\pi,-3s,3t\}$. If $3s<\pi$ and $3t<\pi$, its gaps are $\pi-3t$, $\pi-3s$ and $3s+3t>\frac{3\pi}2$. If $3t\ge\pi>3s$, it lies in $[\pi,2\pi]$. If $3s\ge\pi>3t$, it lies in $[0,\pi]$. If $3s\ge\pi$ and $3t\ge\pi$, it lies in $[\frac\pi2,\frac{3\pi}2]$. In no case does it surround $0$.

Now let $x$ be a solution, and put $\theta_i=\arg x_i$.
\begin{enumerate}[label=(\arabic*)]
\item $\{\theta_i\}$ surrounds $0$. This is (A) with $w=x$: if all $x_i$ lay on a line through $0$, then $x_i=\lambda r_i$ with $r_i\in\R^*$, and $\sum m_ix_i^{-2}=\lambda^{-2}\sum m_ir_i^{-2}\ne0$.
\item $\{2\theta_i\}$ surrounds $0$. This is (A) with $w=x^{-2}$, whose arguments are the $-2\theta_i$ (a set surrounds $0$ if and only if its mirror image does): if all $x_i^{-2}$ lay on a line through $0$, the $\theta_i$ would lie among four directions at right angles, and three of these lie in a closed half-circle, against (1).
\item $\{3\theta_i\}$ surrounds $0$. This is (A) with $w=x^3$: if all $x_i^3$ lay on a line through $0$, then $\theta_i=\psi+n_i\frac\pi3$ with $n_i\in\Z/6$. By (1) the cyclic gaps between the $n_i$ are less than $3$. As they are also positive with sum $6$, they all equal $2$. So the $n_i$ have the same parity, all $x_i^3$ lie on one ray, and $\sum m_ix_i^3\ne0$.
\end{enumerate}
Steps (1)--(3) contradict (B).
\end{proof}

\emph{Remarks.} (a) A second proof eliminates $x$. With $x=(1,t,-(m_a+m_bt)/m_c)$, the conditions are $Q(t)=0$, with $Q$ as in Lemma~\ref{lem:badshape}, and $C(t)=m_c^2(m_a+m_bt^3)-(m_a+m_bt)^3=0$. A solution therefore gives a common root of $Q$ and $C$, so that their resultant
\[
\Res_t(Q,C)=m_a^2m_b^2m_c^4(m_a+m_b+m_c)^2\,P_{11}(m_a,m_b,m_c)
\]
vanishes. But $P_{11}$ is a symmetric polynomial of degree $11$ that is positive on the positive orthant (Appendix~\ref{app:P11}).
(b) The positivity of all three masses is used in (A) and in steps (1) and (3). The statement has no uniform version on the closed simplex: for $m=(\frac12,\frac12,\varepsilon)$ and $x=(1,-1,i\sqrt\varepsilon)$ we have $\sum m_ix_i^{-2}=0$, while $\sum m_ix_i=i\varepsilon^{3/2}$ and $\sum m_ix_i^3=-i\varepsilon^{5/2}$ tend to $0$ with $\varepsilon$. Correspondingly, $P_{11}(\frac12,\frac12,0)=0$.
(c) For four points the analogous statement fails: equal weights at $1,i,-1,-i$ satisfy all three conditions. In the proof above, (B) is where the number three enters.

Note that for equal outer masses the bad shape is $(1,\jj,\jj^2)$ and $\sum m_ix_i^3=3m_a\ne0$, in accordance with Proposition~\ref{prop:P11}.

\begin{corollary}\label{cor:degenerate}
Let $m\in(\R_{>0})^5$. At a singular place of type $\pc b_k(j)$ the leading cluster configuration is degenerate, and there is $\xi\in\ker H\setminus\C X$ with $\kappa_3(\xi)=0$.
\end{corollary}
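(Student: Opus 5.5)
The corollary follows by combining Theorems~\ref{thm:order3} and~\ref{thm:order5} with Proposition~\ref{prop:P11}. Let $P$ be a singular place of type $\pc b_k(j)$ at positive masses, so that $m_i\ne0$, $M\ne0$ and $M_Q\ne0$. Theorem~\ref{thm:order3} gives $\sum_{i\in Q'}m_ix_i=0$, and Lemma~\ref{lem:d15lead}(b), rewritten as in the text after it, gives $\sum_{i\in Q'}m_ix_i^{-2}=0$. All $x_i$ are nonzero, since $x_i^2=1/\hat z_i$ with $\hat z_i$ a leading coefficient. If the leading cluster were nondegenerate, Theorem~\ref{thm:order5}(a) would add $\sum m_ix_i^3=0$. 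Proposition~\ref{prop:P11}, applied to the three positive outer masses, excludes this. Hence the leading configuration is degenerate, so $\dim\ker H\ge2$.

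For the second assertion we must produce $\xi\in\ker H\setminus\C X$ with $\kappa_3(\xi)=0$. If the corank is exactly one, Theorem~\ref{thm:order5}(b) says that $\kappa_3$ vanishes on all of $\ker H$. Any $\xi\in\ker H\setminus\C X$ then works, and one exists because the corank is positive. If the corank $r$ is at least two, we argue by linear algebra. Since $\kappa_3$ is a linear form, its kernel meets the $(r+1)$-dimensional space $\ker H$ in a subspace of dimension at least $r\ge2$. A subspace of dimension at least $2$ is not contained in the line $\C X$. So some $\xi\in\ker H\cap\ker\kappa_3$ lies outside $\C X$.

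The corank-one case is the only one that needs real input, and it rests entirely on Theorem~\ref{thm:order5}(b). The remaining care is bookkeeping: we must check that the hypotheses of Theorems~\ref{thm:order3} and~\ref{thm:order5} are the nonvanishing conditions, which hold for positive masses, and that Proposition~\ref{prop:P11} is applied to the outer masses $m_a,m_b,m_c>0$ with $x\in(\C^*)^3$. We do not expect a genuine obstacle. The higher-corank case needs no expansion at all, only the dimension count above.
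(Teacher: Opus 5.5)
Your proposal is correct and follows the paper's proof: nondegeneracy is excluded by Theorems~\ref{thm:order3} and~\ref{thm:order5}(a) together with Proposition~\ref{prop:P11}, corank one is handled by Theorem~\ref{thm:order5}(b), and corank at least two by a dimension count. In that last case the paper views $\kappa_3$ as a form on $\ker H/\C X$ (using $\kappa_3(X)=0$), while you intersect $\ker\kappa_3$ with $\ker H$ directly; the two arguments are equivalent, and yours does not even need $\kappa_3(X)=0$.
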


\begin{proof}
A nondegenerate configuration would give a bad shape with $\sum m_ix_i^3=0$ (Theorems~\ref{thm:order3} and \ref{thm:order5}(a)), which is excluded by Proposition~\ref{prop:P11}. At corank one, $\kappa_3$ vanishes on $\ker H$ by Theorem~\ref{thm:order5}(b). At corank at least two, $\kappa_3$ is a linear form on $\ker H/\C X$, a space of dimension at least two, and has a nonzero kernel there.
\end{proof}

\subsection{Degenerate leading configurations}\label{ss:residual}
For a function $\lambda$ on the six pairs of $Q$, let $L_\lambda$ be the grounded Laplacian of the complete graph on $Q$ with edge weights $\lambda_{il}$, grounded at $k$: the $3\times3$ matrix with $(L_\lambda\xi)_i=\sum_{l\ne i}\lambda_{il}(\xi_i-\xi_l)$ for $i\in Q'$, where $\xi_k=0$. At a solution of K15 put $q_{il}=m_im_lp_{il}^3$ and $\varrho_{il}=z_{il}/w_{il}$.

\begin{lemma}\label{lem:laplacian}
At a solution of $\mathrm{K15}(m_Q,k)$ with body $k$ at the origin,
\[
H=\begin{pmatrix}L_{3q/(4\varrho)}&L_{q/4}\\ L_{q/4}&L_{3q\varrho/4}\end{pmatrix},
\]
and the system reads $L_qz=0$, $L_{q\varrho}w=0$ and $L_qw=c\,\tilde m$.
\end{lemma}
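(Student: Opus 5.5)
The plan is a direct computation in the coordinates $u_i=z_i-z_k$, $v_i=w_i-w_k$, $i\in Q'$, with $u_k=v_k=0$. The linear term $\frac c2\sum m_iu_i$ of $\Psi_c$ has zero Hessian, so $H$ is the Hessian of $U_Q$ alone. Since $U_Q=\sum_{i<l}m_im_lp_{il}$ with $p_{il}=(z_{il}w_{il})^{-1/2}$ depends on the positions only through the differences $z_{il}$, $w_{il}$, each term is a function of $(z_{il},w_{il})$.

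The second derivatives of a function $g(z_{il},w_{il})$ of one pair contribute to the Hessian in the pattern of a rank-one Laplacian on the edge $il$. The weights are $g_{zz}$, $g_{zw}$, $g_{ww}$ in the $uu$, $uv$, $vv$ blocks respectively. Summing over pairs and deleting the row and column of the grounded vertex $k$ gives exactly the grounded Laplacians $L_\lambda$ of the statement. So it suffices to compute the three second derivatives of $g=m_im_l(zw)^{-1/2}$ on one edge, using $p^2zw=1$ at the solution.

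First, $g_z=-\frac12 m_im_lp^3w$ and $g_w=-\frac12m_im_lp^3z$. Differentiating again:
\[
g_{zz}=\tfrac34m_im_lp^5w^2=\tfrac34 q\,\frac{w}{z}\,(p^2zw)=\frac{3q}{4\varrho},\qquad g_{ww}=\frac{3q\varrho}{4},
\]
\[
g_{zw}=-\tfrac12m_im_l\big(p^3-\tfrac32p^5zw\big)=\tfrac14 q .
\]
This gives the block form of $H$. I will check the sign conventions once: $(L_\lambda\xi)_i=\sum_l\lambda_{il}(\xi_i-\xi_l)$ matches $\partial^2/\partial u_i\partial u_{i'}$ of a function of $u_i-u_l$.

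For the reformulation of the system, note that $m_iF^z_i=\sum_{l\ne i}q_{il}z_{il}=(L_qz)_i$, and likewise $m_iF^w_i=(L_qw)_i$, for $i\in Q'$ with $z_k=w_k=0$. So $F^z_i=0$ for $i\in Q'$ is $L_qz=0$, and $F^w_i=c$ for $i\in Q'$ (as $i\ne k$) is $L_qw=c\,\tilde m$. The equations of body $k$ follow from $\sum_im_iF_i=0$, as noted before Lemma~\ref{lem:Hstruct}.

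Finally, $q_{il}\varrho_{il}w_{il}=q_{il}z_{il}$, so $L_{q\varrho}w=L_qz=0$. This is a restatement of the first equation, and it is the one I expect to be needed later for the kernel analysis.

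There is no real obstacle. The only care needed is bookkeeping: the factors $\frac34$ and $\frac14$, consistent use of $p^2zw=1$ (which is what turns $p^5w^2$ into $p^3/\varrho$), and matching the grounding convention with $u_k=v_k=0$.
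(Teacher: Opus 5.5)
Your proposal is correct and follows essentially the same route as the paper: a per-pair computation of the second derivatives of $m_im_lp_{il}$ using $p^2zw=1$, assembled into grounded Laplacians, with the linear term of $\Psi_c$ dropping out, and the system rewritten via $m_iF^z_i=(L_qz)_i$, $m_iF^w_i=(L_qw)_i$ and $q_{il}\varrho_{il}w_{il}=q_{il}z_{il}$. The only difference is cosmetic: the paper reads off the coefficients $\tfrac34q/\varrho$, $\tfrac14q$, $\tfrac34q\varrho$ from the second derivative of $p(1+s\alpha)^{-1/2}(1+s\beta)^{-1/2}$ along a line, where you compute the partial derivatives directly.
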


\begin{proof}
For one pair, $p=(zw)^{-1/2}$ and, with $\alpha=\zeta/z$, $\beta=\eta/w$ for a displacement $(\zeta,\eta)$, $p(z+s\zeta,w+s\eta)=p(1+s\alpha)^{-1/2}(1+s\beta)^{-1/2}$, whose second derivative at $s=0$ is $\frac p4(3\alpha^2+2\alpha\beta+3\beta^2)$. With $p^2zw=1$ the three coefficients are $\frac34q/\varrho$, $\frac14q$ and $\frac34q\varrho$ times $\zeta^2$, $2\zeta\eta$, $\eta^2$ (divided by $m_im_l$). Summing over the pairs, we obtain $H$. The linear term of $\Psi_c$ does not contribute. The system is $(L_qz)_i=m_iF^z_i$ and $(L_qw)_i=m_iF^w_i$, and $q_{il}\varrho_{il}w_{il}=q_{il}z_{il}$.
\end{proof}

\begin{theorem}\label{thm:residual}
Let the outer masses be positive and $m_k\ne0$, let $(\hat z,\hat w)$ solve $\mathrm{K15}(m_Q,k)$ with body $k$ at the origin, suppose $\sum_{i\in Q'}m_ix_i=0$ with $x_i^2=1/\hat z_i$, and let $\xi\in\ker H\setminus\C X$ satisfy $\sum_{i\in Q'}m_ix_i^3\xi_{u_i}=0$. Then one of the following holds:
\begin{enumerate}[label=\textup{(\roman*)}]
\item the outer masses are equal and $\{\hat z_i\}=\lambda\{1,\jj,\jj^2\}$;
\item \textup{(G2)} $\ker L_q=\C\hat z$ and $L_{q\varrho}\hat z=0$;
\item \textup{(G1)} $L_q=\sigma\,\tilde m\tilde m^{\mathsf T}$ for some $\sigma$, and $\rank L_{q\varrho}=1$.
\end{enumerate}
\end{theorem}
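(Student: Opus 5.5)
We plan to work entirely with the block form of Lemma~\ref{lem:laplacian}. Write $\xi=(\xi_u,\xi_v)\in\C^3\times\C^3$. The condition $H\xi=0$ then reads
\[
L_{3q/(4\varrho)}\xi_u+L_{q/4}\xi_v=0,\qquad L_{q/4}\xi_u+L_{3q\varrho/4}\xi_v=0 .
\]
We also have the relations $L_q\hat z=0$, $L_{q\varrho}\hat w=0$ and $L_q\hat w=c\tilde m$, and Lemma~\ref{lem:Hstruct} gives $\tilde m\cdot\xi_u=0$ and $HE=c(\tilde m,0)$. On the edges, $\varrho_{il}=\hat z_{il}/\hat w_{il}=\hat p_{il}^2\hat z_{il}^2$. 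So $L_{q/\varrho}$ and $L_{q\varrho}$ are determined by $\hat z$ and $\hat p$. The first step is to remove the scaling direction $X=(\hat z,-3\hat w)$ and solve the kernel equations for $\xi_v$ in terms of $\xi_u$. This is possible wherever $L_{q\varrho}$ is invertible.

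The case split is governed by the ranks of the two Laplacians $L_q$ and $L_{q\varrho}$. $L_q$ is singular, since $L_q\hat z=0$ with $\hat z\neq0$. So either $\ker L_q=\C\hat z$ (rank two) or $L_q$ has rank at most one.

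In the rank-two case we first assume $L_{q\varrho}$ invertible. We eliminate $\xi_v=-\tfrac13L_{q\varrho}^{-1}L_q\xi_u$, giving a $3\times3$ condition on $\xi_u$ whose solutions contain $\hat z$. The extra kernel vector must then satisfy both $\tilde m\cdot\xi_u=0$ and $\sum m_ix_i^3\xi_{u_i}=0$. Together with $\sum m_ix_i=0$, $\sum m_ix_i^{-2}=0$ and Proposition~\ref{prop:P11}-type positivity, we expect this to force a contradiction, except possibly in the symmetric situation of (i). If instead $L_{q\varrho}$ is singular, we aim to show that its kernel must be $\C\hat z$. The argument is to pair the kernel equations with $\hat z$ and $\hat w$ and use the symmetry of the Laplacians, $\hat z^{\mathsf T}L_q=0$, and the identity $L_{q\varrho}\hat w=0$. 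This yields (G2).

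In the rank-at-most-one case, symmetry of $L_q$ together with $L_q\hat w=c\tilde m$, $c\ne0$, forces $L_q=\sigma\tilde m\tilde m^{\mathsf T}$. Since $L_{q\varrho}\hat w=0$, the Laplacian $L_{q\varrho}$ is singular, and the remaining task is to exclude rank zero and rank two for $L_{q\varrho}$. Rank zero would make all $q_{il}\varrho_{il}$ vanish, which contradicts $p_{il}\ne0$. For rank two we again eliminate and use the moment conditions, which should land in (i) or a contradiction. This gives (G1).

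The main obstacle is the rank-two elimination. We must show that the two linear conditions on the extra kernel direction are incompatible for positive outer masses, except in the equilateral case. We would first try to reduce this to vanishing of a resultant in the bad-shape parameter $t$ of Lemma~\ref{lem:badshape}, computed in exact arithmetic, and then use positivity as in Proposition~\ref{prop:P11}. The equal-mass branch would be isolated through the factor $t^2+t+1$ of Corollary~\ref{cor:equilateral}.
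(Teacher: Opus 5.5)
\textbf{There is a genuine gap.} Part of your case analysis is vacuous, part of it aims at the wrong conclusion, and the step that actually drives the proof is missing.

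First, $L_{q\varrho}$ is never invertible. Lemma~\ref{lem:laplacian} gives $L_{q\varrho}\hat w=0$, and $\hat w\neq0$ because $\hat w_i=\hat w_{ik}\neq0$ for $i\in Q'$. So the subcase ``rank two with $L_{q\varrho}$ invertible'' and the elimination $\xi_v=-\frac13L_{q\varrho}^{-1}L_q\xi_u$ never occur.

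Second, in the remaining subcase the kernel of $L_{q\varrho}$ cannot be $\C\hat z$. It contains $\hat w$, and $\hat w$ is not a multiple of $\hat z$, since that would give $L_q\hat w=0\neq c\tilde m$. What (G2) asserts is that $\hat z$ lies in this kernel \emph{as well}, so that $L_{q\varrho}$ has rank one.

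Third, you expect the rank-two case to end in a contradiction via a resultant in $t$ and positivity in the style of Proposition~\ref{prop:P11}. It does not. (G2) is a genuine outcome at this stage, excluded only later by Lemma~\ref{lem:G2mass} and Theorem~\ref{thm:wtriangle}. Moreover, Proposition~\ref{prop:P11} concerns $\sum m_ix_i^3$, not the pairing of $x^3$ with a kernel vector. Your step ``we again eliminate and use the moment conditions'' in the rank-at-most-one case has the same defect: nothing in it produces the vector that the rank argument needs.

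\textbf{The missing idea is short linear algebra in $\C^3$.} Consider the forms $\ell_1(\zeta)=\sum_{i\in Q'}m_i\zeta_i$ and $\ell_2(\zeta)=\sum_{i\in Q'}m_ix_i^3\zeta_i$. They vanish on $\xi_u$, by Lemma~\ref{lem:Hstruct} and the hypothesis. They also vanish \emph{on $\hat z$}. Indeed, $\sum m_i\hat z_i=0$ follows from $cM_Q+dm_k=0$ with $\hat z_k=0$, as in Lemma~\ref{lem:d15lead}(b), and $\sum m_ix_i^3\hat z_i=\sum m_ix_i=0$.

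If $\ell_1$ and $\ell_2$ are proportional, then $x_a^3=x_b^3=x_c^3$. Since the $\hat z_i=x_i^{-2}$ are distinct, $x$ is a multiple of $(1,\jj,\jj^2)$ up to order. Then $\sum m_ix_i=0$ with positive $m_i$ forces equal masses, which is case (i).

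Otherwise the common kernel of $\ell_1,\ell_2$ is $\C\hat z$, so $\xi_u=\gamma\hat z$. Then $\xi-\gamma X=(0,\eta')$ lies in $\ker H$, with $\eta'=\xi_v+3\gamma\hat w\neq0$ because $\xi\notin\C X$. By the block form this means $L_q\eta'=L_{q\varrho}\eta'=0$. Now split on the rank of $L_q$.
\begin{itemize}
\item If $\rank L_q=2$, then $\eta'\in\C\hat z$, hence $L_{q\varrho}\hat z=0$. This is case (ii).
\item If $\rank L_q\le1$, then $L_q=\sigma\tilde m\tilde m^{\mathsf T}$ as you say, with $\ker L_q=\tilde m^\perp$. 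If $\rank L_{q\varrho}$ were $2$, then $\eta'\in\C\hat w$ and $\tilde m\cdot\hat w=0$, contradicting $\sigma(\tilde m\cdot\hat w)\tilde m=c\tilde m\neq0$. Also $L_{q\varrho}\neq0$, since its off-diagonal entries $-q_{il}\varrho_{il}$ are nonzero. So its rank is one, which is case (iii).
\end{itemize}
No elimination, resultant or further positivity argument is needed.
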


\begin{proof}
We write $\xi=(\zeta,\eta)$. The linear forms $\ell_1(\zeta)=\sum m_i\zeta_i$ and $\ell_2(\zeta)=\sum m_ix_i^3\zeta_i$ vanish on $\zeta$ (Lemma~\ref{lem:Hstruct} and the hypothesis on $\xi$) and on $\hat z$ ($\sum m_i\hat z_i=0$ and $\sum m_ix_i^3\hat z_i=\sum m_ix_i=0$). If they are proportional, then $x_a^3=x_b^3=x_c^3$. In that case, since the $\hat z_i=x_i^{-2}$ are distinct, $x$ is proportional to $(1,\jj,\jj^2)$ up to order, and $\sum m_ix_i=0$ with real positive $m_i$ forces equal masses: case (i). If they are not proportional, their common kernel is $\C\hat z$, which gives $\zeta=\gamma\hat z$, and $\xi-\gamma X=(0,\eta')$ with $\eta'=\eta+3\gamma\hat w\neq0$ lies in $\ker H$. By Lemma~\ref{lem:laplacian}, $(0,\eta')\in\ker H$ means $L_q\eta'=L_{q\varrho}\eta'=0$. Now $\hat z\in\ker L_q$ and $\hat w\in\ker L_{q\varrho}$. Hence $\rank L_q\le2$. If $\rank L_q=2$, then $\eta'\in\C\hat z$ and $L_{q\varrho}\hat z=0$: case (ii). If $\rank L_q\le1$, we write $L_q=\sigma vv^{\mathsf T}$. Then $L_q\hat w=\sigma v(v\cdot\hat w)=c\tilde m\ne0$ forces $v\in\C\tilde m$ and $\ker L_q=\tilde m^\perp$. If $\rank L_{q\varrho}=2$, then $\eta'\in\C\hat w$ and $\tilde m\cdot\hat w=0$, contradicting $\sigma(\tilde m\cdot\hat w)\tilde m=c\tilde m$ with $c\ne0$. As $L_{q\varrho}\ne0$, $\rank L_{q\varrho}=1$: case (iii).
\end{proof}

Case (iii) is excluded by the following statement, which does not use the bad shape.

\begin{proposition}\label{prop:G1}
Let $(\hat z,\hat w)$ solve $\mathrm{K15}(m_Q,k)$ with body $k$ at the origin, where $m_a$, $m_b$, $m_c$, $m_k$ and $M_{Q'}=m_a+m_b+m_c$ are nonzero, and suppose that $\rank L_q\le1$ and $\rank L_{q\varrho}\le1$. Then $(m_k/M_{Q'})^2=-1/s^3$ for a number $s$ with $s^3=\pm3\sqrt{-3}$. In particular $m_k/M_{Q'}$ is not real, and the masses are not all positive.
\end{proposition}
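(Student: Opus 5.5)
The plan is to turn the two rank conditions into explicit formulas for the $p_{il}$ and $\varrho_{il}$, so that $\mathrm{K15}(m_Q,k)$ collapses to a small algebraic system in a few scalars that can be eliminated by hand or by exact computer algebra.

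\emph{Step 1: the structure of $L_q$.} I extend $L_q$ to the full $4\times4$ weighted Laplacian of the complete graph on $Q$. It is symmetric with zero row sums, and its rank equals that of the grounded matrix. So $\rank L_q\le1$ gives $L_q^{\mathrm{full}}=\sigma vv^{\mathsf T}$ with $\sum_{i\in Q}v_i=0$.

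As in the proof of Theorem~\ref{thm:residual}, the equation $L_qw=c\tilde m$ with $c\ne0$ (Lemma~\ref{lem:laplacian}) forces $v|_{Q'}\in\C\tilde m$. Normalizing, $v_i=m_i$ for $i\in Q'$ and $v_k=-M_{Q'}$. Reading off the off-diagonal entries then gives
\[
q_{il}=-\sigma m_im_l\quad(i,l\in Q'),\qquad q_{ik}=\sigma m_iM_{Q'}.
\]
Hence $p_{il}^3=-\sigma$ on the three outer pairs and $p_{ik}^3=\sigma M_{Q'}/m_k$. In addition, $L_q\hat z=0$ becomes the single relation $\sum_{i\in Q'}m_i\hat z_i=0$, and $L_q\hat w=c\tilde m$ becomes $\sigma\sum_{i\in Q}v_i\hat w_i=c$.

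\emph{Step 2: the structure of $L_{q\varrho}$.} In the same way, $L_{q\varrho}^{\mathrm{full}}=\tau uu^{\mathsf T}$ with $\sum_{i\in Q}u_i=0$. This gives $q_{il}\varrho_{il}=-\tau u_iu_l$, and $L_{q\varrho}\hat w=0$ becomes $\sum_i u_i\hat w_i=0$.

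Dividing by the formulas for $q_{il}$ expresses every $\varrho_{il}=\hat z_{il}/\hat w_{il}$ as a product $\mathrm{const}\cdot y_iy_l$ with $y_i=u_i/m_i$ for $i\in Q'$ and $y_k=-u_k/M_{Q'}$. The constant is $\tau/\sigma$ on the outer pairs and $-\tau/\sigma$ on the pairs $ik$.

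Next I combine this with the rows $p_{il}^2\hat z_{il}\hat w_{il}=1$. They become $\hat z_{il}^2=\varrho_{il}/p_{il}^2$. The factors $p_{il}^2$ involve only $\sigma^{2/3}$, cube roots of unity $\varepsilon_{il}$, and, on the pairs $ik$, the ratio $\mu=M_{Q'}/m_k$. So every $\hat z_{il}^2$ is an explicit monomial in the $y_i$ times constants.

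\emph{Step 3: closure and elimination.} The remaining constraints are as follows.
\begin{itemize}
\item The triangle closures $\hat z_{ab}=\hat z_a-\hat z_b$, and so on, where $\hat z_i=\hat z_{ik}$.
\item The same closures for $\hat w_{il}=\hat z_{il}/\varrho_{il}$.
\item The linear relations $\sum m_i\hat z_i=0$ and $\sum u_i\hat w_i=0$.
\item The relation $\sum u_i=0$.
\end{itemize}
Using the scaling symmetries of K15, I normalize $\sigma$, $\tau$ and one $y_i$.

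Squaring the closure relations gives polynomial identities. For instance, $\hat z_{ab}^2=\hat z_a^2+\hat z_b^2-2\hat z_a\hat z_b$ determines the products $\hat z_a\hat z_b$ as monomials. Multiplying the three such products against the squares gives a consistency condition. Each $\hat z_i^2$ carries the factor $\mu^{-2/3}$ and each outer $\hat z_{il}^2$ does not, so the consistency condition should isolate $\mu^2$ against a pure number. I expect it to lead to $\mu^{-2}=-s^3$, with $s$ built from the cube roots of unity $\varepsilon_{il}$ and the normalized $y$'s. The remaining linear relations then should pin down $s^3=\pm3\sqrt{-3}$.

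Once the identity $(m_k/M_{Q'})^2=-1/s^3$ holds, $-1/s^3=\mp1/(3\sqrt{-3})$ is purely imaginary. So $m_k/M_{Q'}$ is not real, and the masses cannot all be positive.

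\emph{Main obstacle.} The main obstacle is the elimination in Step 3. There are finitely many choices of cube roots of unity $\varepsilon_{il}$ for the $p_{il}$ (on the outer pairs and on the pairs $ik$), and each must be handled. The system is overdetermined, so most choices should be inconsistent.

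My first move would be to exploit the $\mathbb{Z}/3$ symmetry, rescaling $p\mapsto\varepsilon p$ uniformly, to reduce to a few sign and root patterns. For each pattern I would compute a Gröbner basis (or successive resultants) in $(y_a,y_b,y_c,\mu)$ over $\Q(\jj)$. The expectation is that every consistent branch yields $\mu^2s^3=-1$ with $s^6=-27$. I would mark those identities $(\dagger)$ as verified in exact arithmetic, as the paper does elsewhere.
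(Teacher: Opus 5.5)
Your Steps~1 and~2 are correct and match the opening of the paper's proof. They give $L_q=\sigma\tilde m\tilde m^{\mathsf T}$, so that $p^3$ is $-\sigma$ on the outer pairs and $\sigma M_{Q'}/m_k$ on the pairs $ik$. They also give $\varrho_{il}=\gamma\beta_i\beta_l$ with one constant on all six pairs. With your $y_k=-u_k/M_{Q'}$ the constant on the pairs $ik$ is $\tau/\sigma$, not $-\tau/\sigma$.

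The gap is Step~3, which carries the whole content of the proposition. It states only the expectation that an unspecified elimination over all patterns of cube roots will produce the identity, and several details show the elimination has not been worked out:
\begin{enumerate}
\item Squaring the closures gives $\hat z_a\hat z_b=\frac12(\hat z_a^2+\hat z_b^2-\hat z_{ab}^2)$, which is a sum of monomials, not a monomial.
\item The identity you anticipate, $\mu^{-2}=-s^3$ (equivalently $\mu^2s^3=-1$), is not the claimed $\mu^{-2}=-1/s^3$. For $s^6=-27$ the two right sides differ by the factor $-27$.
\item The value of $s^3$ does not come from the linear relations $\sum_{i\in Q'}m_i\hat z_i=0$ and $\sum_iu_i\hat w_i=0$. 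These enter only to rule out $\hat w\in\C\hat z$. The value of $s^3$ comes from a case analysis of a sum of three cube roots of unity.
\end{enumerate}
So the proposal does not yet show why the ratio $m_k/M_{Q'}$ is forced.

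The missing idea is to invert the configuration at body $k$, which makes the elimination collapse by hand. Put $\psi_i=1/\hat z_i$.
\begin{itemize}
\item Since $\hat z_k=\hat w_k=0$, the pairs $ik$ give $\hat z_i=\gamma\beta_i\beta_k\hat w_i$. Inserting this into $\varrho_{il}=\gamma\beta_i\beta_l$ on the outer pairs shows that $(1-\beta_i/\beta_k)\psi_i$ takes a common value $\psi_0$.
\item $\psi_0\ne0$: otherwise all $\beta_i$ are equal and $\hat w\in\C\hat z$, which is impossible because $L_q\hat z=0\ne c\tilde m=L_q\hat w$. Hence $1/\hat w_i=A(\psi_i-\psi_0)$ with $A=\gamma\beta_k^2$.
\item Write $p_{ik}=\varepsilon_i\pi_1$ and $p_{il}=\varepsilon_{il}\pi_0$ with cube roots of unity $\varepsilon$. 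The rows $D_{ik}$ become $\psi_i(\psi_i-\psi_0)=\varkappa\varepsilon_i^2$ with $\varkappa=\pi_1^2/A$. The rows $D_{il}$ become $(\psi_i-\psi_l)^2=\varepsilon_{il}\varepsilon_i^2\varepsilon_l^2\varkappa/\lambda$ with $\lambda=(\pi_0/\pi_1)^2$.
\item The rows $D_{il}$ say that the squared sides of the triangle $\psi_a\psi_b\psi_c$ agree up to cube roots of unity. This forces the triangle to be equilateral: $\psi_i=h+R\varpi_i$ with $\{\varpi_i\}=\{1,\jj,\jj^2\}$ and $R\ne0$.
\item The quadratic $\Phi(\varpi)=(h+R\varpi)(h+R\varpi-\psi_0)$ then takes the values $\varkappa\varepsilon_i^2$ at the three cube roots of unity. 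Averaging against $\varpi_i$ gives $R^2=\varkappa s/3$ with $s=\sum_i\varepsilon_i^2\varpi_i$.
\item If the three terms of $s$ are distinct, then $s=0$ and hence $R=0$. If they are all equal, $\Phi$ is a multiple of $\varpi^2$ and hence $\psi_0=0$. So exactly two terms are equal, and $s^3=\pm3\sqrt{-3}$.
\item Finally, $(\psi_b-\psi_a)^6=-27R^6=-\varkappa^3s^3$, and the row $D_{ab}$ gives $(\psi_b-\psi_a)^6=\varkappa^3/\lambda^3$. Together these give $\lambda^3=-1/s^3$.
\item Choosing $\pi_0=p_{ab}$ and $\pi_1=p_{ak}$ gives $\lambda^3=(p_{ab}^3/p_{ak}^3)^2=(m_k/M_{Q'})^2$.
\end{itemize}
No computer algebra is needed.
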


\begin{proof}
We use the rank conditions to write $q_{il}$ and $\varrho_{il}$ as products of factors attached to~$i$ and to~$l$, deduce from the rows $D_{il}$ that the three points $1/\hat z_i$ form an equilateral triangle, and compute $s$ and $(m_k/M_{Q'})^2$ from this triangle.

(1) Since $L_q\hat w=c\tilde m\ne0$, $\rank L_q=1$ and $L_q=\sigma vv^{\mathsf T}$. Then $\sigma v(v\cdot\hat w)=c\tilde m$ gives $L_q=\sigma\tilde m\tilde m^{\mathsf T}$ with $\sigma\ne0$. Reading off the entries, we obtain $q_{il}=-\sigma m'_im'_l$ on all six pairs, where $m'=(m_a,m_b,m_c,-M_{Q'})$. Thus $p_{il}^3=-\sigma$ on the outer pairs and $p_{ik}^3=\sigma M_{Q'}/m_k$. The off-diagonal entries $-q_{il}\varrho_{il}$ of $L_{q\varrho}$ are nonzero. Hence $L_{q\varrho}=\sigma'v'v'^{\mathsf T}$ with $\sigma'\ne0$, and in the same way $q_{il}\varrho_{il}=-\sigma'\tilde v_i\tilde v_l$ with $\tilde v=(v'_a,v'_b,v'_c,-v'_a-v'_b-v'_c)$, all of whose entries are nonzero. Hence $\varrho_{il}=\gamma\beta_i\beta_l$ on all six pairs, with $\gamma=\sigma'/\sigma$ and $\beta_i=\tilde v_i/m'_i\ne0$.

(2) We put $\psi_i=1/\hat z_i$. On the pairs through $k$, $\hat z_i=\gamma\beta_i\beta_k\hat w_i$. Inserting this into $\varrho_{il}=\gamma\beta_i\beta_l$ on the outer pairs gives $(1-\beta_i/\beta_k)\psi_i=(1-\beta_l/\beta_k)\psi_l$. If $\beta_i=\beta_k$ for one $i$, all the $\beta$ are equal, $\varrho$ is constant and $\hat w$ is a multiple of $\hat z$, which is excluded. So the common value $\psi_0$ is nonzero, $\beta_i=\beta_k(\psi_i-\psi_0)/\psi_i$, and $1/\hat w_i=\gamma\beta_i\beta_k\psi_i=A(\psi_i-\psi_0)$ with $A=\gamma\beta_k^2$.

(3) Since the $p_{ik}^3$ are equal, $p_{ik}=\varepsilon_i\pi_1$ with $\varepsilon_i^3=1$. The rows $D_{ik}$, $p_{ik}^2\hat z_i\hat w_i=1$, then read $\psi_i(\psi_i-\psi_0)=\varkappa\varepsilon_i^2$ with $\varkappa=\pi_1^2/A$.

(4) Likewise $p_{il}=\varepsilon_{il}\pi_0$ on the outer pairs. As $\hat z_{il}\hat w_{il}=(\psi_i-\psi_l)^2/\big(A\psi_i\psi_l(\psi_i-\psi_0)(\psi_l-\psi_0)\big)$, the rows $D_{il}$ read $(\psi_i-\psi_l)^2=\varepsilon_{il}\varepsilon_i^2\varepsilon_l^2\,\varkappa/\lambda$ with $\lambda=\pi_0^2/\pi_1^2$. So the squared sides of the triangle $\psi_a,\psi_b,\psi_c$ agree up to cube roots of unity: $\theta=(\psi_c-\psi_a)/(\psi_b-\psi_a)$ satisfies $\theta^6=(\theta-1)^6=1$ with $\theta\ne0,1$, whence $|\theta|=|\theta-1|=1$ and $\theta=e^{\pm i\pi/3}$. The triangle is equilateral: $\psi_i=h+R\varpi_i$ with $\{\varpi_a,\varpi_b,\varpi_c\}=\{1,\jj,\jj^2\}$ and $R\ne0$.

(5) The polynomial $\Phi(\varpi)=(h+R\varpi)(h+R\varpi-\psi_0)=R^2\varpi^2+R(2h-\psi_0)\varpi+h(h-\psi_0)$ takes the values $\Phi(\varpi_i)=\varkappa\varepsilon_i^2$. Hence $R^2=\frac13\sum_i\Phi(\varpi_i)\varpi_i=\varkappa s/3$ with $s=\sum_i\varepsilon_i^2\varpi_i$, a sum of three cube roots of unity. If the three terms are distinct, $s=0$ and $R=0$, which is excluded. If they are all equal to some $\alpha$, then $\Phi(\varpi)=\varkappa\alpha\varpi^2$ at three points and hence identically, so $R(2h-\psi_0)=h(h-\psi_0)=0$, which forces $\psi_0=0$, excluded. So exactly two terms are equal and $s^3=\pm3\sqrt{-3}$.

(6) By (4), $\lambda^3=\varkappa^3/(\psi_b-\psi_a)^6$, and $(\psi_b-\psi_a)^2=R^2(\varpi_b-\varpi_a)^2=-3R^2\varpi_a\varpi_b$, so that $(\psi_b-\psi_a)^6=-27R^6=-\varkappa^3s^3$ and $\lambda^3=-1/s^3$.

(7) We choose $\pi_0=p_{ab}$ and $\pi_1=p_{ak}$. Then (1) gives $(\pi_0/\pi_1)^3=-m_k/M_{Q'}$. So $(m_k/M_{Q'})^2=\lambda^3=-1/s^3$, a nonzero imaginary number.
\end{proof}

In case (ii), $\hat z$ and $\hat w$ lie in the kernel of $L_{q\varrho}$. They are independent, since $\hat w=\lambda\hat z$ would give $L_q\hat w=0$. Hence $L_{q\varrho}=\sigma'uu^{\mathsf T}$ with $\sigma'\ne0$ and $u=\hat z\times\hat w$. That is, $q_{il}\varrho_{il}=-\sigma'\tilde u_i\tilde u_l$ on the six pairs, where $\tilde u=(u_a,u_b,u_c,-u_a-u_b-u_c)$ is the affine dependence of the four points $(\hat z_i,\hat w_i)\in\C^2$. Then $q_{il}=-\sigma'\tilde u_i\tilde u_lw_{il}/z_{il}$. With this, $L_q\hat w=c\tilde m$ reads
\begin{equation}\label{eq:G2mass}
c\,m_i=-\sigma'\tilde u_iS_i,\qquad S_i=\sum_{l\ne i}\tilde u_l\,w_{il}^2/z_{il}\qquad(i\in Q').
\end{equation}

\begin{lemma}\label{lem:G2mass}
For any four points $(z_i,w_i)$, $i\in Q$, with $(z_k,w_k)=(0,0)$ and distinct $z_i$, and with $\tilde u$ and $S_i$ formed from them as above, $\sum_{i\in Q}\tilde u_iS_iz_i^j=0$ for $j=0,1,2$. In case (ii), therefore, $\sum_{i\in Q'}m_i\hat z_i^2=0$.
\end{lemma}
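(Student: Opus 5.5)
The plan is to write the sum as a double sum over ordered pairs, symmetrize it in $(i,l)$, and reduce everything to the three linear relations expressing that $\tilde u$ is an affine dependence of the four points.

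\emph{The relations for $\tilde u$.} Since $(z_k,w_k)=(0,0)$ and $u=\hat z\times\hat w$ is orthogonal to $\hat z$ and to $\hat w$ (for the three outer coordinates), we have, summing over all of $Q$,
\[
\textstyle\sum_{i\in Q}\tilde u_i=0,\qquad \sum_{i\in Q}\tilde u_iz_i=0,\qquad \sum_{i\in Q}\tilde u_iw_i=0 .
\]
The first relation holds by the choice $\tilde u_k=-u_a-u_b-u_c$. The other two hold because the $k$-terms vanish.

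\emph{Symmetrization.} By definition,
\[
\textstyle\sum_{i\in Q}\tilde u_iS_iz_i^j=\sum_{i\ne l}\tilde u_i\tilde u_l\,w_{il}^2\,z_i^j/z_{il}.
\]
The distinctness of the $z_i$ makes every $z_{il}$ nonzero. Exchanging $i$ and $l$ leaves $w_{il}^2$ unchanged and changes the sign of $z_{il}$. Averaging the two expressions therefore gives
\[
\textstyle\frac12\sum_{i\ne l}\tilde u_i\tilde u_l\,w_{il}^2\,\dfrac{z_i^j-z_l^j}{z_i-z_l}.
\]
The difference quotient equals $0$, $1$ and $z_i+z_l$ for $j=0,1,2$. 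The diagonal terms $i=l$ vanish, so we may sum over all ordered pairs.

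\emph{The three cases.} For $j=0$ the sum vanishes trivially. For $j=1$, expanding $(w_i-w_l)^2$ gives
\[
\textstyle\sum_{i,l}\tilde u_i\tilde u_l(w_i-w_l)^2=2\big(\sum\tilde u\big)\big(\sum\tilde u w^2\big)-2\big(\sum\tilde u w\big)^2=0 .
\]
For $j=2$ we expand $(w_i^2-2w_iw_l+w_l^2)(z_i+z_l)$. Each of the six monomial terms, after summing over the index that carries at most one factor, contains one of the factors $\sum\tilde u_l$, $\sum\tilde u_lz_l$ or $\sum\tilde u_lw_l$, so it vanishes. For instance, $\tilde u_i\tilde u_lw_i^2z_i$ summed over $l$ gives $\sum\tilde u_l=0$, and $\tilde u_i\tilde u_lw_i^2z_l$ summed over $l$ gives $\sum\tilde u_lz_l=0$. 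The cross terms $w_iw_lz_i$ and $w_iw_lz_l$ are handled by summing over the index whose only factor is $w$. The $j=2$ expansion is the only step that needs care, and it is bookkeeping rather than a real obstacle.

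\emph{Consequence in case (ii).} We apply the identity for $j=2$ to the points $(\hat z_i,\hat w_i)$. The $\hat z_i$ are distinct and nonzero for $i\in Q'$, because the $\hat z_{il}$ are nonzero. Since $\hat z_k=0$, the $k$-term drops out, and $\sum_{i\in Q'}\tilde u_iS_i\hat z_i^2=0$. Substituting \eqref{eq:G2mass}, $c\,m_i=-\sigma'\tilde u_iS_i$, and using $c\ne0$ (Lemma~\ref{lem:Hstruct}, from $cM_Q+dm_k=0$), we obtain $\sum_{i\in Q'}m_i\hat z_i^2=0$.
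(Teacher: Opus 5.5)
Your proof is correct. It follows the paper through the symmetrization (the pair $\{i,l\}$ contributes $\tilde u_i\tilde u_lw_{il}^2(z_i^j-z_l^j)/(z_i-z_l)$) and through the case $j=1$. It treats $j=2$ differently.

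The paper writes $(w_i-w_l)(z_i+z_l)=\big((wz)_i-(wz)_l\big)+(w_iz_l-w_lz_i)$. It disposes of the first part with the identity $\sum_{\{i,l\}}\tilde u_i\tilde u_l(f_i-f_l)(g_i-g_l)=-\big(\sum_i\tilde u_if_i\big)\big(\sum_i\tilde u_ig_i\big)$. It evaluates the second part through the explicit cross product. The pairs through $k$ vanish since $(z_k,w_k)=(0,0)$, and on the outer pairs $w_bz_c-w_cz_b=-u_a$ cyclically. So this part is $-\tilde u_a\tilde u_b\tilde u_c$ times a telescoping sum.

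You instead expand into monomials and use the relation $\sum_i\tilde u_iz_i=0$, which the paper's proof does not use directly. This gives a uniform reason for the result. The pair term has total degree $j+1\le3$, so each monomial has degree at most one in one of the two indices. It therefore factors through $\sum_i\tilde u_i$, $\sum_i\tilde u_iz_i$ or $\sum_i\tilde u_iw_i$. Hence the identity holds for every $\tilde u$ satisfying these three relations, and the normalization $(z_k,w_k)=(0,0)$ enters only in checking them. The count also shows why the range stops at $j=2$: for $j=3$, terms of bidegree $(2,2)$ such as $w_i^2z_l^2$ no longer factor.

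The paper's version uses the special form of $\tilde u$ in place of the relation $\sum_i\tilde u_iz_i=0$. Your deduction of $\sum_{i\in Q'}m_i\hat z_i^2=0$ from \eqref{eq:G2mass}, $\hat z_k=0$ and $c\ne0$ is the same as the paper's.
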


\begin{proof}
When we group the double sum by pairs, the pair $\{i,l\}$ contributes $\tilde u_i\tilde u_lw_{il}^2(z_i^j-z_l^j)/(z_i-z_l)$, that is $0$, $\tilde u_i\tilde u_lw_{il}^2$ and $\tilde u_i\tilde u_lw_{il}^2(z_i+z_l)$ for $j=0,1,2$. Since $\sum_i\tilde u_i=0$, any two functions $f$, $g$ on $Q$ satisfy $\sum_{\{i,l\}}\tilde u_i\tilde u_l(f_i-f_l)(g_i-g_l)=-\big(\sum_i\tilde u_if_i\big)\big(\sum_i\tilde u_ig_i\big)$. For $j=1$ we take $f=g=w$. The result vanishes because $\sum_i\tilde u_iw_i=0$. For $j=2$ we write $(w_i-w_l)(z_i+z_l)=\big((wz)_i-(wz)_l\big)+(w_iz_l-w_lz_i)$. The first part gives $-\big(\sum\tilde u_iw_i\big)\big(\sum\tilde u_iw_iz_i\big)=0$. In the second part the pairs through $k$ contribute nothing. On the outer pairs $w_bz_c-w_cz_b=-u_a$ and cyclically, so that the second part equals $-\tilde u_a\tilde u_b\tilde u_c\big((w_b-w_c)+(w_c-w_a)+(w_a-w_b)\big)=0$. The last statement follows from \eqref{eq:G2mass} and $\hat z_k=0$.
\end{proof}

Since a vector orthogonal to $\hat z$ and to $(\hat z_i^2)$ is proportional to $(1/P'(\hat z_i))_{i\in Q'}$ with $P(t)=t\prod_{i\in Q'}(t-\hat z_i)$, by partial fractions, Lemma~\ref{lem:G2mass} says that in case (ii) the outer masses are proportional to $(1/P'(\hat z_i))_{i\in Q'}$. With the bad shape, this leaves only the equilateral configuration.

\begin{theorem}\label{thm:wtriangle}
Let $m_a,m_b,m_c>0$, and let $z_a,z_b,z_c$ be distinct nonzero complex numbers with $\sum_im_iz_i=\sum_im_iz_i^2=0$ and $\sum_im_ix_i=0$ for some $x_i$ with $x_i^2z_i=1$. Then $m_a=m_b=m_c$ and $\{z_a,z_b,z_c\}=\lambda\{1,\jj,\jj^2\}$ for some $\lambda\in\C^*$.
\end{theorem}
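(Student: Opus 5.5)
The plan is to reduce to the one-parameter description of bad shapes in Lemma~\ref{lem:badshape} and then eliminate. Put $x_i=z_i^{-1/2}$ (the given square roots), so that $z_i=x_i^{-2}$. The hypotheses become
\[
\textstyle\sum m_ix_i=0,\qquad \sum m_ix_i^{-2}=0,\qquad \sum m_ix_i^{-4}=0 .
\]
The first two say that $x$ is a bad shape for $(m_a,m_b,m_c)$. By Lemma~\ref{lem:badshape} we may normalize $x=(1,t,-(m_a+m_bt)/m_c)$ with $Q(t)=0$. The third condition, multiplied by $t^4x_c^4m_c^4$, becomes a polynomial
\[
C_4(t)=m_c^4t^4\big(m_a+m_bt^{-4}\big)+m_c\,t^4\,m_c^4/(m_a+m_bt)^4\cdot(m_a+m_bt)^4\cdots,
\]
that is, after clearing denominators, a polynomial in $t$ whose coefficients are polynomials in the masses. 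A solution gives a common root of $Q$ and $C_4$. Hence $\Res_t(Q,C_4)$, a symmetric-up-to-relabelling polynomial in $(m_a,m_b,m_c)$, must vanish. I would compute this resultant exactly, as for $P_{11}$ in remark~(a) after Proposition~\ref{prop:P11}, and factor it over $\Q$.

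The expected outcome is that the resultant factors as monomials in the masses, times powers of $M_{Q'}$, times factors vanishing on the equal-mass locus (for instance $(m_a-m_b)^2(m_b-m_c)^2(m_c-m_a)^2$ or a symmetric polynomial whose positive zeros are exactly the diagonal), times a cofactor $G$ that is positive on the open positive orthant. Equal masses must appear, because for $m_a=m_b=m_c$ the equilateral shape $z=\lambda(1,\jj,\jj^2)$ does satisfy all three conditions. Once $m_a=m_b=m_c$ is forced, Corollary~\ref{cor:equilateral} gives $t\in\{\jj,\jj^2\}$, hence $x\propto(1,\jj,\jj^2)$ up to order and $\{z_i\}=\lambda\{1,\jj,\jj^2\}$. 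The distinctness of the $z_i$ is used to discard spurious roots, for example $t=1$ when $m_a=m_b$.

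The main obstacle is proving $G>0$ on the positive orthant. My first attempt would be to mimic the proof of Proposition~\ref{prop:P11}. By (A) and steps (1)--(2) there, the sets $\{\theta_i\}$ and $\{2\theta_i\}$ surround $0$. Applying (A) to $w=x^{-4}$, the set $\{4\theta_i\}$ surrounds $0$ or is collinear. Parametrizing as in (B) by $\theta=(0,\pi-s,\pi+t)$ with $s,t<\pi/2<s+t$, the surrounding condition for $4\theta$ confines $(s,t)$ to a small region around $s=t=\pi/3$. However, an angle argument alone cannot isolate the equilateral point, since the conditions are open.

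So I would instead use the moduli as well, or fall back on the algebra. The algebraic route is to check positivity of $G$ by writing it, after the substitution $m=(1,1+u,1+u+v)$ on a sorted chamber, as a polynomial with nonnegative coefficients, or to certify it as in Appendix~\ref{app:P11}.

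An alternative I would keep in reserve uses the remark preceding the theorem: the masses are proportional to $1/P'(z_i)$, with $P(t)=t\prod_i(t-z_i)$. Positivity of the three numbers $z_i\prod_{l\ne i}(z_i-z_l)$, up to a common phase, already gives strong angular constraints on the triangle, and combining them with $\sum m_iz_i^{-1/2}=0$ may single out the equilateral configuration directly.
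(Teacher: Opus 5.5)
Your reduction is correct as far as it goes. With $z_i=x_i^{-2}$ the hypotheses read $\sum m_ix_i=\sum m_ix_i^{-2}=\sum m_ix_i^{-4}=0$. After the normalization of Lemma~\ref{lem:badshape}, a solution is a common root of $Q$ and
$C_4(t)=m_at^4(m_a+m_bt)^4+m_b(m_a+m_bt)^4+m_c^5t^4$ (your displayed formula for $C_4$ is garbled). But the proof stops there. The factorization of $\Res_t(Q,C_4)$, a form of degree $44$ in the masses, is only an \emph{expected} outcome. The positivity of the cofactor, which you yourself call the main obstacle, is not established: the angle argument fails, as you note, and the other two routes are only named.

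The expected outcome is also partly inconsistent with the statement. At positive masses $Q$ has no real root. For non-real $t$ one has $x_c\ne0$ and pairwise distinct $x_i^2$; for instance $x_c^2=t^2$ means $m_a+(m_b\pm m_c)t=0$, which forces $t$ to be real. Hence there are no spurious roots to discard ($t=1$ is never a root of $Q$). Since the leading coefficient $m_am_b^2$ of $Q$ does not vanish, every zero of the resultant in the open orthant is a genuine counterexample. A factor like $(m_a-m_b)^2(m_b-m_c)^2(m_c-m_a)^2$ therefore cannot occur. The theorem is equivalent to the assertion that the resultant has no positive zero off the diagonal. The factor that vanishes on the diagonal (at equal masses $t=\jj$ is a common root) has the diagonal, of codimension two, as its positive zero set. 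So strict positivity as in Appendix~\ref{app:P11} is not available for it, and you would have to prove that it has no other positive zeros. As it stands, your plan reformulates the theorem as an unproved claim about a large polynomial.

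The missing idea is to eliminate the masses, using that they are real, rather than the configuration. The plane $m^\perp=\{v\in\C^3:\sum_im_iv_i=0\}$ contains the independent vectors $z$ and $z^2$ and is invariant under conjugation, so $\bar z=\alpha z+\beta z^2$. Here $\beta\ne0$, since otherwise the $z_i$ lie on a line through $0$ and $\sum m_iz_i^2\ne0$. After a scaling $\beta=1$, and the $z_i$ are the roots of $g_\alpha(t)=t^3+2\alpha t^2+(\alpha^2+\bar\alpha)t+|\alpha|^2-1$. The identity $|\sum_im_iz_i|^2=0$ gives $|\alpha|<1$. Since $x\in m^\perp$ too, $x=\gamma z+\delta z^2$, and $x_i^2z_i=1$ says that $g_\alpha$ divides $t^3(\gamma+\delta t)^2-1$. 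Two coefficients of the remainder are binary quadratic forms in $(\gamma,\delta)$ with a common nonzero zero, so their resultant $N(\alpha,\bar\alpha)$ vanishes. Its polar form shows $N\ne0$ for $0<|\alpha|<1$. Hence $\alpha=0$, the $z_i$ are the cube roots of unity up to scaling, and the masses are equal. This replaces your positivity problem of degree $44$ in the masses by an explicit polynomial in one complex parameter that can be handled by hand.
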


\begin{proof}
The plane $m^\perp=\{v\in\C^3:\sum_im_iv_i=0\}$ contains $z$ and $z^2=(z_i^2)$, which are independent because the $z_i$ are distinct and nonzero. It is closed under complex conjugation because the $m_i$ are real. Hence $\bar z=\alpha z+\beta z^2$. If $\beta=0$, then $|\alpha|=1$ and the $z_i$ lie on a line through $0$, so that $\sum m_iz_i^2\ne0$. Thus $\beta\ne0$. The scaling $z\mapsto\lambda z$, $x\mapsto\lambda^{-1/2}x$ preserves the hypotheses and multiplies $\beta$ by $\bar\lambda/\lambda^2$. So we may take $\beta=1$: $\bar z_i=\alpha z_i+z_i^2$. Conjugating and substituting gives $z_i=|\alpha|^2z_i+(\alpha^2+\bar\alpha)z_i^2+2\alpha z_i^3+z_i^4$. Hence the $z_i$ are the three roots of
\[
g_\alpha(t)=t^3+2\alpha t^2+(\alpha^2+\bar\alpha)t+|\alpha|^2-1.
\]
Thus $z_a+z_b+z_c=-2\alpha$ and $z_az_bz_c=1-|\alpha|^2$. For $i\ne l$ this gives $z_i\bar z_l+\bar z_iz_l=z_iz_l(2\alpha+z_i+z_l)=-z_az_bz_c=|\alpha|^2-1$. Therefore
\[
0=\Big|\sum_im_iz_i\Big|^2=\sum_im_i^2|z_i|^2-(1-|\alpha|^2)\sum_{i<l}m_im_l,
\]
so that $|\alpha|<1$.

Next, $x\in m^\perp$ and hence $x=\gamma z+\delta z^2$. Then $x_i^2z_i=1$ says that $g_\alpha$ divides $t^3(\gamma+\delta t)^2-1$. The remainder modulo $g_\alpha$ has the coefficients
\begin{align*}
q_2&=-2\alpha\gamma^2+2(3\alpha^2-\bar\alpha)\gamma\delta+(1+3\alpha\bar\alpha-4\alpha^3)\delta^2,\\
q_1&=-(\alpha^2+\bar\alpha)\gamma^2+2(2\alpha^3+\alpha\bar\alpha+1)\gamma\delta+(\bar\alpha^2-2\alpha-3\alpha^4)\delta^2
\end{align*}
at $t^2$ and $t$, and a constant coefficient containing $-1$. So $(\gamma,\delta)\ne(0,0)$ is a common zero of the binary quadratic forms $q_2$ and $q_1$. Hence their resultant vanishes: $N(\alpha,\bar\alpha)=0$, where
\[
N(a,b)=4a^7-a^6b^2+6a^5b+2a^4b^3+11a^4+8a^3b^2-a^2b^4+6a^2b+14ab^3+8a+4b^5+3b^2 .
\]
Suppose that $\alpha=\rho e^{i\varphi}\ne0$, so that $0<\rho<1$. Since each monomial $a^ib^j$ of $N$ has $i-j\equiv1\pmod3$, $N(\alpha,\bar\alpha)=\alpha E$ with
\[
E=4\rho^6e^{6i\varphi}+\rho^3(11+6\rho^2-\rho^4)e^{3i\varphi}+2(\rho^6+4\rho^4+3\rho^2+4)+\rho(3+14\rho^2-\rho^4)e^{-3i\varphi}+4\rho^4e^{-6i\varphi}.
\]
Its imaginary part is $\operatorname{Im}E=\rho(\rho^2-1)\sin3\varphi\,(8\rho^3\cos3\varphi-\rho^4+6\rho^2+3)$. The last factor is at least $(1-\rho)(\rho^3+9\rho^2+3\rho+3)>0$. Thus $\sin3\varphi=0$. For $e^{3i\varphi}=1$, $E=-\rho^7+6\rho^6+5\rho^5+12\rho^4+25\rho^3+6\rho^2+3\rho+8>0$. For $e^{3i\varphi}=-1$, $E=(1-\rho)^2(\rho^5+8\rho^4+10\rho^3+24\rho^2+13\rho+8)>0$. This contradicts $E=0$. Therefore $\alpha=0$. Then $g_0(t)=t^3-1$, the $z_i$ are the cube roots of unity, and $\sum m_iz_i=0$ with real $m_i$ forces $m_a=m_b=m_c$.
\end{proof}

The proof of Theorem~\ref{thm:wtriangle} uses three small computations: the reduction of $t^3(\gamma+\delta t)^2-1$ modulo the cubic $g_\alpha$, the resultant of the two binary quadratic forms $q_2$ and $q_1$, and the polar form of $N$.

\begin{proposition}\label{prop:G2}
A configuration of case \textup{(ii)} of Theorem~\ref{thm:residual} with a bad shape and positive outer masses, as in that theorem, has equal outer masses and $\{\hat z_i\}=\lambda\{1,\jj,\jj^2\}$.
\end{proposition}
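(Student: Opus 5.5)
The plan is to reduce Proposition~\ref{prop:G2} to Theorem~\ref{thm:wtriangle}. We have to check that a configuration of case (ii) with a bad shape satisfies its three hypotheses, with $z_i=\hat z_i$ for $i\in Q'$.

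First, the points $\hat z_a,\hat z_b,\hat z_c$ are nonzero and distinct. They are differences $z_{ik}$ and $z_{il}$ of the cluster, which are nonzero in $\mathrm{K15}$ since all $z_{il}\ne0$, and body $k$ sits at the origin.

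Second, $\sum_{i\in Q'}m_i\hat z_i=0$ is Lemma~\ref{lem:d15lead}(b). It also follows directly: we contract the $\mathrm{K15}$ relations as in the proof of that lemma and use $\hat z_k=0$ and $c\ne0$.

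Third, $\sum_{i\in Q'}m_i\hat z_i^2=0$ is the last statement of Lemma~\ref{lem:G2mass}. That lemma applies because we are in case (ii), where \eqref{eq:G2mass} holds. To derive \eqref{eq:G2mass} one uses that $\hat z$ and $\hat w$ are independent and both lie in $\ker L_{q\varrho}$, which gives the rank-one form $L_{q\varrho}=\sigma'uu^{\mathsf T}$. We then multiply \eqref{eq:G2mass} by $\hat z_i^2$, sum over $i\in Q$ with $j=2$, and divide by $-c/\sigma'$, which is legitimate since $c\ne0$. This yields the claim.

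Finally, the bad shape supplies $x_i$ with $x_i^2\hat z_i=1$ (from $x_i^2=1/\hat z_i$) and $\sum m_ix_i=0$ (Theorem~\ref{thm:order3}, which is built into the hypothesis of Theorem~\ref{thm:residual}).

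With all hypotheses in place and $m_a,m_b,m_c>0$, Theorem~\ref{thm:wtriangle} gives $m_a=m_b=m_c$ and $\{\hat z_i\}=\lambda\{1,\jj,\jj^2\}$, which is the claim.

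I expect the main obstacle to be bookkeeping rather than substance. The step to watch is the passage from the rank-one form of $L_{q\varrho}$ to \eqref{eq:G2mass}, in particular the nonvanishing of $\sigma'$ and of the entries of $\tilde u$. If some $\tilde u_i$ vanished, \eqref{eq:G2mass} would force $m_i=0$, which contradicts positivity, so this case is harmless. One should also confirm that dividing by $z_{il}$ in $q_{il}=-\sigma'\tilde u_i\tilde u_lw_{il}/z_{il}$ is allowed, which holds because all $z_{il}\ne0$. The argument does not use the Hessian kernel vector $\xi$ beyond the fact that we are in case (ii).
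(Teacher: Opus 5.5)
Your proposal is correct and matches the paper's proof: both reduce to Theorem~\ref{thm:wtriangle}, with $\sum_{i\in Q'}m_i\hat z_i^2=0$ from Lemma~\ref{lem:G2mass} and $\sum_{i\in Q'}m_ix_i=0$, $x_i^2\hat z_i=1$ from the bad shape. The only difference is cosmetic: you obtain $\sum_{i\in Q'}m_i\hat z_i=0$ by contracting the $\mathrm{K15}$ relations, as in Lemma~\ref{lem:d15lead}(b), whereas the paper takes it from the case $j=1$ of Lemma~\ref{lem:G2mass}, and either source suffices.
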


\begin{proof}
The outer masses are positive and the $\hat z_i$ distinct and nonzero. Lemma~\ref{lem:G2mass} gives $\sum m_i\hat z_i=\sum m_i\hat z_i^2=0$. The bad shape gives $\sum m_ix_i=0$ with $x_i^2\hat z_i=1$. Theorem~\ref{thm:wtriangle} applies.
\end{proof}

Note that the bad shape is needed in Proposition~\ref{prop:G2}: for all positive outer masses there are distinct nonzero $z_i$ with $\sum m_iz_i=\sum m_iz_i^2=0$. So the conclusion of Lemma~\ref{lem:G2mass} alone does not exclude them.

\begin{corollary}\label{cor:nod15}
For every $m\in(\R_{>0})^5$, no singular place of an irreducible curve $C\subseteq Y_m$ with $U\not\equiv0$ on $C$ has type $\pc b_k(j)$. Hence the pieces $\pc{d15}(j,k)$ may be removed from $\mathrm{pieces}(S(m))$ in Theorem~\ref{thm:types}, for every $m$, $j$ and $k$.
\end{corollary}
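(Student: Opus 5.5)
We argue by contradiction: suppose a singular place $P$ of $C$ has type $\pc b_k(j)$. The plan is to assemble the results of this section, each of which reduces the possible leading cluster configurations further, until only the equilateral case remains, and then to exclude that case by Corollary~\ref{cor:d15equal}.

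First, by Theorem~\ref{thm:Kgating}(b),(c), after exchanging colours if needed, $P$ has diagram $15$ with cluster $Q=Q_j$ and $w$-edge $kj$. The leading coefficients, with body $k$ at the origin, solve $\mathrm{K15}(m_Q,k)$ with $c=-m_j\hat w_j/M\ne0$ (Lemma~\ref{lem:d15lead}). Theorem~\ref{thm:order3} and Lemma~\ref{lem:d15lead}(b) then give a bad shape: $\sum_{i\in Q'}m_ix_i=0$ and $\sum m_ix_i^{-2}=0$, where $x_i^2=1/\hat z_i$. Next, Corollary~\ref{cor:degenerate}, which uses Proposition~\ref{prop:P11} and positivity, shows that the leading configuration is degenerate. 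It also supplies $\xi\in\ker H\setminus\C X$ with $\kappa_3(\xi)=0$, that is, $\sum m_ix_i^3\xi_{u_i}=0$ by the formula for $\kappa_3$.

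The hypotheses of Theorem~\ref{thm:residual} are therefore satisfied: the outer masses are positive, $m_k\ne0$, we have the bad shape, and we have the vector $\xi$. Hence one of its three cases holds, and we treat them in turn.
\begin{itemize}
\item Case (iii) (G1): $\rank L_q=1$ and $\rank L_{q\varrho}=1$. Proposition~\ref{prop:G1} then makes $m_k/M_{Q'}$ non-real, which contradicts positivity.
\item Case (ii) (G2): Proposition~\ref{prop:G2}, via Lemma~\ref{lem:G2mass} and Theorem~\ref{thm:wtriangle}, gives equal outer masses and an equilateral triangle $\{\hat z_i\}=\lambda\{1,\jj,\jj^2\}$.
\item Case (i): this is the same equilateral conclusion.
\end{itemize}

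So in the surviving cases the three outer masses are equal. Corollary~\ref{cor:d15equal}, which rests on Proposition~\ref{prop:eqfull}, excludes places of type $\pc b_k(j)$ whenever the outer masses are equal. This is the contradiction we need. The final sentence of the statement then follows from Theorem~\ref{thm:Kgating}(b) and Corollary~\ref{cor:Kdrop}, or directly: the only attained point of $\pc{d15}(j,k)$ is $\pc b_k(j)$, so removing the piece loses no type in Theorem~\ref{thm:types}.

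The main obstacle is bookkeeping rather than new mathematics. We must check that every hypothesis of the cited results holds at positive masses: $m_i\ne0$, $M\ne0$ and $M_Q\ne0$ for Theorems~\ref{thm:order3} and~\ref{thm:order5}; $c\ne0$; and, in Theorem~\ref{thm:wtriangle}, that the $\hat z_i$ are distinct and nonzero. Distinctness follows because each $\hat z_{il}$ with $i,l\in Q$ is a nonzero leading coefficient. The condition $\hat z_k=0$ comes from placing body $k$ at the origin. We must also check that the condition $\kappa_3(\xi)=0$ in Corollary~\ref{cor:degenerate} is exactly the orthogonality required in Theorem~\ref{thm:residual}, which it is up to the nonzero factor $-\tfrac12m_j(-\hat w_j)^{-1/2}$.
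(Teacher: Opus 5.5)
Your proposal is correct and follows the paper's proof step for step: the bad shape comes from Theorem~\ref{thm:order3}, degeneracy and the vector $\xi$ come from Corollary~\ref{cor:degenerate}, and the three cases of Theorem~\ref{thm:residual} are disposed of by Proposition~\ref{prop:G1}, Proposition~\ref{prop:G2} and Corollary~\ref{cor:d15equal}. One small point: for the last sentence of the statement, only your direct argument applies, since Corollary~\ref{cor:Kdrop} needs $\mathrm{K15}(m_{Q_j},k)$ to have no solution, and that has not been shown.
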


\begin{proof}
By Theorem~\ref{thm:order3} the leading cluster has a bad shape. By Corollary~\ref{cor:degenerate} it is degenerate with a vector $\xi$ as in Theorem~\ref{thm:residual}, since $\kappa_3(\xi)$ is a nonzero multiple of $\sum_{i\in Q'}m_ix_i^3\xi_{u_i}$. Case (iii) is excluded by Proposition~\ref{prop:G1}. In case (ii), Proposition~\ref{prop:G2} gives the configuration of case (i). Corollary~\ref{cor:d15equal}, which applies as all masses are positive, excludes case (i).
\end{proof}

\subsection{Diagram 14}\label{ss:d14}
Let $P$ have type $\sigma_k(j)(t)$ with $0<t<1$, and put $\sigma=1+t\in(1,2)$. Exchanging the colours if necessary, we obtain from Lemma~\ref{lem:zwK4} that $e(u_i)=e(v_i)=1$ for $i\in Q'=Q_j\setminus k$, $e(u_j)=\sigma$ and $e(v_j)=-\sigma$ in the coordinates relative to $o=k$. We use the leading coefficients as in \S\ref{ss:d15}.

\begin{lemma}\label{lem:d14blead}
The cluster coefficients solve $\mathrm{K14}(m_Q)$, $\sum_{i\in Q'}m_i\hat z_i=0$, and $\hat p_{kj}^3=M_Q/(m_kM)$.
\end{lemma}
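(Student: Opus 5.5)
The plan is to transcribe the proofs of Lemma~\ref{lem:d15lead} and Theorem~\ref{thm:order3}, with the exponent $2$ of body $j$ replaced by $\sigma\in(1,2)$.

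\emph{The K14 statement.} Theorem~\ref{thm:Kgating}(a) applies, since $\sigma_k(j)(t)$ with $0<t<1$ is case (B) of Lemma~\ref{lem:zwK4} with $a=\sigma<2$. So the cluster coefficients solve $\mathrm{K14}(m_Q)$, with body $k$ at the origin.

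\emph{The centre of mass relation.} For a solution of K14 we contract $F^w$ with $\hat z$ and $F^z$ with $\hat w$. This gives $\sum_i m_i\hat z_iF^w_i=\sum_{i<l}m_im_l\hat p_{il}^3\hat z_{il}\hat w_{il}=\sum_im_i\hat w_iF^z_i$, but all of these vanish identically and yield nothing. I would therefore not use the cluster equations. Instead I would use the outer equation for body $j$ at its leading order, as in step (4) of the proof of Lemma~\ref{lem:zwK4}. There $e(w_j)=2\beta=-\sigma$ is determined by the dominant term $m_kW_{jk}$. The analogous $z$-equation for $j$ reads $z_j=\sum_{i\in Q}m_iZ_{ji}$. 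Its terms have exponents $e(Z_{ji})=e(z_{ji})-3x_{ij}$. For $i\ne k$ this is $\sigma-\tfrac32(1-\sigma)$, and for $i=k$ it is $\sigma$.

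Since $e(z_j)$ comes from the centre of mass, I would instead read off $\sum_{i\in Q'}m_i\hat z_i$ from the centre-of-mass identity $Mz_k=-\sum_{i\ne k}m_iu_i$. The terms $u_i$, $i\in Q'$, have exponent $1$, while $u_j$ has exponent $\sigma>1$. It then suffices to show that $z_k$ has exponent $>1$.

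To get this, take the equation of body $k$, $z_k=\sum_{l\ne k}m_lZ_{kl}$. Its cluster part at exponent $-2$ vanishes by K14. The $Z_{kj}$ term has exponent $\sigma>1$. The next cluster corrections have to be controlled. I expect this step to be the main obstacle. If it fails directly, I would fall back on the order-three identity. The $u_j$-gradient equation gives $f'(\bar\Pi)=0$ as before, with $f$ now coming from $m_jm_kp_{kj}$ and the $I$ term; this yields $\hat p_{kj}^3=M_Q/(m_kM)$. The $v_j$-component of $\nabla V$ contains $\frac{m_j}{2M}u_j\cdot(\ldots)$ and $-\frac{m_j}{2M}\sum_{i\in Q'}m_iu_i$ at its leading order. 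Balancing these forces $\sum_{i\in Q'}m_i\hat z_i=0$, exactly as $V_0$ in Lemma~\ref{lem:d15expansion} couples $y_{v_j}$ to $\sum m_iy_{u_i}$.

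\emph{Executing the expansion.} Concretely, I would write $F(\tau,y)=\sum\tau^{n-2}V_n$ with $\tau$-weights adapted to $\sigma$; the exponents are now rational. The leading $V_0$ will contain $-\frac{m_j}{2M}y_{v_j}\sum_{i\in Q'}m_iy_{u_i}$, and $\Pi=y_{u_j}y_{v_j}$ will again appear in $V_2=f(\Pi)$. Then:
\begin{itemize}
\item The $y_{v_j}$-derivative at leading order gives $-\frac{m_j}{2M}\sum m_i\hat z_i+y_{u_j}f'(\Pi)$. Together with $f'(\bar\Pi)=0$ from the $y_{u_j}$-derivative, this gives both remaining claims.
\item One must check that the $p_{ij}$ terms for $i\in Q'$ enter only at strictly higher $\tau$-order. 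Their exponent is $x_{ij}=(1-\sigma)/2>-1$, versus the cluster exponent $-1$. This is the same check as for $V_3$ before.
\end{itemize}
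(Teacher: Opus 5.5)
Your fallback argument is correct in substance and takes a different route from the paper's, but two points need fixing.

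First, the obstacle you flag in the equation of body $k$ does not arise. In the $z$-equation of body $j$, the term $Z_{ji}$ has exponent $e(z_{ji})-3x_{ij}=1+\tfrac32(\sigma-1)$ for $i\ne k$. (Here $z_{ji}=u_j-u_i$ has exponent $1$, not $\sigma$.) The term $Z_{jk}$ has exponent $\sigma$. Every term therefore exceeds $1$, so $e(z_j)>1$ and $e(z_k)=e(z_j-u_j)>1$. The centre of mass $Mz_k=-\sum_{i\ne k}m_iu_i$, read at exponent $1$, then gives $\sum_{i\in Q'}m_i\hat z_i=0$ at once.

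Second, for $\sigma<2$ the coupling term $-\frac{m_j}{2M}y_{v_j}\sum_{i\in Q'}m_iy_{u_i}$ is not part of $V_0$. It sits at $\tau^{2-2\sigma}$, strictly between $\tau^{-2}$ and $\tau^0$; putting it in $V_0$ is the bookkeeping of diagram~$15$. It is the only term of $\partial_{y_{v_j}}F$ at that order, so it must vanish by itself. There is no balancing with $y_{u_j}f'(\Pi)$, which lives at $\tau^0$. The order checks that matter for the $p_{ij}$ terms are:
\begin{itemize}
\item $\sigma-1>2-2\sigma$ in $\partial_{y_{v_j}}F$;
\item $(2n+1)(\sigma-1)>0$ for $n\ge1$ in $\partial_{y_{u_j}}F$.
\end{itemize}
A comparison with the cluster exponent is not the relevant check. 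In fact no expansion is needed: $\partial V/\partial v_j=\frac{m_j}2\bigl(z_j-\sum_{l\ne j}m_lZ_{jl}\bigr)$. Your two gradient equations are therefore just the $z$- and $w$-equations of body $j$, read at exponents $1$ and $-\sigma$.

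The paper proves the centre-of-mass claim differently. It uses the exact identity $\sum_{i\in Q}m_iw_{ik}C^z_i=\sum_{\{i,l\}\subseteq Q}m_im_lp_{il}=\sum_{i\in Q}m_iz_{ik}C^w_i$, where $C^z_i=\sum_{l\in Q\setminus i}m_lz_{il}p_{il}^3$ and $C^w_i$ is defined likewise. It applies this to the full sums, not only to their leading coefficients. That is why your remark that the leading contraction is trivial does not block this route. By the outer equations, $C^z_i=z_i-m_jZ_{ij}$ has exponent at least $1$, so the left side has exponent at least $2$. On the right, $C^w_i=w_i-m_jW_{ij}$ has coefficient $-\frac{m_j}M\hat w_j$ at exponent $-\sigma$ for $i\in Q'$. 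So the right side has coefficient $-\frac{m_j}M\hat w_j\sum_{i\in Q'}m_i\hat z_i$ at exponent $1-\sigma<2$, which must vanish.

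For $\hat p_{kj}^3$ the paper uses $\sum_im_iC^w_i=0$ at exponent $-\sigma$. Since $\sum_{i\in Q}m_iw_i=-m_jw_j$, this is the $w$-equation of body $j$, so that half coincides with yours. Your route is the more elementary one, the mechanism of steps (3) and (4) of the proof of Lemma~\ref{lem:zwK4}. The paper's route keeps the argument inside the cluster, parallel to the contraction in Lemma~\ref{lem:d15lead}(b).
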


\begin{proof}
The first statement is Theorem~\ref{thm:Kgating}(a). For $i\in Q$ let $C^z_i=\sum_{l\in Q\setminus i}m_lz_{il}p_{il}^3$ and $C^w_i$ likewise. The identity $p_{il}^3z_{il}w_{il}=p_{il}$ gives
\[
\sum_{i\in Q}m_iw_{ik}C^z_i=\sum_{\{i,l\}\subseteq Q}m_im_lp_{il}=\sum_{i\in Q}m_iz_{ik}C^w_i .
\]
By \eqref{eq:AK4}, $C^z_i=z_i-m_jp_{ij}^3z_{ij}$, whose terms have exponent at least $1$. The left side therefore has exponent at least $2$. Similarly $C^w_i=w_i-m_jp_{ij}^3w_{ij}$. Here $w_i=w_k+v_i$ has the leading term $-\frac{m_j}M\hat w_j$ at exponent $-\sigma$, the term $p_{ij}^3w_{ij}$ has exponent $\frac{\sigma-3}2>-\sigma$ for $i\ne k$, and $p_{kj}^3w_{kj}$ has exponent $-\sigma$ with coefficient $-\hat p_{kj}^3\hat w_j$. Thus $C^w_i$ has the coefficient $m_j\hat w_j\big(-\frac1M+[i=k]\hat p_{kj}^3\big)$ at exponent $-\sigma$. Since $\sum_im_iC^w_i=0$, we obtain $\hat p_{kj}^3=M_Q/(m_kM)$. On the right side the term $i=k$ vanishes. The rest has the coefficient $-\frac{m_j}M\hat w_j\sum_{i\in Q'}m_i\hat z_i$ at exponent $1-\sigma<2$, which therefore vanishes.
\end{proof}

\input{sec/s5_d14}

\begin{remark}\label{rem:d14a}
The identities of \S\ref{ss:expansion} also apply at a place of type $\pc u_0(j)$, the piece $\pc{d14a}(j)$, where the fifth body stays at distance of order one from the cluster while the cluster shrinks. There the pairings of $\nabla_yV$ with the $z$- and $w$-scalings of the cluster, together with the constancy of $V$, give at first order a signed dipole condition $\sum_ls_lm_l(\hat z_l-\hat z_G)=0$ when the leading coefficients $\hat p_{jl}$ have unequal signs $s_l$, a centred quadrupole condition $\sum_lm_l(\hat z_l-\hat z_G)^2=0$ when they have equal signs, the same conditions in $w$ at the next order, and in the boundary case a single scalar $g^{\mathsf T}H^+g$. These are the first-order conditions of the analysis of $\pc{d14a}$ in \S\ref{sec:curves}. The identities of \S\ref{ss:expansion} therefore do not exclude $\pc{d14a}$ at every positive mass: unlike the conditions of Theorems~\ref{thm:order3} and \ref{thm:order5}, these conditions hold at some symmetric K14 configurations with positive masses, for instance centrally symmetric ones. We treat the pieces $\pc{d14a}$ instead by the point certificates of \S\ref{sec:points}.
\end{remark}

\input{sec/s5_d9}

%% file: sec/s5_d14.tex

We now show that the pieces $\pc{d14b}$ carry no singular place at positive masses.

\begin{theorem}\label{thm:D14}
For every $m\in(\R_{>0})^5$, no singular place of an irreducible curve $C\subseteq Y_m$ with $U\not\equiv0$ on $C$ has type $\sigma_k(j)(t)$ with $0<t<1$, for any body $j$ and any $k\in Q_j$. Hence the pieces $\pc{d14b}(j,k)$ may be removed from $\mathrm{pieces}(S(m))$ in Theorem~\ref{thm:types}, for all $m$, $j$ and $k$.
\end{theorem}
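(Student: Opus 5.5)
We follow the scheme used for diagram~$15$ in \S\ref{ss:d15}--\S\ref{ss:residual}, adapted to the exponents $e(u_j)=\sigma$, $e(v_j)=-\sigma$ with $\sigma=1+t\in(1,2)$. First we expand $F(\tau,y)=\sum_n\tau^{n-2}V_n(y)$ at a place of type $\sigma_k(j)(t)$, with $o=k$, as in Lemma~\ref{lem:d15expansion}. The cluster terms again give $U_Q$ at $n=0$. The term $p_{kj}$ equals $\Pi^{-1/2}$ with $\Pi=y_{u_j}y_{v_j}$, and together with the moment of inertia it contributes $f(\Pi)$ at $n=2$. For $i\in Q'$ the terms $p_{ij}$ now have leading exponent $2x_{ij}=1-t$, so they enter at $n=3-t=4-\sigma$, with $V_{4-\sigma}=m_j\sum_{i\in Q'}m_i(-y_{u_i}y_{v_j})^{-1/2}$. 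The next corrections come from $\tau^{2\sigma-2}y_{u_j}/y_{u_i}$, at $n=3-t+2t=3+t$. The $I$ term coupling $y_{v_j}$ with the cluster appears at $n=2-2\sigma+2=2-2t$, which lies strictly between $0$ and $2$. This is the main structural difference from diagram~$15$: the cluster is now critical for $U_Q$ alone, and Lemma~\ref{lem:d14blead} already gives $\sum_{Q'}m_i\hat z_i=0$ and $f'(\bar\Pi)=0$.

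Next we apply the $y_{u_j}$-component of $\nabla_yF=0$. As in Theorem~\ref{thm:order3}, $f''(\bar\Pi)\neq0$ forces $\Pi-\bar\Pi$ to be small, so that $[V_2]_r=0$ for $0<r<r_0$ with some explicit $r_0>0$. We then apply \eqref{eq:qid} with $q=4-\sigma$. The aim is to kill every other term and obtain $V_{4-\sigma}(\bar y)=0$, that is $\sum_{Q'}m_i\hat p_{ij}=0$, so that the outer bodies form a bad shape: $\sum m_ix_i=\sum m_ix_i^{-2}=0$.

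The hardest step is checking that no other term contributes to this coefficient. This concerns the term $n=2-2t$, through $[V_{2-2t}]_{2+t}$, and the cluster's own displacement in $V_0$, which has $n=0$ and so does not enter \eqref{eq:qid}. We would handle the $V_{2-2t}$ term by noting that it is $-\frac{m_j}{2M}y_{v_j}\sum m_iy_{u_i}$. Its value at $\bar y$ vanishes, and its variations are governed by the cluster equations $\nabla U_Q(\bar y+\delta)=O(\tau^{2-2t})$. If resonances between $1-t$, $2-2t$ and $2t$ spoil the order count for special $t$, we split into cases according to these rational values of $t$ and use the freedom of reparametrization of \S\ref{ss:expansion} to normalize $y_{v_j}$.

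Once the bad shape is established, Lemma~\ref{lem:badshape} and the analysis of K14 finish the proof. A K14 solution has $L_q\hat z=0$ and $L_q\hat w=0$ with $\hat z$ and $\hat w$ independent, so $\rank L_q\le1$, and also $L_{q\varrho}\hat w=0$. We would run the argument of Proposition~\ref{prop:G1} and Lemma~\ref{lem:G2mass}, with $c=0$, to get $\sum m_i\hat z_i^2=0$ or a nonreal mass ratio. By Theorem~\ref{thm:wtriangle} the outer masses are then equal and the outer bodies form an equilateral triangle centred at $k$. It remains to exclude this equilateral case, as in Proposition~\ref{prop:eqfull}. For this we expand one order further, at \eqref{eq:qid} with $q=3+t$, where we expect $\sum m_ix_i^3=3m_a\neq0$ to give the contradiction, exactly as the note after Proposition~\ref{prop:P11} suggests.
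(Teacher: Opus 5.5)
Your proposal has genuine gaps at each of its three stages. Start with the bad shape. Your exponents are off. Since $x_{ij}=-t/2$ for $i\in Q'$, each $p_{ij}$ is $\tau^{\sigma-1}$ times a unit, so these terms sit at $n=\sigma+1=2+t$ and their first correction at $n=3\sigma-1=2+3t$, not at $3-t$ and $3+t$.

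More seriously, the single identity \eqref{eq:qid} cannot give the bad shape. At $q=\sigma+1$ it reads $(4-2\sigma)[\Gamma]_{3\sigma-3}+(\sigma+1)B_0(\bar y)=0$. Here $\Gamma=-\frac{m_j}{2M}y_{v_j}\psi$ is the coupling at $n=4-2\sigma$, $B_0(\bar y)=m_j\sum_{i\in Q'}m_i\hat p_{ij}$, and you have no second relation that determines $[\Gamma]_{3\sigma-3}$. Computing it from the cluster equations means inverting the Hessian of $U_Q$ on a slice. That uses the nondegeneracy of a leading configuration you can identify only after you have the bad shape, and splitting into cases in $t$ does not remove this circularity.

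The missing idea is to use the homogeneity of $U_Q$ in the cluster $u$- and $v$-coordinates separately, not the total scaling behind \eqref{eq:qid}. Pairing $\nabla_yF=0$ with $\sum_{i\in Q'}y_{v_i}\partial/\partial y_{v_i}$ gives $U_Q(y(\tau))=O(\tau^6)$. Pairing it with $\sum_{i\in Q'}y_{u_i}\partial/\partial y_{u_i}$ and subtracting the constancy of $F$ eliminates $\Gamma$, giving $3[B_0]_{q-\sigma+1}+5[B_1]_{q-3\sigma+3}=0$ for $\sigma-1\le q\le3\sigma-3$. This yields $\sum m_ix_i=0$ with no assumption on the configuration, and also $B_1(\bar y)=-\frac35[B_0]_{2\sigma-2}$ (Lemma~\ref{lem:d14energy}).

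The classification step does not go through either. Proposition~\ref{prop:G1} and Lemma~\ref{lem:G2mass} get their information on the masses from $L_q\hat w=c\tilde m$ with $c\ne0$. For $\mathrm{K14}$ one has $c=0$, with three consequences:
\begin{itemize}
\item the rank-one structure of $L_q$ only says that $L_q$ is proportional to $vv^{\mathsf T}$ with $v\perp\hat z,\hat w$, the description leading to \eqref{eq:tsys};
\item $L_{q\varrho}$ need not have rank one (it has rank two already at the equilateral configuration);
\item \eqref{eq:G2mass} degenerates to $\tilde u_iS_i=0$.
\end{itemize}
So you obtain neither $\sum m_i\hat z_i^2=0$ nor a nonreal mass ratio, and Theorem~\ref{thm:wtriangle} cannot be invoked. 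The paper needs a separate classification of the $\mathrm{K14}$ configurations with $\mathrm{X3}_k$ and a bad shape:
\begin{itemize}
\item Theorem~\ref{thm:col} for the collinear ones;
\item Lemma~\ref{lem:typeIII} and the computer-assisted Theorem~\ref{thm:typeII} for the patterns of types III and II;
\item Lemma~\ref{lem:classI} with Theorem~\ref{thm:W} for type I.
\end{itemize}
This classification also fixes $m_k/m_a=1/\sqrt3$, which the later computation needs.

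Finally, one order further you do not get $\sum m_ix_i^3=0$ but only $B_1(\bar y)=-\frac35[B_0]_{2\sigma-2}$. Here $[B_0]_{2\sigma-2}$ is the response of $S_1$ to the displacement of the cluster driven by $\lambda(\tau)\sim\tau^{4-2\sigma}$. To show it vanishes you need the nondegeneracy of $\mathcal E$ and the slice map $Y(\lambda)$ (Lemmas~\ref{lem:d14slice}, \ref{lem:Pdata} and~\ref{lem:d14follow}). Even then the argument fails at the resonant values $\sigma=(2n+1)/(n+1)$, where $n(4-2\sigma)=2\sigma-2$. There the paper uses $U_Q=O(\tau^6)$ to force $\psi_1=\psi_2=0$ along the slice, which contradicts $\psi_2=40/\sqrt3$ at $\mathcal E$ (Proposition~\ref{prop:resonant}).
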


We keep the setting of Lemma~\ref{lem:d14blead}, with $Q'=Q\setminus\{k\}=\{a,b,c\}$ and positive masses. For a configuration of $Q$ with $z_k=0$, the condition $\mathrm{X3}_k$ is $\sum_{i\in Q'}m_iz_i=0$: body $k$ lies at the $z$-centre of mass of $Q$. Such a configuration \emph{has a bad shape} if $\sum_{i\in Q'}m_ix_i=0$ for some $x_i$ with $x_i^2z_i=1$. If it also satisfies $\mathrm{X3}_k$, then $\sum_{i\in Q'}m_ix_i^{-2}=0$, so that $(x_a,x_b,x_c)$ is a bad shape in the sense of \S\ref{ss:d15}. The proof of Theorem~\ref{thm:D14} has four steps. The leading cluster configuration satisfies $\mathrm{X3}_k$ and has a bad shape (Lemma~\ref{lem:d14energy}). At positive masses the only $\mathrm{K14}$ configuration with these properties is, up to symmetries, an equilateral triangle with its centre (Theorem~\ref{thm:Xhat}). This configuration is nondegenerate (Lemma~\ref{lem:Pdata}). At a nondegenerate leading cluster configuration the expansion of the place forces $\sum_{i\in Q'}m_ix_i^3=0$ unless $\sigma=(2n+1)/(n+1)$ for an integer $n\ge1$, and for these $\sigma$ it forces a coefficient to vanish that is nonzero at the equilateral configuration (Propositions~\ref{prop:D14N} and~\ref{prop:resonant}).

\paragraph{The bad shape.} We fix a square root $(-\hat w_j)^{1/2}$ and put $x_i=\hat p_{ij}(-\hat w_j)^{1/2}$ for $i\in Q'$, as in \S\ref{ss:d15}. The leading term of the row $D_{ij}$ is $\hat p_{ij}^2\hat z_i(-\hat w_j)=1$. Thus $x_i^2\hat z_i=1$. Near $\bar y$ let $w(y)$ and $x_i(y)$ be the branches of $(-y_{v_j})^{-1/2}$ and $y_{u_i}^{-1/2}$ with $w(\bar y)=(-\hat w_j)^{-1/2}$ and $x_i(\bar y)=x_i$, and put $S_r(y)=\sum_{i\in Q'}m_ix_i(y)^r$ and $\psi(y)=\sum_{i\in Q'}m_iy_{u_i}$.

\begin{lemma}\label{lem:d14exp}
At a place of type $\sigma_k(j)(t)$, with $\Pi$, $f$ and $U_Q$ as in Lemma~\ref{lem:d15expansion},
\begin{equation}\label{eq:d14F}
F(\tau,y)=\tau^{-2}U_Q(y)+\tau^{2-2\sigma}\Gamma(y)+f(\Pi)+\sum_{n\ge0}\tau^{(2n+1)(\sigma-1)}B_n(y)+R(\tau,y),
\end{equation}
where $\Gamma=-\frac{m_j}{2M}y_{v_j}\psi$, $B_n=\binom{2n}{n}4^{-n}m_jw\,y_{u_j}^nS_{2n+1}$, and $R$ is a convergent sum of terms $\tau^LR_L(y)$ with $L\ge4$. The coordinate $y_{u_j}$ occurs only in $f$, in the $B_n$ with $n\ge1$ and in $R$, and the coordinates $y_{v_i}$, $i\in Q'$, occur only in $U_Q$ and in $R$.
\end{lemma}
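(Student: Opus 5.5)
The plan is to redo the term-by-term computation of Lemma~\ref{lem:d15expansion} with the new exponents $e(u_j)=\sigma$, $e(v_j)=-\sigma$, $1<\sigma<2$. We work in the coordinates relative to $o=k$ and use $q=\tau^{2e(q)}y_q$. Then $u_i=\tau^2y_{u_i}$ and $v_i=\tau^2y_{v_i}$ for $i\in Q'$, while $u_j=\tau^{2\sigma}y_{u_j}$ and $v_j=\tau^{-2\sigma}y_{v_j}$. We sort the terms of $V=U+\frac12 I$ into three groups: the cluster pairs, the pairs through $j$, and the moment of inertia.

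\emph{Cluster pairs.} Each cluster term $m_im_lp_{il}$ with $i,l\in Q$ is homogeneous of degree $-1$ in the positions. It is therefore exactly $\tau^{-2}$ times its value at $y$, which gives $\tau^{-2}U_Q(y)$.

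\emph{The pair $kj$.} Here $p_{kj}=(u_jv_j)^{-1/2}=\Pi^{-1/2}$ exactly, with no power of $\tau$.

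\emph{Pairs $ij$, $i\in Q'$.} We factor
\[
p_{ij}=(-u_iv_j)^{-1/2}\big(1-u_j/u_i\big)^{-1/2}\big(1-v_i/v_j\big)^{-1/2}.
\]
The prefactor is $\tau^{\sigma-1}w\,x_i(y)$. The ratio $u_j/u_i=\tau^{2\sigma-2}y_{u_j}/y_{u_i}$ has positive exponent since $\sigma>1$. Expanding the middle factor binomially gives the coefficients $\binom{2n}{n}4^{-n}(y_{u_j}/y_{u_i})^n$. Since $x_i^{2n+1}=x_i\,y_{u_i}^{-n}$, the $n$-th term is $\tau^{(2n+1)(\sigma-1)}\binom{2n}{n}4^{-n}m_jw\,y_{u_j}^nS_{2n+1}$ after summing $m_im_j(\cdot)$ over $Q'$, and this is $B_n$. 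The last factor has $v_i/v_j=\tau^{2+2\sigma}y_{v_i}/y_{v_j}$. Its corrections multiply by $\tau^{2+2\sigma}$, so with the prefactor the exponents are at least $\sigma-1+2+2\sigma>4$; these go into $R$. So the only place $y_{v_i}$ enters here is $R$.

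\emph{Moment of inertia.} We expand $I=\sum_{i\ne k}m_iu_iv_i-\frac1M(\sum m_iu_i)(\sum m_iv_i)$ by monomial exponent.
\begin{itemize}
\item $u_jv_j$ contributes $m_j\Pi$ at $\tau^0$, and $-\frac1M m_j^2u_jv_j$ contributes $-\frac{m_j^2}{M}\Pi$. Together with $\frac12$ this gives $\frac{m_jM_Q}{2M}\Pi$, the second term of $f(\Pi)$.
\item The cross terms $-\frac1M m_jv_j\sum_{Q'}m_iu_i$ give $\tau^{2-2\sigma}\big(-\frac{m_j}{M}y_{v_j}\psi\big)$, which becomes $\Gamma$ after the factor $\frac12$.
\item The remaining terms $u_iv_i$, $u_iv_l$, $u_jv_i$ and $u_iv_j$ for $i\in Q'$ have exponents $4$, $4$, $2\sigma+2>4$ and $2\sigma+2$ respectively, so they all lie in $R$ with $L\ge4$.
\end{itemize}

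The step that needs care is the remainder bookkeeping: we must check that every discarded exponent is at least $4$ and that the series converges. Convergence holds because the binomial series converge for $y$ near $\bar y$ and small $\tau$. The exponents $(2n+1)(\sigma-1)$ may be irrational-looking rationals, so we index $R$ by a discrete set of exponents. The occurrence claims then follow by inspection. $y_{u_j}$ appears in $f$ through $\Pi$, in $B_n$ for $n\ge1$, and in $R$. The $y_{v_i}$ appear only in $U_Q$ and in $R$, because $\Gamma$ involves only the $y_{u_i}$ and $B_n$ involves only $y_{u_i}$, $y_{u_j}$ and $y_{v_j}$.
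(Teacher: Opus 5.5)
Your proposal is correct and is essentially the paper's proof: the same factorization of $p_{ij}$, the binomial series in $u_j/u_i$ giving the $B_n$, the factor in $v_i/v_j$ contributing only exponents at least $3\sigma+1>4$, and the same sorting of the terms of $I$. There is one bookkeeping slip in your list of remaining terms of $I$: the products $u_iv_j$, $i\in Q'$, have exponent $2-2\sigma$ and are exactly the $\Gamma$ term you already extracted, so only $u_iv_i$, $u_iv_l$ and $u_jv_i$ go into $R$.
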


\begin{proof}
The cluster terms and the term $m_km_jp_{kj}$ are as in the proof of Lemma~\ref{lem:d15expansion}. For $i\in Q'$, $u_{ij}v_{ij}=-\tau^{2-2\sigma}y_{u_i}y_{v_j}(1-\tau^{2\sigma-2}y_{u_j}/y_{u_i})(1-\tau^{2\sigma+2}y_{v_i}/y_{v_j})$, so that
\[
p_{ij}=\tau^{\sigma-1}w\,x_i(y)\big(1-\tau^{2\sigma-2}y_{u_j}/y_{u_i}\big)^{-1/2}\big(1-\tau^{2\sigma+2}y_{v_i}/y_{v_j}\big)^{-1/2}
\]
with the branches of the $p_{ij}$ at $P$. The binomial series of the first factor gives the $B_n$, since $y_{u_i}^{-n}=x_i(y)^{2n}$. The second factor contributes only exponents at least $3\sigma+1>4$. Finally $I$ contributes $-\frac{m_j}My_{v_j}\psi$ at the exponent $2-2\sigma$, $\frac{m_jM_Q}M\Pi$ at $0$, and otherwise only the exponents $4$ and $2\sigma+2$.
\end{proof}

\begin{lemma}\label{lem:d14energy}
At a place of type $\sigma_k(j)(t)$:
\begin{enumerate}[label=\textup{(\alph*)}]
\item $U_Q(y(\tau))=O(\tau^6)$;
\item $[B_0(y(\tau))]_r=0$ for $0\le r<2\sigma-2$; in particular $\sum_{i\in Q'}m_ix_i=0$, and the leading cluster configuration satisfies $\mathrm{X3}_k$ and has a bad shape;
\item $B_1(\bar y)=-\frac35\,[B_0(y(\tau))]_{2\sigma-2}$.
\end{enumerate}
\end{lemma}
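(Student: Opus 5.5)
The plan is to work directly with the expansion \eqref{eq:d14F} of Lemma~\ref{lem:d14exp} and with the two facts used in Lemma~\ref{lem:identities}: $\nabla_yF(\tau,y(\tau))=0$ along the place, and $F(\tau,y(\tau))=E$ is constant. Lemma~\ref{lem:identities} as stated assumes integer exponents, so I would first redo its argument for the mixed exponents $-2$, $2-2\sigma$, $0$, $(2n+1)(\sigma-1)$ and $L\ge4$. Write $F=\sum_L\tau^LG_L(y)$. The scaling invariance $F(\lambda\tau,\lambda^{-2e}y)=F(\tau,y)$ together with $\nabla_yF=0$ gives $\sum_LL\,\tau^LG_L(y(\tau))=0$. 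Adding twice the constancy of $F$ then gives
\[
\sum_L(L+2)\,\tau^LG_L(y(\tau))=2E .
\]
In this identity the term $\tau^{-2}U_Q$ carries weight $0$, so it drops out.

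\emph{Step (a).} With body $k$ at the origin, $U_Q$ is homogeneous of degree $-\frac12$ in the cluster coordinates $(y_{v_i})_{i\in Q'}$. By Lemma~\ref{lem:d14exp}, these coordinates occur only in $U_Q$ and in $R$. So I would contract $\nabla_yF=0$ with the Euler field $\sum_iy_{v_i}\partial_{y_{v_i}}$. This gives $-\frac12\tau^{-2}U_Q(y(\tau))+O(\tau^4)=0$, that is $U_Q(y(\tau))=O(\tau^6)$.

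\emph{Flatness of $\Pi$ and elimination of $\Gamma$.} The $y_{u_j}$-component of $\nabla_yF=0$ reads $y_{v_j}f'(\Pi)+\sum_{n\ge1}\tau^{(2n+1)(\sigma-1)}\partial_{y_{u_j}}B_n+O(\tau^4)=0$. Since $f''(\bar\Pi)\ne0$, it follows as in Theorem~\ref{thm:order3} that $f(\Pi(\tau))-f(\bar\Pi)=O(\tau^{6(\sigma-1)})$. Next I contract $\nabla_yF=0$ with the Euler field in $(y_{u_i})_{i\in Q'}$. Here $U_Q$ has degree $-\frac12$, $\Gamma$ is linear in $\psi$, $B_n$ has degree $-\frac{2n+1}2$ through $S_{2n+1}$, and $f$ does not involve these coordinates. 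Using (a), this gives
\[
\tau^{2-2\sigma}\Gamma=\sum_n\tfrac{2n+1}2\,\tau^{(2n+1)(\sigma-1)}B_n+O(\tau^4).
\]
I substitute this into the weighted identity above, where $\Gamma$ carries the weight $4-2\sigma$. The coefficient of $B_n$ becomes $(2n+1)(2-\sigma)+(2n+1)(\sigma-1)+2=2n+3$, so
\[
\sum_{n\ge0}(2n+3)\,\tau^{(2n+1)(\sigma-1)}B_n(y(\tau))=\text{const}+O\big(\tau^{\min(4,\,6\sigma-6)}\big).
\]

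\emph{Steps (b), (c).} All exponents on the left are positive. Both error exponents exceed $3(\sigma-1)$: for $6\sigma-6$ this is $\sigma>1$, and for $4$ it is $\sigma<2$. So comparing coefficients of $\tau^{(\sigma-1)+r}$ for $0\le r\le2\sigma-2$ gives $3[B_0]_r=0$ for $r<2\sigma-2$, and $3[B_0]_{2\sigma-2}+5B_1(\bar y)=0$ at $r=2\sigma-2$, which is (c). The case $r=0$ gives $B_0(\bar y)=m_jw(\bar y)\sum_{i\in Q'}m_ix_i=0$, and $w(\bar y)\ne0$, so $\sum m_ix_i=0$. Condition $\mathrm{X3}_k$ is Lemma~\ref{lem:d14blead}, and since $x_i^2\hat z_i=1$, the leading cluster configuration has a bad shape.

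The main obstacle is the bookkeeping of exponents. One must check that every remainder (the terms of $R$, the $O(\tau^6)$ part of $U_Q$, and the flatness error of $f(\Pi)$) sits strictly above $3(\sigma-1)$ uniformly for $\sigma\in(1,2)$. One must also check that the dependence of $w$ on $y_{v_j}$, which enters all $B_n$, is never needed, since only the Euler fields in the cluster coordinates $y_{u_i}$, $y_{v_i}$, $i\in Q'$, are used.
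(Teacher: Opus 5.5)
Your proof is correct and follows the paper's argument. The $W$-Euler contraction gives (a), and the relation $3[B_0]_r+5[B_1]_{r-2\sigma+2}=0$ comes from combining the $Z$-Euler contraction with the constancy of $F(\tau,y(\tau))$ and the flatness of $f(\Pi)$. Your detour through the scaling identity $\sum_L(L+2)\tau^LG_L(y(\tau))=2E$ is valid but unnecessary: the paper subtracts the $Z$-contraction directly from $F(\tau,y(\tau))=E$ and obtains the same identity $\sum_n(2n+3)\tau^{(2n+1)(\sigma-1)}B_n+2f(\Pi)=2E+O(\tau^4)$.
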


\begin{proof}
As in the proof of Lemma~\ref{lem:identities}, $\nabla_yF(\tau,y(\tau))=0$ and $F(\tau,y(\tau))$ is constant. Let $Z=\sum_{i\in Q'}y_{u_i}\partial/\partial y_{u_i}$ and $W=\sum_{i\in Q'}y_{v_i}\partial/\partial y_{v_i}$. The function $U_Q$ is homogeneous of degree $-\frac12$ in the $y_{u_i}$ and in the $y_{v_i}$, $\psi$ is linear in the $y_{u_i}$, and $S_{2n+1}$ is homogeneous of degree $-n-\frac12$ in them. By Lemma~\ref{lem:d14exp}, along $y(\tau)$,
\begin{align*}
0=W\cdot\nabla_yF&=-\tfrac12\tau^{-2}U_Q+O(\tau^4),\\
0=Z\cdot\nabla_yF&=-\tfrac12\tau^{-2}U_Q+\tau^{2-2\sigma}\Gamma-\textstyle\sum_n\big(n+\frac12\big)\tau^{(2n+1)(\sigma-1)}B_n+O(\tau^4).
\end{align*}
The first identity gives (a). The $y_{u_j}$-component of $\nabla_yF=0$ reads $y_{v_j}f'(\Pi)+O(\tau^{3\sigma-3})=0$. As in the proof of Theorem~\ref{thm:order3}, $f'(\bar\Pi)=0\ne f''(\bar\Pi)$, so that $\Pi(\tau)-\bar\Pi=O(\tau^{3\sigma-3})$ and $[f(\Pi(\tau))]_q=0$ for $0<q<6\sigma-6$. Let $\sigma-1\le q\le3\sigma-3$. Then $f$, $R$ and the $B_n$ with $n\ge2$ contribute nothing at $\tau^q$. By (a) neither does $U_Q$. So the coefficients of $\tau^q$ in the second identity and in $F(\tau,y(\tau))$ are
\[
[\Gamma]_{q+2\sigma-2}-\tfrac12[B_0]_{q-\sigma+1}-\tfrac32[B_1]_{q-3\sigma+3}=0,\qquad[\Gamma]_{q+2\sigma-2}+[B_0]_{q-\sigma+1}+[B_1]_{q-3\sigma+3}=0,
\]
and their difference gives $3[B_0]_{q-\sigma+1}+5[B_1]_{q-3\sigma+3}=0$. For $q<3\sigma-3$ the second term vanishes, which is (b). For $q=3\sigma-3$ it is $5B_1(\bar y)$, which is (c). Finally $B_0(\bar y)=m_j(-\hat w_j)^{-1/2}\sum_{i\in Q'}m_ix_i$. The condition $\mathrm{X3}_k$ holds by Lemma~\ref{lem:d14blead}.
\end{proof}

\paragraph{K14 configurations with a bad shape.} By Lemmas~\ref{lem:d14blead} and~\ref{lem:d14energy}, the leading cluster configuration is a $\mathrm{K14}(m_Q)$ configuration with body $k$ at the origin that satisfies $\mathrm{X3}_k$ and has a bad shape. We determine these configurations at positive masses.

We use the following description of the configurations of $\mathrm{K14}(m)$, $m\in(\C^*)^4$. A configuration is \emph{collinear} if $w=\alpha+\beta z$ for some $\alpha,\beta\in\C$, and \emph{non-collinear} otherwise. A non-collinear configuration has $p_{il}=\lambda_0^{1/3}\eta_{il}t_it_l$ with $\lambda_0\ne0$, cube roots of unity $\eta_{il}=\eta_{li}$ and $t\in(\C^*)^4$ solving
\begin{equation}\label{eq:tsys}
\textstyle\sum_{l\ne i}m_l\eta_{il}t_l=\kappa t_i^2\quad(i=1,\dots,4),\qquad\sum_lm_lt_l^3=0
\end{equation}
for some $\kappa\in\C$. Indeed, let $A$ be the symmetric matrix with $A_{il}=m_im_lp_{il}^3$ for $i\ne l$ and zero row sums. By \eqref{eq:K14}, $A$ annihilates $\one$, $z$ and $w$, which are linearly independent for a non-collinear configuration, since no $z_{il}$ vanishes. As $A\ne0$, $A=\lambda_0 vv^{\mathsf T}$ with $\lambda_0\ne0$ and $v\cdot\one=v\cdot z=v\cdot w=0$. Since $A_{il}\ne0$, $v_i\ne0$ for all $i$. We choose $t_i$ with $t_i^3=v_i/m_i$. Then $p_{il}^3=\lambda_0 t_i^3t_l^3$, which gives the $\eta_{il}$, and $\sum_lm_lt_l^3=\sum_lv_l=0$. Finally $\sum_{l\ne i}v_lz_{il}w_{il}=\sum_lv_lz_lw_l$ does not depend on $i$. With $z_{il}w_{il}=p_{il}^{-2}=\lambda_0^{-2/3}\eta_{il}t_i^{-2}t_l^{-2}$ this is the first equation of \eqref{eq:tsys}. A gauge change $t_l\mapsto\zeta_lt_l$, $\eta_{il}\mapsto\zeta_i^{-1}\zeta_l^{-1}\eta_{il}$, with cube roots of unity $\zeta_l$, preserves \eqref{eq:tsys} and all $p_{il}$, and multiplies the three products $\eta_{12}\eta_{34}$, $\eta_{13}\eta_{24}$, $\eta_{14}\eta_{23}$ by a common factor. The gauge changes act freely on the $3^6$ patterns, since $\zeta_i\zeta_l=1$ for all $i\ne l$ forces $\zeta_1=\dots=\zeta_4=1$. Thus there are $3^6/3^4=9$ classes. The three products, up to a common factor, take $9$ values and each of them is attained. Hence two patterns are gauge equivalent if and only if their products are proportional. A pattern, and its class, is of \emph{type} I, II or III if its three products take one, two or three distinct values. There is one class of type I and there are six of type II. The two classes of type III are those of the pattern $\eta_{12}=\eta_{34}=1$, $\eta_{13}=\eta_{24}=\jj^2$, $\eta_{14}=\eta_{23}=\jj$ and of its complex conjugate.

\begin{lemma}\label{lem:typeIII}
Let $m\in(\C^*)^4$ have real ratios $m_i/m_l$. Then no non-collinear configuration of $\mathrm{K14}(m)$ has a pattern of type III.
\end{lemma}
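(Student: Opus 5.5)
The plan is to reduce Lemma~\ref{lem:typeIII} to an overdetermined polynomial system with coefficients in $\Q(\jj)$, and then use the reality of the mass ratios to split each equation into two rational ones.

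\emph{Normalization.} First, \eqref{eq:tsys} is homogeneous in $m$, because scaling $m$ by $\lambda$ is absorbed into $\kappa$. Since the ratios $m_i/m_l$ are real, we may therefore assume $m\in(\R^*)^4$. By the gauge action and the classification of patterns above, it suffices to treat the representative $\eta_{12}=\eta_{34}=1$, $\eta_{13}=\eta_{24}=\jj^2$, $\eta_{14}=\eta_{23}=\jj$. Its complex conjugate is handled by conjugating the whole system, which keeps $m$ real. We also use the scaling $t\mapsto\lambda t$, $\kappa\mapsto\kappa/\lambda$ to set $t_4=1$.

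This leaves four unknowns $t_1,t_2,t_3,\kappa$, subject to the four equations $N t=\kappa\,t\circ t$ with $N_{il}=m_l\eta_{il}$, together with the cubic $\sum_l m_lt_l^3=0$. A first consequence is obtained by multiplying the $i$-th equation by $m_it_i$ and summing:
\[
\sum_{i\ne l}\eta_{il}m_im_lt_it_l=\kappa\sum_i m_it_i^3=0 .
\]

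\emph{Elimination.} Next I would eliminate $\kappa$. Concretely, I take the ratios of the linear rows to the squares, $(Nt)_i\,t_4^2=(Nt)_4\,t_i^2$ for $i=1,2,3$. Together with the cubic and the quadric above, this gives polynomials in $t_1,t_2,t_3$ with coefficients in $\Z[\jj][m]$. Eliminating $t$ by resultants or a Gröbner basis over $\Q(\jj)(m)$ should produce a nonzero polynomial
\[
G(m)=A(m)+\jj\,B(m),\qquad A,B\in\Q[m],
\]
that vanishes at every solution with all $t_i\ne0$. Because $m$ is real and $1,\jj$ are linearly independent over $\R$, any solution forces $A(m)=B(m)=0$ simultaneously.

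\emph{Closing the argument.} The last step is to show that $A$ and $B$ have no common zero in $(\R^*)^4$. The intended route is to factor the resultant of $A$ and $B$ in one mass variable, after dehomogenizing by $m_4=1$, and check that its factors cannot vanish at nonzero real masses. If needed, I would instead run a finite case analysis on common components, checking each against the real locus.

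As a cheaper alternative to try first, I would look for a structural argument. Conjugating the system with real $m$ exchanges the type III pattern with its conjugate class, so a solution $t$ yields a solution $\bar t$ for the conjugate pattern. Since the two classes have products differing by the transposition of $\jj$ and $\jj^2$, combining the identity $\sum\eta_{il}m_im_lt_it_l=0$ with its conjugate, and with $1+\jj+\jj^2=0$, may give directly a contradiction of the form $\sum_i m_i|\cdot|^2=0$. This would work only if the masses could be assumed of one sign, which they need not be here, so I do not expect to rely on it.

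\emph{Expected obstacle.} The main difficulty is the elimination and certification step. The system is overdetermined by exactly one equation, so $G$ is a single hypersurface condition and may have large degree. One must also make sure that degenerate branches, with some $t_i=0$ or with the quadric condition dependent on the others, are saturated away before the split into $A$ and $B$. In practice I would do this with exact computer algebra, marked $(\dagger)$ as elsewhere in the section.
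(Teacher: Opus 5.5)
Your proposal has a genuine gap. The whole content of the lemma is deferred to an elimination you have not carried out, and the decisive claim, that $A$ and $B$ have no common zero in $(\R^*)^4$, is asserted rather than argued.

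That claim is not routine. You never use the hypothesis that the configuration is non-collinear, so you have to show that \eqref{eq:tsys} by itself has no type III solution at real mass ratios. But \eqref{eq:tsys} does have type III solutions at real masses once one mass is allowed to vanish. Here is one:
\begin{itemize}
\item put bodies $1,2,4$ at $3,0,1$ and body $3$ at $(6-3\sqrt3\,i)/7$, and take $w=z$;
\item take $m=(-9,-\frac94,0,1)$;
\item take $p_{il}=s_{il}/z_{il}$, with $s_{24}=1$ and $s_{il}=-1$ for the other pairs $i<l$.
\end{itemize}
All rows of $\mathrm{K14}$ hold. The four positions are equianharmonic, and $p_{il}^3=\lambda_0t_i^3t_l^3$ with $t\in(\C^*)^4$ and a type III pattern. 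This gives a solution of \eqref{eq:tsys} with $\kappa\ne0$.

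Hence $A$ and $B$ do have real common zeros. Your last step would have to prove that every one of them has a vanishing coordinate, and nothing in the proposal addresses this. There is a second problem: the branch $\kappa=0$ contributes the real factor $\sum_l\prod_{k\ne l}m_k^2$ to $G$. This factor divides both $A$ and $B$, so the resultant you propose to factor vanishes identically.

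The missing idea is the splitting used in the paper, in which non-collinearity does the work.
\begin{itemize}
\item \emph{Setup.} With $H=(\eta_{il})$ and $u=(m_lt_l)_l$, \eqref{eq:tsys} reads $Hu=\kappa(t_l^2)_l$ and $\sum_lu_lt_l^2=0$. The matrix $H$ is diagonal in the basis $e_0,\dots,e_3$, with eigenvalues $0,2,2\jj^2,2\jj$.
\item \emph{Case $\kappa=0$.} Then $u\in\C e_0$, so $t_l=\gamma/m_l$, and the cubic gives $\gamma^3\sum_lm_l^{-2}=0$. Real ratios exclude this. It is the only place where reality of the masses is used.
\item \emph{Case $\kappa\ne0$.} Then $\sum_lt_l^2=0$. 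Writing $(t_l^2)_l=\sum_k\xi_ke_k$, the cubic becomes $\xi_1^2+\jj\xi_2^2+\jj^2\xi_3^2=0$. This condition involves no masses and equals $-\frac14h_{123}$. By the double-transposition symmetry all $h_{abc}$ vanish. The identity $h_{abc}=\lambda_0^{4/3}(t_at_bt_c)^4(z_{ab}w_{ac}-z_{ac}w_{ab})^2$, a complex Heron formula, then forces the configuration to be collinear.
\end{itemize}
Your example point lies on exactly this collinear branch. The non-collinearity hypothesis is what lets the paper discard the whole branch, without having to determine where its masses are real.
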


\begin{proof}
Suppose that one has. We diagonalize the matrix of the pattern and show that $\kappa\ne0$ forces the configuration to be collinear and that $\kappa=0$ is impossible for real mass ratios. After a gauge change, $\eta$ is the pattern above or its complex conjugate. We treat the first. For the second, replace $\jj$ by $\jj^2$ throughout. Let $H=(\eta_{il})$, with zero diagonal, $u=(m_lt_l)_l$, $e_0=(1,1,1,1)$, $e_1=(1,1,-1,-1)$, $e_2=(1,-1,1,-1)$ and $e_3=(1,-1,-1,1)$. Then \eqref{eq:tsys} reads $Hu=\kappa(t_l^2)_l$ and $\sum_lu_lt_l^2=0$. We have $He_0=0$, $He_1=2e_1$, $He_2=2\jj^2e_2$, $He_3=2\jj e_3$. Hence the kernel of $H$ is $\C e_0$ and its image, spanned by $e_1,e_2,e_3$, consists of the vectors with coordinate sum $0$.

Suppose first that $\kappa\ne0$. Then $\sum_lt_l^2=0$. We write $(t_l^2)_l=\xi_1e_1+\xi_2e_2+\xi_3e_3$. We obtain $u=\frac\kappa2(\xi_1e_1+\jj\xi_2e_2+\jj^2\xi_3e_3)+\gamma e_0$ for some $\gamma\in\C$. Since $e_k\cdot e_{k'}=4\delta_{kk'}$, the last equation of \eqref{eq:tsys} becomes $2\kappa(\xi_1^2+\jj\xi_2^2+\jj^2\xi_3^2)=0$, so that $\xi_1^2+\jj\xi_2^2+\jj^2\xi_3^2=0$. For three bodies $a,b,c$ put
\[
h_{abc}=(\eta_{ab}t_c^2+\eta_{ac}t_b^2-\eta_{bc}t_a^2)^2-4\eta_{ab}\eta_{ac}t_b^2t_c^2 ,
\]
which is symmetric in $a,b,c$. For $(a,b,c)=(1,2,3)$ we have $\eta_{12}t_3^2+\eta_{13}t_2^2-\eta_{23}t_1^2=2\jj^2\xi_1+2\xi_2$ and $4\eta_{12}\eta_{13}t_2^2t_3^2=4\jj^2\big(\xi_3^2-(\xi_1-\xi_2)^2\big)$. Expanding gives
\[
h_{123}=-4\big(\xi_1^2+\jj\xi_2^2+\jj^2\xi_3^2\big)=0 .
\]
The double transpositions of the bodies fix $\eta$, change each $\xi_k$ at most in sign and permute the four sets $\{a,b,c\}$ transitively. Hence $h_{abc}=0$ for any three bodies $a,b,c$. On the other hand $d_{il}=z_{il}w_{il}=p_{il}^{-2}=\lambda_0^{-2/3}\eta_{il}t_i^{-2}t_l^{-2}$ (since $\eta_{il}^{-2}=\eta_{il}$) and $d_{ab}+d_{ac}-d_{bc}=z_{ab}w_{ac}+z_{ac}w_{ab}$, so that
\[
h_{abc}=\lambda_0^{4/3}(t_at_bt_c)^4\big((d_{ab}+d_{ac}-d_{bc})^2-4d_{ab}d_{ac}\big)=\lambda_0^{4/3}(t_at_bt_c)^4(z_{ab}w_{ac}-z_{ac}w_{ab})^2 .
\]
For a real configuration, $w=\bar z$, the factor $(z_{ab}w_{ac}-z_{ac}w_{ab})^2$ is $-16$ times the squared area of the triangle $abc$. For such a configuration this identity is Heron's formula. Since $h_{abc}=0$, $\lambda_0\ne0$ and $t\in(\C^*)^4$, we obtain $z_{ab}w_{ac}=z_{ac}w_{ab}$ for any three bodies $a,b,c$. With $a=1$ and $b=2$ this gives $w_{1c}=\beta z_{1c}$ for all $c$, where $\beta=w_{12}/z_{12}$, that is, $w=\alpha+\beta z$ with $\alpha=w_1-\beta z_1$. The configuration is collinear, a contradiction.

Hence $\kappa=0$. Then $u\in\ker H$, so that $t_l=\gamma/m_l$ with $\gamma\ne0$. The last equation of \eqref{eq:tsys} becomes $\gamma^3\sum_lm_l^{-2}=0$. But $\sum_lm_l^{-2}=m_1^{-2}\sum_l(m_1/m_l)^2$, where the last sum is a sum of squares of nonzero real numbers and hence positive, a contradiction.
\end{proof}

For the other types we invert the configuration at body $k$. Let $(z,w,p)$ be a non-collinear $\mathrm{K14}(m_Q)$ configuration with $m\in(\C^*)^4$ and $z_k=w_k=0$. We put $d_{il}=z_{il}w_{il}$ and, for $i\in Q'$, $z'_i=1/w_i$ and $w'_i=1/z_i$. We index the outer bodies cyclically, $a+1=b$, $b+1=c$, $c+1=a$, and put $u'_i=z'_{i+1}w'_{i+2}-z'_{i+2}w'_{i+1}$.

\begin{lemma}\label{lem:inversion}
\begin{enumerate}[label=\textup{(\alph*)}]
\item The numbers $(z'_a-z'_b)(w'_a-w'_b)$, $(z'_b-z'_c)(w'_b-w'_c)$ and $(z'_c-z'_a)(w'_c-w'_a)$ are a common nonzero factor times the products $\eta_{ab}\eta_{ck}$, $\eta_{bc}\eta_{ak}$ and $\eta_{ca}\eta_{bk}$ of the pattern.
\item There is $C\ne0$ with $m_i=C\,u'_i/p_{ik}$ for $i\in Q'$.
\end{enumerate}
\end{lemma}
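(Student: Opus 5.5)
The plan is to compute directly from the description of non-collinear $\mathrm{K14}$ configurations given just before Lemma~\ref{lem:typeIII}: $p_{il}=\lambda_0^{1/3}\eta_{il}t_it_l$, hence $d_{il}=z_{il}w_{il}=p_{il}^{-2}=\lambda_0^{-2/3}\eta_{il}t_i^{-2}t_l^{-2}$ (using $\eta_{il}^{-2}=\eta_{il}$), together with the vector $v=(m_it_i^3)_i$. That vector is orthogonal to $\one$, $z$ and $w$. Both parts reduce to rewriting the inverted quantities in terms of the $d_{il}$ with one index equal to $k$. Since $z_k=w_k=0$, we have $d_{ik}=z_iw_i$ for $i\in Q'$.

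For (a), we first note that
\[
(z'_a-z'_b)(w'_a-w'_b)=\Big(\frac1{w_a}-\frac1{w_b}\Big)\Big(\frac1{z_a}-\frac1{z_b}\Big)=\frac{z_{ab}w_{ab}}{z_aw_az_bw_b}=\frac{d_{ab}}{d_{ak}d_{bk}} .
\]
Inserting the formula for $d_{il}$, the $t_a,t_b$ factors cancel and we get $\lambda_0^{2/3}t_k^4\,\eta_{ab}\eta_{ak}^{2}\eta_{bk}^{2}$. Since $\eta^3=1$, this equals $\lambda_0^{2/3}t_k^4(\eta_{ak}\eta_{bk}\eta_{ck})^2\cdot\eta_{ab}\eta_{ck}$. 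The prefactor is symmetric in $a,b,c$ and nonzero, and the same computation applies cyclically to the other two pairs. This gives (a).

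For (b), we use that $v\cdot\one=v\cdot z=v\cdot w=0$ with $z_k=w_k=0$. Then $(v_a,v_b,v_c)$ is orthogonal to $(z_a,z_b,z_c)$ and to $(w_a,w_b,w_c)$. These two vectors are independent. Otherwise, together with $\one$, they would not span a three-dimensional space in $\C^4$, contradicting the non-collinearity used above. Hence $v_i=C_0(z_{i+1}w_{i+2}-z_{i+2}w_{i+1})$ with $C_0\ne0$, because $v\ne0$. On the other hand,
\[
u'_i=\frac1{w_{i+1}z_{i+2}}-\frac1{w_{i+2}z_{i+1}}=\frac{z_{i+1}w_{i+2}-z_{i+2}w_{i+1}}{d_{i+1,k}\,d_{i+2,k}} .
\]
Therefore $m_it_i^3=C_0u'_i\,d_{i+1,k}d_{i+2,k}$. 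Substituting $d_{lk}=\lambda_0^{-2/3}\eta_{lk}t_l^{-2}t_k^{-2}$ gives
\[
m_i=C_0\lambda_0^{-4/3}t_k^{-4}\,\eta_{i+1,k}\eta_{i+2,k}\,\frac{u'_i}{t_i(t_at_bt_c)^2} .
\]
We then replace $1/t_i$ by $\lambda_0^{1/3}\eta_{ik}t_k/p_{ik}$. The result is $m_i=C\,u'_i/p_{ik}$ with
\[
C=C_0\lambda_0^{-1}t_k^{-3}(\eta_{ak}\eta_{bk}\eta_{ck})(t_at_bt_c)^{-2}\ne0,
\]
which is independent of $i$.

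The only point needing care is the bookkeeping of cube roots of unity, namely checking that the leftover factors $\eta_{ak}\eta_{bk}\eta_{ck}$ are symmetric. The second point is the nondegeneracy claim that the outer parts of $z$ and $w$ are independent, so that the cross product is nonzero and $C\ne0$. I expect the latter to be the only step requiring an argument rather than algebra. I would derive it from the linear independence of $\one,z,w$ in $\C^4$ with $z_k=w_k=0$.
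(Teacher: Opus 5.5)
Your proof is correct and follows the paper's argument. In (a) you rewrite each product as $d_{il}/(d_{ik}d_{lk})$ and insert $d_{il}=\lambda_0^{-2/3}\eta_{il}t_i^{-2}t_l^{-2}$, and in (b) you take $(v_a,v_b,v_c)$ proportional to the cross product of the outer parts of $z$ and $w$, both exactly as the paper does. The only difference is cosmetic: in (b) the paper reads $m_i=\lambda_0v_iv_k/(m_kp_{ik}^3)$ directly off $m_im_kp_{ik}^3=\lambda_0v_iv_k$ and then uses $p_{ik}^{-2}=z_iw_i$, which yields the common factor without tracking the $t_l$ and the cube roots $\eta_{lk}$.
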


\begin{proof}
(a) For $\{i,l,n\}=Q'$, $(z'_i-z'_l)(w'_i-w'_l)=d_{il}/(d_{ik}d_{lk})=d_{il}d_{nk}/(d_{ak}d_{bk}d_{ck})$, and by the derivation of \eqref{eq:tsys}, $d_{il}d_{nk}=\lambda_0^{-4/3}(t_at_bt_ct_k)^{-2}\eta_{il}\eta_{nk}$. (b) In that derivation $m_im_kp_{ik}^3=\lambda_0 v_iv_k$, and $v\perp z,w$ with $z_k=w_k=0$. The vectors $(z_a,z_b,z_c)$ and $(w_a,w_b,w_c)$ are independent, since $w=\beta z$ on $Q$ would make the configuration collinear. Hence $(v_a,v_b,v_c)$ is proportional to their cross product. Its component $i$ is $u'_i/(z'_{i+1}z'_{i+2}w'_{i+1}w'_{i+2})$. As $p_{ik}^{-2}=z_iw_i=1/(z'_iw'_i)$, $m_i=\lambda_0 v_kv_i/(m_kp_{ik}^3)$ is a common factor times $u'_i/p_{ik}$.
\end{proof}

The next theorem treats type I, in which the triangle $w'$ is equilateral (see the proof of Lemma~\ref{lem:classI}): $\mathrm{X3}_k$ and a bad shape force its centre to be the origin.

\begin{theorem}\label{thm:W}
Let $m_0,m_1,m_2>0$, $\delta\in\C$, and let $x_0,x_1,x_2\in\C^*$ satisfy $x_i^2=\delta+\jj^i$, $\sum_im_ix_i=0$ and $\sum_im_ix_i^{-2}=0$. Then $\delta=0$ and $m_0=m_1=m_2$.
\end{theorem}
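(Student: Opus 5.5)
The plan is to eliminate the $x_i$ and reduce to a one-variable problem in $\delta$, as in the proof of Theorem~\ref{thm:wtriangle}. The relation $\sum_im_ix_i^{-2}=0$ reads $\sum_im_i(\delta+\jj^i)^{-1}=0$; clearing denominators and using $\prod_i(\delta+\jj^i)=\delta^3+1$, this is linear in the masses and quadratic in $\delta$:
\[
\sum_im_i(\delta+\jj^{i+1})(\delta+\jj^{i+2})=0,
\]
that is, $M\delta^2-\delta\sum_im_i\jj^i+\sum_im_i\jj^{2i}=0$ with $M=m_0+m_1+m_2$. Then I use $\sum m_ix_i=0$.

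I first handle $\delta=0$. Then $x_i^2=\jj^i$, and $\sum_i m_i x_i^{-2}=\sum_i m_i\jj^{-i}=0$ with real positive $m_i$ forces $m_0=m_1=m_2$, since $1,\jj,\jj^2$ have only the trivial real relations up to a common multiple. I should still confirm that $\sum m_ix_i=0$ is then solvable: with $x_i=\pm\jj^{2i}$ (square roots of $\jj^i$), the signs all equal give $\sum\jj^{2i}=0$. So the case is consistent, and the conclusion for $\delta=0$ is immediate.

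For $\delta\neq0$, I would argue geometrically, in the spirit of Proposition~\ref{prop:P11}. A positive relation $\sum m_iw_i=0$ forces the arguments of the $w_i$ to surround $0$, or the $w_i$ to be collinear through $0$ (fact (A) there). Apply this to $w_i=x_i$ and to $w_i=x_i^{-2}=(\delta+\jj^i)^{-1}$. The second says that the three points $\delta+\jj^i$, the equilateral triangle translated by $\delta$, have arguments surrounding $0$ (inversion reverses arguments). Hence $0$ lies in the interior of the translated triangle, or on a line through $0$, which I treat separately. Write $\theta_i=\arg x_i$. Then $2\theta_i\equiv\arg(\delta+\jj^i)$, and both $\{\theta_i\}$ and $\{-2\theta_i\}$ surround $0$. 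Fact (B) alone does not finish, since there is no triple condition. Instead I would exploit the rigid structure: the three points $\delta+\jj^i$ form an equilateral triangle containing $0$, so their arguments are tied, with $\arg(\delta+\jj^i)$ rotating monotonically by roughly $2\pi/3$ steps. I would try to show that square roots with arguments that surround $0$ cannot be chosen. Halving the arguments of three directions that surround $0$, with each gap less than $\pi$, gives half-gaps below $\pi/2$. One of the three branch choices for the sign pattern then places all $x_i$ in an open half-plane unless a sign is flipped. A flip adds $\pi$ to one half-angle, and I must check whether the gaps can then all stay below $\pi$.

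The main obstacle is this sign analysis: halved angles with one flipped can surround $0$ in general. So the purely angular argument likely fails, and I would fall back to algebra. Set $x_i=\epsilon_i\sqrt{\delta+\jj^i}$ and eliminate the masses. The two linear equations in $(m_0,m_1,m_2)$ have a positive solution only if the cross product of the coefficient vectors $(x_i)$ and $(x_i^{-2})$, taken together with their conjugates, admits a real positive kernel vector. Concretely, the $m$ must lie in the real kernel of a $4\times3$ real system (real and imaginary parts). That requires a $3\times3$ determinant condition, which is real-analytic in $\delta$ and the signs. I expect this to reduce, after a polar substitution $\delta=\rho e^{i\varphi}$ and symmetry $\varphi\mapsto\varphi+2\pi/3$, to a one-parameter sign check like the one for $E$ in Theorem~\ref{thm:wtriangle}: the imaginary part factors through $\sin3\varphi$, and the real cases $e^{3i\varphi}=\pm1$ are handled by positivity of explicit polynomials in $\rho$. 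The result is that only $\delta=0$ survives, with equal masses.
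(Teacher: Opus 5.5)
Your reduction of $\sum_im_ix_i^{-2}=0$ to $M\delta^2-\nu\delta+\bar\nu=0$, with $\nu=\sum_im_i\jj^i$, and your treatment of $\delta=0$ are correct; they are steps (1) and (5) of the paper's proof. The case $\delta\ne0$ is the whole content of the theorem, and there you give no argument. You rightly abandon the angular approach. The algebraic fallback is only the expectation that an elimination ``reduces to a one-parameter sign check like the one for $E$'', and the work this sentence takes for granted is the proof itself.

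Three things are missing before the final sign check. First, a nonzero real kernel vector of your real $4\times3$ matrix requires a rank condition of codimension two, not one $3\times3$ determinant. A single real minor cuts out a curve in the $\delta$-plane and cannot force $\delta=0$. The paper gets the right count in two moves: the quadratic and its conjugate determine the masses from $\delta$, and $x$ must lie in the conjugation-invariant plane $m^\perp$, which is spanned by $z=(x_i^{-2})_i$ and $\bar z$. Second, the square-root branches must be eliminated. The paper writes $x$ in the basis $\one,\Omega=(\jj^i)_i,\bar\Omega$, where $x_i^2=\delta+\jj^i$ becomes three quadrics. It removes one coordinate with the plane condition and takes the resultant of two binary quadratic forms, which gives $-3\delta(\rho^2-1)^2N(\delta,\bar\delta)=0$ for an explicit $N$. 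Third, even once $\operatorname{Im}E=-2\rho^2\sin3\varphi\,K$ is known, $\sin3\varphi=0$ follows only because $\operatorname{Re}E$ and $K$ have no common zero. That needs a further resultant in $\cos3\varphi$, which your sketch does not anticipate.

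Most importantly, your last step cannot be ``positivity of explicit polynomials in $\rho$''. In the case $e^{3i\varphi}=1$ one may take $\delta=\rho>0$. There $E=\rho(\rho+1)^4S_+(\rho)$, with $S_+(\frac12)=-\frac{27}{64}$ and $S_+(1)=5$, so the elimination has genuine real zeros $\delta=\rho\in(\frac12,1)$. These are excluded only by the positivity of the masses. Normalize $\sum_im_i=1$. The quadratic then gives the real value $\nu=-\rho^2/(1-\rho)$, hence $m_1=m_2$, together with
\[
3m_1(\rho-1)=-(\rho^2-\rho+1),\qquad 3m_0(1-\rho)=(1-2\rho)(1+\rho).
\]
Since $\rho\ne1$, these rule out $\rho>1$ and force $\rho<\frac12$. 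Only on $[0,\frac12]$ does the estimate $S_+<0$ hold. A proof along your lines must carry the masses through the elimination and use $m_0,m_1>0$ at exactly this point. As written, the case $\delta\ne0$ is not proved.
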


\begin{proof}
We show that the hypotheses force $\delta=0$ or the vanishing of an explicit polynomial in $\delta$ and $\bar\delta$, and exclude the second case with $\delta\ne0$ by writing $\delta$ in polar form and using the positivity of the masses. We may assume $\sum_im_i=1$. We put $\nu=\sum_im_i\jj^i$ and $\rho=|\delta|$. We use the basis $\one=(1,1,1)$, $\Omega=(\jj^i)_i$, $\bar\Omega=(\jj^{2i})_i$ of $\C^3$. Since $x_i\ne0$, $\delta\ne-\jj^i$, that is, $\delta^3\ne-1$.
\begin{enumerate}[label=(\arabic*)]
\item From $(\delta+\jj^i)(\delta^2-\delta\jj^i+\jj^{2i})=\delta^3+1$ we obtain $z:=(x_i^{-2})_i=(\delta^2\one-\delta\Omega+\bar\Omega)/(\delta^3+1)$. So $\sum m_ix_i^{-2}=0$ reads $\delta^2-\nu\delta+\bar\nu=0$. With its conjugate this gives $\nu(1-\rho^2)=-(\delta\rho^2+\bar\delta^2)$. If $\rho=1$, then $\delta=-\bar\delta^2$ and $\delta^3=-\delta^2\bar\delta^2=-1$, which is excluded. Hence $\rho\ne1$.
\item The plane $V=\{v:\sum_im_iv_i=0\}$ contains $z$, $\bar z$ and $x$. As the $m_i$ are real, $\bar V=V$. In the basis $(\one,\Omega,\bar\Omega)$ the coordinates of $z$ and $\bar z$ are proportional to $(\delta^2,-\delta,1)$ and $(\bar\delta^2,1,-\bar\delta)$, whose cross product $n=(\rho^2-1,\bar\delta^2+\delta^2\bar\delta,\delta^2+\delta\bar\delta^2)$ has $n_0\ne0$. So $z$ and $\bar z$ span $V$ and the coordinates $(\hat x_0,\hat x_1,\hat x_2)$ of $x$ satisfy $n_0\hat x_0+n_1\hat x_1+n_2\hat x_2=0$.
\item Since $\Omega\Omega=\bar\Omega$, $\bar\Omega\bar\Omega=\Omega$ and $\Omega\bar\Omega=\one$ componentwise, $x_i^2=\delta+\jj^i$ reads $\hat x_0^2+2\hat x_1\hat x_2=\delta$, $2\hat x_0\hat x_1+\hat x_2^2=1$ and $2\hat x_0\hat x_2+\hat x_1^2=0$. We eliminate $\hat x_0$ by (2). Then $n_0$ times the third equation and $n_0^2$ times the difference of $\delta$ times the second and the first are two binary quadratic forms in $(\hat x_1,\hat x_2)$ with a common zero $(\hat x_1,\hat x_2)\ne(0,0)$, as $x\ne0$. Their resultant, $-3\delta(\rho^2-1)^2N(\delta,\bar\delta)$ $(\dagger)$, therefore vanishes, where $N\in\Z[\delta,\bar\delta]$ has $23$ terms:
\[
\begin{aligned}
N={}&4\delta^8\bar\delta^4+4\delta^8\bar\delta+24\delta^7\bar\delta^3-3\delta^7+40\delta^6\bar\delta^5-14\delta^6\bar\delta^2+4\delta^5\bar\delta^7+19\delta^5\bar\delta^4\\
&-12\delta^5\bar\delta+54\delta^4\bar\delta^6-48\delta^4\bar\delta^3+6\delta^4+\delta^3\bar\delta^8-20\delta^3\bar\delta^5+6\delta^3\bar\delta^2+28\delta^2\bar\delta^7\\
&-18\delta^2\bar\delta^4+8\delta^2\bar\delta-12\delta\bar\delta^6+12\delta\bar\delta^3-3\delta+4\bar\delta^8-4\bar\delta^2.
\end{aligned}
\]
Hence $\delta=0$ or $N(\delta,\bar\delta)=0$.
\item Suppose that $\delta=\rho e^{i\varphi}\ne0$ and $N(\delta,\bar\delta)=0$. Since each monomial $\delta^a\bar\delta^b$ of $N$ has $a-b\equiv1\pmod3$, $N=e^{i\varphi}E$ with $E$ a polynomial in $\rho$ and $e^{\pm3i\varphi}$ with real coefficients. With $c=\cos3\varphi$, $\operatorname{Re}E$ is a polynomial in $\rho$ and $c$,
\[
\begin{aligned}
\operatorname{Re}E={}&16\rho^8c^3+(2\rho^{11}+64\rho^9-30\rho^7)c^2+(8\rho^{12}+78\rho^{10}-46\rho^8-30\rho^6+18\rho^4-4\rho^2)c\\
&+39\rho^{11}-13\rho^9-33\rho^7+6\rho^5+8\rho^3-3\rho,
\end{aligned}
\]
and
\[
\operatorname{Im}E=-2\rho^2\sin3\varphi\,K,\qquad K=8\rho^6c^2+(\rho^9+24\rho^7-9\rho^5)c+15\rho^8-5\rho^6-3\rho^4+3\rho^2-2\quad(\dagger),
\]
\[
\Res_c(\operatorname{Re}E,K)=7776\rho^{24}(\rho-1)^6(\rho+1)^6(4\rho^8+13\rho^6+27\rho^4+26\rho^2+11)\quad(\dagger).
\]
The resultant is nonzero and $K$ has the leading coefficient $8\rho^6\ne0$. Hence $\operatorname{Re}E$ and $K$ have no common zero, and $\sin3\varphi=0$. If $e^{3i\varphi}=-1$, then $\operatorname{Re}E=-\rho(\rho-1)^4S_-(\rho)$ $(\dagger)$ with $S_-=\rho^4((\rho-1)^2(8\rho+7)+4)+6\rho^3+6\rho^2+8\rho+3>0$. If $e^{3i\varphi}=1$, then $\delta=\rho\jj^l$ for some $l$, and replacing $x_i$ and $m_i$ by $\jj^lx_{i+l}$ and $m_{i+l}$ we may take $\delta=\rho$. By (1), $\nu=-\rho^2/(1-\rho)$ is real. Thus $m_1=m_2$, $3m_0=1+2\nu$ and $3m_1=1-\nu$. For $\rho>1$ this gives $3m_1(\rho-1)=-(\rho^2-\rho+1)<0$, which contradicts $m_1>0$. For $\rho<1$ it gives $3m_0(1-\rho)=(1-2\rho)(1+\rho)$, so that $\rho<\frac12$. Then $\operatorname{Re}E=\rho(\rho+1)^4S_+(\rho)$ $(\dagger)$ with $S_+=(-3+8\rho-6\rho^2)+\rho^3(6-11\rho)+\rho^5(-6+9\rho+8\rho^2)\le-\frac12+\frac{2187}{21296}+\frac1{64}<0$: on $[0,\frac12]$ the first summand is increasing, the second is at most its value $\frac{2187}{21296}$ at $\rho=\frac9{22}$, and $-6+9\rho+8\rho^2\le\frac12$. In the remaining cases $E\ne0$, a contradiction.
\item Hence $\delta=0$. Then $\bar\nu=0$ by (1), and $m_0=m_1=m_2$.
\end{enumerate}
\end{proof}

\begin{lemma}\label{lem:classI}
Let $m_a,m_b,m_c>0$ and $m_k\ne0$, and let $(z,w,p)$ be a non-collinear $\mathrm{K14}(m_Q)$ configuration of type I with $z_k=w_k=0$ that satisfies $\mathrm{X3}_k$ and has a bad shape. Then $m_a=m_b=m_c$, $(m_k/m_a)^2=\frac13$, and after a relabelling of $a,b,c$, $z=s(1,\jj,\jj^2)$ and $w=s'(1,\jj^2,\jj)$ with $s,s'\in\C^*$.
\end{lemma}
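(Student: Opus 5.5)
We work in the inverted coordinates $z'_i=1/w_i$, $w'_i=1/z_i$ of Lemma~\ref{lem:inversion}. For a pattern of type I the three products $\eta_{ab}\eta_{ck}$, $\eta_{bc}\eta_{ak}$, $\eta_{ca}\eta_{bk}$ are equal. Lemma~\ref{lem:inversion}(a) then says that the three mutual quantities $(z'_i-z'_l)(w'_i-w'_l)$ of the outer triangle in the primed coordinates are equal. So the triangle $(z',w')$ is an ``equilateral'' complex configuration: the squared sides $d'_{il}$ all agree.

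\emph{Step 1: an explicit parametrization.} Up to the symmetries of K14, complex equilaterality means the following. After a translation $(z',w')\mapsto(z'-h,w'-h')$, a complex rotation and a relabelling, we can write $w'_i=h'+R'\jj^i$ and $z'_i=h+R\jj^{-i}$, or the same with the orientations exchanged. I would verify this by the same cube-root argument as in step (4) of Proposition~\ref{prop:G1}. Writing $\theta=(w'_c-w'_a)/(w'_b-w'_a)$ and $\theta'$ for $z'$, equality of the three products gives $\theta\theta'=1$ and $(\theta-1)(\theta'-1)=1$. Hence $\theta+\theta'=1$, so $\theta$ is a primitive sixth root of unity and $\theta'=\bar\theta$. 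This forces the stated form.

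\emph{Step 2: use $\mathrm{X3}_k$ and the bad shape.} By construction $\hat z_i=1/w'_i=1/(h'+R'\jj^i)$. After scaling so that $R'=1$, and with $\delta=h'$, we get $x_i^2=1/\hat z_i=\delta+\jj^i$. The condition $\mathrm{X3}_k$ says $\sum m_i\hat z_i=\sum m_ix_i^{-2}=0$, and the bad shape says $\sum m_ix_i=0$. Theorem~\ref{thm:W} applies directly and gives $\delta=0$ and $m_a=m_b=m_c$. Thus $w'$ is centred at the origin, so $\hat z=s(1,\jj^{2},\jj)$ up to relabelling, which is $s(1,\jj,\jj^2)$ after exchanging $b$ and $c$.

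\emph{Step 3: recover $w$ and the ratio $m_k/m_a$.} This is the main obstacle, because it requires the remaining K14 equations rather than the hypotheses used so far. I would use Lemma~\ref{lem:inversion}(b): $m_i=C\,u'_i/p_{ik}$. With equal outer masses this gives $u'_i\propto p_{ik}$. I would then combine it with the rows $p_{ik}^2z_iw_i=1$ and the type I structure $p_{il}=\lambda_0^{1/3}\eta_{il}t_it_l$ from \eqref{eq:tsys}. With $w'$ now centred, I expect these to force $h=0$ for $z'$ as well, so that $z'$ is centred and hence $w=1/z'=s'(1,\jj^2,\jj)$.

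Finally, substituting this symmetric ansatz into \eqref{eq:tsys} with $m=(1,1,1,\mu)$, the $\mathbb{Z}/3$ symmetry reduces everything to two scalar equations in $t_k/t_a$ and $\kappa$. The relation $\sum_l m_lt_l^3=0$ ties $\mu$ to $t_k/t_a$. Eliminating should give $\mu^2=\tfrac13$. As a check, the real equilateral triangle with its centre has $U=0$ exactly when $3\cdot\tfrac{1}{\sqrt3}\,\mu+3=0$, up to the signs of the $p_{ik}$. This gives $\mu^2=\tfrac13$, consistent with this value.

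If the direct argument for $h=0$ fails, the fallback is to keep $h$ free. I would substitute $z'_i=h+R\jj^{-i}$ into the K14 equations $F^z_i=F^w_i=0$ of the outer bodies and compute a resultant in $h$, showing by exact computation that only $h=0$ survives.
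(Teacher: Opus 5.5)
Your Steps 1 and 2 are correct and are the paper's argument. Equality of the three products (your $\theta+\theta'=\theta\theta'=1$, the paper's $AB+BC+CA=0$ with $A+B+C=0$) makes $w'$ and $z'$ oppositely oriented equilateral triangles. After scaling $z$ one has $x_i^2=\delta+\jj^i$, and Theorem~\ref{thm:W} gives $\delta=0$ and $m_a=m_b=m_c$. (A wording point: translations of $(z',w')$ are not symmetries of $\mathrm{K14}$, but since you keep $h,h'$ as unknowns nothing is lost.)

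The gap is Step~3. It carries the rest of the conclusion, and you only announce it. For the centring of $z'$ you need neither the type~I form of the $p_{il}$ nor a resultant. Scale $w$ as well, so that $w'_i=\jj^i$ and $z'_i=h+\jj^{-i}$. Then $u'_i=z'_{i+1}w'_{i+2}-z'_{i+2}w'_{i+1}=(\jj^2-\jj)(h\jj^i-1)$, and the row $D_{ik}$ gives $p_{ik}^2=1/(z_iw_i)=z'_iw'_i=h\jj^i+1$. With equal outer masses, Lemma~\ref{lem:inversion}(b) makes $u'_i/p_{ik}$ independent of $i$. So $(h\jj^i-1)^2/(h\jj^i+1)$ takes a single value $\varkappa$ for $i=0,1,2$. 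If $h\ne0$, the three distinct numbers $h\jj^i$ would be roots of the monic quadratic $(X-1)^2-\varkappa(X+1)$, which is impossible. Hence $h=0$, and $w=1/z'$ has the stated form.

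For the mass ratio, your check with $U=0$ is not a proof, because the shape fixes only the $p_{il}^2$. Normalize $ss'=1$, so that $d_{ab}=3$ and $d_{ak}=1$. Then $U=0$ reads $m_a(\epsilon_{ab}+\epsilon_{bc}+\epsilon_{ca})/\sqrt3+m_k(f_a+f_b+f_c)=0$ for signs $\epsilon,f$, which also allows $(m_k/m_a)^2=3$ and $\frac1{27}$.

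Your elimination in \eqref{eq:tsys} assumes $t_a=t_b=t_c$ by symmetry. The distances only give $t_a^2=t_b^2=t_c^2=t_k^2/3$, and with unequal signs $\sum_lm_lt_l^3=0$ gives $\frac1{27}$ instead of $\frac13$. Ruling this out needs a further equation.

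The paper fixes the signs with the rank-one factorization $m_im_lp_{il}^3=\lambda_0v_iv_l$ from the derivation of \eqref{eq:tsys}. Since $v$ is orthogonal to $\one$, $z$ and $w$, it is proportional to $(1,1,1,-3)$. Dividing the relations for the pairs $ab$ and $bk$ gives $(m_k/m_a)^2=(v_k/v_a)^2(d_{bk}/d_{ab})^3=9\cdot3^{-3}=\frac13$.

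Equivalently, use the row $F^z_a=0$ with $p_{ab}=e_{ab}/\sqrt3$, $p_{ac}=e_{ac}/\sqrt3$ and $p_{ak}=f_a$. Its imaginary part gives $e_{ab}=e_{ac}$, and its real part gives $m_k/m_a=-e_{ab}f_a/\sqrt3$.
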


\begin{proof}
By Lemma~\ref{lem:inversion}(a), $(z'_i-z'_l)(w'_i-w'_l)=\Lambda\ne0$ for the three pairs of $Q'$. Let $A=w'_a-w'_b$, $B=w'_b-w'_c$ and $C=w'_c-w'_a$. The corresponding differences of the $z'_i$ are $\Lambda/A$, $\Lambda/B$ and $\Lambda/C$. As their sum is $0$, $AB+BC+CA=0$. With $A+B+C=0$, the numbers $A$, $B$, $C$ are the three roots of $X^3=ABC$. So $w'$ is an equilateral triangle and $z'$ is one of the opposite orientation. Scaling $z$ and $w$, which preserves $\mathrm{K14}$, $\mathrm{X3}_k$ and the bad shape, and labelling the outer bodies $0,1,2$, we obtain $w'_i=\delta+\jj^i$ and $z'_i=\gamma+\jj^{-i}$ with $\gamma,\delta\in\C$. Then $x_i^2=1/z_i=\delta+\jj^i$. The condition $\mathrm{X3}_k$ is $\sum m_ix_i^{-2}=0$. Theorem~\ref{thm:W} gives $\delta=0$ and $m_0=m_1=m_2$. Now $u'_i=(\jj^2-\jj)(\gamma\jj^i-1)$ and $p_{ik}^2=z'_iw'_i=\gamma\jj^i+1$. By Lemma~\ref{lem:inversion}(b), the three numbers $(\gamma\jj^i-1)^2/(\gamma\jj^i+1)$ equal one number $\varkappa$. If $\gamma\ne0$, the three distinct numbers $\gamma\jj^i$ are roots of the quadratic $(X-1)^2-\varkappa(X+1)$, which is impossible. Hence $\gamma=0$, $z=(1,\jj^2,\jj)$ and $w=(1,\jj,\jj^2)$. The exchange of $b$ and $c$ gives the stated form. Finally, $m_am_bp_{ab}^3=\lambda_0 v_av_b$ and $m_bm_kp_{bk}^3=\lambda_0 v_bv_k$ give $(m_k/m_a)^2=(v_k/v_a)^2(d_{bk}/d_{ab})^3$. Here $v_a=v_b=v_c=-v_k/3$ by the cross product in the proof of Lemma~\ref{lem:inversion}, $d_{bk}=1$ and $d_{ab}=3$, so that $(m_k/m_a)^2=\frac13$.
\end{proof}

\begin{theorem}\label{thm:typeII}
Let $m_a,m_b,m_c,m_k\in\C^*$ with $m_b/m_a,m_c/m_a\in\R$. No non-collinear $\mathrm{K14}(m_Q)$ configuration of type II with $z_k=w_k=0$ satisfies $\mathrm{X3}_k$ and has a bad shape.
\end{theorem}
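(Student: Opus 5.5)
We follow the inversion approach already used for type I in Lemma~\ref{lem:classI}. By Lemma~\ref{lem:inversion}(a), for a pattern of type II the three products $(z'_i-z'_l)(w'_i-w'_l)$ over the outer pairs equal a common factor $\Lambda\ne0$ times $(\epsilon_1,\epsilon_2,\epsilon_3)$. Two of the $\epsilon$ coincide and the third differs from them by a primitive cube root of unity. After a gauge change and a relabelling of $a,b,c$ we may take these values to be $(1,1,\jj^{\pm1})$. As in Lemma~\ref{lem:classI}, put $A=w'_a-w'_b$, $B=w'_b-w'_c$, $C=w'_c-w'_a$. The $z'$-differences are then $\Lambda/A$, $\Lambda/B$ and $\Lambda\epsilon/C$, where the exceptional factor sits on the pair $ca$. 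Closure of the triangles gives $A+B+C=0$ and $BC+CA+\epsilon AB=0$. This fixes the shape ratio $A:B$ up to finitely many choices. So, up to scaling and translation, the triangle $w'$ is one of finitely many explicit triangles, and so is $z'$. We normalize $w'_i=\delta+\omega_i$ and $z'_i=\gamma+\omega'_i$, where $\omega,\omega'$ are these explicit nonequilateral shapes and $\gamma,\delta\in\C$ are the translations; these play the role of the parameters in Theorem~\ref{thm:W}.

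Next we impose the conditions. The bad shape gives $x_i^2=1/z_i=w'_i=\delta+\omega_i$ and $\sum m_ix_i=0$. The condition $\mathrm{X3}_k$ reads $\sum m_ix_i^{-2}=0$. The mass formula of Lemma~\ref{lem:inversion}(b) gives $m_i=C\,u'_i/p_{ik}$ with $p_{ik}^2=z'_iw'_i$. The mass ratios are real by hypothesis. Unlike Theorem~\ref{thm:W}, the masses here need not be positive, so the conjugation argument must use only real ratios. Since the $m_i$ have real ratios, the plane $\{v:\sum m_iv_i=0\}$ is closed under conjugation, and as before it is spanned by $(x_i^{-2})$ and its conjugate. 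This gives one linear relation on the coordinates of $x$, together with the quadratic relations $x_i^2=\delta+\omega_i$. Eliminating $x$ as in step (3) of Theorem~\ref{thm:W} yields a resultant $N_{\mathrm{II}}(\delta,\bar\delta)=0$. Independently, the identity $m_i^2\propto u_i'^2/(z'_iw'_i)$ together with real ratios gives polynomial equations in $\gamma,\delta$ and their conjugates. Rather than positivity, I expect the key leverage to come from combining these with $\sum m_ix_i^{-2}=0$: the masses are expressed through $(\gamma,\delta)$, so $\mathrm{X3}_k$ becomes a polynomial equation $G(\gamma,\delta)=0$ over $\Q(\jj)$.

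The plan is to reduce to finitely many candidate $(\gamma,\delta)$ using exact resultants and then check that each candidate violates a condition. Concretely, we combine $G=0$, the reality of $m_b/m_a$ and $m_c/m_a$ written as $m_b\bar m_a=\bar m_bm_a$ in the variables $\gamma,\bar\gamma,\delta,\bar\delta$, and $N_{\mathrm{II}}=0$. Eliminating $\gamma,\bar\gamma$ by resultants leaves polynomials in $\delta,\bar\delta$. We then pass to polar form, using the $\Z/3$ grading of monomials as in Theorem~\ref{thm:W} wherever the residual symmetry permits, and show that there is no common solution, or only degenerate ones where some $x_i=0$, some $p_{ik}=0$, or $z=w$ is collinear. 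The six type II classes are permuted by relabelling and conjugation, so we expect to treat only one or two of them. \textbf{The main obstacle} is the size of this elimination, since there is now a second translation parameter $\gamma$ and no positivity. I would first try to show that $\gamma$ is determined by the mass formula, just as $\gamma=0$ was forced in Lemma~\ref{lem:classI}: equal ratios of $(u'_i)^2/(z'_iw'_i)$ up to real factors should be restrictive enough. Failing that, I would run the full exact elimination and mark the resulting identities with $(\dagger)$.
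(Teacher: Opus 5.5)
Your setup matches the paper's up to the point where the decisive computation starts: the inversion at $k$, Lemma~\ref{lem:inversion}(a) with the products normalized to $(1,1,\varpi)$, the closure relation $A^2+(2-\varpi)AB+B^2=0$ (so $t=A/B$ is a root of $t^4+5t^3+9t^2+5t+1$), the normalization $B=\Lambda=1$ leaving the translations $\gamma,\delta$, and the mass formula of Lemma~\ref{lem:inversion}(b). The gap is that decisive computation. You insert the mass formula only into $\mathrm{X3}_k$. You treat the bad shape by the conjugation trick of Theorem~\ref{thm:W}. You then propose a real-algebraic elimination in $\gamma,\bar\gamma,\delta,\bar\delta$ whose outcome you leave open (``the main obstacle''). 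The tool you plan to finish with is not available here. The polar-form step of Theorem~\ref{thm:W} rests on the $\Z/3$ grading of $N$, which comes from the symmetry $\delta\mapsto\jj\delta$ of the family $w'_i=\delta+\jj^i$ (a cyclic relabelling combined with a scaling). A type~II triangle has no such symmetry. So the proposal stops before the only nontrivial step and gives no reason it would close. It also says nothing about how you would certify that the candidate points you find are \emph{all} the solutions.

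The missing idea is that, because the masses are determined by the configuration, \emph{both} conditions are holomorphic in $(\gamma,\delta)$. The hypothesis on the masses is then needed only at the very end. Put $x_i^2=w'_i$ and $\xi_i=p_{ik}/x_i$, so that $\xi_i^2=z'_i$ and $m_i=C\,u'_i/(x_i\xi_i)$. Then the bad shape reads $\sum_iu'_i/\xi_i=0$ and $\mathrm{X3}_k$ reads $\sum_iu'_i/(x_i^3\xi_i)=0$. The choice of square roots is removed by the product of the four signed sums, $H(h)=\sum_ih_i^2-2\sum_{i<l}h_ih_l$, evaluated at $h_i=u_i'^2/z'_i$ and at $h_i=u_i'^2/(w_i'^3z'_i)$ respectively. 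Together with the quartic in $t$, and the nonvanishing of the relevant quantities imposed by an extra unknown $s$ with $sS=1$, this is a zero-dimensional system over $\Q$. It needs no conjugate variables and covers both values of $\varpi$ at once. The paper solves it exactly, through an eliminant of degree $280$. It proves the list complete by an auxiliary system whose quotient has dimension $472$, on which multiplication by $S$ has rank $280$. Only then does it test the hypothesis, pointwise. Real ratios mean $R_i=(m_i/m_0)^2=u_i'^2w'_0z'_0/(u_0'^2w'_iz'_i)\in(0,\infty)$, and ball arithmetic shows that at every solution $R_1$ or $R_2$ is non-real or negative. Note that the test is positivity of $R_i$, not just its reality. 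If you impose reality through $m_i^2\propto u_i'^2/(z'_iw'_i)$, you must add this.
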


\begin{proof}
We parametrize the inverted configuration by three unknowns, derive two polynomial equations in them from the bad shape and $\mathrm{X3}_k$, solve the resulting system exactly over $\Q$, and show in ball arithmetic that at each of its solutions one of the two squared ratios of the outer masses is not positive. We label the outer bodies $0,1,2$. By Lemma~\ref{lem:inversion}(a), $(z'_i-z'_l)(w'_i-w'_l)=\theta_{il}\Lambda$, where $\Lambda\ne0$ and $\theta_{01},\theta_{12},\theta_{20}$ are cube roots of unity, exactly two of them equal. After a relabelling, which preserves $\mathrm{K14}$, $\mathrm{X3}_k$ and the bad shape, and a change of $\Lambda$, $(\theta_{01},\theta_{12},\theta_{20})=(1,1,\varpi)$ with $\varpi\in\{\jj,\jj^2\}$. Let $A=w'_0-w'_1$ and $B=w'_1-w'_2$. Then $z'_0-z'_1=\Lambda/A$, $z'_1-z'_2=\Lambda/B$ and $z'_2-z'_0=-\varpi\Lambda/(A+B)$. As their sum is $0$, $A^2+(2-\varpi)AB+B^2=0$ $(\dagger)$. So $t=A/B$ satisfies $t^2+(2-\varpi)t+1=0$, that is, $\varpi=(t+1)^2/t$. The condition $\varpi^2+\varpi+1=0$ becomes
\[
f(t)=t^4+5t^3+9t^2+5t+1=0\quad(\dagger).
\]
Scaling $z$ and $w$ we take $B=\Lambda=1$. Then $w'=(\delta,\delta-t,\delta-t-1)$ and $z'=(\gamma,\gamma-1/t,\gamma-1/t-1)$ with $\gamma,\delta\in\C$. The $u'_i$ are affine in $(\gamma,\delta)$ $(\dagger)$. With $\xi_i=p_{ik}/x_i$ we have $\xi_i^2=z'_i$, as $x_i^2=w'_i$. Lemma~\ref{lem:inversion}(b) gives $m_i=C\,u'_i/(x_i\xi_i)$. So the bad shape and $\mathrm{X3}_k$ read
\[
\textstyle\sum_iu'_i/\xi_i=0,\qquad\sum_iu'_i/(x_i^3\xi_i)=0.
\]
For $r\in\C^3$, some choice of signs gives $r_0\pm r_1\pm r_2=0$ if and only if $H(r_0^2,r_1^2,r_2^2)=0$, where $H(h)=\sum_ih_i^2-2\sum_{i<l}h_ih_l$, because $H(r_0^2,r_1^2,r_2^2)$ is the product of the four sums $r_0\pm r_1\pm r_2$ $(\dagger)$. Hence $H(h)=H(h')=0$ for $h_i=u_i'^2/z'_i$ and $h'_i=u_i'^2/(w_i'^3z'_i)$. Clearing denominators, we obtain the polynomials $H_1=t^8(z'_0z'_1z'_2)^2H(h)$ and $H_2=t^8(w_0'^3w_1'^3w_2'^3z'_0z'_1z'_2)^2H(h')$ in $\Q[t,\gamma,\delta]$ $(\dagger)$. Let $S=t\prod_iw'_i\prod_i(tz'_i)\prod_i(tu'_i)$. It does not vanish at a configuration: $t\ne0$ as $f(0)\ne0$, the $z'_i$ and $w'_i$ are finite and nonzero, and $u'_i\ne0$ as $m_i\ne0$. If the outer masses have real ratios, then
\[
R_i=(m_i/m_0)^2=\frac{u_i'^2w'_0z'_0}{u_0'^2w'_iz'_i}\in(0,\infty)\qquad(i=1,2).
\]
It therefore suffices to show that no solution $(t,\gamma,\delta,s)$ of
\begin{equation}\label{eq:heron}
f(t)=H_1=H_2=sS-1=0
\end{equation}
has $R_1,R_2\in(0,\infty)$.

We solve \eqref{eq:heron} exactly over $\Q$. A rational parametrization by the coordinate $\delta$, with a squarefree eliminant of degree $280$ and a denominator that is invertible modulo the eliminant, satisfies the four equations exactly modulo the eliminant. So \eqref{eq:heron} has at least $280$ solutions, with distinct values of $\delta$. To show that it has no others we use a second system $E$, in the unknowns $t,\gamma,\delta$ and $e_i=x_{i+1}x_{i+2}$, $f_i=t\xi_{i+1}\xi_{i+2}$ (indices modulo $3$). It consists of $f(t)$, the relations $e_i^2=w'_{i+1}w'_{i+2}$, $e_ie_l=w'_ne_n$, $f_i^2=(tz'_{i+1})(tz'_{i+2})$ and $f_if_l=(tz'_n)f_n$ for $\{i,l,n\}=\{0,1,2\}$, and the two conditions in the form $\sum_i(tu'_i)f_i=0$ and $\sum_i(tu'_i)e_i^3f_i=0$. Each solution of \eqref{eq:heron} gives a point of $V(E)$ with the same $(t,\gamma,\delta)$, at which $S\ne0$: we choose the signs of the $\xi_i$ so that the first sum vanishes, and then those of the $x_i$ for the second. A standard basis of $E$ over $\Q$ shows that $E$ has dimension $0$ and that the quotient ring of the polynomial ring by $E$ has dimension $472$ over $\Q$. Multiplication by $S$ on this quotient ring has rank $280$ over $\Q$. Over $\overline{\Q}$, the quotient ring is the product of the local rings at the points of $V(E)$. On the local ring at a point where $S\ne0$ multiplication by $S$ is invertible. So the rank is at least the total multiplicity of the points of $V(E)$ at which $S\ne0$, there are at most $280$ such points, and the $280$ solutions are all. In ball arithmetic at $800$ bits, with disjoint balls around the roots of the eliminant, at each solution $|\operatorname{Im}R_i|\ge0.0469$ or $\operatorname{Re}R_i\le-0.0469$ for $i=1$ or $i=2$. The programs and their logs are in the supplementary material.
\end{proof}

\begin{theorem}\label{thm:col}
Let $m_a,m_b,m_c,m_k>0$. No collinear $\mathrm{K14}(m_Q)$ configuration with $z_k=0$ satisfies $\mathrm{X3}_k$ and has a bad shape.
\end{theorem}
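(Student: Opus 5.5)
We reduce the collinear case to a small polynomial system in the outer positions and then use the positivity of the masses, as in Proposition~\ref{prop:P11} and Theorem~\ref{thm:typeII}.

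\emph{Reduction.} Since $w=\alpha+\beta z$ and $z_k=w_k=0$ (translating $w$ too), we get $\alpha=0$, so $w=\beta z$ with $\beta\ne0$. The rows $p_{il}^2z_{il}w_{il}=1$ then read $p_{il}^2=\beta^{-1}z_{il}^{-2}$, that is, $p_{il}=s_{il}\beta^{-1/2}z_{il}^{-1}$ with signs $s_{il}=\pm1$. Both halves of \eqref{eq:K14} become the same system
\[
\textstyle\sum_{l\ne i}m_l\,s_{il}\,z_{il}^{-2}=0\qquad(i\in Q).
\]
Contracting with $m_iz_i$ gives $\sum_{i<l}m_im_ls_{il}z_{il}^{-1}=0$, which is a consistency check and no new information. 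By the scaling of \S\ref{ss:Kgating} we may put $z_a=1$. The row of body $k$ is $\sum_{i\in Q'}m_is_{ik}x_i^4=0$, where $x_i^2=1/z_i$. Together with the hypotheses $\sum m_ix_i=0$ (bad shape) and $\sum m_ix_i^{-2}=0$ ($\mathrm{X3}_k$), this gives three weighted power sums of the three numbers $x_i$ with exponents $1,-2,4$, and real signed weights in the fourth-power sum.

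\emph{First attempt: an argument by arguments.} Steps (1) and (2) of the proof of Proposition~\ref{prop:P11} apply verbatim. They show that the arguments $\theta_i$ and $2\theta_i$ both surround $0$, so $\theta=(0,\pi-s,\pi+t)$ up to symmetry with $s,t<\frac\pi2<s+t$. When the three signs $s_{ik}$ agree, the row of $k$ says that $\{4\theta_i\}$ surrounds $0$ or the $x_i^4$ are collinear with $0$. I would try to show, as in fact (B) there, that $\{\theta_i\}$, $\{2\theta_i\}$ and $\{4\theta_i\}$ cannot all surround $0$, and treat the collinear subcase as in step (3). I expect this to fail for some angles, since $4\theta$ behaves worse than $3\theta$. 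In that event I also bring in the three outer rows $\sum_{l\ne i}m_ls_{il}z_{il}^{-2}=0$, which involve the outer differences.

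\emph{Main route: exact elimination over the sign patterns.} The sign patterns $(s_{il})$ are taken modulo relabelling of $a,b,c$ and a global sign, which leaves few cases. For each pattern I treat the unknowns $z_b,z_c$ (with $z_a=1$) and the masses up to scale. I remove the choice of square roots in the bad shape by the product trick of Theorem~\ref{thm:typeII}: $\sum m_ix_i=0$ for some choice of signs if and only if $H(m_a^2z_a^{-1},m_b^2z_b^{-1},m_c^2z_c^{-1})=0$. The system then consists of the four rows, $\mathrm{X3}_k$ and this $H$-condition, with the nonvanishing of all $z_i$ and $z_{il}$ imposed by a saturation variable. I eliminate $z_b,z_c$ and $m_k$ by a standard basis over $\Q$ to obtain conditions on the outer mass ratios. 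I then show that these have no positive solution, by Sturm sequences or Descartes' rule where the eliminant is univariate, and otherwise by isolating the finitely many solutions and checking in ball arithmetic that some mass ratio is non-real or non-positive. A quick consistency check is that the equilateral configuration of Lemma~\ref{lem:classI} is non-collinear, so it must not appear here.

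\emph{Main obstacle.} The hard step is whether the elimination yields a zero-dimensional set. In the collinear case the masses enter linearly in the four rows, so these rows could leave a positive-dimensional family of mass ratios. If that happens, I would restrict to the real slice using the mirror argument of Theorem~\ref{thm:wtriangle}: the plane $\{v:\sum m_iv_i=0\}$ is closed under conjugation and contains $z$ and $x^{-2}$. I would then write $\bar z$ in terms of $z$ and powers of $z$, and finish with a polar-form sign argument like the one for $E$ there.
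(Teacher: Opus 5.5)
Your reduction to the rows $\sum_{l\ne i}m_ls_{il}z_{il}^{-2}=0$, with $s_{li}=-s_{il}$, is correct, and it is where the paper starts. After that, however, the proposal contains no step that actually rules out a solution.

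The argument by arguments cannot be completed. For $\theta=(0,0.7\pi,1.3\pi)$ the sets $\{\theta_i\}$, $\{2\theta_i\}$ and $\{4\theta_i\}$ all surround $0$. In the mixed-sign case the row of $k$ gives no surrounding condition at all.

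The main route is a standard basis computation whose outcome you assert rather than derive, and whose zero-dimensionality you leave open. The count is in fact favourable: $\sum_im_iC_i\equiv0$, so the four rows give three conditions, and with $\mathrm{X3}_k$ and the $H$-condition you have five equations for $z_b,z_c$ and three mass ratios.

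Your fallback is misstated. Since $z_i=x_i^{-2}$, ``$z$ and $x^{-2}$'' is a single vector. Nothing in your setting expresses $\bar z$ through powers of $z$ as in Theorem~\ref{thm:wtriangle}.

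The missing idea is linear algebra on the right vectors. $\mathrm{X3}_k$ and the bad shape say that $(m_a,m_b,m_c)$ is orthogonal to $x$ and to $x^{-2}$. The row of $k$ says it is also orthogonal to $(\epsilon_ix_i^4)$, where $\epsilon_i=s_{ki}$.

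\emph{Dependent case.} If $x$ and $x^{-2}$ are dependent, the $\chi_i=x_i^3$ coincide. Positivity then forces equal outer masses and the equilateral triangle, and the outer rows give sign conditions incompatible with $m_k>0$.

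\emph{Independent case.} Otherwise $\det(\chi_i,1,\epsilon_i\chi_i^2)_{i\in Q'}=0$.
\begin{itemize}
\item For equal signs this is a Vandermonde determinant in distinct $\chi_i$, so that case dies with no positivity at all.
\item For the pattern $(1,1,-1)$ it gives $AB=2$, with $A=\chi_a/\chi_c-1$ and $B=\chi_b/\chi_c-1$. Positivity of the two outer ratios then yields two real equations in $A=u+iv$, whose resultant leaves only $A_0$ and $\bar A_0$.
\end{itemize}
At $A_0$ the outer masses are positive. The contradiction comes only from $m_k/m_a$, read off from the row of $a$, being non-real.

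So the exclusion genuinely rests on the rows of the outer bodies and on $m_k$. The outer ratios forced by $\mathrm{X3}_k$, the bad shape and the row of $k$ alone can be positive. Your elimination keeps all four rows, so it may well reach the same conclusion. As written, though, the proposal does not show it.
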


\begin{proof}
Let $w=\alpha+\beta z$. Then $\beta\ne0$, as no $w_{il}$ vanishes, $w_{il}=\beta z_{il}$, and $p_{il}=s_{il}\beta^{-1/2}/z_{il}$ with $s_{il}=\pm1$ and $s_{li}=-s_{il}$. So $F^w_i=\beta F^z_i$ and $F^z_i=\beta^{-3/2}\sum_{l\ne i}m_ls_{il}/z_{il}^2$. Thus $\mathrm{K14}$ holds if and only if
\[
\textstyle(C_i)\qquad\sum_{l\ne i}m_ls_{il}/z_{il}^2=0\qquad(i\in Q).
\]
We write $z_i=x_i^{-2}$ for $i\in Q'$, with the $x_i$ of the bad shape, $\chi_i=x_i^3$ and $\epsilon_i=s_{ki}$, and $m$, $x$, $x^{-2}$ for the vectors $(m_i)$, $(x_i)$, $(x_i^{-2})$, $i\in Q'$. Then $\mathrm{X3}_k$ and the bad shape say $m\cdot x^{-2}=m\cdot x=0$. The row $(C_k)$ says $\sum_{i\in Q'}m_i\epsilon_ix_i^4=0$.

Suppose first that $x$ and $x^{-2}$ are dependent. Then $\chi_a=\chi_b=\chi_c$, that is, $x_i=\zeta_ix_a$ with $\zeta_i^3=1$. As $m_i>0$, the relation $\sum m_i\zeta_i=0$ forces $\{\zeta_a,\zeta_b,\zeta_c\}=\{1,\jj,\jj^2\}$ and $m_a=m_b=m_c=:\mu$. Now $(C_k)$ reads $\mu x_a^4\sum\epsilon_i\zeta_i=0$, which forces $\epsilon_a=\epsilon_b=\epsilon_c$. After a scaling of $z$ and a relabelling of $b$ and $c$, $z=(1,\jj,\jj^2)$. As $(1-\jj)^2=-3\jj$ and $(1-\jj^2)^2=-3\jj^2$, the row $(C_a)$ reads $m_ks_{ak}=\frac\mu3(s_{ab}\jj^2+s_{ac}\jj)$. Since $m_k$ is real, $s_{ab}=s_{ac}$ and $m_k=-s_{ak}s_{ab}\mu/3$. In the same way $(C_b)$ and $(C_c)$ give $s_{ba}=s_{bc}$, $m_k=-s_{bk}s_{bc}\mu/3$, and $s_{cb}=s_{ca}$, $m_k=-s_{ck}s_{ca}\mu/3$. As $s_{ik}=-\epsilon_a$ for all $i\in Q'$, $m_k>0$ forces $s_{ab}=s_{bc}=s_{ca}=\epsilon_a$. Then $s_{ac}=s_{ab}=\epsilon_a$ contradicts $s_{ca}=\epsilon_a$.

Hence $x$ and $x^{-2}$ are independent and $m$ is proportional to their cross product, that is, to $(x_a^2(\chi_b-\chi_c),x_b^2(\chi_c-\chi_a),x_c^2(\chi_a-\chi_b))$. In particular the $\chi_i$ are distinct. Now $(C_k)$ says that the rows $(x_i,x_i^{-2},\epsilon_ix_i^4)$, $i\in Q'$, have determinant $0$, that is, after multiplying the rows by $x_i^2$, $\det(\chi_i,1,\epsilon_i\chi_i^2)_{i\in Q'}=0$. If $\epsilon_a=\epsilon_b=\epsilon_c$, this is $\pm$ the Vandermonde determinant of the $\chi_i$ $(\dagger)$, which is nonzero. Otherwise, after the reversal $s\mapsto-s$, which preserves the rows $(C_i)$, and a relabelling, $\epsilon=(1,1,-1)$. The determinant is then $(\chi_b-\chi_a)(2\chi_c^2-(\chi_a-\chi_c)(\chi_b-\chi_c))$ $(\dagger)$. So $(\chi_a-\chi_c)(\chi_b-\chi_c)=2\chi_c^2$. We divide $x$ by $x_c$, which preserves all conditions and the mass ratios, and put $A=\chi_a-1$ and $B=\chi_b-1$. Then $AB=2$, $A\notin\{0,-1,-2,\pm\sqrt2\}$ since the $\chi_i$ are nonzero and distinct, and
\[
m_a/m_c=2x_a^2/(A^2-2),\qquad m_b/m_c=-x_b^2A^2/(A^2-2).
\]
As both are positive, $(x_bA/x_a)^2<0$, $x_b=i\gamma x_a/A$ with $\gamma\in\R\setminus\{0\}$, and $x_a^2=\rho(A^2-2)$ with $\rho>0$. Taking cubes, with $\chi_a=1+A$ and $\chi_b=(A+2)/A$, we obtain
\[
\textup{(E1)}\quad\gamma^3=iA^2(A+2)/(A+1),\qquad\textup{(E2)}\quad(A+1)^2/(A^2-2)^3=\rho^3>0.
\]
Let $A=u+iv$ with $u,v$ real. Then (E1) gives $P_1=0$ and (E2) gives $P_2=0$, where
\[
P_1=\operatorname{Re}\big(A^2(A+2)(\bar A+1)\big)=u^4+3u^3+2u^2-uv^2-v^4-2v^2\quad(\dagger),\qquad P_2=\operatorname{Im}\big((A+1)^2(\bar A^2-2)^3\big).
\]
Their resultant is $\Res_v(P_1,P_2)=4096u^8(u+1)^3(u+2)h(u)^2$ $(\dagger)$ with $h=2u^3+4u^2-u-4$. The polynomial $P_1$ has the leading coefficient $-1$ in $v$. So $u\in\{0,-1,-2\}$ or $h(u)=0$. For $u=0,-1,-2$, $P_1=-v^2(v^2+2),-v^2(v^2+1),-v^4$, so $v=0$ and $A$ is excluded. The cubic $h$ has discriminant $-104<0$ and hence a single real root $u_0$, and $h(0.917)<0<h(0.918)$, so that $u_0\in(0.917,0.918)$. At $u=u_0$, $P_1=0$ is a quadratic equation in $v^2$ whose roots have the product $-u_0^2(u_0+1)(u_0+2)<0$. Its positive root is $v_0^2=2-u_0^2$, because $P_1=2h(u)$ on the circle $u^2+v^2=2$ $(\dagger)$. Thus $A$ is $A_0=u_0+iv_0$ or $\bar A_0$. If $A=A_0$, then (E1) and (E2) determine $\gamma\in\R$ and $\rho>0$, hence $x_a$ up to sign, $x_b=i\gamma x_a/A_0$, the positions $z_i=x_i^{-2}$ and the outer mass ratios. Since $m_k$ has the coefficient $s_{ak}/z_a^2\ne0$ in the row $(C_a)$, this row determines $m_k/m_a$ for each choice of $s_{ab},s_{ac}$. Certified in ball arithmetic, the four values are $\pm(0.0059+0.4768i)$ and $\pm(0.1808+0.6951i)$, none of them real. At $\bar A_0$ they are the conjugates. This contradicts $m_k>0$.
\end{proof}

\begin{theorem}\label{thm:Xhat}
Let $m_a,m_b,m_c,m_k>0$, and let $(z,w,p)$ be a $\mathrm{K14}(m_Q)$ configuration with $z_k=w_k=0$ that satisfies $\mathrm{X3}_k$ and has a bad shape. Then $(m_a,m_b,m_c,m_k)$ is a positive multiple of $(\sqrt3,\sqrt3,\sqrt3,1)$, and after a relabelling of $a,b,c$, $(z,w,p)=(sz_{\mathcal E},s'w_{\mathcal E},\zeta p_{\mathcal E})$ with $s,s'\in\C^*$ and $\zeta^2ss'=1$. Here $z_{\mathcal E}=(1,\jj,\jj^2)$ and $w_{\mathcal E}=(1,\jj^2,\jj)$ on $(a,b,c)$, and $p_{\mathcal E}$ is $1/\sqrt3$ on the pairs of $Q'$ and $-1$ on the pairs $ik$.
\end{theorem}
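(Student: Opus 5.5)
The plan is a case split on the structure of the configuration, using the classification that precedes the statement. If the configuration is collinear, Theorem~\ref{thm:col} excludes it, since all four masses are positive. Otherwise the configuration is non-collinear, and its inverse distances have the form $p_{il}=\lambda_0^{1/3}\eta_{il}t_it_l$ with a pattern $\eta$ of type I, II or III. Type III is excluded by Lemma~\ref{lem:typeIII}, because positive masses have real ratios. Type II is excluded by Theorem~\ref{thm:typeII}, again because the ratios $m_b/m_a$ and $m_c/m_a$ are real. That leaves type I, and Lemma~\ref{lem:classI} applies with $m_k>0\ne0$. It gives $m_a=m_b=m_c$ and $(m_k/m_a)^2=\frac13$. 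With positivity, $m_k=m_a/\sqrt3$, so the mass vector is a positive multiple of $(\sqrt3,\sqrt3,\sqrt3,1)$. After a relabelling it also gives $z=s(1,\jj,\jj^2)$ and $w=s'(1,\jj^2,\jj)$.

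What remains is to determine $p$. The rows $p_{il}^2z_{il}w_{il}=1$ fix each $p_{il}$ up to sign. On the pairs $ik$ we have $z_iw_i=ss'$, so $p_{ik}^2=1/(ss')$. On the outer pairs, for example $ab$, $z_{ab}w_{ab}=ss'(1-\jj)(1-\jj^2)=3ss'$, so $p_{ab}^2=1/(3ss')$. Choose $\zeta$ with $\zeta^2ss'=1$. Then $p_{ik}=\pm\zeta$ and $p_{il}=\pm\zeta/\sqrt3$ on the outer pairs.

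The signs come from the rows of K14. First I would use the factorization from the non-collinear description: $m_im_lp_{il}^3=\lambda_0v_iv_l$ with $v_a=v_b=v_c=-v_k/3$, which is computed in the proof of Lemma~\ref{lem:classI}. This makes the three cubes $p_{il}^3$ on the outer pairs equal, so those three signs agree. It also makes the three cubes $p_{ik}^3$ equal, so those three signs agree. Their ratio is $p_{ab}^3/p_{ak}^3=(m_k/m_a)(v_a/v_k)=-m_k/(3m_a)$. With $m_k/m_a=1/\sqrt3$ this is negative, so the outer sign is opposite to the sign on the pairs through $k$. Since $p\mapsto-p$ is a symmetry and can be absorbed into $\zeta$, we get $p=\zeta p_{\mathcal E}$. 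As a check I would verify directly that $(z_{\mathcal E},w_{\mathcal E},p_{\mathcal E})$ satisfies $F^z_a=0$ with $m_k=m_a/\sqrt3$. The main obstacle is this sign bookkeeping together with the possible loss of the rank-one description. That description is, however, available here, because every configuration in this case is non-collinear.
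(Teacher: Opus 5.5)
Your proposal is correct and follows the paper's proof. It uses the same case split: Theorem~\ref{thm:col} for collinear configurations, Lemma~\ref{lem:typeIII} and Theorem~\ref{thm:typeII} for types III and II, and Lemma~\ref{lem:classI} for type I. The only difference is the sign step. The paper substitutes into the row $F^z_a=0$ and reads $e_{ab}=e_{ac}$ and $f_a=-e_{ab}$ off its imaginary and real parts. It also checks the converse, which Lemma~\ref{lem:Pdata} later relies on. You get the same signs from the rank-one relation $m_im_lp_{il}^3=\lambda_0v_iv_l$ with $v\propto(1,1,1,-3)$ and $m_k/m_a=1/\sqrt3>0$, which works equally well.
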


\begin{proof}
Theorem~\ref{thm:col} excludes collinear configurations. A non-collinear one has a pattern as in \eqref{eq:tsys}. Lemma~\ref{lem:typeIII} and Theorem~\ref{thm:typeII} exclude the types III and II. For type I, Lemma~\ref{lem:classI} gives the masses and, after a relabelling and a scaling of $z$ and $w$, $z=z_{\mathcal E}$ and $w=w_{\mathcal E}$. We scale the masses to $(\sqrt3,\sqrt3,\sqrt3,1)$. Then $d_{il}=3$ on the pairs of $Q'$ and $d_{ik}=1$, so that $p_{il}=e_{il}/\sqrt3$ and $p_{ik}=f_i$ with $e_{il},f_i\in\{\pm1\}$. The row $F^z_a=0$ reads $(e_{ab}(1-\jj)+e_{ac}(1-\jj^2))/3+f_a=0$. Its imaginary part gives $e_{ab}=e_{ac}$, and its real part $f_a=-e_{ab}$. The rows of $b$ and $c$, divided by $\jj$ and $\jj^2$, give the same relations, so that $p=\pm p_{\mathcal E}$. Conversely, $\pm p_{\mathcal E}$ solve the rows $F^z_i=0$ for $i\in Q'$ and therefore also $F^z_k=0$. The rows $F^w=0$ are their complex conjugates.
\end{proof}

We write $\mathcal E$ for the configuration $(z_{\mathcal E},w_{\mathcal E},p_{\mathcal E})$ with the masses $(\sqrt3,\sqrt3,\sqrt3,1)$. It is the equilateral triangle $1,\jj,\jj^2$ with body $k$ at its centre.

\paragraph{Nondegeneracy and the slice expansion.} Let $(z,w,p)$ be a $\mathrm{K14}(m_Q)$ configuration with body $k$ at the origin and $m\in(\C^*)^4$. Since $\partial U_Q/\partial u_i=-\frac12m_iF^w_i$ and $\partial U_Q/\partial v_i=-\frac12m_iF^z_i$ (\S\ref{ss:order5}), it is a critical point of $U_Q$ in the six coordinates $(u_i,v_i)_{i\in Q'}$. Let $H$ be the Hessian of $U_Q$ there, $Z=(z,0)$ and $W=(0,w)$. As $U_Q$ is homogeneous of degree $-\frac12$ in $u$ and in $v$ separately, $HZ=HW=0$. We call a $\mathrm{K14}$ configuration \emph{nondegenerate} if $\ker H$ is spanned by $Z$ and $W$. For $a\in Q'$ let $\mathcal S_a=\{\xi\in\C^6:\xi_{u_a}=\xi_{v_a}=0\}$. The components of a vector along $\mathcal S_a$ are its four components other than those of $u_a$ and $v_a$. The matrix $H_a$ is the block of $H$ on $\mathcal S_a$. Let $g=\nabla\psi=(m_a,m_b,m_c,0,0,0)$.

\begin{lemma}\label{lem:d14slice}
Let the configuration be nondegenerate and satisfy $\mathrm{X3}_k$, and let $a\in Q'$.
\begin{enumerate}[label=\textup{(\alph*)}]
\item $H_a$ is invertible, and there is a unique analytic map $\lambda\mapsto Y(\lambda)\in(z,w)+\mathcal S_a$, for small $\lambda\in\C$, with $Y(0)=(z,w)$ and $\partial_l(U_Q-\lambda\psi)(Y(\lambda))=0$ for the four coordinates $l\ne u_a,v_a$.
\item Let $U_Q(Y(\lambda))=\sum_h\alpha_h\lambda^h$ and $\psi(Y(\lambda))=\sum_h\psi_h\lambda^h$. Then $\alpha_0=\psi_0=0$ and $h\alpha_h=(h-1)\psi_{h-1}$ for $h\ge1$. If $\xi\in\mathcal S_a$ and the components of $H\xi-g$ along $\mathcal S_a$ vanish, then $\psi_1=g\cdot\xi$ and $\psi_2=-\frac12D^3U_Q[\xi,\xi,\xi]$.
\item Replace $(z,w,p)$ by $(sz,s'w,\zeta p)$ with $\zeta^2ss'=1$ and $m$ by $cm$ with $c\ne0$, and relabel the outer bodies together with $a$. The new configuration is nondegenerate, and $\psi_h$ is multiplied by $c^{1-h}s(s/\zeta)^h$.
\end{enumerate}
\end{lemma}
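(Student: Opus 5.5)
For (a) I will use the kernel structure. Since $HZ=HW=0$, nondegeneracy gives $\ker H=\C Z\oplus\C W$. Both $Z$ and $W$ have nonzero components along the coordinates $u_a$ and $v_a$: $Z_{u_a}=z_a\ne0$, $Z_{v_a}=0$, $W_{u_a}=0$, $W_{v_a}=w_a\ne0$. Hence $\ker H\cap\mathcal S_a=0$, and every vector of $\C^6$ is an element of $\mathcal S_a$ plus an element of $\ker H$.

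Suppose $\eta\in\mathcal S_a$ with $H_a\eta=0$. Then $H\eta$ vanishes along $\mathcal S_a$, so $\xi\cdot H\eta=0$ for $\xi\in\mathcal S_a$. By symmetry of $H$ we also have $\xi\cdot H\eta=0$ for $\xi\in\ker H$. So $H\eta=0$, and $\eta\in\ker H\cap\mathcal S_a=0$. Thus $H_a$ is invertible. The existence and uniqueness of $Y(\lambda)$ then follow from the holomorphic implicit function theorem, applied to the four equations $\partial_l(U_Q-\lambda\psi)=0$ in the four unknowns of $\mathcal S_a$. At $\lambda=0$ these hold because the configuration is critical for $U_Q$.

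For (b) I will use the Euler operators $Z_{\mathrm{op}}=\sum_iy_{u_i}\partial_{u_i}$ and $W_{\mathrm{op}}=\sum_iy_{v_i}\partial_{v_i}$. At $\lambda=0$ we have $\alpha_0=U_Q=0$, since K14 configurations have zero energy (\S\ref{ss:Kgating}), and $\psi_0=0$ is $\mathrm{X3}_k$. Write $G=U_Q-\lambda\psi$. Along $Y(\lambda)$ the gradient of $G$ is supported on the two coordinates $u_a,v_a$. Homogeneity gives $W_{\mathrm{op}}G=-\frac12U_Q$ and $Z_{\mathrm{op}}G=-\frac12U_Q-\lambda\psi$. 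Evaluated along $Y$ these equal $y_{v_a}\partial_{v_a}G$ and $y_{u_a}\partial_{u_a}G$, where $y_{u_a}=z_a$ and $y_{v_a}=w_a$ are fixed. This alone does not close the argument, so I add the chain rule: since $\dot Y\in\mathcal S_a$,
\[
\tfrac{d}{d\lambda}G(Y(\lambda))=-\psi+\nabla G\cdot\dot Y=-\psi .
\]
Hence $\tfrac{d}{d\lambda}U_Q(Y)=\lambda\,\tfrac{d}{d\lambda}\psi(Y)$, and comparing coefficients of $\lambda^{h-1}$ gives $h\alpha_h=(h-1)\psi_{h-1}$.

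For $\psi_1$ and $\psi_2$ I differentiate the defining equations. At first order the $\mathcal S_a$-components of $H\dot Y(0)-g$ vanish, so $\dot Y(0)=\xi$ by uniqueness and the invertibility of $H_a$, and $\psi_1=g\cdot\xi$ because $\psi$ is linear. For $\psi_2=\frac12g\cdot\ddot Y(0)$, the second-order equation along $\mathcal S_a$ reads $H\ddot Y+D^3U_Q[\xi,\xi,\cdot]=0$. I pair it with $\xi\in\mathcal S_a$ and use the symmetry of $H$:
\[
\xi\cdot H\ddot Y=\ddot Y\cdot H\xi=\ddot Y\cdot g .
\]
The last equality holds because $\ddot Y\in\mathcal S_a$ and $H\xi-g$ vanishes along $\mathcal S_a$. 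This gives $g\cdot\ddot Y=-D^3U_Q[\xi,\xi,\xi]$, hence $\psi_2=-\frac12D^3U_Q[\xi,\xi,\xi]$.

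For (c), the rescaling $(z,w,p)\mapsto(sz,s'w,\zeta p)$ with $\zeta^2ss'=1$ preserves K14 and maps critical points to critical points. The Hessian conjugates by the diagonal scaling, so the kernel is still spanned by $Z$ and $W$ and nondegeneracy is kept. Relabelling is clear, and multiplying $m$ by $c$ does not change the kernel, since $U_Q$ scales by $c^2$. To track $\psi_h$ I will write down the map $Y(\lambda)\mapsto\tilde Y(\tilde\lambda)$. Under the rescaling $U_Q$ becomes $c^2\zeta U_Q$ and $\psi$ becomes $cs\,\psi$. So $\tilde\lambda$ must correspond to $\lambda=\tilde\lambda\,cs/(c^2\zeta)$, which gives $\tilde\psi_h=cs\,(s/(c\zeta))^h\psi_h=c^{1-h}s(s/\zeta)^h\psi_h$. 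The main obstacle is the bookkeeping here: the exact power of $\zeta$ picked up by $U_Q$, noting that $p$ scales by $\zeta$ independently of $s,s'$ through the branch choice.
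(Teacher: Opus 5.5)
Your proof is correct and follows essentially the same route as the paper. In (a) you show $H_a$ injective through the splitting $\C^6=\mathcal S_a\oplus(\C Z\oplus\C W)$, where the paper pairs $H\xi$ with $Z$ and $W$ directly. In (b) you use $\tfrac{d}{d\lambda}(U_Q-\lambda\psi)(Y(\lambda))=-\psi(Y(\lambda))$ and pair the second-order equation with $\xi$, and in (c) you match $\tilde\lambda$ with $\lambda=\tilde\lambda\,s/(c\zeta)$, all as in the paper; the homogeneity digression in (b) is not needed but does no harm.
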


\begin{proof}
(a) Let $\xi\in\mathcal S_a$ with $H_a\xi=0$. Then $H\xi=\beta e_{u_a}+\beta'e_{v_a}$. Pairing with $Z$ and $W$ gives $\beta z_a=\beta'w_a=0$. So $H\xi=0$ and $\xi\in\C Z+\C W$ with $\xi_{u_a}=\xi_{v_a}=0$. Since $z_a,w_a\ne0$, $\xi=0$. The implicit function theorem gives $Y$.
(b) $\alpha_0=U_Q(z,w)=0$ (\S\ref{ss:Kgating}). By $\mathrm{X3}_k$, $\psi_0=0$. Let $\hat F(\lambda)=(U_Q-\lambda\psi)(Y(\lambda))=\sum_hf_h\lambda^h$. The gradient of $U_Q-\lambda\psi$ at $Y(\lambda)$ has only the components $u_a$ and $v_a$, and $Y'(\lambda)\in\mathcal S_a$. So $\hat F'(\lambda)=-\psi(Y(\lambda))$ and $U_Q(Y(\lambda))=\hat F-\lambda\hat F'$. Hence $\alpha_h=(1-h)f_h$ and $\psi_{h-1}=-hf_h$. Next, $Y(\lambda)=(z,w)+\lambda\xi+\lambda^2\eta+O(\lambda^3)$, where $\eta\in\mathcal S_a$ and the components of $H\eta+\frac12D^3U_Q[\xi,\xi]$ along $\mathcal S_a$ vanish. As $\psi$ is linear and $H$ is symmetric, $\psi_1=g\cdot\xi$ and $\psi_2=g\cdot\eta=\xi\cdot H\eta=-\frac12D^3U_Q[\xi,\xi,\xi]$.
(c) Near the two configurations, $U_Q(sz',s'w')=\zeta\,U_Q(z',w')$ for the branches of the $p_{il}$, and $\psi(sz')=s\psi(z')$. So the equations of (a) for the new configuration at $\lambda$ are those for the old one at $(s/\zeta)\lambda$, after the substitution $(z',w')\mapsto(sz',s'w')$. This multiplies $\psi_h$ by $s(s/\zeta)^h$. Under $m\mapsto cm$, $U_Q$ and $\psi$ are multiplied by $c^2$ and $c$. Thus $Y(\lambda)$ becomes $Y(\lambda/c)$ and $\psi_h$ is multiplied by $c^{1-h}$. The Hessian becomes $\zeta D^{-1}HD^{-1}$ with $D=\mathrm{diag}(s,s,s,s',s',s')$, resp.\ $c^2H$, and a relabelling permutes the coordinates, so that nondegeneracy is preserved.
\end{proof}

\begin{lemma}\label{lem:Pdata}
The configuration $\mathcal E$ is nondegenerate. Its Hessian $H$ has rank $4$ and $\det H_a=81/256$, and for the slice of $a$, $\psi_1=0$ and $\psi_2=40/\sqrt3$.
\end{lemma}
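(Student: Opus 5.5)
This is an explicit computation at one configuration, so I would organise it as a direct calculation of $H$ from Lemma~\ref{lem:laplacian}, adapted to the Hessian of $U_Q$ alone. The linear term is absent here, so Lemma~\ref{lem:laplacian} applies with $c=0$.

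\textbf{Setting up the data.} At $\mathcal E$ the masses are $m=(\sqrt3,\sqrt3,\sqrt3,1)$, with $m_k=1$. On the pairs of $Q'$ we have $p=1/\sqrt3$, so $q_{il}=3\cdot 3^{-3/2}=1/\sqrt3$. On the pairs $ik$ we have $p=-1$, so $q_{ik}=-\sqrt3$. The ratios $\varrho_{il}=z_{il}/w_{il}$ are cube roots of unity read off from $z_{\mathcal E}=(1,\jj,\jj^2)$ and $w_{\mathcal E}=(1,\jj^2,\jj)$. For example $\varrho_{ak}=1$, $\varrho_{bk}=\jj^2$, $\varrho_{ck}=\jj$, and on the outer pairs $\varrho_{ab}=(1-\jj)/(1-\jj^2)=-\jj$, and cyclically.

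\textbf{Computing $H$ and its rank.} I would write out the three grounded $3\times3$ Laplacians $L_{3q/(4\varrho)}$, $L_{q/4}$ and $L_{3q\varrho/4}$. The key observation is that $L_q$ has zero row sums on $Q'$ only up to the grounding: its diagonal entries are $2/\sqrt3-\sqrt3$ and its off-diagonal entries are $-1/\sqrt3$.

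It then helps to use the $\Z/3$ symmetry that rotates $a\to b\to c$ together with $z\mapsto\jj z$ and $w\mapsto\jj^2w$. This symmetry block-diagonalises $H$ into three $2\times2$ blocks, one for each character. I would compute these blocks in the Fourier basis. The aim is to show that two of them (the ones containing $Z$ and $W$) are singular with one-dimensional kernels, and that the remaining pieces are invertible. Together this gives $\rank H=4$ and $\ker H=\C Z+\C W$, which is nondegeneracy. The determinant $\det H_a$ would then be computed directly as a $4\times4$ determinant in exact arithmetic over $\Q(\sqrt3,\jj)$. Its value $81/256$ is a $(\dagger)$-type check.

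\textbf{The slice coefficients $\psi_1$ and $\psi_2$.} By Lemma~\ref{lem:d14slice}(b), I need $\xi\in\mathcal S_a$ solving $(H\xi-g)|_{\mathcal S_a}=0$, that is $\xi=H_a^{-1}g|_{\mathcal S_a}$ with $g=(\sqrt3,\sqrt3,\sqrt3,0,0,0)$. Then $\psi_1=g\cdot\xi$. For $\psi_1=0$ I would look for a structural reason. Since $g$ pairs to zero with $Z$ (this is $\mathrm{X3}_k$) and with $W$, $g$ lies in the image of $H$. Writing $g=H\xi_0$, the $\Z/3$ symmetry should place $\xi_0$ in the trivial-character block, where both $g$ and $\xi_0$ live. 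I would then shift $\xi_0$ by multiples of $Z$ and $W$ to kill the $u_a$ and $v_a$ components. This gives $\psi_1=g\cdot\xi=\xi\cdot H\xi_0$, which equals $\xi_0\cdot H\xi$ up to kernel corrections. I expect it to reduce to $g\cdot\xi_0$, and this should vanish by the symmetry of $\xi_0$ against the equal masses.

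Finally $\psi_2=-\tfrac12D^3U_Q[\xi,\xi,\xi]$. I would compute this pairwise, extending the one-pair formula in the proof of Lemma~\ref{lem:laplacian} to the third derivative. The third derivative of $(1+s\alpha)^{-1/2}(1+s\beta)^{-1/2}$ is $-\tfrac18(15\alpha^3+9\alpha^2\beta+9\alpha\beta^2+15\beta^3)$. I would sum $m_im_lp_{il}$ times this over the six pairs.

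\textbf{Main obstacle.} The main difficulty is keeping exact track of the branch signs ($p=-1$ on the pairs through $k$) and of the scaling normalisation. The value $40/\sqrt3$ depends on these, and Lemma~\ref{lem:d14slice}(c) shows how $\psi_2$ rescales. I would therefore verify the final numbers by exact computer algebra, as for the other $(\dagger)$ identities.
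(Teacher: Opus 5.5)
Your route is the paper's. There the values are exact computations in $\Q(\sqrt3,i)=\Q(\sqrt3,\jj)$ of $\nabla U_Q$, $H$ and $D^3U_Q$ at $\mathcal E$, obtained from the Taylor expansions of the $p_{il}$ and fed into Lemma~\ref{lem:d14slice}(b). Your data ($q_{il}$, $\varrho_{il}$, the third-derivative formula) are correct. Two of your symmetry shortcuts are wrong as stated, however.

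\emph{The block structure.} The order-three symmetry $S$ you describe is linear and fixes $\mathcal E$, so $SZ=Z$ and $SW=W$. Both therefore lie in the trivial-character block, which is exactly $\C Z+\C W$ and is annihilated by $H$. Nondegeneracy thus amounts to the two nontrivial $2\times2$ blocks being invertible. It is not the case that two blocks each have a one-dimensional kernel.

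\emph{The argument for $\psi_1=0$.} For the same reason, $\xi_0$ cannot lie in the trivial block: $H$ vanishes there. Since $S^{\mathsf T}HS=H$, the form $H$ pairs the $\chi$-eigenspace of $S$ with the $\chi^{-1}$-eigenspace. Because the outer masses are equal, $\psi\circ S=\chi_0\psi$ with $\chi_0=\jj\ne1$. Hence the solution of $H\xi_0=g$ can be taken with $S\xi_0=\chi_0^{-1}\xi_0$. Then $\chi_0\psi(\xi_0)=\psi(S\xi_0)=\chi_0^{-1}\psi(\xi_0)$, which forces $\psi(\xi_0)=0$. As $g$ pairs to zero with $Z$ and $W$, this gives $\psi_1=g\cdot\xi=g\cdot\xi_0=0$.

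With this correction your structural argument for $\psi_1$ works. The values $\det H_a=81/256$ and $\psi_2=40/\sqrt3$ remain, as in the paper, exact machine computations.
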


\begin{proof}
By the end of the proof of Theorem~\ref{thm:Xhat}, $\mathcal E$ is a $\mathrm{K14}$ configuration, and it satisfies $\mathrm{X3}_k$ as $1+\jj+\jj^2=0$. Hence Lemma~\ref{lem:d14slice} applies. As $HZ=HW=0$, rank $4$ means that $\ker H$ is spanned by $Z$ and $W$. The values are exact computations in $\Q(\sqrt3,i)$. We take $\nabla U_Q$, $H$ and $D^3U_Q$ at $\mathcal E$ from the exact Taylor expansion of each $p_{il}$ in the form $p_{il}(\mathcal E)(1+\epsilon)^{-1/2}$, with $\epsilon$ a polynomial in the displacement. We check $\nabla U_Q=0$ and evaluate the formulas of Lemma~\ref{lem:d14slice}(b). The two programs that make these computations also test $\det H_a\ne0$. They are in the supplementary material.
\end{proof}

\paragraph{The exponent.} At a nondegenerate leading cluster configuration the place follows the map $Y$ of Lemma~\ref{lem:d14slice}, with $\lambda$ replaced by a series of order $4-2\sigma$.

\begin{lemma}\label{lem:d14follow}
Let the leading cluster configuration $\bar y_Q$ at a place of type $\sigma_k(j)(t)$ be nondegenerate, let $a\in Q'$, and let $Y$ be as in Lemma~\ref{lem:d14slice}. After a reparametrization and a rotation as in \S\ref{ss:expansion}, $y_{u_a}(\tau)\equiv\hat z_a$ and $y_{v_a}(\tau)\equiv\hat w_a$. Then, with $y_Q=(y_{u_i},y_{v_i})_{i\in Q'}$ and $\lambda(\tau)=\frac{m_j}{2M}y_{v_j}(\tau)\,\tau^{4-2\sigma}$,
\[
y_Q(\tau)-Y(\lambda(\tau))=O(\tau^{1+\sigma}),\qquad U_Q(y_Q(\tau))-U_Q(Y(\lambda(\tau)))=O(\tau^{5-\sigma}).
\]
\end{lemma}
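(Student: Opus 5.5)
The plan is to compare the cluster equations along the place with the defining equations of $Y$, and to treat the difference as a perturbation controlled by the invertibility of $H_a$ (Lemma~\ref{lem:d14slice}(a)). The normalization first. As in the proof of Theorem~\ref{thm:order5}, the freedom of \S\ref{ss:expansion} lets us fix two leading coordinates. Here we fix $y_{u_a}$ and $y_{v_a}$: the conditions $\mu\varphi^{-2}y_{u_a}=\hat z_a$ and $\mu^{-1}\varphi^{-2}y_{v_a}=\hat w_a$ (both $u_a$, $v_a$ have exponent $1$) determine $\varphi^4$ from their product and then $\mu$. Then $y_Q(\tau)-\bar y_Q\in\mathcal S_a$, and $Y(\lambda)\in(\hat z,\hat w)+\mathcal S_a$ as well.

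Next I would write the four cluster components $l\ne u_a,v_a$ of $\tau^2\nabla_yF(\tau,y(\tau))=0$ using Lemma~\ref{lem:d14exp}. By that lemma the $y_{v_i}$ occur only in $U_Q$ and $R$, and the $y_{u_i}$ occur in $U_Q$, $\Gamma$, the $B_n$ and $R$. Multiplying by $\tau^2$, the $\Gamma$ term gives $\tau^{4-2\sigma}\cdot(-\frac{m_j}{2M}y_{v_j})\nabla\psi=-\lambda(\tau)\nabla\psi$. So these components read
\[
\partial_l(U_Q-\lambda(\tau)\psi)(y_Q(\tau))=O(\tau^{2+(\sigma-1)})+O(\tau^{6})=O(\tau^{1+\sigma}),
\]
where the first error term comes from $B_0$ (its gradient is nonzero, and it carries $\tau^{2}\tau^{\sigma-1}$), the higher $B_n$ are smaller, and $R$ contributes $\tau^{2+L}$ with $L\ge4$. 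Since $\sigma<2$, the $B_0$ term dominates, and $1+\sigma<6$. The function $G(\lambda,y)=(\partial_l(U_Q-\lambda\psi)(y))_{l}$ on $(\hat z,\hat w)+\mathcal S_a$ has invertible derivative $H_a$ in $y$ at $(0,\bar y_Q)$. Both $y_Q(\tau)$ and $Y(\lambda(\tau))$ tend to $\bar y_Q$, because $\lambda(\tau)\to0$ as $4-2\sigma>0$. Hence a Lipschitz estimate for the local inverse (the uniqueness part of the implicit function theorem) gives $|y_Q(\tau)-Y(\lambda(\tau))|\le C|G(\lambda(\tau),y_Q(\tau))|=O(\tau^{1+\sigma})$. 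This is the first claim.

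For the second claim I would expand $U_Q$ at $Y=Y(\lambda(\tau))$ to first order:
\[
U_Q(y_Q)-U_Q(Y)=\nabla U_Q(Y)\cdot\delta+O(|\delta|^2),\qquad \delta=y_Q-Y\in\mathcal S_a .
\]
Since $\delta$ lies in $\mathcal S_a$, only the four components $l\ne u_a,v_a$ of $\nabla U_Q(Y)$ matter, and by the definition of $Y$ these equal $\lambda\,\partial_l\psi(Y)=O(\tau^{4-2\sigma})$. So the first-order term is $O(\tau^{4-2\sigma+1+\sigma})=O(\tau^{5-\sigma})$. The quadratic term is $O(\tau^{2+2\sigma})$, and $2+2\sigma>5-\sigma$ because $\sigma>1$.

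The main obstacle I expect is the bookkeeping of the error exponents in the gradient equation. One has to make sure the normalization leaves no other term of lower order. The exponent of $\lambda$ has to match the $\Gamma$ term exactly, including the factor $y_{v_j}(\tau)$ rather than $\hat w_j$. The $R$ terms must not hide exponents below $1+\sigma$. In particular I would check carefully that $\tau^2\partial B_0$ carries $\tau^{\sigma+1}$ and not less, and that the $f(\Pi)$ part does not involve the cluster coordinates.
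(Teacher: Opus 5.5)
Your proposal is correct and follows the paper's proof essentially step by step. It uses the same normalization fixing $y_{u_a}$ and $y_{v_a}$, the same reading of the cluster components of $\tau^2\nabla_yF=0$ via Lemma~\ref{lem:d14exp} as $\nabla(U_Q-\lambda(\tau)\psi)(y_Q(\tau))=O(\tau^{1+\sigma})$, the invertibility of $H_a$ to compare $y_Q(\tau)$ with $Y(\lambda(\tau))$, and the first-order expansion of $U_Q$ at $Y$, whose $\mathcal S_a$-components equal $\lambda g$, with the quadratic term $O(\tau^{2+2\sigma})$ negligible since $\sigma>1$. Your explicit bookkeeping of the $B_n$, $R$ and $f(\Pi)$ terms spells out what the paper leaves implicit in ``by Lemma~\ref{lem:d14exp}''.
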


\begin{proof}
The conditions are $\mu\varphi^{-2}y_{u_a}=\hat z_a$ and $\mu^{-1}\varphi^{-2}y_{v_a}=\hat w_a$, which determine $\varphi^4$ and then $\mu$. By Lemma~\ref{lem:d14exp}, the cluster components of $\tau^2\nabla_yF(\tau,y(\tau))=0$ read $\nabla(U_Q-\lambda(\tau)\psi)(y_Q(\tau))=O(\tau^{1+\sigma})$. Both $y_Q(\tau)$ and $Y(\lambda(\tau))$ lie in $\bar y_Q+\mathcal S_a$. The components of $\nabla(U_Q-\lambda\psi)$ along $\mathcal S_a$ vanish at $(Y(\lambda),\lambda)$, they are $O(\tau^{1+\sigma})$ at $(y_Q(\tau),\lambda(\tau))$, and their derivative at $(\bar y_Q,0)$ is the invertible $H_a$. This gives the first estimate. For the second, let $\delta=y_Q(\tau)-Y(\lambda(\tau))\in\mathcal S_a$. Along $\mathcal S_a$ the components of $\nabla U_Q(Y(\lambda))$ and $\lambda g$ agree. Hence $U_Q(y_Q)-U_Q(Y)=\lambda(\tau)\,g\cdot\delta+O(|\delta|^2)=O(\tau^{5-\sigma})$, since $2+2\sigma>5-\sigma$.
\end{proof}

In both propositions below we use $\lambda(\tau)=\bar\lambda\tau^{4-2\sigma}(1+o(1))$ with $\bar\lambda=m_j\hat w_j/(2M)\ne0$. For a convergent power series $\sum_hc_h\lambda^h$, the first $h$ with $c_h\ne0$ contributes $c_h\bar\lambda^h$ at the exponent $h(4-2\sigma)$, and all other contributions have larger exponents.

\begin{proposition}\label{prop:D14N}
Let $\sigma\ne(2n+1)/(n+1)$ for all integers $n\ge1$. At positive masses, the leading cluster configuration at a place of type $\sigma_k(j)(t)$, $\sigma=1+t$, is degenerate.
\end{proposition}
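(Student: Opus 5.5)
We argue by contradiction. Suppose the leading cluster configuration $\bar y_Q$ is nondegenerate. By Lemma~\ref{lem:d14energy}(b) and Lemma~\ref{lem:d14blead} it satisfies $\mathrm{X3}_k$ and has a bad shape. Theorem~\ref{thm:Xhat} then says that, after relabelling, scaling and a change of sign of $p$, it is $\mathcal E$. In particular the plan does not need the bad-shape input $\sum m_ix_i^3=0$ directly: the aim is instead to derive $\sum_{i\in Q'}m_ix_i=0$ at higher order, and a vanishing condition contradicting Lemma~\ref{lem:Pdata}. We fix $a\in Q'$ and use Lemma~\ref{lem:d14follow}, so that the cluster follows the slice curve $Y(\lambda(\tau))$ with $\lambda(\tau)=\bar\lambda\tau^{4-2\sigma}(1+o(1))$, up to $O(\tau^{1+\sigma})$.

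\textbf{Step 1: expand $B_0$ along the place.} Since $B_0=m_jw\,S_1$, and $y_{v_j}$ can be taken constant up to the reparametrization freedom, the expansion of $B_0(y(\tau))$ is governed by $S_1(Y(\lambda(\tau)))$. Its coefficients in $\lambda$ are the Taylor coefficients of $S_1$ along $Y$. Lemma~\ref{lem:d14energy}(b) says that $[B_0]_r=0$ for $0\le r<2\sigma-2$. Every power $\lambda^h$ with $h(4-2\sigma)<2\sigma-2$, that is $h<(\sigma-1)/(2-\sigma)$, must therefore have a vanishing coefficient. The error $O(\tau^{1+\sigma})$ lies beyond $2\sigma-2$, since $1+\sigma>2\sigma-2$.

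\textbf{Step 2: use the energy relation to link to $\psi_h$.} From the constancy of $F$ together with the $Z$-pairing (the two identities in the proof of Lemma~\ref{lem:d14energy}), the coefficients of $U_Q(y_Q)$, of $\Gamma\propto\lambda\psi$ and of $B_0$ are tied at each exponent. We combine this with Lemma~\ref{lem:d14slice}(b), $h\alpha_h=(h-1)\psi_{h-1}$, and with the second estimate of Lemma~\ref{lem:d14follow}. At an exponent $h(4-2\sigma)-(2\sigma-2)$, the $\Gamma$- and $U_Q$-contributions give $\psi_{h-1}$ terms that are not matched by any $B_0$ or $B_1$ term unless $h(4-2\sigma)$ coincides with one of the exponents $(2n+1)(\sigma-1)$ of the $B_n$. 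The equation $h(4-2\sigma)=(2n+1)(\sigma-1)+(2\sigma-2)$ reduces to $\sigma=(2n+1)/(n+1)$ for the relevant $n=h-1$, and these $\sigma$ are excluded by hypothesis. Off these resonances each unmatched coefficient must vanish, so the first $h$ with $\psi_h\ne0$ forces a contradiction. By Lemma~\ref{lem:Pdata}, $\psi_1=0$ but $\psi_2=40/\sqrt3\ne0$, and Lemma~\ref{lem:d14slice}(c) shows this nonvanishing is invariant under the symmetries used to reach $\mathcal E$. Hence $\bar\lambda^3\psi_2\ne0$ produces a nonzero term at the exponent $3(4-2\sigma)-(2\sigma-2)$ (or the corresponding one), with nothing to cancel it. This is the contradiction.

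\textbf{Main obstacle.} The delicate step is the exponent bookkeeping in Step 2. One must check that for nonresonant $\sigma\in(1,2)$ the exponent carrying $\psi_2\bar\lambda^3$ is below the error orders $5-\sigma$ and $1+\sigma$ and below $4$ (the order of $R$). One must also check that it differs from every $B_n$ exponent and from the $f(\Pi)$ exponents $\ge 6\sigma-6$. For $\sigma$ close to $2$ the exponent $4-2\sigma$ is small, so many powers of $\lambda$ enter. I would handle this by taking the first index $h$ with $\psi_h\ne0$, which is $h=2$, and verifying the inequalities directly as linear inequalities in $\sigma$. If for some range of $\sigma$ the needed inequalities fail, I would instead work with the $W$-pairing identity to isolate $U_Q$, and use $\alpha_3=\tfrac23\psi_2\ne0$ at exponent $3(4-2\sigma)$ against the bound of Lemma~\ref{lem:d14energy}(a).
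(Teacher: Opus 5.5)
Your Step~1 is correct and is how the paper begins, but Step~2 is not an argument, and your fallback covers only part of the range of $\sigma$. The bookkeeping in Step~2 is off in two places:
the $\Gamma$-term carrying $\psi_h$ sits at the exponent $h(4-2\sigma)-(2\sigma-2)$, so $\psi_{h-1}$ sits at $h(4-2\sigma)-2$;
and the equation $h(4-2\sigma)=(2n+1)(\sigma-1)+(2\sigma-2)$ with $n=h-1$ gives $\sigma=(6h+1)/(4h+1)$, not $(2n+1)/(n+1)$.
More tellingly, an argument through $\psi_2\ne0$ works equally at resonant $\sigma$, which is Proposition~\ref{prop:resonant}. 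So it cannot be where the nonresonance hypothesis enters.

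The real obstruction is the error term. With the estimates of Lemma~\ref{lem:d14follow}, $y_Q(\tau)-Y(\lambda(\tau))=O(\tau^{1+\sigma})$. Hence $\psi(y_Q(\tau))$ is known only up to $O(\tau^{1+\sigma})$, and $U_Q(y_Q(\tau))$ only up to $O(\tau^{5-\sigma})$. These errors come from the forcing of the cluster by $\tau^{1+\sigma}\nabla B_0$, and you cannot dismiss them without a finer expansion of the place, which your plan does not supply. At $\mathcal E$ the signal of $\psi_2\ne0$ is $\psi_2\bar\lambda^2\tau^{8-4\sigma}$ in $\psi$, and $\alpha_3\bar\lambda^3\tau^{12-6\sigma}$ with $\alpha_3=\tfrac23\psi_2$ in $U_Q$. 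Both lie at or beyond these error orders whenever $\sigma\le\tfrac75$. So your fallback, which is exactly the paper's proof of Proposition~\ref{prop:resonant}, gives a contradiction only for $\sigma>\tfrac75$; the paper needs it only for resonant $\sigma\ge\tfrac32$. For nonresonant $\sigma\in(1,\tfrac75]$ your plan reaches no contradiction at all.

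The missing idea is to finish Step~1 and use Lemma~\ref{lem:d14energy}(c), which you set aside. Write $S_1(Y(\lambda))=\sum_hs_h\lambda^h$ with $s_0=0$.
Your Step~1 shows that the first $h$ with $s_h\ne0$ has $h(4-2\sigma)\ge2\sigma-2$. Equality means $\sigma=(2h+1)/(h+1)$, and this is the only place where nonresonance enters.
So the inequality is strict. Since $1+\sigma>2\sigma-2$, you get $[S_1(y_Q(\tau))]_{2\sigma-2}=0$, and therefore $[B_0]_{2\sigma-2}=0$.
Lemma~\ref{lem:d14energy}(c) then gives $B_1(\bar y)=\tfrac12m_j(-\hat w_j)^{-1/2}\hat z_j\sum_{i\in Q'}m_ix_i^3=0$.
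Together with $\sum m_ix_i=\sum m_ix_i^{-2}=0$, this contradicts Proposition~\ref{prop:P11}.

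This needs neither Theorem~\ref{thm:Xhat} nor Lemma~\ref{lem:Pdata}, and it covers every nonresonant $\sigma\in(1,2)$ at once. If you keep your reduction to $\mathcal E$, you may replace Proposition~\ref{prop:P11} by the remark that there the $x_i^3$ are all equal, so $\sum_{i\in Q'}m_ix_i^3=3m_ax_a^3\ne0$.
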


\begin{proof}
Suppose that it is nondegenerate, and normalize the place as in Lemma~\ref{lem:d14follow}. Lemma~\ref{lem:d14energy} holds in this parametrization, since its proof uses only $\nabla_yF=0$ and the constancy of $F$ along the place. Let $S_1(Y(\lambda))=\sum_hs_h\lambda^h$, with $s_0=0$ by the bad shape. Then $S_1(y_Q(\tau))=\sum_hs_h\lambda(\tau)^h+O(\tau^{1+\sigma})$, where $1+\sigma>2\sigma-2$. Since $B_0=m_jwS_1$ and $w(y(\tau))\to(-\hat w_j)^{-1/2}\ne0$, Lemma~\ref{lem:d14energy}(b) gives $[S_1(y_Q(\tau))]_r=0$ for $r<2\sigma-2$ and $[B_0]_{2\sigma-2}=m_j(-\hat w_j)^{-1/2}[S_1]_{2\sigma-2}$. So the first $h$ with $s_h\ne0$, if there is one, has $h(4-2\sigma)\ge2\sigma-2$, with equality only for $\sigma=(2h+1)/(h+1)$. Hence $h(4-2\sigma)>2\sigma-2$ and $[S_1(y_Q(\tau))]_{2\sigma-2}=0$. By Lemma~\ref{lem:d14energy}(c), $B_1(\bar y)=\frac12m_j(-\hat w_j)^{-1/2}\hat z_j\sum_{i\in Q'}m_ix_i^3=0$. Thus $\sum_{i\in Q'}m_ix_i^3=0$. With $\sum_{i\in Q'}m_ix_i=\sum_{i\in Q'}m_ix_i^{-2}=0$, this contradicts Proposition~\ref{prop:P11}.
\end{proof}

\begin{proposition}\label{prop:resonant}
Let $\sigma=(2n+1)/(n+1)$ for an integer $n\ge1$. If the leading cluster configuration at a place of type $\sigma_k(j)(t)$, $\sigma=1+t$, is nondegenerate, then $\psi_1=\psi_2=0$ for each $a\in Q'$.
\end{proposition}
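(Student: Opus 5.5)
The plan is to read off $\psi_1$ and $\psi_2$ from the energy identity of Lemma~\ref{lem:d14energy}(a), which says that the cluster potential is very small along the place, transported to the slice curve $Y(\lambda)$ of Lemma~\ref{lem:d14slice} by Lemma~\ref{lem:d14follow}. The resonance only fixes the exponents. With $\sigma=(2n+1)/(n+1)$ we have $4-2\sigma=2/(n+1)$ and $5-\sigma=(3n+4)/(n+1)$. Both are explicit, and the argument compares them.

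First I fix $a\in Q'$ and normalize the place as in Lemma~\ref{lem:d14follow}, so that $y_{u_a}\equiv\hat z_a$ and $y_{v_a}\equiv\hat w_a$. Lemma~\ref{lem:d14energy}(a) gives $U_Q(y(\tau))=O(\tau^6)$, and it still holds in this parametrization: its proof uses only $\nabla_yF=0$ and the constancy of $F$, as already noted in the proof of Proposition~\ref{prop:D14N}. The cluster potential depends only on the cluster coordinates $y_Q$. Combining this with the second estimate of Lemma~\ref{lem:d14follow} gives
\[
U_Q\big(Y(\lambda(\tau))\big)=O\big(\tau^{\min(6,\,5-\sigma)}\big)=O(\tau^{5-\sigma}),
\]
since $\sigma>1$.

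Next I expand the left side by Lemma~\ref{lem:d14slice}(b). We have $U_Q(Y(\lambda))=\sum_h\alpha_h\lambda^h$ with $\alpha_0=0$, $\alpha_1=0$, $\alpha_2=\tfrac12\psi_1$ and $\alpha_3=\tfrac23\psi_2$. Here $\lambda(\tau)=\bar\lambda\tau^{4-2\sigma}(1+o(1))$ with $\bar\lambda\ne0$. By the remark before Proposition~\ref{prop:D14N}, the first $h$ with $\alpha_h\ne0$ contributes $\alpha_h\bar\lambda^h$ at the exponent $2h/(n+1)$, and nothing else contributes there. Two comparisons finish the argument:
\begin{itemize}
\item For $h=2$, $4/(n+1)<(3n+4)/(n+1)$ always holds. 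So $\alpha_2\ne0$ is impossible, and $\psi_1=0$.
\item Then $\alpha_3$ is the first possibly nonzero coefficient. For $h=3$, $6/(n+1)<(3n+4)/(n+1)$ holds exactly when $3n>2$, which is true for $n\ge1$. Hence $\alpha_3=0$ and $\psi_2=0$.
\end{itemize}
Since $a$ was arbitrary, this holds for each $a\in Q'$.

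The main point to watch is the bookkeeping of error terms. I must check that the $O(\tau^{5-\sigma})$ remainder of Lemma~\ref{lem:d14follow}, and the $o(1)$ correction in $\lambda(\tau)$, do not interfere at exponents $4/(n+1)$ and $6/(n+1)$. The correction in $\lambda(\tau)$ is handled by the first-nonzero-coefficient remark. I also have to confirm that the normalization (reparametrization and rotation) leaves $U_Q$ and the order estimates unchanged. This holds because $U_Q$ is invariant under the complex rotation of the cluster and the reparametrization has $\varphi(0)=1$. If the case $n=1$ turned out to be delicate for $\psi_2$, I would fall back on the strict inequality $6/2=3<7/2$, which still holds, so I expect no obstacle there.
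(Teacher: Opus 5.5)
Your proposal is correct and follows the paper's proof. You normalize for $a$, combine Lemma~\ref{lem:d14energy}(a) with Lemma~\ref{lem:d14follow} to get $U_Q(Y(\lambda(\tau)))=O(\tau^{5-\sigma})$, deduce $\alpha_2=\alpha_3=0$ from $h(4-2\sigma)<5-\sigma$, and read off $\psi_1=2\alpha_2$ and $\psi_2=\tfrac32\alpha_3$ from Lemma~\ref{lem:d14slice}(b); the only difference is that you write the exponents as $2h/(n+1)$ and $(3n+4)/(n+1)$, where the paper uses $\sigma\ge\tfrac32>\tfrac75$.
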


\begin{proof}
We normalize the place for $a$ as in Lemma~\ref{lem:d14follow}. By Lemmas~\ref{lem:d14energy}(a) and~\ref{lem:d14follow}, $\sum_h\alpha_h\lambda(\tau)^h=U_Q(Y(\lambda(\tau)))=O(\tau^{5-\sigma})$. So $\alpha_h=0$ whenever $h(4-2\sigma)<5-\sigma$. As $\sigma\ge\frac32>\frac75$, this holds for $h=2$ and $h=3$. Then Lemma~\ref{lem:d14slice}(b) gives $\psi_1=2\alpha_2=0$ and $\psi_2=\frac32\alpha_3=0$.
\end{proof}

\begin{proof}[Proof of Theorem~\textup{\ref{thm:D14}}]
Let a singular place $P$ have type $\sigma_k(j)(t)$ with $0<t<1$, and let $\sigma=1+t$. By Lemmas~\ref{lem:d14blead} and~\ref{lem:d14energy}(b), the leading cluster configuration is a $\mathrm{K14}(m_Q)$ configuration that satisfies $\mathrm{X3}_k$ and has a bad shape. By Theorem~\ref{thm:Xhat}, $m_Q=c(\sqrt3,\sqrt3,\sqrt3,1)$ with $c>0$, and after a relabelling of $a,b,c$ the leading cluster configuration is $(sz_{\mathcal E},s'w_{\mathcal E},\zeta p_{\mathcal E})$ with $\zeta^2ss'=1$. By Lemmas~\ref{lem:Pdata} and~\ref{lem:d14slice}(c) it is nondegenerate. If $\sigma$ is not of the form $(2n+1)/(n+1)$, Proposition~\ref{prop:D14N} gives a contradiction. Otherwise we take the slice of the outer body that corresponds to $a$ in $\mathcal E$. By Lemmas~\ref{lem:Pdata} and~\ref{lem:d14slice}(c), $\psi_2=c^{-1}s(s/\zeta)^2\cdot40/\sqrt3\ne0$, which contradicts Proposition~\ref{prop:resonant}.
\end{proof}

%% file: sec/s5_d9.tex

\subsection{Diagram 9}\label{ss:d9}

We now use the expansion of \S\ref{ss:expansion} to exclude diagram~$9$ at all positive masses.

\begin{theorem}\label{thm:D9}
Let $m\in(\R_{>0})^5$, and let $C\subseteq Y_m$ be an irreducible curve on which some $r_{kl}^2$ is not constant and $U\not\equiv0$. No singular place of $C$ has diagram~$9$. Hence the pieces $\pc{d9}(T;ax,by)$ may be removed from $\mathrm{pieces}(S(m))$ in Theorem~\ref{thm:types}, for all $m$, $T$, $ax$ and $by$.
\end{theorem}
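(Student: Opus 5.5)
We would follow the pattern of the treatment of diagram~$15$ and of $\pc{d14b}$: expand the potential $V$ at the place and combine the Euler identity \eqref{eq:qid} with the vanishing of $\nabla_yF$ to obtain conditions on the leading point, and then show that these conditions are incompatible at positive masses. Let $P$ have diagram $9$ with the numbering of \cite[Fig.~11]{AK12}, so that its type is $\pc n(\{1,2,3\};14,25)$ (Table~\ref{tab:types}). The $zw$-triangle is $T=\{1,2,3\}$, with $x=1$ on its pairs. The pairs $14$, $25$, $45$ have $x=0$, and the four remaining pairs have $x=-\tfrac12$. By Lemma~\ref{lem:orders} we first read off $\zeta,\omega$ for every pair. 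On the maximal strokes $14$, $25$ (and on $45$, according to its colour in the figure) the exponents are $(-2,2)$ or $(2,-2)$. This fixes the exponents of the coordinates relative to a base body. We would take $o=3$, or a body of $T$ adjacent to the strokes, so that the triangle coordinates have exponent $1$ and the coordinates of $4$, $5$ have exponents $\pm2$.

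Next we expand $F(\tau,y)=\sum_n\tau^{n-2}V_n(y)$ as in Lemma~\ref{lem:d15expansion}. $V_0$ collects the cluster potential $U_T$, the exact terms $m_1m_4p_{14}$, $m_2m_5p_{25}$, $m_4m_5p_{45}$ (functions of products $\Pi_{14},\Pi_{25},\Pi_{45}$ of leading coordinates), and the parts of $I$ at exponent $0$. The eight cross pairs with $x=-\tfrac12$ contribute $p\sim\tau^{1}\cdot$(leading factor) times binomial series, hence odd $V_n$ with $n\ge3$. Exactly as in Theorem~\ref{thm:order3}, the components of $\nabla_yF=0$ in the coordinates of $4$, $5$ that occur only in a function $f(\Pi)$ with $f'(\bar\Pi)=0\neq f''(\bar\Pi)$ show that $V_2$ (or the $\Pi$-part of $V_0$) is flat to order $6$. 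Then \eqref{eq:qid} with $q=3$ gives $V_3(\bar y)=0$. This is a weighted sum $\sum m_im_l\hat p_{il}$ over the cross pairs, a linear relation among square roots of leading coordinates. We also use the leading relations of the triangle: by Lemma~\ref{lem:U} and \cite[\S8.9]{AK12} the triangle is a three-body configuration of zero energy at leading order, $\sum_{T}m_im_l\hat p_{il}=0$, together with the centre-of-mass relations from \eqref{eq:AK4}, which say that the triangle is critical for $U_T$ plus a linear term from the strokes.

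The final step is to show that these leading relations have no solution with positive masses. We would parametrize the zero-energy three-body configuration explicitly, which is a two-parameter family up to symmetry. We insert the stroke relations $\hat p_{14}^3$, $\hat p_{25}^3$ fixed by $f'=0$, and the order-three condition. Then we reduce to a real-variable inequality, in the style of Proposition~\ref{prop:P11} or Theorem~\ref{thm:wtriangle}: either a surrounding-angles argument for the weighted sums of the $x_i$ and their powers, or a resultant in polar form whose sign is fixed on the positive orthant, possibly combined with the polynomial $N$ of \cite{AK12}. If the order-three condition is not enough, we go to order five as in Theorem~\ref{thm:order5}, splitting into nondegenerate and degenerate leading configurations.

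The main obstacle will be this last incompatibility. Diagram~$9$ has less symmetry than diagram $15$, and the leading data involve two strokes and the edge $45$. The algebraic system may therefore have genuine complex solutions on the hypersurface $N=0$, and positivity must be exploited carefully: through the signs of the branches $\hat p_{il}$ and the realness of the mass ratios. I would first try to reduce everything to a statement about three nonzero complex numbers with positive weights, analogous to Proposition~\ref{prop:P11}.
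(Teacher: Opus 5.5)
Your overall strategy is the paper's: expand $V$ at the place, obtain $V_3(\bar y)=0$ from \eqref{eq:qid} with $q=3$, and combine it with the leading relations of the triangle. The first concrete step, however, does not work as you set it up.

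\emph{The exponents and the coordinates.} Your exponents are off. The pair $45$ is not a stroke. At this place $(\zeta,\omega)$ is $(1,1)$ on the pairs of $T$, $(2,-2)$ on $14$ and $25$, $(1,-1)$ on $45$, and $(1,-2)$ on the four (not eight) pairs $15,24,34,35$. Relative to any base body $o$, one of the strokes $14$, $25$ avoids $o$. Since $\zeta=2$ on that stroke, the positions of its two ends relative to $o$ have exponent $1$ and the same leading coefficient. For $o=3$ this happens for $u_1,u_4$ and for $u_2,u_5$; moreover $v_4,v_5$ agree at exponent $-2$, because $\omega_{45}=-1$. So $p_{14}$, $p_{25}$ and $p_{45}$ are not holomorphic in $y$ at the leading point. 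The hypothesis of Lemma~\ref{lem:identities} fails, and there are no products $\Pi_{kl}$ of leading coordinates to work with.

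You need coordinates adapted to the diagram. Take the coarse coordinates $u_1,u_2,v_1,v_2$ and $\Omega=w^*$, the $w$-offset between the centres of mass of $\{4,5\}$ and of $T$. Take the fine coordinates $u_{41},u_{52},v_{45}$. Lemma~\ref{lem:identities} extends to invertible linear changes of the $u$'s and of the $v$'s (Lemma~\ref{lem:adapted}). Then $V_0=U_T+\frac\nu2\Omega L(u)$, and $p_{14},p_{25},p_{45}$, of exponent $0$, lie in $V_2$, not in $V_0$.

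In these coordinates your flatness argument does go through, and more simply than in the paper. $V_2$ is a sum $f_{14}(u_{41}\Omega)+f_{25}(u_{52}\Omega)+f_{45}(u_{12}v_{45})$ of functions of the form $c\,\Pi^{-1/2}+c'\Pi$. The fine coordinates enter no $V_n$ with $n\le3$ except $V_2$. So $f'(\bar\Pi)=0\ne f''(\bar\Pi)$ gives $V_2(y(\tau))=V_2(\bar y)+O(\tau^4)$. This is $O(\tau^4)$, not $O(\tau^6)$, since $V_4$ involves the fine coordinates, but it is all that $q=3$ needs. The paper instead proves $\val\delta_c,\val\delta_f\ge2$ through the kernel of the Hessian of $V_0$.

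\emph{The final step.} This is the genuine gap. You leave it open and anticipate resultants, angle arguments, the polynomial $N$ or order five. None of this is needed. You list the right ingredients, but the missing idea is a reality structure of the leading data.
\begin{itemize}
\item The $\Omega$-component of $\nabla V_0=0$ at the leading point is $L(\bar u)=0$. It places body $3$ at real multiples of $\hat z_{12}$: $(m_4+m_5)\hat z_{3l}=\rho_{3l}\hat z_{12}$, with $\rho_{31},\rho_{32}$ real and nonzero and $\rho_{32}-\rho_{31}=m_4+m_5$.
\item On the four cross pairs, $(m_4+m_5)\bar z_{kl}=\tilde\rho_{kl}\hat z_{12}$ with $\tilde\rho$ equal to $m_4+m_5$, $-(m_4+m_5)$, $\rho_{31}$, $\rho_{32}$ on $15,24,34,35$. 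The leading $w$-difference on all four is the common $-\bar\Omega$.
\item Hence the four $\hat p_{kl}^2$ are real multiples of one number. $V_3(\bar y)=0$ then splits into two real signed sums, one over the pairs with $\tilde\rho_{kl}>0$ and one over those with $\tilde\rho_{kl}<0$.
\item Each sum needs at least two terms. Since $15$ lies in the first and $24$ in the second, this forces $\rho_{31}<0<\rho_{32}$ and $m_1^2\rho_{32}=m_3^2(m_4+m_5)=-m_2^2\rho_{31}$.
\item The $v$-components of $\nabla V_0=0$ give $\hat p_{12}^3\hat z_{12}/m_3=\hat p_{23}^3\hat z_{23}/m_1=\hat p_{31}^3\hat z_{31}/m_2$. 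With the two relations, $\hat p_{12}/\hat p_{23}=-\eta m_3/m_1$ and $\hat p_{12}/\hat p_{31}=-\eta' m_3/m_2$, where $\eta^3=\eta'^3=1$.
\item The zero energy $m_1m_2\hat p_{12}+m_1m_3\hat p_{13}+m_2m_3\hat p_{23}=0$ of the triangle becomes $(\eta-1)(\eta'-1)=1$. This is impossible, since $|\eta-1|,|\eta'-1|\in\{0,\sqrt3\}$.
\end{itemize}
So the leading triangle is not a free two-parameter family of zero-energy configurations. It is critical for $V_0$, which fixes its $z$-shape and the ratios of the $\hat p_{kl}^3$. Positivity enters only through the reality of the $\rho_{3l}$ and the sign count.
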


Let $P$ be a singular place of diagram~$9$. After a relabelling of the bodies and, if necessary, an exchange of the colours, the triangle is $T=\{1,2,3\}$, the $w$-edges are $14$ and $25$, and $x(P)=\pc n(\{1,2,3\};14,25)$ (Table~\ref{tab:types}). By Appendix~\ref{app:types}, $(\zeta_{kl},\omega_{kl})$ is $(1,1)$ on $12$, $13$, $23$ and $(2,-2)$ on $14$, $25$. On the other five pairs $\zeta_{kl}=1$, by $z_{45}=z_{41}+z_{12}+z_{25}$, $z_{15}=z_{12}+z_{25}$ and the like, and $\omega_{kl}=2x_{kl}-1$, which is $-1$ on $45$ and $-2$ on $15$, $24$, $34$, $35$.

For $h\in\C^5$ let $h^*=\frac{m_4h_4+m_5h_5}{m_4+m_5}-\frac{m_1h_1+m_2h_2+m_3h_3}{M_T}$. We use the positions relative to body~$3$, $u_i=z_i-z_3$ and $v_i=w_i-w_3$, in the following combinations: the \emph{coarse} coordinates $u_1$, $u_2$, $v_1$, $v_2$, $\Omega=w^*$, and the \emph{fine} coordinates $u_{41}$, $u_{52}$, $v_{45}$, where $u_{kl}=u_k-u_l$ and $v_{kl}=v_k-v_l$. They form an invertible linear function of $(u_1,u_2,u_4,u_5)$ and one of $(v_1,v_2,v_4,v_5)$. Their exponents are $1$, $1$, $1$, $1$, $-2$ and $2$, $2$, $-1$. For $\Omega$ this follows from $w_{14}=-\Omega-\frac{m_5}{m_4+m_5}v_{45}+\frac{m_2v_{12}+m_3v_1}{M_T}$, since $w_{14}$ has exponent $-2$ and the last two terms have exponents $-1$ and $1$. The coordinates of \S\ref{ss:expansion} do not serve here. For each body $o$ one of the pairs $14$, $25$ avoids $o$, the positions of its two bodies relative to $o$ have exponent $1$ and the same leading coefficient, and so the corresponding $p_{kl}$ is not holomorphic in $y$ near $\bar y$. We therefore state Lemma~\ref{lem:identities} for linear coordinates of the kind above.

\begin{lemma}\label{lem:adapted}
Let $q_1,\dots,q_8$ be coordinates such that $(q_1,\dots,q_4)$ is an invertible linear function of $(u_1,\dots,u_4)$ and $(q_5,\dots,q_8)$ one of $(v_1,\dots,v_4)$, and suppose that no $q_i$ vanishes identically on $C$. Then, after the rotation of Lemma~\ref{lem:place}, $q_i=\tau^{2e(q_i)}y_{q_i}$ with $y_{q_i}(0)\ne0$. Lemma~\ref{lem:identities} holds in the coordinates $q$, and the reparametrizations and rotations of \S\ref{ss:expansion} change neither the leading point nor the identities of that lemma.
\end{lemma}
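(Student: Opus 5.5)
The plan is to reduce everything to the proofs of Lemma~\ref{lem:place} and Lemma~\ref{lem:identities}, which use only three features of the coordinates: each is of $z$-type or $w$-type, it is a unit of $\C\{t\}$ times a power of $t$ at $P$, and $V$ restricted to $\{D_{kl}=0\}$ has vanishing gradient in them along $C$. Each of these will be checked for the $q_i$.

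\emph{Expansion and rotation.} The $q_1,\dots,q_4$ are linear in the $u_k$, and the rotation $s\cdot$ multiplies each $u_k$ by $s$, so it multiplies each of $q_1,\dots,q_4$ by $s$. Likewise each of $q_5,\dots,q_8$ is multiplied by $s^{-1}$. Thus every $q_i$ is of $z$-type or $w$-type. Since $q_i$ is a regular function on $C$ that does not vanish identically, near $P$ it is $t^{\nu(q_i)}$ times a unit of $\C\{t\}$. The last paragraph of the proof of Lemma~\ref{lem:place} then applies verbatim: after the rotation by $s=\vartheta^{2\gamma}$ we get $q_i=\vartheta^{d\,e(q_i)}y_{q_i}(\vartheta)$ with $y_{q_i}(0)\ne0$, that is, $q_i=\tau^{2e(q_i)}y_{q_i}$.

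\emph{Vanishing gradient.} The map $(u,v)\mapsto q$ is block diagonal and invertible, with constant blocks $L_u$ and $L_v$. By the chain rule, $\nabla_qV=(L_u^{-\mathsf T}\nabla_uV,\,L_v^{-\mathsf T}\nabla_vV)$. By Lemma~\ref{lem:potential} and the invertibility of $A$, $\nabla_{(u,v)}V=0$ on $X_m$, and hence $\nabla_qV=0$ along $C$.

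\emph{Identities.} With these facts, the proof of Lemma~\ref{lem:identities} goes through line by line with $y$ the vector of the $y_{q_i}$. Indeed, $y=\tau^{-2e}q$ coordinatewise, so $\nabla_yF=\tau^{2e}\nabla_qV=0$. Moreover $V$ is constant on $C$ by Lemma~\ref{lem:U}. Finally, the scaling $F(\lambda\tau,\lambda^{-2e}y)=F(\tau,y)$ holds because both sides are $V$ at the same point. These give \eqref{eq:qid} under the same hypothesis on the expansion of $F$.

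\emph{Invariance.} Replacing $\tau$ by $\tau\varphi(\tau)$ with $\varphi(0)=1$ multiplies $y_{q_i}$ by $\varphi^{-2e(q_i)}$. A rotation by $\mu(\tau)$ with $\mu(0)=1$ multiplies $y_{q_i}$ by $\mu^{\pm1}$. Both factors equal $1$ at $\tau=0$, so the leading point is unchanged. The identities of Lemma~\ref{lem:identities} are statements about $V$ along the place, with $F$ defined by the same formula. They were derived for an arbitrary local parameter and rotation, so they hold again after the change.

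I expect no serious obstacle. The one point to watch is that the hypothesis of Lemma~\ref{lem:identities}, holomorphy of the $V_n$ near $\bar y$, is not part of the present statement. It is a hypothesis to be verified in each application, here in the adapted coordinates of diagram~$9$.
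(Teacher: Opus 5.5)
Your proposal is correct and follows the paper's proof. The $q_i$ are of $z$- or $w$-type, so Lemma~\ref{lem:place} gives the expansion, and the proof of Lemma~\ref{lem:identities} uses only three facts: $\nabla V=0$, which transfers to $q$ through the invertible linear change of variables; the constancy of $V$ on $C$; and the scaling identity for $F$, which, like the constancy of $V$, does not depend on the coordinates. Your closing remark is also right: holomorphy of the $V_n$ stays a hypothesis, and for diagram~$9$ it is verified in Lemma~\ref{lem:d9exp}.
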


\begin{proof}
Since each $q_i$ is of $z$-type or of $w$-type, Lemma~\ref{lem:place} gives the first statement and a rotation by $\mu(\tau)$ multiplies $y_{q_i}$ by $\mu(\tau)^{\pm1}$. The proof of Lemma~\ref{lem:identities} uses three facts: $\nabla V=0$ at the points of $X_m$ in the coordinates used, $V$ is constant on $C$, and $F(\lambda\tau,\lambda^{-2e}y)=F(\tau,y)$. The last two do not depend on the coordinates. The first holds in the coordinates $q$, because $\nabla_{(u,v)}V=0$ on $X_m$ (\S\ref{ss:expansion}) and $q$ is an invertible linear function of $(u,v)$.
\end{proof}

We write the coefficients $V_n$ of $F(\tau,y)=V(\tau^{2e}y)$ as functions of the coordinates, with $u_1$ standing for $y_{u_1}$ and so on, and put $u_{12}=u_1-u_2$, $v_{12}=v_1-v_2$, $\nu=M_T(m_4+m_5)/M$, $\nu_{45}=m_4m_5/(m_4+m_5)$,
\begin{gather*}
U_T=m_1m_2(u_{12}v_{12})^{-1/2}+m_1m_3(u_1v_1)^{-1/2}+m_2m_3(u_2v_2)^{-1/2},\\
L(u)=\frac{m_4u_1+m_5u_2}{m_4+m_5}-\frac{m_1u_1+m_2u_2}{M_T}.
\end{gather*}

\begin{lemma}\label{lem:d9exp}
At a place of type $\pc n(\{1,2,3\};14,25)$, the function $F(\tau,y)=\sum_{n\ge0}\tau^{n-2}V_n(y)$ satisfies the hypothesis of Lemma~\ref{lem:identities} in the coordinates above, with $V_1=0$ and
\begin{align*}
V_0&=U_T+\tfrac\nu2\,\Omega\,L(u),\\
V_2&=m_1m_4(u_{41}\Omega)^{-1/2}+m_2m_5(u_{52}\Omega)^{-1/2}+m_4m_5(u_{12}v_{45})^{-1/2}\\
&\qquad+\tfrac{\nu_{45}}2\,u_{12}v_{45}+\tfrac\nu2\,\Omega\,\frac{m_4u_{41}+m_5u_{52}}{m_4+m_5},\\
V_3&=m_1m_5(-u_{12}\Omega)^{-1/2}+m_2m_4(u_{12}\Omega)^{-1/2}+m_3m_4(u_1\Omega)^{-1/2}+m_3m_5(u_2\Omega)^{-1/2},
\end{align*}
where the square roots are the branches of the $p_{kl}$ at $P$. The coefficients $V_0$ and $V_3$ do not depend on the fine coordinates.
\end{lemma}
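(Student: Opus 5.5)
The plan is to expand each term of $V=U+\tfrac12I$ separately. First I express every difference $z_{kl}$, $w_{kl}$ in the coarse and fine coordinates, then substitute $q=\tau^{2e(q)}y_q$ from Lemma~\ref{lem:adapted}, and finally collect powers of $\tau$. The adapted coordinates have exponents $1,1,1,1,-2$ and $2,2,-1$. So, after the factor $\tau^{-2}$ is pulled out, each substitution produces only integer powers of $\tau$. The hypothesis of Lemma~\ref{lem:identities} then follows once each $p_{kl}$ is shown to be a single monomial in $\tau$ times a holomorphic function of $y$ near $\bar y$ that does not vanish at $\bar y$.

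\emph{Writing the differences.} For each of the ten pairs I write $u_{kl}v_{kl}=\tau^{2(\zeta_{kl}+\omega_{kl})}\,m_{kl}(y)\,(1+\epsilon_{kl}(\tau,y))$. Here $m_{kl}$ is the leading monomial in the coordinates, and $\epsilon_{kl}$ is a polynomial in $\tau^2$ with coefficients that are rational functions of $y$, regular near $\bar y$. The $z$-differences are straightforward: $z_{12},z_{13},z_{23}$ are $u_{12},u_1,u_2$; $z_{14}=-u_{41}$; $z_{25}=-u_{52}$; and $z_{45},z_{15},z_{24},z_{34},z_{35}$ equal a coarse difference (such as $u_{12}$ or $u_1$) at exponent $1$ plus fine corrections at exponent $2$. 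For the $w$-differences I would invert the linear change $(v_1,v_2,v_4,v_5)\leftrightarrow(v_1,v_2,\Omega,v_{45})$, using the displayed formula for $w_{14}$ and its analogue for $w_{25}$. This shows that each of $w_{14},w_{15},w_{24},w_{34},w_{35},w_{25}$ is $\pm\Omega\tau^{-4}$ plus a $v_{45}$ term at $\tau^{-2}$ plus coarse terms at $\tau^{2}$, and that $w_{45}=v_{45}\tau^{-2}$ exactly.

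\emph{Collecting the potential.} For the triangle pairs the product is exact at $\tau^4$, so these terms give $\tau^{-2}U_T$ exactly, which goes into $V_0$. For $14$, $25$ and $45$ the product is $\tau^0$ times $u_{41}\Omega$, $u_{52}\Omega$, $u_{12}v_{45}$, with relative corrections $O(\tau^2)$. These give the first three terms of $V_2$ and contribute otherwise only to $V_n$ with $n\ge4$. For the four pairs $15,24,34,35$ the product is $\tau^{-2}$ times $\mp u_{12}\Omega$, $u_1\Omega$, $u_2\Omega$, so each $p_{kl}$ is $\tau$ times a unit. These terms give $V_3$ and contribute otherwise only to odd $n\ge5$. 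The signs $-u_{12}\Omega$ and $+u_{12}\Omega$ for $15$ and $24$ come from the orientation of $z_{15}\approx u_{12}$ against $w_{15}\approx\pm\Omega$.

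\emph{Collecting the inertia.} For $I$ I use the barycentric decomposition $I=\nu\,z^*w^*+I_T+\nu_{45}\,z_{45}w_{45}$, where $I_T$ is the internal inertia of $T$. Since $w^*=\Omega$ has exponent $-2$, and $z^*=L(u)\tau^2$ plus fine terms at $\tau^4$, the first term gives $\tfrac\nu2\Omega L$ in $V_0$ and the $\Omega$-fine term of $V_2$. The term $\nu_{45}u_{12}v_{45}$ goes into $V_2$ up to $O(\tau^2)$ corrections, and $I_T$ starts at $\tau^{4}$, that is, in $V_6$.

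Reading off the collected terms, $V_1=0$, because every contribution lands at an even $n$ except those of the four pairs above, which start at $n=3$. Also, $V_0$ and $V_3$ involve only coarse coordinates. The main obstacle is the bookkeeping of the inverse linear change for the $w$-differences. One must check that every fine contribution to the four $\tau$-order pairs is pushed to relative order $\tau^2$, so that $V_3$ is free of fine coordinates, and one must track the branches and signs of the square roots. I would do this by listing each $w_{kl}$ explicitly and then confirming the expansions by a symbolic computation, marking those identities $(\dagger)$.
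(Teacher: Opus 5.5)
Your proposal is correct and is essentially the paper's own proof. You write each $z_{kl}$, $w_{kl}$ as its leading linear form in the adapted coordinates times $1+O(\tau^2)$, which uses that the $\tau$-exponents $2,2,2,2,-4$ and $4,4,-2$ are even, so that $p_{kl}$ is $\tau^{-2x_{kl}}$ times a convergent series in $\tau^2$ with the leading products $u_{12}v_{12},\dots,u_2\Omega$; with the barycentric decomposition $I=I_T+\nu z^*w^*+\nu_{45}z_{45}w_{45}$ and this parity you obtain $V_1=0$ and the stated $V_0$, $V_2$, $V_3$. Two small points of precision: the factor beside the monomial in $\tau$ is holomorphic in $(\tau,y)$ near $(0,\bar y)$, not in $y$ alone, and all six cross differences $w_{14},w_{15},w_{24},w_{25},w_{34},w_{35}$ have the leading coefficient $-\Omega$, which one reads off by hand without a symbolic computation.
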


\begin{proof}
The coordinates have the $\tau$-exponents $2,2,2,2,-4$ and $4,4,-2$, all even. So each $z_{kl}$ is $\tau^{2\zeta_{kl}}(\bar z_{kl}+\tau^2c_{kl})$. Here $\bar z_{kl}$ is a linear form in the coordinates whose value at $\bar y$ is the leading coefficient $\hat z_{kl}\ne0$. The function $c_{kl}$ is a polynomial in $\tau^2$ and the coordinates. Likewise $w_{kl}=\tau^{2\omega_{kl}}(\bar w_{kl}+\tau^2c'_{kl})$. For instance $z_{15}=\tau^2(u_{12}-\tau^2u_{52})$ and $w_{15}=\tau^{-4}\big(-\Omega+\tau^2\frac{m_4v_{45}}{m_4+m_5}+\tau^6\frac{m_2v_{12}+m_3v_1}{M_T}\big)$. Hence
\[
p_{kl}=\tau^{-2x_{kl}}(\bar z_{kl}\bar w_{kl})^{-1/2}\big(1+\tau^2c_{kl}/\bar z_{kl}\big)^{-1/2}\big(1+\tau^2c'_{kl}/\bar w_{kl}\big)^{-1/2}
\]
is $\tau^{-2x_{kl}}$ times a convergent series in $\tau^2$ whose coefficients are holomorphic near $\bar y$. The exponent $-2x_{kl}$ is $-2$ on the pairs of $T$, $0$ on $14$, $25$, $45$ and $1$ on $15$, $24$, $34$, $35$. On the pairs of $T$ the series is $1$. The products $\bar z_{kl}\bar w_{kl}$ are $u_{12}v_{12}$, $u_1v_1$, $u_2v_2$ on $12$, $13$, $23$, then $u_{41}\Omega$, $u_{52}\Omega$, $u_{12}v_{45}$ on $14$, $25$, $45$, and $-u_{12}\Omega$, $u_{12}\Omega$, $u_1\Omega$, $u_2\Omega$ on $15$, $24$, $34$, $35$. For the moment of inertia we use $I=I_T+\nu z^*w^*+\nu_{45}z_{45}w_{45}$ with $I_T=M_T^{-1}\sum_{k<l\in T}m_km_lz_{kl}w_{kl}$. Here $I_T$ contributes only at $\tau^4$; $z^*=L(u)+(m_4u_{41}+m_5u_{52})/(m_4+m_5)$ and $w^*=\Omega$, so $\nu z^*w^*$ contributes at $\tau^{-2}$ and $\tau^0$; and $z_{45}w_{45}=\big(u_{12}+\tau^2(u_{41}-u_{52})\big)v_{45}$ contributes at $\tau^0$ and $\tau^2$. Collecting terms gives the lemma.
\end{proof}

\begin{lemma}\label{lem:d9lead}
At a place of type $\pc n(\{1,2,3\};14,25)$, let $\bar y_c$ be the coarse part of the leading point, and $\nabla_f$ the gradient in the fine coordinates.
\begin{enumerate}[label=\textup{(\alph*)}]
\item $\nabla V_0(\bar y_c)=0$ and $\nabla_fV_2(\bar y)=0$.
\item $(m_4+m_5)\hat z_{3l}=\rho_{3l}\hat z_{12}$ for $l=1,2$, where
\[
\rho_{31}=\frac{m_2m_4-m_1m_5-m_3m_5}{m_3},\qquad\rho_{32}=\frac{m_2m_4-m_1m_5+m_3m_4}{m_3}
\]
are nonzero real numbers with $\rho_{32}-\rho_{31}=m_4+m_5$.
\item $\hat Z_{12}/m_3=\hat Z_{23}/m_1=\hat Z_{31}/m_2=:\kappa$, where $\hat Z_{kl}=\hat p_{kl}^3\hat z_{kl}$, and $\kappa\ne0$.
\item $U_T(\bar y_c)=m_1m_2\hat p_{12}+m_1m_3\hat p_{13}+m_2m_3\hat p_{23}=0$.
\end{enumerate}
\end{lemma}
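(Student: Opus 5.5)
The plan is to derive all four statements from the lowest-order identities at the place, using the expansion of Lemma~\ref{lem:d9exp} together with $\nabla_yF(\tau,y(\tau))=0$ and the constancy of $F$ (Lemmas~\ref{lem:identities} and~\ref{lem:adapted}).

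\emph{Part (a).} Multiply $\nabla_yF=0$ by $\tau^2$. Its $\tau^0$ term in the coarse components is $\nabla V_0(\bar y)$. Since $V_1=0$ and $V_0$ does not depend on the fine coordinates, this gives $\nabla V_0(\bar y_c)=0$. In the fine components, $V_0$ contributes nothing and $V_1=0$. The first nonzero term is therefore at $\tau^2$ and equals $\nabla_fV_2(\bar y)$. Here I will check that $V_3$ involves no fine coordinate, and also that the coarse part of $y(\tau)$ varies only at orders where it cannot feed a lower term into $\nabla_fV_2$. The latter holds because $\nabla_fV_2$ is evaluated at $y(\tau)$ and only its constant term matters.

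\emph{Parts (b)--(d).} Write out the components of $\nabla V_0(\bar y_c)=0$. Differentiating $V_0=U_T+\frac\nu2\Omega L(u)$ in $\Omega$ gives $L(\hat u)=0$, that is,
\[
\Big(\frac{m_4}{m_4+m_5}-\frac{m_1}{M_T}\Big)\hat u_1+\Big(\frac{m_5}{m_4+m_5}-\frac{m_2}{M_T}\Big)\hat u_2=0 .
\]
Since $\hat z_{31}=-\hat u_1$, $\hat z_{32}=-\hat u_2$ and $\hat z_{12}=\hat u_1-\hat u_2$, solving this linear relation yields $(m_4+m_5)\hat z_{3l}=\rho_{3l}\hat z_{12}$. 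A direct computation gives the stated $\rho_{3l}$ and $\rho_{32}-\rho_{31}=m_4+m_5$. They are nonzero because $\hat u_1,\hat u_2\ne0$ (neither $z_{13}$ nor $z_{23}$ has exponent above $1$).

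The $v_1,v_2$ derivatives involve only $U_T$, since $\Omega L$ is free of $v$. From $\partial_{v}(uv)^{-1/2}=-\tfrac12u(uv)^{-3/2}$ they read
\[
m_1m_2\hat Z_{12}+m_1m_3\hat Z_{13}=0,\qquad -m_1m_2\hat Z_{12}+m_2m_3\hat Z_{23}=0 .
\]
With $\hat Z_{kl}=-\hat Z_{lk}$ this is exactly $\hat Z_{12}/m_3=\hat Z_{23}/m_1=\hat Z_{31}/m_2$. The common value $\kappa$ is nonzero because each $\hat p$ and $\hat z$ is nonzero.

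For (d) I use the $u$-scaling identity. $U_T$ is homogeneous of degree $-\frac12$ in $(u_1,u_2)$ and in $(v_1,v_2)$ separately, and $\Omega L$ is independent of $v$. Contracting the $v$-gradient with $v$ therefore gives $-\frac12U_T(\bar y_c)=0$, which is (d).

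\emph{Main obstacle.} The delicate point is the sign and branch bookkeeping: checking that (b) fixes the actual $\rho_{3l}$ rather than a sign-flipped variant, and that the contraction argument in (d) is legitimate with the branches of the square roots at $P$. I would verify the sign convention $z_{kl}=z_k-z_l$ against the formula for $w_{14}$ given before Lemma~\ref{lem:adapted}.
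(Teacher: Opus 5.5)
Your proof is correct and follows the paper's route for (a)--(c). You read off the lowest-order coarse and fine components of $\tau^2\nabla_yF(\tau,y(\tau))=0$, then use the $\Omega$-component and the $v$-components of $\nabla V_0(\bar y_c)=0$; in the $\Omega$ step the solve needs $\nu\ne0$ and the coefficients of $L$ summing to $m_3/M_T\ne0$, which you use implicitly.

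For (d) you contract $\nabla_vU_T=0$ with $v$ via the Euler identity $v\cdot\nabla_vU_T=-\frac12U_T$. The paper instead combines (c) with $\hat p_{kl}=\hat Z_{kl}\hat w_{kl}$ and $\hat w_{12}+\hat w_{23}+\hat w_{31}=0$. This is the same computation written differently, and your version does not need (c).
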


\begin{proof}
(a) By Lemmas~\ref{lem:adapted} and~\ref{lem:d9exp}, $\nabla_yF(\tau,y(\tau))=0$. Its coarse components, multiplied by $\tau^2$, are $\nabla V_0(y_c(\tau))+O(\tau^2)$. Its fine components are $\nabla_fV_2(y(\tau))+O(\tau^2)$.
(b) The $\Omega$-component of $\nabla V_0$ is $\frac\nu2L(u)$. So $\bar u=(\bar u_1,\bar u_2)=(-\hat z_{31},-\hat z_{32})$ satisfies $L(\bar u)=0$ and $\bar u_1-\bar u_2=\hat z_{12}$. These two linear conditions determine $\bar u$, since the coefficients of $L$ have the sum $m_3/M_T\ne0$. The vector $-\hat z_{12}(\rho_{31},\rho_{32})/(m_4+m_5)$ satisfies both. The $\hat z_{3l}$ are nonzero, as $\zeta_{3l}=1$. (These are the positions of \cite[\S8.9.2]{AK12}.)
(c) Since $p_{kl}^2z_{kl}w_{kl}=1$, $\partial p_{kl}/\partial w_{kl}=-\frac12p_{kl}^3z_{kl}$. As $w_{12}=v_1-v_2$, $w_{13}=v_1$ and $w_{23}=v_2$, the $v$-components of $\nabla V_0(\bar y_c)=0$ are $-\frac12m_1(m_2\hat Z_{12}+m_3\hat Z_{13})=0$ and $-\frac12m_2(m_1\hat Z_{21}+m_3\hat Z_{23})=0$.
(d) By (c) and $\hat p_{kl}=\hat p_{kl}^3\hat z_{kl}\hat w_{kl}=\hat Z_{kl}\hat w_{kl}$, we get $U_T(\bar y_c)=\kappa m_1m_2m_3(\hat w_{12}+\hat w_{23}+\hat w_{31})=0$.
\end{proof}

\begin{lemma}\label{lem:d9kernel}
Let $H$ be the Hessian of $V_0$ at $\bar y_c$ in the coordinates $(u_1,u_2,v_1,v_2,\Omega)$, and let $\bar u=(\bar u_1,\bar u_2)$ and $\bar v=(\bar v_1,\bar v_2)$. Then
\[
\ker H=\big\{\big(a\bar u,\,b\bar v,\,-\tfrac{3a+b}2\,\bar\Omega\big):a,b\in\C\big\}.
\]
\end{lemma}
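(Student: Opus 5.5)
The plan is to obtain the two displayed kernel vectors from the two scaling symmetries of $V_0$, and then to show by an explicit computation that $H$ has rank exactly $3$.

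\emph{Symmetries.} By Lemma~\ref{lem:d9exp}, $V_0=U_T+\frac\nu2\,\Omega\,L(u)$. Here $U_T$ is homogeneous of degree $-\frac12$ in $u=(u_1,u_2)$ and in $v=(v_1,v_2)$ separately, and $L$ is linear in $u$. Hence
\[
V_0(\lambda u,\mu v,\lambda^{-3/2}\mu^{-1/2}\Omega)=(\lambda\mu)^{-1/2}\,V_0(u,v,\Omega).
\]
By Lemma~\ref{lem:d9lead}(a) the point $\bar y_c$ is critical. By Lemma~\ref{lem:d9lead}(b),(d), $L(\bar u)=0$ and $U_T(\bar y_c)=0$, so $V_0(\bar y_c)=0$. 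For a function $f$ with $f(g_sy)=c(s)f(y)$, a critical point $y$ with $f(y)=0$ stays critical along its orbit: the gradient at $g_sy$ is $c(s)\,(g_s^{-1})^{\mathsf T}\nabla f(y)=0$. Differentiating the orbit at $\lambda=\mu=1$ puts its tangent vectors in $\ker H$. The $\lambda$-direction is $(\bar u,0,-\frac32\bar\Omega)$ and the $\mu$-direction is $(0,\bar v,-\frac12\bar\Omega)$. Their combinations with coefficients $a,b$ give exactly the displayed set, which has dimension $2$ since $\bar u\ne0$. So the displayed set is contained in $\ker H$.

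\emph{Rank.} It remains to show that $\dim\ker H\le2$. Write $H=\begin{pmatrix}A&c\\ c^{\mathsf T}&0\end{pmatrix}$. Here $A$ is the $4\times4$ Hessian of $U_T$ in $(u,v)$; the term $\Omega L$ contributes nothing to it because $L$ is linear. The column is $c=\frac\nu2(\nabla L,0)$, and the $\Omega\Omega$ entry vanishes. A kernel vector $(\alpha,\beta,\omega)$ must therefore satisfy
\[
A(\alpha,\beta)+\omega c=0,\qquad L(\alpha)=0.
\]

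\emph{Computing $A$.} I would compute $A$ as in Lemma~\ref{lem:laplacian}. It is the block matrix of grounded Laplacians (grounded at body $3$) with weights $\frac34q/\varrho$, $\frac14q$ and $\frac34q\varrho$, where $q_{kl}=m_km_l\hat p_{kl}^3$ and $\varrho_{kl}=\hat z_{kl}/\hat w_{kl}$. Lemma~\ref{lem:d9lead}(c) simplifies the weights: $\hat Z_{kl}=\hat p_{kl}^3\hat z_{kl}$ equals $\kappa$ times the third mass, so $q_{kl}=\kappa m_1m_2m_3/\hat z_{kl}$, up to the orientation sign fixed by the cyclic order. With $q\varrho=\kappa m_1m_2m_3/\hat w_{kl}$, every entry of $A$ becomes an explicit rational function of $\hat z_{12},\hat z_{13},\hat z_{23}$ and of $\hat w_{12},\hat w_{13},\hat w_{23}$. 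The positions $\hat z_{3l}$ are fixed multiples of $\hat z_{12}$ with ratios $\rho_{31},\rho_{32}$ by Lemma~\ref{lem:d9lead}(b), and on the $w$ side the closure $\hat w_{12}+\hat w_{23}+\hat w_{31}=0$ holds. After scaling, $A$ depends only on the masses and on one free ratio of the $\hat w_{kl}$.

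\emph{Main obstacle.} The hard step is to show that the bordered $5\times5$ matrix has rank $3$ and not less at every admissible point. I would first split off the known kernel, restricting to a complement, for instance the slice $\alpha_1=0$, $\beta_1=0$, which meets the kernel only in $0$ because $\bar u_1,\bar v_1\ne0$. I would then compute the $3\times3$ minor on that slice symbolically in terms of the masses, $\rho_{31}$, $\rho_{32}$ and the $w$-ratio. The $w$-ratio is constrained by $\nabla_vU_T=0$, which says that the $w$-triangle satisfies a three-body analogue of K14 in $w$ alone. I expect this minor to factor into powers of $\kappa$, of the $\hat z_{kl}$, of the $\hat w_{kl}$ and of $\rho_{31}$, $\rho_{32}$, times a mass factor that is nonzero for positive masses. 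If it does not factor this cleanly, the fallback is to use $\ker A$ directly: by the same homogeneity in $u$ and in $v$ separately, $A(\bar u,0)$ and $A(0,\bar v)$ are multiples of $c$, and a Schur-complement argument for the bordering then reduces the claim to $\dim\ker A\le2$ together with the nondegeneracy of $A$ on $\{L=0\}$.
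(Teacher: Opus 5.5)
Your first step is correct. The two scaling symmetries of $V_0=U_T+\frac\nu2\,\Omega L(u)$ give exactly the displayed vectors. This is the same information the paper obtains by differentiating the Euler identities $u\cdot\nabla_uU_T=v\cdot\nabla_vU_T=-\frac12U_T$, and the condition $V_0(\bar y_c)=0$ is not needed for it.

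The gap is the reverse inclusion $\dim\ker H\le2$, which you do not prove. The symbolic $3\times3$ minor is only \emph{expected} to factor, and the fallback is misstated. We have $A(\bar u,0)=\frac32\bar\Omega\,c$ and $A(0,\bar v)=\frac12\bar\Omega\,c$, and $c$ annihilates $\{L(\alpha)=0\}$. So $(\bar u,0)$ and $(0,\bar v)$ both lie in the radical of $A$ restricted to $\{L(\alpha)=0\}$, and ``nondegeneracy of $A$ on $\{L=0\}$'' is false as stated. Also, $\dim\ker A\le2$ is not the relevant condition. What is needed is $\ker A\cap\{L(\alpha)=0\}=\C(\bar u,-3\bar v)$, because $(\alpha,\beta,\omega)\mapsto\big((\alpha,\beta)+\frac{2\omega}{3\bar\Omega}(\bar u,0),\,\omega\big)$ identifies $\ker H$ with $(\ker A\cap\{L(\alpha)=0\})\times\C$.

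The missing idea is that the block structure reduces the rank question to a $2\times2$ one, with no symbolic dependence on the masses. The steps are as follows.

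The $\Omega$-row of $H$ is $(\frac\nu2\nabla L,0,0)$. So a kernel vector $(\delta u,\delta v,\delta\Omega)$ has $L(\delta u)=0$, hence $\delta u=a\bar u$. This uses $L\ne0$ (its coefficients sum to $m_3/M_T$) and $L(\bar u)=0$.

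The $v$-rows have no $\Omega$-entry. By the $u$-Euler identity, $H_{vu}\bar u=-\frac12\nabla_vU_T(\bar y_c)=0$. So the $v$-rows reduce to $H_{vv}\delta v=0$.

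Since $H_{vv}\bar v=-\frac32\nabla_vU_T(\bar y_c)=0$, it only remains to show $H_{vv}\ne0$. You already derived the weights from Lemma~\ref{lem:d9lead}(c): $m_km_l\hat p_{kl}/\hat w_{kl}=\kappa m_1m_2m_3$. With them, the quadratic form of $H_{vv}$ is $\frac34\kappa m_1m_2m_3\sum_{kl}s_{kl}^2/\hat w_{kl}$ on the plane $s_{12}+s_{23}+s_{31}=0$. At $s=(1,-1,0)$ it equals $-\frac34\kappa m_1m_2m_3\,\hat w_{31}/(\hat w_{12}\hat w_{23})\ne0$, using $\hat w_{12}+\hat w_{23}+\hat w_{31}=0$.

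Thus $\delta v=b\bar v$. The $u$-rows then read $\frac\nu2\big(\frac{3a+b}2\bar\Omega+\delta\Omega\big)\nabla L=0$, which gives $\delta\Omega=-\frac{3a+b}2\bar\Omega$.
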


\begin{proof}
Since $U_T$ is homogeneous of degree $-\frac12$ in $u$ and in $v$, $u\cdot\nabla_uU_T=v\cdot\nabla_vU_T=-\frac12U_T$. By Lemma~\ref{lem:d9lead}(a), $\nabla_vU_T=0$ and $\nabla_uU_T=-\frac\nu2\bar\Omega\nabla L$ at $\bar y_c$. The term $\frac\nu2\Omega L(u)$ of $V_0$ is linear in $u$ and does not involve $v$. Differentiating the two Euler identities, we obtain at $\bar y_c$
\[
H_{vu}\bar u=-\tfrac12\nabla_vU_T=0,\qquad H_{vv}\bar v=-\tfrac32\nabla_vU_T=0,\qquad H_{uu}\bar u=-\tfrac32\nabla_uU_T,\qquad H_{uv}\bar v=-\tfrac12\nabla_uU_T,
\]
where $H_{uu},\dots$ are the blocks of $H$. Moreover the $\Omega$-row of $H$ is $(\frac\nu2\nabla L,0,0)$.

Let $\delta=(\delta u,\delta v,\delta\Omega)\in\ker H$. The $\Omega$-row gives $L(\delta u)=0$. So $\delta u=a\bar u$ for some $a\in\C$, as $\ker L=\C\bar u$. The $v$-rows then give $H_{vv}\delta v=0$. We show that $H_{vv}$ has rank $1$. As $\partial^2p_{kl}/\partial w_{kl}^2=\frac34p_{kl}/w_{kl}^2$ and, by Lemma~\ref{lem:d9lead}(c), $m_km_l\hat p_{kl}/\hat w_{kl}=m_km_l\hat Z_{kl}=\kappa m_1m_2m_3$ for $kl=12,23,31$, the quadratic form of $H_{vv}$ is
\[
\tfrac34\kappa m_1m_2m_3\Big(\frac{s_{12}^2}{\hat w_{12}}+\frac{s_{23}^2}{\hat w_{23}}+\frac{s_{31}^2}{\hat w_{31}}\Big),\qquad s_{12}=\delta v_1-\delta v_2,\quad s_{23}=\delta v_2,\quad s_{31}=-\delta v_1,
\]
on the plane $s_{12}+s_{23}+s_{31}=0$. Its polar form pairs the point $(\hat w_{12},\hat w_{23},\hat w_{31})$, given by $\delta v=\bar v$, with $s$ to $\frac34\kappa m_1m_2m_3(s_{12}+s_{23}+s_{31})$, which vanishes on the plane. At $s=(1,-1,0)$ the form is $\frac34\kappa m_1m_2m_3(\hat w_{12}^{-1}+\hat w_{23}^{-1})=-\frac34\kappa m_1m_2m_3\hat w_{31}/(\hat w_{12}\hat w_{23})\ne0$. So $\ker H_{vv}=\C\bar v$ and $\delta v=b\bar v$ for some $b\in\C$. Finally the $u$-rows give $aH_{uu}\bar u+bH_{uv}\bar v+\frac\nu2\delta\Omega\,\nabla L=\frac\nu2\big(\frac{3a+b}2\bar\Omega+\delta\Omega\big)\nabla L=0$ with $\nabla L\ne0$. Conversely, one checks by the same computation that each vector of the stated form lies in $\ker H$. The vectors with $(a,b)=(1,-1)$ and $(-2,-2)$ are the tangent vectors of the rotation and of the scaling $(u,v,\Omega)\mapsto(\lambda^{-2}u,\lambda^{-2}v,\lambda^4\Omega)$.
\end{proof}

The next statement is the analogue of Theorem~\ref{thm:order3}: the leading potential between $T$ and $\{4,5\}$ vanishes.

\begin{proposition}\label{prop:d9cross}
At a place of type $\pc n(\{1,2,3\};14,25)$, $V_3(\bar y)=m_1m_5\hat p_{15}+m_2m_4\hat p_{24}+m_3m_4\hat p_{34}+m_3m_5\hat p_{35}=0$.
\end{proposition}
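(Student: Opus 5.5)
We follow the proof of Theorem~\ref{thm:order3}: we apply the identity \eqref{eq:qid} with $q=3$ in the adapted coordinates of Lemma~\ref{lem:adapted}. By Lemma~\ref{lem:d9exp} it reads
\[
[V_1]_2+2[V_2(y(\tau))]_1+3V_3(\bar y)=0,
\]
and $V_1=0$. Everything therefore rests on showing that $[V_2(y(\tau))]_1=0$, that is, that $V_2$ along the place has no term of order $\tau^1$.

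We expand $V_2(y(\tau))$ to first order around $\bar y$. The coefficient of $\tau^1$ is $\nabla V_2(\bar y)\cdot[y]_1$, where $[y]_1$ is the coefficient of $\tau^1$ in $y(\tau)-\bar y$. By Lemma~\ref{lem:d9lead}(a) the fine part of $\nabla V_2(\bar y)$ vanishes. So only the coarse components $[y_c]_1$ matter. For these we use the coarse components of $\tau^2\nabla_yF=0$, which read
\[
\nabla V_0(y_c(\tau))+\tau^2\nabla_cV_2(y(\tau))+\tau^3\nabla V_3(y_c(\tau))+\dots=0 .
\]
At order $\tau^1$ this gives $H[y_c]_1=0$. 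By Lemma~\ref{lem:d9kernel}, $[y_c]_1$ lies in $\ker H$, which is spanned by the tangent vectors of the rotation and of the scaling.

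We then remove this kernel component by the freedom of \S\ref{ss:expansion}, which is still available by Lemma~\ref{lem:adapted}. Replacing $\tau$ by $\tau\varphi(\tau)$ and rotating by $\mu(\tau)$, with $\varphi=1+\varphi_1\tau$ and $\mu=1+\mu_1\tau$, changes $[y_c]_1$ by an arbitrary combination of the scaling and rotation tangents. We choose $\varphi_1$ and $\mu_1$ so that $[y_c]_1=0$. One must check that this is consistent: a reparametrization also moves the fine coordinates, but their contribution is killed by $\nabla_fV_2(\bar y)=0$ anyway. Since the identity \eqref{eq:qid} is invariant under these changes, we then get $[V_2]_1=0$, hence $V_3(\bar y)=0$.

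Finally we translate $V_3(\bar y)=0$ into the stated sum. By Lemma~\ref{lem:d9exp} the terms of $V_3$ are the leading coefficients $\hat p_{15},\hat p_{24},\hat p_{34},\hat p_{35}$ with the right branches, which gives the stated identity.

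The main obstacle is the bookkeeping of the exponents. The coarse equations start at $\tau^{-2}$ relative to $V_0$, and we must make sure that no $V_n$ term with $n\le 2$ contributes at $\tau^1$ in the coarse gradient equation. Since $V_1=0$ and the other terms enter only at $\tau^2$ or later, this appears to hold. We must also make sure that the Euler-identity kernel of Lemma~\ref{lem:d9kernel} is exactly what the reparametrization freedom spans. If that failed, we would instead pair the order-one equation with $\ker H$ directly and use the homogeneity of $V_2$ to show that $\nabla V_2(\bar y)$ annihilates the two kernel vectors.
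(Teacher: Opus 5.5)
Your outline is the paper's: \eqref{eq:qid} with $q=3$, reduction to $[V_2(y(\tau))]_1=0$, and the fact (Lemma~\ref{lem:d9kernel}) that $\ker H$ consists of the rotation and scaling directions. But you carry it out only for the linear term at exponent $1$. You write $[V_2(y(\tau))]_1=\nabla V_2(\bar y)\cdot[y]_1$, read $H[y_c]_1=0$ off the coarse equations, and remove the kernel part with $\varphi=1+\varphi_1\tau$, $\mu=1+\mu_1\tau$. This tacitly assumes that $y(\tau)-\bar y$ has no terms of exponent in $(0,1)$, and that is false in general.

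Before any reparametrization the $y_q$ are power series in the local parameter $t=\tau^{4/d}$. At a place of type $\pc n(\{1,2,3\};14,25)$ the entry $x_{15}=-\frac12$ gives $\val_P(r_{15})=-d/4$, so $d$ is a positive multiple of $4$. As soon as $d\ge8$, the expansions have exponents in $(0,1)$, for instance $\frac12$ when $d=8$. The reparametrizations of \S\ref{ss:expansion} are themselves Puiseux as well.

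A term $[y]_{1/2}\tau^{1/2}$ contributes $\frac12D^2V_2(\bar y)\bigl[[y]_{1/2},[y]_{1/2}\bigr]$ to $[V_2(y(\tau))]_1$, and $\frac12D^3V_0(\bar y_c)\bigl[[y_c]_{1/2},[y_c]_{1/2}\bigr]$ to the order-one coarse equation. So neither of your two formulas holds, and a correction at order $\tau$ cannot remove a term at order $\tau^{1/2}$. Likewise, your remark that the fine coordinates are harmless because $\nabla_fV_2(\bar y)=0$ only disposes of their linear contribution. You never use the fine components of $\nabla_yF=0$, and those are what keep $\delta_f$ from having valuation $\le\frac12$.

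The missing step is a valuation bootstrap after an \emph{exact} normalization, which is what the paper does.
\begin{enumerate}
\item By Lemma~\ref{lem:adapted}, choose Puiseux series $\varphi$, $\mu$ with $y_{u_1}(\tau)\equiv\bar u_1$ and $y_\Omega(\tau)\equiv\bar\Omega$. Then $\delta_c$ lies in $\mathcal S=\{\xi_{u_1}=\xi_\Omega=0\}$.
\item By Lemma~\ref{lem:d9kernel}, $\mathcal S\cap\ker H=0$ and $\dim\mathcal S+\dim\ker H=5$. So $H$ restricted to $\mathcal S$ is nondegenerate.
\item The coarse equations $H\delta_c+O(|\delta_c|^2)+O(\tau^2)=0$ then give $\val\delta_c\ge\min(2\val\delta_c,2)$, hence $\val\delta_c\ge2$, whatever the exponents.
\item The fine equations read $\nabla_fV_2(\bar y+\delta)+O(\tau^2)=0$. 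With $\nabla_fV_2(\bar y)=0$ and the diagonal fine Hessian of $V_2$, whose entries are nonzero (Lemma~\ref{lem:d9exp}), they give $\val\delta_f\ge2$.
\item Hence $V_2(y(\tau))=V_2(\bar y)+O(\tau^2)$ and $[V_2]_1=0$.
\end{enumerate}
An order-by-order induction over all exponents below $2$ would also work: remove the kernel part at each order by the gauge freedom, and use the fine Hessian for $\delta_f$.

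When all exponents are integers your argument is correct, including your fallback. $V_2$ is invariant under the rotation and under the scaling $y\mapsto\lambda^{-2e}y$, so with $\nabla_fV_2(\bar y)=0$ the coarse gradient $\nabla_cV_2(\bar y)$ does annihilate $\ker H$. But that fallback, too, controls only the linear term.
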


\begin{proof}
By Lemma~\ref{lem:adapted} we may normalize the place as in the proof of Theorem~\ref{thm:order5}, so that $y_{u_1}(\tau)\equiv\bar u_1$ and $y_\Omega(\tau)\equiv\bar\Omega$: the conditions $\mu\varphi^{-2}y_{u_1}=\bar u_1$ and $\mu^{-1}\varphi^4y_\Omega=\bar\Omega$ determine $\varphi^2$ and then $\mu$. We write $y(\tau)=\bar y+(\delta_c(\tau),\delta_f(\tau))$ with the coarse part $\delta_c$ and the fine part $\delta_f$. Then $\delta_c$ lies in $\mathcal S=\{\xi\in\C^5:\xi_{u_1}=\xi_\Omega=0\}$. By Lemma~\ref{lem:d9kernel}, $\mathcal S\cap\ker H=0$, since $a\bar u_1=0$ and $(3a+b)\bar\Omega=0$ force $a=b=0$. As $\dim\mathcal S+\dim\ker H=5$, the restriction of the bilinear form $H$ to $\mathcal S$ is nondegenerate: a vector of $\mathcal S$ that is $H$-orthogonal to $\mathcal S$ is $H$-orthogonal to $\mathcal S+\ker H=\C^5$. By Lemma~\ref{lem:d9lead}(a), the coarse components of $\tau^2\nabla_yF(\tau,y(\tau))=0$ read $H\delta_c+O(|\delta_c|^2)+O(\tau^2)=0$. Their components along $\mathcal S$ give $\val\delta_c\ge\min(2\val\delta_c,2)$. Hence $\val\delta_c\ge2$. The fine components of $\nabla_yF(\tau,y(\tau))=0$ read $\nabla_fV_2(\bar y+\delta)+O(\tau^2)=0$. By Lemma~\ref{lem:d9exp}, the matrix of second derivatives of $V_2$ in the fine coordinates is diagonal at $\bar y$, with the nonzero entries $\frac34m_1m_4\hat p_{14}/\bar u_{41}^2$, $\frac34m_2m_5\hat p_{25}/\bar u_{52}^2$ and $\frac34m_4m_5\hat p_{45}/\bar v_{45}^2$. With $\nabla_fV_2(\bar y)=0$ and $\val\delta_c\ge2$ this gives $\val\delta_f\ge\min(2\val\delta_f,2)$, hence $\val\delta_f\ge2$. Therefore $V_2(y(\tau))=V_2(\bar y)+O(\tau^2)$. Now \eqref{eq:qid} with $q=3$ reads $[V_1]_2+2[V_2]_1+3V_3(\bar y)=0$, as in the proof of Theorem~\ref{thm:order3}. Here $V_1=0$ and $[V_2(y(\tau))]_1=0$. Thus $V_3(\bar y)=0$.
\end{proof}

\begin{proof}[Proof of Theorem~\textup{\ref{thm:D9}}]
Suppose that $P$ has diagram~$9$, with the labelling above. On the pairs $kl=15,24,34,35$, Lemma~\ref{lem:d9exp} gives $\hat p_{kl}=(\bar z_{kl}\bar w_{kl})^{-1/2}$ at $\bar y$, with $\bar w_{kl}=-\bar\Omega$. By Lemma~\ref{lem:d9lead}(b), $(m_4+m_5)\bar z_{kl}=\tilde\rho_{kl}\hat z_{12}$ at $\bar y$, where $\tilde\rho_{15}=m_4+m_5$, $\tilde\rho_{24}=-(m_4+m_5)$, $\tilde\rho_{34}=\rho_{31}$ and $\tilde\rho_{35}=\rho_{32}$. Let $K$ be a square root of $-(m_4+m_5)/(\hat z_{12}\bar\Omega)$. Then $\hat p_{kl}=\pm K\tilde\rho_{kl}^{-1/2}$ if $\tilde\rho_{kl}>0$, and $\hat p_{kl}=\pm iK|\tilde\rho_{kl}|^{-1/2}$ if $\tilde\rho_{kl}<0$. So Proposition~\ref{prop:d9cross} says that two sums of real numbers $\pm m_km_l|\tilde\rho_{kl}|^{-1/2}$ vanish, one over the pairs with $\tilde\rho_{kl}>0$ and one over the pairs with $\tilde\rho_{kl}<0$. Each sum therefore has at least two terms. The pair $15$ is in the first sum, $24$ is in the second, and $\rho_{31}<\rho_{32}$. Hence $\rho_{31}<0<\rho_{32}$, the first sum is over $15$, $35$ and the second over $24$, $34$, and they vanish only if
\begin{equation}\label{eq:C12}
m_1^2\rho_{32}=m_3^2(m_4+m_5),\qquad m_2^2\rho_{31}=-m_3^2(m_4+m_5).
\end{equation}
Let $\alpha=\hat p_{12}/\hat p_{23}$ and $\beta=\hat p_{12}/\hat p_{31}$. By Lemma~\ref{lem:d9lead}(c), $\hat p_{kl}^3\hat z_{kl}$ is $\kappa m_3$, $\kappa m_1$, $\kappa m_2$ for $kl=12,23,31$. By Lemma~\ref{lem:d9lead}(b), $(m_4+m_5)\hat z_{23}=-\rho_{32}\hat z_{12}$ and $(m_4+m_5)\hat z_{31}=\rho_{31}\hat z_{12}$. With Lemma~\ref{lem:d9lead}(d) these relations give
\begin{gather*}
m_1(m_4+m_5)\alpha^3=-m_3\rho_{32}=m_1m_5-m_2m_4-m_3m_4,\\
m_2(m_4+m_5)\beta^3=m_3\rho_{31}=m_2m_4-m_1m_5-m_3m_5,\\
m_1m_3\alpha+m_2m_3\beta+m_1m_2\alpha\beta=\alpha\beta\,U_T(\bar y_c)/\hat p_{12}=0.
\end{gather*}
By \eqref{eq:C12}, $\alpha^3=-(m_3/m_1)^3$ and $\beta^3=-(m_3/m_2)^3$. So $\alpha=-\eta m_3/m_1$ and $\beta=-\eta'm_3/m_2$ with $\eta^3=\eta'^3=1$. The third relation becomes $m_3^2(\eta\eta'-\eta-\eta')=0$, that is, $(\eta-1)(\eta'-1)=1$. This is impossible, because $|\eta-1|$ and $|\eta'-1|$ are $0$ or $\sqrt3$.

The last statement follows, since Theorem~\ref{thm:types} places the type of a singular place in a piece of its own diagram.
\end{proof}

\begin{remark}\label{rem:torque}
At a place of diagram~$1$ or~$4$ with triangle $T$, the torque identity of \cite[\S5.0]{AK12} shows that the leading potential between $T$ and the two other bodies vanishes as well; at positive masses this confines such places to finitely many explicit curves of masses \cite{Tom26}.
\end{remark}

%% file: sec/s5_points.tex
\section{Point certificates for the clusters of zero energy}\label{sec:points}

For a body $j$ write $Q_j=\{1,\dots,5\}\setminus\{j\}$. By Theorem~\ref{thm:Kgating}, the pieces $\pc{d14a}(j)$ and $\pc{d14b}(j,k)$ are attained at $m$ only if $\mathrm{K14}(m_{Q_j})$ has a solution, and $\pc{d15}(j,k)$ only if $\mathrm{K15}(m_{Q_j},k)$ has one. In \S\ref{sec:curves} the separation of \S\ref{sec:lex} succeeds at finitely many mass points, given exactly, only after the pieces of some clusters $Q_j$, usually one, are removed. At these points we show that $\mathrm{K14}(m_{Q_j})$ has no solution. By \S\ref{ss:certs}, the proof needs this only for the pieces $\pc{d14a}(j)$.

All these certificates have the same structure. We reduce the cluster system modulo a prime $\ell$ at an $\ell$-adic point over the given masses. A standard basis over $\mathbb F_\ell$ shows that neither the reduced system nor any of its initial systems on the cells of its tropical prevariety has a zero in the torus. A valuation argument lifts this to characteristic $0$. The computations over $\mathbb F_\ell$ and the enumeration of the cells are exact. Where a point of \S\ref{sec:curves} is a certified point, we identify the two exactly.

\subsection{The cluster systems in the torus}\label{ss:Ksystems}
Let $Q$ be a set of four bodies, $\mu=(\mu_i)_{i\in Q}$ complex masses, $M_Q=\sum_{i\in Q}\mu_i$, and $k\in Q$. In this section we write $h$ for the second index of a pair of bodies, since $\ell$ denotes a prime. Recall from \eqref{eq:K14} and \eqref{eq:K15} that $\mathrm{K14}(\mu)$ asks for $z,w\in\C^Q$ and $p_{ih}=p_{hi}$ with $p_{ih}^2z_{ih}w_{ih}=1$ and
\[
F^z_i=\sum_{h\in Q\setminus i}\mu_h\,z_{ih}\,p_{ih}^3=0,\qquad F^w_i=\sum_{h\in Q\setminus i}\mu_h\,w_{ih}\,p_{ih}^3=0\qquad(i\in Q),
\]
and that $\mathrm{K15}(\mu,k)$ asks for the same with $F^w_i=c+d\,[i=k]$, $d\ne0$. Whether they have solutions depends only on the ratios of the masses and on $k$, and is equivariant under relabelling (\S\ref{ss:Kgating}).

We number the bodies of $Q$ increasingly as $1,\dots,4$. The unknowns $\xi$ are $z_{ih}$, $w_{ih}$ and $p_{ih}$ for the five pairs $\{i,h\}\ne\{1,2\}$, and for $\mathrm{K15}$ also $c$ and $d$, which gives $N=15$, respectively $N=17$, unknowns. The generators are:
\begin{enumerate}[label=(\roman*)]
\item the seven circuit relations of the complete graph on $Q$ (four triangles and three four-cycles), for $z$ and for $w$, with $z_{12}=w_{12}=1$;
\item $p_{ih}^2z_{ih}w_{ih}-1$ for the five pairs;
\item the eight rows $F^z_i$ and $F^w_i-c-d\,[i=k]$, with $p_{12}=1$, and $c=d=0$ for $\mathrm{K14}$;
\item with $U=\sum_{i<h}\mu_i\mu_hp_{ih}$, $V_{zz}=\sum_{i<h}\mu_i\mu_hz_{ih}^2p_{ih}^3$ and $V_{ww}=\sum_{i<h}\mu_i\mu_hw_{ih}^2p_{ih}^3$: for $\mathrm{K14}$ the three polynomials $U$, $V_{zz}$, $V_{ww}$; for $\mathrm{K15}$ the five polynomials
\[
cM_Q+d\mu_k,\qquad U,\qquad \textstyle\sum_{h\in Q}\mu_hz_{kh},\qquad V_{zz},\qquad V_{ww}+c\sum_{h\in Q}\mu_hw_{kh}.
\]
\end{enumerate}
This gives $30$ generators for $\mathrm{K14}$ and $32$ for $\mathrm{K15}$. Each generator has the form $F_\nu=\sum_{a\in A_\nu}c_{\nu,a}(\mu)\,\xi^a$, where each $c_{\nu,a}$ is a nonzero element of $\Z[\mu]$. Each $c_{\nu,a}$ is an integer multiple of a monomial in $\mu$, except the coefficient $M_Q$ of $c$ in $cM_Q+d\mu_k$. We call $A_\nu$ the \emph{universal support} of $F_\nu$.

\begin{lemma}\label{lem:torus}
If $\mathrm{K14}(\mu)$ holds, the generators of $\mathrm{K14}$ have a common zero in $(\C^*)^{15}$. If $\mu_k\ne0$, $M_Q\ne0$ and $\mathrm{K15}(\mu,k)$ holds, the generators of $\mathrm{K15}$ with this $k$ have a common zero in $(\C^*)^{17}$.
\end{lemma}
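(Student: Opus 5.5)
Given a solution $(z,w,p)$ of $\mathrm{K14}(\mu)$ (resp.\ $\mathrm{K15}(\mu,k)$), I will produce the point of the torus in three moves. First I normalize by the symmetries listed in \S\ref{ss:Kgating}. Then I read off the unknowns $\xi$. Finally I check each family of generators (i)--(iv) in turn, using the identities already derived in \S\ref{ss:Kgating} and \S\ref{ss:order5}.

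\emph{Normalization.} By definition all $z_{ih},w_{ih}$ are nonzero, and $p_{ih}^2z_{ih}w_{ih}=1$ forces $p_{ih}\neq0$. I apply the scaling $(z,w,p)\mapsto(\lambda z,\mu' w,(\lambda\mu')^{-1/2}p)$ with $\lambda=1/z_{12}$ and $\mu'=1/w_{12}$, so that $z_{12}=w_{12}=1$. This also gives $p_{12}^2=1$. Using $p\mapsto -p$ if needed, I get $p_{12}=1$. For $\mathrm{K15}$ the constants $c,d$ are rescaled along the way, and $d$ stays nonzero. The fifteen unknowns $z_{ih},w_{ih},p_{ih}$ for $\{i,h\}\ne\{1,2\}$ are then nonzero, and this is the whole torus condition for $\mathrm{K14}$.

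\emph{Checking the generators.} The circuit relations (i) hold because the $z_{ih}$ and $w_{ih}$ are differences of positions. The binomials (ii) are the defining equations. The rows (iii) are the system itself, with $p_{12}=z_{12}=w_{12}=1$ substituted. For (iv) I need some identities.
\begin{itemize}
\item $U=0$: contract $F^z$ with $w$, i.e.\ compute $\sum_i\mu_iw_iF^z_i=\sum_{i<h}\mu_i\mu_hp_{ih}^3z_{ih}w_{ih}=U$ (\S\ref{ss:Kgating}). For $\mathrm{K15}$ use $F^z=0$.
\item $V_{zz}$: antisymmetrize $\sum_i\mu_iz_iF^z_i$ to get $\sum_{i<h}\mu_i\mu_hz_{ih}^2p_{ih}^3=V_{zz}$, which vanishes since $F^z=0$.
\item $V_{ww}$: the same computation gives $V_{ww}=\sum_i\mu_iw_iF^w_i$. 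For $\mathrm{K14}$ this is $0$. For $\mathrm{K15}$ it equals $c\sum_i\mu_iw_i+d\mu_kw_k$.
\item The relation $cM_Q+d\mu_k=0$ comes from summing $\mu_iF^w_i$.
\end{itemize}

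\emph{The $\mathrm{K15}$ linear terms.} Put the translation origin at body $k$ (translations are free), so $w_i=w_{ik}=-w_{ki}$. Then $V_{ww}=c\sum_i\mu_iw_i=-c\sum_h\mu_hw_{kh}$, which is exactly the generator $V_{ww}+c\sum_h\mu_hw_{kh}$. For $\sum_h\mu_hz_{kh}$ I would like to derive it from $\mathrm{X}$-type identities. The natural move is to pair $F^w$ with $z$: with origin at $k$, $\sum_i\mu_iz_iF^w_i=U=0$, giving $c\sum\mu_iz_i+d\mu_kz_k=c\sum\mu_iz_i=0$. Since $c\ne0$ (from $cM_Q=-d\mu_k\neq0$), this yields $\sum_h\mu_hz_{kh}=0$; this is the computation already used in Lemma~\ref{lem:d15lead}(b).

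\emph{Torus for $\mathrm{K15}$.} It remains that $c,d\ne0$. Here $d\ne0$ is given, and $c=-d\mu_k/M_Q\ne0$ uses the hypotheses $\mu_k\ne0$ and $M_Q\ne0$. That completes a zero in $(\C^*)^{17}$.

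The only delicate step is sign and index bookkeeping in the $\mathrm{K15}$ identities: the orientation $z_{ih}$ versus $z_{hi}$, and checking that the normalization $p_{12}=1$ is compatible with the chosen branch. I expect it to be routine.
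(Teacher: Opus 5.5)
Your proof is correct and follows the paper's argument. You normalize to $z_{12}=w_{12}=p_{12}=1$ using the scaling and sign symmetries, and then obtain the generators (iv) from the contractions $\sum_i\mu_iw_iF^z_i=\sum_i\mu_iz_iF^w_i=U$, $\sum_i\mu_iz_iF^z_i=V_{zz}$, $\sum_i\mu_iw_iF^w_i=V_{ww}$ and $\sum_i\mu_iF^w_i=0$. The only difference is cosmetic: you place the origin at body $k$ and use $c\ne0$, whereas the paper works with the centre of mass $z_G$ and uses $d\mu_k\ne0$.
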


\begin{proof}
Replacing $(z,w,p)$ by $(sz,tw,(st)^{-1/2}p)$ multiplies $F^z$ by $s(st)^{-3/2}$ and $F^w$ by $t(st)^{-3/2}$. Replacing $p$ by $-p$ changes the sign of both. For $\mathrm{K15}$ the factor of $F^w$ is absorbed by $(c,d)$. Since $z_{12}w_{12}=p_{12}^{-2}\ne0$, each solution has a representative with $z_{12}=w_{12}=p_{12}=1$. Conditions (i)--(iii) hold there. The circuit relations give positions $z_i,w_i$. From $p_{ih}^2z_{ih}w_{ih}=1$ and antisymmetry we obtain
\[
\textstyle\sum_i\mu_iw_iF^z_i=\sum_i\mu_iz_iF^w_i=U,\qquad\sum_i\mu_iz_iF^z_i=V_{zz},\qquad\sum_i\mu_iw_iF^w_i=V_{ww},\qquad\sum_i\mu_iF^w_i=0.
\]
For $\mathrm{K14}$ the left sides vanish. For $\mathrm{K15}$ the last identity gives $cM_Q+d\mu_k=0$. Then $0=U=\sum_i\mu_iz_i(c+d[i=k])=d\mu_k(z_k-z_G)$ with $z_G=\sum_i\mu_iz_i/M_Q$. So $\sum_h\mu_hz_{kh}=M_Q(z_k-z_G)=0$. Similarly, $V_{ww}=cM_Qw_G+d\mu_kw_k=-c\sum_h\mu_hw_{kh}$. Finally, $p_{ih}^2z_{ih}w_{ih}=1$ makes $z_{ih}$, $w_{ih}$, $p_{ih}$ nonzero, $d\ne0$, and $c=-d\mu_k/M_Q\ne0$.
\end{proof}

\emph{Cells.} For a finite set $A\subset\Z^N$ and $\omega\in\R^N$ let $\max_\omega(A)$ be the set of $a\in A$ at which $\langle a,\omega\rangle$ is largest, and for $F=\sum_{a\in A}c_a\xi^a$ let $\mathrm{in}_\omega F=\sum_{a\in\max_\omega(A)}c_a\xi^a$. The \emph{prevariety} of the universal supports is the set of $\omega$ with $|\max_\omega(A_\nu)|\ge2$ for every $\nu$. Its \emph{cells} are the classes of its nonzero points with the same tuple of max-sets. On a cell $\sigma$ the initial forms $\mathrm{in}_\sigma F_\nu$ are well defined. Since the prevariety depends only on the supports, we compute it once for each of the five systems ($\mathrm{K14}$, and $\mathrm{K15}$ with $k=1,\dots,4$). We enumerate the cells by a search in which each linear-programming decision is proved in integer arithmetic, by an integer point or by nonnegative integer Gordan multipliers (the numbers of decisions are in \S\ref{ss:checks}). We compare the result with the prevariety computed by gfan~0.7: the two agree, and the lineality space is $0$. The prevariety of $\mathrm{K14}$ has $28$ cells, spanned by $16$ rays. That of each system $\mathrm{K15}$ has $43$ cells, spanned by $18$ rays.

\subsection{Algebraic mass points}\label{ss:KA}
Let $f\in\Z[t]$ be irreducible over $\Q$, and let $g=(g_i)_{i\in Q}$ with $g_i\in\Q[t]$. For a root $\theta$ of $f$ put $\mu(\theta)=(g_i(\theta))_{i\in Q}$. For a prime $\ell$ and $\bar\theta\in\mathbb F_\ell$ put $\bar\mu_i=g_i(\bar\theta)$ and $\bar F_\nu=\sum_ac_{\nu,a}(\bar\mu)\xi^a\in\mathbb F_\ell[\xi]$.

\begin{theorem}\label{thm:KA}
Let $\ell$ be a prime and $\bar\theta\in\mathbb F_\ell$ such that:
\begin{enumerate}[label=(P\arabic*)]
\item $\ell$ divides neither the leading coefficient of $f$ nor a denominator of a coefficient of a $g_i$; $\bar f=f\bmod\ell$ is squarefree of degree $\deg f$, and $\bar f(\bar\theta)=0$;
\item $\bar\mu_i\ne0$ for $i\in Q$, $\sum_i\bar\mu_i\ne0$, and $c_{\nu,a}(\bar\mu)\ne0$ for every generator and every $a\in A_\nu$;
\end{enumerate}
\begin{enumerate}[label=(\Alph*)]
\item the ideal of $\mathbb F_\ell[\xi_0,\xi]$ generated by $\bar F_1,\dots,\bar F_N$ and $\xi_0\xi_1\cdots\xi_N-1$ is the unit ideal;
\item for every cell $\sigma$, the ideal generated by $\mathrm{in}_\sigma\bar F_1,\dots,\mathrm{in}_\sigma\bar F_N$ and $\xi_0\xi_1\cdots\xi_N-1$ is the unit ideal.
\end{enumerate}
Then for every root $\theta\in\C$ of $f$ the masses $g_i(\theta)$ and their sum are nonzero, and $\mathrm{K14}(\mu(\theta))$, respectively $\mathrm{K15}(\mu(\theta),k)$, has no solution.
\end{theorem}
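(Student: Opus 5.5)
Suppose, for a contradiction, that for some root $\theta$ of $f$ the system $\mathrm{K14}(\mu(\theta))$, respectively $\mathrm{K15}(\mu(\theta),k)$, has a solution. We lift the modular data to an $\ell$-adic field, reduce the solution to $\mathbb F_\ell$, and contradict (A) or (B).

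\emph{The $\ell$-adic setting.} By (P1), $\bar f$ is squarefree and $\ell$ does not divide the leading coefficient of $f$. Hensel's lemma then lifts $\bar\theta$ to a root $\theta_\ell\in\Z_\ell$ of $f$. Since $f$ is irreducible over $\Q$, there is an embedding $\Q(\theta)\to\Q_\ell$ with $\theta\mapsto\theta_\ell$; we extend it to an isomorphism $\C\cong\C_\ell$. The masses become $\mu_i=g_i(\theta_\ell)\in\Z_\ell$, because no denominator of a $g_i$ is divisible by $\ell$, and they reduce to $\bar\mu_i$. By (P2) each $\mu_i$, the sum $M_Q$ and every coefficient $c_{\nu,a}(\mu)$ is an $\ell$-adic unit. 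In particular all of them are nonzero, which is the first conclusion, and $\mu_k\ne0$, $M_Q\ne0$. The property of having a solution is algebraic and is preserved by field isomorphisms. Lemma~\ref{lem:torus} therefore gives a common zero $\xi\in(\C_\ell^*)^N$ of the generators $F_\nu$ with coefficients $c_{\nu,a}(\mu)$.

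\emph{Reduction and valuations.} Let $\omega=-\val(\xi)\in\Q^N$. First suppose $\omega=0$. Then all coordinates of $\xi$ are units, and their reductions give a zero of the $\bar F_\nu$ in $(\bar{\mathbb F}_\ell^*)^N$. Setting $\xi_0=(\xi_1\cdots\xi_N)^{-1}$, we obtain a zero of the ideal in (A), so (A) fails. (A Gröbner certificate of the unit ideal over $\mathbb F_\ell$ remains valid over $\bar{\mathbb F}_\ell$.)

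Now suppose $\omega\ne0$. Each coefficient $c_{\nu,a}(\mu)$ is a unit, so the valuation of $c_{\nu,a}\xi^a$ equals $-\langle a,\omega\rangle$. Since $F_\nu(\xi)=0$, the minimum of these valuations is attained at least twice. Hence $|\max_\omega(A_\nu)|\ge2$ for every $\nu$, and $\omega$ lies in a cell $\sigma$ of the prevariety. To pass to the residue field, we choose a uniformizer-type element $\pi^{\omega}$, available in $\C_\ell$ because the value group is divisible. Put $\eta=\pi^{\omega}\xi$ coordinatewise, so that every $\eta_i$ is a unit. Then $\pi^{-\max}F_\nu(\xi)=\mathrm{in}_\sigma F_\nu(\eta)+(\text{terms of positive valuation})$. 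Reducing modulo the maximal ideal, $\bar\eta\in(\bar{\mathbb F}_\ell^*)^N$ is a common zero of the $\mathrm{in}_\sigma\bar F_\nu$, which contradicts (B). Here the reduction of the initial form is computed from the reduced coefficients, and these coefficients are nonzero by (P2). That is exactly why (B) is stated for the initial forms of the reduced system.

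\emph{Main obstacle.} The delicate step is to justify that every $\omega$ arising this way lies in a cell of the prevariety computed from the universal supports. This holds because no coefficient drops out modulo $\ell$, which is (P2), so the $\ell$-adic supports equal the universal ones. One must also check that the enumerated cells cover all nonzero points of the prevariety; this was verified exactly, with lineality space $0$. Finally, the valuation-rescaling argument requires some care when $\omega$ has rational entries, and I would handle this by passing to the algebraically closed field $\C_\ell$.
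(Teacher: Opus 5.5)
Your proof is correct and follows the paper's route. You lift $\bar\theta$ by Hensel's lemma and note that every $c_{\nu,a}(\mu)$ is an $\ell$-adic unit by (P2), so the valuation vector $\omega$ of a torus zero is either $0$, which contradicts (A) after reduction, or a nonzero point of the prevariety, which after rescaling and reduction contradicts (B). Checking that the enumerated cells cover the prevariety is not part of this proof, since (B) is assumed for every cell. There are two minor differences:
\begin{itemize}
\item To pass from an arbitrary complex root $\theta$ to $\theta_\ell$, you use an abstract isomorphism $\C\cong\C_\ell$ extending $\theta\mapsto\theta_\ell$, which needs choice and a transcendence-degree count. The paper shows more elementarily, via the Nullstellensatz and linear algebra over $K=\Q[t]/(f)$, that one embedding of $K$ suffices.
\item You rescale in $\C_\ell$ with rational exponents. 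This works once $q\mapsto\pi^{q}$ is chosen multiplicatively, e.g.\ $\pi^{q}=\varpi^{eq}$ with $\varpi^{e}=\ell$ and $e$ a common denominator of $\omega$. The paper works in a finite extension $E$ of $\Q_\ell$ with a uniformizer and so never meets this issue.
\end{itemize}
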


\begin{proof}
\emph{Step 1: one embedding suffices.} Let $K=\Q[t]/(f)$, a field, with $\vartheta$ the class of $t$, and let $I\subset K[\xi_0,\xi]$ be the ideal generated by the polynomials $\sum_ac_{\nu,a}(g(\vartheta))\xi^a$ and $\xi_0\xi_1\cdots\xi_N-1$. For an embedding $\iota$ of $K$ into an algebraically closed field $\Omega$, the system $\iota F$ has a zero in $(\Omega^*)^N$ if and only if $1\notin I$: by the Nullstellensatz it has one if and only if $1\notin\iota(I)\,\Omega[\xi_0,\xi]$, and a representation $1=\sum h_\nu\,\iota(P_\nu)$ with prescribed degrees is a linear system over $\iota(K)$, which is solvable over $\iota(K)$ if it is solvable over $\Omega$. The complex embeddings of $K$ send $\vartheta$ to the roots of $f$. It therefore suffices to find one embedding of $K$ under which the system has no zero in the torus.

\emph{Step 2: the Hensel embedding.} Since $\bar f$ is squarefree, $\bar\theta$ is a simple root. Hensel's lemma gives a unique $\theta_\ell\in\Z_\ell$ with $f(\theta_\ell)=0$ and $\theta_\ell\equiv\bar\theta\pmod\ell$. As $f$ is irreducible, $\vartheta\mapsto\theta_\ell$ defines an embedding $\tau\colon K\to\Q_\ell$. By (P1) each $g_i(\theta_\ell)$ lies in $\Z_\ell$ and reduces to $\bar\mu_i$. Hence each coefficient $c_{\nu,a}(g(\theta_\ell))$ lies in $\Z_\ell$ and reduces to $c_{\nu,a}(\bar\mu)$, which is nonzero by (P2). All coefficients are therefore $\ell$-adic units. In particular the masses and their sum are nonzero in $K$, hence at every root of $f$.

\emph{Step 3: valuations.} Suppose that $\tau F$ has a zero $\xi\in(\bar\Q_\ell^{\,*})^N$. Its coordinates lie in a finite extension $E$ of $\Q_\ell$, with valuation $v$, $v(\ell)=1$, a uniformizer $\pi$ of valuation $1/e$, and residue field $k_E$. Put $\omega=-v(\xi)$ and $\xi_\alpha=\pi^{ev(\xi_\alpha)}y_\alpha$ with units $y_\alpha$. As the coefficients are units, the terms of $\tau F_\nu(\xi)$ of least valuation are those with exponent in $\max_\omega(A_\nu)$. Dividing by the least power of $\pi$ and reducing, we obtain $\mathrm{in}_\omega\bar F_\nu(\bar y)=0$ for every $\nu$, with $\bar y\in(k_E^*)^N$. If $\omega=0$, then $\bar y$, with $\xi_0=(\bar y_1\cdots\bar y_N)^{-1}$, is a zero of the ideal of (A), which is impossible. If $\omega\ne0$, then each $\max_\omega(A_\nu)$ has at least two elements, since a single term with a nonzero coefficient does not vanish at a torus point. Hence $\omega$ is a nonzero point of the prevariety. It lies in a cell $\sigma$, where $\mathrm{in}_\omega\bar F_\nu=\mathrm{in}_\sigma\bar F_\nu$. So $\bar y$ gives a zero of the ideal of (B), which is impossible.

Thus $\tau F$ has no zero in the torus. By Step 1, no complex root $\theta$ of $f$ gives one. By Lemma~\ref{lem:torus} neither $\mathrm{K14}(\mu(\theta))$ nor $\mathrm{K15}(\mu(\theta),k)$ holds.
\end{proof}

\begin{remark}\label{rem:KAforms}
(a) One prime suffices, although the certificates of \S\ref{ss:pointsets} that use Theorem~\ref{thm:KA} use two. (b) The proof uses the masses only through Step~2. If they are given in homogeneous coordinates, $\mu_i=a_i(\theta)/b(\theta)$ with $a_i,b\in\Z[t]$ and $b(\theta)\ne0$, the theorem holds with (P1) replaced by: $\ell$ does not divide the leading coefficient of $f$, $\bar f$ is squarefree of degree $\deg f$ with $\bar f(\bar\theta)=0$, $b(\bar\theta)\ne0$, and $\bar\mu_i=a_i(\bar\theta)\,b(\bar\theta)^{-1}$. Indeed $b(\theta_\ell)$ is then a unit, and $a_i(\theta_\ell)/b(\theta_\ell)\in\Z_\ell$ reduces to $\bar\mu_i$. (c) Since $\mathrm{K14}$ and $\mathrm{K15}$ are unchanged under scaling of the masses, the theorem may be applied to any nonzero multiple of the masses, for instance to the numerators over a common denominator.
\end{remark}

The certificate also bounds the masses at which a cluster system can hold along an algebraic curve.

\begin{corollary}\label{cor:GF}
Let $X$ be an irreducible curve over $\Q$ with a rational map $x\mapsto\mu(x)$ to the masses of a cluster, and suppose that the hypotheses of Theorem~\ref{thm:KA} hold, for $\mathrm{K14}$ or for $\mathrm{K15}(\cdot,k)$ and with some prime $\ell$, at $\mu(x_0)$ for a point $x_0\in X$ defined over a number field. Then the set of $x\in X(\C)$ at which the system has a zero in the torus is finite.
\end{corollary}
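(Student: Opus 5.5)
The plan is a specialization argument in families. I want to show that the bad locus
\[
B=\{x\in X(\C):\text{the system at }\mu(x)\text{ has a zero in the torus}\}
\]
is a constructible set. Its complement contains a nonempty Zariski open set, and the curve $X$ is irreducible, so $B$ is then finite. The certificate at $x_0$ will provide this open set.

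\emph{Step 1: $B$ is constructible.} Shrink $X$ to an affine open $X'$ on which $\mu$ is regular. The complement of $X'$ is a finite set, so the points removed are harmless. On $X'$ the generators $F_\nu$ have coefficients $c_{\nu,a}(\mu(x))$ in $\C[X']$. Adjoin $\xi_0\xi_1\cdots\xi_N-1$ and consider the projection
\[
Z\subset X'\times\C^{N+1}\longrightarrow X'
\]
of their common zero set $Z$. Its image is exactly $B\cap X'$, and it is constructible by Chevalley's theorem. A constructible subset of an irreducible curve is either finite or cofinite. So it suffices to show that $B$ is not cofinite, that is, to exhibit one point of $X$ outside $B$. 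Since a cofinite set would contain all but finitely many points, it is enough to show that $B$ omits some nonempty open set.

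\emph{Step 2: use the certificate at $x_0$.} The point $x_0$ is defined over a number field $K_0$. Take $K=K_0$, or a field containing the residue field of $x_0$, and present it as $\Q[t]/(f)$ with the masses $g_i$, so that Theorem~\ref{thm:KA} applies. It shows that for every embedding $\iota$ of $K$, in particular the complex embeddings, the system at $\mu(\iota x_0)$ has no torus zero. By Step 1 of its proof this is equivalent to $1$ lying in the ideal $I$ over $K$. Write
\[
1=\sum_\nu h_\nu F_\nu+h_0(\xi_0\cdots\xi_N-1)
\]
with $h_\nu,h_0$ of bounded degree $D$. A representation of $1$ with degrees bounded by $D$ is the solvability of a linear system whose matrix has entries regular in $x$. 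The identity holds at $x_0$, so the matrix has maximal relevant rank there. Solvability, namely the condition that the augmented matrix and the matrix have the same rank, is then witnessed by a nonvanishing minor. That minor stays nonzero on a Zariski open neighbourhood of $x_0$, provided the rank of the coefficient matrix does not jump up. To avoid rank subtleties I would instead argue via the complement: the locus where $1\notin I_x$ in degree $\le D$ is Zariski closed. This is the closedness of a rank-deficiency condition on the augmented system, expressed by vanishing of minors, once I fix the effective degree bound. Such a bound exists by the effective Nullstellensatz, with degree depending only on the number of variables and on the degrees of the generators, which are independent of $x$. Hence $\{x:1\in I_x\}$ is open and contains $x_0$. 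Its complement, which is $B$, is a proper closed subset of the irreducible curve and therefore finite.

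\emph{The main obstacle} is the uniformity of the degree bound in Step 2. Without it, openness of ``$1\in I_x$'' is not automatic. I would invoke an effective Nullstellensatz (Kollár's bound) so that $D$ depends only on $N$ and the generator degrees. With that bound, the argument reduces to lower semicontinuity of rank for a linear system with coefficients in $\C[X']$. A small additional point is that Theorem~\ref{thm:KA} yields non-solvability at all conjugates of $x_0$, but one point already suffices.
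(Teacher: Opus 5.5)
\textbf{There is a genuine gap in your Step~2.} Your Step~1 is fine and agrees with the paper: $B$ is constructible, hence finite or cofinite on the irreducible curve, so it suffices to show that $B$ is not cofinite.

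Step~2 fails. With a uniform degree bound $D$, the condition $1\in I_x$ is the solvability of a linear system $A(x)h=e$ whose entries are regular in $x$, that is, $\operatorname{rank}[A(x)\mid e]=\operatorname{rank}A(x)$. Since $\operatorname{rank}A(x)$ can jump up away from $x_0$, this locus is only constructible. It need not be open, and its complement is not a closed rank-deficiency condition.

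A concrete example: take the generators $\xi_1-\xi_2-1$ and $\xi_1-c\,\xi_2-2$, together with $\xi_0\xi_1\xi_2-1$, and let $c_0=1$. Every coefficient is nonzero at $c_0$, and $1\in I_{c_0}$. Yet for every $c\ne1,2$ there is the torus zero $\xi_2=1/(1-c)$, $\xi_1=(2-c)/(1-c)$. So $\{c:1\in I_c\}=\{1,2\}$ and $B$ is cofinite. Your argument uses only the pointwise conclusion of Theorem~\ref{thm:KA} at $x_0$, and this example shows that conclusion cannot give finiteness: torus zeros at nearby parameters can escape to the boundary of the torus as $x\to x_0$.

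\textbf{The missing idea is the use of hypothesis (B), not just the conclusion at $x_0$.} Hypothesis (B) is what rules out this escape. In the example, the initial system on the cell through $\omega=(1,1)$ at $c_0$ is $\xi_1-\xi_2$ twice, which has torus zeros, so (B) fails there.

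The paper uses the fact that (P2), (A) and (B) are conditions on the reduction $\bar\mu$ alone. Hence Step~3 of the proof of Theorem~\ref{thm:KA} excludes torus zeros at every point of $X$ over a finite extension of $\Q_\ell$ that is $\ell$-adically close to $x_0$. These points are infinitely many, hence Zariski dense, so $B$ is not cofinite. The statement then transfers to $\C$ because membership in $B$ is first-order over $\Q$.

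You could instead stay over $\C$. A cofinite $B$ would give a torus zero over Puiseux series on a branch of $X$ at $x_0$. Since the coefficients are units by (P2), its valuation vector yields a torus zero of some initial system at $\mu(x_0)$. Excluding these requires (A) and (B) in characteristic $0$ at $\mu(x_0)$, which follow from the mod-$\ell$ versions by the same lifting argument. Either way, the initial systems of the cells are essential, not just the absence of a zero at $x_0$.
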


\begin{proof}
Since the set $Z$ in question is constructible and defined over $\Q$, it is finite or contains a nonempty Zariski open subset of $X$. Step~3 of the proof of Theorem~\ref{thm:KA} uses only that the masses lie in $\mathcal O_E^4$ for a finite extension $E$ of $\Q_\ell$ and reduce to $\bar\mu=\mu(x_0)\bmod\ell$, where (P2), (A) and (B) are conditions on $\bar\mu$. The points of $X$ over finite extensions of $\Q_\ell$ that are $\ell$-adically close to $x_0$ have such masses. They are infinitely many and hence Zariski dense in $X$. None of them lies in $Z$. Since membership in $Z$ is a first-order statement over $\Q$, it transfers between algebraically closed fields of characteristic $0$. Therefore $Z$ is not cofinite and hence finite.
\end{proof}

For masses along a curve we can make the parameter a further unknown. Let $G_1,\dots,G_n$ be nonzero polynomials in $\xi=(\xi_1,\dots,\xi_N)$ and a parameter $s$, with coefficients $e_{\nu,a}(\theta)$, where $e_{\nu,a}\in\Z[t]$ is not divisible by $f$. We now take the prevariety and its cells in the variables $(\xi,s)$.

\begin{theorem}\label{thm:KApar}
Let $\ell$ be a prime and $\bar\theta\in\mathbb F_\ell$ such that $\ell$ does not divide the leading coefficient of $f$, $\bar f$ is squarefree of degree $\deg f$, $\bar f(\bar\theta)=0$, and $e_{\nu,a}(\bar\theta)\ne0$ for all $\nu$ and $a$.
\begin{enumerate}[label=(\alph*)]
\item If $1$ lies in the ideal of $\mathbb F_\ell[\xi_0,\xi,s]$ generated by $\bar G_1,\dots,\bar G_n$ and $\xi_0\xi_1\cdots\xi_Ns-1$, and in the corresponding ideal of the initial systems of every cell, then for every root $\theta$ of $f$ and every $s\in\C^*$ the system $G(\cdot,s)$ has no zero in $(\C^*)^N$.
\item Let $h\in\Z[s]$ with $\bar h=h\bmod\ell$ of degree $\deg h$ and $\bar h(0)\ne0$. If over $\bar{\mathbb F}_\ell$ the full system, together with $\bar h(s)=0$, has no zero with all coordinates nonzero, and the same holds for the initial system of every cell whose relative interior contains a point with $s$-coordinate $0$, then for every root $\theta$ of $f$ and every root $s_0$ of $h$ the system $G(\cdot,s_0)$ has no zero in $(\C^*)^N$.
\end{enumerate}
\end{theorem}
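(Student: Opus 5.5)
The plan is to run the proof of Theorem~\ref{thm:KA} again, now with $s$ as an extra unknown. Step~1 carries over as it stands: the coefficients $e_{\nu,a}(\vartheta)$ lie in $K=\Q[t]/(f)$, and solvability of the system in the torus does not depend on the embedding of $K$. We may therefore use the Hensel embedding $\tau\colon K\to\Q_\ell$, $\vartheta\mapsto\theta_\ell$, given by the simple root $\bar\theta$ of the squarefree $\bar f$. Since $e_{\nu,a}\in\Z[t]$ and $e_{\nu,a}(\bar\theta)\ne0$, every coefficient $e_{\nu,a}(\theta_\ell)$ is an $\ell$-adic unit with reduction $e_{\nu,a}(\bar\theta)$. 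One point needs care: statement (a) is quantified over all $s\in\C^*$, and transferring from $\C$ to $\bar\Q_\ell$ is not just a choice of embedding. I would handle it by reading (a) as the assertion that the system in $(\xi,s)$ has no zero in $(\C^*)^{N+1}$. By the Nullstellensatz this is equivalent to $1$ lying in the ideal generated by the $G_\nu$ and $\xi_0\xi_1\cdots\xi_Ns-1$ over $K$. That is a linear-algebra statement over $K$, so it holds for every embedding of $K$ as soon as it holds for one.

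For (a), suppose that under $\tau$ there is a zero $(\xi,s)\in(\bar\Q_\ell^{\,*})^{N+1}$. Its coordinates lie in a finite extension $E$ of $\Q_\ell$. We put $\omega=-v(\xi,s)\in\Q^{N+1}$ and write each coordinate as a power of a uniformizer times a unit. Because all coefficients are units, the terms of least valuation in each $G_\nu$ are exactly those whose exponents lie in $\max_\omega(A_\nu)$. Reducing then gives a zero in $(k_E^*)^{N+1}$ of the initial system $\mathrm{in}_\omega\bar G$. If $\omega=0$, this zero contradicts the first unit-ideal hypothesis. If $\omega\ne0$, each max-set has at least two elements, so $\omega$ lies in a cell $\sigma$ of the prevariety in the variables $(\xi,s)$, and the zero contradicts the unit-ideal hypothesis for $\sigma$.

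For (b), $s_0$ is a root of $h$, so $s_0\in\bar\Q$. I would use a single Hensel-type embedding into $\bar\Q_\ell$ in which $\theta$ goes to $\theta_\ell$ and $s_0$ goes to a root of $h$ in $\bar\Q_\ell$. Since $\bar h$ has degree $\deg h$ and $\bar h(0)\ne0$, the roots of $h$ in $\bar\Q_\ell$ are integral with nonzero reduction, so $v(s_0)=0$. Now take a zero $\xi$ of $G(\cdot,s_0)$ in the torus. The point $(\xi,s_0)$ has weight vector $\omega$ with $s$-coordinate $0$. If $\omega=0$, reduction gives a zero of the full reduced system with all coordinates nonzero and $\bar h(\bar s)=0$, which contradicts the first hypothesis of (b). If $\omega\ne0$, then $\omega$ lies in the relative interior of a cell that contains a point with $s$-coordinate $0$, and reduction gives a zero of that cell's initial system together with $\bar h(\bar s)=0$. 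This is excluded by the cell hypothesis. Over $\C$ the conclusion follows from Step~1 applied to the field $\Q(\theta,s_0)$: the system is defined over the compositum, and one embedding suffices there too.

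The main obstacle is this last transfer in (b). An embedding of $\Q(\theta,s_0)$ into $\bar\Q_\ell$ sends $\theta$ to \emph{some} $\ell$-adic root of $f$, not necessarily to $\theta_\ell$. The argument must therefore work for every root of $f$ and of $h$ whose reduction satisfies the hypotheses. I would get around this by observing that the hypotheses of (b) concern all zeros over $\bar{\mathbb F}_\ell$ with $\bar h(\bar s)=0$. They are therefore insensitive to which root of $h$ we take, and it is enough to choose an embedding of the compositum that extends $\vartheta\mapsto\theta_\ell$; such an extension always exists.
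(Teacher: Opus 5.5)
Your proposal is correct and follows the paper's proof. Part (a) is the proof of Theorem~\ref{thm:KA} with $s$ as one more torus coordinate, and part (b) extends the Hensel embedding to $\Q(\theta,s_0)$, uses that the roots of $h$ are $\ell$-adic units so that $\omega$ has $s$-coordinate $0$, and reduces to a zero, with $\bar h(\bar s)=0$, of the full system or of the initial system of the cell containing $\omega$; your explicit transfer through one embedding of the compositum extending $\vartheta\mapsto\theta_\ell$ is exactly what the paper's phrase ``we extend the Hensel embedding'' abbreviates.
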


\begin{proof}
(a) is the proof of Theorem~\ref{thm:KA} with $s$ as one more coordinate of the torus. (b) We extend the Hensel embedding of Step~2 to $\Q(\theta,s_0)$. Since $\bar h$ has full degree and $\bar h(0)\ne0$, all roots of $h$ in $\bar\Q_\ell$ are units. In Step~3 the vector $\omega=-v(\xi,s_0)$ therefore has $s$-coordinate $0$. The residue of $s_0$ is a root of $\bar h$. The reduction is a zero of the full system with $\bar h(s)=0$ (if $\omega=0$) or of the initial system of the cell containing $\omega$.
\end{proof}

The hypotheses of (a) fail for $\mathrm{K14}$ and $\mathrm{K15}$ along a family on which these systems have solutions; on such a family, however, Corollary~\ref{cor:GF} shows that the solutions occur at finitely many points if the hypotheses of Theorem~\ref{thm:KA} hold at one of its points. We will use part (b) for $\mathrm{K14}$ at the points of the family $T_9$ at which the label set changes (\S\ref{sec:T9}).

\subsection{The certified points}\label{ss:pointsets}

\begin{definition}\label{def:Kcert}
Let $m\in(\R_{>0})^5$ and $T\subseteq\{1,\dots,5\}$. The point $m$ is \emph{K-certified with transversal $T$} if
\begin{enumerate}[label=(\roman*)]
\item for every $j\in T$, $\mathrm{K14}(m_{Q_j})$ has no solution; and
\item there are a set of labels $S$ with $\mathrm{pieces}(S(m))\subseteq\mathrm{pieces}(S)$ and a D5 hull $V$ valid at $m$ (Definition~\ref{def:validhull}) such that the final check of $(S,T)$ holds: some $l_1,\dots,l_r\in\Z^{10}$ satisfy (a) and (b) of Definition~\ref{def:cert} for the attained faces of $\mathrm{pieces}(S)$, built with $V$, without those of all pieces $\pc{d2}$ and $\pc{d9}$, and of $\pc{d14a}(j)$, $\pc{d14b}(j,k)$ and $\pc{d15}(j,k)$, $j\in T$, $k\in Q_j$.
\end{enumerate}
\end{definition}

Note that this is the notion of \S\ref{sec:lex}, with the set of removed clusters made explicit.

\begin{proposition}\label{prop:Kcert}
A K-certified point is settled.
\end{proposition}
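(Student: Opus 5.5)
The plan is to follow the proof of Proposition~\ref{prop:cert}, with the pieces of the removed clusters taken out before Lemma~\ref{lem:lex} is applied. Let $m$ be K-certified with transversal $T$, witnessed by $S$, the D5 hull $V$ and the functionals $l_1,\dots,l_r$. Suppose, for a contradiction, that $C\subseteq Y_m$ is a nontrivial curve. By \eqref{eq:balx}, the types of its singular places satisfy $\sum_Pd_Px(P)=0$ with all $d_P>0$, and there is at least one singular place.

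The first step places each type in a face that survives the final check. By Theorem~\ref{thm:types}, together with the validity of $V$ (Definition~\ref{def:validhull}), each $x(P)$ lies in the relative interior of an attained face of a piece of $\mathrm{pieces}(S(m))\subseteq\mathrm{pieces}(S)$ of the diagram of $P$, with the faces of diagram~$5$ taken from $V$. I then discard the faces that the final check omits:
\begin{itemize}
\item Proposition~\ref{prop:nod2} and Theorem~\ref{thm:D9} show that no place has diagram~$2$ or diagram~$9$.
\item Corollary~\ref{cor:nod15} shows that no type equals $\pc b_k(j)$, so the pieces $\pc{d15}$ carry no place.
\item Theorem~\ref{thm:D14} shows that no type lies on an open segment $\sigma_k(j)((0,1))$, so the pieces $\pc{d14b}$ carry no place.
\item For $j\in T$, condition (i) says that $\mathrm{K14}(m_{Q_j})$ has no solution. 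Corollary~\ref{cor:Kdrop} then excludes types in the attained set of $\pc{d14a}(j)$.
\end{itemize}
After these removals, every $x(P)$ lies in the relative interior of one of the faces listed in the final check of $(S,T)$.

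The second step applies Lemma~\ref{lem:lex}. As after Definition~\ref{def:cert}, let $I_i$ be the set of remaining faces on whose vertices $l_1,\dots,l_{i-1}$ all vanish and at which $l_i$ is positive at some vertex. Conditions (a) and (b) of Definition~\ref{def:cert}, applied to the remaining faces, show by induction on $i$ that the $I_i$ partition these faces and satisfy hypotheses (i) and (ii) of Lemma~\ref{lem:lex}. A face and the convex hull of its vertices have the same relative interior, so Lemma~\ref{lem:lex} says that $0$ is not a positive combination of $N\ge1$ points of these relative interiors. This contradicts the balancing relation, so no nontrivial curve exists and $m$ is settled.

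The main obstacle is the bookkeeping in the first step. A removed piece may share its relative closure with a retained piece, so I have to check that each removal argument excludes exactly the types that make up the attained set of the removed piece. For $\pc{d14a}(j)$ this attained set is the single point $\pc u_0(j)$, and Theorem~\ref{thm:Kgating}(a) covers it. I also have to confirm that the statements used are unconditional at positive masses, which they are. Finally, in Definition~\ref{def:Kcert} the pieces $\pc{d14b}$ and $\pc{d15}$ are dropped only for $j\in T$, but those for $j\notin T$ are still present among the checked faces. Keeping them only makes the certificate's hypotheses harder to satisfy, so it does no harm.
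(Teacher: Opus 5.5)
Your proof is correct and follows the paper's argument. Like the paper, you drop the faces of $\pc{d2}$, $\pc{d9}$, $\pc{d14b}$ and $\pc{d15}$ by the results collected in \S\ref{ss:certs}, and those of $\pc{d14a}(j)$, $j\in T$, by Corollary~\ref{cor:Kdrop} and condition (i). You then rerun the proof of Proposition~\ref{prop:cert} with Lemma~\ref{lem:lex} applied to the functionals of (ii). Your remark that the faces of $\pc{d14b}(j,k)$ and $\pc{d15}(j,k)$ for $j\notin T$ stay in the checked list, and only make the certificate harder to satisfy, is also right.
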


\begin{proof}
Let $C\subseteq Y_m$ be a nontrivial curve. By Corollary~\ref{cor:Kdrop} and (i), no singular place of $C$ has its type in the attained set of a piece $\pc{d14a}(j)$ with $j\in T$. By \S\ref{ss:certs} none has its type in that of a piece $\pc{d2}$, $\pc{d9}$, $\pc{d14b}$ or $\pc{d15}$. The proof of Proposition~\ref{prop:cert}, with these faces removed and Lemma~\ref{lem:lex} applied to the functionals of (ii), shows that there is no such curve.
\end{proof}

\emph{Exact points.} Every point below is given exactly, by an irreducible $f\in\Z[t]$, a rational isolating interval of a root $\theta$ (one sign change and either $f'\ne0$ on the interval in ball arithmetic or one sign variation in Descartes' rule), and masses $g_i(\theta)$ with $g_i\in\Q[t]$. We compute its label set $S(m)$. A label polynomial vanishes if it reduces to $0$ modulo $f$. It does not vanish if a ball value excludes $0$ or its reduction modulo $f$ is nonzero. This gives a set $S$ with $\mathrm{pieces}(S(m))\subseteq\mathrm{pieces}(S)$, as Definition~\ref{def:Kcert} requires.

Table~\ref{tab:points} lists the points, with their point records in the supplementary material. They are the jump points of the curve families of \S\ref{sec:curves} that are taken from this section (Table~\ref{tab:curvecounts}), in other labellings and scalings. Among them is the point $\hthree=\big(u_*^{3/2}(1+u_*)^{-2},u_*,u_*,1,1\big)$, with $u_*$ the root in $(0,1)$ of $u^4-4u^3-6u^2-4u+1$. After a relabelling and a scaling it is a jump point of $T_1$. The point records in one row of the table are conjugate: they belong to roots of one polynomial $f$ with the same $g_i$.

\begin{table}[ht]
\centering\small
\begin{tabular}{@{}llrrll@{}}
\toprule
point & used in & transversal & runs & final check & point records\\
\midrule
$\hthree$ & $T_1$ & $\{1\}$ & $5$ & $\Vtwo^{\mathrm w}$ & $48_0$, $48_1$\\
$(1,1,1,1,1)$ & $T_3$, \S\ref{sec:curves} & $\{1,\dots,5\}$ & $5$ & $\Vtwo^{\mathrm w}$ & $53_0$\\
two points & $T_5$ & $\{3\}$ & $5$ & $\VN$ & $20_0$, $20_1$\\
$(\frac19,\frac19,\frac14,1,1)$ & $T_7$ & $\{1\}$ & $5$ & $\Vtwo^{\mathrm w}$ & $22_0$\\
\midrule
& & & $20$ & $4$ on $\Vtwo^{\mathrm w}$, $2$ on $\VN$ & $6$ point records\\
\bottomrule
\end{tabular}
\caption{The K-certified points, with the point, sorted and scaled to $m_5=1$, where it is short. The last column gives their point records, the names of their files in the supplementary material. Where a row names one point and lists several point records, these are that point in other labellings and scalings. A run of Theorem~\ref{thm:KA} certifies, for one cluster, one of its five systems for one choice of $f$ and the cluster masses, and the point records of one row share their runs. Each row has one run of $\mathrm{K14}$, which is used in the proof, and four runs of $\mathrm{K15}$, which are cross-checks (\S\ref{ss:crosschecks}). At $(1,1,1,1,1)$ the five clusters have the same masses and share these runs. Each point record has one final check. It holds relative to the hull listed, with the functionals stored in the supplementary material.}\label{tab:points}
\end{table}

\begin{theorem}\label{thm:points}
\begin{enumerate}[label=(\alph*)]
\item At the point of every point record of Table~\ref{tab:points} and every $j$ in its transversal, $\mathrm{K14}(m_{Q_j})$ has no solution, and the pieces $\pc{d14a}(j)$ and $\pc{d14b}(j,k)$, $k\in Q_j$, are therefore attained at no such point.
\item The point of every point record of Table~\ref{tab:points} is K-certified with the transversal listed. Hence it is settled.
\end{enumerate}
\end{theorem}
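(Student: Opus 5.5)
The plan is to reduce both parts to Theorem~\ref{thm:KA}, Corollary~\ref{cor:Kdrop}, Theorem~\ref{thm:D14} and Proposition~\ref{prop:Kcert}, with the remaining work being finite exact checks on the stored data.

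For part (a), fix a row of Table~\ref{tab:points}, its polynomial $f$, the cluster masses $g_i$ for $Q_j$ with $j$ in the transversal, and the recorded prime $\ell$ and residue $\bar\theta$ of the $\mathrm{K14}$ run. We then verify the hypotheses of Theorem~\ref{thm:KA}, in the homogeneous form of Remark~\ref{rem:KAforms}(b),(c) where the masses are given as numerators over a common denominator.
\begin{enumerate}[label=(\arabic*)]
\item Conditions (P1) and (P2) are exact arithmetic modulo $\ell$. We check squarefreeness of $\bar f$ by $\gcd(\bar f,\bar f')=1$. We check the nonvanishing of every coefficient $c_{\nu,a}(\bar\mu)$ on the $30$ generators.
\item Condition (A) is a standard basis over $\mathbb F_\ell$ of the saturated system, which must reduce to $\{1\}$.
\item Condition (B) is the same computation for the initial systems on each of the $28$ cells of the $\mathrm{K14}$ prevariety. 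These cells are enumerated once, independently of the masses, with integer certificates for every linear-programming decision and a cross-check against gfan.
\end{enumerate}
Theorem~\ref{thm:KA} then gives that $\mathrm{K14}(m_{Q_j})$ has no solution at every root of $f$, so at every conjugate point record of the row at once. Corollary~\ref{cor:Kdrop} removes $\pc{d14a}(j)$ and $\pc{d14b}(j,k)$. The latter is in any case removed at all positive masses by Theorem~\ref{thm:D14}. One must also confirm that each point record is the stated point in the stated labelling and scaling, so that the cluster $Q_j$ of the record really has masses $g(\theta)$ for the isolated root. I would do this by exact reduction modulo $f$ of the relabelled, rescaled masses.

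For part (b), condition (i) of Definition~\ref{def:Kcert} is part (a). For (ii) we first compute $S(m)$ exactly as described before the table. A label vanishes iff its polynomial reduces to $0$ modulo $f$. Otherwise a ball enclosure at the isolated root excludes $0$, or the reduction is nonzero; the latter suffices since $f$ is irreducible. This yields a set $S$ with $\mathrm{pieces}(S(m))\subseteq\mathrm{pieces}(S)$. Next we check that the hull used is valid at $m$:
\begin{enumerate}[label=(\arabic*)]
\item $\Vtwo^{\mathrm w}$ is valid everywhere.
\item For the two $T_5$ points, which use $\VN$, we verify two things exactly at $m$. First, $m_o\ne m_a+m_b$ on every triangle $T$ with $\lab{E}(T)\in S$. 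Second, $m$ lies off the hypersurface of (R1)–(R2). The centred case of (R3) is covered by the model noted in \S\ref{ss:d5}.
\end{enumerate}
Then we run the exact verification of \S\ref{ss:lexexact} on the stored functionals. This means rebuilding the face list from $S$, dropping $\pc{d2}$, $\pc{d9}$ and the cluster pieces for the transversal, scaling vertices by $168$, and checking (a), (b) of Definition~\ref{def:cert} in integers. Proposition~\ref{prop:Kcert} concludes that the point is settled.

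The main obstacle is computational rather than conceptual: the standard bases over $\mathbb F_\ell$ for all $28$ cell systems and the full system must terminate and certify the unit ideal. A bad choice of $\ell$ or $\bar\theta$ can make (P2) fail or leave a cell with a torus zero mod $\ell$. If that happens, I would first try a different prime, and a different root $\bar\theta$ of $\bar f$, before suspecting that $\mathrm{K14}$ genuinely has solutions there. Correctly matching the conjugate point records to roots of a single $f$ is the other point needing care.
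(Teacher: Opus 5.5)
Your proposal is correct and follows the paper's proof. For part~(a), Theorem~\ref{thm:KA} is applied to the recorded $\mathrm{K14}$ run of each row, one run covering all conjugate point records, and Theorem~\ref{thm:Kgating}, through Corollary~\ref{cor:Kdrop}, turns this into the statement on the pieces; for part~(b), part~(a) is combined with the exact final check relative to the listed hull, the validity of $\VN$ at the two $T_5$ records from the point check of \S\ref{ss:checks}, and Proposition~\ref{prop:Kcert}, with your verification steps spelled out in more detail than the paper gives.
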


\begin{proof}
(a) For each point and each $j$ in its transversal, the hypotheses of Theorem~\ref{thm:KA} hold for $\mathrm{K14}$. In each run of Table~\ref{tab:points} we obtain the conclusion ``no solution''. A run applies to the point of the table, because its input data are those of the exact point. Theorem~\ref{thm:Kgating} turns (i) of Definition~\ref{def:Kcert} into the statement on the pieces.

(b) Table~\ref{tab:points} gives, for each point record, the hull of Remark~\ref{rem:hulls} relative to which its final check holds. The hull $\VN$ is valid at the two point records at which it is used (\S\ref{ss:d5}), by the check of \S\ref{ss:checks}. Proposition~\ref{prop:Kcert} completes the proof.
\end{proof}

The point $(1,1,1,1,1)$ of Table~\ref{tab:points} has the transversal $\{1,\dots,5\}$. Each cluster has the masses $(1,1,1,1)$ there. Theorem~\ref{thm:points}(a) shows that $\mathrm{K14}(1,1,1,1)$ has no solution, a fact that will be used in \S\ref{sec:curves}. We will apply Theorems~\ref{thm:KA} and~\ref{thm:KApar} at further points of the curve families, among them the jump points of $T_9$ (\S\ref{sec:T9}), and count these points there.

%% file: sec/s6_near.tex
\section{Diagram 2 does not occur}\label{sec:nod2}\label{ss:nod2}

In this section we prove that no singular place has diagram~$2$, at any positive mass (Proposition~\ref{prop:nod2}), so that the pieces $\pc{d2}[T]$ may be left out of every certificate (\S\ref{ss:certs}). A place of diagram~$2$ forces the torque of its triangle to vanish at leading order. For a triangle that is not equilateral, closed forms of the first obstructions leave twelve mass vectors of the triangle, at which an inequality, the T-test of Theorem~\ref{thm:gate}, shows that the torque cannot vanish; equilateral triangles are excluded directly (Lemma~\ref{lem:d2gate}). We prove Lemma~\ref{lem:d2gate} and Theorem~\ref{thm:gate}, which carry this argument, in Appendix~\ref{app:d2}. The proof uses no result of \S\S\ref{sec:strata}--\ref{sec:curves}.

Let $T=\{a<b<c\}$ be a triple with complement $\{d<e\}$. A singular place of diagram~$2$ with triangle $T$ has type $\pc a_1(T)$ (Table~\ref{tab:types}), the vertex of the piece $\pc{d2}[T]$. No other diagram has a type there (the attained set of $\pc{d1}[T]$ is the open segment). The piece $\pc{d2}[T]$ is present for every label set (Table~\ref{tab:pieces}).

\begin{proposition}\label{prop:nod2}
Let $m\in(\R_{>0})^5$, and let $C\subseteq Y_m$ be an irreducible curve on which some $r_{kl}^2$ is not constant and $U\not\equiv0$. No singular place of $C$ has diagram $2$. Consequently Theorem~\ref{thm:types} holds with the pieces $\pc{d2}[T]$ removed from $\mathrm{pieces}(S(m))$.
\end{proposition}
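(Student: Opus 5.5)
The plan is to follow the pattern used for diagrams $9$, $14$ and $15$: expand the constant potential at the place and read off leading-order constraints. Then reduce the remaining cases to the T-test of Theorem~\ref{thm:gate} and to Lemma~\ref{lem:d2gate}.

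Let $P$ be a singular place of diagram~$2$ with triangle $T=\{a,b,c\}$ and complement $\{d,e\}$. By Table~\ref{tab:types} its type is $\pc a_1(T)$: the distances inside $T$ and $r_{de}$ stay of order one, while the six cross pairs grow like $\tau^{-2}$ in the exponent scale. I would use coordinates adapted to the diagram, as in Lemma~\ref{lem:adapted}: positions within $T$ relative to one vertex, the separation of $d$ and $e$, and the relative position of the two clusters. The expansion $F(\tau,y)=\sum\tau^{n-2}V_n$ then splits $V_0$ into the internal potentials of $T$ and of $\{d,e\}$ plus the moment-of-inertia terms. The cross potentials enter only at higher order. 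Lemma~\ref{lem:identities} gives $\nabla V_0(\bar y)=0$, so the leading triangle is a relative equilibrium of three bodies, up to the complex degrees of freedom. I would then pair $\nabla_yF=0$ with the infinitesimal rotation of $T$ about its centre of mass. The resulting identity says that the torque of $T$ vanishes at leading order, namely the torque exerted by the far pair, whose leading term is a quadrupole-type expression in the leading shape of $T$ and the masses.

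Next I split on the shape of $T$. If the leading triangle is equilateral, I invoke Lemma~\ref{lem:d2gate} directly. Otherwise the three-body equations at leading order force a collinear or otherwise rigid complex shape. The torque condition, together with the first obstructions (the next-order solvability conditions, written in closed form as in the proof of Theorem~\ref{thm:D3}), then becomes polynomial conditions on $(m_a,m_b,m_c)$ and on the far pair. I expect these to cut the positive masses of the triangle down to the twelve mass vectors mentioned in the section introduction, up to scaling. At each of them, Theorem~\ref{thm:gate} (the T-test) shows that the leading torque cannot vanish, which is a contradiction.

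Finally, since $\pc a_1(T)$ is attained by no other diagram (the attained set of $\pc{d1}[T]$ is open), excluding places of diagram~$2$ removes the whole piece $\pc{d2}[T]$. Theorem~\ref{thm:types} therefore holds without it. The main obstacle is the middle step: getting the closed forms of the first obstructions and showing that they leave only finitely many triangle masses. This requires a full-rank Jacobian computation at next order, like the one in Appendix~\ref{app:D3}. I would first try eliminating the leading coefficients with resultants and then checking positivity, and I would defer the detailed computations to Appendix~\ref{app:d2}.
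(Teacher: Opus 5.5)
Your outer frame is the paper's. At a place of diagram~$2$ the leading solution has $\tau(y_0)=0$ and satisfies one of (i)--(iv) of Lemma~\ref{lem:d2gate}, the last statement of that lemma removes the equilateral triangles, and Theorem~\ref{thm:gate} removes the rest. But you use Theorem~\ref{thm:gate} only as the T-test at twelve mass vectors, and you propose to reach those twelve vectors yourself. That middle step is only \emph{expected} in your text, and the mechanism you sketch for it would not deliver it, for three reasons.
\begin{itemize}
\item A full-rank next-order computation is not available at every non-equilateral leading solution. For Euler-type triangles, simplicity (Theorem~\ref{thm:d2closed}(b)) needs $\langle\omega,\omega\rangle\ne0$ and $\mu\ne-\tfrac12$. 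The case $\mu=-\tfrac12$ is exactly where the permutations of $(1,1,2+2\sqrt2)$ in the list come from. The loci $\langle\omega,\omega\rangle=0$ and $\langle e_3,e_3\rangle=0$, where the closed forms degenerate, must be excluded separately (Proposition~\ref{prop:d2loci}).
\item When $\theta$ is constant, $y_1=0$ and the first obstruction $\eta_2=\tfrac12M_t\,t\,(\theta_3-\theta_4)$ vanishes identically, so it yields no condition at all. The paper has to prove that $\theta$ is never constant off the equilateral triangles (Lemma~\ref{lem:d2lag}(c), Lemma~\ref{lem:d2thetaconst}, Proposition~\ref{prop:d2thetaeuler}).
\item Polynomial conditions on the triangle masses \emph{and} the far pair do not by themselves leave finitely many triangle mass vectors. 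What makes $\rho$ and $\sigma$ drop out is that on $\tau=0$ one has $\theta-\bar\theta\,(1,1,1)=\gamma e_3$ (Lemma~\ref{lem:d2forcing}). Then $\eta_2=\gamma^2H$, with $H$ depending only on the central configuration of the triangle (Theorem~\ref{thm:d2closed}).
\end{itemize}

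Your description of the torque is also off. It is not a quadrupole-type expression in the far pair but $\tau=-M_t\sum_{k\in T}\tau_k$, essentially the leading cross potential. Its terms have moduli $|\tau_k|=m_1(1+\rho^3)^{1/2}m_k|\omega_k|^{-1/2}$. The T-test compares exactly these moduli, which is why it excludes $\tau=0$ for every $\sigma$, every $\rho>0$ and either labelling of the segment. A quadrupolar torque would not lead to that inequality.

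The repair is simple: cite Theorem~\ref{thm:gate} as stated. It already excludes every non-equilateral leading solution with $\tau=0$ satisfying (i)--(iv), at every positive mass. Together with Lemma~\ref{lem:d2gate} this is the paper's three-line proof. Your expansion of the potential and the case split on the shape then become unnecessary.
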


\emph{Leading solutions.} At a singular place of diagram $2$ with triangle $T$, after a relabelling that puts the segment at the bodies $\{d,e\}$ and the triangle at $T$, the leading coefficients form a \emph{leading solution} $y_0$: a complex central configuration $\omega$ of the triangle (of Euler or Lagrange type, possibly complex), a sign vector $\sigma\in\{\pm1\}^T$, the ratio of the segment masses, and the coefficients of the cross distances determined by them. The leading system forces the \emph{torque} $\tau(y_0)$, a linear combination of the cross coefficients, to vanish. Let $\theta_k$, $k\in T$, be the cross coefficients normalized by the triangle positions, and call $\theta$ \emph{constant} if $\theta_a=\theta_b=\theta_c$. The leading solution is \emph{simple} if the Jacobian $J_0$ of the leading system at $y_0$ has rank $16$, and a \emph{triple point} if the triangle parts of $u$ and $v$ are proportional. Triple points have rank $15$. Appendix~\ref{app:d2} gives the definitions in full.

\begin{lemma}\label{lem:d2gate}
At a singular place of diagram $2$ with triangle $T$ the leading solution $y_0$ has $\tau(y_0)=0$, and one of the following holds:
\begin{enumerate}[label=\textup{(\roman*)}]
\item $y_0$ is simple, $\theta$ is not constant, and the obstruction $\eta_2$ of the second order vanishes;
\item $y_0$ is simple, $\theta$ is constant, and the obstruction $\Omega_3$ of the third order vanishes;
\item $y_0$ is a triple point, $\theta$ and $\sigma$ are constant, and the product $b_3F_{ss}F_{TT}$ of the coefficients of the quadratic terms of its Lyapunov--Schmidt reduction vanishes;
\item $\operatorname{rank}J_0\le15$ and $y_0$ is not a triple point.
\end{enumerate}
No leading solution whose triangle is equilateral satisfies \textup{(i)}--\textup{(iv)}, at any positive masses.
\end{lemma}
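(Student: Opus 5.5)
I would prove Lemma~\ref{lem:d2gate} by the same Puiseux machinery used for diagrams $9$, $14$ and $15$, in coordinates adapted to diagram~$2$.

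\emph{Setup.} At a place of diagram~$2$ with triangle $T$ the type is $\pc a_1(T)$. So the three triangle distances and the segment distance $r_{de}$ are of order one, and the six cross distances are of order $\varepsilon^{-2}$ (Table~\ref{tab:types}). I would take positions relative to the centre of mass of $T$ and use the separation vector between $T$ and the segment $\{d,e\}$ as a coarse coordinate, as in Lemma~\ref{lem:adapted}. Then I would expand $F(\tau,y)=\sum_n\tau^{n-2}V_n(y)$ as in Lemma~\ref{lem:d9exp}. At leading order $V_0$ decouples into the triangle potential plus the moment-of-inertia term. Its critical points are the complex Euler and Lagrange configurations $\omega$ of $T$, together with the segment pair. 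The cross terms $m_km_lp_{kl}$ enter at the first odd order, with coefficients $\hat p_{kl}$ that are fixed by the leading point up to the signs $\sigma$. Applying \eqref{eq:qid} with $q=3$, exactly as in Theorem~\ref{thm:order3} and Proposition~\ref{prop:d9cross}, gives the vanishing of the leading cross potential. The angular-momentum identity (the torque of \cite[\S5.0]{AK12}) then gives $\tau(y_0)=0$ after contraction with the rotation generator.

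\emph{Case split.} I would compute the Jacobian $J_0$ of the leading system at $y_0$ and split by rank. If the rank is $16$ (the simple case), the implicit function theorem solves the next orders uniquely. The first solvability condition that is not automatic is a scalar obstruction, found by pairing with the cokernel vector, again via \eqref{eq:qid} at higher $q$ as in Theorem~\ref{thm:order5}. This obstruction is $\eta_2$ at second order. When $\theta$ is constant, $\eta_2$ vanishes identically by symmetry, and the first genuine obstruction is $\Omega_3$. This gives (i) and (ii). If the rank drops, the solution is either a triple point or not. This gives (iv) directly. For a triple point I would do a Lyapunov--Schmidt reduction on the one extra kernel direction. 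Constant $\theta$ and $\sigma$ are forced because otherwise the reduced linear term is nonzero, and then the quadratic coefficients must satisfy $b_3F_{ss}F_{TT}=0$ for the reduced equation to have a branch of the required valuation. This is (iii).

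\emph{Equilateral triangles.} For the last sentence I would use the symmetry of $\omega=\lambda(1,\jj,\jj^2)$. Here the cross coefficients, and hence $\theta$, can be written in closed form in terms of the segment position and the masses. For positive masses, $\tau(y_0)=0$ forces a relation that I expect to show makes $J_0$ of full rank with $\theta$ nonconstant unless the masses are equal. In that case $\Omega_3$ evaluates to an explicit nonzero multiple of a positive quantity. I expect this to be the main obstacle. It requires checking cases (iii) and (iv) at the equilateral point, where the kernel of $J_0$ contains symmetry directions and the rank analysis is delicate. I would first try to compute $J_0$ explicitly in $\Q(\jj)$ for the normalized equilateral leading solution, with the masses as parameters. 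I would then show that its $16\times16$ minors have no common zero with positive masses, using the technique of Proposition~\ref{prop:P11} (three directions cannot surround $0$ in all required multiples) to dispose of the sign patterns.
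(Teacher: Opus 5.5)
Your case split for (i)--(iv) matches the paper's, but several steps are misdescribed.

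The paper does not use \eqref{eq:qid} here. It works with the weighted rows $G=\sum_iT^iG_i$ and the angular-momentum identity of the triangle, $\ell(y)\cdot G(T,y)=T\tau(y)+T^3\sigma_3(y)$, which has no $T^2$ term. Along the branch this gives $\tau(y(\xi))=O(T^2)$, hence $\tau(y_0)=0$. The torque is $-M_t$ times your leading cross potential, so the two facts you derive are one fact.

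At a simple point the branch is a power series in $T$ with first coefficient $y_1$. The same identity then gives $\eta_2=D\tau(y_0)\cdot y_1=0$. The cokernel is two-dimensional, but only $\lambda=\ell(y_0)$ is used. For constant $\theta$, $\eta_2$ vanishes because $G_1(y_0)=0$ forces $y_1=0$, not by symmetry.

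Your expansion is also mis-scaled. At type $\pc a_1(T)$ the triangle and segment potentials are of order one. The leading term of $F$ is the large part of $I$ coming from the cross pairs, so the triangle central configuration is not the critical set of $V_0$.

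In (iii), constancy of $\sigma$ is not forced by a linear term of the reduction: all its linear terms vanish once $\theta$ is constant. It comes from $\tau=0$. A triple point has an equilateral $\omega$. Constant $\theta$ gives equal $|\omega_k|$, hence equal triangle masses. At equal masses a nonconstant $\sigma$ gives $\tau\ne0$.

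The genuine gap is the last sentence of the lemma. Your plan to show that the $16\times16$ minors of $J_0$ have no common zero at positive masses cannot work. Every triple point (a Lagrange triangle with $u$ parallel to $v$, equilateral at all masses) has $\ker J_0=\C\kappa$ and rank exactly $15$. At equal masses with constant $\sigma$, some of these points have $\tau=0$. After the normalization $y_{u_1}=y_{v_1}=1$ this kernel is not a symmetry direction; it is a genuine degeneracy of the leading triangle system. Three things are missing.

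First, you need the list of equilateral leading solutions. Write $p_{kl}=M_t^{-1/3}\zeta_{kl}$ on the sides of a Lagrange triangle. The equilateral leading solutions are the Lagrange class with three equal $\zeta_{kl}$, which is simple, and the triple points. The triple points come from the Lagrange classes with pairwise distinct $\zeta_{kl}$; an Euler triangle is equilateral only at a triple point.

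Second, at unequal masses, full rank and nonconstant $\theta$ do not exclude (i); you must show $\eta_2\ne0$. Write $(y_1)_{v_3}=t/m_3$ and $(y_1)_{v_4}=-t/m_4$, so that $\eta_2=\frac12M_tt(\theta_3-\theta_4)$. The paper shows that $t$ is a nonzero multiple of $(\theta_3-\bar\theta)(m_5r_{34}+m_4r_{35}+m_3r_{45})$, with $\bar\theta=\sum_km_k\theta_k/M_t$ and $r_{kl}=u_{kl}/v_{kl}$. It then shows that the last factor is proportional to $m_3+m_4\jj^2+m_5\jj\ne0$. Triple points at unequal masses fail (iii), because $\theta$ is not constant there.

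Third, the equal-mass case needs three separate arguments:
\begin{itemize}
\item For nonconstant $\sigma$ one shows $\tau\ne0$ directly, by a check over sixth roots of unity.
\item For constant $\sigma$, $\Omega_3$ is a sum of two terms, not a multiple of a positive quantity. You need a bound below $1$ on the ratio of their moduli; the paper gets $0.909$.
\item At the triple realization you must still compute $b_3$, $F_{ss}$ and $F_{TT}$ and check that they are nonzero.
\end{itemize}
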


\begin{proof}[Outline]
We give the proof in Appendix~\ref{app:d2}. A left null vector of $J_0$ applied to the rows gives $T\tau+T^3\sigma_3$ in the local parameter $T$, whence $\tau(y_0)=0$. At a simple $y_0$ the first-order correction $y_1$ is unique. There the coefficient of $T^2$ gives $\eta_2=D\tau(y_0)\cdot y_1=0$. Here $\eta_2$ is a nonzero multiple of $(\theta_a-\theta_b)(\theta_a-\theta_c)\,w$, where $w$ is a response of the triangle block of $J_0$. It vanishes identically when $\theta$ is constant, in which case $y_1=0$ and the first coefficient that can be nonzero is $\Omega_3$, at $T^3$. At a triple point the rank is $15$ and the expansion forces $\theta$ to be constant. There the linear terms vanish after the reduction to the kernel, and the quadratic coefficients must satisfy $b_3F_{ss}F_{TT}=0$. For an equilateral triangle:
\begin{itemize}
\item at equal masses and constant $\sigma$, $|\Omega_3|\ge0.853$ for the Lagrange realization, and $b_3$, $F_{ss}$, $F_{TT}$ are nonzero for the triple realization;
\item at equal masses and nonconstant $\sigma$ the torque does not vanish;
\item at unequal masses $\theta$ is not constant and $\eta_2\ne0$ for the Lagrange realization, while the triple realization is excluded because $\theta$ would have to be constant.\qedhere
\end{itemize}
\end{proof}

The last statement of Lemma~\ref{lem:d2gate} excludes the equilateral triangles. The following theorem excludes all other triangles.

\begin{theorem}\label{thm:gate}
For every positive mass vector, every $\sigma$, every $\rho>0$ and either labelling of the segment, no leading solution with $\tau=0$ whose triangle is not equilateral satisfies \textup{(i)}--\textup{(iv)} of Lemma~\ref{lem:d2gate}.
\end{theorem}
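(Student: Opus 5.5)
The plan is to reduce Theorem~\ref{thm:gate} to a finite list of explicit mass vectors of the triangle, and then decide the torque there by a rigorous inequality. I would take the triangle parts of the leading solution first. A non-equilateral leading triangle is a complex central configuration of Euler type, possibly complex, or of a degenerate Lagrange type. I would normalize it by the symmetries of the leading system: scaling, complex rotation and the sign vector $\sigma$. Then $\omega$ is parametrized by one shape parameter (the Euler ratio) together with the masses $m_T$. The segment ratio $\rho$ and the cross coefficients $\theta_k$ are given in closed form in terms of $\omega$, $\sigma$ and $\rho$. The torque $\tau(y_0)$ is then an explicit algebraic function. The condition $\tau=0$ cuts out a hypersurface in the parameters $(m_T,\rho,\text{shape})$ for each $\sigma$ and each labelling of the segment.

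Next I would split into the four cases of Lemma~\ref{lem:d2gate}. In case (i), $\eta_2$ is a nonzero multiple of $(\theta_a-\theta_b)(\theta_a-\theta_c)\,w$. With $\theta$ nonconstant, vanishing forces $w=0$, a second polynomial condition on the response of the triangle block. In case (ii), $\theta$ constant is itself a codimension-one condition on the triangle, and $\Omega_3=0$ adds another. In case (iii) the triple-point condition forces $\theta$ and $\sigma$ constant, and $b_3F_{ss}F_{TT}=0$ adds a further factor. In case (iv) the rank drop of $J_0$ is a determinantal condition, read off from the $16\times16$ minors of $J_0$. In each case I would compute closed forms of these obstructions, meaning the relevant resultants after eliminating $\rho$ and the shape parameter, together with $\tau=0$. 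The aim is to show that at positive masses the common solutions are finitely many mass vectors of the triangle up to scaling and permutation. According to the section introduction these are twelve vectors, obtained by exact elimination over $\Q$ and real root isolation (C2).

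At each of the twelve mass vectors, and for each admissible $\sigma$, $\rho>0$ and labelling, I would prove the T-test. This is an inequality showing that $|\tau(y_0)|$ is bounded away from zero, or that it has fixed sign, on the relevant real branch. The bound would be certified in ball arithmetic on a compact range of $\rho$, together with asymptotic bounds as $\rho\to0$ and $\rho\to\infty$.

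I expect the main obstacle to be the elimination step for cases (i) and (iv). There the obstruction involves the full response $w$ and minors of a large Jacobian, so the resultants will be of high degree. Proving that their positive real zeros are isolated points, and not curves, will need careful factorization; spurious factors vanishing identically on the $\tau=0$ locus must be discarded by checking that they correspond to excluded configurations, such as coincident bodies or $\rho\le0$. My first attempt would be to parametrize $\tau=0$ rationally where possible and substitute, reducing each case to a univariate polynomial in the masses.
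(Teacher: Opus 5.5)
You have the right skeleton: reduce to finitely many triangle mass vectors, then exclude $\tau=0$ there. But the step that carries the theorem is missing. You take the count of twelve from the introduction and propose to reach it by eliminating $\rho$ and the shape from $\tau=0$, together with the response $w$, the $16\times16$ minors of $J_0$ and $\Omega_3$, hoping that factorization leaves isolated points. Nothing in this plan explains why the positive zero set should be finite. Also, if $\tau=0$ is kept in the elimination, the twelve vectors would not appear at all, since $\tau\ne0$ there.

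The missing idea is structural. On $\tau=0$ the vector $\theta-\bar\theta\one$ is $\langle\cdot,\cdot\rangle$-orthogonal to $\one$ and $\omega$, hence equals $\gamma e_3$ (Lemma~\ref{lem:d2forcing}). The triangle block can then be solved in closed form. For a Lagrange triangle this is done explicitly. For an Euler triangle one uses the eigenbasis $\one,\omega,e_3$ of $\Psi$, with eigenvalues $0,1,\mu$. This gives $\eta_2=\gamma^2H$ with $H$ depending on the triangle alone: essentially $\beta$, resp.\ $\mu\langle e_3,e_3\rangle/((1-\mu)(1+2\mu))$. The same computation shows that $J_0$ drops rank only at triple points, at $\mu=-\frac12$, or where $\langle\omega,\omega\rangle=0$ (Theorem~\ref{thm:d2closed}, Lemma~\ref{lem:d2reduce}). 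So $\rho$ and $\sigma$ never enter, and no minors are needed. Each surviving condition is one complex equation in the real masses, that is, two real conditions on the projective mass plane. Finiteness then comes from positivity. For instance, $\beta=0$ makes $(m_5,m_3,m_4)$ the coefficients of a real quadratic with a prescribed non-real root. For $\mu=0,-\frac12$ the resultant factors are polynomials such as $q_1$ (a sum of two squares) and the octic $P_1$, each with a single zero in the open quadrant.

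Your treatment of cases (ii) and (iii) also goes astray. Case (iii) is vacuous, because triple points have an equilateral $\omega$. In case (ii) you would use $\Omega_3=0$ as a second condition. But at a non-equilateral triangle, constant $\theta$ forces $\sigma$ to be nonconstant (Lemma~\ref{lem:d2thetaconst}), and there the closed form of $\Omega_3$ is not available. What is needed instead is that constant $\theta$ cannot occur at all. For the Lagrange type, equal $|\omega_k|$ would make the obtuse triangle acute. For the Euler type, it forces $\omega_k^3=\omega_l^3$ with the masses on the conic $m_o^2=m_k^2-m_km_l+m_l^2$, where a resultant has no admissible root (Proposition~\ref{prop:d2thetaeuler}).

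Finally, the T-test needs no compact range of $\rho$ and no asymptotics, and \emph{fixed sign} means nothing for the complex number $\tau$. Since $|m_1+\sigma_k\jj_\rho m_2|^2=m_1^2(1+\rho^3)$ and $|p_{1k}|=|\omega_k|^{-1/2}$, the moduli $|\tau_k|$ share the factor $m_1(1+\rho^3)^{1/2}$. So one strict inequality $T_k>\sum_{l\ne k}T_l$, with $T_k=m_k|\omega_k|^{-1/2}$, excludes $\tau=0$ for all $\rho$, all $\sigma$, all square-root signs and both labellings at once.
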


\begin{proof}[Outline]
We give the proof in Appendix~\ref{app:d2}. On the set $\tau=0$ the obstruction $\eta_2$ factors as $\gamma^2H$. Here $\gamma$ vanishes if and only if $\theta$ is constant, and $H$ depends only on the central configuration of the triangle (Theorem~\ref{thm:d2closed}). Hence (i)--(iv) at a triangle that is not equilateral force constant $\theta$ or one of a few conditions on the triangle alone (Lemma~\ref{lem:d2reduce}). We show that $\theta$ is not constant. We write the triangles of Euler type as the roots of quintics whose coefficients are polynomials in the mass ratios, and we eliminate the shape by resultants. Up to a common factor, the conditions then hold only at twelve mass vectors of the triangle: the permutations of $(1,1,5)$ and $(1,1,2+2\sqrt2)$ for the Euler type, and those of a vector $(X_L,Y_L,1)$ for the Lagrange type. At each of these triangles the \emph{T-test} holds: some $k\in T$ has $m_k|\omega_k|^{-1/2}>\sum_{l\in T\setminus k}m_l|\omega_l|^{-1/2}$. The T-test excludes $\tau=0$ for every $\sigma$, every segment ratio and either labelling of the segment.
\end{proof}

\begin{proof}[Proof of Proposition~\ref{prop:nod2}]
Suppose that a singular place of diagram $2$ with triangle $T$ exists at $m$. By Lemma~\ref{lem:d2gate} its leading solution $y_0$ has $\tau(y_0)=0$ and satisfies one of (i)--(iv). The last statement of Lemma~\ref{lem:d2gate} excludes an equilateral triangle, and Theorem~\ref{thm:gate} excludes the other triangles. Thus no such place exists, and the piece $\pc{d2}[T]$ may be removed in Theorem~\ref{thm:types}.
\end{proof}

\section{The label sets of all positive masses}\label{sec:strata}

In this section we prove that the label set of every positive mass vector with $m_1\le\dots\le m_5$ is certified, contains a curve set, or is the label set of one of two exceptional points (Theorem~\ref{thm:S}). We decide this by exact computer algebra on the sixteen strata of the sorted masses. With \S\ref{sec:curves} the theorem gives finitely many central configurations at every positive mass vector off the special rays (Corollary~\ref{cor:S}). Such a mass vector is settled unless it is a positive multiple of a permutation of the point $P_1$ of \S\ref{ss:twopoints}. We call these mass vectors the \emph{counted points}. At them we count the central configurations (Theorem~\ref{thm:countP1}).

\input{sec/s_reduction}

\subsection{Labels and certified label sets}\label{ss:nearlabels}

We use the following $65$ labels:
\begin{itemize}
\item $\lab{E}(T)$ for each of the $10$ triples $T$;
\item $\lab{R}(ab;s)$ for each of the $30$ choices of a pair and a third body;
\item $\lab{H}(D_1|D_2)$ for each of the $15$ partitions of a four-body set into two pairs, read as $\cD_3(s;D_1|D_2)$ (Theorem~\ref{thm:D3});
\item $\lab{EQ}(a,b)$ for each of the $10$ pairs.
\end{itemize}
For $m\in(\R_{>0})^5$ let $\Lambda(m)$ be the set of these labels that hold at $m$. The label $\lab{D6}(ab|cd)$ is implied by $\lab{EQ}(a,b)$ and $\lab{EQ}(c,d)$ (\S\ref{ss:conditions}). Hence every piece of $\mathrm{pieces}(S(m))$ lies in $\mathrm{pieces}(S)$ whenever $\Lambda(m)\subseteq S$, and Proposition~\ref{prop:cert} applies to any label set $S\supseteq\Lambda(m)$.

In this section a label set $S$ is \emph{certified} if it has a certificate relative to $\Vtwo^{\mathrm w}$ for the list $\cA_{\Vtwo^{\mathrm w}}(S)$ without the attained faces of the pieces $\pc{d2}[T]$, $\pc{d9}(T;ax,by)$, $\pc{d14b}$ and $\pc{d15}$, which we may leave out by \S\ref{ss:certs}. Proposition~\ref{prop:cert} applies to such a set, since $\Vtwo^{\mathrm w}$ is valid at every positive mass (Remark~\ref{rem:hulls}).

For each of the curve types $T_1$, $T_2$, $T_3$, $T_4$, $T_5$, $T_7$, $T_8$, $T_9$ and the line $\mathrm{RL}$ of \S\ref{sec:curves}, Table~\ref{tab:curvesets} gives a set $D_T$ of labels. The images of these sets under the $120$ relabellings of the bodies are the \emph{curve sets}. We treat the mass vectors whose label set contains a curve set in \S\ref{sec:curves}.

\begin{table}[ht]
\centering
\small
\begin{tabular}{@{}ll@{\qquad}ll@{}}
\toprule
$T$ & $D_T$ & $T$ & $D_T$\\
\midrule
$T_1$ & $\lab{R}(23;1)$, $\lab{EQ}(2,5)$, $\lab{EQ}(3,4)$ & $T_5$ & $\lab{E}(123)$, $\lab{R}(24;1)$, $\lab{EQ}(4,5)$\\
$T_2$ & $\lab{R}(23;1)$, $\lab{EQ}(3,4)$, $\lab{EQ}(3,5)$, $\lab{EQ}(4,5)$ & $T_7$ & $\lab{E}(123)$, $\lab{EQ}(2,5)$, $\lab{EQ}(3,4)$\\
$T_3$ & $\lab{EQ}(a,b)$ for all pairs $a<b$ in $\{1,2,3,4\}$ & $T_8$ & $\lab{E}(123)$, $\lab{R}(24;1)$, $\lab{R}(25;1)$\\
$T_4$ & $\lab{R}(23;1)$, $\lab{R}(25;1)$, $\lab{EQ}(3,4)$ & $T_9$ & $\lab{E}(123)$, $\lab{R}(24;1)$, $\lab{EQ}(2,3)$\\
$\mathrm{RL}$ & $\lab{E}(123)$, $\lab{EQ}(2,3)$, $\lab{EQ}(2,4)$, $\lab{EQ}(3,4)$ & & \\
\bottomrule
\end{tabular}
\caption{The label sets $D_T$ of the curve families.}\label{tab:curvesets}
\end{table}

\subsection{The theorem}\label{ss:thmS}

Let $\mathbf q_*$ be the only mass vector with $0<m_1<m_2<m_3<m_4<m_5=1$ at which $\lab E(125)$ and $\lab H(25|34)$ hold (Proposition~\ref{prop:Spoints}). Then
\[
\mathbf q_*=(0.1243983532\ldots,\ 0.2968967473\ldots,\ 0.4494254022\ldots,\ 0.6606140769\ldots,\ 1).
\]
Let $\mathbf p_*=(\frac1{16},\frac19,\frac14,1,1)$.

\begin{theorem}\label{thm:S}
Let $m\in(\R_{>0})^5$ with $m_1\le m_2\le m_3\le m_4\le m_5$. Then one of the following holds:
\begin{enumerate}[label=\textup{(\roman*)}]
\item $\Lambda(m)$ is certified;
\item $\Lambda(m)$ contains a curve set;
\item a positive multiple of $m$ is $\mathbf q_*$ or $\mathbf p_*$.
\end{enumerate}
\end{theorem}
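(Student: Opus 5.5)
The plan is to turn Theorem~\ref{thm:S} into a finite computation over the sixteen strata of sorted masses, following the description in \S\ref{ss:masscert}. A stratum is fixed by which of the inequalities $m_1\le m_2\le\dots\le m_5$ are equalities. On a stratum we scale to $m_5=1$ and view each of the $65$ labels as a polynomial on the affine parameter space of the stratum. Some labels hold identically there; for instance $\lab{EQ}(a,b)$ holds whenever $a,b$ lie in a block of equal masses. Those go into a base set. For the others we work with the ideals they generate.

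\emph{Step 1 (enumeration).} We enumerate the sets of labels that can hold simultaneously at some positive point of the stratum. Starting from the base set, we add labels one at a time. Each time, we compute a standard basis of the ideal generated by the label polynomials, together with the saturation by the product of the coordinates and by the strict inequalities of the stratum. We then take minimal primes over $\Q$ (this is (C1)). Each prime $\mathfrak p$ gives the set of all labels whose polynomials lie in $\mathfrak p$, which is the generic label set on $V(\mathfrak p)$. We recurse into $\mathfrak p$ plus each further label not in $\mathfrak p$. A branch is discarded when $V(\mathfrak p)$ has no point in the open positive stratum. This is decided by (C2): real root isolation on zero-dimensional components, and an explicit positive point, or a sign argument, on positive-dimensional ones. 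The recursion terminates by Noetherianity and dimension, and it produces a finite list of ideals, $2206$ according to the introduction. Every positive sorted $m$ has $\Lambda(m)$ equal to the label set of the deepest ideal containing it, because $\Lambda(m)$ is the generic label set of the irreducible component of $\{m\}$ in that tree. For the labels $\lab{H}$, read as $\cD_3$, the conditions $\mathrm{Tri}$ contain square roots. We will therefore replace them by the norm polynomial over all sign choices. This only enlarges the label sets, which is harmless because certificates pass to subsets.

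\emph{Step 2 (classification of leaves).} For each label set produced, there are three possibilities. If it contains an image of some $D_T$ from Table~\ref{tab:curvesets} under one of the $120$ relabellings, this is case (ii); the check is purely combinatorial. Otherwise we search for a certificate relative to $\Vtwo^{\mathrm w}$ by the staged linear programs of \S\ref{ss:certs}, leaving out the pieces $\pc{d2}$, $\pc{d9}$, $\pc{d14b}$ and $\pc{d15}$ as that section permits, and verify it exactly as in \S\ref{ss:lexexact}. This is case (i). Where a label set is not certified, we expect the corresponding variety to be zero-dimensional. We will isolate its positive real points exactly and identify them, up to scaling, with $\mathbf q_*$, the unique solution of $\lab E(125)\wedge\lab H(25|34)$ with strict ordering, or with $\mathbf p_*$. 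This is case (iii). A single point $m$ can lie on a proper subvariety of a certified generic leaf, but then its label set is a strictly larger set that appears deeper in the recursion, so it is covered.

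\emph{Main obstacle.} I expect the hard part to be completeness and positivity in Step 1, not the certificates. One difficulty is deciding rigorously that a prime has no positive real point in the open stratum; the other is making sure that the $\cD_3$ labels and the non-polynomial conditions do not create missing branches. For positivity I will first try Positivstellensatz-free arguments, namely elimination to a univariate polynomial followed by Sturm or Descartes sign checks, as in (C2). Only if that fails will I fall back on a cylindrical decomposition in low dimension. The certificate search is mechanical but may fail on a few sets. Those should be exactly the curve sets and the two points. Any further failure would force either a smaller hull or new curve families.
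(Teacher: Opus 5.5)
Your plan is essentially the paper's proof: on each of the sixteen strata the paper builds a tree of ideals (flats) by adding one label at a time and decomposing, certifies the resulting label sets relative to $\Vtwo^{\mathrm w}$ without the pieces $\pc{d2}$, $\pc{d9}$, $\pc{d14b}$, $\pc{d15}$, and sets aside the sets containing a curve set. It then finds $\mathbf q_*$ and $\mathbf p_*$ by real root isolation at the zero-dimensional flats that remain, and concludes by the same descent along a chain of flats with strictly growing label sets.

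It differs from your plan only in economy. It works in the coordinates $x_i=m_i^{-1/2}$, where $\lab E$ splits into three hyperplanes and $\cD_3$ into eight explicit ideals. It discards ideals only by one-sided emptiness filters, never by a complete positivity decision. And it closes a flat without expanding it as soon as the union of all labels still possible on it is certified (Lemma~\ref{lem:Sclosed}); together with the one-sided filters, this sidesteps the obstacle you anticipate.
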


In case (iii) the hull $\VN$ is valid at $\mathbf q_*$ and $\mathbf p_*$, and it certifies their label sets with all pieces $\pc{d14a}$ present (Proposition~\ref{prop:Spoints}). We prove Theorem~\ref{thm:S} in \S\ref{ss:proofS}. With \S\ref{sec:curves} it gives the following corollary.

\begin{corollary}\label{cor:S}
Let $m\in(\R_{>0})^5$ lie on no special ray. Then the planar five-body problem with masses $m$ has finitely many central configurations up to similarity, and $m$ is settled unless it is a counted point.
\end{corollary}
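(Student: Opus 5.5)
Corollary~\ref{cor:S} follows by splitting according to Theorem~\ref{thm:S}, after reducing to sorted masses. Settledness and the number of central configurations up to similarity are both invariant under positive scaling and permutation of the masses (Lemma~\ref{lem:slice}(b),(c) and the remark after Corollary~\ref{cor:U}). The special rays and the set of counted points are unions of such orbits. So we may assume $m_1\le\dots\le m_5$, and Theorem~\ref{thm:S} applies to $m$.

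\emph{Case (i).} Here $\Lambda(m)$ is certified relative to $\Vtwo^{\mathrm w}$, with the pieces $\pc{d2}$, $\pc{d9}$, $\pc{d14b}$ and $\pc{d15}$ left out. These omissions are justified at every positive mass by Proposition~\ref{prop:nod2}, Theorem~\ref{thm:D9}, Theorem~\ref{thm:D14} and Corollary~\ref{cor:nod15}. Since $\Vtwo^{\mathrm w}$ is valid at every positive mass and $\mathrm{pieces}(S(m))\subseteq\mathrm{pieces}(\Lambda(m))$ (\S\ref{ss:nearlabels}), Proposition~\ref{prop:cert} with these omissions shows that $m$ is settled. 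Corollary~\ref{cor:U} then gives finiteness.

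\emph{Case (iii).} Here $m$ is a multiple of $\mathbf q_*$ or $\mathbf p_*$. Proposition~\ref{prop:Spoints} says that $\VN$ is valid there and certifies the label set with all pieces $\pc{d14a}$ present. Again Proposition~\ref{prop:cert} gives settledness.

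\emph{Case (ii).} Here $\Lambda(m)$ contains a curve set, so by Lemma~\ref{lem:DT} $m$ lies, up to scaling and relabelling, on one of the curve families $T_1,\dots,T_9$ or $\mathrm{RL}$. This case carries the real content and is where I expect the main obstacle, since it is handed off to \S\ref{sec:curves}. I would invoke its results family by family.
\begin{itemize}
\item Off finitely many jump points, the label set along each family is constant. There a certificate applies once $\pc{d14a}(j)$ has been removed for the relevant clusters, by showing that K14 has no solution; Corollary~\ref{cor:GF} and Theorem~\ref{thm:KApar} confine solutions to finitely many points.
\item At the jump points and special points, Theorem~\ref{thm:points} (K-certified points) or the local analysis of \S\ref{sec:curves} applies.
\item The only points on the families that are not settled are the special rays, which are excluded by hypothesis, and the counted points, the multiples and permutations of $P_1$.
\end{itemize}
At the counted points finiteness comes not from settledness but from the count of Theorem~\ref{thm:countP1}. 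Checking this case amounts to making sure that each family's statement in \S\ref{sec:curves} has exactly the form ``settled, or on a special ray, or a counted point''. This would conclude the corollary.
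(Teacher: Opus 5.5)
Your proposal is correct and follows the paper's proof: reduce to sorted masses by the invariance under scaling and permutation, apply Theorem~\ref{thm:S}, settle cases (i) and (iii) by Proposition~\ref{prop:cert} with $\Vtwo^{\mathrm w}$, resp.\ with $\VN$ via Proposition~\ref{prop:Spoints}, and conclude finiteness by Corollary~\ref{cor:U}. In case (ii) the paper does not re-derive anything but simply cites Proposition~\ref{prop:farT9} for points of $T_9$ and Theorem~\ref{thm:curves} for all other points, which already have exactly the form ``settled, on a special ray, or a counted point''. Your sketch of how \S\ref{sec:curves} works is loose, though this does not matter for the corollary: on most families the generic label set is certified after removing only pieces $\pc{d14b}$ and $\pc{d15}$, which are excluded at every positive mass. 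Only on $T_3$ and $T_7$ must $\pc{d14a}$ be removed, and there the K14 locus comes from Lemma~\ref{lem:K14FAFB} rather than from Corollary~\ref{cor:GF}.
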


\subsection{Coordinates and labels on a stratum}\label{ss:strata}

Let $F$ be a face with the blocks $B_1,\dots,B_{n+1}$, numbered in increasing order of their bodies. At a point $m$ of $\Sigma_F$ put $x_i=m_i^{-1/2}$, as in \S\ref{ss:conditions}. Then $x_1\ge x_2\ge\dots\ge x_5=1$. We write $y_k$ for the common value of $x_i$ on the bodies $i$ of $B_k$. Then $y_1>y_2>\dots>y_n>y_{n+1}=1$. The map $m\mapsto(y_1,\dots,y_n)$ identifies $\Sigma_F$ with the open set $\{y\in\R^n:\ y_1>\dots>y_n>1\}$, and we use this identification throughout. On $\Sigma_F$ the label $\lab{EQ}(a,b)$ holds if and only if $a$ and $b$ lie in one block of $F$. Let $Q(F)$ be the set of these labels.

We describe the $55$ labels $\lab E$, $\lab R$ and $\lab H$ in $x$, at positive $x$ (\S\ref{ss:conditions} and Appendix~\ref{app:labels}).
\begin{itemize}
\item $\lab{E}(abc)$: its polynomial times $(x_ax_bx_c)^4$ is $-(x_a+x_b+x_c)(-x_a+x_b+x_c)(x_a-x_b+x_c)(x_a+x_b-x_c)$. The first factor is positive. So $\lab{E}(abc)$ holds if and only if $x$ lies on one of the hyperplanes $x_a=x_b+x_c$, $x_b=x_a+x_c$ and $x_c=x_a+x_b$.
\item $\lab{R}(ab;s)$: its polynomial times $x_s^4(x_ax_b)^8$ is $\big((x_a^2+x_b^2)^2-x_s^2x_ax_b\big)\big((x_a^2+x_b^2)^2+x_s^2x_ax_b\big)$. The second factor is positive. So $\lab{R}(ab;s)$ holds if and only if $(x_a^2+x_b^2)^2=x_s^2x_ax_b$.
\item $\lab{H}(ij|kl)$ with centre $s$, read as $\cD_3$: put $A=x_i^3$, $B=x_j^3$, $A'=x_k^3$, $B'=x_l^3$, $D=\prod_{q\in Q_s}x_q^2$ and $N=DM_{Q_s}$, a polynomial in $x$. Let $c=x_s^2M_{Q_s}^{-1/2}>0$. Then $\mathrm{Tri}(ij;s)$ says that $c^2=(A+\epsilon B)^2$ for a sign $\epsilon$, and likewise for $\mathrm{Tri}(kl;s)$. Since $c^2N=x_s^4D$, the condition $\cD_3$ holds if and only if $x$ is a zero of one of the eight ideals
\[
\big(x_ix_j-x_kx_l,\ A+\epsilon_1B-\delta(A'+\epsilon_2B'),\ (A+\epsilon_1B)^2N-x_s^4D\big),\qquad \epsilon_1,\epsilon_2,\delta\in\{1,-1\}.
\]
\end{itemize}
The three hyperplanes of $\lab E(abc)$, the quartic of $\lab R(ab;s)$ and these eight ideals are the \emph{branch ideals} of the labels. After the substitution of $y_k$ for $x_i$, $i\in B_k$, they become ideals of $\Q[y]=\Q[y_1,\dots,y_n]$. Thus a label $\lab E$, $\lab R$ or $\lab H$ holds at a point of $\Sigma_F$ if and only if the point is a zero of one of its branch ideals. We also fix the \emph{$x$-polynomials} of each label: for $\lab E(abc)$ the product of the four linear factors above; for $\lab R(ab;s)$ the quartic $(x_a^2+x_b^2)^2-x_s^2x_ax_b$; for $\lab H(ij|kl)$ the polynomial $x_k^2x_l^2-x_i^2x_j^2$ and $N^2$ times the polynomials of $\mathrm{Tri}(ij;s)$ and $\mathrm{Tri}(kl;s)$ in Appendix~\ref{app:labels}, written in $x$. At positive $x$ a label holds if and only if all its $x$-polynomials vanish.

\subsection{Flats}\label{ss:flats}

We compute standard bases, minimal primes and saturations over $\Q$ with Singular~4.4.1 (class (C1)).

\emph{Filters.} An ideal $J$ of $\Q[y]$ \emph{passes the filters} if
\begin{enumerate}[label=\textup{(F\arabic*)}]
\item $J\ne\Q[y]$;
\item no element of degree at most $12$ of the reduced standard basis of $J$ becomes, under the substitution $y_j=1+z_j+\dots+z_n$, a nonzero polynomial whose nonzero coefficients all have one sign;
\item the affine-linear elements of this basis have a common zero in $\Sigma_F$, which we decide by Fourier--Motzkin elimination over $\Q$ (class (C2)).
\end{enumerate}
The substitution maps $(\R_{>0})^n$ onto $\Sigma_F$, and a nonzero polynomial whose coefficients have one sign does not vanish on $(\R_{>0})^n$. Hence an ideal has no zero in $\Sigma_F$ if it fails a filter.

\emph{Decompositions.} For ideals $P$ and $B$ of $\Q[y]$ we compute a list $L(P,B)$ of ideals. Each member contains $P+B$ and passes the filters, and every zero of $P+B$ in $\Sigma_F$ is a zero of some member. If $P+B$ is zero-dimensional or $B$ contains $P$, the list consists of $P+B$, unless $P+B$ fails a filter and the list is empty. Otherwise we add the generators of $B$ to $P$ one at a time, in order of increasing degree. After each step we replace each ideal of positive dimension by its minimal primes, keep each zero-dimensional ideal as it is and drop the ideals failing (F1) or (F2). If a decomposition does not finish within a time limit, we redo it with the saturation by $y_1\cdots y_n$ in place of the minimal primes of each ideal of positive dimension. Saturation keeps all zeros in $\Sigma_F$, since no coordinate vanishes there. At the end we drop each member failing (F3) or containing another one. The members are prime ideals, zero-dimensional ideals or saturated ideals. We do not use that they are prime.

\emph{Components and flats.} For a label $\ell$ of the kinds $\lab E$, $\lab R$, $\lab H$ the \emph{components} of $\ell$ on $\Sigma_F$ are the members of the lists $L((0),B)$, $B$ a branch ideal of $\ell$. A \emph{flat} of $F$ is an ideal $P$ of $\Q[y]$ with a set $G(P)$ of labels of these kinds, each of which holds at every zero of $P$ in $\Sigma_F$. The \emph{root} is $P=(0)$ with $G(P)=\emptyset$. Let $P$ be a flat and $\ell\notin G(P)$ a label of these kinds. The \emph{children} of $P$ via $\ell$ are the members other than $P$ of the lists $L(P,C)$, $C$ a component of $\ell$. If $P$ itself is such a member, we add $\ell$ to $G(P)$. Each child $P'$ is a flat. The set $G(P')$ consists of the labels whose $x$-polynomials lie in $P'$, together with $G(P)$ and $\ell$. We keep $G(P')\supseteq G(P)\cup\{\ell\}$ when $G(P)$ grows. The zero set of a child lies in those of $P$ and of $C$. So these labels hold at its zeros in $\Sigma_F$.

\begin{lemma}\label{lem:Scover}
Let $P$ be a flat of $F$, let $m\in\Sigma_F$ be a zero of $P$, and let $\ell\in\Lambda(m)\setminus(Q(F)\cup G(P))$.
\begin{enumerate}[label=\textup{(\alph*)}]
\item The point $m$ is a zero of $P+C$ for a component $C$ of $\ell$, and $P+C$ passes the filters.
\item If the children of $P$ via $\ell$ are computed, $m$ is a zero of one of them.
\end{enumerate}
\end{lemma}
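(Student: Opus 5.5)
The plan is to unwind the definitions of branch ideals, components, the decomposition lists $L(P,B)$ and the filters. Each step should be a direct consequence of how these objects were set up.

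For (a), note first that $\ell\notin Q(F)$, so $\ell$ is of kind $\lab E$, $\lab R$ or $\lab H$. By \S\ref{ss:strata}, a label of these kinds holds at a point of $\Sigma_F$ if and only if the point, in the coordinates $y$, is a zero of one of its branch ideals $B$. So $m$ is a zero of some branch ideal $B$ of $\ell$. By the defining property of $L((0),B)$, every zero of $(0)+B=B$ in $\Sigma_F$ is a zero of some member $C$ of $L((0),B)$. By definition, such a member is a component of $\ell$. Then $m$ is a zero of $P$ and of $C$, hence of $P+C$.

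Next we check the filters for $P+C$. Each filter is only an obstruction to having zeros in $\Sigma_F$, and $m\in\Sigma_F$ is a zero. (F1) holds because $P+C$ has a zero. For (F2), every element of the reduced standard basis vanishes at $m$. Under the substitution $y_j=1+z_j+\dots+z_n$, the point $m$ corresponds to a point $z\in(\R_{>0})^n$. A nonzero polynomial whose coefficients all have one sign cannot vanish there, so no basis element fails (F2). For (F3), the affine-linear basis elements all vanish at $m\in\Sigma_F$, so they have a common zero there. Fourier--Motzkin decides this exactly, so it cannot report infeasibility.

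For (b), the children of $P$ via $\ell$ are the members other than $P$ of the lists $L(P,C)$, with $C$ running over the components of $\ell$. We take the $C$ from (a). By the covering property of $L(P,C)$, the zero $m$ of $P+C$ in $\Sigma_F$ is a zero of some member $P'$ of $L(P,C)$. If $P'\ne P$, then $P'$ is a child and we are done. The remaining case is $P'=P$. By the construction, $\ell$ is then added to $G(P)$, so at the moment the children are computed $\ell\in G(P)$. This contradicts $\ell\notin G(P)$, or else, with $G(P)$ read after the computation, the statement becomes vacuous. The main point needing care is exactly this interplay with the growing set $G(P)$: one must make precise that (b) refers to $G(P)$ as fixed before the children are computed, and that the case $P\in L(P,C)$ is absorbed by adding $\ell$ to $G(P)$.

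The second delicate point is that the covering property of $L(P,B)$ really holds through both computational fallbacks. Minimal primes preserve the zero set. Saturation by $y_1\cdots y_n$ keeps all zeros in $\Sigma_F$, since no coordinate vanishes there. Dropping an ideal that fails a filter loses no zeros in $\Sigma_F$, because by the argument above such an ideal has none. Dropping an ideal that contains another member loses no zeros either, because its zeros are zeros of that other member.
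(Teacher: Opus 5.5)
Your proof is correct and is the paper's argument, with the filter checks and the covering property of the lists $L(P,B)$ written out in more detail. The one thing to fix is your closing remark on timing. $G(P)$ must be read as the final set, which contains $\ell$ whenever $P\in L(P,C)$, and that is what makes the case $P'=P$ contradict the hypothesis $\ell\notin G(P)$. With $G(P)$ frozen before the children are computed, as your last paragraph suggests, the case $P'=P$ is not excluded, and (b) would not follow.
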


\begin{proof}
(a) Since $\ell$ holds at $m$, the point $m$ is a zero of a branch ideal $B$ of $\ell$, hence of a member $C$ of $L((0),B)$. So $m$ is a zero of $P+C$ in $\Sigma_F$, and $P+C$ passes the filters. (b) The point $m$ is a zero of a member of $L(P,C)$. This member is not $P$, since otherwise $\ell$ would lie in $G(P)$. So it is a child of $P$ via $\ell$.
\end{proof}

\emph{Expansion and closure.} We call a label $\ell\notin G(P)$ of the kinds $\lab E$, $\lab R$, $\lab H$ \emph{possible} on $P$ if $Q(F)\cup G(P)\cup\{\ell\}$ contains no curve set and $P+C$ passes the filters for some component $C$ of $\ell$. To \emph{expand} $P$, we first find its possible labels. If the set $U_A(P)$ of the labels of $Q(F)$ and $G(P)$ and of the possible labels is certified, we call $P$ \emph{closed} and do not compute its children. Otherwise we compute the children of $P$ via all possible labels. If then the set $U(P)$ of the labels of $Q(F)$ and $G(P)$ and of the labels via which $P$ has a child is certified, we call $P$ closed as well. We call $P$ \emph{expanded} if its children via all possible labels are computed.

\begin{lemma}\label{lem:Sclosed}
Let $P$ be a closed flat of $F$, and let $m\in\Sigma_F$ be a zero of $P$. Then $\Lambda(m)$ is certified or contains a curve set.
\end{lemma}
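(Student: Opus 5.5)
The plan is to split on whether $\Lambda(m)$ contains a curve set. If it does, we are done. Otherwise we show that every label of $\Lambda(m)$ lies in $U_A(P)$, and also in $U(P)$ when $P$ was closed by the second test. Since certification passes to subsets (\S\ref{ss:certs}: a certificate for $S$ is one for every $S'$ with $\mathrm{pieces}(S')\subseteq\mathrm{pieces}(S)$, in particular every subset), this proves that $\Lambda(m)$ is certified.

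First we sort the labels of $\Lambda(m)$ by kind. A label $\lab{EQ}(a,b)$ that holds at $m\in\Sigma_F$ lies in $Q(F)$, by the description of the stratum in \S\ref{ss:strata}. The remaining labels are of the kinds $\lab E$, $\lab R$, $\lab H$. Let $\ell$ be such a label, and suppose $\ell\notin Q(F)\cup G(P)$. Then Lemma~\ref{lem:Scover}(a) gives a component $C$ of $\ell$ such that $m$ is a zero of $P+C$ and $P+C$ passes the filters. We also need the first clause of possibility. Since $Q(F)\cup G(P)\cup\{\ell\}\subseteq\Lambda(m)$ (the labels of $G(P)$ hold at every zero of $P$, and $Q(F)$ holds on all of $\Sigma_F$), and $\Lambda(m)$ contains no curve set, this union contains no curve set. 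Hence $\ell$ is possible on $P$, and so $\ell\in U_A(P)$. Thus $\Lambda(m)\subseteq U_A(P)$.

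Now consider the two ways $P$ can have been closed. If $P$ was closed because $U_A(P)$ is certified, then $\Lambda(m)$ is certified by the subset property. If instead $P$ was closed because $U(P)$ is certified, then the children via all possible labels were computed. For $\ell$ as above, Lemma~\ref{lem:Scover}(b) then says that $m$ is a zero of a child of $P$ via $\ell$. So $P$ has a child via $\ell$, and $\ell\in U(P)$. Together with the labels of $Q(F)$ and $G(P)$, this gives $\Lambda(m)\subseteq U(P)$, and we conclude as before.

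The main point to check carefully is the monotonicity step. It should be checked against Definition~\ref{def:cert} and the definition of certified in \S\ref{ss:nearlabels}, where the pieces $\pc{d2}$, $\pc{d9}$, $\pc{d14b}$ and $\pc{d15}$ are removed uniformly. Removing the same pieces for $S'\subseteq S$ keeps $\cA(S')\subseteq\cA(S)$, and conditions (a) and (b) restrict to sublists. A second subtlety is that $G(P)$ may have grown during the computation. This does no harm: labels added to $G(P)$ still hold at all zeros of $P$ (by the construction of children), so they lie in $\Lambda(m)$ and do not affect the argument.
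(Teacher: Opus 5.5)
Your proof is correct and follows the paper's argument. A label of $\Lambda(m)$ outside $Q(F)\cup G(P)$ is possible on $P$ by Lemma~\ref{lem:Scover}(a), so $\Lambda(m)\subseteq U_A(P)$, and Lemma~\ref{lem:Scover}(b) gives $\Lambda(m)\subseteq U(P)$ when the children are computed; certification then passes to subsets. Your explicit remark that the $\lab{EQ}$ labels of $\Lambda(m)$ are exactly $Q(F)$, which the paper leaves implicit, is a useful addition.
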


\begin{proof}
The labels of $Q(F)$ and $G(P)$ lie in $\Lambda(m)$. Suppose that $\Lambda(m)$ contains no curve set, and let $\ell\in\Lambda(m)\setminus(Q(F)\cup G(P))$. Then $Q(F)\cup G(P)\cup\{\ell\}$ contains no curve set, and $\ell$ is possible on $P$ by Lemma~\ref{lem:Scover}(a). If the children of $P$ are computed, $P$ has a child via $\ell$, by Lemma~\ref{lem:Scover}(b). Hence $\Lambda(m)$ lies in $U_A(P)$, and in $U(P)$ if the children of $P$ are computed. Since $P$ is closed, $U_A(P)$ is certified, or the children of $P$ are computed and $U(P)$ is certified. A subset of a certified set is certified (\S\ref{ss:certs}).
\end{proof}

\subsection{The computation}\label{ss:Scomp}

On each face $F$ we expand the root and then every flat that we obtain, except the flats $P$ for which $Q(F)\cup G(P)$ contains a curve set and the \emph{covered} flats, the children of closed or covered flats. We decide whether a label set is certified in the sense of \S\ref{ss:nearlabels} by the search of \S\ref{ss:lexexact}. We check every certificate exactly as there. A label set for which the search finds no certificate counts as uncertified. This only adds work. Because the choice of the decompositions that are redone with saturations depends on the time limit and on the load of the machine, the proof concerns the computed flats, which are stored in the supplementary material.

\begin{proposition}\label{prop:Stree}
For each face $F$ the computation gives finitely many flats. Table~\ref{tab:Sflats} gives their numbers summed over the faces. Every flat $P$ of $F$ satisfies one of the following:
\begin{enumerate}[label=\textup{(\alph*)}]
\item $Q(F)\cup G(P)$ contains a curve set;
\item $P$ is closed or covered;
\item $Q(F)\cup G(P)$ is certified, and $P$ is expanded;
\item $P$ is zero-dimensional and expanded, and each of its zeros in $\Sigma_F$ is $\mathbf q_*$ or $\mathbf p_*$.
\end{enumerate}
\end{proposition}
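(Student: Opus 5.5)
Proposition~\ref{prop:Stree} is a statement about a finite computation, so I would prove it by describing the computation as a terminating procedure and then checking the four alternatives flat by flat from the stored output. First, finiteness. On each face $F$ the search starts at the root $(0)$ and passes to children only via labels $\ell\notin G(P)$. Every child $P'$ satisfies $G(P')\supseteq G(P)\cup\{\ell\}$. So along any branch of the search tree the set $G$ grows strictly, and since there are only $55$ labels of the kinds $\lab E$, $\lab R$, $\lab H$, the depth is at most $55$. Each expansion produces finitely many children, because each list $L(P,C)$ is finite: minimal primes and saturations give finitely many ideals. Hence each face carries finitely many flats. The counts in Table~\ref{tab:Sflats} are read off the stored flats, summed over the sixteen faces.

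Second, the classification. I would go through the rules of \S\ref{ss:Scomp} for each computed flat $P$. The rules say which flats are not expanded. If $Q(F)\cup G(P)$ contains a curve set, we are in (a). If $P$ is closed or covered, we are in (b). Every other flat is expanded. For an expanded flat that is neither closed nor covered, I would run the exact certificate search of \S\ref{ss:lexexact} on $Q(F)\cup G(P)$ and record the outcome. If a certificate is found, this gives (c). The flats for which no certificate is found are then listed. For each of them I would check that it is zero-dimensional. I would then isolate its real zeros in $\Sigma_F$ exactly, by a univariate eliminant and Sturm sequences, both of class (C2). Finally I would identify each zero with $\mathbf q_*$ or $\mathbf p_*$. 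For this I compare the exact defining polynomials, say the flat given by $\lab E(125)$ and $\lab H(25|34)$ in the generic stratum for $\mathbf q_*$. The isolating intervals then confirm uniqueness. This gives (d).

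The main obstacle is the last step, together with making the whole thing reproducible, since the decompositions depend on time limits. I would handle reproducibility by proving the proposition about the stored flats, not about a rerun. Each stored flat comes with exact verifiable data: its generators, its set $G(P)$, the membership of the $x$-polynomials of $G(P)$ in $P$, its filter outcomes, its certificates or closure witnesses, and its parent. An independent checker then verifies three things. First, every member of each list $L(P,C)$ contains $P+C$. Second, the lists cover the zeros in $\Sigma_F$: a minimal-prime decomposition is recomputed or checked by an intersection equal up to radical, and a saturation is checked directly. Third, every certificate passes the exact integer check. For the exceptional flats of (d), I expect the zero-dimensional ideals to have high degree. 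There I would certify the real-root count by Sturm sequences on the eliminant and confirm positivity and ordering ($y_1>\dots>y_n>1$) with rigorous ball enclosures.
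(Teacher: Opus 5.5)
Your proposal is correct and matches the paper's proof in substance. The proposition is a statement about the stored output of the computation of \S\ref{ss:Scomp}; cases (a)--(c) are read off that output, and the $77$ remaining flats turn out to be zero-dimensional and are settled by exact real-root isolation. Your termination argument is valid: each child has $G(P')\supseteq G(P)\cup\{\ell\}$ with $\ell\notin G(P)$, so the depth is at most $55$, and every list $L(P,C)$ is finite. It spells out what the paper simply records as the outcome of the computation.

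Your route for (d) differs from the paper's. The paper does not isolate the zeros of the $77$ flats directly. It first decomposes each of them into primes:
\begin{itemize}
\item it obtains the radical by adding the squarefree parts of univariate eliminants (Seidenberg's lemma);
\item it factors over $\Q$ the minimal polynomial of a separating linear form.
\end{itemize}
This gives $334$ primes. Of these, $310$ fail (F2) and $18$ fail (F3). Only the remaining six need a shape basis and exact root isolation, which yields $\mathbf q_*$ (from a prime of degree $48$ of the empty face) and $\mathbf p_*$ (on $1|2|3|45$). The filtering means root isolation is done on six primes rather than on all $77$ ideals.

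If you keep your direct route, the univariate polynomial must be that of a separating linear form, with the other coordinates expressed through it (a shape basis or rational univariate representation). A real root of the eliminant of a single coordinate need not come from a real zero. It also does not by itself decide whether the zero satisfies $y_1>\dots>y_n>1$.

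Finally, Proposition~\ref{prop:Spoints}(a) takes the uniqueness of $\mathbf q_*$ from this very computation. So your identification must come from the degree-$48$ prime having exactly one zero in $\Sigma_{1|2|3|4|5}$, which your last step does provide.
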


\begin{table}[ht]
\centering
\small
\begin{tabular}{@{}lrrrrr@{}}
\toprule
dimension & flats & (a) & (b) & (c) & (d)\\
\midrule
$4$ & $1$ & & & $1$ & \\
$3$ & $24$ & & & $24$ & \\
$2$ & $177$ & & $14$ & $163$ & \\
$1$ & $538$ & $6$ & $126$ & $406$ & \\
$0$ & $1466$ & $12$ & $1377$ & & $77$\\
\midrule
all & $2206$ & $18$ & $1517$ & $594$ & $77$\\
\bottomrule
\end{tabular}
\caption{The flats of the sixteen faces, by dimension and by the first of the cases (a)--(d) of Proposition~\ref{prop:Stree} that they satisfy. Of the $1517$ flats of (b), $1510$ are closed. The empty face has $1979$ flats.}\label{tab:Sflats}
\end{table}

\begin{proof}
The flats, their sets $G$, their lists of children and the closures are the outcome of the computation. The proof of Theorem~\ref{thm:S} uses the lists of children of $35545$ pairs $(P,\ell)$, many of them empty. On the empty face $22$ of the $55$ labels have a component: the ten labels $\lab E$, the ten labels $\lab R(ab;s)$ with $s<a<b$, and $\lab H(25|34)$ and $\lab H(15|34)$. On this face $110$ decompositions are redone with saturations, and $60$ flats are saturated ideals. They all have dimension~$1$ and satisfy (b) or (c). The computation decides $2599$ label sets: $1840$ are certified, $703$ have an exact witness in the sense of \S\ref{ss:T458}, and $56$ remain undecided.

The $77$ flats outside (a)--(c) are zero-dimensional and expanded. Of them $73$ belong to the empty face, three to $12|3|4|5$ and one to $1|2|3|45$. We decompose each of them into primes. We add the squarefree parts of its univariate eliminants, which gives its radical (Seidenberg's lemma). We take a linear form whose minimal polynomial on the radical has as many roots as the radical has zeros, and we factor this polynomial over $\Q$. This gives $334$ primes. Of them $310$ fail (F2) and $18$ fail (F3). For each of the other six we compute a shape basis for a separating linear form and isolate the real roots of its univariate polynomial exactly (class (C2)). Two of these roots give points of $\Sigma_F$: the point $\mathbf q_*$, a zero of a prime of degree $48$ of the empty face, and the point $\mathbf p_*$ of $1|2|3|45$. The other four primes have no real zero in $\Sigma_F$.
\end{proof}

\subsection{The two exceptional points}\label{ss:Spoints}

\begin{proposition}\label{prop:Spoints}
\begin{enumerate}[label=\textup{(\alph*)}]
\item Exactly one mass vector with $0<m_1<m_2<m_3<m_4<m_5=1$, the point $\mathbf q_*$, has $\lab E(125)$ and $\lab H(25|34)$ in its label set. Moreover $\Lambda(\mathbf q_*)=\{\lab E(125),\lab H(25|34)\}$ and
\[
\Lambda(\mathbf p_*)=\{\lab E(124),\lab E(125),\lab E(234),\lab E(235),\lab E(345),\lab R(45;3),\lab{EQ}(4,5)\}.
\]
\item The hull $\VN$ is valid at $\mathbf q_*$ and at $\mathbf p_*$.
\item Let $S$ be $\Lambda(\mathbf q_*)$ or $\Lambda(\mathbf p_*)$. Then $S$ has a certificate relative to $\VN$ for the list $\cA_{\VN}(S)$ without the attained faces of the pieces $\pc{d2}[T]$, $\pc{d9}(T;ax,by)$, $\pc{d14b}$ and $\pc{d15}$.
\end{enumerate}
\end{proposition}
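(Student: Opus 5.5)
For (a) I will use the exact data from the proof of Proposition~\ref{prop:Stree}. On the empty face the point $\mathbf q_*$ came out as a zero of one prime of degree $48$. The ideal $\lab E(125)+\lab H(25|34)$ is a sum of branch ideals: a hyperplane $x_a=x_b+x_c$ among the three for $\{1,2,5\}$, plus one of the eight $\cD_3$ ideals. I will decompose each of these $3\times 8$ sums into primes with Singular. For every prime I will isolate the real roots of a shape-basis eliminant with Sturm sequences and test membership in $\Sigma_F=\{y_1>\dots>y_4>1\}$ with rational interval arithmetic (class (C2)). The expectation is that exactly one real zero survives over all branches. Its coordinates then match the numerical value of $\mathbf q_*$, which proves uniqueness.

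To compute $\Lambda(\mathbf q_*)$ I will represent the point exactly, as $\theta$ with a minimal polynomial $f$ and the masses as $g_i(\theta)$. For each of the $65$ labels I reduce its $x$-polynomials modulo $f$, or separate them from $0$ by a ball evaluation, as in the ``exact points'' procedure of \S\ref{sec:points}. At $\mathbf q_*$ the masses are distinct, so no $\lab{EQ}$ label holds. For $\mathbf p_*=(\frac1{16},\frac19,\frac14,1,1)$ we have $x=(4,3,2,1,1)$, so everything is rational and can be checked directly. $\lab E(abc)$ holds iff one of $x_a,x_b,x_c$ is the sum of the other two; this gives exactly $124,125,234,235,345$. $\lab R(45;3)$ holds because $(1+1)^2=4=x_3^2x_4x_5$. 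All other $\lab R$ and all $\cD_3$ conditions are excluded by evaluating rationals. Since $m_4=m_5$ is the only equality, $\lab{EQ}(4,5)$ is the only $\lab{EQ}$ label.

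For (b) I will verify the two validity conditions of $\VN$ stated in \S\ref{ss:d5}. First, $m_o\ne m_a+m_b$ must hold on every triangle $T=\{o,a,b\}$ with $\lab E(T)$ in the label set. This is a finite check: exact at $\mathbf p_*$, and modulo $f$ or with balls at $\mathbf q_*$. Second, the point must lie off the explicit hypersurface of (R1)/(R2). I will evaluate that polynomial, taken from the supplementary material, at the masses of $T,A,B$ for each labelling of diagram~$5$ with $\lab E(T)$, and certify that it is nonzero in the same way. The centred case, with masses of $T$ equal to $(\mu/4,\mu,\mu)$ up to order, needs no check, because $\VN$ stays valid there. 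I still expect this case to occur at $\mathbf p_*$: for $T=\{3,4,5\}$ the masses are $(\frac14,1,1)$.

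Part (c) is the main obstacle, because the weaker hull $\Vtwo^{\mathrm w}$ evidently fails to separate these two sets; that is why they end up in case (d). I will build the list $\cA_{\VN}(S)$ exactly by steps (1)--(5) of \S\ref{ss:lexexact}. Then I run the staged LP search with HiGHS, rationalize each stage, and accept a stage only after the exact integer check. I expect the faces of $\pc{d7}$, $\pc{d8}$, $\pc{d10}$--$\pc{d12}$ and the five embedded $\VN$ hulls at $\mathbf p_*$ to be the hardest to separate, so I would order the stages to clear the $zw$-heavy pieces first. All $\pc{d14a}$ pieces remain present, as the statement requires. If a stage fails, I first try refining which D5 faces count as attained before anything else.
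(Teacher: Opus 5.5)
Your proposal follows the paper's proof in all three parts. In (a) you decompose on $\Sigma_{1|2|3|4|5}$, reduce the $x$-polynomials exactly or enclose them in balls at $\mathbf q_*$, and evaluate exactly at $x=(4,3,2,1,1)$. In (b) you run the point check of \S\ref{ss:checks}, and in (c) the staged LP search with exact integer verification. Your $3\times8$ decomposition in (a) is a heavier version of what the paper does. On the stratum, $\lab E(125)$ has the single component $y_1=y_2+1$, since the other two hyperplanes would force $x_2>x_1$ or $x_5>x_1$. $\lab H(25|34)$ also has a single component, and the sum of the two is the prime of degree $48$ with exactly one zero in the stratum.

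The step to fix is the centred case in (b). You plan to certify that the whole (R1)/(R2) hypersurface is nonzero at every labelling with $\lab E(T)$, and you say the centred triangle needs no check. The paper's check is organized the other way round. At $\mathbf p_*$ the triangle $T=\{3,4,5\}$ has masses $(\frac14,1,1)$, with body $3$ at the centre of mass. The pairs of modes (Z1, Zs$_{t_0}$) and (W1, Ws$_{t_0}$) with $t_0=3$ are governed by the separate centred model, which has $c_Y>1$ built in and is infeasible by Theorem~\ref{thm:hulls}. So at this triangle only the factors of (R2) for the two other bodies of $T$ are required, and these do have to be certified nonzero. The factor attached to the centred body is not required. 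It is also the natural candidate to vanish at $(\frac14,1,1)$, because the leading difference $\hat z_{t_0}-\hat z_c$ degenerates there. As stated, your test can therefore fail at $\mathbf p_*$ even though $\VN$ is valid. Restrict it to the needed factors, as the paper does.

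Finally, the fallback you mention for (c), refining which D5 faces count as attained, is not available. The attained faces of $\VN$ are fixed by the rule of \S\ref{ss:d5}: each of the five coordinates must be below $1$ at some vertex. They determine the list $\cA_{\VN}(S)$ that (c) is about. Part (c) is a finite computation, which the exact check of the found functionals either confirms or does not.
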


\begin{proof}
(a) The mass vectors in question form the stratum $\Sigma_{1|2|3|4|5}$. There $\lab E(125)$ has one component, the hyperplane $y_1=y_2+1$, and $\lab H(25|34)$ has one component, a curve. Every point of $\Sigma_{1|2|3|4|5}$ at which both labels hold is a zero of each component, hence of their sum. This sum is a prime ideal with $48$ zeros, and exactly one of them lies in $\Sigma_{1|2|3|4|5}$ (proof of Proposition~\ref{prop:Stree}). The $x$-polynomials of $\lab E(125)$ and $\lab H(25|34)$ lie in this ideal. For every other label a ball enclosure at $\mathbf q_*$ of one of its $x$-polynomials excludes $0$. At $\mathbf p_*$ we have $x=(4,3,2,1,1)$, and the label set follows by exact evaluation.

(b) Both points pass the check of \S\ref{ss:checks}. At $\mathbf p_*$ the triangle $\{3,4,5\}$ has the masses $(\frac14,1,1)$ and is in the centred case.

(c) The search of \S\ref{ss:lexexact} finds these certificates, and we check them exactly as there.
\end{proof}

Exact witnesses as in \S\ref{ss:T458} show that neither $\Lambda(\mathbf q_*)$ nor $\Lambda(\mathbf p_*)$ is certified; but the certificates of (c) keep all pieces $\pc{d14a}$, so the two points need no point certificate of \S\ref{sec:points}.

\subsection{Proofs of Theorem~\ref{thm:S} and Corollary~\ref{cor:S}}\label{ss:proofS}

\begin{proof}[Proof of Theorem~\ref{thm:S}]
The labels are homogeneous in $m$. So we may scale $m$ to $m_5=1$. Then $m$ lies in $\Sigma_F$ for one face $F$. We construct flats $P_0,P_1,\dots$ of $F$ with the zero $m$ such that $G(P_0)\subsetneq G(P_1)\subsetneq\cdots$. Let $P_0$ be the root, and let $P=P_r$ be constructed. The labels of $Q(F)$ and $G(P)$ lie in $\Lambda(m)$. We apply Proposition~\ref{prop:Stree} to $P$.
\begin{itemize}
\item In case (a), $\Lambda(m)$ contains a curve set.
\item In case (b), $P$ is closed or covered. The zero set of a child lies in that of its parent, and a child strictly contains its parent. So, following parents, we reach a closed flat with the zero $m$. Lemma~\ref{lem:Sclosed} gives (i) or (ii).
\item In cases (c) and (d), suppose first that $\Lambda(m)=Q(F)\cup G(P)$. Then $\Lambda(m)$ is certified in case (c), and $m$ is $\mathbf q_*$ or $\mathbf p_*$ in case (d). Otherwise pick $\ell\in\Lambda(m)\setminus(Q(F)\cup G(P))$. If $Q(F)\cup G(P)\cup\{\ell\}$ contains a curve set, so does $\Lambda(m)$. If not, $\ell$ is possible on $P$, by Lemma~\ref{lem:Scover}(a). Since $P$ is expanded, $m$ is a zero of a child $P'$ of $P$ via $\ell$, by Lemma~\ref{lem:Scover}(b). Then $G(P')$ contains $G(P)\cup\{\ell\}$, and we put $P_{r+1}=P'$.
\end{itemize}
The sets $G(P_r)$ increase strictly, and they contain at most $55$ labels. Hence the construction ends in one of the cases above.
\end{proof}

\begin{proof}[Proof of Corollary~\ref{cor:S}]
Positive scaling and relabelling of the masses preserve finiteness and settledness, by the isomorphisms in the proof of Lemma~\ref{lem:slice}. So we may assume $m=\hat m$. We apply Theorem~\ref{thm:S} to $m$.
\begin{enumerate}[label=(\roman*)]
\item If $\Lambda(m)$ is certified, Proposition~\ref{prop:cert} settles $m$ (\S\ref{ss:nearlabels}).
\item Let $\Lambda(m)$ contain $\sigma(D_T)$ for a name $T$ of Table~\ref{tab:curvesets} and a relabelling $\sigma$. If $m$ is a point of $T_9$ up to scaling and relabelling, Proposition~\ref{prop:farT9} settles $m$. Otherwise Theorem~\ref{thm:curves} applies, since $m$ lies on no special ray. So $m$ has finitely many central configurations up to similarity, and it is settled unless it is a counted point.
\item If $m$ is $\mathbf q_*$ or $\mathbf p_*$, then $\VN$ is valid at $m$, and $\Lambda(m)$ has a certificate relative to $\VN$ as in Proposition~\ref{prop:Spoints}(c). We may leave out the faces of the pieces left out there, by \S\ref{ss:certs}. So Proposition~\ref{prop:cert} settles $m$.
\end{enumerate}
A settled mass vector has finitely many central configurations up to similarity (Corollary~\ref{cor:U}).
\end{proof}

%% file: sec/s_reduction.tex
\subsection{Sorted masses and the reduction}\label{ss:reduction}

For $m\in(\R_{>0})^5$ let $m_{(1)}\le\dots\le m_{(5)}$ be its coordinates in increasing order. The \emph{sorted scaled image} of $m$ is
\[
\hat m=\big(m_{(1)},m_{(2)},m_{(3)},m_{(4)},m_{(5)}\big)/m_{(5)}.
\]
The sorted scaled images are the mass vectors with $0<m_1\le m_2\le m_3\le m_4\le m_5=1$. Among them are the points $\bone=(\tfrac14,1,1,1,1)$ and $\btwo=(\tfrac14,\tfrac14,1,1,1)$.

A \emph{face} $F$ is a set of consecutive pairs $(i,i+1)$, $1\le i\le4$. We write it as the partition of $12345$ into the runs of bodies joined by it, its \emph{blocks}. For instance $1|23|45$ is the face $\{(2,3),(4,5)\}$ and $1|2|3|4|5$ the empty face. The \emph{stratum} of $F$ is
\[
\Sigma_F=\{m:\ 0<m_1\le\dots\le m_5=1,\ m_i=m_{i+1}\text{ for }(i,i+1)\in F,\ m_i<m_{i+1}\text{ for }(i,i+1)\notin F\}.
\]
At a point of $\Sigma_F$ two bodies have equal masses if and only if they lie in one block of $F$. The sorted scaled images form the disjoint union of the sixteen strata. The point $\bone$ lies in $\Sigma_{1|2345}$ and $\btwo$ in $\Sigma_{12|345}$.

\begin{lemma}\label{lem:R}
Let $m\in(\R_{>0})^5$.
\begin{enumerate}[label=\textup{(\alph*)}]
\item The planar five-body problem with masses $m$ has finitely many central configurations up to similarity if and only if it has finitely many with masses $\hat m$.
\item $\hat m=\bone$ if and only if $m$ is a positive multiple of a permutation of $(1,4,4,4,4)$, and $\hat m=\btwo$ if and only if $m$ is a positive multiple of a permutation of $(1,1,4,4,4)$.
\end{enumerate}
\end{lemma}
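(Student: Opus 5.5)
Both parts are direct consequences of the invariance statements in Lemma~\ref{lem:slice} together with the definition of $\hat m$.

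\emph{Part (a).} By definition $\hat m=\pi(m)/m_{(5)}$, where $\pi$ is the permutation that sorts the coordinates of $m$ increasingly. So $\hat m$ is obtained from $m$ by a relabelling of the bodies together with their masses, followed by a positive scaling with $t=1/m_{(5)}$. By Lemma~\ref{lem:slice}(c), each of these operations gives a bijection between central configurations that preserves similarity. The number of central configurations up to similarity is therefore the same for $m$ and for $\hat m$, and finiteness transfers in both directions. (Alternatively one could go through Lemma~\ref{lem:slice}(b) and finiteness of $Y_m$, but (c) is the statement about central configurations themselves and needs no detour through the complex system.)

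\emph{Part (b).} First suppose $m=t\,\pi(1,4,4,4,4)$ with $t>0$. Then the sorted coordinates are $(t,4t,4t,4t,4t)$. Dividing by $m_{(5)}=4t$ gives $(\frac14,1,1,1,1)=\bone$.

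Conversely, suppose $\hat m=\bone$. Then $m_{(1)}=\frac14 m_{(5)}$ and $m_{(2)}=\dots=m_{(5)}$. With $t=m_{(5)}/4$ the multiset of coordinates of $m$ is $\{t,4t,4t,4t,4t\}$, so $m$ is $t$ times a permutation of $(1,4,4,4,4)$.

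The case of $\btwo$ and $(1,1,4,4,4)$ is identical: the sorted scaled image of $t\,\pi(1,1,4,4,4)$ is $(\frac14,\frac14,1,1,1)$, and conversely $\hat m=\btwo$ forces the multiset of coordinates to be $\{t,t,4t,4t,4t\}$ with $t=m_{(5)}/4$.

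The only point needing care is that sorting, and hence $\hat m$, is well defined when masses coincide. It is, because $\hat m$ depends only on the multiset of the masses.
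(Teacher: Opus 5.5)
Your proposal is correct and follows the paper's proof: part (a) is Lemma~\ref{lem:slice}(c) applied to the sorting permutation and the scaling by $1/m_{(5)}$, and part (b) reads the multiset $\{t,4t,4t,4t,4t\}$ (resp.\ $\{t,t,4t,4t,4t\}$) off $\hat m$ with $t=m_{(5)}/4$, exactly as the paper does.
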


\begin{proof}
(a) This is Lemma~\ref{lem:slice}(c), since $\hat m$ is obtained from $m$ by a permutation and a positive scaling.

(b) The sorted scaled image of a positive multiple of a permutation of $(1,4,4,4,4)$ is $(\tfrac14,1,1,1,1)=\bone$. Conversely, if $\hat m=\bone$, then $m$ is a permutation of $m_{(5)}\bone=\tfrac{m_{(5)}}4(1,4,4,4,4)$. The same argument applies to $(1,1,4,4,4)$ and $\btwo$.
\end{proof}

Theorem~\ref{thm:S} below concerns sorted masses. On the rays of Lemma~\ref{lem:R}(b) we will count the central configurations directly in \S\ref{sec:rays}.

%% file: sec/s8_curves.tex
\section{Curve families and the RL line}\label{sec:curves}

Case (ii) of Theorem~\ref{thm:S} concerns the mass vectors whose label set contains a curve set, that is, one of the nine sets $D_T$ of Table~\ref{tab:curvesets} after a relabelling. Each $D_T$ defines a curve of mass vectors up to scaling (Lemma~\ref{lem:DT}), along which a generic label set $S_C$ holds identically. The pieces enabled by $S_C$ include the pieces $\pc{d14a}$, $\pc{d14b}$ and $\pc{d15}$ of the clusters. We settle the points of these curves by excluding such pieces point by point along each curve. In this section we do so along the whole curves. We prove that every point of $T_9$ is settled (Proposition~\ref{prop:farT9}), and that every point of these curves other than the points of $T_9$ and of the special rays has finitely many central configurations (Theorem~\ref{thm:curves}).

\subsection{The families}\label{ss:families}
Table~\ref{tab:families} lists the families of mass vectors defined by the sets $D_T$, with their parametrizations and generic label sets. The labels follow the supplementary files. The line $\mathrm{RL}$ plays the role of a sixth family. We define the family $T_9$, with the constant $\rho_0$, in \S\ref{sec:T9} and treat it there.

\begin{table}[ht]
\centering
\small
\begin{tabular}{@{}lp{0.47\textwidth}p{0.40\textwidth}@{}}
\toprule
& masses $(m_1,\dots,m_5)$; parameter & generic label set $S_C$\\
\midrule
$T_1$ & $(t^3,\,(1+t^2)^2,\,t^2(1+t^2)^2,\,t^2(1+t^2)^2,\,(1+t^2)^2)$; $t>0$ & $\lab{R}(23;1)$ $\lab{R}(24;1)$ $\lab{R}(35;1)$ $\lab{R}(45;1)$ $\lab{EQ}(2,5)$ $\lab{EQ}(3,4)$\\
$T_2$ & $(t^3,\,t^2(1+t^2)^2,\,(1+t^2)^2,\,(1+t^2)^2,\,(1+t^2)^2)$; $t>0$ & $\lab{R}(23;1)$ $\lab{R}(24;1)$ $\lab{R}(25;1)$ $\lab{EQ}(3,4)$ $\lab{EQ}(3,5)$ $\lab{EQ}(4,5)$\\
$T_3$ & $(a,a,a,a,1)$; $a>0$ & $\lab{EQ}(i,j)$ $(1\le i<j\le4)$\\
$T_4$ & $(yr^5B,\,r^4g^2,\,rg^2B,\,rg^2B,\,r^5g^2B)$; $r>0$, $y=f^{1/2}$ & $\lab{R}(23;1)\,\lab{R}(24;1)\,\lab{R}(25;1)\,\lab{EQ}(3,4)$\\
$T_5$ & $(q^6A^2,\,(1+q^4)^2A^2,\,q^6(1+q^4)^2,\,q^4(1+q^4)^2A^2,\,q^4(1+q^4)^2A^2)$; $q\neq0$ & $\lab{E}(123)\,\lab{R}(24;1)\,\lab{R}(25;1)\,\lab{EQ}(4,5)$\\
$T_7$ & $((1-s)^2,\,s^2(1-s)^2,\,s^2,\,s^2,\,s^2(1-s)^2)$; $s\ne0,1$ & $\lab{E}(123)$ $\lab{E}(124)$ $\lab{E}(135)$ $\lab{E}(145)$ $\lab{EQ}(2,5)$ $\lab{EQ}(3,4)$\\
$T_8$ & $(q^2r^4fC^2,\,r^4g^2C^2,\,q^2r^4fg^2,\,fg^2C^2,\,r^4fg^2C^2)$; $r>0$, $q^4=f$ & $\lab{E}(123)\,\lab{R}(24;1)\,\lab{R}(25;1)$\\
$\mathrm{RL}$ & $(4,1,4,4,\mu)$; $\mu>0$ & $\lab{E}(123)$ $\lab{R}(13;2)$ $\lab{E}(124)$ $\lab{R}(14;2)$ $\lab{E}(234)$ $\lab{R}(34;2)$ $\lab{EQ}(1,3)$ $\lab{EQ}(1,4)$ $\lab{EQ}(3,4)$\\
\bottomrule
\end{tabular}
\caption{The families. Here $A=1+q^3+q^4$, $B=1+r+r^2$, $f=r^3+r^2+r$, $g=(1+r)(1+r^2)=1+f$ and $C=g+q^3$, with $q$ real. On $T_5$ and $T_8$ both signs of $q$ occur. Each $S_C$ also contains the labels $\lab{D6}(ab|cd)$ with $\lab{EQ}(a,b)$ and $\lab{EQ}(c,d)$ in $S_C$ (on $T_1$, $T_3$ and $T_7$), as $S(m)$ does (\S\ref{ss:pieces}).}\label{tab:families}
\end{table}

All masses in Table~\ref{tab:families} are positive: $A\ge\frac{229}{256}$ for real $q$, and on $T_8$, $C=1+f+q^3\ge1+f-f^{3/4}>0$. The \emph{transversal cluster} of a family is $Q_1=\{2,3,4,5\}$ for $T_1$, $T_2$, $T_4$, $T_5$, $T_7$ and $T_8$, with masses proportional to
\begin{gather*}
(1,c,c,1),\ c=t^2\ \text{on $T_1$};\qquad (t^2,1,1,1)\ \text{on $T_2$};\qquad (r^3,B,B,r^4B)\ \text{on $T_4$};\\
(A^2,q^6,q^4A^2,q^4A^2)\ \text{on $T_5$};\qquad (1,c,c,1),\ c=(1-s)^{-2}\ \text{on $T_7$};\\
(r^4C^2,\,q^6r^4,\,q^4C^2,\,r^4q^4C^2)\ \text{on $T_8$}.
\end{gather*}
On $T_3$ the clusters are $Q_k$, $k\le4$, with masses proportional to $(1,1,1,1/a)$, and $Q_5$ with masses $(1,1,1,1)$. On $\mathrm{RL}$ it is $Q_2=\{1,3,4,5\}$ with masses $(4,4,4,\mu)$. Two cluster families recur, up to relabelling and scaling: $F_A(\nu)=(1,1,1,\nu)$ and $F_B(c)=(1,c,c,1)$.

\begin{lemma}\label{lem:DT}
Let $T$ be one of the nine names of Table~\ref{tab:curvesets}, and let $m\in(\R_{>0})^5$ satisfy every label of $D_T$. Then a positive multiple of $m$ is a point of the family $T$, or of $T_2$ if $T=T_4$, or of $T_5$ if $T=T_8$. If $T=\mathrm{RL}$, a positive multiple of $m$ is the image of a point of $\mathrm{RL}$ under the exchange of the bodies $1$ and $2$. If $T=T_9$, it is a point of $T_9$ or such an image.
\end{lemma}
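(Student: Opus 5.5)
The plan is to translate every label of $D_T$ into the variables $x_i=m_i^{-1/2}$ of \S\ref{ss:strata}, solve the resulting system at positive $x$, and match each real positive branch with a parametrization of Table~\ref{tab:families}. By \S\ref{ss:strata}, at positive $x$:
\begin{itemize}
\item $\lab{EQ}(a,b)$ is $x_a=x_b$;
\item $\lab{E}(abc)$ holds if and only if one of $x_a,x_b,x_c$ is the sum of the other two;
\item $\lab{R}(ab;s)$ is $(x_a^2+x_b^2)^2=x_s^2x_ax_b$.
\end{itemize}
Each label is homogeneous in $x$, so I would normalize one coordinate, say $x_2=1$ or $x_5=1$. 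The conclusion concerns positive multiples of $m$, so this loses nothing. For each family I then substitute its parametrization into its labels, verify them identically, and check that it reaches every positive solution up to scaling.

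First the families with only linear labels. On $T_3$ the labels just say $m_1=\dots=m_4$, which is $(a,a,a,a,1)$ after scaling. On $\mathrm{RL}$ we have $x_2=x_3=x_4$. Then $\lab E(123)$ forces $x_1=2x_2$, since the other two branches give a zero coordinate. So $m$ is a multiple of $(1,4,4,4,\mu')$, the image of a point $(4,1,4,4,\mu)$ of $\mathrm{RL}$ under the exchange of bodies $1$ and $2$.

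Next the families with one $\lab R$ label. On $T_1$ put $x_2=x_5=1$ and $x_3=x_4=u$. Then $\lab R(23;1)$ gives $x_1^2=(1+u^2)^2/u$, which determines $m$ uniquely and positively. A rescaling, with $t=1/u$ or $t=u$ according to the orientation, matches it with $(t^3,(1+t^2)^2,\dots)$. On $T_2$ the same computation applies, with $x_3=x_4=x_5$ normalized to $1$ and $x_2$ free.

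The families with two labels sharing a coordinate need a factorization. On $T_4$ normalize $x_2=1$. The two labels $\lab R(23;1)$ and $\lab R(25;1)$ give $x_1^2=(1+x_3^2)^2/x_3=(1+x_5^2)^2/x_5$. The difference $(1+a^2)^2b-(1+b^2)^2a$ factors as $(a-b)$ times a residual curve. The branch $x_3=x_5$, together with $\lab{EQ}(3,4)$, gives $T_2$. For the residual curve I would find a rational parametrization, presumably the one in $r$ behind $T_4$ with $y=f^{1/2}$ absorbing the square root in $x_1$, and check that its positive real points are exactly $r>0$.

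$T_5$ and $T_8$ combine $\lab E(123)$ with $\lab R(24;1)$ and $\lab R(25;1)$. I would first use the same factorization: the branch $x_4=x_5$ gives $T_5$, and the residual branch gives $T_8$. Then I impose the linear relation of $\lab E(123)$. For each of its three sign branches I expect to check that it is either inconsistent with positivity or realized by a sign of $q$. This is why both signs of $q$ occur on $T_5$ and $T_8$, with $q^4=f$ linking $q$ to $r$ on $T_8$.

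$T_7$ is linear. With $x_2=x_5$ and $x_3=x_4$, the label $\lab E(123)$ is one of $x_1=x_2+x_3$, $x_2=x_1+x_3$ or $x_3=x_1+x_2$. Each branch should give the $s$-line with $s\in(0,1)$, $s>1$ or $s<0$. For $T_9$ I would use the definition of \S\ref{sec:T9}. There $\lab{EQ}(2,3)$ and $\lab E(123)$ force $x_1=2x_2$. Then $\lab R(24;1)$ becomes a homogeneous equation in $x_2$ and $x_4$, so the ratio $x_4/x_2$ is one of finitely many positive roots, presumably the one defining $\rho_0$, while $x_5$ stays free. The exchange of $1$ and $2$ accounts for the alternative branch.

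I expect the main obstacle to be the residual curves on $T_4$ and $T_8$. There I must exhibit, or verify from the given formulas, a rational parametrization of the non-diagonal component of $(1+a^2)^2b=(1+b^2)^2a$. I also need surjectivity onto its positive real points, and the sign-branch bookkeeping when $\lab E(123)$ is added. I would first guess the parametrization by inverting the formulas of Table~\ref{tab:families}: set $x_3/x_5$ equal to a power of $r$ and solve. I would then confirm that the residual factor, evaluated at the parametrization, vanishes identically. To show that no positive points are missed, I would use a degree count, or a monotonicity argument in $r$.
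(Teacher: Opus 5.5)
Your proposal is correct and is essentially the paper's proof. In both, you rewrite the labels in $x_i=m_i^{-1/2}$, normalize one coordinate and solve family by family: the diagonal branch sends $T_4$ to $T_2$ and $T_8$ to $T_5$, and on $T_5$, $T_8$ the branch $x_2=x_1+x_3$ of $\lab{E}(123)$ is excluded because $x_1-x_2=(1-q^3+q^4)/q>0$ when $x_5=1$, $x_2=q^2$.

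One correction to the obstacle you expect on $T_4$ and $T_8$. The non-diagonal component is birational to the genus-one curve $y^2=r^3+r^2+r$, so it has no rational parametrization. None is needed, though. With $x_5=1$ and $x_3=r^2$ (resp.\ $x_4=r^2$), the two $\lab{R}$ labels give at once $y^2(1-r)=r(1-r)(1+r+r^2)$ for $y=x_2$ (resp.\ $q^4(1-r)=r(1-r)(1+r+r^2)$ for $x_2=q^2$). Off $r=1$ the only positive solution is $y=\sqrt{f(r)}$ (resp.\ $q^4=f(r)$), so surjectivity onto the positive points is immediate and needs no degree count or monotonicity argument.
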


\begin{proof}
With $x_i$ as in \S\ref{ss:conditions}, for positive masses $\lab{R}(ab;s)$ says $x_s=(x_a^2+x_b^2)/(x_ax_b)^{1/2}$, and $\lab{E}(abc)$ says that one of $x_a,x_b,x_c$ is the sum of the other two. The case $T_3$ is immediate.

$T_1$ and $T_2$: we scale so that $m_2=m_5=1$ and $m_3=m_4=t^2$ with $t>0$ ($T_1$), resp.\ $m_3=m_4=m_5=1$ and $m_2=t^2$ ($T_2$). Then $\lab{R}(23;1)$ gives $m_1=t^3/(1+t^2)^2$. So $(1+t^2)^2m$ is the point $t$ of the family.

$T_7$: scale so that $x_2=x_5=1$. Also $x_3=x_4$. If $x_1+x_3=1$ or $x_1=1+x_3$, put $s=x_1$. If $x_3=1+x_1$, put $s=-x_1$. Then $s\ne0,1$, $x_1=|s|$ and $x_3=|1-s|$. So $s^2(1-s)^2m$ is the point $s$ of $T_7$.

$T_4$: we scale so that $x_5=1$, and write $x_3=x_4=r^2$, $x_2=y$ with $r,y>0$. Then $\lab{R}(23;1)$ and $\lab{R}(25;1)$ give $x_1=(y^2+r^4)/(y^{1/2}r)=(y^2+1)/y^{1/2}$, that is, $y^2(1-r)=r(1-r)(1+r+r^2)$. If $r=1$, then $m_3=m_4=m_5$ and $m$ satisfies $D_{T_2}$. Otherwise $y^2=f(r)$ and $m$ is a multiple of the point $r$ of $T_4$.

$T_5$ and $T_8$: scale so that $x_5=1$ and write $x_2=q^2$, $x_4=r^2$ with $q,r>0$. On $T_5$, $\lab{EQ}(4,5)$ gives $r=1$. On $T_8$, $\lab{R}(24;1)$ and $\lab{R}(25;1)$ give $(q^4+r^4)/(qr)=(q^4+1)/q$, that is, $q^4(1-r)=r(1-r)(1+r+r^2)$. Hence $r=1$, and $m$ satisfies $D_{T_5}$, or $q^4=f(r)$. In both cases $x_1=(1+q^4)/q$ and $x_1-x_2=(1-q^3+q^4)/q>0$, since $1-u^3+u^4\ge\frac{229}{256}$ for $u>0$. Hence $\lab{E}(123)$ means $x_3=(1+q^4\pm q^3)/q$. Replacing $q$ by $-q$ in the case of the minus sign, $x_2=q^2$, $x_1=(1+q^4)/|q|$ and $x_3=(1+q^3+q^4)/|q|$ with $q\in\R\setminus\{0\}$. Hence $m$ is a multiple of the point $q$ of $T_5$, resp.\ of the point $(r,q)$ of $T_8$.

$\mathrm{RL}$: since $x_2=x_3=x_4$, $\lab{E}(123)$ forces $x_1=2x_2$. Hence $m$ is a multiple of $(1,4,4,4,\mu)$; the exchange of $1$ and $2$ maps the point $\mu$ of $\mathrm{RL}$ to this vector.

$T_9$: since $x_2=x_3$, $\lab{E}(123)$ forces $x_1=2x_2$. Scale so that $x_4=1$ and put $x_2=u^2$, $u>0$. Then $\lab{R}(24;1)$ reads $2u^2=(u^4+1)/u$, that is, $(u-1)(u^3-u^2-u-1)=0$. For $u=1$, $m$ is a multiple of $(1,4,4,4,\mu)$ as for $\mathrm{RL}$. Otherwise $u=\theta_0$, the real root of $u^3=u^2+u+1$, and $m=(\theta_0^{-4}/4,\theta_0^{-4},\theta_0^{-4},1,m_5)$ with $\theta_0^{-4}=\rho_0$.
\end{proof}

\subsection{Settling a point of a family}\label{ss:howsettled}
A piece is \emph{excluded} at $m\in(\R_{>0})^5$ if no singular place $P$ of a nontrivial curve $C\subseteq Y_m$ has its type $x(P)$ in the attained set of the piece. Let $S$ be a set of labels, $E$ a set of pieces and $V$ a D5 hull. The pair $(S,E)$ is \emph{certified} relative to $V$ if there are $l_1,\dots,l_r\in\Z^{10}$ that satisfy (a) and (b) of Definition~\ref{def:cert} for the list $\cA_V(S)$ without the attained faces of the pieces of $E$ and of all pieces $\pc{d2}$ and $\pc{d9}$. A certificate for $(S,E)$ is one for every $(S',E')$ with $S'\subseteq S$ and $E'\supseteq E$ (\S\ref{ss:certs}). A \emph{minimal exclusion set} of $S$ is a set $E$ of pieces $\pc{d14a}$, $\pc{d14b}$, $\pc{d15}$, minimal under inclusion, such that $(S,E)$ is certified. In this section all certificates are relative to the hull $\Vtwo^{\mathrm w}$ or to $\VN$ (Remark~\ref{rem:hulls}).

\begin{lemma}\label{lem:criterion}
Let $m\in(\R_{>0})^5$, let $V$ be a D5 hull valid at $m$, let $S\supseteq S(m)$, and let $E$ be a set of pieces each of which is excluded at $m$. If $(S,E)$ is certified relative to $V$, then $m$ is settled.
\end{lemma}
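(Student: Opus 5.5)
The plan is to repeat the proof of Proposition~\ref{prop:cert}, with the excluded pieces removed, in the same way as the proof of Proposition~\ref{prop:Kcert}.

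Let $C\subseteq Y_m$ be a nontrivial curve. By \eqref{eq:balx}, the types of its singular places satisfy $\sum_P d_P\,x(P)=0$ with $d_P>0$, and at least one place is singular. Theorem~\ref{thm:types}, with the hull $V$ valid at $m$ (Definition~\ref{def:validhull}), places each $x(P)$ in the relative interior of an attained face of a piece of $\mathrm{pieces}(S(m))$. Since $S\supseteq S(m)$ and $\mathrm{pieces}$ is monotone in the label set, that face belongs to $\cA_V(S)$.

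We then discard faces in two steps. The pieces of $E$ are excluded at $m$ by hypothesis, so no $x(P)$ lies in their attained sets. By Proposition~\ref{prop:nod2} and Theorem~\ref{thm:D9}, no $x(P)$ lies in the attained set of any piece $\pc{d2}$ or $\pc{d9}$, at any positive mass. Hence every $x(P)$ lies in $\relint\conv(G)$ for some face $G$ of the reduced list, namely $\cA_V(S)$ without the attained faces of $E$, $\pc{d2}$ and $\pc{d9}$. Here we use that a face and the convex hull of its vertices have the same relative interior. This reduced list is exactly the one for which the certificate of $(S,E)$ holds.

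Finally, as in \S\ref{ss:certs}, the functionals $l_1,\dots,l_r$ define a partition $(I_i)$ of the reduced list that satisfies (i) and (ii) of Lemma~\ref{lem:lex}. That lemma then forbids $0$ from being a positive combination of points of these relative interiors, which contradicts the balancing relation. So no nontrivial curve exists, and $m$ is settled.

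No step is a real obstacle. The only point needing care is to check that the list the certificate separates contains the face of every possible type. This holds because faces are removed only for pieces that are genuinely excluded: $E$ by hypothesis, and $\pc{d2}$, $\pc{d9}$ at all positive masses.
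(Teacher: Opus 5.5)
Your proof is correct and is the paper's own argument. The paper also reruns the proof of Proposition~\ref{prop:cert} on $\cA_V(S)$ with the attained faces of the pieces of $E$ and of all pieces $\pc{d2}$ and $\pc{d9}$ removed. It uses Theorem~\ref{thm:types}, the validity of $V$ and the definition of an excluded piece to place every type in the relative interior of a face of this reduced list, and then concludes with Lemma~\ref{lem:lex}.
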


\begin{proof}
This is the proof of Proposition~\ref{prop:cert}, applied to $\cA_V(S)$ without the faces of the pieces of $E$ and of the pieces $\pc{d2}$ and $\pc{d9}$, which are excluded at every positive mass (\S\ref{ss:certs}): by Theorem~\ref{thm:types}, the validity of $V$ and the definition of an excluded piece, at each singular place of a curve as there, the type lies in the relative interior of a face in this list.
\end{proof}

We exclude pieces $\pc{d14a}$, $\pc{d14b}$ and $\pc{d15}$ by the following results. We carry out the computations behind them for solutions of the rows over the field of Puiseux series with prescribed valuations. The expansions of the coordinates at a place are such solutions.
\begin{enumerate}[label=\textup{(X\arabic*)}]
\item By Corollary~\ref{cor:Kdrop}, if $\mathrm{K14}(m_{Q_j})$ has no solution, $\pc{d14a}(j)$ and all $\pc{d14b}(j,k)$ are excluded. At a mass point given exactly over a number field we decide this system by Theorem~\ref{thm:KA}, one run of which covers all conjugates of the point.
\item By Theorem~\ref{thm:D14}, each $\pc{d14b}(j,k)$ is excluded at every $m\in(\R_{>0})^5$.
\item At a place of type $\pc b_k(j)$, the leading coefficients of the cluster form a $\mathrm{K15}(m_{Q_j},k)$ solution with a bad shape (Lemma~\ref{lem:d15lead} and Theorem~\ref{thm:order3}). Thus $\pc{d15}(j,k)$ is excluded at $m$ if no $\mathrm{K15}(m_{Q_j},k)$ solution has a bad shape. If the three masses of $Q_j\setminus\{k\}$ are equal, $\pc{d15}(j,k)$ is excluded (Corollary~\ref{cor:d15equal}).
\item By Corollary~\ref{cor:nod15}, each $\pc{d15}(j,k)$ is excluded at every $m\in(\R_{>0})^5$.
\end{enumerate}
For a family with transversal cluster $Q_1$ we write $\pi(1,k)=\{\pc{d14b}(1,k),\pc{d15}(1,k)\}$ and $\Pi(1)$ for the nine pieces $\pc{d14a}(1)$, $\pc{d14b}(1,k)$, $\pc{d15}(1,k)$, $k\in Q_1$. Similarly for other clusters. By (X2) and (X4), each pair $\pi(j,k)$ is excluded at every $m\in(\R_{>0})^5$. A \emph{special point} of a family is a point at which $\mathrm{K14}$ has a solution for its transversal cluster (on $T_3$, for one of $Q_1,\dots,Q_4$).

\begin{theorem}\label{thm:SJ}
Let $T$ be one of $T_1$, $T_2$, $T_3$, $T_4$, $T_5$, $T_7$, $T_8$, and read $\lab{H}$ as $\cD_3$ \textup{(}Theorem~\ref{thm:D3}\textup{)}. On each branch of $T$, $S(m)=S_C$ except at the \emph{jump points}, where a label $\lab{E}$, $\lab{R}$, $\lab{H}$ or $\lab{EQ}$ outside $S_C$ holds. At each jump point a label set containing $S(m)$ is determined. The numbers of the jump points on each family are those of Table~\ref{tab:curvecounts}.
\end{theorem}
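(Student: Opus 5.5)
The plan is to substitute each parametrization of Table~\ref{tab:families} into the $x$-polynomials of all $65$ labels and study the resulting one-variable conditions. Recall that $x_i=m_i^{-1/2}$. For each family $T$ and each branch, I will write the $x_i$ as explicit algebraic functions of the parameter ($t$, $a$, $r$, $q$ or $s$). This is easiest after the scalings used in the proof of Lemma~\ref{lem:DT}, where the $x_i$ are monomials or simple rational expressions in the parameter (for instance $x_2=q^2$ and $x_1=(1+q^4)/|q|$ on $T_5$). I will then substitute into the branch ideals of \S\ref{ss:strata}:
\begin{itemize}
\item for $\lab E$, the three hyperplanes;
\item for $\lab R$, the quartic;
\item for $\lab H$, the eight ideals of $\cD_3$;
\item for $\lab{EQ}$, the equalities $x_a=x_b$.
\end{itemize}
Each label then becomes a finite set of univariate polynomials (or polynomial systems) in the parameter. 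Clearing the radicals where needed (for $T_4$ and $T_8$ the second coordinate $y$, resp.\ $q$, is tied to $r$ by $y^2=f(r)$, resp.\ $q^4=f(r)$) gives polynomials over $\Q$.

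The first step is to check that each label of $S_C$ vanishes identically along its branch. This is an exact polynomial identity, class (C2). For every other label, I will decide whether its substituted polynomial vanishes identically. If it does, the label belongs to $S_C$ and the table is checked; if it does not, it has finitely many roots. The jump points are the real roots of these nonzero polynomials in the admissible parameter range ($t>0$, $r>0$, $q\neq0$, $s\neq0,1$, $a>0$, $\mu>0$) at which the label actually holds with the correct signs. For $\cD_3$ this means all three conditions hold simultaneously, so I will take gcds or resultants of the three substituted conditions before isolating roots.

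Next, I will isolate the real roots exactly by Sturm sequences or Descartes' rule. This gives each jump point by an irreducible factor over $\Q$ and an isolating interval, which is exactly the format of the point records of \S\ref{sec:points}. At each such point I will determine the full label set the same way as for the exact points of \S\ref{ss:pointsets}: a label polynomial vanishes if it reduces to $0$ modulo the minimal polynomial, and is nonzero if a ball enclosure excludes $0$. Counting the points per family then gives the entries of Table~\ref{tab:curvecounts}.

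The main obstacle is the $\lab H$ labels read as $\cD_3$. They involve $M_{Q_s}$, cubes of the $x_i$ and sign choices, so along $T_4$, $T_5$ and $T_8$ the substituted systems have high degree in two algebraically related variables. There are also parameter values where two roots of different labels coincide, which must not be merged or missed. I would handle this by computing, for each of the eight sign ideals, a Gröbner basis of the ideal together with the curve equation over $\Q$. Each basis is either the unit ideal, or zero-dimensional with an eliminant I can factor and root-isolate, or contains the curve; the last case would put the label in $S_C$. Only real positive roots inside the branch would be kept.
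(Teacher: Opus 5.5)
Your plan is correct and is essentially the paper's proof. In both, you compose each label condition exactly with the parametrization, eliminate the relation $y^2=f$, resp.\ $q^4=f$, on $T_4$ and $T_8$, factor over $\Q$, isolate the real roots inside each branch, decide $\lab{H}$ through its two conditions $\mathrm{Tri}$, and verify that no label outside $S_C$ holds identically. On $T_4$ and $T_8$ the paper takes the norm over $\Q(r)$ and then discards roots at which only a conjugate branch satisfies the label; your check of the correct signs covers this.

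The differences lie only in the tools: the paper isolates roots and decides the conditions $\mathrm{Tri}$ in ball arithmetic, where you use Sturm sequences and Gr\"obner bases. The paper also identifies the jump points exactly with the points of \S\ref{sec:points} and of Table~\ref{tab:noK}, by comparing minimal polynomials of the mass ratios; your description of each jump point by a minimal polynomial and an isolating interval makes this step routine.
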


\begin{table}[ht]
\centering
\small
\begin{tabular}{@{}lrrrrrrr@{}}
\toprule
& $T_1$ & $T_2$ & $T_3$ & $T_4$ & $T_5$ & $T_7$ & $T_8$\\
\midrule
jump points (Theorem~\ref{thm:SJ}) & $7$ & $3$ & $2$ & $8$ & $11$ & $9$ & $18$\\
\quad points of \S\ref{sec:points} & $2$ & $0$ & $1$ & $0$ & $2$ & $2$ & $0$\\
\quad points of Table~\ref{tab:noK} & $1$ & $2$ & $1$ & $0$ & $2$ & $7$ & $0$\\
\quad other jump points & $4$ & $1$ & $0$ & $8$ & $7$ & $0$ & $18$\\
special points & $16$ & $6$ & $6$ & & & $32$ & \\
sets in the list $\mathcal B$ (Theorem~\ref{thm:options}) & & & & $1992$ & $108$ & $9$ & $108$\\
\bottomrule
\end{tabular}
\caption{Counts along the families: the jump points, split by their treatment in Proposition~\ref{prop:exceptional}; the special points; and the numbers of sets with a witness in the proof of Theorem~\ref{thm:options}.}\label{tab:curvecounts}
\end{table}

\begin{proof}
Each label is a disjunction of polynomial conditions (Appendix~\ref{app:labels}). We compose every polynomial exactly with the masses of the family. On $T_4$ and $T_8$ we reduce it modulo the relation $y^2=f$, resp.\ $q^4=f$, and replace it by its norm over the conjugates of $y$, resp.\ $q$, a nonzero polynomial in $r$ because the relation is irreducible over $\Q(r)$. We factor the result over $\Q$. We isolate the real roots of the factors in ball arithmetic at $400$ bits or more and show that each root ball lies on one side of every boundary point of a branch, unless the boundary point is itself a root. On $T_4$ and $T_8$ a root $r_0$ is a jump point only if the label polynomial vanishes at the conjugate of the branch. We decide this by balls at the conjugates or, where they do not separate, by an exact gcd over $\Q(r_0)$. We decide an $\lab{H}$ label at a root of its polynomial by its two conditions $\mathrm{Tri}$ in ball arithmetic, and the $\lab{H}$ labels whose polynomial vanishes identically on a family through the exact gcd of their two $\mathrm{Tri}$ conditions. No label outside $S_C$ holds identically on a branch: we show each of them false in ball arithmetic at two sample points of every branch. The identifications of jump points with the points of \S\ref{sec:points} and with the special rays are exact: the mass ratios have the same minimal polynomials, with a common isolating ball.
\end{proof}

\subsection{The exceptional points}\label{ss:exceptional}
Table~\ref{tab:noK} lists the jump points that we treat individually. We will count three of them directly (\S\ref{sec:rays}).

\begin{table}[ht]
\centering
\small
\begin{tabular}{@{}llp{0.25\textwidth}l@{}}
\toprule
point & masses, up to scaling and relabelling & on the families & finiteness\\
\midrule
$\bone$ & $(1,4,4,4,4)$ & $T_1$ $t=1$; $T_2$ $t=1$; $T_3$ $a=4$; $T_5$ $q=-1$; $T_7$ $s=2$; $\mathrm{RL}$ $\mu=4$ & Theorem~\ref{thm:count1}\\
$\btwo$ & $(1,1,4,4,4)$ & $T_7$ $s=\frac12,-1$; $\mathrm{RL}$ $\mu=1$ & Theorem~\ref{thm:count2}\\
$P_1$ & $(9,36,36,36,4)$ & $T_5$ $q=1$; $\mathrm{RL}$ $\mu=\frac49$ & Theorem~\ref{thm:countP1}\\
$P_2$ & $(9,36,36,4,9)$ & $T_7$ $s=\frac32,3$ & settled\\
$\hone$ & $((1+t_*)^{-2},t_*^{-2},t_*^{-2},1,1)$ & $T_7$ $s\approx1.5548,\ 2.8025$ & settled\\
$\htwo$ & $(\frac{25}8,\frac{25}2,\frac{25}2,\frac{25}2,1)$ & $T_2$ $t=\frac12$; $\mathrm{RL}$ $\mu=\frac8{25}$ & settled\\
\bottomrule
\end{tabular}
\caption{The jump points treated individually. Here $t_*\approx1.8024986256$ is the root greater than $1$ of $2t^8+t^6-8t^5-6t^4-8t^3+t^2+2$.}\label{tab:noK}
\end{table}

At $P_2$ the label set contains $\lab{H}(12|35)$ and $\lab{H}(13|25)$, whose conditions $\mathrm{Tri}$ fail. So Theorem~\ref{thm:D3} removes the two pieces $\pc{d3}$. It also enables $\pc{d6}(145|234)$, which is removed by Proposition~\ref{prop:d6P2}. $\mathrm{K14}$ of the cluster $Q_4$, with masses $(9,36,36,9)$, has no solution, the pieces $\pc{d15}(4,2)$ and $\pc{d15}(4,3)$ are excluded by (X4), and the remaining pair is certified relative to $\VN$. At $\hone$ the label set contains two labels $\lab{H}$ whose conditions $\cD_3$ hold. So Theorem~\ref{thm:D3} does not remove the two pieces $\pc{d3}$. Proposition~\ref{prop:d3tstar} does. $\mathrm{K14}$ of the cluster of the four heavier bodies has no solution, the four pieces $\pc{d15}$ of this cluster are excluded by (X4), and the remaining pair is certified. $\htwo$ is the point $\mu=8/25$ of $\mathrm{RL}$ after the exchange of the bodies $1$ and $2$. It is settled by Theorem~\ref{thm:RL}.

\begin{proposition}\label{prop:exceptional}
On $T_1$, $T_2$, $T_3$, $T_4$, $T_5$, $T_7$ and $T_8$, every jump point of Theorem~\ref{thm:SJ} has finitely many central configurations, and every such point is settled except the points $\bone$, $\btwo$ and $P_1$ of Table~\ref{tab:noK}.
\end{proposition}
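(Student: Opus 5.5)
The plan is to go through the jump points of Theorem~\ref{thm:SJ} family by family, following the split of Table~\ref{tab:curvecounts} into three classes. In each case we either settle the point with Lemma~\ref{lem:criterion}, or send it to the counts of \S\ref{sec:rays}. For every jump point, Theorem~\ref{thm:SJ} gives an exact label set $S\supseteq S(m)$. By (X2) and (X4) the pairs $\pi(j,k)$ are excluded at every positive mass. So the only pieces that can block a certificate, beyond those already enabled by $S$, are the pieces $\pc{d14a}(j)$.

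\emph{First class: the points of \S\ref{sec:points}.} These are the jump points identified exactly, after relabelling and scaling, with point records of Table~\ref{tab:points}, for instance $\hthree$ on $T_1$ and $(1,1,1,1,1)$ on $T_3$. Theorem~\ref{thm:SJ} states that the identification is exact. Since settledness is invariant under scaling and relabelling (\S\ref{ss:slice}), Theorem~\ref{thm:points}(b) settles them.

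\emph{Second class: the points of Table~\ref{tab:noK}.} The points $\bone$, $\btwo$ and $P_1$ are not claimed to be settled. For them we only need finiteness, which we quote from Theorems~\ref{thm:count1}, \ref{thm:count2} and~\ref{thm:countP1}. The point $\htwo$ is settled by Theorem~\ref{thm:RL}. For $P_2$ and $\hone$ we follow the discussion after Table~\ref{tab:noK}:
\begin{enumerate}[label=(\roman*)]
\item remove the pieces $\pc{d3}$, by Theorem~\ref{thm:D3} at $P_2$ and by Proposition~\ref{prop:d3tstar} at $\hone$;
\item at $P_2$, remove $\pc{d6}$ by Proposition~\ref{prop:d6P2};
\item remove $\pc{d14a}$ of the named cluster by a run of Theorem~\ref{thm:KA} for $\mathrm{K14}$, which gives (X1);
\item remove its $\pc{d15}$ pieces by (X4);
\item check that the remaining pair $(S,E)$ is certified relative to $\VN$ at $P_2$ and relative to $\Vtwo^{\mathrm w}$ at $\hone$.
\end{enumerate}
For $\VN$ we also need validity at $P_2$. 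This means checking off the hypersurface of (R1) and (R2), and $m_o\ne m_a+m_b$ on the relevant triangles, as in \S\ref{ss:checks}. Lemma~\ref{lem:criterion} then settles both points.

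\emph{Third class: the remaining jump points.} These are 4 on $T_1$, 1 on $T_2$, 8 on $T_4$, 7 on $T_5$ and 18 on $T_8$. For each I would first try to certify the label set $S$ with only $\pi(j,k)$ removed, that is, with all $\pc{d14a}$ present. I expect this to succeed at most of them, since the extra jump label changes few pieces. Where it fails, I would run Theorem~\ref{thm:KA} for $\mathrm{K14}$ of the transversal cluster at the exact point. This needs a prime $\ell$ with $\bar f$ squarefree and all coefficients units, and then (A) and (B) on the $28$ cells. After that I would certify $(S,\Pi(1))$. On $T_4$ and $T_8$ the points are given by norms over $\Q(r_0)$, so the masses must be fed in homogeneous form as in Remark~\ref{rem:KAforms}(b).

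The main obstacle I expect is a jump point that is also a special point, where $\mathrm{K14}$ does have a solution and no certificate exists without $\pc{d14a}$. Table~\ref{tab:curvecounts} suggests that such points lie only in the first two classes, but this has to be verified point by point, using the exact identification of the minimal polynomials. If one appears in the third class, I would fall back on the local analysis at the place for $\pc{d14a}$ from Remark~\ref{rem:d14a}: the dipole or quadrupole condition at first order.
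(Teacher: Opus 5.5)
Your proposal is correct and follows the paper's proof. It uses the same three-way split of Table~\ref{tab:curvecounts}: the points of \S\ref{sec:points} are settled by Theorem~\ref{thm:points}, and the points of Table~\ref{tab:noK} are handled as in the discussion after that table. That means counts for $\bone$, $\btwo$, $P_1$; Theorem~\ref{thm:RL} for $\htwo$; and Theorem~\ref{thm:D3}, Propositions~\ref{prop:d6P2} and~\ref{prop:d3tstar}, a $\mathrm{K14}$ run and (X4) for $P_2$ and $\hone$. Every other jump point gets a point certificate via Theorem~\ref{thm:KA}.

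The one difference is that the paper runs Theorem~\ref{thm:KA} for the transversal cluster at every remaining jump point, rather than first trying without it. These runs all succeed, so the special point you fear in the third class never occurs. That matters, because your fallback would not work in general: the first-order dipole and quadrupole conditions of Remark~\ref{rem:d14a} do not by themselves exclude $\pc{d14a}$, and Proposition~\ref{prop:d14a} needs higher-order obstructions in some cases.
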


\begin{proof}
Table~\ref{tab:curvecounts} splits the jump points into three kinds. The points of \S\ref{sec:points} are K-certified and settled there. On $T_3$ this is the point of equal masses. We have treated the points of Table~\ref{tab:noK} above. Every other jump point carries a point certificate: in an exact description over a number field, $\mathrm{K14}$ of the transversal cluster has no solution (Theorem~\ref{thm:KA}), the pieces $\pc{d15}$ needed of this cluster are excluded by (X4), and the pair formed by the label set of the point and the pieces so excluded is certified. On $T_8$ the cluster is $Q_1$ and the pair is $(S,\Pi(1))$, with $S$ the label set of the point. Thus Lemma~\ref{lem:criterion} applies. Each certificate in this proof holds relative to $\Vtwo^{\mathrm w}$ or to $\VN$ (\S\ref{ss:checks}).
\end{proof}

\subsection{Four-body configurations at the masses $F_A$ and $F_B$}\label{ss:FAFB}

\begin{lemma}\label{lem:K14FAFB}
$\mathrm{K14}(1,1,1,\nu)$, $\nu>0$, has a solution for exactly six values of $\nu$, and $\mathrm{K14}(1,c,c,1)$, $c>0$, for exactly sixteen values of $c$. They are listed in Table~\ref{tab:K14values}.
\end{lemma}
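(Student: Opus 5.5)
The aim is to turn each of the two one-parameter problems into an exact elimination in the mass parameter, in the style of the proofs of Proposition~\ref{prop:eqfull} and Theorem~\ref{thm:typeII}. We use the description of $\mathrm{K14}$ solutions from \S\ref{ss:d14}. A solution is either collinear or non-collinear. A non-collinear solution has $p_{il}=\lambda_0^{1/3}\eta_{il}t_it_l$, with a pattern $\eta$ of type I, II or III and $t\in(\C^*)^4$ solving \eqref{eq:tsys}. The mass ratios are real, so Lemma~\ref{lem:typeIII} excludes type III. The seven remaining gauge classes are handled separately, and within each class the relabellings that preserve $F_A$ (permutations of the three equal masses) or $F_B$ (the exchanges $1\leftrightarrow4$ and $2\leftrightarrow3$) reduce the number of patterns to treat.

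\emph{Non-collinear solutions.} For a fixed pattern, \eqref{eq:tsys} has unknowns $t$ and $\kappa$ and is homogeneous in $t$. We dehomogenize by $t_4=1$ (or $t_1=1$) and adjoin the mass parameter $\nu$ (or $c$) as a further unknown. This gives five equations in five unknowns, and we expect a zero-dimensional system. Non-collinearity is not yet encoded, so we impose it. The matrix $A=\lambda_0vv^{\mathsf T}$ must annihilate three independent vectors $\one,z,w$. Since $v\perp\one$ for the system, the solution of $p^2zw=1$ must recover $z,w$ with all $z_{il},w_{il}\ne0$. We add this as a saturation, for instance by $\prod t_i$ and by the Heron quantities $h_{abc}$ from the proof of Lemma~\ref{lem:typeIII}, whose vanishing means collinearity. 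We then compute the eliminant in the mass parameter, factor it over $\Q$, and isolate its positive real roots exactly (class (C2)). At each such root we still have to check that a genuine $\mathrm{K14}$ solution exists. For this we reconstruct $(z,w)$ from $v$ and the distances $d_{il}=p_{il}^{-2}$ as in the derivation of \eqref{eq:tsys}, which needs the compatibility of the six $d_{il}$ with a four-point configuration. We verify this at the root exactly, in the number field, or by a rational parametrization as in Theorem~\ref{thm:typeII}.

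\emph{Collinear solutions.} Here we use the reduction from the proof of Theorem~\ref{thm:col}. With $w=\alpha+\beta z$, the system becomes $(C_i)$: $\sum_{l\ne i}m_ls_{il}/z_{il}^2=0$ for a sign matrix $s$ with $s_{li}=-s_{il}$. For each of the $2^6$ sign patterns, reduced modulo symmetries, we eliminate the three free positions, after normalizing two of them to $0$ and $1$, together with the mass parameter. We then find the positive roots of the resulting eliminant.

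The union of the positive roots from both parts should be the six values of $\nu$ and the sixteen values of $c$ in Table~\ref{tab:K14values}. The step I expect to be the obstacle is exactness of the count rather than existence. Every candidate root must carry an actual solution with nonzero $z_{il},w_{il}$, and the saturations must not discard solutions. I would handle this with a rank argument on the quotient ring like the one in Theorem~\ref{thm:typeII}, comparing the number of solutions from a rational parametrization with the rank of multiplication by the nonvanishing product, so that every solution is accounted for. As a cross-check, the equal-mass point $\nu=1$ (and $c=1$) must not appear, consistent with Theorem~\ref{thm:points}(a). The value $\nu=1/\sqrt3$ from Lemma~\ref{lem:classI}, corresponding to the configuration $\mathcal E$, should appear for $F_A$.
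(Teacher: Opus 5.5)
Your plan is correct and follows the paper's route. The paper also enumerates the $\mathrm{K14}$ configurations through the components of the Dziobek (rank-one) form together with the collinear ones, eliminates the system, and rejects candidates that are degenerate or give a value of the mass parameter that is not positive real; the only difference is that it certifies that all zeros are found by enclosing them in Krawczyk boxes, where you use a rank count on the quotient ring.

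The realizability check you plan at each root is in fact automatic for nondegenerate solutions. With $D=(d_{il})$ and $J$ the centring matrix, \eqref{eq:tsys} gives $\sum_lv_l=0$ and $Dv=\kappa\lambda_0^{-2/3}\one$. Hence $JDJ$ annihilates $\one$ and $v$, and the centred Gram matrix $-\tfrac12JDJ$ has rank at most $2$. So as soon as some $h_{abc}\ne0$, the $d_{il}$ are realized by points of $\C^2$ whose affine dependence is $v$.
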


\begin{proof}
We enumerate the configurations exactly: the components of the K14 system in Dziobek form are eliminated by resultants and norms over their coefficient fields, all zeros are enclosed by Krawczyk boxes, each rejected candidate is shown to be degenerate or to give a value of $\nu$, resp.\ $c$, that is not positive real, and each value found is identified as a root of an exact polynomial.
\end{proof}

\begin{table}[ht]
\centering
\small
\begin{tabular}{@{}lp{0.30\textwidth}p{0.52\textwidth}@{}}
\toprule
& values (to $4$ decimals) & polynomial\\
\midrule
$F_A$, $\nu$ & $0.2462$, $12.1840$ (collinear) & $x^8-116x^6-4810x^4-1044x^2+81$\\
& $0.2540$, $19.6837$ (collinear) & $x^8-32x^7+300x^6-864x^5-5098x^4-4320x^3+7500x^2-4000x+625$\\
& $1/\sqrt3$ & $3x^2-1$\\
& $6.0193$ & $3x^6-106x^4-97x^2-25$\\
\midrule
$F_B$, $c$ & $0.1125$, $3.2140$ (collinear) & $x^8+16x^6-40x^5-2x^4-512x^3+192x^2-24x+1$\\
& $0.1387$, $7.2098$ & $x^4-52x^2+1$\\
& $0.1801$, $5.5533$ (collinear) & degree $16$\\
& $0.2490$; $4.0154$; $0.6012$; $1.6633$ (collinear) & $x^4+4x-1$; $x^4-4x^3-1$; $x^4+4x^3-1$; $x^4-4x-1$\\
& $0.2510$, $3.9843$ (collinear) & $x^{16}-16x^{14}-4x^{12}+528x^{10}-6906x^8+528x^6-4x^4-16x^2+1$\\
& $0.3111$, $8.8913$ (collinear) & $x^8-24x^7+192x^6-512x^5-2x^4-40x^3+16x^2+1$\\
& $0.4429$; $2.2579$ & $x^4+10x^2-2$; $2x^4-10x^2-1$\\
\bottomrule
\end{tabular}
\caption{The positive masses $\nu$ and $c$ at which $\mathrm{K14}(1,1,1,\nu)$, resp.\ $\mathrm{K14}(1,c,c,1)$, has a solution, with their minimal polynomials. The entry ``collinear'' marks the values at which the configurations are collinear.}\label{tab:K14values}
\end{table}

The special points of $T_1$, $T_2$, $T_3$ and $T_7$ are therefore the points $t=\sqrt c$, $t=\sqrt\nu$, $a=1/\nu$ and $s=1\pm c^{-1/2}$, respectively, with $\nu$ and $c$ from Table~\ref{tab:K14values}. Table~\ref{tab:curvecounts} gives their numbers.

\begin{proposition}\label{prop:d14a}
At each of the $32$ special points of $T_7$ the piece $\pc{d14a}(1)$ is excluded, and at each of the $6$ special points of $T_3$ every $\pc{d14a}(k)$, $k\le4$, is excluded.
\end{proposition}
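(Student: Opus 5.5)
At a special point of $T_7$ or $T_3$ the system $\mathrm{K14}$ of the cluster does have solutions, so Corollary~\ref{cor:Kdrop} is not available. I would instead carry out a local analysis at a hypothetical place of type $\pc u_0(j)$, the only attained point of $\pc{d14a}(j)$. The analysis would use the first-order conditions of Remark~\ref{rem:d14a}. By Theorem~\ref{thm:Kgating}(a) the leading cluster coefficients solve $\mathrm{K14}(m_{Q_j})$. By Lemma~\ref{lem:zwK4}(A) with $c=0$, the fifth body $j$ stays at distance of order one. Its leading inverse distances $\hat p_{jl}$ are nonzero, with signs $s_l$ fixed by the branch at $P$. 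Expanding $F(\tau,y)$ as in Lemma~\ref{lem:identities}, the cluster contributes $\tau^{-2}U_Q$ and the interaction with $j$ contributes from $\tau^0$ on through a multipole expansion of $\sum_l m_jm_lp_{jl}$ about the cluster centre $\hat z_G$. Pairing $\nabla_yF=0$ with the $z$- and $w$-scalings of the cluster, together with the constancy of $F$, I would derive the conditions of Remark~\ref{rem:d14a}:
\begin{itemize}
\item if the signs $s_l$ are unequal, the signed dipole $\sum_l s_lm_l(\hat z_l-\hat z_G)=0$ must vanish;
\item if the signs $s_l$ are equal, the centred quadrupole $\sum_l m_l(\hat z_l-\hat z_G)^2=0$ must vanish;
\item the same conditions hold in $w$ at the next order;
\item in the boundary case the single scalar $g^{\mathsf T}H^+g$ must vanish, as for Lemma~\ref{lem:d14slice}.
\end{itemize}

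The proof then becomes a finite check over explicit configurations. Table~\ref{tab:K14values} lists the relevant mass values: the values of $c$ for $F_B$ give the $32$ points $s=1\pm c^{-1/2}$ of $T_7$, and the values of $\nu$ for $F_A$ give the $6$ points $a=1/\nu$ of $T_3$. The proof of Lemma~\ref{lem:K14FAFB} enumerates all $\mathrm{K14}$ configurations at these values exactly, up to the symmetries of \S\ref{ss:Kgating}, with minimal polynomials and Krawczyk enclosures. For each configuration and each of the $2^4$ sign patterns $s_l$ modulo a global sign, I would evaluate the dipole or quadrupole condition in ball arithmetic, or exactly in the number field of the configuration, and show that it fails in $z$ or in $w$.

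The collinear configurations need separate care. There $w=\alpha+\beta z$, and by the rows $(C_i)$ of the proof of Theorem~\ref{thm:col} the $z$- and $w$-conditions coincide. One of them must still be shown nonzero. A symmetric configuration in which the quadrupole vanishes would force the boundary case, and there I would compute $H^+$ and $g^{\mathsf T}H^+g$ exactly, as in Lemma~\ref{lem:Pdata}.

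I expect the main obstacle to be the degenerate or boundary configurations. If the Hessian of $U_Q$ has a kernel beyond $Z,W$, or if a symmetric configuration, for instance a centrally symmetric one at $F_B$, makes both first-order conditions vanish, the first-order test says nothing. There I would push the expansion one order further, in the manner of Theorem~\ref{thm:order5}. I would split off the kernel directions by a Lyapunov--Schmidt step and use \eqref{eq:qid} at the next exponent to obtain a second obstruction, which I would evaluate exactly at the few remaining configurations.
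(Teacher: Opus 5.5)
Your overall route is the paper's: a local analysis at a hypothetical place of type $\pc u_0(1)$, first-order conditions from pairing the cluster rows with the two cluster scalings, an exhaustive ball-arithmetic check over the configurations of Lemma~\ref{lem:K14FAFB}, and a higher-order Lyapunov--Schmidt obstruction at the exceptional symmetric configurations. In the paper these are the sixteen centrally symmetric configurations of $T_7$, where the signed dipole of the class $(+,-,-,+)$ vanishes by symmetry, and the equilateral triangle with its centre at $a=\sqrt3$.

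The gap is that you never track how the fifth body sits relative to the cluster. Lemma~\ref{lem:zwK4}(A) with $c=0$ only makes $r_{1l}^2$ of order one. After a rotation and possibly an exchange of colours, the paper has $\val z_{il}=\val w_{il}=1$ in $Q_1$ and $\val z_{1l}=\alpha$, $\val w_{1l}=-\alpha$, where $\alpha\in[0,1]$ is a free parameter of the place. The first-order condition depends on $\alpha$.
\begin{itemize}
\item For $0<\alpha<1$ your list is right: dipole or quadrupole in $z$, then the same in $w$ at the next order.
\item At $\alpha=0$ the $z$- and $w$-terms of $(z_{1l}w_{1l})^{-1/2}$ enter at the same order. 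With equal signs the cross moment $I=\sum_im_i\tilde\zeta_i\tilde\omega_i$ enters together with $Q_{zz}$ and $Q_{ww}$. What the scaling pairings force is $A^2Q_{zz}Q_{ww}-B^2I^2=0$, with $A=3m_1$ and $B=3m_1+2M_Q$. A nonzero $Q_{zz}$ or $Q_{ww}$ does not contradict this, so your check leaves these places open. You must evaluate the mixed quantity at every configuration.
\item At $\alpha=1$ body $1$ is at the cluster scale in $z$, and there is no multipole expansion in $z$. This boundary case is a separate valuation pattern that must be excluded at every configuration of every special value. It is not something ``forced'' by a vanishing quadrupole at a symmetric configuration, as your last paragraph has it. The paper treats $\alpha=1$ at all leading points by computing the Lyapunov--Schmidt obstruction series to higher order, with the vanishing of the lower orders proved exactly. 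A first-order test such as $g^{\mathsf T}H^+g\ne0$ is not what closes this case.
\end{itemize}

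A second, smaller omission concerns degeneracy. A nonvanishing first-order quantity excludes a place only when the leading cluster configuration is nondegenerate in the weighted rows. Otherwise a displacement along an extra kernel direction, of small valuation, can contribute quadratically at the order of the forcing; compare the proof of Theorem~\ref{thm:order5}(b). You mention degeneracy only as a possible obstacle to be met if it arises. Before any of your first-order checks is conclusive, you must verify nondegeneracy at every configuration. The paper does this with a nonzero $10\times10$ minor, in ball arithmetic, of the Jacobian of the weighted cluster rows. Its image is then cut out exactly by the two scaling pairings.
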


\begin{proof}
Since the masses of $T_3$ are symmetric in the bodies $1,\dots,4$, it suffices to treat $\pc{d14a}(1)$. At a place of type $\pc u_0(1)$, after a rotation and possibly an exchange of the colours, $\val z_{il}=\val w_{il}=1$ inside $Q=Q_1$ and $\val z_{1l}=\alpha$, $\val w_{1l}=-\alpha$ for one $\alpha\in[0,1]$. The leading coefficients of the cluster form a K14 configuration in a gauge, together with explicit data for body $1$ with signs $s_l=\pm1$ of the leading $p_{1l}$. The configurations at the special values are known completely (Lemma~\ref{lem:K14FAFB}). In a weighted form of the rows, the Jacobian $J$ of the cluster rows has rank $10$ at every configuration (a nonzero $10\times10$ minor in ball arithmetic). The first-order forcing must lie in the image of $J$. This image is described by two linear conditions, the pairings with the two cluster scalings, which lie in the kernel of the Hessian of the cluster potential (Remark~\ref{rem:d14a}). For $\alpha<1$ this forces, according to the sign pattern and to whether $\alpha=0$, the vanishing of a signed dipole $\sum_is_im_i\tilde\zeta_i$, the quadrupole $\sum_im_i\tilde\zeta_i^2$ or $A^2Q_{zz}Q_{ww}-B^2I^2$, where $\tilde\zeta$, $\tilde\omega$ are the centred leading positions, $Q_{zz}=\sum m_i\tilde\zeta_i^2$, $Q_{ww}=\sum m_i\tilde\omega_i^2$, $I=\sum m_i\tilde\zeta_i\tilde\omega_i$, $A=3m_1$ and $B=3m_1+2M_Q$. In $400$-bit ball arithmetic each of these quantities is nonzero at all configurations of all special values, except the dipole of the sign class $(+,-,-,+)$ at sixteen centrally symmetric configurations of $T_7$, where it vanishes by the symmetry (proved by a Krawczyk uniqueness argument), and the quadrupole and the dipole of $(+,+,+,-)$ at $a=\sqrt3$, where the configuration is an equilateral triangle with its centre. In these cases, and for $\alpha=1$ at all points, we compute the obstruction series of a Lyapunov--Schmidt reduction to higher order in ball arithmetic at all leading points, with the vanishing of the lower orders proved exactly. A nonzero coefficient excludes each case.
\end{proof}

\subsection{The family $T_3$}\label{ss:T3}

\begin{theorem}\label{thm:T3}
Every point of $T_3$ has finitely many central configurations, and every point other than $a=4$ is settled.
\end{theorem}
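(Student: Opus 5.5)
We work along $T_3$, parametrized by $a>0$ with masses $(a,a,a,a,1)$, and split the parameter line into three kinds of points: the generic points, the jump points of Theorem~\ref{thm:SJ}, and the special points.

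At a generic point, $S(m)=S_C$, the set of the six labels $\lab{EQ}(i,j)$, $1\le i<j\le4$, together with the implied $\lab{D6}$ labels. The first step is a certificate for the pair $(S_C,E)$, with $E=\Pi(1)\cup\dots\cup\Pi(4)$, relative to $\Vtwo^{\mathrm w}$. The pieces $\pc{d14b}$ and $\pc{d15}$ are excluded everywhere by (X2) and (X4). So at a point of $T_3$ that is not special, (X1) excludes $\pc{d14a}(k)$ for $k\le4$, since $\mathrm{K14}$ of $Q_k$, with masses proportional to $(1,1,1,1/a)$, has no solution. We would check that the cluster $Q_5$ never needs removing: its masses are $(1,1,1,1)$, and $\mathrm{K14}(1,1,1,1)$ has no solution by Theorem~\ref{thm:points}(a). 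Hence $\pc{d14a}(5)$ is excluded at every point of $T_3$, and we may enlarge $E$ by $\Pi(5)$. Lemma~\ref{lem:criterion} then settles every point that is neither special nor a jump point.

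At the special points, $a=1/\nu$ with $\nu$ among the six values of Lemma~\ref{lem:K14FAFB}. Proposition~\ref{prop:d14a} excludes every $\pc{d14a}(k)$, $k\le4$, so the same certificate applies whenever such a point is not also a jump point. If it is a jump point, we use the label set determined there.

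At the two jump points, Proposition~\ref{prop:exceptional} applies. One of them is the equal-mass point, which is K-certified in Table~\ref{tab:points}. The other is $a=4$, that is, $\bone$, whose finiteness is Theorem~\ref{thm:count1}; this point is not claimed to be settled. We would also verify exactly that $1/\nu\neq4$ and $1/\nu\ne1$ for every $\nu$ in Table~\ref{tab:K14values}, by comparing minimal polynomials, so that no special point coincides with a jump point.

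The main obstacle is producing the single uniform certificate for $S_C$ with the cluster pieces removed. The label set contains all the $\lab{D6}$ pieces of diagram~$6$ and the pieces of diagrams $3$, $7$ and $8$ that equal masses may enable, so the search of \S\ref{ss:lexexact} must succeed with $\Vtwo^{\mathrm w}$. If it fails, we would try $\Vtwo$, which is valid when no $\lab E(T)$ holds, as on generic points of $T_3$.
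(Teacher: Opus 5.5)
Your argument is correct and follows the paper's proof. The cluster pieces are excluded by (X1)--(X4), using $\mathrm{K14}(1,1,1,1)$ for $Q_5$ and Proposition~\ref{prop:d14a} at the special points. Lemma~\ref{lem:criterion} then settles the points with $S(m)=S_C$, and Proposition~\ref{prop:exceptional} handles the jump points $a=1$ and $a=4$. The paper certifies the smaller exclusion set $\{\pc{d14a}(k)\}\cup\Pi(5)$, whereas you remove all $45$ cluster pieces; since the certified set only grows with the exclusion set, this is harmless.

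Two of your side remarks should be corrected.

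First, $\Pi(5)$ is not an optional enlargement, and $Q_5$ does need removing. The three points $\pc v(5;125|345)$, $\pc v(5;135|245)$ and $\pc v(5;145|235)$ of the pieces $\pc{d6}$ sum to $-4$ on the six pairs of $\{1,2,3,4\}$ and to $0$ on the pairs $i5$. Together with $4\,\pc u_0(5)$ they therefore form a witness, so $(S_C,\Pi(1)\cup\dots\cup\Pi(4))$ is not certified. Your ``first step'' as stated fails, and the absence of solutions of $\mathrm{K14}(1,1,1,1)$ is the indispensable input.

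Second, the certificate is no obstacle. At the points with $S(m)=S_C$ no label $\lab E$, $\lab R$ or $\cD_3$ holds. So no piece of diagrams $3$, $5$, $7$ or $8$ is enabled, and the choice of hull plays no role. With all cluster pieces removed, only the three $\pc{d6}$ points remain, and $-\sum_{k<l}x_{kl}$ (value $8$ on each) separates them.

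In fact, $l_1=x_{15}+x_{25}+x_{35}+x_{45}$ vanishes on these three points and is at least $\frac52$ at every vertex of every piece of $Q_1,\dots,Q_4$. So $l_1$ followed by $l_2=-\sum_{k<l}x_{kl}$ already certifies $(S_C,\Pi(5))$, and the special points of $T_3$ would need no separate treatment.
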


\begin{proof}
For each $k\in\{1,2,3,4\}$, the pair $(S_C,\{\pc{d14a}(k)\}\cup\Pi(5))$ is certified. The cluster $Q_5$ has the masses $(1,1,1,1)$, at which K14 has no solution (\S\ref{sec:points}). The pieces $\pc{d15}(5,k)$ are excluded by (X4). Hence $\Pi(5)$ is excluded at every point. Off the six special points $\mathrm{K14}(m_{Q_k})$ has no solution. At the special points $\pc{d14a}(k)$ is excluded by Proposition~\ref{prop:d14a}. By Theorem~\ref{thm:SJ} and Lemma~\ref{lem:criterion} each point with label set $S_C$ is settled. Proposition~\ref{prop:exceptional} covers the other points.
\end{proof}

\subsection{The families $T_1$ and $T_7$}\label{ss:T1T7}
On $T_1$ and $T_7$ the transversal cluster is $F_B(c)$, a relabelling of $(1,1,c,c)$. By (X4) all pieces $\pc{d15}(1,k)$ are removed on these two families.

\begin{theorem}\label{thm:T1T7}
Every point of $T_1$ and of $T_7$ has finitely many central configurations. Every point of $T_1$ other than $t=1$, and every point of $T_7$ other than $s\in\{2,\frac12,-1\}$, is settled.
\end{theorem}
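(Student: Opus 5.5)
We follow the proof of Theorem~\ref{thm:T3}, with the cluster $Q_1$ as transversal cluster. The masses of $Q_1$ are $F_B(c)$, with $c=t^2$ on $T_1$ and $c=(1-s)^{-2}$ on $T_7$. The first step is the generic certificate. For each family we exhibit, by the search of \S\ref{ss:lexexact}, a certificate relative to $\Vtwo^{\mathrm w}$ for the pair $(S_C,\Pi(1))$, and we check it exactly. If the search does not succeed with $\pc{d14a}(1)$ alone removed, we also try to remove the pieces of a second cluster. On $T_1$ and $T_7$ the cluster $Q_5$, resp.\ another four-body cluster, again has masses of type $F_B$ up to relabelling, because of the labels $\lab{EQ}(2,5)$ and $\lab{EQ}(3,4)$.

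The second step excludes $\Pi(1)$ at every point of a branch with $S(m)=S_C$. The pieces $\pc{d14b}(1,k)$ are excluded by (X2) and the pieces $\pc{d15}(1,k)$ by (X4). It remains to exclude $\pc{d14a}(1)$. Off the special points, $\mathrm{K14}(1,c,c,1)$ has no solution by Lemma~\ref{lem:K14FAFB}, so Corollary~\ref{cor:Kdrop} applies (this is (X1)). At the special points of $T_7$, Proposition~\ref{prop:d14a} excludes $\pc{d14a}(1)$. On $T_1$ the special points are $t=\sqrt c$ with $c$ from Table~\ref{tab:K14values}, and Proposition~\ref{prop:d14a} as stated does not cover them. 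There we would either run the same Lyapunov--Schmidt analysis at the sixteen configurations $F_B(c)$, which are the same cluster configurations already handled on $T_7$ (only the fifth mass differs), or show that these sixteen values of $t$ are jump points handled in Proposition~\ref{prop:exceptional}. Once $\Pi(1)$ is excluded, Lemma~\ref{lem:criterion} with Theorem~\ref{thm:SJ} settles every non-jump point.

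The third step treats the jump points. By Proposition~\ref{prop:exceptional}, every jump point has finitely many central configurations, and every jump point other than $\bone$, $\btwo$ and $P_1$ is settled. By Table~\ref{tab:noK}, $\bone$ is $t=1$ on $T_1$ and $s=2$ on $T_7$, and $\btwo$ is $s\in\{\frac12,-1\}$ on $T_7$. The point $P_1$ does not lie on these families. So exactly the stated exceptions remain.

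The main obstacle is the special points of $T_1$. There $\mathrm{K14}$ really has solutions, and the exclusion of $\pc{d14a}(1)$ needs the first-order dipole and quadrupole conditions of Remark~\ref{rem:d14a}, with higher-order Lyapunov--Schmidt obstructions wherever these conditions vanish by symmetry. We would first try to reuse the computation behind Proposition~\ref{prop:d14a} verbatim, since the leading cluster data depend only on $c$ and the ratio $m_1/M_Q$ enters only through the constants $A$ and $B$. The only new input would then be the recomputation of the quantities $A^2Q_{zz}Q_{ww}-B^2I^2$ at the new value of $m_1$.
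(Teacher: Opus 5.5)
Your treatment of $T_7$ is the paper's. Off the sixteen special values of $c$, (X1) with Lemma~\ref{lem:K14FAFB} excludes $\pc{d14a}(1)$. At the $32$ special points, Proposition~\ref{prop:d14a} excludes it. The pair $(S_C,\Pi(1))$ is certified there, as it has to be, since $\Pi(1)$ is the only minimal exclusion set on $T_7$ (Theorem~\ref{thm:options}(iii)). Your handling of the jump points through Proposition~\ref{prop:exceptional} is also correct.

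On $T_1$, however, there is a genuine gap, the one you point to yourself. By using the exclusion set $\Pi(1)$ there too, you must exclude $\pc{d14a}(1)$ at the sixteen special points $t=\sqrt c$, and nothing in the paper proves this: Proposition~\ref{prop:d14a} covers only $T_3$ and $T_7$.

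Neither of your fallbacks works.
\begin{itemize}
\item The special points of $T_1$ cannot all be jump points: $T_1$ has $16$ special points but only $7$ jump points (Table~\ref{tab:curvecounts}).
\item The computation behind Proposition~\ref{prop:d14a} cannot be reused verbatim. Normalize the cluster to $(1,c,c,1)$. Body $1$ then has mass $c^{3/2}/(1+c)^2$ on $T_1$ and $(1\pm c^{-1/2})^{-2}$ on $T_7$, and these agree only at $c=1$, which is not a special value. The mass of body $1$ enters the rows at the place. So it enters not only $A^2Q_{zz}Q_{ww}-B^2I^2$ but also the higher-order Lyapunov--Schmidt obstructions, which Proposition~\ref{prop:d14a} needs for $\alpha=1$ at every point and at the centrally symmetric configurations. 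That is a new computation, not carried out.
\end{itemize}
Your side remark that a second cluster again has masses of type $F_B$ is also wrong. Every cluster other than $Q_1$ contains body $1$, whose mass generically equals no other mass.

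The missing idea is that on $T_1$ you never need to exclude $\pc{d14a}(1)$. The paper certifies the smaller pair $(S_C,\pi(1,2)\cup\pi(1,5))$ on $T_1$. Its four pieces $\pc{d14b}(1,2)$, $\pc{d15}(1,2)$, $\pc{d14b}(1,5)$ and $\pc{d15}(1,5)$ are excluded at every positive mass by (X2) and (X4). Lemma~\ref{lem:criterion} therefore settles every point of $T_1$ with label set $S_C$, special or not, with no analysis of $\mathrm{K14}$ at all. A certificate for this pair is in particular one for $(S_C,\Pi(1))$, so your first step survives. What fails is committing to remove all of $\Pi(1)$ on $T_1$.
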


\begin{proof}
On $T_7$, off the sixteen special values of $c$, $\mathrm{K14}(m_{Q_1})$ has no solution. Thus (X1) excludes $\pc{d14a}(1)$. At the special values $\pc{d14a}(1)$ is excluded by Proposition~\ref{prop:d14a}. Thus, by \S\ref{ss:howsettled}, $\Pi(1)$ is excluded at all points of $T_7$, and $\pi(1,2)\cup\pi(1,5)$ at all points of $T_1$. The pairs $(S_C,\Pi(1))$ on $T_7$ and $(S_C,\pi(1,2)\cup\pi(1,5))$ on $T_1$ are certified, and $\Pi(1)$ is the only minimal exclusion set of $S_C$ on $T_7$ (Theorem~\ref{thm:options}). We conclude as in the proof of Theorem~\ref{thm:T3}.
\end{proof}

\subsection{The family $T_2$}\label{ss:T2}

\begin{theorem}\label{thm:T2}
Every point of $T_2$ has finitely many central configurations, and every point other than $t=1$ is settled. Every piece $\pc{d15}(1,k)$, $k\in Q_1$, is excluded at every point of $T_2$.
\end{theorem}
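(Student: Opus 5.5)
The proof follows the template of Theorems~\ref{thm:T3} and~\ref{thm:T1T7}. The transversal cluster of $T_2$ is $Q_1$, with masses proportional to $(t^2,1,1,1)$, that is, a relabelling of $F_A(\nu)$ with $\nu=t^2$ on body~$2$.

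\textbf{Step 1: excluding the pieces $\pc{d15}(1,k)$.} This is the last statement of the theorem. It is a direct consequence of (X4): Corollary~\ref{cor:nod15} excludes every $\pc{d15}(j,k)$ at every positive mass. For $k\ne2$ one can alternatively cite (X3), because the three masses of $Q_1\setminus\{2\}$ are equal and Corollary~\ref{cor:d15equal} applies. I would record both arguments.

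\textbf{Step 2: excluding $\pc{d14a}(1)$ and $\pc{d14b}(1,k)$.} By (X2) every $\pc{d14b}(1,k)$ is excluded at every point. Off the six special points $t=\sqrt\nu$, with $\nu$ from Table~\ref{tab:K14values}, $\mathrm{K14}(m_{Q_1})$ has no solution by Lemma~\ref{lem:K14FAFB}. So (X1) excludes $\pc{d14a}(1)$ there, and $\Pi(1)$ is excluded off the special points.

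At the six special points I would first try to certify the pair $(S_C,E)$ with $E$ the subset of $\Pi(1)$ not containing $\pc{d14a}(1)$, that is, with $E=\bigcup_k\pi(1,k)$. This would make an argument for $\pc{d14a}(1)$ unnecessary, as on $T_1$. If that pair is not certified, I would need an analogue of Proposition~\ref{prop:d14a} at the six values of $\nu$. This means running the first-order dipole and quadrupole conditions of Remark~\ref{rem:d14a} on the known K14 configurations of $F_A(\nu)$. The point $\nu=1/\sqrt3$ is the equilateral triangle with its centre and needs the higher-order Lyapunov--Schmidt obstruction.

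\textbf{Step 3: the generic points and the jump points.} Next I check, exactly as in \S\ref{ss:lexexact}, that $(S_C,\Pi(1))$, or the smaller exclusion set, is certified relative to $\Vtwo^{\mathrm w}$. By Theorem~\ref{thm:SJ} every non-jump point has $S(m)=S_C$, so Lemma~\ref{lem:criterion} settles it. The three jump points are handled by Proposition~\ref{prop:exceptional}: $t=1$ is $\bone$, which has finitely many central configurations by Theorem~\ref{thm:count1}, and $t=\frac12$ is $\htwo$, which is settled. The remaining jump point carries a point certificate.

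\textbf{Main obstacle.} The delicate part is the special points, where K14 does have solutions. If the certificate with $\pc{d14a}(1)$ kept fails, the local place analysis at the equilateral configuration is the hardest step. Everything else reduces to results already stated.
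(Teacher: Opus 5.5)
Your route is essentially the paper's: the paper certifies $(S_C,\pi(1,3)\cup\pi(1,4))$, and a certificate for an exclusion set is also one for every larger set. So your first option $(S_C,\bigcup_k\pi(1,k))$ succeeds, $\pc{d14a}(1)$ never has to be excluded, and the special points, which you single out as the main obstacle, need no treatment on $T_2$.

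One slip in Step~1: (X3) applies to $k=2$, whose outer bodies $3,4,5$ have equal masses, not to $k\ne2$, whose outer triple contains body~$2$ of mass proportional to $t^2$; this is harmless because (X4) covers every $k$.
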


\begin{proof}
The pair $(S_C,\pi(1,3)\cup\pi(1,4))$ is certified. Each pair $\pi(1,k)$, $k\in Q_1$, is excluded at every point of $T_2$ (\S\ref{ss:howsettled}). We conclude as in the proof of Theorem~\ref{thm:T3}.
\end{proof}

\subsection{The families $T_4$, $T_5$ and $T_8$}\label{ss:T458}
The following two results describe, for the families $T_4$, $T_5$, $T_7$ and $T_8$, all ways of settling a point with label set $S_C$ by Lemma~\ref{lem:criterion} with $V=\Vtwo^{\mathrm w}$ and pieces $\pc{d14a}$, $\pc{d14b}$, $\pc{d15}$. A \emph{witness} for $(S,E)$ is a nonempty set $W$ of faces in $\cA_V(S)$ outside the pieces of $E$ and outside the pieces $\pc{d2}[T]$ and $\pc{d9}(T;ax,by)$, together with rationals $c_v>0$ on the vertices $v$ of the faces in $W$ such that $\sum_vc_vv=0$. Any functional that is nonnegative on these vertices vanishes on all of them. So, by induction on $i$, every functional $l_i$ of a certificate for $(S,E)$ vanishes on them, against (b) of Definition~\ref{def:cert} for the faces of $W$. Hence $(S,E)$ is not certified relative to $V$, and neither is $(S,E')$ for any $E'\subseteq E$.

\begin{theorem}\label{thm:options}
For each of the families $T_4$, $T_5$, $T_7$ and $T_8$, the minimal exclusion sets of $S_C$ relative to $\Vtwo^{\mathrm w}$, among the $45$ pieces $\pc{d14a}(j)$, $\pc{d14b}(j,k)$, $\pc{d15}(j,k)$, are the following.
\begin{enumerate}[label=\textup{(\roman*)}]
\item $T_4$ \textup{(}$19$ sets\textup{)}: $\pi(1,A)\cup\pi(1,B)$ for $A<B$ in $\{2,3,4,5\}$; $\pi(1,A)\cup\pi(A,1)$ for $A\in\{3,4,5\}$; $\pi(1,A)\cup\pi(2,B)\cup\pi(2,C)$ for $\{A,B,C\}=\{3,4,5\}$; $\Pi(2)$; and $\pi(1,A)\cup\pi(2,B)\cup\Pi(C)$ for $(A,B,C)$ a permutation of $(3,4,5)$.
\item $T_5$ and $T_8$ \textup{(}$4$ sets\textup{)}: $\pi(1,4)$; $\pi(1,5)$; $\pi(1,3)\cup\Pi(2)$; $\Pi(2)\cup\Pi(3)$.
\item $T_7$ \textup{(}$1$ set\textup{)}: $\Pi(1)$.
\end{enumerate}
\end{theorem}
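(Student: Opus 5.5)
The statement has two halves for each family: every listed set $E$ is an exclusion set (the pair $(S_C,E)$ is certified relative to $\Vtwo^{\mathrm w}$), and no other set is, i.e. the listed sets are exactly the minimal ones. I would prove both halves by a finite, exact computation. The pieces lie in $\mathrm{pieces}(S_C)$, and the generic label set $S_C$ of each family is a fixed finite set of labels (Table~\ref{tab:families}). So the list $\cA_{\Vtwo^{\mathrm w}}(S_C)$ is a fixed finite list of faces, built as in \S\ref{ss:lexexact} without the pieces $\pc{d2}$ and $\pc{d9}$. Certification of $(S_C,E)$ depends only on which faces of the $45$ cluster pieces are removed, which makes the question a combinatorial one over subsets of these $45$ pieces.

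First I would reduce the search space. The pairs $\pi(j,k)$ are always excluded, by (X2) and (X4). Still, the statement is about the certificate alone, so all $45$ pieces must be treated as removable. Monotonicity (\S\ref{ss:certs}) gives two facts: a certificate for $(S,E)$ works for every $E'\supseteq E$, and a witness for $(S,E)$ rules out every $E'\subseteq E$. Hence it suffices to do two things.
\begin{enumerate}[label=(\arabic*)]
\item Exhibit a certificate (integer functionals $l_1,\dots,l_r$, found stage by stage by linear programming and checked exactly as in \S\ref{ss:lexexact}) for each listed set.
\item Exhibit witnesses for a family $\mathcal B$ of sets that are maximal among the non-certified ones: every set of pieces that contains none of the listed sets must be contained in some member of $\mathcal B$.
\end{enumerate}
Each witness is a positive rational relation $\sum_v c_v v=0$ among the vertices of remaining faces. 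I would find it by linear programming and verify it exactly in integers after scaling by $168$. The sizes of $\mathcal B$ are those in Table~\ref{tab:curvecounts}.

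To build $\mathcal B$ I would compute the maximal sets avoiding all listed sets. These are the complements of the minimal transversals (hitting sets) of the family of listed sets, taken inside the $45$ pieces. Then I would attach a witness to each. If some such complement admitted a certificate instead, the list would be missing a minimal exclusion set. Enumerating the hitting sets is a finite exact computation. Minimality of each listed set then follows: removing any single piece from a listed set gives a set that contains no listed set, hence lies in some member of $\mathcal B$, hence has a witness.

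The main obstacle will be $T_4$, with $19$ minimal sets and $1992$ witnesses. I would exploit the symmetry of $S_C$ under relabellings that preserve it (e.g. exchanging bodies $3$ and $4$ on $T_4$) to cut the enumeration, and I would check that each witness's faces really lie in $\cA(S_C)$ by recomputing the face list from $S_C$ rather than taking it from the witness. For $T_7$ the single set $\Pi(1)$ needs only the $9$ witnesses obtained by leaving out one piece of $\Pi(1)$ and every piece outside it.
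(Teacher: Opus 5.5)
Your plan is the paper's proof: exact certificates for the listed sets, plus exact witnesses for a family $\mathcal B$ whose minimal transversals are exactly the listed sets. You record each member of $\mathcal B$ by its removed pieces, the paper by the complementary kept pieces, and the sizes in Table~\ref{tab:curvecounts} are indeed those of the blocker of the listed family (e.g.\ $72+3\cdot640=1992$ for $T_4$). One slip in your last sentence: for $T_7$ the nine witnesses must be for removing all $45$ pieces except a single $p\in\Pi(1)$, as your general recipe gives, not for removing $p$ together with the pieces outside $\Pi(1)$, which would leave sets such as $\Pi(1)\setminus\{q\}$ uncovered.
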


\begin{proof}
Each listed set $E$ has a certificate for $(S_C,E)$. For each family we give a list $\mathcal B$ of sets $A$ of pieces (Table~\ref{tab:curvecounts}), each with a witness for $(S_C,\text{all pieces}\setminus A)$. The listed sets are exactly the minimal transversals of $\mathcal B$. A set $E$ with $(S_C,E)$ certified meets each $A\in\mathcal B$ and therefore contains a minimal transversal. Each listed set is certified, and each of its proper subsets misses some $A$. The face systems of $T_5$ and $T_8$ coincide. The certificates and witnesses also hold with the labels of $T_8$ (\S\ref{ss:checks}).
\end{proof}

\begin{proposition}\label{prop:N}
For each of the families $T_4$, $T_5$, $T_7$ and $T_8$, the pair formed by $S_C$ and the $25$ pieces $\pc{d14a}$, $\pc{d14b}$ of all clusters has a witness relative to $\Vtwo^{\mathrm w}$. Consequently every minimal exclusion set contains a piece $\pc{d15}$.
\end{proposition}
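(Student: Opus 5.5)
To prove Proposition~\ref{prop:N}, I would exhibit for each of the four families one explicit witness in the sense of \S\ref{ss:T458}. That is a nonempty set $W$ of attained faces of $\cA_{\Vtwo^{\mathrm w}}(S_C)$, taken outside the pieces $\pc{d2}$, $\pc{d9}$ and all $\pc{d14a}$, $\pc{d14b}$, together with positive rationals $c_v$ on their vertices with $\sum_v c_v v=0$. The faces of $W$ may belong to the pieces $\pc{d15}$ and to every other piece enabled by $S_C$, including the faces of the D5 hull. The second sentence then follows formally. Suppose a set $E$ of pieces $\pc{d14a}$, $\pc{d14b}$, $\pc{d15}$ contains no piece $\pc{d15}$. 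Then $E$ is contained in the set $E_0$ of the $25$ pieces $\pc{d14a}$, $\pc{d14b}$. By the remark preceding Theorem~\ref{thm:options}, a witness for $(S_C,E_0)$ shows that $(S_C,E)$ is not certified for any $E\subseteq E_0$. So no minimal exclusion set, indeed no exclusion set at all, avoids the pieces $\pc{d15}$.

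To find the witnesses, I would compute for each family the finite list of faces of $\cA_{\Vtwo^{\mathrm w}}(S_C)$, removing the faces of $\pc{d2}$, $\pc{d9}$, $\pc{d14a}$ and $\pc{d14b}$. This uses the exact pipeline (1)--(5) of \S\ref{ss:lexexact}, with the pieces of Table~\ref{tab:pieces}, the cells of Table~\ref{tab:d7} and the face lattice of $\Vtwo^{\mathrm w}$ embedded for each $\lab E(T)\in S_C$. I would then solve a linear feasibility problem: find weights $c_v\ge0$, not all zero on some face, with $\sum c_v v=0$, and with every face that receives weight receiving positive weight on all its vertices. This is the Gordan alternative to the staged separation of Lemma~\ref{lem:lex}. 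The search for a certificate fails exactly when some face set supports such a relation. I would solve the problem in floating point, rationalize the result, and recheck $\sum c_v v=0$ in integer arithmetic after scaling the vertices by $168$. The sets recorded in the list $\mathcal B$ of Theorem~\ref{thm:options} already provide such witnesses whenever $A$ contains only pieces $\pc{d15}$. So I would first look there: in each family, some $A\in\mathcal B$ consisting of $\pc{d15}$ pieces should exist. This is consistent with the listed minimal exclusion sets, each of which contains a $\pc{d15}$ through some $\pi(j,k)$ or $\Pi(j)$.

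As a consistency check on the combinatorics, the minimal transversals listed in Theorem~\ref{thm:options} all meet the set of $\pc{d15}$ pieces. A set of $\mathcal B$ made only of $\pc{d15}$ pieces is exactly what forces this, so the witnesses should be among the stored ones. For $T_5$ and $T_8$ the face systems coincide, so one witness serves both. I would verify separately that it also holds with the labels of $T_8$.

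I expect the main obstacle to be practical rather than conceptual. It lies in making sure that the witness faces are genuinely attained faces of the enabled pieces. This is most delicate for the D5 hull $\Vtwo^{\mathrm w}$ and for the diagram~$7$ cells, which are enabled by the various $\lab E$ and $\lab R$ labels of $S_C$. It also requires that no removed piece is used. I would handle this by recomputing the face list independently of the witness file, exactly as for certificates, and checking membership face by face.
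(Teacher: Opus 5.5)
Your proposal is correct and takes the paper's route: the first sentence is a computed witness for $S_C$ together with the $25$ pieces $\pc{d14a}$, $\pc{d14b}$, and the second follows formally from the remark on witnesses in \S\ref{ss:T458}, exactly as you argue. Your side remark is also sound and yields the witness from the data behind Theorem~\ref{thm:options}: every set listed there contains a piece $\pc{d15}$, so some $A\in\mathcal B$ consists of pieces $\pc{d15}$ only (otherwise the $25$ pieces $\pc{d14a}$, $\pc{d14b}$ would meet every set of $\mathcal B$ and contain a minimal transversal without a $\pc{d15}$), and its witness for $S_C$ and the complement of $A$ is a witness for the proposition.
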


Lemma~\ref{lem:criterion} with $V=\Vtwo^{\mathrm w}$ therefore settles a point with label set $S_C$ on these families only if some piece $\pc{d15}$ is excluded there.

\begin{proposition}\label{prop:T458}
\begin{enumerate}[label=\textup{(\alph*)}]
\item Every jump point of $T_4$, $T_5$ and $T_8$ \textup{(}Theorem~\ref{thm:SJ}\textup{)} has finitely many central configurations, and every such point other than $q=1$ \textup{(}$P_1$\textup{)} and $q=-1$ \textup{(}$\bone$\textup{)} on $T_5$ is settled.
\item The minimal exclusion sets $\pi(1,2)\cup\pi(1,3)$ of $T_4$ and $\pi(1,4)$ of $T_5$ are excluded at every point, so every point of $T_4$ and of $T_5$ is settled, except $q=\pm1$ on $T_5$, where finiteness holds by counting.
\item The minimal exclusion set $\pi(1,5)$ of $T_8$ is excluded at every point, so every point of $T_8$ is settled.
\end{enumerate}
\end{proposition}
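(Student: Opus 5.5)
Proposition~\ref{prop:T458} has three parts, and I would prove them in turn, using Lemma~\ref{lem:criterion} with the exclusion results (X1)--(X4) and the minimal exclusion sets of Theorem~\ref{thm:options}.

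\emph{Part (a).} Part (a) concerns the jump points. It follows from Proposition~\ref{prop:exceptional} restricted to $T_4$, $T_5$ and $T_8$. On $T_4$ and $T_8$, Table~\ref{tab:curvecounts} says that every jump point is an ``other jump point''. Each such point carries a point certificate: Theorem~\ref{thm:KA} shows that $\mathrm{K14}$ of $Q_1$ has no solution, (X4) excludes the pieces $\pc{d15}$, and a certified pair remains. On $T_5$ the jump points are of three kinds.
\begin{itemize}
\item Two are the points of \S\ref{sec:points}. These are K-certified by Theorem~\ref{thm:points}.
\item Two are the points of Table~\ref{tab:noK}, namely $q=-1$, which is $\bone$, and $q=1$, which is $P_1$. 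Finiteness at these holds by the counts of \S\ref{sec:rays}, Theorems~\ref{thm:count1} and~\ref{thm:countP1}.
\item Seven are other jump points, certified as on $T_4$ and $T_8$.
\end{itemize}
I would only need to check that the identifications in Theorem~\ref{thm:SJ} match these rows.

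\emph{Parts (b) and (c).} For the points off the jump points, Theorem~\ref{thm:SJ} gives $S(m)=S_C$. Theorem~\ref{thm:options} then certifies $(S_C,E)$ relative to $\Vtwo^{\mathrm w}$ for $E=\pi(1,2)\cup\pi(1,3)$ on $T_4$, for $E=\pi(1,4)$ on $T_5$, and for $E=\pi(1,5)$ on $T_8$. The hull $\Vtwo^{\mathrm w}$ is valid at every positive mass (Remark~\ref{rem:hulls}), so it remains only to show that every piece of $E$ is excluded. Each $E$ is a union of pairs $\pi(1,k)=\{\pc{d14b}(1,k),\pc{d15}(1,k)\}$. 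By (X2) (Theorem~\ref{thm:D14}) and (X4) (Corollary~\ref{cor:nod15}), every such pair is excluded at every $m\in(\R_{>0})^5$. Lemma~\ref{lem:criterion} then settles each generic point. Combined with (a), this settles every point of $T_4$ and $T_8$, and every point of $T_5$ except $q=\pm1$, where finiteness holds by counting.

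\emph{Main obstacle.} Given the earlier results, none of this is deep. The substantive inputs are Corollary~\ref{cor:nod15} and Theorem~\ref{thm:D14}, which are already proved. The step most likely to need care is the bookkeeping. First, the branches of $T_4$ and $T_8$, including both signs of $q$ on $T_8$, must exhaust the families as stated. Second, the certificate for $T_8$ must hold with the labels of $T_8$ and not only with those of $T_5$; the proof of Theorem~\ref{thm:options} notes that the face systems coincide. Third, every jump point on each branch must be one of the points counted in Table~\ref{tab:curvecounts}. I would verify these against Theorem~\ref{thm:SJ} and the checks of \S\ref{ss:checks}.
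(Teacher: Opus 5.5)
Your proposal is correct and follows the paper's proof. Part (a) is Proposition~\ref{prop:exceptional} restricted to these families. Parts (b) and (c) combine Theorem~\ref{thm:SJ} for the generic label set, the certified minimal exclusion sets of Theorem~\ref{thm:options}, the exclusion of every pair $\pi(1,k)$ at all positive masses by (X2) and (X4), and Lemma~\ref{lem:criterion} with $\Vtwo^{\mathrm w}$, with Proposition~\ref{prop:exceptional} covering the jump points.
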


\begin{proof}
(a) is Proposition~\ref{prop:exceptional}. (b) and (c) follow from \S\ref{ss:howsettled}, Theorem~\ref{thm:SJ}, Theorem~\ref{thm:options} and Lemma~\ref{lem:criterion}, and from Proposition~\ref{prop:exceptional} for the other points.
\end{proof}

\subsection{The RL line}\label{ss:RL}
On $\mathrm{RL}$, $m=(4,1,4,4,\mu)$, the cluster is $Q_2=\{1,3,4,5\}$ with masses $(4,4,4,\mu)$.

\begin{theorem}\label{thm:RL}
Let $\mu>0$ and $m=(4,1,4,4,\mu)$. The planar five-body problem with masses $m$ has finitely many central configurations up to similarity, and $m$ is settled unless $\mu\in\{1,4,\frac49\}$.
\end{theorem}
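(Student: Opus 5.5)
The plan follows the pattern of Theorems~\ref{thm:T3}--\ref{thm:T2}: a generic certificate along the line, exclusion of the transversal cluster pieces pointwise, and separate treatment of the jump points. First, exactly as in the proof of Theorem~\ref{thm:SJ}, we compose every label polynomial with $m=(4,1,4,4,\mu)$, factor over $\Q$, and isolate the positive roots in $\mu$. Off finitely many jump values the label set is the generic $S_C$ of Table~\ref{tab:families}. No other label holds identically, which we check at sample points of each interval. We expect the jump values to include $\mu\in\{1,4,\frac49,\frac8{25}\}$ (Table~\ref{tab:noK}) together with a few others where an extra $\lab R$, $\lab E$ or $\lab{EQ}$ label holds.

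Next we exclude the pieces of the cluster $Q_2=\{1,3,4,5\}$, whose masses $(4,4,4,\mu)$ are a multiple of $F_A(\mu/4)$. By (X2) and (X4), every pair $\pi(2,k)$ is excluded at all $\mu$. By Lemma~\ref{lem:K14FAFB}, $\mathrm{K14}(m_{Q_2})$ has a solution only at the six values $\mu=4\nu$ of Table~\ref{tab:K14values}. Off these values (X1) therefore excludes $\pc{d14a}(2)$, and hence all of $\Pi(2)$. We then exhibit a certificate for $(S_C,\Pi(2))$ relative to $\Vtwo^{\mathrm w}$, checked as in \S\ref{ss:lexexact}. A natural hope is that $(S_C,\pi(2,k)\cup\pi(2,k'))$ already suffices. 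In that case the special values need no work, just as on $T_2$. With such a certificate, Lemma~\ref{lem:criterion} settles every generic point that is not special. If the certificate needs $\pc{d14a}(2)$, then at the six special values we exclude $\pc{d14a}(2)$ by the local analysis of Proposition~\ref{prop:d14a}. For the masses $F_A$ that analysis is already set up, including the equilateral-centred configuration at $\nu=1/\sqrt3$.

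Finally we treat the jump points. At $\mu=\frac8{25}$ (the point $\htwo$) and at every other jump value except $1,4,\frac49$, we compute the exact label set over a number field. If needed we run Theorem~\ref{thm:KA} for $\mathrm{K14}(4,4,4,\mu)$ to drop $\pc{d14a}(2)$, and we certify the resulting pair relative to $\Vtwo^{\mathrm w}$ or, where it is valid, $\VN$. The three values $\mu=4,1,\frac49$ are $\bone$, $\btwo$ and $P_1$. Finiteness there comes from the counts of Theorems~\ref{thm:count1}, \ref{thm:count2} and~\ref{thm:countP1}, so settledness is not claimed at these points.

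The main obstacle is the existence of the generic certificate. The label set $S_C$ is large: it contains three $\lab E$ labels, three $\lab R$ labels and three equalities, so many pieces of diagrams $1$, $4$, $5$, $7$, $8$, $10$--$12$ are enabled. Proposition~\ref{prop:N} warns that on similar families witnesses may force the use of $\pc{d15}$ exclusions. We would first search by linear programming with all of $\Pi(2)$ removed. If that fails, we would try $\VN$, checking its validity along the line (the condition $m_o\neq m_a+m_b$ and the hypersurface of (R1), (R2)). Only as a last resort would we remove a second cluster, excluding it via Corollary~\ref{cor:GF} off finitely many points.
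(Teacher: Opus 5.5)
Your plan matches the paper's proof. The paper certifies $(S_C,U)$ with $U$ the four pairs $\pi(2,k)$ (and $\pc{d2}[134]$), so your ``natural hope'' holds and neither $\pc{d14a}(2)$ nor the six values of Table~\ref{tab:K14values} ever enter. The jump factors with roots $\mu=16$ and a cubic root are certified with the same $U$. At $\mu=\frac8{25}$ the paper runs Theorem~\ref{thm:KA} and removes the $\pc{d14a}$ pieces of all five clusters, not only that of $Q_2$. The values $\mu=1,4,\frac49$ are counted.

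Your fallback would not have worked as stated. The analysis of Proposition~\ref{prop:d14a} depends on the mass of the isolated body, through $A=3m_1$, $B=3m_1+2M_Q$ and the higher-order obstructions. With the cluster scaled to $F_A(\nu)=(1,1,1,\nu)$, that mass is $\frac14$ on $\mathrm{RL}$ but $1$ on $T_3$.
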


The proof uses three results valid for every $\mu>0$: Proposition~\ref{prop:nod2} excludes the piece $\pc{d2}[134]$, and (X2) and (X4) exclude every $\pc{d14b}(2,k)$ and every $\pc{d15}(2,k)$, $k\in Q_2$, respectively.

\begin{proof}[Proof of Theorem~\ref{thm:RL}]
Each label polynomial restricted to the line is a polynomial in $\mu$. The label polynomials that vanish identically give $S_C$. The other nonzero restrictions have $6$ irreducible factors over $\Q$ with a positive root (Table~\ref{tab:RLfactors}). We isolate these roots in ball arithmetic. Off the positive roots of these factors $S(m)\subseteq S_C$. At a root of a factor $g$, $S(m)\subseteq S(g)$, the union of $S_C$ and the labels with a polynomial divisible by $g$. Let $U=\pi(2,1)\cup\pi(2,3)\cup\pi(2,4)\cup\pi(2,5)\cup\{\pc{d2}[134]\}$, excluded at all $\mu>0$ by the results above. We treat the factors as follows.
\begin{enumerate}[label=\textup{(\roman*)}]
\item The pair $(S_C,U)$ is certified, and so is $(S(g),U)$ for the factors $g$ of the first row of Table~\ref{tab:RLfactors}.
\item At the root $\mu=\frac8{25}$ of the factor in the second row, K14 has no solution for any of the five clusters (Theorem~\ref{thm:KA}), the pieces $\pc{d15}$ are removed by (X4), and the pair formed by $S(g)$ and all pieces $\pc{d14a}$, $\pc{d14b}$, $\pc{d15}$ together with $\pc{d2}[134]$ is certified.
\item The roots of the factors of the third row are $1$, $4$ and $\frac49$. At $\mu=1$ and $\mu=4$, $m$ is a permutation of $(1,1,4,4,4)$, resp.\ of $(1,4,4,4,4)$. At $\mu=\frac49$, $9m$ is a permutation of $P_1$. Theorems~\ref{thm:count2}, \ref{thm:count1} and~\ref{thm:countP1} apply.
\end{enumerate}
By Lemma~\ref{lem:criterion}, $m$ is settled at every $\mu>0$ other than $1$, $4$ and $\frac49$.

\begin{table}[ht]
\centering
\small
\begin{tabular}{@{}lrl@{}}
\toprule
treatment & factors & degrees\\
\midrule
(i) $(S(g),U)$ certified & $2$ & $1$ (root $\mu=16$), $3$\\
(ii) all K14 systems fail & $1$ & $1$ (root $\mu=\frac8{25}$)\\
(iii) counted & $3$ & $1$, $1$, $1$ ($\mu-1$, $\mu-4$, $9\mu-4$)\\
\bottomrule
\end{tabular}
\caption{The factors with a positive root of the label polynomials restricted to $\mathrm{RL}$, by their treatment in the proof of Theorem~\ref{thm:RL}.}\label{tab:RLfactors}
\end{table}
\end{proof}

\subsection{The family $T_9$}\label{sec:T9}\label{ss:farstrata}

Theorem~\ref{thm:curves} treats the mass vectors whose label set contains a curve set, except the points of the special rays and of the family $T_9$ below. In this subsection we settle the points of $T_9$ (Proposition~\ref{prop:farT9}). For $T_9$ we need Theorem~\ref{thm:KApar}. Table~\ref{tab:farclose} collects the numbers of the computation.

\paragraph{The curve $T_9$.} Let $\rho_0=0.0873780\ldots$ be the real root of $p^3+5p^2+11p-1$, and let $T_9$ be the curve $m(s)=(\rho_0/4,\rho_0,\rho_0,1,s)$, $s>0$. The cluster $Q_1$ has masses $(\rho_0,\rho_0,1,s)$, with body $b$ in position $b-1$. The cluster $Q_5$ has the constant masses $(\rho_0/4,\rho_0,\rho_0,1)$.

\begin{proposition}\label{prop:farT9}
Every point of $T_9$ is settled.
\end{proposition}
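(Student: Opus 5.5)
The plan follows the pattern of Theorems~\ref{thm:T3}--\ref{thm:RL}: determine the label set along $T_9$, certify it after removing pieces that are excluded, and treat finitely many jump points separately. First I compute the generic label set $S_C$ of $T_9$. I compose every label polynomial of Appendix~\ref{app:labels} with $m(s)=(\rho_0/4,\rho_0,\rho_0,1,s)$ and reduce exactly modulo $p^3+5p^2+11p-1$, working in $\Q(\rho_0)[s]$. The polynomials that vanish identically give $S_C$, which contains $D_{T_9}$. I take the norms of the others down to $\Q[s]$, factor them, and isolate their positive roots in ball arithmetic. These roots are the jump points. At each jump point I record a label set $S(g)\supseteq S(m)$, exactly as in the proof of Theorem~\ref{thm:SJ}.

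Next I deal with the generic points. Two clusters matter. The cluster $Q_5$ has the constant masses $(\rho_0/4,\rho_0,\rho_0,1)$, so a single run of Theorem~\ref{thm:KA} over $\Q(\rho_0)$ should show that $\mathrm{K14}(m_{Q_5})$ has no solution. Together with (X2) and (X4), this excludes $\Pi(5)$ at every point of $T_9$. The cluster $Q_1$ has masses $(\rho_0,\rho_0,1,s)$, which vary with $s$. By (X2) and (X4), all pairs $\pi(1,k)$ are excluded at every point. For $\pc{d14a}(1)$ I would apply Corollary~\ref{cor:GF} at one good point of the family. This shows that $\mathrm{K14}(m_{Q_1})$ has solutions at only finitely many $s$.

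I then look for a certificate of $(S_C,\Pi(5)\cup\pi(1,\cdot))$ relative to $\Vtwo^{\mathrm w}$ that keeps $\pc{d14a}(1)$. If the search finds one, Lemma~\ref{lem:criterion} settles every point with label set $S_C$, special points included. If it fails, I certify $(S_C,\Pi(5)\cup\Pi(1))$ instead. In that case the finitely many special $s$ where $\mathrm{K14}(m_{Q_1})$ holds have to be located and handled, either by an analysis as in Proposition~\ref{prop:d14a} or by a different exclusion set.

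Finally I treat the jump points. At each root $s_0$ of a factor $h$, I exclude $\pc{d14a}(1)$ with Theorem~\ref{thm:KApar}(b), taking $s$ as the extra unknown and $h$ as the constraint; this is the use announced in \S\ref{ss:Ksystems}. I then certify $(S(g),\Pi(1)\cup\Pi(5))$ relative to $\Vtwo^{\mathrm w}$, or relative to $\VN$ where the check of \S\ref{ss:checks} shows that $\VN$ is valid. I also have to confirm that no jump point of $T_9$ lies on a special ray or is a counted point. This holds automatically, because $\rho_0\notin\Q$ while those points have rational mass ratios.

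I expect the main obstacle to be the variable cluster $Q_1$. There may be a continuum of positive $s$ along which $\mathrm{K14}(\rho_0,\rho_0,1,s)$ has solutions only at scattered special values, and the generic certificate may need $\pc{d14a}(1)$ removed. Corollary~\ref{cor:GF} proves such special values are finite but does not locate them. So the first thing I would try is to find a generic certificate that keeps $\pc{d14a}(1)$, removing only $\Pi(5)$ and the $\pi(1,k)$. This is plausible because $Q_5$ is the rigid cluster and Proposition~\ref{prop:N} suggests that some $\pc{d15}$ is what must be removed. Only if that fails would I fall back on an explicit enumeration of the special values of $s$.
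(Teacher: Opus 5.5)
Your treatment of the labels and of the cluster $Q_5$ is the paper's: you use Theorem~\ref{thm:KA} for $\mathrm{K14}(m_{Q_5})$, together with (X2) and (X4). Your treatment of the generic label set is also the paper's. Its two certificates for $S_C$ relative to $\Vtwo^{\mathrm w}$ keep $\pc{d14a}(1)$ and leave out only $\pc{d14b}(1,k)$ and some $\pc{d15}(1,k)$, so your fallback through Corollary~\ref{cor:GF} is never needed. At the jump points the paper also excludes $\pc{d14a}(1)$ by Theorem~\ref{thm:KApar}(b), but only in the classes whose certificate removes that piece.

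The gap is at the jump point $s=\rho_0/4$. There $m_1=m_5$ and $m_2=m_3$, so $\lab{D6}(15|23)\in S(m)$ and the piece $\pc{d6}(145|234)$, the point $\pc v(4;145|234)$, is enabled. None of the exclusions in your plan touches it: these are $\Pi(1)$, $\Pi(5)$, $\pc{d2}$ and $\pc{d9}$. The paper's certificate at this point, however, leaves $\pc{d6}(145|234)$ out. So your step that certifies $(S(g),\Pi(1)\cup\Pi(5))$ has no justification there.

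You need an argument that no singular place of diagram~$6$ with centre $4$ and wings $\{1,4,5\}$, $\{2,3,4\}$ exists at this mass. The paper gets it from the test of Proposition~\ref{prop:d6P2}. It relabels to the masses $(1,1,4,4,4/\rho_0)$ and works with $\Q(4/\rho_0)$ in place of $\Q$. The weighted leading system is zero-dimensional with $5184$ solutions, certified in ball arithmetic. At each of them some obstruction coefficient $\eta_{R,n}$ is nonzero. By Theorem~\ref{thm:d6AB}, leaving out $\pc{d6}(145|234)$ removes exactly that point and no type of any other piece.
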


\begin{proof}
\emph{Labels.} We reduce each label polynomial composed with $m(s)$ modulo the minimal polynomial of $\rho_0$, isolate the positive roots of its norm in ball arithmetic, and decide at each of them, by a ball or by an exact gcd over $\Q(\rho_0)$, whether the polynomial vanishes there. As in the proof of Theorem~\ref{thm:SJ}, a label $\lab H$ holds only where its two conditions $\mathrm{Tri}$ hold as well, which we decide in the same way. At each $s>0$ the label set is $S_C=\{\lab E(123),\lab R(23;1),\lab R(24;1),\lab R(34;1),\lab{EQ}(2,3)\}$, except at $13$ jump points, all in $[0.0030920,2091.64]$. They fall into $10$ classes by their label sets.

\emph{Pieces removed at every $s$.} Theorem~\ref{thm:D14} removes the pieces $\pc{d14b}(1,k)$. $\mathrm{K14}$ fails at the masses of $Q_5$ (Theorem~\ref{thm:KA}) and Corollary~\ref{cor:nod15} removes the pieces $\pc{d15}(5,k)$. Hence no piece of the cluster $Q_5$ is attained.

\emph{The generic set.} Two certificates for $S_C$ relative to $\Vtwo^{\mathrm w}$ remove $\pc{d14b}(1,k)$, $k=2,\dots,5$, and in addition $\pc{d15}(1,5)$, respectively $\pc{d15}(1,2)$, $\pc{d15}(1,3)$, $\pc{d15}(1,4)$. By (X2) and (X4) these pieces are removed at all $s$, so that both certificates apply. Hence $m(s)$ is settled by Proposition~\ref{prop:cert} with these pieces removed, as in the proof of Proposition~\ref{prop:Kcert}.

\emph{The jump points.} Each class has a certificate whose removed pieces are removed at its points: $\pc{d14b}(1,k)$ and $\pc{d15}(1,k)$ by (X2) and (X4), the pieces of $Q_5$ as above, and, in the classes whose certificate removes it, $\pc{d14a}(1)$ because $\mathrm{K14}(m_{Q_1})$ has no solution at the roots of their jump factors, by Theorem~\ref{thm:KApar}(b). We check the hypothesis of that theorem by a unit standard basis with the factor adjoined, or through a polynomial in $s$ alone in the ideal of the full system that is prime to the factor. At $s=\rho_0/4$, where $m_1=m_5$, the certificate also removes the piece $\pc{d6}(145|234)$. After relabelling to the masses $(1,1,4,4,4/\rho_0)$, we carry out the test for diagram $6$ used at $P_2$ in \S\ref{ss:exceptional} with $\Q(4/\rho_0)$ in place of $\Q$, with the standard basis and the closed forms exact over $\Q(4/\rho_0)$: the leading system is zero-dimensional, and each of its solutions has a nonzero obstruction coefficient; the leading solutions and their obstructions are certified in ball arithmetic. Hence no place of this diagram exists. The certificates at $s=\rho_0/4$ and at the two jump points $s\approx0.0520549$ and $s\approx0.176101$, where $\lab E(245)$ and $\lab E(345)$ hold, are relative to $\VN$, the others relative to $\Vtwo^{\mathrm w}$.
\end{proof}

\begin{table}[ht]
\centering
\small
\begin{tabular}{@{}lp{0.62\textwidth}r@{}}
\toprule
& quantity & number\\
\midrule
$T_9$ & jump points; classes & $13$; $10$\\
& certificates for $T_9$ & $12$\\
& solutions of the leading system of diagram $6$ at $s=\rho_0/4$ & $5184$\\
\bottomrule
\end{tabular}
\caption{The computation along $T_9$ in numbers.}\label{tab:farclose}
\end{table}

\subsection{The curve sets}\label{ss:HC}

\begin{theorem}\label{thm:curves}\label{prop:HC}
Let $m\in(\R_{>0})^5$ satisfy every label of $\sigma(D_T)$ for one of the nine names $T$ of Table~\ref{tab:curvesets} and a relabelling $\sigma$. Suppose that $m$ is not a positive multiple of a permutation of $(1,4,4,4,4)$ or of $(1,1,4,4,4)$, and, if $T=T_9$, that $m$ is not a point of the family $T_9$ up to scaling and relabelling.
Then the planar five-body problem with masses $m$ has finitely many central configurations up to similarity, and $m$ is settled unless it is a positive multiple of a permutation of $P_1$ \textup{(}\S\ref{ss:twopoints}\textup{)}.
\end{theorem}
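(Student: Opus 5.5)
The plan is to reduce to the individual family theorems already proved in this section, after normalizing $m$ by scaling and relabelling. By Lemma~\ref{lem:slice}(b),(c), finiteness and settledness are invariant under positive scaling and permutation of the masses, and $P_1$, the special rays and $T_9$ are closed under these operations. We may therefore replace $m$ by $\sigma^{-1}(m)$ and assume that $m$ satisfies every label of $D_T$ itself. Lemma~\ref{lem:DT} then places a positive multiple of $m$ on a specific family.
\begin{itemize}
\item For $T\in\{T_1,T_2,T_3,T_5,T_7\}$ the multiple lies on the family $T$.
\item For $T=T_4$ it lies on $T_4$ or $T_2$, and for $T=T_8$ on $T_8$ or $T_5$.
\item For $T=\mathrm{RL}$ it is the image of a point of $\mathrm{RL}$ under the exchange of bodies $1$ and $2$.
\item For $T=T_9$ it is a point of $T_9$, which is excluded by hypothesis, or such an $\mathrm{RL}$ image.
\end{itemize}

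Next we go through the families and quote the corresponding theorem, each time checking that its exceptional parameters are exactly the special rays or $P_1$.
\begin{itemize}
\item $T_3$: Theorem~\ref{thm:T3} settles every point except $a=4$, which is $\bone$.
\item $T_1$ and $T_7$: Theorem~\ref{thm:T1T7} settles every point except $t=1$ on $T_1$ and $s\in\{2,\tfrac12,-1\}$ on $T_7$. By Table~\ref{tab:noK} these are $\bone$, $\btwo$ and $\btwo$.
\item $T_2$: Theorem~\ref{thm:T2} leaves only $t=1$, which is $\bone$.
\item $T_4$, $T_5$, $T_8$: Proposition~\ref{prop:T458}(b),(c) settles everything except $q=\pm1$ on $T_5$, which are $P_1$ and $\bone$.
\item $\mathrm{RL}$: Theorem~\ref{thm:RL} settles $m$ unless $\mu\in\{1,4,\tfrac49\}$. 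These values give permutations of $(1,1,4,4,4)$ and $(1,4,4,4,4)$, and a multiple of a permutation of $P_1$.
\end{itemize}
In all cases, then, $m$ is settled unless it is a multiple of a permutation of $P_1$ or lies on a special ray.

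The remaining points are finite by counting. At multiples of permutations of $P_1$, finiteness follows from Theorem~\ref{thm:countP1}; the special rays are excluded by hypothesis. At a settled $m$, finiteness follows from Corollary~\ref{cor:U}.

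The step needing the most care is the bookkeeping that every exceptional parameter listed in the family theorems is identified, up to scaling and relabelling, with $\bone$, $\btwo$ or $P_1$, and with nothing else. Among the other exceptional points, $P_2$, $\hone$ and $\htwo$ are settled by Proposition~\ref{prop:exceptional} and Theorem~\ref{thm:RL}. For this I would rely on the exact identifications in Table~\ref{tab:noK} and in the proof of Theorem~\ref{thm:SJ}. A second, smaller point is that Lemma~\ref{lem:DT} for $T_4$ and $T_8$ also allows the degenerate branches $r=1$ to fall onto $T_2$ and $T_5$, so the argument must route those through the $T_2$ and $T_5$ theorems.
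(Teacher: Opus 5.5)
Your proposal is correct and follows the paper's proof essentially verbatim. You apply Lemma~\ref{lem:DT} to $\sigma^{-1}m$, route $T_4$ and $T_8$ through $T_2$ and $T_5$ where needed and $T_9$ through $\mathrm{RL}$, and then invoke Theorems~\ref{thm:T3}, \ref{thm:T1T7}, \ref{thm:T2}, \ref{thm:RL} and Proposition~\ref{prop:T458}; their only unsettled points lie on the special rays or are multiples of permutations of $P_1$, which Theorem~\ref{thm:countP1} handles. The one slip is cosmetic: the four exceptional parameters $t=1$ on $T_1$ and $s=2,\frac12,-1$ on $T_7$ are $\bone$, $\bone$, $\btwo$, $\btwo$ respectively, not the three points you list.
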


\begin{proof}
By Lemma~\ref{lem:DT}, applied to $\sigma^{-1}m$, a positive multiple of $m$ is obtained by a relabelling from a point on one of the following families:
\begin{enumerate}[label=\textup{(\roman*)}]
\item $T$ itself, if $T$ is $T_1$, $T_2$, $T_3$, $T_5$ or $T_7$;
\item $T_4$ or $T_2$, if $T=T_4$, and $T_8$ or $T_5$, if $T=T_8$;
\item $\mathrm{RL}$, if $T$ is $\mathrm{RL}$ or $T_9$. For $T_9$ the other case of Lemma~\ref{lem:DT}, a point of $T_9$, is excluded by hypothesis.
\end{enumerate} As in the proof of Corollary~\ref{cor:S}, positive scaling and relabelling of the masses preserve finiteness and settledness. Theorems~\ref{thm:T3}, \ref{thm:T1T7}, \ref{thm:T2} and~\ref{thm:RL} and Proposition~\ref{prop:T458}, which hold on the whole families, give the claim. The points of these families that are only counted are the points of the special rays, excluded here, and the positive multiples of permutations of $P_1$, which are counted points.
\end{proof}

%% file: sec/s9_rays.tex
\section{The special rays}\label{sec:rays}

Recall the points $\bone$ and $\btwo$ of \S\ref{ss:reduction}. At these two points the separation of \S\S\ref{sec:strata}--\ref{sec:curves} fails, and at $\bone$ it has to fail (\S\ref{sec:intro}). On the rays through these two mass vectors and their relabellings we therefore count the real central configurations directly, following Moczurad and Zgliczy\'nski \cite{MZ19,MZ26}: a branch and bound in a reduced system with interval arithmetic, exclusion tests derived from exact identities, and Krawczyk's test \cite{Kra69,Moo77,Neu90}. With the same method we also count the central configurations at one further mass point needed in \S\ref{sec:curves} (\S\ref{ss:twopoints}).

\subsection{Statements}\label{ss:raystatements}
In this section a configuration is $q=(q_1,\dots,q_5)\in(\R^2)^5$ outside the collision set $\Delta$, with $q_i=(\xi_i,\eta_i)$, $r_{ij}=|q_i-q_j|$, $M=\sum_im_i$ and
\[
F_i(q)=q_i-\sum_{j\ne i}m_j\,\frac{q_i-q_j}{r_{ij}^3}\qquad(i=1,\dots,5).
\]
A \emph{normalized central configuration} is a zero of $F=(F_1,\dots,F_5)$, that is, a solution of \eqref{eq:cc} with $\lambda=1$ and $c=0$. As in the proof of Lemma~\ref{lem:realpos}, every central configuration is similar, by an orientation-preserving similarity, to a normalized one, and two central configurations are orientation-preserving similar if and only if their normalizations lie in the same $SO(2)$-orbit. The group $SO(2)$ acts freely on normalized central configurations. Thus the central configurations up to orientation-preserving similarity correspond to the $SO(2)$-orbits of normalized central configurations. The reflection $(\xi,\eta)\mapsto(\xi,-\eta)$ acts on these orbits as an involution whose fixed orbits are the collinear ones. Hence, with $N$ orbits of which $N_c$ are collinear, there are $(N+N_c)/2$ central configurations up to similarity.

Rotations give $DF(q)\,Jq=0$ at a normalized central configuration $q$, where $J$ rotates each $q_i$ by $\pi/2$. Hence $\rank DF(q)\le9$. We call $q$ \emph{nondegenerate} if $\rank DF(q)=9$. Since $DF(q)$ is, up to the mass matrix, the Hessian of $V=\tfrac12\sum_im_i|q_i|^2+\sum_{i<j}m_im_j/r_{ij}$, this is the usual notion \cite{Moe14}: $q$ is a nondegenerate critical point of the potential on the normalized configurations, modulo rotations.

For a non-collinear configuration let $P$ be the convex hull of $q_1,\dots,q_5$. It is \emph{strictly convex} if all bodies are vertices of $P$, and of type $(4,1)$ or $(3,2)$ if exactly four, resp.\ three, bodies are vertices of $P$ and the others are interior points of $P$.

\begin{theorem}\label{thm:count1}
Let $m$ be a positive multiple of a permutation of $(1,4,4,4,4)$.
\begin{enumerate}[label=\textup{(\roman*)}]
\item There are exactly $306$ central configurations up to orientation-preserving similarity, and exactly $183$ up to similarity.
\item Every central configuration is nondegenerate.
\item Of the $306$, exactly $60$ are collinear, $24$ strictly convex, $102$ of type $(4,1)$ and $120$ of type $(3,2)$. In every non-collinear central configuration each body is a vertex or an interior point of the convex hull, and each of the $24$ oriented cyclic orders of the five bodies is the counterclockwise order of exactly one strictly convex one.
\item There are exactly $612$ positive normalized central configurations in the sense of \cite[Def.~2]{AK12}.
\end{enumerate}
\end{theorem}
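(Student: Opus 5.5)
By Lemma~\ref{lem:slice}(c) the counts are invariant under positive scaling and relabelling, so I fix $m=(1,4,4,4,4)$ and count normalized central configurations, i.e.\ zeros of $F$ in $(\R^2)^5\setminus\Delta$, one per $SO(2)$-orbit. To remove the rotation I restrict to a slice and work in the reduced system of Moczurad and Zgliczy\'nski \cite{MZ19,MZ26}. Concretely, I put the centre of mass at the origin, which is forced by $\sum m_iF_i=0$, and I fix the rotation by requiring the light body $q_1$ to lie on the positive $\xi$-axis. The case $q_1=0$ is handled by a second slice that puts $q_2$ on the axis instead. A zero of the reduced system corresponds to exactly one orbit. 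Nondegeneracy, with $\rank DF=9$, is equivalent to invertibility of the reduced Jacobian. Collinear configurations are counted separately by Moulton's theorem: for five bodies there are $5!/2=60$ orbits, all nondegenerate on the line, and transversal nondegeneracy is checked at each.

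The first step is an a priori bound. From $\lambda I=U$ with $\lambda=1$, together with the standard lower bounds on mutual distances in normalized central configurations (as in \cite{MZ19}), I get explicit bounds $r_{\min}\le r_{ij}\le R$. The search box is therefore compact and bounded away from collisions. The second step is a branch and bound in interval arithmetic. Each box is handled in one of three ways. It is discarded if some component of $F$, or one of the exact identities (centre of mass, virial $I=U$, angular-momentum/torque identity $\sum m_i q_i\times F_i=0$), excludes $0$. It is accepted if Krawczyk's operator maps it into its interior, which certifies a unique zero and the invertibility of $DF$ there, giving (ii). Otherwise it is bisected.

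Uniqueness across boxes is then established by checking that the certified boxes are pairwise disjoint, or are merged when they overlap, using the uniqueness part of the Krawczyk test. For each certified box I read off the combinatorial data rigorously: collinearity, and the convex-hull type via the signs of the orientation determinants $\det(q_j-q_i,q_k-q_i)$, enclosed away from zero. This yields the tally $60+24+102+120=306$ for (iii) and shows that no body lies on an edge of the hull. The statement about cyclic orders follows by listing the strictly convex ones. The count up to similarity is $(306+60)/2=183$ by the reflection remark. For (iv), Lemma~\ref{lem:realpos} gives two positive normalized configurations per orbit, hence $612$.

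The main obstacle is termination near degenerate regions. At $(4,4,4,4,1)$ the modified Roberts continuum lives in the complexified system, and its real shadow, four equal masses with the light body at the centre of the square and suitable signs, comes close to real solutions. I expect the central square-with-centre configuration and its neighbours to require careful exclusion or certification. There I would first exploit the symmetry: restrict to symmetric subspaces, where Krawczyk succeeds, and then show transversal invertibility. Failing that, I would use the tie-breaking preconditioners and a locally refined interval enclosure of the Jacobian, which only affects running time, as stated in \S\ref{ss:masscert}.
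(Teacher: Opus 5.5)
Your overall method is the paper's: an interval branch and bound on a reduced system, with the exact identities as exclusion tests, Krawczyk's test for existence, uniqueness and nondegeneracy, orientation signs for the shapes, then $(306+60)/2$ and Lemma~\ref{lem:realpos}. The way you fix the rotation, however, does not work as stated.

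At these masses the four bodies of mass $4$ at the vertices of a square, with the light body at its centre, form a central configuration, so $q_1=0$ really occurs. In your first slice ($\eta_1=0$, $\xi_1>0$) the search region must be a compact box. On its face $\xi_1=0$ every rotation of this configuration lies in the slice, so the reduced system has a whole circle of zeros there. Boxes meeting that circle can neither be discarded nor certified by Krawczyk, and the search does not terminate.

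Suppose instead you cut the first slice at $\xi_1\ge\delta>0$. Then the second slice must cover exactly the orbits with $|q_1|<\delta$. That inequality has to be enforced by an exclusion test and verified strictly on the certified boxes. ``The case $q_1=0$'' is an equality you cannot impose in an interval search. Without a strict partition, every orbit with $q_1\ne0\ne q_2$ has a chart point in both slices and is counted twice. Checking that certified boxes are disjoint cannot catch this, because the two representatives are different points in different charts.

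This is exactly what the paper's regions do. Region $k$ is defined by $|q_k|^2+c_k\ge|q_i|^2+c_i$ for all $i$, and by Lemma~\ref{lem:bounds}(a) it forces $|q_k|^2\ge\mu_k>0$. So the chart $\Phi_{k,e}$, with $q_k$ on the positive axis, lives on a box with $\xi_k\ge R_{\min,k}>0$. Each tightened certified box is shown to lie strictly inside its region, which makes boxes and orbits correspond bijectively (Proposition~\ref{prop:raycount}). The tie-breaking constants $c_k$ you invoke serve this partition, not termination: without them, symmetric configurations would sit on region boundaries.

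Two smaller points. First, collinearity cannot be read off from orientation determinants. They vanish exactly at a collinear configuration, so their enclosures always contain $0$. Quoting Moulton's $60$ does not tell you which certified boxes are the collinear ones, nor let you remove them from the search. The paper certifies collinearity by enlarging the box to one symmetric under the chart reflection $(v_2,v_4,v_6)\mapsto(-v_2,-v_4,-v_6)$ and applying Krawczyk there, so the unique zero equals its mirror image. Only boxes where some orientation has a sign go through the hull tests. Second, the modified Roberts continuum is not an obstacle on the real side. Its points have $\delta_{k5}<0$, so none of them is a zero of $F$, which uses the positive distances. The configuration that actually troubles your chart is the square with centre, for the reason above.
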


\begin{theorem}\label{thm:count2}
Let $m$ be a positive multiple of a permutation of $(1,1,4,4,4)$. Then statements \textup{(i)--(iv)} of Theorem~\ref{thm:count1} hold with the numbers $294$ and $177$ in \textup{(i)}; $60$, $24$, $90$ and $120$ in \textup{(iii)}; and $588$ in \textup{(iv)}.
\end{theorem}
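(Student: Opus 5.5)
By Lemma~\ref{lem:slice}(c) both counts are invariant under positive scaling and relabelling. We therefore fix the representative $m=(1,1,4,4,4)$, or the permutation that best fits the symmetry-adapted reduction below, and count the normalized central configurations, i.e.\ the zeros of $F$, modulo $SO(2)$. We fix the rotation by a slice. The centre of mass is automatically at the origin. We choose a pair, for instance the two bodies of mass $1$, and require $\eta_a=\eta_b$ with $\xi_a<\xi_b$. Following Moczurad and Zgliczy\'nski \cite{MZ19,MZ26}, we use the reduced system in which this slice and the centre-of-mass relations eliminate three coordinates. The result is a square system whose zeros are in bijection with the $SO(2)$-orbits.

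\textbf{Compact search region.} Next we bound the search region a priori. From $\lambda I=U$ with $\lambda=1$, together with standard lower bounds on $r_{ij}$ for normalized central configurations at given masses (e.g.\ $r_{ij}\ge c(m)>0$ from the equation of the closest pair) and the upper bound $\max|q_i|^2\le I/m_{\min}$ combined with $U\le\sum m_im_j/c(m)$, we obtain a compact box that excludes the collision set.

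On this box we run a branch and bound in outward-rounded interval arithmetic. A box is discarded if some component of $F$ encloses an interval not containing $0$. We also discard it by the exclusion tests derived from exact identities: the centre of mass, the virial identity $I=U$, and the angular momentum/torque identity. A box is accepted if the Krawczyk operator, with a floating-point preconditioner, maps it into its interior; this certifies a unique zero there and, through the nonsingular reduced Jacobian, that this zero is nondegenerate, which gives (ii). Uniqueness forces the accepted boxes to be disjoint up to the identification made by the slice. We also verify that every zero lies in the interior of the slice chart, i.e.\ $\xi_a\ne\xi_b$ is certified, so that no orbit is double counted or missed.

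\textbf{The counts.} The number of accepted boxes is $N=294$. Collinearity is decided on each Krawczyk box. The collinear zeros are certified separately in the collinear subsystem, where Moulton's count gives $5!/2=60$. Non-collinear boxes are certified by an enclosure of some triangle area that excludes $0$. The count up to similarity is then $(N+N_c)/2=(294+60)/2=177$.

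For (iii), on each non-collinear box we decide the convex-hull type by sign-definite enclosures of the orientation determinants $\det(q_j-q_i,q_k-q_i)$ for all triples. These give the vertex/interior status of every body, since no three bodies are certified collinear. From them we read off the cyclic order of the strictly convex ones, and we check the tallies $24$, $90$, $120$ and the bijection with the $24$ oriented cyclic orders. Statement (iv) follows from Lemma~\ref{lem:realpos}: each orbit gives exactly two positive normalized central configurations in the sense of \cite{AK12}, so there are $2\cdot294=588$.

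\textbf{Main obstacle.} I expect the main obstacle to be termination of the branch and bound near the boundary of the region, where some $r_{ij}$ is small and the interval enclosures of $r_{ij}^{-3}$ blow up. This matters especially at these symmetric masses, where near-degenerate configurations (close to the modified Roberts continuum, which is not a real one but lies nearby in parameter space) produce poorly conditioned Jacobians. To handle it I would first sharpen the lower bound on mutual distances using the equation of the closest pair, so that small-distance regions are excluded analytically. I would also use boxes in the relative coordinates $q_i-q_j$ for close pairs, with a higher working precision for the Krawczyk test there.
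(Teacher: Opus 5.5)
Your plan follows the paper's outline: an interval branch and bound with Krawczyk's test on a seven-dimensional reduced system after Moczurad and Zgliczy\'nski, the count $N=294$, the formula $(N+N_c)/2$, and (iv) from Lemma~\ref{lem:realpos}. Two of your choices are legitimate alternatives to the paper's:
\begin{itemize}
\item the global slice $\eta_a=\eta_b$, $\xi_a<\xi_b$. It is transverse to the rotations wherever $q_a\ne q_b$, so it replaces the paper's regions, tie-breaking constants and charts $\Phi_{k,e}$;
\item taking $N_c=60$ from Moulton's theorem as an input. If $234$ boxes are certified non-collinear, the bijection makes the other $60$ exactly the collinear orbits. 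The paper instead certifies collinearity by Krawczyk's test on a reflection-symmetric box and uses Moulton's count only as a check.
\end{itemize}
Your derivation of the search region, however, does not work. The natural estimate of the other terms in the equation of the closest pair, $m_l(r_{il}^{-2}+r_{jl}^{-2})\le 2m_lr^{-2}$, only gives $r^3\ge m_i+m_j-2\sum_{l\ne i,j}m_l$. This is vacuous for every pair at these masses, because the other bodies act at the same order $r^{-2}$. The bounds have to come in the opposite order. First bound $R=\max_i|q_i|$ from above by a cluster argument, as in Lemma~\ref{lem:bounds}(b) or \cite{MZ19}. Then $m_im_j/r_{ij}\le U=I\le MR^2$ gives $r_{ij}\ge m_im_j/(MR^2)$.

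The step that fails outright is (iii). You require a certified sign for every orientation determinant. At these masses, however, there are non-collinear central configurations with three bodies exactly on a line. To see this, minimize $U$ on $\{I=1\}$ over the configurations symmetric under the reflection that exchanges the two bodies of mass $1$ and fixes the three of mass $4$. In each collision-free component the minimum is a central configuration, by symmetric criticality. It is not collinear, since the two bodies of mass $1$ cannot both lie on the axis, but its three bodies of mass $4$ do. There one determinant vanishes exactly, no enclosure excludes $0$, and the hull cannot be read off from the full order type. The paper's tests use only orientations that are nonzero. A body is certified to be a vertex when the two proposed hull edges at it leave all other bodies strictly on one side. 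It is certified to be an interior point when it lies in the open triangle of three others or inside a quadrilateral of the other four.

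Three smaller points:
\begin{itemize}
\item Say which seven components of $F$ you keep. A zero of the reduced system is a central configuration only if the dropped components follow from $\sum_im_iF_i=\sum_im_iq_i$ and $\sum_im_iq_i\times F_i=0$, and for a fixed choice of components this can fail on a hyperplane. In the paper's chart it is $\xi_k=\xi_e$, which is why Lemma~\ref{lem:RS}(a) assumes $\xi_k\ne\xi_e$. With your slice, dropping the vertical components of $F_a$ and $F_b$ and the horizontal component for the body eliminated by the centre of mass turns the condition into $\xi_a\ne\xi_b$, which is automatic.
\item Krawczyk uniqueness in each box does not by itself make distinct boxes carry distinct orbits. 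That needs boxes of one subdivision, with disjoint interiors, and zeros in their interiors, as in the proof of Proposition~\ref{prop:raycount}.
\item Lemma~\ref{lem:slice}(c) transfers the counts to the whole ray and to all relabellings. For (ii) and (iii) you also need that scaling and relabelling preserve nondegeneracy, collinearity and shapes.
\end{itemize}
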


The $60$ collinear configurations agree with Moulton's count \cite{Mou10}. Statement (iv) follows from (i) by Lemma~\ref{lem:realpos}. By (iv), the positive solutions of the system of \cite{AK12} at $(4,4,4,4,1)$ are finite in number, although the system has a continuum of real solutions there (\S\ref{sec:intro}). At this mass, finiteness therefore rests on the signs of the distances, as anticipated in \S\ref{sec:intro}.

\subsection{The reduced system and the a priori bounds}\label{ss:raysmethod}
Every normalized central configuration satisfies the following identities. Summing $m_iF_i$ gives $\sum_im_iq_i=0$. Taking the scalar product of $F_i=0$ with $m_iq_i$ and summing gives $I=U$, where $I=\sum_im_i|q_i|^2$ and $U=\sum_{i<j}m_im_j/r_{ij}$. With Lagrange's identity $MI=\sum_{i<j}m_im_jr_{ij}^2$, this is the virial identity $\sum_{i<j}m_im_j(r_{ij}^2/M-1/r_{ij})=0$. For a nonempty proper subset $C$ of the bodies, summing $m_iF_i$ over $C$ gives the cluster identity $\sum_{i\in C}m_iq_i=\sum_{i\in C,\,j\notin C}m_im_j(q_i-q_j)/r_{ij}^3$. Let $c_C$ be the centre of mass of $C$. The scalar products of $m_iF_i=0$ with $q_i-c_C$, summed over $C$, give the virial identity of the cluster $C$.

\begin{lemma}\label{lem:bounds}
Let $q$ be a normalized central configuration and $R=\max_i|q_i|$.
\begin{enumerate}[label=\textup{(\alph*)}]
\item Some $r_{ij}$ is at least $M^{1/3}$. Hence $R\ge M^{1/3}/2$.
\item For every $\varepsilon>0$, $R<\max\{4\varepsilon,\ \max_{1\le s\le4}[(s-1)\varepsilon+W_s/\varepsilon^2]\}$, where $W_s$ is $M$ minus the sum of the $s$ smallest masses.
\item $r_{ij}\ge m_im_j/(MR^2)$ for all $i\ne j$.
\end{enumerate}
\end{lemma}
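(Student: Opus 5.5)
All three parts come from the identities of \S\ref{ss:raysmethod}: the virial identity, $I=U$, and the cluster identity projected onto a well-chosen direction. I take them in the order (a), (c), (b), since (b) is the only part that needs a choice of cluster.

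For (a) I use the virial identity $\sum_{i<j}m_im_j(r_{ij}^2/M-1/r_{ij})=0$. If every $r_{ij}$ were less than $M^{1/3}$, then $r_{ij}^2/M<1/r_{ij}$ for every pair. Every term would then be negative, which is a contradiction. So some $r_{ij}\ge M^{1/3}$. Since $r_{ij}\le|q_i|+|q_j|\le2R$, this gives $R\ge M^{1/3}/2$.

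For (c) I use $I=U$. On one side, $U\ge m_im_j/r_{ij}$ for any single pair, since all terms of $U$ are positive. On the other side, $I=\sum_km_k|q_k|^2\le MR^2$. Together these give $r_{ij}\ge m_im_j/(MR^2)$.

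For (b) I fix $\varepsilon>0$ and choose a body $i$ with $|q_i|=R$. Let $u=q_i/R$, put $x_k=\langle q_k,u\rangle$, and sort the bodies by decreasing $x_k$, so that the first one has $x=R$. The cases are split by the first large gap in this order.
\begin{itemize}
\item If all four consecutive gaps are less than $\varepsilon$, then $\min_kx_k>R-4\varepsilon$. The centre of mass is at $0$, so $\min_kx_k\le0$, and hence $R<4\varepsilon$.
\item Otherwise let $s\in\{1,\dots,4\}$ be the least index such that the gap between the $s$-th and $(s+1)$-th values is at least $\varepsilon$, and let $C$ be the set of the first $s$ bodies. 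Then every $k\in C$ has $x_k\ge R-(s-1)\varepsilon$, strictly when $s\ge2$. Every pair $k\in C$, $l\notin C$ has $x_k-x_l\ge\varepsilon$, so $r_{kl}\ge\varepsilon$. The cluster identity for $C$, projected onto $u$, reads $\sum_{k\in C}m_kx_k=\sum_{k\in C,\,l\notin C}m_km_l(x_k-x_l)/r_{kl}^3$. Each right-hand term is at most $m_km_l/r_{kl}^2\le m_km_l/\varepsilon^2$, so the right side is at most $m_C(M-m_C)/\varepsilon^2$. The left side is at least $m_C(R-(s-1)\varepsilon)$. Dividing by $m_C$ gives $R\le(s-1)\varepsilon+(M-m_C)/\varepsilon^2$. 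Finally $M-m_C\le W_s$, because $C$ has $s$ bodies and so $m_C$ is at least the sum of the $s$ smallest masses.
\end{itemize}

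The main obstacle is the strictness of the inequality in (b), and I have not closed it. For $s\ge2$ it follows from the strict bound on $x_k$ for $k\in C$. For $s=1$, equality in $R\le(s-1)\varepsilon+W_s/\varepsilon^2$ would need every outside body $l$ to satisfy $x_i-x_l=r_{il}=\varepsilon$ exactly. Then all outside bodies coincide at the point $q_i-\varepsilon u$, which is a collision, and this is excluded because $q\notin\Delta$. A further point to check is that equality in $M-m_C\le W_s$ does not reopen equality: the bound on the right side must be strict either there or elsewhere. I would verify this carefully when writing out the proof.
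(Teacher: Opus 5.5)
Your proof is correct, and parts (a) and (c) are exactly the paper's arguments.

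For (b) the paper also projects the cluster identity onto $q_{i_0}/R$, and it also uses the centre of mass when all bodies cluster. The difference is how the cluster is formed: the paper takes $C$ to be the connected component of $i_0$ in the graph whose edges are the pairs with $r_{ij}\le\varepsilon$. Bodies of $C$ then lie within $(s-1)\varepsilon$ of $q_{i_0}$. Every cross pair has $r_{kl}>\varepsilon$ strictly, so the right side is strictly below $m_C(M-m_C)/\varepsilon^2$ and strictness comes for free. In the case $s=5$ it gets $MR=|\sum_jm_j(q_{i_0}-q_j)|<4M\varepsilon$ because the term $j=i_0$ vanishes.

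Your one-dimensional gap clustering works just as well, and your strictness argument is in fact complete. In both subcases you have shown $R<(s-1)\varepsilon+(M-m_C)/\varepsilon^2$ strictly. Replacing $M-m_C$ by the larger $W_s$ cannot destroy a strict inequality, so the ``further point'' you raise is vacuous.

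You could also drop the collision argument for $s=1$ by using a gap $>\varepsilon$ as the threshold instead of $\ge\varepsilon$. Cross pairs then have $r_{kl}\ge x_k-x_l>\varepsilon$. In the all-clustered case, $0=\sum_km_kx_k\ge m_iR+(M-m_i)(R-4\varepsilon)$ gives $R\le4\varepsilon(M-m_i)/M<4\varepsilon$.
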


\begin{proof}
(a) If all $r_{ij}<M^{1/3}$, all terms of the virial identity are negative. (b) Let $G$ be the graph with an edge $ij$ when $r_{ij}\le\varepsilon$, and $C$ the component of a body $i_0$ with $|q_{i_0}|=R$, with $s$ bodies. If $s=5$, then $MR=|\sum_jm_j(q_{i_0}-q_j)|<4M\varepsilon$. Otherwise the scalar product of the cluster identity for $C$ with $q_{i_0}/R$ is at least $M_C(R-(s-1)\varepsilon)$ on the left and less than $M_C(M-M_C)/\varepsilon^2$ on the right. (c) $m_im_j/r_{ij}\le U=I\le MR^2$.
\end{proof}

Part (b) sharpens \cite[Thm.~11]{MZ19}: at $(1,4,4,4,4)$ that theorem gives about $10.11$, whereas ours gives $6.352$. Since the boxes $B_k$ below have sides set by this bound, the sharper bound shrinks the region searched. Table~\ref{tab:raycount} lists the values of $\varepsilon$ that we use and the resulting bounds.

Fix real constants $c_1,\dots,c_5$. A normalized central configuration lies in \emph{region} $k$ if $|q_k|^2+c_k\ge|q_i|^2+c_i$ for all $i$. Every one lies in some region. The constants only break ties: a symmetric configuration often has two bodies at the same distance from the centre of mass. With $c=0$ it would lie on the boundary of two regions. By Lemma~\ref{lem:bounds}(a), in region $k$ we have $|q_k|^2\ge\mu_k:=M^{2/3}/4+\min_ic_i-c_k$, which is positive for the constants we use. Hence $q_k\neq0$ and exactly one rotation puts $q_k$ on the positive $\xi$-axis. For a second body $e\ne k$ and the other three bodies $f_1,f_2,f_3$, the chart $\Phi_{k,e}\colon\R^7\to(\R^2)^5$, $v\mapsto q$ with
\[
q_k=(v_0,0),\qquad q_{f_a}=(v_{2a-1},v_{2a})\ (a=1,2,3),\qquad q_e=-\frac1{m_e}\sum_{i\ne e}m_iq_i,
\]
is an isomorphism onto $\{\sum_im_iq_i=0,\ \eta_k=0\}$, and the \emph{reduced system} $RS(v)$ consists of the seven components $F_{f_1},F_{f_2},F_{f_3}$ and the first component of $F_k$ at $\Phi_{k,e}(v)$ \cite[Thm.~2]{MZ26}, \cite[Thm.~3]{MZ22}.

\begin{lemma}\label{lem:RS}
Let $q=\Phi_{k,e}(v)\notin\Delta$.
\begin{enumerate}[label=\textup{(\alph*)}]
\item If $RS(v)=0$ and $\xi_k\ne\xi_e$, then $q$ is a normalized central configuration. Conversely, every normalized central configuration with $\eta_k=0$ is $\Phi_{k,e}(v)$ for a zero $v$ of $RS$.
\item If $q$ is a normalized central configuration with $\xi_k\ne0$ and $DRS(v)$ is invertible, then $q$ is nondegenerate.
\end{enumerate}
\end{lemma}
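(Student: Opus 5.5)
For (a), the plan is to reconstruct the missing equations from the chart and the centre of mass. On the image of $\Phi_{k,e}$ we have $\sum_im_iq_i=0$, and summing $m_iF_i$ gives the exact identity $\sum_im_iF_i(q)=\sum_im_iq_i-\sum_{i\ne j}m_im_j(q_i-q_j)/r_{ij}^3=0$, since the double sum cancels by antisymmetry. If $RS(v)=0$, then $F_{f_1}=F_{f_2}=F_{f_3}=0$ and the first component of $F_k$ vanishes. Writing $F_k=(0,\phi)$, the identity gives $m_eF_e=-m_kF_k$, so $F_e=-(m_k/m_e)(0,\phi)$. It remains to show $\phi=0$. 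For this I would use the rotation identity: the torque $\sum_im_i\,q_i\wedge F_i(q)$ vanishes identically, because $q_i\wedge q_i=0$ and the mutual forces are central and cancel in pairs. Only the terms of $k$ and $e$ survive, and they give $m_k\,q_k\wedge(0,\phi)+m_e\,q_e\wedge F_e=m_k\phi\,(\xi_k-\xi_e)=0$. Hence $\xi_k\ne\xi_e$ forces $\phi=0$, and $q$ is a normalized central configuration. The converse is immediate: a normalized central configuration has $\sum m_iq_i=0$, so with $\eta_k=0$ it lies in the image of the isomorphism $\Phi_{k,e}$, and its preimage is a zero of $RS$.

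For (b), I would argue at the level of tangent spaces. At a zero $q$, $DF(q)$ is a $10\times10$ matrix with $DF(q)Jq=0$. The Jacobian of the full map restricted to the slice $L=\{\sum m_iq_i=0,\eta_k=0\}$, composed with $\Phi_{k,e}$, has the rows of $RS$ among its components. First I would note that the image of $DF(q)$ lies in the $7$-dimensional space $\{y:\sum m_iy_i=0,\ \sum m_iq_i\wedge y_i=0\}$, by differentiating the two identities above at the zero. Second, translations change $F$ by $-$ the translation (because $q_i$ enters linearly), so $DF$ maps the $2$-dimensional space of translations injectively. Third, the tangent directions are the sum of $L$ (dimension $7$), the translations (dimension $2$) and the rotation $Jq$. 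This sum is direct and spans $\R^{10}$, provided $\xi_k\ne0$, which makes the $\eta_k$-component of $Jq$ nonzero, so $Jq\notin L$. If $DRS(v)$ is invertible, then $DF|_L$ composed with the projection onto the coordinates of $RS$ is injective. Since on the $7$-dimensional image space those coordinates determine the rest, as in (a), provided $\xi_k\ne\xi_e$, $DF|_L$ is injective. Together with the translations, which map into a complementary subspace ($\sum m_iy_i\ne0$ there), this gives $\rank DF(q)\ge9$, hence $\rank DF(q)=9$.

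The main obstacle is the case $\xi_k=\xi_e$ in (b), where the reconstruction of $F_e$ and $F_k$ from $RS$ fails. I would handle it by showing directly that the kernel of the linear map $y\mapsto$ (the $RS$-coordinates of $y$) on the constrained image space is trivial whenever invertibility holds. If that failed, I would replace $e$ by another body, or note that invertibility of $DRS$ already forces injectivity of $DF|_L$ by counting dimensions, since the image of $DF|_L$ lies in a $7$-dimensional space.
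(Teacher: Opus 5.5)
Your proposal is correct and follows the paper's proof. In (a) you use the same two identities, $\sum_i m_iF_i=\sum_i m_iq_i$ and $\sum_i m_i\,q_i\times F_i=0$, to reach $m_k(\xi_k-\xi_e)F_{k,2}=0$; in (b) you use the same splitting of $\R^{10}$ into $L$, the line $\R\,Jq$ (which needs $\xi_k\ne0$) and the translations, with the mass-weighted sum killing the translation part.

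The obstacle you raise at $\xi_k=\xi_e$ does not exist. For $\ell\in L$, $DF(q)\ell=0$ already gives $DRS(v)\,\Phi_{k,e}^{-1}(\ell)=0$, so no reconstruction of the missing components is needed; this is exactly how the paper concludes. Two small slips should also be fixed: a translation changes $F$ by $+$ itself, and only $DF(q)|_L$, not all of $DF(q)$, maps into $\{\sum_i m_iy_i=0\}$.
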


\begin{proof}
(a) For any configuration $\sum_im_iF_i=\sum_im_iq_i$ and $\sum_im_i\,q_i\times F_i=0$, with $a\times b=a_1b_2-a_2b_1$. If $RS(v)=0$, then $F_i=0$ for $i\ne k,e$. The first identity then gives $m_kF_k+m_eF_e=0$, and the second $m_k(q_k-q_e)\times F_k=0$. As $q_k-q_e=(\xi_k-\xi_e,-\eta_e)$ and the first component of $F_k$ vanishes, this is $m_k(\xi_k-\xi_e)F_{k,2}=0$. Hence $F_k=F_e=0$. The converse holds because the centre of mass of a normalized central configuration is $0$. (b) Differentiating the first identity gives $\sum_im_i(DF(q)u)_i=\sum_im_iu_i$ for all $u$. Since $\xi_k\neq0$, the vector $Jq$ does not lie in the image $L$ of $\Phi_{k,e}$. Hence $\R^{10}$ is the direct sum of $L$, the line spanned by $Jq$ and the translations. Let $u=\ell+\alpha Jq+\tau\in\ker DF(q)$ accordingly. Then $0=DF(q)\ell+\tau$, because $DF(q)Jq=0$ and $DF(q)\tau=\tau$. Taking the mass-weighted sum, we obtain $M\tau=0$. Thus $DF(q)\ell=0$, hence $DRS(v)\,\Phi_{k,e}^{-1}(\ell)=0$ and $\ell=0$. Therefore $\ker DF(q)$ is spanned by $Jq$.
\end{proof}

Consequently, in region $k$ the chart point of the rotated configuration lies in the box $B_k=[R_{\min,k},R_{\max}]\times[-R_{\max},R_{\max}]^6$, with $R_{\min,k}\le\sqrt{\mu_k}$ positive and $R_{\max}$ the bound of Lemma~\ref{lem:bounds}(b), and it is a zero of $RS$.

\subsection{The search and the count}\label{ss:raysearch}
We search the boxes $B_k$ by branch and bound in outward-rounded interval arithmetic: every operation of IEEE~754 binary64 arithmetic is computed in round-to-nearest, after which the lower endpoint is moved down and the upper endpoint up by one unit in the last place, and no multiplication is fused with an addition. Every computed interval therefore contains the exact value of the corresponding real expression on the box. We discard a box if one of the following tests shows that it contains no zero of $RS$ that is a normalized central configuration in region $k$:
\begin{enumerate}[label=\textup{(T\arabic*)},start=0]
\item an enclosure of $|q_i|^2+c_i-|q_k|^2-c_k$ is positive for some $i$;
\item the enclosure of the virial sum excludes $0$;
\item for some proper subset $C$ all of whose cross pairs are separated on the box, an enclosure of the difference between the two sides of the cluster identity excludes $0$;
\item for such a $C$ with $|C|\ge2$, the enclosures of the two sides of the virial identity for $C$ are disjoint;
\item[(MV)] for some component $F_r$ of $F\circ\Phi_{k,e}$, the mean-value enclosure $F_r(c)+\sum_t\partial_t(F_r\circ\Phi_{k,e})(X)\,(X_t-c_t)$ at the midpoint $c$ excludes $0$.
\end{enumerate}
Since the Newtonian kernel $(\xi,\eta)\mapsto\xi/(\xi^2+\eta^2)^{3/2}$ has no critical point, on a rectangle not containing the origin its extrema are attained at the corners, at the points with $\eta=0$ of its vertical sides, or at the critical points $(\pm|\beta|/\sqrt2,\beta)$ of its restrictions to the horizontal sides $\eta=\beta\ne0$, where it takes the values $\pm\frac{2}{3\sqrt3}\beta^{-2}$. The enclosure on a rectangle is the hull of its enclosed values at those of these points that lie in the rectangle. On a box on which all pairs are separated we apply Krawczyk's test, with the interval Jacobian of $RS$ and a floating-point approximate inverse $C$ of the Jacobian at the midpoint (\S\ref{ss:masscert}). If one component of the operator is disjoint from the box, the box contains no zero. If all components lie in the interior of the box, $RS$ has exactly one zero $v^*$ in it and $DRS(v^*)$ is invertible \cite{Kra69,Moo77,Neu90}. Boxes that are neither discarded nor certified are contracted by the operator or bisected, at an off-centre cut so that symmetric configurations do not lie on cut faces. We tighten a certified box by further Krawczyk steps. On the tightened box the enclosure of $\xi_k-\xi_e$ excludes $0$, and the enclosures of $|q_i|^2+c_i-|q_k|^2-c_k$ are negative for all $i\ne k$.

\begin{proposition}\label{prop:raycount}
Let $m$ be $(1,4,4,4,4)$ or $(1,1,4,4,4)$. Each certified box contains exactly one zero $v^*$ of $RS$, and $\Phi_{k,e}(v^*)$ is a nondegenerate normalized central configuration lying in region $k$ and in no other region. The map from certified boxes to $SO(2)$-orbits of normalized central configurations is a bijection. There are $306$ certified boxes for $(1,4,4,4,4)$ and $294$ for $(1,1,4,4,4)$, distributed over the regions as in Table~\ref{tab:raycount}.
\end{proposition}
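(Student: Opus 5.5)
The proof has two parts: the soundness of each certified box, and the bijection between certified boxes and $SO(2)$-orbits. The counts themselves are the output of the search.

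For soundness, fix a certified box $X\subseteq B_k$. Krawczyk's theorem \cite{Kra69,Moo77,Neu90} says that if all components of the operator lie in the interior of $X$, then $RS$ has exactly one zero $v^*$ in $X$ and $DRS(v^*)$ is invertible. This applies here because outward rounding makes every computed interval contain the exact value. All pairs are separated on $X$, so $q=\Phi_{k,e}(v^*)\notin\Delta$. The tightened box gives $\xi_k\ne\xi_e$, so Lemma~\ref{lem:RS}(a) makes $q$ a normalized central configuration. The tightened enclosures give $|q_i|^2+c_i<|q_k|^2+c_k$ for all $i\ne k$, so $q$ lies in region $k$ and in no other. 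Since $\xi_k=v_0\ge R_{\min,k}>0$, Lemma~\ref{lem:RS}(b) gives nondegeneracy.

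For the bijection, I define the map by sending a certified box to the orbit of $\Phi_{k,e}(v^*)$. To prove surjectivity, take any orbit. Its members lie in a common region $k$, because the region conditions are rotation invariant. Rotate so that $q_k$ is on the positive $\xi$-axis; this is possible since $|q_k|^2\ge\mu_k>0$. By Lemma~\ref{lem:bounds}(b) and Lemma~\ref{lem:RS}(a), the chart point lies in $B_k$ and is a zero of $RS$. Every discarding test (T0)--(T3), (MV), and the Krawczyk exclusion, discards only boxes containing no such zero, since each rests on an exact identity or a region inequality evaluated by enclosure. The Newtonian-kernel enclosure is valid by the stated extremum analysis, so it too excludes nothing it should not. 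The search terminates with every leaf discarded or certified, so the zero lies in some certified box.

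Injectivity is the delicate step. Within region $k$ the rotated representative with $\eta_k=0$ and $\xi_k>0$ is unique, so an orbit yields a single chart point. The orbit lies in exactly one region, by the strict inequalities of the tightened boxes. It remains to show that no two certified boxes contain the same zero. My first plan is to record all certified boxes and check pairwise disjointness of the tightened boxes within each region, using interval comparison. This holds automatically if the certified boxes are leaves of the bisection tree, because leaves are disjoint up to faces. A zero on a shared face is impossible in two interiors: Krawczyk locates the zero in the interior of the operator image, and tightening keeps it inside the box. I expect this step to need a short explicit check recorded in the logs, rather than reliance on the tree structure, since contraction steps can move boxes.

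Finally, the numbers $306$ and $294$ and their distribution in Table~\ref{tab:raycount} are read off the completed search. Verification consists of re-running the search in outward-rounded arithmetic and re-checking the certified boxes, as described in \S\ref{ss:checks}.
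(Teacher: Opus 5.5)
Your proposal is correct and follows the paper's proof: Krawczyk's test with Lemma~\ref{lem:RS} for soundness, the covering property of the search for surjectivity, and a unique chart point per orbit plus strict membership in one region for injectivity. Your hedge on disjointness is not needed: the paper takes it from the structure of the search, since a contraction only replaces a box by a sub-box, so the certified leaves of one subproblem have pairwise disjoint interiors, and the initial boxes of the different subproblems tile $B_k$ (checked exactly), so no extra pairwise check is required.
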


\begin{proof}
For each mass vector we search the regions in subproblems whose initial boxes tile $B_k$. We verify the tiling, the inequalities $R_{\min,k}^2\le\mu_k$ and the bound $R_{\max}$ in exact rational arithmetic. In all subproblems each leaf box is discarded or certified, and no subproblem ends abnormally (\S\ref{ss:checks}). A box is discarded only if it contains no zero of $RS$ that is a normalized central configuration in region $k$, contractions keep every zero of $RS$, and bisections cover the box. Hence every chart point of a normalized central configuration in region $k$ lies in a certified box. By Krawczyk's test the certified box contains exactly one zero $v^*$, in its interior. It is a normalized central configuration by Lemma~\ref{lem:RS}(a), since $\xi_k\ne\xi_e$ on the tightened box, and nondegenerate by Lemma~\ref{lem:RS}(b), since $\xi_k\ge R_{\min,k}>0$. It lies strictly in region $k$, hence in no other region. Each orbit is thus the image of a certified box (surjectivity). Two certified boxes with the same orbit come from the same region and, since $q_k$ lies on the positive $\xi$-axis in both, give the same chart point, which lies in the interior of both boxes. The boxes of one subproblem are leaves in a tree of bisections and contractions. Boxes from different subproblems lie in different pieces of the tiling. The two boxes therefore coincide (injectivity). The counts are those of the search (Table~\ref{tab:raycount}).
\end{proof}

\begin{table}[ht]
\centering
\small
\begin{tabular}{@{}lllrrrrrrrr@{}}
\toprule
& & & \multicolumn{5}{c}{region} & & & boxes\\
\cmidrule(lr){4-8}
masses & $\varepsilon$ & bound & $1$ & $2$ & $3$ & $4$ & $5$ & total & collinear & examined\\
\midrule
$(1,4,4,4,4)$ & $\frac{397}{250}$ & $\frac{794}{125}$ & $168$ & $28$ & $38$ & $18$ & $54$ & $306$ & $60$ & $211{,}263{,}506$\\
$(1,1,4,4,4)$ & $\frac{1587}{1000}$ & $\frac{5332323001}{839523000}$ & $90$ & $150$ & $18$ & $24$ & $12$ & $294$ & $60$ & $123{,}212{,}100$\\
\bottomrule
\end{tabular}
\caption{The searches of Proposition~\ref{prop:raycount}: the value of $\varepsilon$ and the bound of Lemma~\ref{lem:bounds}(b) at it, which the binary64 number $R_{\max}$ is at least; the certified boxes per region; the collinear boxes; and the number of boxes examined.}\label{tab:raycount}
\end{table}

We certify the shapes in ball arithmetic (Arb, at $200$ bits, through python-flint 0.9.0) on the tightened boxes, after checking that all balls in these computations are finite, that all divisors exclude $0$ and that all square-root arguments are positive. We enclose the orientations of all triples of bodies on the box. If some orientation has a sign, the configuration is not collinear, and each body passes one of two tests: a body is a vertex of the convex hull if the orientations certify that the two hull edges proposed at it leave all other bodies strictly on their left. It is an interior point if they certify that it lies in the open triangle of three other bodies, or in the interior of a quadrilateral formed by the four others, as when the four bodies of mass $4$ form a square with the body of mass $1$ at its centre. If no orientation has a sign, we enlarge the box to one that is symmetric under the reflection $(v_2,v_4,v_6)\mapsto(-v_2,-v_4,-v_6)$ of the chart, which maps the zero set of $RS$ to itself, and Krawczyk's test succeeds on the enlarged box. Its unique zero then coincides with its mirror image. So all five bodies lie on the $\xi$-axis. In this way we show that every certified box is collinear, or non-collinear with a certified shape. Table~\ref{tab:raycount} gives the numbers of collinear boxes.

\begin{proof}[Proof of Theorems~\ref{thm:count1} and~\ref{thm:count2}]
By Proposition~\ref{prop:raycount} there are $306$, resp.\ $294$, orbits, all nondegenerate. With the $60$ collinear ones, we obtain $183$ and $177$ up to similarity. The shape counts are those of the classification. The reflection pairs the non-collinear orbits with equal shapes and reversed cyclic orders. As in the proof of Lemma~\ref{lem:slice}, positive scaling of the masses multiplies the normalized central configurations by $t^{1/3}$, and permutations of the masses relabel them. Since both preserve nondegeneracy, collinearity and shapes, the statements hold on the whole rays and for all relabellings.
\end{proof}

\begin{remark}
By nondegeneracy and the implicit function theorem, the numbers in Theorems~\ref{thm:count1} and~\ref{thm:count2} persist on open neighbourhoods of the two mass vectors that are invariant under positive scaling.
\end{remark}

\subsection{A further point}\label{ss:twopoints}
We treat the mass point $P_1=(9,36,36,36,4)$ of \S\ref{sec:curves} by the same count instead of the separation.

\begin{theorem}\label{thm:countP1}
Let $m$ be a positive multiple of a permutation of $P_1$. Then statements \textup{(i)--(iv)} of Theorem~\ref{thm:count1} hold with the same numbers.
\end{theorem}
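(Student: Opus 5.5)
We will prove Theorem~\ref{thm:countP1} by repeating, at $P_1=(9,36,36,36,4)$, the computer-assisted count of Proposition~\ref{prop:raycount}. First we reduce to the single mass vector $P_1$. As in the proof of Theorems~\ref{thm:count1} and~\ref{thm:count2}, a positive scaling multiplies the normalized central configurations by $t^{1/3}$ and a permutation relabels them. Both preserve nondegeneracy, collinearity and the hull types.

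\emph{A priori bounds and boxes.} At $P_1$ we have $M=121$. We choose $\varepsilon$ by a one-dimensional search minimizing the bound of Lemma~\ref{lem:bounds}(b), with $W_s$ computed from the sorted masses $(4,9,36,36,36)$. We verify the resulting rational $R_{\max}$ exactly. We also pick tie-breaking constants $c_k$, for which we must check $\mu_k>0$. For each region $k$ we then form $B_k$ with a second body $e$; a heavy body is a natural choice, since it keeps the chart well conditioned. The tiling of each $B_k$ into initial boxes and the inequalities $R_{\min,k}^2\le\mu_k$ are checked in rational arithmetic.

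\emph{Search and certification.} We run the branch and bound of \S\ref{ss:raysearch} with the tests (T0)--(T3) and (MV), Krawczyk's test, off-centre bisections and tightening. We require every leaf to be discarded or certified. Lemma~\ref{lem:RS} and the argument of Proposition~\ref{prop:raycount} then give a bijection between certified boxes and $SO(2)$-orbits, with every orbit nondegenerate. The expected total is $306$ boxes, of which $60$ are collinear. The collinear count must agree with Moulton's $5!/2=60$, which gives an independent consistency check. We then certify the shapes in ball arithmetic as described after Table~\ref{tab:raycount}: orientations of all triples, vertex and interior tests, and the symmetric-box argument for the collinear ones. The expected split is $24$ strictly convex, $102$ of type $(4,1)$ and $120$ of type $(3,2)$, with each oriented cyclic order realized once. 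Statements (i) and (iv) then follow as before from the count via Lemma~\ref{lem:realpos} and the reflection pairing, which gives $(306+60)/2=183$.

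\emph{Main obstacle.} The hard step is making the search terminate, and it has two sources. First, the mass ratio $36/4=9$ is larger than on the special rays. Near configurations where the light bodies $1$ and $5$ approach heavy ones, the exclusion tests are weak and Krawczyk struggles with clustered near-collisions, and (T2)/(T3) require separated cross pairs. I would first try refining the choice of $(k,e)$ per region. I would also strengthen the cluster tests by using the sharpened Newtonian-kernel enclosure on rectangles. Second, $P_1$ lies on $T_5$ and on $\mathrm{RL}$, so symmetric configurations with three equal masses sit on region boundaries. The constants $c_k$ and the off-centre cuts must be chosen so that no certified zero lies on a tie or a cut face. If some certified configurations were degenerate, Krawczyk would fail near them. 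The matching of the counts with those at $\bone$ suggests nondegeneracy throughout, but that is only confirmed by the search succeeding.
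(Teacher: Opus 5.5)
Your plan follows the paper: the proof of Theorem~\ref{thm:countP1} is the count of Proposition~\ref{prop:raycount} repeated at $P_1$, with the shapes then classified in ball arithmetic. The paper's proof, however, records two points that your proposal misses. The second one makes one of your steps false as stated.

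First, with your choice of a heavy second body $e$, and the masses scaled so that the heavy bodies have mass $1$, the chart has coefficients such as $\pm1/9$ and $\pm10/9$. These are not binary64 numbers. They must be entered as enclosing intervals, since outward rounding of the arithmetic operations alone does not make the enclosures rigorous if the input constants are already rounded.

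Second, you write that Lemma~\ref{lem:RS} and the argument of Proposition~\ref{prop:raycount} then give a bijection between certified boxes and $SO(2)$-orbits. At $P_1$ this is not what happens. One Krawczyk-certified box has its unique zero outside its own region. The injectivity step of Proposition~\ref{prop:raycount} needs each certified zero to lie in its region and in no other, so it does not apply to that box. By surjectivity, its orbit is also represented by a certified box of a region that contains it. Counting every certified box would therefore count that orbit twice.

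The fix is the paper's: leave such a box out of the count. Surjectivity is unaffected, because every orbit is still counted by a box of a region containing it. Among the remaining boxes, whose zeros lie strictly in their own regions, injectivity goes through as before.

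The obstacles you anticipate, namely termination at mass ratio $9$ and ties on region boundaries, are not the ones the paper reports.
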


The proof of Theorem~\ref{thm:countP1} is that of Theorem~\ref{thm:count1}. Here some coefficients of the chart, $\pm1/9$ and $\pm10/9$, are not binary64 numbers. So we enclose them by intervals. One certified box turns out to have its zero outside its region. This does not affect the count: we do not count that box, since the surjectivity in the proof of Proposition~\ref{prop:raycount} counts every orbit by a box of a region that contains it.

\begin{remark}\label{rem:P1hand}
The method of \cite{Tom26} also gives finiteness at $P_1$ without the count of Theorem~\ref{thm:countP1}. At every positive multiple $m$ of a permutation of $P_1$, the function $U$ vanishes identically on every irreducible curve in the slice $Y_m$ of \S\ref{ss:slice}. So $m$ is settled, and it has finitely many central configurations up to similarity. This is part of \cite{Tom26}.
\end{remark}

\subsection{The complex solution sets on the rays}\label{ss:raysAB}
\begin{remark}\label{rem:AB}
At $m=(1,4,4,4,4)$ the slice $Y_m$ of \S\ref{ss:slice} is the union of $36$ pairwise disjoint irreducible curves and a finite set. Through the correspondence of Lemma~\ref{lem:slice}(a) between the points of $Y_m$ and the solutions of \eqref{eq:AK4}, the curves contain the modified Roberts continuum of \S\ref{sec:intro}, carried to these masses by the exchange of the bodies $1$ and $5$ (Lemma~\ref{lem:slice}(b)), and no point of the curves corresponds to a positive normalized central configuration. At $m=(1,1,4,4,4)$ the slice $Y_m$ is finite. Both statements are proved in \cite{Tom26}. They give a second proof, by a different method, that every point of the special rays has finitely many central configurations up to similarity, but they do not give the counts.
\end{remark}

%% file: sec/s10_proof.tex
\section{Proof of Theorem~\ref{thm:main}}\label{sec:proof}

Let $m\in(\R_{>0})^5$. If $m$ lies on a special ray, Theorems~\ref{thm:count1} and~\ref{thm:count2} give exactly $306$ or $294$ central configurations up to orientation-preserving similarity. Otherwise Corollary~\ref{cor:S} gives finitely many up to similarity. In its proof we apply Theorem~\ref{thm:S} to the sorted scaled image $\hat m$ of $m$:
\begin{enumerate}[label=\textup{(\roman*)}]
\item if $\Lambda(\hat m)$ is certified in the sense of \S\ref{ss:nearlabels}, Proposition~\ref{prop:cert} with $V=\Vtwo^{\mathrm w}$ settles $\hat m$, the pieces $\pc{d2}$, $\pc{d9}$, $\pc{d14b}$ and $\pc{d15}$ being removed by Proposition~\ref{prop:nod2}, Theorem~\ref{thm:D9}, Theorem~\ref{thm:D14} and Corollary~\ref{cor:nod15};
\item if $\Lambda(\hat m)$ contains a curve set, Proposition~\ref{prop:farT9} or Theorem~\ref{thm:curves} applies, and at the counted points the latter rests on Theorem~\ref{thm:countP1};
\item if $\hat m$ is $\mathbf q_*$ or $\mathbf p_*$, Proposition~\ref{prop:cert} with $V=\VN$ settles $\hat m$, by Proposition~\ref{prop:Spoints}. \qed
\end{enumerate}

\medskip
Section~\ref{sec:verification} lists the computations on which the proof rests and how they are checked.

%% file: sec/s11_verification.tex
\section{Verification}\label{sec:verification}

The proof of Theorem~\ref{thm:main} consists of the arguments in the text and the computations they invoke, classified in \S\ref{ss:checks}. It does not use the cross-checks of \S\ref{ss:crosschecks} or Remarks~\ref{rem:P1hand} and~\ref{rem:AB}, nor any re-check these arguments do not invoke. The certificates, the programs that check them, their logs and the supplementary notes are in the supplementary material, with a checksum for each file. It is available at \url{https://doi.org/10.5281/zenodo.23202861}.

Some objects have other names in the supplementary material. The points $\hone$, $\htwo$ and $\hthree$ (Tables~\ref{tab:points} and~\ref{tab:noK}) are \texttt{X52}, \texttt{X29} and $48_1$ there, and the points of Table~\ref{tab:points} appear under the point records in its last column. The relations (R1), (R2) and (R3) of \S\ref{ss:d5} are called gen, det and tilde there and are proved as Theorems G, D and T of its module \texttt{d5fam}; the model of the centred case (\S\ref{ss:d5}) is that of its Theorem~C0. Theorem~\ref{thm:gate} is Theorem G of its module \texttt{nod2route}. Theorem~\ref{thm:S} is Theorem S of its module \texttt{strata}, in which $\mathbf q_*$ and $\mathbf p_*$ are zeros of the flats \texttt{F89} of the face $1|2|3|4|5$ and \texttt{F49} of the face $1|2|3|45$. The points $\mathbf q_*$ and $\mathbf p_*$ also appear under the point records $52_0$ and $47_0$. The jump points of $T_9$ (Proposition~\ref{prop:farT9}) fall there into numbered classes: the point $s=\rho_0/4$ is class $11$, and the two points $s\approx0.0520549$ and $s\approx0.176101$ form class $14$. The table of re-checks is the file \texttt{CHECKS.md}.

\subsection{Literature}\label{ss:literature}
We list the inputs from \cite{AK12} in \S\ref{ss:akinputs}.

\subsection{Classes of computations}\label{ss:checks}
\begin{enumerate}[label=(\arabic*)]
\item \emph{Exact rational arithmetic.} The lexicographic certificates (\S\ref{sec:lex}) are lists of integer functionals. We recompute the attained faces of the pieces in rational arithmetic and check every inequality in integers. The checks take the hulls of \S\ref{ss:d5} from files. We verify in exact arithmetic that each polytope in these files is the polytope of Table~\ref{tab:d5hulls} for its pair of modes, or its image under Lemma~\ref{lem:d5sym}. We make the exact algebraic decisions of class (C2) (\S\ref{ss:masscert}) in exact arithmetic over $\Q$ and over number fields $\Q[\theta]/(f)$.
\item \emph{Ball arithmetic.} We show that polynomials, such as those of labels, do not vanish at exact mass points, and we isolate real roots. We certify isolated zeros of polynomial systems by Krawczyk's test \cite{Kra69,Moo77,Neu90}. We use Arb in FLINT 3.6.0, through python-flint 0.9.0.
\item \emph{Standard bases.} We prove that a polynomial system has no solution by a standard basis over $\Q$ or $\Q(\jj)$ (Singular 4.4.1), or modulo large primes together with a lifting argument (\S\ref{sec:points}). In Theorem~\ref{thm:typeII} we bound the number of solutions of a system by a standard basis over $\Q$. In \S\ref{sec:strata}, by standard bases, minimal primes and saturations over $\Q$, we describe the points of the strata at which given labels hold. We use msolve to search for solutions, version 0.9.4 for every run whose output enters a proof. Where its output enters a proof we verify it: we check the rational parametrizations against the input polynomials and prove their completeness separately.
\item \emph{The special rays.} We obtain the counts of \S\ref{sec:rays} from a branch and bound in double precision with outward rounding.
\end{enumerate}
We verify each lexicographic certificate of the paper with a program different from the one that found it. The \emph{independent checker} of lexicographic certificates is a program written from the definitions and the tables of this paper; it shares no code with the search programs. It re-verifies in particular the certificates of the label sets in the proof of Theorem~\ref{thm:S} and the two certificates of Proposition~\ref{prop:Spoints}(c). The re-checks, with their counts, are tabulated in the supplementary material. They cover the computations of the proof except those in Table~\ref{tab:notrechecked}.

\begin{table}[ht]
\centering
\small
\begin{tabular}{@{}p{0.36\textwidth}p{0.58\textwidth}@{}}
\toprule
computation & not re-checked\tabularnewline
\midrule
\raggedright the flats of the computation (Proposition~\ref{prop:Stree}) & \raggedright a second enumeration of the flats from scratch\tabularnewline[2pt]
\raggedright the conditions on the primes, where (P1) is re-checked for the runs of Theorem~\ref{thm:KA} & \raggedright the other conditions, which are checked within the runs\tabularnewline[2pt]
\raggedright the searches of Proposition~\ref{prop:raycount} and Theorem~\ref{thm:countP1} & \raggedright the upper bounds, which rest on the exclusion tests and on the outward rounding of the search\tabularnewline
\bottomrule
\end{tabular}
\caption{Parts of the computations of the proof that the re-checks leave out.}\label{tab:notrechecked}
\end{table}

The following results rest also on computations stated in full in their proofs: Proposition~\ref{prop:eqfull}, Theorems~\ref{thm:W}, \ref{thm:typeII} and~\ref{thm:col}, Lemma~\ref{lem:Pdata}, Theorems~\ref{thm:points} and~\ref{thm:gate}, Propositions~\ref{prop:Stree}, \ref{prop:Spoints} and~\ref{prop:farT9}, Theorem~\ref{thm:SJ}, Lemma~\ref{lem:K14FAFB}, Proposition~\ref{prop:d14a}, Theorem~\ref{thm:RL}, and Theorems~\ref{thm:count1}, \ref{thm:count2} and~\ref{thm:countP1}. For the following results we list the computations here.
\begin{itemize}
\item The relations (R1)--(R3) of \S\ref{ss:d5}: an elimination whose generator has only positive coefficients for (R1), $36$ eliminations by standard bases over $\Q$ for (R2), and a polynomial identity for (R3).
\item Proposition~\ref{prop:d6P2}: the checks (i)--(iv) of its proof, made as stated there in exact arithmetic, by a standard basis over $\Q$ and in ball arithmetic.
\item Proposition~\ref{prop:d3tstar}: the checks (i)--(iii) of its proof, by exact rational bisection and in ball arithmetic.
\end{itemize}

We apply the certificates that use the hull $\VN$ of \S\ref{ss:d5} only at points where $\VN$ is valid. Eleven of the certificates in the proof of Proposition~\ref{prop:exceptional}, used at ten jump points of $T_1$, $T_4$ and $T_5$ and at ten jump points of $T_8$, are relative to $\VN$. All others there are relative to $\Vtwo^{\mathrm w}$. At these jump points, at the two point records of Table~\ref{tab:points} whose final checks use $\VN$, and at $\mathbf q_*$ and $\mathbf p_*$, we check in ball arithmetic that the point lies off the exceptional hypersurface of \S\ref{ss:d5} and that $m_o\ne m_a+m_b$ on every triangle $T=\{o,a,b\}$ with $\lab{E}(T)$ in the label set. We also show in ball arithmetic that no body of such a triangle is at its centre of mass, except at the triangles in the centred case, whose masses are exactly proportional to $(\frac14,1,1)$. Each factor of the hypersurface needed at these triangles is enclosed in a ball that excludes $0$.

This point check covers $88$ applications of $\VN$, including all those used in the proof. Each of them lies off the factors of the hypersurface needed at its triangles (in the centred case of \S\ref{ss:d5} only those of (R2) for the two other bodies of $T$, by the model of that case in the supplementary material) and satisfies the second condition. Thirteen of them have a triangle of masses $(\mu/4,\mu,\mu)$, in the centred case.

The enumerations of the cells of the prevarieties of \S\ref{ss:Ksystems} make $13609$ linear-programming decisions for $\mathrm{K14}$ and $32010$ for each system $\mathrm{K15}$. The final checks of the point records of Table~\ref{tab:points} concern $4$ distinct pairs $(S,T)$.

\subsection{Cross-checks}\label{ss:crosschecks}
\begin{itemize}
\item \emph{Pieces $\pc{d15}$, independently of Corollary~\ref{cor:nod15}.}
\begin{enumerate}[label=\textup{(\alph*)}]
\item At the points of Table~\ref{tab:points}, and at the other mass points of \S\ref{sec:curves} at which $\mathrm{K14}$ is used in the proof, the system $\mathrm{K15}(m_{Q_j},k)$ has no solution, by runs of Theorems~\ref{thm:KA} and~\ref{thm:KApar}(b), for each piece $\pc{d15}(j,k)$ removed by the certificate of the point. At the points of Table~\ref{tab:points} these runs and those of $\mathrm{K14}$ exclude all pieces $\pc{d14a}$, $\pc{d14b}$ and $\pc{d15}$ of the clusters of the transversals from the cluster systems alone, independently of \S\ref{sec:energy}.
\item The system $\mathrm{K15}((1,1,1,\nu),k)$ in the normalization of \S\ref{ss:Ksystems}, with $\nu$ as an unknown, has exactly $768$ solutions when $k$ is the body of mass $\nu$ and exactly $1448$ when $k$ has mass $1$. They lie over the roots of irreducible polynomials of degrees $128$ and $724$, each with exactly two positive roots. The proof counts the solutions modulo a prime and excludes the open cells of the prevariety by a local analysis, at two primes.
\item At the four resulting values of $\mu$ on the line $\mathrm{RL}$, and at the two values of $t$ on $T_2$ at which $t^2$ is a positive root of the second polynomial, a nonzero coefficient of order at most $3$ in a weighted form of the rows excludes a place of type $\pc b_k(j)$ at each leading point, $288$ leading points when $k$ is the body of mass $\nu$ and $96$ otherwise, at three primes. With the previous item, this excludes the pieces $\pc{d15}(2,k)$ on $\mathrm{RL}$ and $\pc{d15}(1,k)$, $k=3,4,5$, on $T_2$.
\item No solution of $\mathrm{K15}((1,1,t,t),k)$, $t\in\C^*$, has a bad shape. For $k=1$, all solutions of this system with $t$ as an unknown, in a normalization, have unit coordinates at the prime $2147483629$, by its prevariety and a local analysis at two of its cells. Two standard bases modulo that prime, of the system with the condition of a bad shape added, are the unit ideal. The other values of $k$ reduce to $k=1$. This excludes the pieces $\pc{d15}$ of the transversal cluster on $T_1$ and on $T_7$.
\item Along $T_9$, let $Z_k$ be the set of $s$ at which $\mathrm{K15}(m_{Q_1},k)$ has a torus zero. Then $Z_2=Z_1$, by the exchange of the two bodies of mass $\rho_0$, and $Z_4\cap Z_1=Z_4\cap Z_3=\emptyset$, by the cells of the three systems modulo $2147483629$ and by polynomials $u_4$, $u_1$, $u_3$ in $s$ in their ideals with $\gcd(u_4,u_1)=\gcd(u_4,u_3)=1$. Hence at every point of $T_9$ one of the two certificates for the generic label set applies without Corollary~\ref{cor:nod15}.
\end{enumerate}
\item \emph{Proposition~\ref{prop:eqfull}.} A standard basis over $\Q(\jj)$ of the system of Proposition~\ref{prop:eqfull}, with $\nu$ as an unknown and saturated by $\prod p_{il}\cdot\nu(3+\nu)$, is $\{1\}$. This confirms the proposition whenever $\nu(3+\nu)\ne0$. An independent encoding of the system gives the same result. With $c=d=0$ it finds the expected solutions of $\mathrm{K14}$.
\item \emph{Theorem~\ref{thm:D14} as used in Proposition~\ref{prop:T458}.} On the families $T_4$, $T_5$ and $T_8$ we confirm the exclusion of the pieces $\pc{d14b}(1,k)$ of the transversal cluster $Q_1$, for $k=2,\dots,5$ on $T_4$ and $k=4,5$ on $T_5$ and $T_8$, with Lemma~\ref{lem:d14blead}, the description of the configurations that leads to \eqref{eq:tsys}, and Lemma~\ref{lem:typeIII}, but without the rest of the proof of Theorem~\ref{thm:D14}. These are the pieces $\pc{d14b}$ of the minimal exclusion sets of \S\ref{ss:T458} that use only $Q_1$. By Lemma~\ref{lem:d14blead}, at a place with type in the attained set of $\pc{d14b}(1,k)$ the leading coefficients of the cluster form a $\mathrm{K14}(m_{Q_1})$ configuration that satisfies $\mathrm{X3}_k$ (\S\ref{ss:d14}), after the exchange of the colours if necessary: body $k$ lies at the $z$- or the $w$-centre of mass of $Q_1$. We show that at no point of these families does a $\mathrm{K14}(m_{Q_1})$ configuration satisfy $\mathrm{X3}_k$ in $z$ or in $w$. With $d_{il}=z_{il}w_{il}$ and $M_Q$ the mass of the cluster, the parallel-axis identity shows that $\mathrm{X3}_k$, in $z$ or in $w$, is equivalent to $M_Q\sum_lm_ld_{kl}=\sum_{i<l}m_im_ld_{il}$. By Lemma~\ref{lem:typeIII}, a non-collinear configuration at positive masses has a pattern of type I or II. By complex conjugation four of these seven classes suffice. At positive masses $\kappa\ne0$ in \eqref{eq:tsys} and the masses are then rational functions of $t$. A collinear configuration solves one of $32$ tournament systems $\sum_{l\ne i}m_ls_{il}/z_{il}^2=0$, $s_{il}=\pm1$, linear in the masses. In suitable labels $T_4$ and $T_5$ lie in the plane $m_3=m_4$ of the cluster masses. An explicit resultant $P_T$, of degree $6$ for $T_4$ and $12$ for $T_5$, vanishes on each family. For each of the four classes, every needed $k$ and each tournament we compute with Singular~4.4.1, in characteristic $0$, a standard basis over $\Q(\jj)$ of the ideal $J$ of the configurations with $\mathrm{X3}_k$ in this plane, and show that $P_T$ is a unit modulo $J$. We establish each unit statement by an exact standard basis of $J+P_T$ in characteristic $0$. Those for the four classes we establish also modulo the prime $p'=536870879$, by the following modular test: if a Gr\"obner basis of $J$ has $p'$-integral coefficients \textup{(H2)} and leading coefficients that are nonzero modulo $p'$ \textup{(H3)}, then its reduction is a Gr\"obner basis of the reduced ideal with the same staircase, the multiplication matrices reduce accordingly, and a unit modulo $p'$ is a unit. On $T_8$, where no two masses are identically equal, every point is a zero of an explicit polynomial $P_{T_8}$ in the masses of the bodies $3$ and $5$ and the parameter $r$. We write the systems in the same way, with the mass relations of $T_8$ in place of the plane. The ideal $J$ is then not saturated, so that $V(J)$ may contain points that are not configurations. For the collinear configurations and for three of the four classes, $P_{T_8}$ is a unit modulo $J$, by a standard basis in characteristic $0$ followed by the modular test at $p'$ and, except for two of the classes with $\mathrm{X3}_4$, at the prime $p=536870909$. For the fourth class $P_{T_8}$ is not a unit modulo either prime, but it vanishes on $V(J)$ only where a coordinate $t_i$ vanishes. At each configuration $t_i\ne0$. For this class we use the following extension of the modular test, with $f=t_i$. Let $A=\Q(\jj)[x]/J$, and let $\bar a$ be the dimension of the generalized eigenspace of multiplication by $\bar f$ for the eigenvalue $0$ on the reduction of $A$. If \textup{(F1)} $\dim A/f^NA=\bar a$ for some $N$ and \textup{(F2)} $\bar f$ is nilpotent modulo $\bar J+(\bar P_{T_8})$, then $P_{T_8}$ has no zero on $V(J)\setminus V(f)$. Indeed, the characteristic polynomial of multiplication by $f$ is $\lambda^a\chi_1$ with integral coefficients and $\chi_1(0)\ne0$. Condition \textup{(F1)} gives $a=\bar a$, so that $\chi_1(0)$ is a unit at the prime. So the idempotent of $A$ on which $f$ is invertible is integral and reduces to the corresponding idempotent of the reduction, on which $\bar P_{T_8}$ is invertible by \textup{(F2)}. Hence $P_{T_8}$ is invertible on the part of $A$ where $f$ is invertible. Here \textup{(F1)} holds with $\bar a=6$ and $N=3$, for $\mathrm{X3}_4$ and for $\mathrm{X3}_5$. Conditions \textup{(F2)}, \textup{(H2)} and \textup{(H3)} are checked at both primes. Each unit statement that we establish modulo a prime has a modular test in which we check \textup{(H2)} and \textup{(H3)} element by element. We derive the systems twice, independently. The two derivations agree.
\item \emph{Lemma~\ref{lem:typeIII}.} We check the identity for $h_{abc}$ in the proof of Lemma~\ref{lem:typeIII} by machine for each of the four sets $\{a,b,c\}$ of three bodies. A second proof of the lemma, by an explicit parametrization of the solutions of \eqref{eq:tsys} of type III, is in the supplementary material. We also ran the computations of the previous item for the classes of type III on $T_4$ and $T_5$, and on $T_8$ for $\mathrm{X3}_5$. They agree with the lemma. On $T_4$ and $T_5$, $P_T$ is a unit modulo $J$. On $T_8$, for $\mathrm{X3}_5$, the case $D=0$ for a polynomial $D$ of the class gives a unit. In the case $D\ne0$, where one unknown is eliminated through $D\ne0$, the extension of the modular test applies with $f=D$, $\bar a=192$ and $N=4$.
\item \emph{Theorem~\ref{thm:D14} as used in Theorems~\ref{thm:T1T7}, \ref{thm:T2}, \ref{thm:RL} and Proposition~\ref{prop:farT9}.} With Lemma~\ref{lem:d14blead} and the condition $\mathrm{X3}_k$, in $z$ or in $w$, but without the rest of the proof of Theorem~\ref{thm:D14}, we exclude the pieces $\pc{d14b}$ of the transversal cluster used in Theorems~\ref{thm:T1T7}, \ref{thm:T2} and~\ref{thm:RL} and Proposition~\ref{prop:farT9}. On $T_1$ and $T_7$, off the sixteen special values of $c$, $\mathrm{K14}(m_{Q_1})$ has no solution. At every configuration of Lemma~\ref{lem:K14FAFB} for $F_B$ we evaluate $\mathrm{X3}_k$ in $z$ and in $w$ in ball arithmetic for the four positions $k$; no ball contains $0$. If a $\mathrm{K14}(1,1,1,\nu)$ configuration with $\nu\ne0$ satisfies $\mathrm{X3}_k$, then $k$ is the body of mass $\nu$ and $3\nu^2=\pm1$. Indeed, in a normalization of the gauge either $z=w$ or the matrix $A$ of the derivation of \eqref{eq:tsys} has rank one. In each case $\mathrm{X3}_k$ adds one equation. We compute the eliminations with Singular~4.4.1, over $\Q(\jj)$ for the $2\times9\times2$ systems of the second case and over $\Q$ for the $2\times32$ of the first, and recompute all of them with Macaulay2~1.25.11. Only $k=4$ with $3\nu^2=1$ (one system) and $3\nu^2=-1$ (four systems) survive. This excludes $\pc{d14b}(1,k)$, $k=3,4,5$, on $T_2$, and all $\pc{d14b}(2,k)$ on $\mathrm{RL}$ except $\pc{d14b}(2,5)$ at $\mu=\mu^*=4/\sqrt3$. There $\mathrm{K14}$ at $(4,4,4,\mu^*)$ has exactly two configurations up to the gauge, the equilateral triangle of the bodies $1$, $3$, $4$ with body $5$ at its centre and its mirror image. The places on the segment of $\pc{d14b}(2,5)$ have exactly $96$ leading points, $48$ over each configuration. The exchange of the bodies $3$ and $4$ maps one half to the other. A weighted form of the rows that does not depend on the parameter $\sigma\in(1,2)$ of the segment, followed by a Lyapunov--Schmidt reduction, gives at each of the $48$ leading points over one configuration a series $g(\delta)$ that must vanish. Modulo two primes, with all inverses checked to be units, $g$ has a nonzero coefficient of order at most $5$. Along $T_9$, $\mathrm{K14}(m_{Q_1})$ together with the condition of Lemma~\ref{lem:d14blead} for body $k$ has no torus zero for any $s\in\C^*$, by Theorem~\ref{thm:KApar}(a) (four systems, two primes).
\item \emph{The inclusions in the proof of Theorem~\ref{thm:types}, all labellings.} For all $120$ labellings, each type set of Table~\ref{tab:types} lies in the attained set of its piece in Table~\ref{tab:pieces}, with diagram~$7$ refined by Table~\ref{tab:d7}. Three programs check this in exact rational arithmetic: for diagrams $1$, $4$, $7$, $8$ and $10$--$13$, for the refined diagram~$7$, and for diagrams $2$, $3$, $6$, $9$ and $14$--$16$.
\item \emph{Theorem~\ref{thm:hulls}.} Exact computations confirm Theorem~\ref{thm:hulls}. For each assertion of Propositions~\ref{prop:d5empty}--\ref{prop:d5VN}, and for each main step of their proofs, a list of relations of the model that contains those used in it, together with its negated conclusion, forms a system of linear relations and disjunctions. Two programs show that each of these systems is infeasible, by exact certificates. One program takes the relations from the programs that define the models; the other builds the models anew and uses its own exact linear programming. Independently, for every pair of modes of $\Vtwo^{\mathrm w}$, $\Vtwo$ and $\VN$, a complete branching over the disjunctions of the model, with an exact certificate at every node, shows that the projection of the model lies in the polytope of Table~\ref{tab:d5hulls}, or that the model is empty. The same holds for the model of the centred case on its four pairs of modes. A separate checker extracts each model again and verifies all these certificates. Moreover, every vertex $v$ of each polytope of Table~\ref{tab:d5hulls} is a limit of projections of model points: exact points $x_s$ of the model and $x_c$ of its closure, with $x_c$ projecting to $v$, have the half-open segment $(x_c,x_s]$ in the model. So each polytope is the convex hull of the closure of the projection of its model.
\end{itemize}

\subsection{Formalization}\label{ss:formal}
Theorem~\ref{thm:main} is proved in Lean~4 with Mathlib (project \texttt{lean\_full}, commit \texttt{52858e1}; toolchain v4.35.0-rc3, Mathlib commit \texttt{c55e6e7}). The formal statement asserts that for every $m\in(\R_{>0})^5$ there is a finite set of configurations in $\C^5$ such that every solution of \eqref{eq:cc} by distinct points, with $\lambda\in\R$ and $c\in\C$, is the image of one of them under a map $z\mapsto az+b$ or $z\mapsto a\bar z+b$ with $a\ne0$. Admitting reflections does not change finiteness. The proof uses only the axioms propext, Classical.choice and Quot.sound, and no evaluation by compiled code. It is also accepted by nanoda and lean4lean, two independent implementations of the Lean kernel.

The formal proof does not assume the statements of \S\ref{ss:akinputs}. It proves at singular places what it needs of them, among them the rules of \cite{AK12} and the classification of \cite[\S7]{AK12}: a search in the kernel finds $1376$ labelled diagrams that satisfy the rules, which are, up to relabelling and the exchange of the colours, the sixteen diagrams of \cite[Fig.~11]{AK12} and eight others; it excludes the eight at places. Data produced by our programs enter only as certificates checked in the kernel, among them the lexicographic certificates, the flats of Theorem~\ref{thm:S}, the runs and enumerations of $\mathrm{K14}$, the resultants of Theorem~\ref{thm:gate} and the branch and bound of \S\ref{sec:rays}, which $83454$ generated modules replay, with local repairs where a test fails in the arithmetic of the kernel. The formal proof of Theorem~\ref{thm:D14} does not use Proposition~\ref{prop:P11}, and it replaces Propositions~\ref{prop:D14N} and~\ref{prop:resonant} by arguments of the same kind for $\sigma<5/3$ and for $\sigma>11/7$: these ranges cover $(1,2)$, so the resonant values of $\sigma$ play no role. In the formal proof, relation (R2) of \S\ref{ss:d5} follows from one polynomial identity and $36$ factorizations instead of eliminations. Building the project from a fresh copy of its repository takes about $860$ CPU-hours; the supplementary material gives the commands.

\subsection{Hand proofs and computations}
The arguments we give by hand do not depend on any machine check; programs re-verify steps of many of these arguments (see the supplementary material).

\smallskip\noindent\emph{Arguments that need no computer}, among others:
\begin{itemize}
\item the proofs of Lemma~\ref{lem:infty}, Propositions~\ref{prop:G1} and~\ref{prop:G2} and Theorems~\ref{thm:wtriangle}, \ref{thm:D9} and~\ref{thm:hulls};
\item the exclusion of diagram~$15$ (Corollary~\ref{cor:nod15}) apart from the identities marked $(\dagger)$ in the proof of Proposition~\ref{prop:eqfull};
\item steps in the proof of Lemma~\ref{lem:classI};
\item the levels of the expansion in Lemmas~\ref{lem:d14exp} and~\ref{lem:d14energy};
\item the identities in the proofs of Propositions~\ref{prop:D14N} and~\ref{prop:resonant}.
\end{itemize}

\smallskip\noindent\emph{Computations without a hand proof}: the computations used in the proof, among them
\begin{itemize}
\item the computations of Theorem~\ref{thm:S} (Proposition~\ref{prop:Stree}) and of Proposition~\ref{prop:Spoints};
\item the computational steps of the proof of Theorem~\ref{thm:gate} (Appendix~\ref{app:d2});
\item the point certificates of \S\ref{sec:points};
\item the certificates of \S\ref{sec:curves};
\item the computational steps of the proof of Theorem~\ref{thm:D14}.
\end{itemize}
The formal proof of \S\ref{ss:formal} depends neither on the statements of \S\ref{ss:akinputs} nor on the correctness of the mathematics software named above. It depends on the Lean kernel and on the formal statement, which the reader can compare with Theorem~\ref{thm:main} in the file \texttt{CC5Full/System.lean} of the project (with \texttt{Masses} from \texttt{CC5Full/Basic.lean}); the proof is the theorem \texttt{CC5Full.Core.Compose.MainTheorem\_holds}.

%% file: sec/a_appendix.tex
\section{The sixteen diagrams and their types}\label{app:types}

Table~\ref{tab:diagrams} describes the diagrams of \cite[Fig.~11]{AK12} in the numbering of the figure. In diagram~3 the colours of the four sides alternate around the square, as in the figure and in the description of \cite[p.~582]{AK12} (bodies $1,3$ and $2,4$ on the diagonals, body $5$ at the centre) and of \S8.3 there. See also \S\ref{ss:akinputs}.

\begin{table}[ht]
\centering
\small
\begin{tabular}{@{}ll@{}}
\toprule
diagram & edges and circles\\
\midrule
1 & $z$-edge $12$, $z$-circles $1,2$; $w$-triangle $345$, no circles\\
2 & as 1, with $w$-circles $3,4,5$\\
3 & $z$-edges $14,23$, $w$-edges $12,34$; $1$--$4$ circled in both colours; $5$ isolated, uncircled\\
4 & $z$-edge $12$, $z$-circles $1,2$; $zw$-triangle $345$\\
5 & $1,2$ isolated, uncircled; $zw$-triangle $345$\\
6 & $z$-triangle $125$, $z$-circles $1,2$; $w$-triangle $345$, $w$-circles $3,4$\\
7 & $z$-triangle $125$, $w$-triangle $345$, no circles\\
8 & $z$-triangle $125$, no circles; $w$-triangle $345$, $w$-circles $3,4$\\
9 & $zw$-triangle $123$, all $w$-circled; $w$-edges $14,25$; $4,5$ $w$-circled\\
10 & $zw$-triangle $125$, $w$-triangle $345$, no circles\\
11 & $zw$-triangle $125$, no circles; $w$-triangle $345$, $w$-circles $3,4$\\
12 & $zw$-triangle $125$, $w$-circles $1,2,5$; $w$-triangle $345$, $w$-circle $3$\\
13 & as 12, with $w$-circles $3,4$\\
14 & $zw$-edges on the six pairs of $\{1,2,3,4\}$; $5$ isolated\\
15 & as 14, all $w$-circled, with the $w$-edge $15$\\
16 & $zw$-edges on all ten pairs\\
\bottomrule
\end{tabular}
\caption{The sixteen diagrams of \cite[Fig.~11]{AK12}.}\label{tab:diagrams}
\end{table}

We derive the entries of Table~\ref{tab:types} for diagrams $1$--$4$, $6$--$13$ and $16$, using Lemma~\ref{lem:orders} and the rules quoted after it. We use Lemma~\ref{lem:cut} only where stated, and Lemma~\ref{lem:U} and \cite[\S8.9]{AK12} only for diagram~$9$. We treat diagrams $5$, $14$ and $15$ in \S\S\ref{ss:typestable}--\ref{ss:d5}.

\emph{Diagram 1.} The ends of the $z$-stroke $12$ are $z$-circled. By Rule~1d they are not $z$-close. So $x_{12}=0$ by (O4) and $\omega_{12}=2$ by (O3). By Rule~2e the three exponents $\omega$ of the triangle $T=\{3,4,5\}$ are equal. The triangle $T$ is an isolated $w$-component without circles. By Rule~1e it has no maximal $w$-stroke. Hence $x=a>0$ on $T$. By (O4), $a<1$. On a cross pair $1t$, $\zeta_{1t}=-2$ by (O6). A $z$-stroke $1t$ would need $\omega_{1t}=2$, which shows $\mu_t:=\omega_{1t}<2$. In the triangle $12t$, (O5) gives $\omega_{2t}=\omega_{1t}$. Hence $x_{1t}=x_{2t}=\frac{\mu_t-2}2$. By (O5) in the triangles $1tt'$, where $\omega_{tt'}=3a-2$, either all $\mu_t$ equal some $\mu\le3a-2$, or two of them equal $3a-2$ and the third is larger. Lemma~\ref{lem:S1} leaves $\mu_t=3a-2$.

\emph{Diagram 2.} As for diagram $1$, $x_{12}=0$. The $w$-circled bodies of $T$ are not all $w$-close (Rule~1d). By Rule~2e the triangle is maximal: $x=0$ on $T$. On the cross pairs (O6) gives $\zeta=\omega=-2$.

\emph{Diagram 3.} Since the four sides are not close by Rule~1d, they are maximal and $x=0$. On the diagonal $13$, $\zeta_{12}=2$ ($12$ is a maximal $w$-stroke) and $\zeta_{23}=-2$ ($23$ is a maximal $z$-stroke). Then (O5) gives $\zeta_{13}=-2$. Likewise $\omega_{13}=-2$. On the pairs $k5$, (O6) applies in both colours.

\emph{Diagram 4.} As for diagram $1$, with $x=1$ on $T$ and $3a-2$ replaced by $1$. Lemma~\ref{lem:S1} gives $\mu_t=1$.

\emph{Diagram 6.} The pairs $12$ and $34$ of circled bodies are not close by Rule~1d. By Rule~2e each triangle is maximal, $x=0$. On the four cross pairs (O6) applies in both colours.

\emph{Diagram 7.} By Rule~2e the exponents are equal on each of the triangles $125$ and $345$. By Rule~1e, $x=a>0$ on $125$ and $x=b>0$ on $345$. By (O4), $a,b<1$. On a cross pair, (O5) gives $\zeta_{13}=\min(\zeta_{15},\zeta_{53})=\min(3a-2,2-b)=3a-2$ and likewise $\omega_{13}=3b-2$, which gives $x_{13}=\frac32(a+b)-2$. Lemma~\ref{lem:cut} gives $a+b\le\frac43$.

\emph{Diagram 8.} On $125$, $x=a\in(0,1)$ as for diagram $7$. The edge $34$ is maximal by Rule~1d. The edges $35$, $45$ have $\omega=-2$ by (O6). Hence $x=0$ on $345$. On the cross pairs $\zeta=3a-2$ by (O5) and $\omega=-2$ by (O6).

\emph{Diagram 9.} Each body is $w$-circled. If body $4$ clustered with $w_1,w_2,w_3$ (case \S8.9.1 of \cite{AK12}), the potential would have exactly one unbounded term, $m_1m_4r_{14}^{-1}$ \cite[\S8.9.1]{AK12}, which is impossible because $U$ is constant on $C$ (Lemma~\ref{lem:U}). The same holds with $1\leftrightarrow2$, $4\leftrightarrow5$. We are therefore in case \S8.9.2, where $14$ and $25$ are maximal $w$-strokes: $\zeta=2$, $x=0$. In the equation $z_4=\sum_lm_lZ_{4l}$, $e(Z_{41})=e(Z_{52})=2$ and $Z_{42},Z_{43},Z_{51},Z_{53}$ have exponent $\frac52$, and $z_{45}=(m_4+m_5)Z_{45}+O(\varepsilon^2)$. If $e(p_{45})>0$ all terms of $z_4$ and $z_5$ would have exponent larger than $1$, and if $e(p_{45})<0$ then $e(z_{45})=e(Z_{45})<1$, both contradicting $e(z_{45})=1$, which follows from $z_{45}=z_{41}+z_{12}+z_{25}$. Thus $x_{45}=0$. Then (O5) gives the rest: the type is $\pc n(\{1,2,3\};14,25)$, with $x=1$ on $12,13,23$, $0$ on $14,25,45$ and $-\frac12$ on $15,24,34,35$.

\emph{Diagrams 10--13.} On the $zw$-triangle $125$, $x=1$. On $345$, $x=c$ by Rule~2e. On a cross pair (O5) gives $\zeta_{13}=\min(1,2-c)=1$ and $\omega_{13}=\min(1,3c-2)=3c-2$, hence $x=\frac{3c-1}2$. For diagram $10$, $0<c<1$ by Rule~1e, and Lemma~\ref{lem:cut} gives $c\le\frac13$. For diagrams $11$--$13$, $c=0$ by Rule~1d, (O6) and Rule~2e.

\emph{Diagram 16.} Since every pair is a $zw$-edge, $x=\pc 1$, which is excluded by Lemma~\ref{lem:cut}.

\input{sec/a_d5hulls}

\section{Proof of Theorem~\ref{thm:D3}}\label{app:D3}

By relabelling we take $s=5$, $D_1=\{1,3\}$, $D_2=\{2,4\}$. The sides are $12,14,23,34$. Let $P$ have type $\pc h(5;13|24)$.

\emph{Orders and leading solutions.} By Table~\ref{tab:types} and (O3)--(O6), after the rotation the coordinates relative to body $5$ have $e(u_k)=e(v_k)=-2$ for $k\le4$, $e(p)=0$ on the sides and $e(p)=2$ on the diagonals and on the pairs $k5$. We write the coordinates as $\tau$-powers times $y$ and divide each row by its lowest power. This gives a weighted system $G(\tau,y)=G_0(y)+\tau^2G_2(y)+\tau^3G_3(y)$, to be solved with $y(0)$ a \emph{leading solution}, $G_0(y(0))=0$ with all coordinates nonzero. We normalize $y_{u_1}=y_{v_1}=1$. The leading solutions then exist only when $m_1m_3=m_2m_4$. They are explicit: in case A, $y_u=(1,U,U,1)$, $y_v=(1,1,V,V)$ with $U=-m_1/m_2$, $V=-m_2/m_3$, $k_e^3=1/(m_k+m_l)$ on each side $e=kl$, and $q_{13}^2=m_3/M_Q$, $q_{24}^2=-m_3/M_Q$, $q_{15}^2=1$, $q_{25}^2=-m_2/m_1$, $q_{35}^2=m_3/m_1$, $q_{45}^2=-m_4/m_1$, where $k_e$, $q_e$ are the leading coefficients of $p_e$ on the sides and on the other pairs. Case B exchanges $u$ and $v$.

\emph{Rank and obstruction.} At each leading solution the Jacobian $J_0$ of $G_0$ has rank $16$: one checks by a direct elimination that on the side rows and the rows $Z_k$, $W_k$ the reduced Jacobian in the variables $y_{u_2},y_{u_3},y_{u_4},y_{v_2},y_{v_3},y_{v_4}$ and the four side values is injective. The six other rows have the invertible diagonal block $\operatorname{diag}(2/q_e)$. The left kernel of $J_0$ is spanned by $\lambda_Z=-Z_1-\frac{m_2}{m_1}Z_2+\frac{m_2}{m_1}Z_3+Z_4$ and $\lambda_W=\frac{m_2}{m_3}W_1-\frac{m_2}{m_3}W_2-W_3+W_4$ (in case A). Since $\lambda$ vanishes on the side rows, $\lambda\cdot G_2=0$. The first obstruction is
\begin{gather*}
\lambda_Z\cdot G_3(y_0)=\frac{O}{m_1m_2},\\
O=m_2\big(m_1m_3q_{13}+m_2m_4q_{24}+m_1m_5q_{15}+m_2m_5q_{25}+m_3m_5q_{35}+m_4m_5q_{45}\big),
\end{gather*}
and $\lambda_W\cdot G_3(y_0)=-O/(m_1m_3)$. If $O\ne0$, we compare coefficients in $G(\tau,y(\tau))\equiv0$ through the order $\tau^3$, using that $J_0$ restricted to the rows other than $Z_4$, $W_2$ is invertible, and obtain a contradiction. Hence $O=0$.

\emph{The condition $O=0$.} The values $q_{13}$, $q_{15}$, $q_{35}$ are real and $q_{24}$, $q_{25}$, $q_{45}$ are imaginary, with all signs occurring among the leading solutions. Therefore $O=0$ for some leading solution if and only if both $m_1m_3q_{13}\pm m_1m_5q_{15}\pm m_3m_5q_{35}$ and $m_2m_4q_{24}\pm m_2m_5q_{25}\pm m_4m_5q_{45}$ vanish for some signs. We divide the first triple of absolute values $m_1m_3^{3/2}M_Q^{-1/2}$, $m_1m_5$, $m_3^{3/2}m_5m_1^{-1/2}$ by $m_1m_3^{3/2}m_5$ and obtain $x_5^2/\sqrt{M_Q}$, $x_3^3$, $x_1^3$. A signed sum of three positive numbers vanishes for some signs if and only if one is the sum of the other two. For the first triple this is $\mathrm{Tri}(13;5)$. The second triple gives $\mathrm{Tri}(24;5)$ in the same way. \qed

\section{The labels}\label{app:labels}

The polynomials are those of \cite[\S8]{AK12} (see \S\ref{ss:akinputs}). The files of the supplementary material contain them with checksums. With $x_i$ as in \S\ref{ss:conditions}, they read as follows.
\begin{itemize}
\item $\lab{E}(\{a,b,c\})$: $m_a^2m_b^2+m_b^2m_c^2+m_c^2m_a^2-2m_am_bm_c(m_a+m_b+m_c)=0$, the product over the signs of $x_a\pm x_b\pm x_c$ times $(m_am_bm_c)^2$.
\item $\lab{R}(ab;s)$: $m_s^2(m_a+m_b)^4-m_a^3m_b^3=0$.
\item $\lab{H}(ij|kl)$: $m_im_j-m_km_l=0$; and $\mathrm{Tri}(ij;s)$: for positive masses equivalent to
\[
\Big(\frac1{m_s^2M_{Q_s}}-\frac1{m_i^3}-\frac1{m_j^3}\Big)^2-\frac4{m_i^3m_j^3}=0 .
\]
\end{itemize}

\input{sec/app_d2}

\section{The polynomial $P_{11}$}\label{app:P11}
In terms of $e_1=m_a+m_b+m_c$, $e_2=m_am_b+m_bm_c+m_cm_a$ and $e_3=m_am_bm_c$,
\begin{multline*}
P_{11}=e_1^{11}-13e_1^9e_2+17e_1^8e_3+63e_1^7e_2^2-162e_1^6e_2e_3-135e_1^5e_2^3+105e_1^5e_3^2+513e_1^4e_2^2e_3\\
+108e_1^3e_2^4-651e_1^3e_2e_3^2-540e_1^2e_2^3e_3+250e_1^2e_3^3+1000e_1e_2^2e_3^2-675e_2e_3^3 .
\end{multline*}
Expanded in the masses it has $78$ terms, and $P_{11}(1,1,1)=9$. The substitution $m_a=c+p+q$, $m_b=c+p$, $m_c=c$ gives a polynomial with $72$ terms, all coefficients positive and the coefficient of $c^{11}$ equal to $9$. With the symmetry of $P_{11}$, this gives $P_{11}(m)\ge9\min(m)^{11}>0$ on the open positive orthant.

\section{Diagrams 6 and 3 at two exceptional mass points}\label{app:exceptional}

At the mass point $P_2$ and at the mass point $\hone$ of Table~\ref{tab:noK} a piece of diagram $6$, respectively of diagram $3$, is enabled by the mass conditions. We exclude these diagrams there by computing the leading solutions of a place and an obstruction at a higher order.

At a place with a fixed diagram and type, the exponents of the coordinates (relative to body $5$) are determined, up to the colour exchange. We write the coordinates as $\tau^{2e(q)}y_q$ as in \S\ref{ss:expansion}, divide each row by its lowest power of $\tau$, and normalize two coordinates as there. This gives a weighted system $G(\tau,y)=\sum_{n\ge0}\tau^nG_n(y)$. The point $y(0)$ is a \emph{leading solution}: $G_0(y(0))=0$, with all coordinates nonzero. Let $R$ be a set of rows such that the Jacobian $J_0[R]$ of $G_0$ restricted to these rows is invertible at $y_0$. Then the rows of $R$ determine a unique formal solution $Y(\tau)$ of $G_R(\tau,Y)=0$ with $Y(0)=y_0$, by the implicit function theorem for formal series. Then every solution $y(\tau)$ of $G=0$ with leading solution $y_0$ equals $Y$ after the normalization. Let $\eta_{R,n}(y_0)$ be the coefficient of $\tau^n$ in the remaining rows evaluated at $Y(\tau)$. If $\eta_{R,n}(y_0)\ne0$ for some $n$, no place of the given type has leading solution $y_0$.

\begin{proposition}\label{prop:d6P2}
Let $m=(9,36,36,4,9)$. No singular place of an irreducible curve $C\subseteq Y_m$ has diagram $6$ with centre $4$ and wings $\{1,4,5\}$, $\{2,3,4\}$.
\end{proposition}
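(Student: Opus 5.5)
We follow the scheme just described for a fixed diagram and type. At a place of diagram $6$ with centre $4$ and wings $A=\{1,4,5\}$, $B=\{2,3,4\}$, Theorem~\ref{thm:d6AB} and Table~\ref{tab:types} give the type $\pc v(4;A|B)=\pc v_7(0,0)$: $x=0$ on the six wing pairs and $-2$ on the four cross pairs. By Lemma~\ref{lem:orders}, up to the colour exchange we have $(\zeta,\omega)=(-2,2)$ on $\mathrm{tri}(A)$, $(2,-2)$ on $\mathrm{tri}(B)$ and $(-2,-2)$ on the cross pairs. These exponents fix the exponents of the coordinates relative to body $5$ after relabelling, or relative to the centre $4$, which is more convenient. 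We write the coordinates as $\tau^{2e(q)}y_q$ and divide each row by its lowest power of $\tau$. Then we normalize two coordinates by the freedom of \S\ref{ss:expansion}, for instance one $z$-coordinate in wing $A$ and one $w$-coordinate in wing $B$. This yields the weighted system $G=\sum_n\tau^nG_n$. The two colour choices are treated separately; they are not symmetric, because the masses of the two wings, $(9,4,9)$ and $(36,36,4)$, differ.

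First we solve the leading system $G_0(y)=0$ exactly over $\Q$ at $m=(9,36,36,4,9)$. The leading system splits essentially into the two wing triangles, each an isolated maximal stroke triangle in one colour, coupled only through the centre of mass and through the cross distances. It is the system that forces \lab{D6}, since $m_1=m_5$ and $m_2=m_3$. We compute a standard basis over $\Q$, confirm that it is zero-dimensional, and enumerate the leading solutions with all coordinates nonzero. Their number is finite and explicit; they lie over a few number fields, with the sign choices of the cross $p_{kl}$ giving the multiplicity. We certify each solution in ball arithmetic by Krawczyk's test. This step is check (i)--(ii) of the proof.

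Next, for each leading solution we choose a set $R$ of rows with $J_0[R]$ invertible. Our first try is all rows except the two rows in the left kernel of $J_0$, which correspond to the wing torques or momenta, by analogy with the $\lambda_Z,\lambda_W$ of Appendix~\ref{app:D3}. We verify invertibility in ball arithmetic or exactly. We then compute the formal solution $Y(\tau)$ order by order by Newton steps, and evaluate the remaining rows to obtain the obstructions $\eta_{R,n}(y_0)$ for $n=1,2,3,\dots$. We check exactly that the lower orders vanish identically where they must, and that some $\eta_{R,n}$, expected at order $2$ or $3$ by analogy with Theorem~\ref{thm:D3}, has a ball excluding $0$ at every leading solution. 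By the principle stated before the proposition, no place of this type exists.

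The main obstacle is leading solutions at which $J_0$ drops rank beyond the rotation and scaling directions. A degeneracy of this kind is exactly what the coincidence $m_1=m_5$, $m_2=m_3$ together with the symmetric wings can produce. If no $R$ with $J_0[R]$ invertible exists at some solution, we first try a different normalization of the two coordinates. Failing that, we perform a Lyapunov--Schmidt reduction to the extra kernel direction, as in case (iii) of Lemma~\ref{lem:d2gate}, and show that its lowest nonvanishing coefficient is nonzero in ball arithmetic.
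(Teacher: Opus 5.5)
This is essentially the paper's argument: leading solutions of the weighted system, $J_0$ of rank $16$ at each, and a nonzero $\eta_{R,n}$ at each. In the paper there are $5184$ leading solutions, and completeness comes from matching the multiplicity count of a standard basis over $\Q$ with $5184$ distinct Krawczyk-certified simple zeros; the obstructions needed run up to order $6$, so your Lyapunov--Schmidt fallback is never used. One correction: the exchange $z\leftrightarrow w$ is a symmetry of \eqref{eq:AK4} at fixed masses and does not swap the masses of the wings, so a single colouring suffices and your separate second case is redundant rather than required.
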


\begin{proof}
We renumber the bodies so that $m=(9,9,36,36,4)$ and the diagram has centre $5$ and wings $\{1,2,5\}$, $\{3,4,5\}$. At such a place the type is $\pc v(5;125|345)$. The exponents of the coordinates are fixed by it (after exchanging the colours if necessary). The leading solutions of the weighted system are given by closed formulas in $10$ roots (cube and square roots of explicit functions of the masses), which satisfy a square system of $10$ polynomial relations. We check:
\begin{enumerate}[label=\textup{(\roman*)}]
\item in exact arithmetic, that each point of the relations, substituted into the closed formulas, is a leading solution;
\item by a standard basis over $\Q$, that the leading system has $5184$ solutions counted with multiplicity;
\item in ball arithmetic, with Krawczyk's test, that the relations have $5184$ distinct simple zeros, so that the closed-form points are all the leading solutions and each is simple, and $J_0$ has rank $16$ at each of them;
\item in ball arithmetic, that at each leading solution, for a set $R$ of $16$ rows with $J_0[R]$ invertible, some $\eta_{R,n}$ with $n\le6$ is nonzero.
\end{enumerate}
The first nonzero obstruction has order $3$ at $3888$ leading solutions, order $4$ at $576$, order $5$ at $576$ and order $6$ at $144$. By the remark above, no place has this diagram.
\end{proof}

Let $t_*\approx1.8024986256$ be the root greater than $1$ of $p(t)=2t^8+t^6-8t^5-6t^4-8t^3+t^2+2$. The other positive root of $p$ is $1/t_*$.

\begin{proposition}\label{prop:d3tstar}
Let $t\in\{t_*,1/t_*\}$ and $m=(1,1,t^{-2},t^{-2},(1+t)^{-2})$, so that $(x_1,x_3,x_5)=(1,t,1+t)$ with $x_i=m_i^{-1/2}$. No singular place of an irreducible curve $C\subseteq Y_m$ has as its type one of the $15$ points $\pc h(s;D_1|D_2)$ of diagram $3$.
\end{proposition}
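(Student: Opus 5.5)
We will run the scheme of Proposition~\ref{prop:d6P2}, now for diagram~$3$, but first we reduce the fifteen labellings to the few that can occur at all. By Theorem~\ref{thm:D3}, a place of type $\pc h(s;D_1|D_2)$ forces $m\in\cD_3(s;D_1|D_2)$. Our first step is therefore to evaluate, for each of the $15$ choices of centre $s$ and diagonals $D_1|D_2$, the condition $\lab H(D_1|D_2)$ together with its two conditions $\mathrm{Tri}$ at the given masses. We use $x=(1,1,t,t,1+t)$. The conditions are then polynomial in $t$, and we decide them modulo $p$ exactly or by ball enclosures at $t_*$. By the symmetry $t\mapsto1/t$, combined with a relabelling and a scaling of the masses, the case $1/t_*$ reduces to the case $t_*$, so we treat only $t_*$. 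We expect that only the two labellings named in \S\ref{ss:exceptional} survive, namely those where both conditions $\mathrm{Tri}$ hold because $p(t_*)=0$. All the others are excluded by Theorem~\ref{thm:D3} alone.

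For each surviving labelling we relabel so that $s=5$, $D_1=\{1,3\}$ and $D_2=\{2,4\}$, and we use the weighted system $G=G_0+\tau^2G_2+\tau^3G_3+\dots$ of Appendix~\ref{app:D3}. There the leading solutions are explicit, in cases A and B, with sign choices for the $q_e$ and cube roots for the $k_e$. At each of them $J_0$ has rank $16$, and the left kernel is spanned by $\lambda_Z$ and $\lambda_W$. The first obstruction $O$ vanishes exactly for the sign patterns that realize the two relations $\mathrm{Tri}$. These are the leading solutions we must still exclude.

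At each such leading solution we choose $16$ rows $R$ with $J_0[R]$ invertible. We then solve $G_R(\tau,Y)=0$ formally, order by order. The higher coefficients $\eta_{R,n}$ of the two remaining rows are obtained by back-substitution in the number field $\Q(t_*)$ with the needed radicals adjoined, or, more practically, in ball arithmetic at sufficient precision. We then show that some $\eta_{R,n}$ with small $n$ (we expect $n=4$ or $5$) is nonzero. By the remark before Proposition~\ref{prop:d6P2}, this excludes the place.

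We expect the main obstacle to be certifying that the chosen $\eta_{R,n}$ is nonzero, together with the exact vanishing of the lower orders that makes it the first obstruction. The lower-order vanishing ($O=0$, and possibly $\eta_{R,4}$) is exact only because of $p(t_*)=0$. So it must be proved exactly, not by balls, for instance by reducing the closed forms modulo $p$ after clearing the radicals by exact sign and rational bisection arguments. The nonvanishing itself is certified by a ball enclosure that excludes $0$.
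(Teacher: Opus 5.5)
Your proposal follows the paper's proof: Theorem~\ref{thm:D3} leaves only $\pc h(5;13|24)$ and $\pc h(5;14|23)$, which the relabelling $(34)$ exchanges while fixing $m$; then, at the leading solutions of Appendix~\ref{app:D3}, a ball-arithmetic computation finds some nonzero $\eta_{R,n}$ with $n\le5$ (order $4$ or $5$ exactly at the $648$ with $O=0$). What you call the main obstacle is not one: by the remark before Proposition~\ref{prop:d6P2}, a single nonzero $\eta_{R,n}$ at any order already excludes $y_0$, so you need no exact proof that the lower orders vanish, and ball enclosures suffice throughout.
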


\begin{proof}
Since $\lab{E}(\{1,3,5\})$ holds at $m$, Theorem~\ref{thm:D3} leaves only the two points $\pc h(5;13|24)$ and $\pc h(5;14|23)$, at which $\cD_3$ holds. The relabelling $(34)$ fixes $m$ and exchanges them. At $\pc h(5;13|24)$ the leading solutions are the $10368$ points of Appendix~\ref{app:D3}. The first obstruction vanishes exactly at the $648$ of them with $O=0$. We check:
\begin{enumerate}[label=\textup{(\roman*)}]
\item an enclosure of $t$ of width $2^{-440}$, by exact rational bisection on a sign change of $p$;
\item enclosures of the $10368$ leading solutions in balls, from their closed forms;
\item in ball arithmetic at $400$ bits, that at each leading solution, for a set $R$ of $16$ rows with $J_0[R]$ invertible, some $\eta_{R,n}$ with $n\le5$ is nonzero.
\end{enumerate}
The first nonzero obstruction has order $3$ at $9720$ leading solutions, order $4$ at $432$ and order $5$ at $216$.
\end{proof}

The mass points of Proposition~\ref{prop:d3tstar} are, up to relabelling, the point $\hone$ of Table~\ref{tab:noK}.

%% file: sec/a_d5hulls.tex
\section{The hulls of diagram 5}\label{app:d5hulls}

We prove Theorem~\ref{thm:hulls}, which gives the hulls of \S\ref{ss:d5} explicitly. Throughout, $P$ is a place of diagram~$5$ with $zw$-triangle $T$ and outer bodies $A$, $B$. The points $z_c$, $w_c$ are the centres of mass of $T$. We use the coordinates $y=(x_{At_0},x_{At},x_{Bt_0},x_{Bt},x_{AB})$ of \S\ref{ss:d5}. We put $e_A=(1,1,0,0,0)$ and $e_B=(0,0,1,1,0)$.

\begin{theorem}\label{thm:hulls}
Consider the model of $\Vtwo^{\mathrm w}$, $\Vtwo$ or $\VN$ \textup{(}\S\ref{ss:d5}\textup{)} for a pair of modes. The projection to $y$ of each point of the model lies in the polytope that Table~\ref{tab:d5hulls} gives for this hull and this pair of modes, up to the exchanges of Lemma~\ref{lem:d5sym}; where the table gives $\emptyset$, the model is empty. In the centred case, the model of Theorem~C0 of the supplementary material is empty for the pairs of modes \textup{(Z1,\,Zs$_{t_0}$)} and \textup{(W1,\,Ws$_{t_0}$)} and their exchanges, where $t_0$ is the body of $T$ at the centre of mass of $T$.
\end{theorem}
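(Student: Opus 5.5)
We will prove Theorem~\ref{thm:hulls} pair of modes by pair of modes. Each model is a finite system of linear equalities, strict and non-strict inequalities and disjunctions ("the minimum is attained at least twice") in the orders of the modelled quantities. The projection of its feasible set to $y$ is therefore a finite union of projections of polyhedra, one for each branch of the disjunctions.

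First we cut down the number of cases with the symmetries of Lemma~\ref{lem:d5sym}: the exchange of $A$ and $B$, the exchange of the colours, and the relabelling of $t_0$ inside $T$. Under these the $25$ pairs of modes for $\Vtwo^{\mathrm w}$, and likewise for $\Vtwo$ and $\VN$, fall into a small number of orbits. For one representative of each orbit we do the following.
\begin{enumerate}[label=(\arabic*)]
\item Write down the relations of \S\ref{ss:d5rel} that belong to the model. These are the mode parameters and their inequalities from Lemma~\ref{lem:nearcluster}, the orders of the positions and of the differences of $0,z_c,z_A,z_B$, the centre-of-mass identity $m_Az_A+m_Bz_B+M_Tz_c=0$, the torques, the sums $\sum_tm_tZ_{tX}$, the quantity $1-Mp_{AB}^3$, and, for $\VN$, the relations (R1)--(R3).
\item Resolve each "minimum attained twice" into its finitely many branches.
\item For each branch, show that the resulting polyhedron is empty, or that it satisfies every facet inequality of the polytope of Table~\ref{tab:d5hulls}. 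Both are done by exact Farkas (Fourier--Motzkin) certificates: a nonnegative combination of the model rows that yields either $0<0$, or the facet inequality itself.
\end{enumerate}

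The propositions of this appendix (Propositions~\ref{prop:d5empty}--\ref{prop:d5VN}) organise the argument. We first prove emptiness for the pairs of modes that carry no type. This uses mainly the near/far dichotomy: a Z-near body and a W-near body cannot coexist with the closure of the triangle $AB t$ and the orders of the centre of mass. We then derive, mode by mode, the bounds on $(x_{At_0},x_{At})$ and $(x_{Bt_0},x_{Bt})$ that come from one outer body alone. For example, in mode F the bound $c<1$ together with $c\ge0$ forced by the equation of that body; in modes Z1 and Zs the inequalities $s<2$, $2s+\bar\omega<3$ and so on. The last step is the coupling through $x_{AB}$, which is where the hull polytopes arise as genuine polygons rather than products.

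For the hull $\Vtwo^{\mathrm w}$ versus $\Vtwo$, we drop the equalities for $\sum_tm_tZ_{tX}$ in modes F, Z1 and W1. We then check that only the pairs (F,F), (F,Z1), (F,W1) and their exchanges change, since the other certificates never use those rows.

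For the centred case, the model of Theorem~C0 drops (R3) but keeps the centred-position relation $z_{t_0}-z_c\prec z_{t_0t}$. Its emptiness for (Z1,Zs$_{t_0}$) comes from combining that relation with the exceptional exponent $s>1$ and the equation of the body $t_0$, which should give a contradiction $s\le1$.

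The main obstacle is step (3) for the coupled pairs, especially (Zs,Ws) and (F,Zs). There the number of branches of the disjunctions is large, and the facet inequalities only follow from long combinations involving the torque equality and $1-Mp_{AB}^3$. By hand I would first try to eliminate $x_{AB}$ using the triangle relation (O5) on $A,B,t$. This reduces to the four outer coordinates. I would then guess the certificate from the vertices of the listed polytope, each vertex being tight on a known set of rows. In practice these certificates will be found by linear programming and checked exactly, as the cross-checks of \S\ref{ss:crosschecks} indicate. Finally, the statement holds for the projections of all model points, not just their closures, because each certificate is a linear consequence valid at every feasible point.
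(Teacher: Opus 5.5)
Your load-bearing step is sound: branch over every ``minimum attained at least twice'', every event of (R) and the conditional of (T), then certify each branch exactly as either infeasible or implying each defining inequality and equation of the tabulated polytope. Every relation of the models is linear in the orders, so each branch is a polyhedron (with strict inequalities, so the certificates are of Motzkin type). Containment of its projection in a closed polytope is then a finite LP statement.

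This is, however, the independent cross-check of \S\ref{ss:crosschecks}, not the paper's proof. As written, your argument is a proof only once the enumeration has actually been run.

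The paper argues by hand, in four steps:
\begin{enumerate}
\item It distils the model into Lemma~\ref{lem:d5rules}: five of the six orders $c_A,c_B,\zeta,p^z_A,p^z_B,p^z_c$ equal their minimum, together with the consequences of (E), (S) and (I).
\item It splits each representative pair on the sign of $r=x_{AB}$. This gives $r=0$ for (F,\,F), (F,\,Z1), (Z1,\,Z1) and (Z1,\,W1), and emptiness for (F,\,Zs), (Z1,\,Ws) and (Zs,\,Ws).
\item For the two remaining pairs it derives explicit regions $\mathcal R$, $\mathcal T$, $\mathcal S^0$, $\mathcal S^{\pm}$, and identifies their convex hulls with $P_{ss}$ and $P_{1s}$ by the remark on affine maps over a polytope.
\item The $\VN$ column then follows from (R), (Q) and (T), using by-products of these propositions.
\end{enumerate}
The hand route explains where each vertex comes from and needs no facet enumeration. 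Yours is uniform over the three hulls and the centred case, but it gives no structure. It also needs the H-representations of the tabulated polytopes, including the affine-hull equations of the lower-dimensional ones.

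Several heuristics in your middle paragraphs are wrong and would mislead a hand attempt.
\begin{itemize}
\item A $z$-near and a $w$-near body can coexist. For (Z1,\,W1) the models of $\Vtwo^{\mathrm w}$ and $\Vtwo$ are nonempty with projection $[e_A,e_B]$; only the event rule (R) empties the model of $\VN$. Conversely, (F,\,Zs) is empty with a single near body.
\item You cannot eliminate $x_{AB}$ through (O5). $y_5=r$ is a genuine direction of $P_{1s}$ and $P_{ss}$, and its sign is what organizes the case analysis.
\item The bound $\bar x_X\ge0$ in mode F is not a one-body consequence in the paper. It is obtained only after $r=0$ has been forced using both bodies.
\item $\Vtwo$ differs from $\Vtwo^{\mathrm w}$ only by the equalities of (S) in mode F. In mode Z1 both models have only the inequalities.
\item In the centred case the extra relation is $c_Y>1$, not an equation of body $t_0$ yielding $s\le1$. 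It is needed only for $r>0$, where it contradicts the five-of-six rule whether or not ``det'' holds. For $r\le0$ the argument for $\VN$ goes through unchanged, since it uses neither (T) nor the event of $\tilde t_X$.
\end{itemize}
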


\begin{table}[ht]
\centering
\small
\begin{tabular}{@{}llll@{}}
\toprule
pair of modes & $\Vtwo^{\mathrm w}$ & $\Vtwo$ & $\VN$\\
\midrule
(F,\,F) & $\conv\{0,e_A,e_B\}$ & $\{0\}$ & $\{0\}$\\
(F,\,Z1) & $\conv\{0,e_A,e_B\}$ & $[0,e_B]$ & $\emptyset$\\
(Z1,\,Z1) & $P_{11}$ & $P_{11}$ & $[-\frac12(e_A+e_B),0]$\\
(Z1,\,W1) & $[e_A,e_B]$ & $[e_A,e_B]$ & $\emptyset$\\
(Z1,\,Zs) & $P_{1s}$ & $P_{1s}$ & $\emptyset$\\
(Zs,\,Zs) & $P_{ss}$ & $P_{ss}$ & $P_{ss}$\\
(F,\,Zs), (Z1,\,Ws), (Zs,\,Ws) & $\emptyset$ & $\emptyset$ & $\emptyset$\\
\bottomrule
\end{tabular}
\caption{The hulls of \S\ref{ss:d5} for nine pairs of modes, in the coordinates $y$. Here $P_{11}=\{ae_A+be_B:\ -\frac12\le a,b\le1,\ a+b\le1\}$, with the vertices $-\frac12(e_A+e_B)$, $-\frac12e_A+e_B$, $e_B$, $e_A-\frac12e_B$, $e_A$. The polytope $P_{1s}$, of dimension $4$, has the vertices $-\frac12(e_A+e_B)$, $(-\frac12,-\frac12,0,-\frac12,0)$, $0$, $e_A-\frac12e_B$, $(1,1,0,-\frac12,-\frac12)$, $(1,1,0,-\frac12,0)$, $e_A$, $(1,1,\frac13,0,\frac13)$. The polytope $P_{ss}$, of dimension $5$, has the vertices $-\frac12(e_A+e_B)$, $(-\frac12,-\frac12,0,-\frac12,0)$, $(-\frac13,-\frac13,0,-\frac12,0)$, $(0,-\frac12,-\frac12,-\frac12,0)$, $(0,-\frac12,-\frac13,-\frac13,0)$, $(0,-\frac12,0,-\frac12,0)$, $0$, $(\frac13,0,\frac13,0,\frac13)$.}\label{tab:d5hulls}
\end{table}

\subsection{Orders and relations}\label{ss:d5rel}
For $X\in\{A,B\}$ let $Y$ be the other outer body. We write $c_X=e(z_X-z_c)$, $d_X=e(w_X-w_c)$, $p^z_X=e(z_X)$, $p^w_X=e(w_X)$, $p^z_c=e(z_c)$, $p^w_c=e(w_c)$ and
\[
t^z_X=e\Big(\sum_{t\in T}m_tZ_{tX}\Big),\qquad t^w_X=e\Big(\sum_{t\in T}m_tW_{tX}\Big),
\]
and we put $\zeta=\zeta_{AB}$, $\omega=\omega_{AB}$ and $r=x_{AB}$, so that $e(Z_{AB})=\zeta-3r$ and $e(W_{AB})=\omega-3r$. A point of a model is a real vector, with one entry for each modelled order, that satisfies the relations of the model. We denote its entries by the names of the orders.

Inside $T$ the differences $z_t-z_c$ and $w_t-w_c$ have exponent at least $1$. So, by Lemma~\ref{lem:nearcluster}, a body $X$ in mode F has $c_X,d_X<1$ and $x_{Xt}=\bar x_X:=\frac12(c_X+d_X)$ for all $t\in T$. A body $X$ in mode Z1 or Zs$_{t_0}$ has $c_X\ge1$, the far order $f_X:=d_X<1$ and the order $s_X=\zeta_{Xt_0}$, which is $1$ in mode Z1. In mode Zs$_{t_0}$, $s_X>1$, $s_X+3f_X<4$ and $3s_X+f_X<4$. Then $x_{Xt}=\bar x_X:=\frac12(1+f_X)$ for $t\ne t_0$, and $x_{Xt_0}=x^0_X:=\frac12(s_X+f_X)$. In the modes F and Z1 we put $x^0_X=\bar x_X$, so that $y=(x^0_A,\bar x_A,x^0_B,\bar x_B,r)$. The modes W1 and Ws$_{t_0}$ are the same with the colours exchanged. The orders $c_X$, $d_X$, $\zeta$, $\omega$ and those of the positions are at least $-2$, since all components of $\cZ$ and $\cW$ have exponent at least $-2$ (\S\ref{ss:orders}). So $\bar x_X\ge-\frac12$ in the modes other than F. As $A$ and $B$ have no strokes, $\zeta+3\omega<4$ and $3\zeta+\omega<4$.

The terms of $\sum_tm_tZ_{tX}$ and $\sum_tm_tW_{tX}$ have the orders $\zeta_{Xt}-3x_{Xt}$ and $\omega_{Xt}-3x_{Xt}$. In mode Zs$_{t_0}$ the term of $t_0$ has the least order. These two facts give the relations (S):
\begin{enumerate}[leftmargin=3.4em,labelwidth=2.8em,labelsep=0.6em,align=left]
\item[(S)] In mode F, $t^z_X\ge c_X-3\bar x_X$ and $t^w_X\ge d_X-3\bar x_X$, with equality in the models of $\Vtwo$ and $\VN$. In mode Z1, $t^z_X\ge1-3\bar x_X$ and $t^w_X\ge f_X-3\bar x_X$. In mode Zs$_{t_0}$, $t^z_X=s_X-3x^0_X$ and $t^w_X=f_X-3x^0_X$.
\end{enumerate}
Each of the following relations says that, among the orders of the terms of an exact identity with nonzero constant coefficients, the minimum is attained at least twice. We list the orders and then the identity.
\begin{enumerate}[leftmargin=3.4em,labelwidth=2.8em,labelsep=0.6em,align=left]
\item[(P)] Any three of the six orders $c_A$, $c_B$, $\zeta$, $p^z_A$, $p^z_B$, $p^z_c$, and likewise in $w$. By the centre of mass, $Mz_c+m_A(z_A-z_c)+m_B(z_B-z_c)=0$. So the six quantities $z_A-z_c$, $z_B-z_c$, $z_{AB}$, $z_A$, $z_B$, $z_c$ are linear combinations of the first two and no two of them are proportional.
\item[(E)] $p^z_X$, $\zeta-3r$ and $t^z_X$, and likewise in $w$: the equation $z_X=m_YZ_{XY}-\sum_tm_tZ_{tX}$ of \eqref{eq:AK4}.
\item[(F)] For $X$ not in mode F: $c_X$, $\zeta+\beta$, $t^z_X$, and also $d_X$, $\omega+\beta$, $t^w_X$, where $\beta$ is the order of $1-Mp_{AB}^3$. The identity is $M_T(z_X-z_c)=-M\sum_tm_tZ_{tX}-m_Y(1-Mp_{AB}^3)\,z_{XY}$, with its counterpart in $w$.
\item[(I)] $2r$, $c_A+d_A$, $c_B+d_B$ and $0$: the identity $MI=MI_T+M_T\sum_Xm_X(z_X-z_c)(w_X-w_c)+m_Am_Bz_{AB}w_{AB}$. Here $I_T=\sum_tm_t(z_t-z_c)(w_t-w_c)$ has order at least $2$. The quantity $I$ is a nonzero constant on $C$ (Lemma~\ref{lem:U}).
\item[(V)] $d_A+t^z_A$, $d_B+t^z_B$, $-r$ and $0$: the identity $U=I_T-\sum_Xm_X(w_X-w_c)\sum_tm_tZ_{tX}+m_Am_Bp_{AB}$. It follows from $p_{kl}=Z_{kl}w_{kl}$ and from the equations of the bodies $t\in T$ multiplied by $m_t(w_t-w_c)$. Here $U=I$ is a nonzero constant.
\item[(V$'$)] $c_A+t^w_A$, $c_B+t^w_B$, $-r$ and $0$: the same identity with the colours exchanged.
\end{enumerate}
In (I), (V) and (V$'$) we leave out the term $I_T$, since the minimum is at most $0<2$. We also use the following relations, in which $X$ is in mode Z1.
\begin{enumerate}[leftmargin=3.4em,labelwidth=2.8em,labelsep=0.6em,align=left]
\item[(Q)] The torque of an outer body $X$ is $\Theta_X=\sum_{t\in T}m_t\big((z_t-z_c)W_{tX}-(w_t-w_c)Z_{tX}\big)$. Its order is $\tau_X$. We sum over $t\in T$ the equation of $w_t$ in \eqref{eq:AK4} multiplied by $m_t(z_t-z_c)$, minus that of $z_t$ multiplied by $m_t(w_t-w_c)$. The left sides give $w_c\sum_tm_t(z_t-z_c)-z_c\sum_tm_t(w_t-w_c)=0$ and the terms of the pairs inside $T$ cancel. Hence $m_A\Theta_A+m_B\Theta_B=0$ and $\tau_A=\tau_B$. For $X$ in mode Z1 the terms $m_t(z_t-z_c)W_{tX}$ have order at least $1+f_X-3\bar x_X=-\bar x_X$ and the terms $m_t(w_t-w_c)Z_{tX}$ order at least $2-3\bar x_X>-\bar x_X$. Thus $\tau_X\ge-\bar x_X$.
\item[(R)] In the model of $\VN$: at most one of the following \emph{events} occurs: $c_X>1$; $t^z_X>1-3\bar x_X$; $t^w_X>f_X-3\bar x_X$; $\tau_X>-\bar x_X$; and, if $Y$ is in mode Zs, $\max(p^z_A,p^z_B,p^z_c)>1$ and $\tilde t_X>-\bar x_X$. This is the content of (R1) and (R2) of \S\ref{ss:d5}, which are proved in the supplementary material.
\item[(T)] In the model of $\VN$, if $Y$ is in mode Zs$_{t_0}$: the orders $\tilde t_X$, $\tilde t_Y$ of the two quantities of (R3) (proved in the supplementary material) satisfy $\tilde t_X\ge-\bar x_X$, and $\tilde t_Y=\min(e_1,e_2)$ if $e_1\ne e_2$, where $e_1=1-\frac12(s_Y+3f_Y)$ and $e_2=-\bar x_Y$. The minimum of $\tilde t_X$, $\tilde t_Y$ and $p^w_c+1$ is attained at least twice.
\end{enumerate}
In the centred case, for $X$ in mode Z1 and $Y$ in mode Zs$_{t_0}$ with $t_0$ at the centre of mass of $T$, the model of the centred case in the supplementary material has the relations of the model of $\VN$ other than (T) and the event $\tilde t_X>-\bar x_X$, and moreover $c_Y>1$. In the modes W1 and Ws the relations are the same with the colours exchanged. The model of $\Vtwo$ is that of $\Vtwo^{\mathrm w}$ together with the equalities of (S) in mode F. The models of $\VN$ and of the centred case contain the relations of the model of $\Vtwo$. So a statement proved for the model of $\Vtwo^{\mathrm w}$ holds for the other models, and one proved for the model of $\Vtwo$ holds for those of $\VN$ and of the centred case. The models have further relations, which we do not use.

\subsection{Symmetries and rules}\label{ss:d5rules}

\begin{lemma}\label{lem:d5sym}
The exchange of the colours maps the model of each hull, and that of the centred case, for a pair of modes onto the model of the same kind for the pair with Z1, Zs and W1, Ws exchanged, and it fixes $y$. The exchange of $A$ and $B$ maps the model for a pair of modes $(a,b)$ onto the model for $(b,a)$, and $y$ to $(y_3,y_4,y_1,y_2,y_5)$. Every pair of modes is the image of one of the nine pairs of Table~\ref{tab:d5hulls} under these exchanges.
\end{lemma}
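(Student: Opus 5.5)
The plan is to verify the three assertions of Lemma~\ref{lem:d5sym} separately. Each is a bookkeeping check against the list of relations in \S\ref{ss:d5rel}, together with the definitions of the modes in Lemma~\ref{lem:nearcluster} and of the coordinates $y$.

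\emph{Colour exchange.} Exchanging $z$ and $w$ (the map $(u,v)\mapsto(v,u)$, under which the rows are permuted among themselves) swaps every $z$-order with its $w$-counterpart. So it swaps $c_X\leftrightarrow d_X$, $p^z\leftrightarrow p^w$, $t^z\leftrightarrow t^w$ and $\zeta\leftrightarrow\omega$, and sends $\Theta_X$ to $-\Theta_X$, so that the order $\tau_X$ is unchanged. By Lemma~\ref{lem:nearcluster} this exchanges the modes Z1, Zs with W1, Ws, while F is fixed, and the parameters $s_X$, $f_X$ keep their roles. I would go through (S), (P), (E), (F), (I), (V)/(V$'$), (Q), (R), (T), and the extra relations of the centred case, and check that each is carried to the relation of the same name in the exchanged mode. (V) and (V$'$) are swapped with each other, and the others are mapped to themselves. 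The point to check is that the models were defined in \S\ref{ss:d5rel} with the clause ``with the colours exchanged'' for W1 and Ws, so this holds by construction. Finally, $y$ consists of the orders $x_{Xt}=\frac12(\zeta_{Xt}+\omega_{Xt})$ and $r=x_{AB}$, which are symmetric in the colours, so $y$ is fixed.

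\emph{Exchange of $A$ and $B$.} Every relation is stated either symmetrically in $A,B$ (for instance (P), (I), (V), $\tau_A=\tau_B$) or for a generic outer body $X$ with $Y$ the other one. Relabelling therefore maps the model of $(a,b)$ onto that of $(b,a)$. In $y=(x^0_A,\bar x_A,x^0_B,\bar x_B,r)$ the first two coordinates swap with the next two, and $r$ is symmetric, which gives $(y_3,y_4,y_1,y_2,y_5)$. The body $t_0$ of a mode Zs is carried along with its outer body, so the subscripts stay consistent. The centred case is likewise stated for ``$X$ in Z1, $Y$ in Zs$_{t_0}$'' and its exchanges.

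\emph{Orbit count.} There are five modes (F, Z1, Zs, W1, Ws), ignoring $t_0$, so there are $25$ ordered pairs. I would enumerate the orbits under the group $\Z/2\times\Z/2$ generated by the two exchanges. The table's nine representatives are (F,F), (F,Z1), (Z1,Z1), (Z1,W1), (Z1,Zs), (Zs,Zs), (F,Zs), (Z1,Ws), (Zs,Ws). Their orbits have sizes $1,4,2,2,4,2,4,4,2$, which sum to $25$. So the nine orbits are disjoint and cover all pairs.

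The main, if modest, obstacle is the relation-by-relation check of the colour exchange for (R) and (T). These are asymmetric in form, since they refer to $\tilde t_X,\tilde t_Y$, $p^w_c$ and the events, and they rest on supplementary statements. I would handle them by noting that the paper \emph{defines} the W-mode models as their colour images, so no new claim about (R1)--(R3) is needed.
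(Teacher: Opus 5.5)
Your proposal is correct and is essentially the paper's argument. The colour exchange preserves \eqref{eq:AK4} and the type and carries each relation to its mirror image, with (V) and (V$'$) swapped and the W-mode relations defined as colour images; it fixes $y$. The exchange of $A$ and $B$ only renames the outer bodies, and the group of order $4$ has the nine listed orbits on the $25$ pairs.

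The one slip is in the last step: orbit sizes summing to $25$ give coverage only together with the fact that the nine orbits are pairwise distinct, which is evident once you list them.
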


\begin{proof}
The system \eqref{eq:AK4} and the type are invariant under the exchange of $z$ and $w$. This exchange maps each relation of a model to a relation of the model for the pair of modes with the colours exchanged: (P), (E) and (F) in $z$ go to their counterparts in $w$, (V) goes to (V$'$), and (S), (Q), (R), (T) go to their mirror images. The exchange of $A$ and $B$ renames the outer bodies. The two exchanges generate a group of order $4$, whose orbits on the $25$ pairs of modes are those of the nine pairs of the table.
\end{proof}

By Lemma~\ref{lem:d5sym} we treat the nine pairs of Table~\ref{tab:d5hulls}. For their images under the exchange of the colours, the proofs below hold with (V$'$) in place of (V).

\begin{lemma}\label{lem:d5rules}
At every point of the model of $\Vtwo^{\mathrm w}$, for $X\in\{A,B\}$:
\begin{enumerate}[label=\textup{(\alph*)}]
\item In each colour, five of the six orders $c_A$, $c_B$, $\zeta$, $p^z_A$, $p^z_B$, $p^z_c$ are equal to their minimum $\mu_z$, and the sixth, the \emph{exception}, is at least $\mu_z$; likewise in $w$, with the minimum $\mu_w$. So if two of the six orders differ, the larger is the exception and the other five are equal to the smaller. If $c_A,c_B\ge1$, then $\zeta\ge1$ and $p^z_A,p^z_B,p^z_c\ge1$.
\item $t^z_X\ge\min(p^z_X,\zeta-3r)$, with equality if $p^z_X\ne\zeta-3r$. If $r<0$ and $p^z_X=\mu_z$, then $t^z_X=\mu_z$. The same holds in $w$.
\item In the modes F and Z1, $d_X+t^z_X\ge-\bar x_X$, with equality in mode F in the models of $\Vtwo$ and $\VN$. In mode Zs, $d_X+t^z_X=-x^0_X$.
\item In mode Z1, $3\bar x_X<2+f_X$; in mode Zs, $3x^0_X<2+f_X$. Moreover $3r<2+\zeta$ and $3r<2+\omega$.
\item In the modes Z1 and Zs, $c_X+d_X\ge1+f_X=2\bar x_X$. If $r>0$, the minimum of $c_A+d_A$, $c_B+d_B$ and $0$ is attained at least twice. If $r<0$, the minimum of $2r$, $c_A+d_A$ and $c_B+d_B$ is attained at least twice, and it is at most $2r$.
\end{enumerate}
\end{lemma}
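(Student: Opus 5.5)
We prove each item of Lemma~\ref{lem:d5rules} directly from the relations (P), (E), (S), (I), (V) and the mode constraints of \S\ref{ss:d5rel}. We work in the model of $\Vtwo^{\mathrm w}$, which is contained in the others, and use the equalities of (S) only where the statement restricts to $\Vtwo$ and $\VN$.

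For (a) we take the $z$ colour; the $w$ colour is symmetric. Relation (P) says that among any three of the six orders the minimum is attained at least twice. If two of the six orders were strictly above the common minimum, a triple containing both of them and one minimal order would violate (P). So at most one order exceeds $\mu_z$, and the other five equal it. If $c_A,c_B\ge1$, then either both are minimal, so $\mu_z\ge1$, or one of them is the exception and the other equals $\mu_z\ge1$. In both cases every order is at least $1$.

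For (b) we read (E) as an ultrametric bound: $t^z_X\ge\min(p^z_X,\zeta-3r)$, with equality when these two orders differ. If $r<0$ and $p^z_X=\mu_z$, then $\zeta\ge\mu_z$ by (a), so $\zeta-3r>\mu_z=p^z_X$, and equality gives $t^z_X=\mu_z$.

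For (c) we substitute (S). In mode F the bound $t^z_X\ge c_X-3\bar x_X$ gives $d_X+t^z_X\ge c_X+d_X-3\bar x_X=-\bar x_X$, since $\bar x_X=\tfrac12(c_X+d_X)$. This becomes an equality when (S) is an equality. In mode Z1, $d_X=f_X$ and $t^z_X\ge1-3\bar x_X$, so the sum is at least $1+f_X-3\bar x_X=-\bar x_X$. In mode Zs the equality of (S) gives $f_X+s_X-3x^0_X=-x^0_X$.

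For (d) we use that all components of $\cZ$ and $\cW$ have exponent at least $-2$. The order of $W_{Xt}$ is $f_X-3\bar x_X$, or $f_X-3x^0_X$ for $t=t_0$ in mode Zs, and this order is $\ge-2$. Equality would make $Xt$ a stroke, which is impossible since $X$ has no strokes, so the inequality is strict. The same argument applied to $e(Z_{AB})=\zeta-3r$ and $e(W_{AB})=\omega-3r$ gives the last two inequalities.

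For (e) the first claim is immediate: $c_X\ge1$ and $d_X=f_X$. For the other two claims we invoke (I), which says that the minimum of $2r$, $c_A+d_A$, $c_B+d_B$ and $0$ is attained at least twice. If $r>0$, the term $2r$ is not minimal, because $0<2r$, so the minimum is attained twice among the remaining three terms. If $r<0$, the term $0$ is not minimal, because $2r<0$, so the minimum is attained twice among the other three. That minimum is at most $2r$, because $2r$ is one of the terms.

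The argument is routine bookkeeping. The only delicate points are the strictness in (d), which rests on the no-stroke hypothesis, and keeping straight where (S) is an equality and where it is only a bound.
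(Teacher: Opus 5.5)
Your proof is correct and follows the paper's argument item by item: (P) for (a), the ``minimum attained twice'' reading of (E) for (b), substitution of (S) for (c), and (I) with the comparison of $2r$ against $0$ for (e). The only difference is in (d). There you re-derive the inequalities from the bound $-2$ on the exponents of $W_{Xt}$, $Z_{AB}$, $W_{AB}$ and the absence of strokes. The paper instead observes that they are algebraic restatements of the model relations $f_X<1$, $3s_X+f_X<4$, $\zeta+3\omega<4$ and $3\zeta+\omega<4$. Citing those relations is what makes the argument apply literally at an arbitrary point of the model, which need not come from a place.
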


\begin{proof}
(a) If two of the six orders exceeded the minimum, they would form, with an order equal to the minimum, a triple whose minimum is attained once, against (P). If $c_A,c_B\ge1$ and $\mu_z<1$, both $c_A$ and $c_B$ would be exceptions. (b) is (E): the two smallest of $p^z_X$, $\zeta-3r$ and $t^z_X$ are equal. If $r<0$, then $\zeta-3r>\zeta\ge\mu_z$. (c) follows from (S); for example, in mode F, $d_X+t^z_X\ge d_X+c_X-3\bar x_X=-\bar x_X$. (d) restates $f_X<1$, $3s_X+f_X<4$, $\zeta+3\omega<4$ and $3\zeta+\omega<4$. (e) The inequality holds as $c_X\ge1$. The rest is (I), whose minimum is at most $0<2r$ if $r>0$, and at most $2r<0$ if $r<0$.
\end{proof}

\subsection{The pairs of modes}\label{ss:d5pairs}

\begin{proposition}\label{prop:d5empty}
The models of the pairs of modes \textup{(F,\,Zs)}, \textup{(Z1,\,Ws)} and \textup{(Zs,\,Ws)} are empty.
\end{proposition}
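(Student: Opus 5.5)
We treat the three pairs one at a time. In each case the aim is to show that the relations of the model of $\Vtwo^{\mathrm w}$ collected in \S\ref{ss:d5rel} and Lemma~\ref{lem:d5rules} are inconsistent. Since the models of $\Vtwo$, $\VN$ and the centred case contain these relations, emptiness then holds for all hulls. In every case the tool is the identity (V), or (V$'$) for the images under the colour exchange. Its minimum over $d_A+t^z_A$, $d_B+t^z_B$, $-r$, $0$ must be attained at least twice.

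\emph{The pair \textup{(Zs,\,Ws)}.} By Lemma~\ref{lem:d5rules}(c), $d_A+t^z_A=-x^0_A$, and the colour-mirror of (c) gives $c_B+t^w_B=-x^0_B$. For $B$ in mode Ws, however, $d_B\ge1$ while $c_B=f_B<1$. We therefore use both (V) and (V$'$) and bound the cross terms. From (S) in mode Ws, $t^z_B=f_B-3x^0_B$, so $d_B+t^z_B\ge1+f_B-3x^0_B$. Comparing with $-x^0_B=-\frac12(s_B+f_B)$, the inequalities $3s_B+f_B<4$ and $s_B>1$ show that this term is strictly larger than $-x^0_B$. The symmetric statement holds for $A$ in (V$'$).

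Next we use (I) through Lemma~\ref{lem:d5rules}(e), together with (P) through Lemma~\ref{lem:d5rules}(a), to pin down $r$. Since $c_A\ge1$ and $d_B\ge1$, we get $\zeta$ and $\omega$ versus the far orders. In (V) the term $-x^0_A$ must then tie with $-r$ or with $0$, and in (V$'$) the term $-x^0_B$ must tie similarly. Each alternative contradicts either the mode inequality $x^0_X>\frac12(1+f_X)\ge\ldots$ or Lemma~\ref{lem:d5rules}(d) for $r$.

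\emph{The pairs \textup{(F,\,Zs)} and \textup{(Z1,\,Ws)}.} These go the same way. The body in mode Zs (respectively the body in mode Ws, using (V$'$)) contributes an exact term $-x^0$, while the other body contributes at least $-\bar x$ by Lemma~\ref{lem:d5rules}(c). We show that $-x^0$ is the strict minimum. For this we compare $x^0$ with $\bar x$ of the other body and with $r$ through (P), (E) and (F), which force $s_X$ large relative to the orders of the other body. We then check that neither $-r$ nor $0$ can equal $-x^0$: $r=x^0$ violates (I) or (d), and $x^0=0$ violates the mode constraint.

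The main obstacle is the case analysis through (P): which of the six orders is the exception in each colour, and the sign of $r$. We would first enumerate these cases with the help of Lemma~\ref{lem:d5rules}(a),(e). Each branch reduces to a small linear infeasibility, which we would verify by hand with explicit nonnegative multipliers. If a branch resists, we would bring in (F) with the order $\beta$ to compare $c_X$ with $t^z_X$.
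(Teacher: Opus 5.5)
There is a genuine gap: the relation you make the engine of all three cases, (V)/(V$'$), cannot produce the contradiction, and the concrete steps you state fail.

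In the (Zs,\,Ws) paragraph the key inequality is reversed. We have $1+f_B-3x^0_B=-x^0_B+(1-s_B)<-x^0_B$ because $s_B>1$, so your lower bound for $d_B+t^z_B$ lies below $-x^0_B$, not above it. Besides, $-x^0_B$ is a term of (V$'$), not of (V). Your next claim, that $-x^0_A$ must tie with $-r$ or $0$, is false on the relations you invoke. Take $d_A=0$, $c_A=s_A=\frac65$, $c_B=0$, $d_B=s_B=\frac65$, $\zeta=\omega=r=0$, all positions of order $0$, and the $t$'s given by (S). This point satisfies (P), (S), (I), (V), (V$'$) and Lemma~\ref{lem:d5rules}(c)--(e). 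In (V) its minimum $-\frac35=-x^0_A$ is attained by $d_B+t^z_B$, not by $-r$ or $0$.

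In the (F,\,Zs) paragraph, the claim that $x^0_B=0$ violates the mode constraint is false: $s_B=-f_B=\frac65$ is allowed in mode Zs. More seriously, consider the point with $A$ in F, $c_A=0$, $d_A=-\frac65$; $B$ in Zs, $c_B=s_B=\frac65$, $f_B=-\frac65$; $\zeta=0$, $\omega=-\frac65$, $r=-\frac35$; all $z$-positions of order $0$, all $w$-positions of order $-\frac65$; and (S) with equality. It satisfies (P), (S), (I), (V), (V$'$) and Lemma~\ref{lem:d5rules}(c)--(e). In (V) its minimum $0$ is attained by $-x^0_B$ and by the constant term, so $-x^0_B$ is not a strict minimum. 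No argument built on these relations can exclude this pair. The point is excluded by (E): for $A$ in $z$ the terms are $0$, $\frac95$, $\frac95$, so the minimum is attained once.

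The missing idea is to let (P) pin the orders and then apply (E), i.e.\ Lemma~\ref{lem:d5rules}(b), to the body in mode Ws or Zs. The paper does not use (V) here at all.

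Take $B$ in mode Ws$_{t_0}$ with far order $g=c_B<1$, and $A$ in Z1 or Zs. The inequalities $c_B<1\le c_A$ and $d_A<1\le d_B$ make $c_A$ and $d_B$ the exceptions. Hence $\zeta=p^z_A=p^z_B=p^z_c=g$, $\omega=p^w_A=p^w_B=p^w_c=f_A$, and $2r=f_A+g$. Then (E) for $B$ in $z$ says that the maximum of $0$, $r$, $x^0_B$ is attained twice. In $w$, since $t^w_B-f_A=-x^0_B-2r$, it says that the maximum of $0$, $3r$, $x^0_B+2r$ is attained twice. The three signs of $r$ now all fail:
\begin{itemize}
\item $r>0$ gives $x^0_B=r$, i.e.\ $s_B=f_A<1$;
\item $r<0$ gives $x^0_B=0$, and then $0$ is the unique maximum in $w$;
\item $r=0$ gives $s_B\le f_A<1$.
\end{itemize}
Each case contradicts $s_B>1$.

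For (F,\,Zs), (P) gives $\zeta=c_A=p^z_A=p^z_B=p^z_c=\kappa<1$ and $\omega=2r-\kappa$. Splitting on the sign of $r$ and combining Lemma~\ref{lem:d5rules}(b) with the bounds (S) for $A$ yields either two exceptions in $w$ or incompatible constraints on $f_B$.
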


\begin{proof}
Let $B$ be in mode Ws$_{t_0}$, with the far order $g:=c_B<1$, so that $d_B\ge1$, $s_B>1$, $x^0_B=\frac12(s_B+g)$, $t^z_B=g-3x^0_B$ and $t^w_B=s_B-3x^0_B$. Let $A$ be in mode Z1 or Zs. By Lemma~\ref{lem:d5rules}(a), $c_B<1\le c_A$ gives $\zeta=p^z_A=p^z_B=p^z_c=g$, and $d_A<1\le d_B$ gives $\omega=p^w_A=p^w_B=p^w_c=f_A$. So $2r=f_A+g$. In $z$, relation (E) for $B$ says that the maximum of $0$, $r$, $x^0_B$ is attained twice. In $w$, since $t^w_B-f_A=-x^0_B-2r$, it says that the maximum of $0$, $3r$, $x^0_B+2r$ is attained twice. If $r>0$, the first statement gives $x^0_B=r$, hence $s_B=f_A<1$. If $r<0$, it gives $x^0_B=0$, and then $0$ is the only maximum in the second. If $r=0$, it gives $x^0_B\le0$, hence $s_B\le-g=f_A<1$. Each case contradicts $s_B>1$.

Now let $A$ be in mode F and $B$ in mode Zs$_{t_0}$. By Lemma~\ref{lem:d5rules}(a), $c_A<1\le c_B$ gives $\zeta=c_A=p^z_A=p^z_B=p^z_c=:\kappa<1$. Thus $\omega=2r-\kappa$.

If $r>0$, then $\zeta-3r<\kappa$, and Lemma~\ref{lem:d5rules}(b) gives $t^z_A=t^z_B=\kappa-3r$. By (S), $t^z_A\ge\kappa-3\bar x_A$. Hence $\bar x_A\ge r$, that is, $d_A\ge\omega$. And $t^z_B=-\frac12(s_B+3f_B)=\kappa-3r$ with $s_B>1$ gives $f_B<2r-\frac13(2\kappa+1)<\omega$. So $\omega$ and $d_A$ both exceed $f_B$ and both are exceptions in $w$.

If $r<0$, then $\zeta-3r>\kappa$, and $t^z_A=t^z_B=\kappa$. For $A$ this gives $\bar x_A\ge0$. Thus $d_A\ge-\kappa>\omega$. So $d_A$ is the exception in $w$ and $f_B=\omega=p^w_B$. For $B$ it gives $f_B=-\frac13(2\kappa+s_B)$. As $\omega-3r>p^w_B$, Lemma~\ref{lem:d5rules}(b) gives $t^w_B=p^w_B=f_B$. Then $x^0_B=0$, $s_B=-f_B$ and $f_B=-\kappa$, whereas $f_B=\omega<-\kappa$.

If $r=0$, then $t^z_B\ge\kappa$, so $s_B+3f_B\le-2\kappa$ and $f_B<-\frac13(2\kappa+1)<-\kappa=\omega$. So $\omega$ is the exception in $w$ and $d_A=f_B=p^w_A<\omega$. Hence $t^w_A=f_B$, while (S) gives $t^w_A\ge d_A-3\bar x_A=-\frac12(3\kappa+f_B)$, that is, $f_B\ge-\kappa$.
\end{proof}

\begin{proposition}\label{prop:d5zero}
At every point of the model of $\Vtwo^{\mathrm w}$ for each of the following pairs of modes, $r=0$, and:
\begin{enumerate}[label=\textup{(\alph*)}]
\item \textup{(Z1,\,W1)}: $\bar x_A+\bar x_B=1$;
\item \textup{(F,\,F)}: $\bar x_A,\bar x_B\ge0$ and $\bar x_A+\bar x_B<1$; in the models of $\Vtwo$ and $\VN$, $\bar x_A=\bar x_B=0$;
\item \textup{(F,\,Z1)}: $\bar x_A,\bar x_B\ge0$ and $\bar x_A+\bar x_B<1$; in the model of $\Vtwo$, $\bar x_A=0$;
\item \textup{(Z1,\,Z1)}: one of $f_A$, $f_B$ is equal to $\mu_w$, and $\mu_w\le\omega\le-1$; so $\bar x_A+\bar x_B<1$.
\end{enumerate}
\end{proposition}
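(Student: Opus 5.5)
The plan is to work case by case with the order calculus of Lemma~\ref{lem:d5rules}, in the same style as the proof of Proposition~\ref{prop:d5empty}. For each pair of modes, relation (P) and Lemma~\ref{lem:d5rules}(a) first pin down which of the six orders in each colour is the exception. Then (E), (S) and (I) force $r=0$. The general mechanism is the following: if $r>0$, then $\zeta-3r<\zeta$ and $\omega-3r<\omega$, so Lemma~\ref{lem:d5rules}(b) makes the $t$-orders equal to $\zeta-3r$ or $\omega-3r$. If $r<0$, (I) in the form of Lemma~\ref{lem:d5rules}(e) forces some $c_X+d_X\le2r<0$. Either conclusion should contradict the lower bounds of (S) and Lemma~\ref{lem:d5rules}(c).

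\emph{(Z1,\,W1).} Here $c_A\ge1>c_B$ and $d_B\ge1>d_A$. Lemma~\ref{lem:d5rules}(a) then gives $\zeta=p^z_\ast=c_B$ and $\omega=p^w_\ast=d_A$. Hence $2r=c_B+d_A=\bar x_A+\bar x_B-1$. I would next run (E) for $A$ in $w$ and for $B$ in $z$, exactly as in Proposition~\ref{prop:d5empty}. For $r>0$ this should give $\bar x_X=r$ together with a contradiction with the bound $3\bar x_X<2+f_X$. For $r<0$, (E) together with the value of $t^w_A$ from (S) should contradict Lemma~\ref{lem:d5rules}(b). Then $r=0$ yields $\bar x_A+\bar x_B=1$.

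\emph{(F,\,F).} Both $c_X<1$ and both $d_X<1$. I would use (V): its minimum over $d_A+t^z_A$, $d_B+t^z_B$, $-r$ and $0$ must be attained twice. By Lemma~\ref{lem:d5rules}(c), $d_X+t^z_X\ge-\bar x_X$, and similarly for (V$'$).

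Combining (E) with (P) should exclude $r\ne0$. If $r>0$, then $t^z_X=\zeta-3r$, and the equalities with (P) force $\bar x_X\ge r$ for both $X$. The exception structure in (P) cannot accommodate this. If $r<0$, (I) forces $c_X+d_X=2\bar x_X\le2r$, and then (S) together with (E) gives $t^z_X=p^z_X$, which contradicts (V) because $-r>0$.

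With $r=0$, (I) gives $\bar x_X\ge0$. The bound $\bar x_A+\bar x_B<1$ should come from $\zeta+\omega=\bar x_A+\bar x_B$-type identities via (P), combined with $3\zeta+\omega<4$ and $\zeta+3\omega<4$. In the model of $\Vtwo$, the equality case of Lemma~\ref{lem:d5rules}(c) turns (V) into $\min(-\bar x_A,-\bar x_B,0)$ attained twice. Together with $\bar x_X\ge0$ this forces $\bar x_A=\bar x_B=0$.

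\emph{(F,\,Z1).} This mixes the two previous arguments. It uses the same exception analysis as in Proposition~\ref{prop:d5empty}, namely $\zeta=\kappa=c_A$, with $s_B=1$ in place of $s_B>1$.

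\emph{(Z1,\,Z1).} Here $c_A,c_B\ge1$, so Lemma~\ref{lem:d5rules}(a) gives $\zeta\ge1$. The $w$-orders $f_A$, $f_B<1$ make one of them the minimum $\mu_w$. I would then show $\omega\le-1$ from $3\zeta+\omega<4$ combined with (E) in $w$, and finally $r=0$ from (I), since $c_X+d_X\ge1+f_X$.

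The main obstacle is excluding $r<0$ in the (F,\,F) and (F,\,Z1) cases. There the inequalities in the weak model $\Vtwo^{\mathrm w}$ (no equalities in (S)) are loose, and I expect to need (V) and (V$'$) simultaneously rather than (E) alone.
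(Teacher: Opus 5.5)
Your plan uses the right relations, but several of the steps you assign to them do not go through.

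\textbf{Excluding $r>0$ by (E) and (S).} Even when Lemma~\ref{lem:d5rules}(b) does give $t^z_X=\zeta-3r$, the lower bound in (S) only yields $\bar x_X\ge r$. That contradicts nothing; the contradiction has to come from (I). In (Z1,\,W1) take $c_A=d_B=1$, $f_A=c_B=\tfrac12$, all six positions and $\zeta,\omega$ equal to $\tfrac12$, $r=\tfrac12$, all four $t$-orders $-1$ and $\beta=-\tfrac32$. This satisfies (P), (E), (S), (F), (V) and (V$'$), with $\bar x_A=\bar x_B=\tfrac34\ne r$ and $3\bar x_X<2+f_X$ (which just restates $f_X<1$). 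Only (I) fails.

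\textbf{Case (Z1,\,W1).} The paper settles (a) by (I) alone. Here $2r=f_A+c_B$, while $c_A+d_A\ge1+f_A>2r$ and $c_B+d_B\ge1+c_B>2r$. So the minimum of (I) can only be attained by $2r$ and $0$, which gives $r=0$ and $\bar x_A+\bar x_B=1+r=1$. Note that $\bar x_A+\bar x_B-1$ equals $r$, not $2r$.

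\textbf{Case (Z1,\,Z1).} Here it is the other way round: (I) alone does not give $r=0$. With $c_A=c_B=1$ and $f_A=f_B=-1$, its minimum $0$ is attained three times for every $r>0$. The argument needs three steps. First, (V) with Lemma~\ref{lem:d5rules}(c) gives a body $Y$ with $\bar x_Y\ge r>0$. Second, Lemma~\ref{lem:d5rules}(e) gives a body $X$ with $c_X+f_X\le0$, so $X\ne Y$ and $f_X=\omega=p^w_X=\mu_w$. Third, (E) gives $t^w_X=\omega-3r<f_X$, against $t^w_X\ge f_X-3\bar x_X\ge f_X$. The case $r<0$ is analogous. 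Also, $\omega\le-1$ does not follow from $3\zeta+\omega<4$, which gives only $\omega<1$. It is $\omega=2r-\zeta=-\zeta$ once $r=0$.

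\textbf{Case (F,\,F), $r>0$.} Your claim that $r>0$ forces $t^z_X=\zeta-3r$ fails when $\zeta$ is the exception in $z$. Then $p^z_X=\mu_z$ may lie below $\zeta-3r$, and Lemma~\ref{lem:d5rules}(b) gives $t^z_X=\mu_z$. The paper instead gets one body $X$ with $\bar x_X\ge r$ from (V). Lemma~\ref{lem:d5rules}(e) then gives $c_Y+d_Y=0$ for the other body. Finally (E) for $Y$ fails in a colour whose $AB$ order is the minimum.

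\textbf{Case (F,\,F), $r=0$.} Here (I) does not give $\bar x_X\ge0$: its terms $0$, $c_A+d_A$, $c_B+d_B$, $0$ allow $c_A+d_A=c_B+d_B<0$. The paper uses (E) and (S). If $\bar x_A<0$, say $c_A<\zeta$, then $\zeta$ is the exception and $p^z_A=c_A<\zeta$. So (E) forces $t^z_A=c_A$, against $t^z_A\ge c_A-3\bar x_A>c_A$. After that, neither $\zeta$ nor $\omega$ can be an exception, since that would give two exceptions in the other colour. So $\mu_z+\mu_w=\zeta+\omega=0$.

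\textbf{Case (F,\,F), the model of $\Vtwo$.} Your use of (V) does not force $\bar x_A=\bar x_B=0$. With the equalities its terms are $-\bar x_A$, $-\bar x_B$, $0$, $0$, which allow $\bar x_A=\bar x_B>0$. What works is to add $c_A-3\bar x_A=t^z_A\ge\mu_z$ and $d_A-3\bar x_A=t^w_A\ge\mu_w$, which gives $-4\bar x_A\ge0$. Likewise, $\bar x_A+\bar x_B<1$ comes from $\zeta=\mu_z$, $\omega=\mu_w$ and the mode-F bounds $c_X,d_X<1$ on the two remaining orders. It does not come from the no-stroke inequalities.

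\textbf{Case (F,\,Z1).} This case is not argued, and the analogy with (F,\,Zs) in Proposition~\ref{prop:d5empty} breaks. In mode Z1, (S) gives only $t^z_B\ge1-3\bar x_B$. So for $r>0$ one gets the lower bound $f_B\ge2r-\tfrac13(2\kappa+1)>-1$ instead of an upper bound. The contradiction comes from two further steps. (S) for $A$ gives $\bar x_A\ge r>0$, and Lemma~\ref{lem:d5rules}(e) then forces $c_B+f_B=0$, hence $f_B\le-1$.
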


\begin{proof}
(a) Let $g=c_B<1$ be the far order of $B$. By Lemma~\ref{lem:d5rules}(a), $\zeta=g$ and $\omega=f_A$. Thus $2r=f_A+g$. Since $c_A+d_A\ge1+f_A>2r$ and $c_B+d_B\ge1+g>2r$, the minimum in (I) is attained by $2r$ and $0$. So $r=0$ and $\bar x_A+\bar x_B=1+\frac12(f_A+g)=1$.

(b) Suppose that $r>0$. Since the minimum in (V) is at most $-r<0$, it is attained by $d_X+t^z_X$ for some $X$, and Lemma~\ref{lem:d5rules}(c) gives $\bar x_X\ge r>0$. By Lemma~\ref{lem:d5rules}(e), the other body $Y$ has $c_Y+d_Y=0$. The minimum of $c_A$, $c_B$, $\zeta$ is attained twice. So $c_Y\ge\min(c_X,\zeta)$, and likewise $d_Y\ge\min(d_X,\omega)$; hence $0=c_Y+d_Y\ge\min(c_X,\zeta)+\min(d_X,\omega)$. As $c_X+d_X>0$ and $\zeta+\omega>0$, either $c_X\le\zeta$ and $\omega\le d_X$, or $\zeta\le c_X$ and $d_X\le\omega$. In the first case also $\omega\le d_Y$. Thus $\omega=\mu_w$. Then $p^w_Y\ge\omega>\omega-3r$ and $t^w_Y\ge d_Y-3\bar x_Y=d_Y\ge\omega$ by (S). Hence the minimum $\omega-3r$ in (E) is attained once. The second case is the same in $z$.

Suppose that $r<0$. By Lemma~\ref{lem:d5rules}(e), $c_X+d_X\le2r$ for some $X$. Then $\bar x_X<0$ and $d_X+t^z_X\ge-\bar x_X>0$. As $-r>0$, the minimum in (V) is $0$, attained also by $d_Y+t^z_Y\ge-\bar x_Y$. Hence $\bar x_Y\ge0$. Then $c_Y$ and $d_Y$ are not both minimal in their colours, since otherwise $2\bar x_Y\le c_X+d_X<0$. Say that $c_Y$ is the exception in $z$. Then $p^z_X=c_X=\mu_z$. Lemma~\ref{lem:d5rules}(b) gives $t^z_X=c_X$, against $t^z_X\ge c_X-3\bar x_X>c_X$. The case of $d_Y$ is the same in $w$.

So $r=0$. If $\bar x_A<0$, then $c_A+d_A<\zeta+\omega$; say $c_A<\zeta$. Then $\zeta$ is the exception in $z$ and $p^z_A=c_A<\zeta-3r$. So $t^z_A=c_A$, against $t^z_A\ge c_A-3\bar x_A>c_A$. Hence $\bar x_A,\bar x_B\ge0$. If $\zeta$ were the exception in $z$, then $c_A=c_B=\mu_z$ and $\omega=-\zeta<-\mu_z\le d_A,d_B$, which would give two exceptions in $w$. So $\zeta=\mu_z$, likewise $\omega=\mu_w$, and $\mu_z+\mu_w=0$. One of $c_A$, $c_B$ is equal to $\mu_z$ and one of $d_A$, $d_B$ to $\mu_w$. Hence $\bar x_A+\bar x_B$ is half the sum of the two other orders among $c_A$, $d_A$, $c_B$, $d_B$, which are less than $1$. In the models of $\Vtwo$ and $\VN$, (S) and Lemma~\ref{lem:d5rules}(b) give $c_A-3\bar x_A=t^z_A\ge\mu_z$ and $d_A-3\bar x_A\ge\mu_w$. Adding, we obtain $-4\bar x_A\ge0$. So $\bar x_A=0$ and likewise $\bar x_B=0$.

(c) By Lemma~\ref{lem:d5rules}(a), $c_A<1\le c_B$ gives $\zeta=c_A=p^z_A=p^z_B=p^z_c=:\kappa<1$. If $r>0$, Lemma~\ref{lem:d5rules}(b) gives $t^z_A=t^z_B=\kappa-3r$. For $A$, (S) then gives $\bar x_A\ge r>0$. Thus Lemma~\ref{lem:d5rules}(e) gives $c_B+f_B=0$ and $f_B\le-1$. For $B$, (S) gives $\kappa-3r\ge1-3\bar x_B$, that is, $f_B\ge2r-\frac13(2\kappa+1)>-1$. If $r<0$, then $t^z_A=t^z_B=\kappa$, so $\bar x_A\ge0$ and $f_B\ge-\frac13(2\kappa+1)>-1$. Then $c_A+d_A>2r$ and $c_B+f_B>0>2r$, against Lemma~\ref{lem:d5rules}(e). So $r=0$ and $\omega=-\kappa$. As in (b), $\bar x_A\ge0$. So $\omega$ is not the exception in $w$, since otherwise $d_A=\mu_w<\omega$ and $\bar x_A<0$. Hence $\omega=\mu_w=-\kappa>-1$. Then $f_B\ge-\kappa$ gives $\bar x_B>0$. One of $d_A$, $f_B$ is equal to $-\kappa$. If $d_A=-\kappa$, then $\bar x_A=0$ and $\bar x_B<1$. If $f_B=-\kappa$, then $\bar x_A+\bar x_B=\frac12(\kappa+d_A)+\frac12(1-\kappa)=\frac12(1+d_A)<1$. In the model of $\Vtwo$, (S) and Lemma~\ref{lem:d5rules}(b) give $\kappa-3\bar x_A=t^z_A\ge\kappa$, so $\bar x_A=0$.

(d) By Lemma~\ref{lem:d5rules}(a), $\zeta\ge1$. Suppose that $r>0$. By Lemma~\ref{lem:d5rules}(e), $c_X+f_X\le0$ for some $X$. Then $f_X\le-1$ and $\bar x_X\le0$. As in (b), (V) gives $d_Y+t^z_Y\le-r$ for some $Y$. Hence $\bar x_Y\ge r>0$. Thus $Y\ne X$ and $f_X<f_Y$. By Lemma~\ref{lem:d5rules}(a), $f_X=\omega=p^w_X=\mu_w$. Lemma~\ref{lem:d5rules}(b) gives $t^w_X=\omega-3r<f_X$, against $t^w_X\ge f_X-3\bar x_X\ge f_X$. Suppose that $r<0$. By Lemma~\ref{lem:d5rules}(e), $c_X+f_X\le2r$ for some $X$. Then $f_X<-1$ and $\bar x_X<0$. As in (b), the minimum in (V) is $0=d_Y+t^z_Y$. Hence $\bar x_Y\ge0$ and $f_Y\ge-1>f_X$. Then $p^w_X=f_X=\mu_w$. Lemma~\ref{lem:d5rules}(b) gives $t^w_X=f_X$, against $t^w_X\ge f_X-3\bar x_X>f_X$. So $r=0$ and $\omega=-\zeta\le-1$. By Lemma~\ref{lem:d5rules}(a), one of $f_A$, $f_B$ is equal to $\mu_w\le\omega$, and the other is less than $1$.
\end{proof}

\begin{proposition}\label{prop:d5ss}
Let both outer bodies be in mode Zs. At every point of the model of $\Vtwo^{\mathrm w}$, $r\ge0$ and $x^0_A,x^0_B\le r$. If $r>0$, then $y$ lies in the triangle $\mathcal T=\conv\{(0,-\frac12,0,-\frac12,0),\,0,\,(\frac13,0,\frac13,0,\frac13)\}$. If $r=0$, then $y$ lies in the polytope $\mathcal R$ of the points with $y_5=0$, $y_2\le y_1\le0$, $y_4\le y_3\le0$, $y_2,y_4\ge-\frac12$ and
\[
3y_4\le y_1+2y_2\le2y_4,\qquad 3y_2\le y_3+2y_4\le2y_2 .
\]
\end{proposition}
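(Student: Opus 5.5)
Both outer bodies are in mode Zs, say $A$ in Zs$_{t_0}$ and $B$ in Zs$_{t_1}$. I would first pin down the forced orders and then split by the sign of $r$, exactly as in Propositions~\ref{prop:d5empty} and~\ref{prop:d5zero}.

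\emph{Forced orders.} Since $c_A,c_B\ge1$, Lemma~\ref{lem:d5rules}(a) gives $\zeta\ge1$ and $p^z_A,p^z_B,p^z_c\ge1$. In $w$ the relevant orders are $f_A,f_B<1$. Relation (S) gives $t^z_X=s_X-3x^0_X$, $t^w_X=f_X-3x^0_X$, and Lemma~\ref{lem:d5rules}(c) gives $d_X+t^z_X=-x^0_X$.

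\emph{Excluding $r<0$.} This follows case (d) of Proposition~\ref{prop:d5zero}. Lemma~\ref{lem:d5rules}(e) gives some $X$ with $c_X+f_X\le2r<0$, hence $f_X<-1$. Relation (V) then says that its minimum $0$ is attained by some $d_Y+t^z_Y=-x^0_Y$, so $x^0_Y\le0$. I would then show that $f_X=\mu_w=p^w_X$. Lemma~\ref{lem:d5rules}(b) then yields $t^w_X=f_X$, and so $x^0_X=0$. The last step is to compare this with $3x^0_X<2+f_X<1$ and with the relation $x^0_X=\tfrac12(s_X+f_X)$, $s_X>1$. I expect this to give a contradiction, as in the Z1 case, but the bookkeeping of which of $A$, $B$ plays $X$ and which plays $Y$ is the delicate point.

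\emph{Showing $x^0_X\le r$ when $r\ge0$.} By (V), the minimum of $-x^0_A$, $-x^0_B$, $-r$, $0$ is attained twice. This alone does not bound $x^0_X$ by $r$, so I would combine it with (E). In $z$, relation (E) says that the two smallest of $p^z_X$, $\zeta-3r$ and $t^z_X=s_X-3x^0_X$ are equal. If $x^0_X>r$, then $t^z_X<s_X-3r$, and I want to play this against $\zeta$. The natural tool is (P) applied to the triple $c_X,\zeta,p^z_X$: each of these is at least $1$, while $s_X\le c_X$. I expect this to force $t^z_X\ge\min(\zeta-3r,p^z_X)$, and hence $x^0_X\le r$.

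\emph{The region.} With $r>0$ and $x^0_X\le r$, I would combine $\bar x_X\ge-\tfrac12$, the inequality $x^0_X<r$ obtained from (I) and (V), and $3r<2+\omega$ from Lemma~\ref{lem:d5rules}(d) to confine $y$ to $\mathcal T$. The vertex $(\tfrac13,0,\tfrac13,0,\tfrac13)$ should be the extreme point where $s_X=1^+$ and $f_X\to-1/3$. With $r=0$, I would translate $s_X>1$, $s_X+3f_X<4$ and $3s_X+f_X<4$ into inequalities on $y_1=x^0_A$ and $y_2=\bar x_A$: namely $y_2\le y_1$ and $y_2\ge-\tfrac12$. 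I would then use the twice-attained minima in $w$ from (P)/(E) to obtain the coupled inequalities $3y_4\le y_1+2y_2\le2y_4$ and their mirror images. Here $y_1+2y_2=\tfrac12(s_A+3f_A)+\ldots$ should be read as $-t^z_A$ shifted, and this links it to $t^w_B$ through (E) in $w$.

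The main obstacle is this last step: extracting precisely the facet inequalities of $\mathcal R$ from the disjunctions. I would do it by case analysis on which of $f_A$, $f_B$ equals $\mu_w$, and check each resulting linear system by hand, as in the exact-LP verification described in \S\ref{ss:crosschecks}.
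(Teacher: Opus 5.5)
Your outline has a genuine gap in each step: the decisive relations are used in the wrong colour, or not at all.

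For $x^0_X\le r$ you work in $z$. But Lemma~\ref{lem:d5rules}(b) already gives $t^z_X\ge\min(p^z_X,\zeta-3r)$ at every point, and $t^z_X\ge\zeta-3r$ only says $3(x^0_X-r)\le s_X-\zeta$, which has no sign. Nor is $s_X\le c_X$ available: off the centred case, $c_X=1<s_X$. The bound comes from $w$. Since $r\ge0$, (V) forces $x^0_A=x^0_B=q>r$ as soon as one of them exceeds $r$. Then (E) in $w$ gives one of two cases. Either $p^w_X=t^w_X=f_X-3q<f_X$ for some $X$, so $f_X$ is the exception and $p^w_X=\omega<\omega-3r$, which is impossible. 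Or $f_A,f_B\ge\omega+3(q-r)>\omega$, which gives two exceptions.

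For $r>0$ your claim $x^0_X<r$ goes the wrong way. The triangle $\mathcal T$ lies in $\{y_1=y_3=y_5,\ y_2=y_4\}$, so you must show $x^0_A=x^0_B=r$ and $f_A=f_B$. This follows in four steps:
\begin{enumerate}
\item (V) gives some $Y$ with $x^0_Y=r$;
\item (E) in $w$ for $Y$ forces $f_Y=\omega=\mu_w$;
\item the same holds for the other body $X$, since otherwise $p^w_X=\omega-3r<\mu_w$;
\item Lemma~\ref{lem:d5rules}(e) gives $\omega\le-1$, and (d) gives $3r<2+\omega$.
\end{enumerate}
Also, the vertex $(\tfrac13,0,\tfrac13,0,\tfrac13)$ corresponds to $f=-1$, $s=\tfrac53$, not to $s\to1$.

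For $r<0$ your first step is unjustified. In mode Zs, $d_X+t^z_X=-x^0_X$, and $x^0_X=\tfrac12(s_X+f_X)$ need not be negative when $f_X<-1$. In modes F and Z1 it was $\bar x_X<0$ that made this term positive. So the minimum in (V) may be $-x^0_A=-x^0_B<0$. The contradiction you expect is also absent: $x^0_X=0$ with $s_X=-f_X\in(1,2)$ is compatible with $3x^0_X<2+f_X$ and $s_X>1$. What is missing is a split according to which $w$-order is the exception, a position $p^w_X$ or some $f_X$. In the decisive subcase you then need the $z$-side: $t^z_X=\mu_z$ for a body with $p^z_X=\mu_z$ gives $\mu_z=s_X=-\mu_w$, hence $2r=\zeta+\omega\ge\mu_z+\mu_w=0$.

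For $r=0$, (P) and (E) do not yield the facets of $\mathcal R$. Take
\begin{itemize}
\item $\zeta=c_A=c_B=\tfrac32$ and $\omega=-\tfrac32$;
\item all positions of order $\tfrac32$ in $z$ and $-\tfrac32$ in $w$;
\item $(s_A,f_A)=(1.05,-1.4)$, $(s_B,f_B)=(1.2,-1.5)$ and $r=0$;
\item $t^z_X$, $t^w_X$ given by (S).
\end{itemize}
This point satisfies (S), (P), (E), (I), (V) and (V$'$), yet $y_3+2y_4=-0.65<-0.6=3y_2$. Only (F), which your plan never uses, excludes it. When $f_A$ is the exception in $w$, (F) for $A$ in $w$ forces $\beta>0$. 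Then (F) for $B$ forces $t^w_B=f_B$, hence $x^0_B=0$ and $s_B=-\omega$. This gives $t^z_B<-\tfrac12(1+3f_A)$, the missing facet.
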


\begin{proof}
For a body $X$ in mode Zs, $t^z_X=s_X-3x^0_X=1-x^0_X-2\bar x_X$ and $t^w_X=f_X-3x^0_X$. By Lemma~\ref{lem:d5rules}(c), relation (V) says that the maximum of $x^0_A$, $x^0_B$, $r$ and $0$ is attained at least twice. By Lemma~\ref{lem:d5rules}(a), $\zeta\ge1$ and $p^z_A,p^z_B,p^z_c\ge1$.

\emph{$r\ge0$.} Suppose that $r<0$. If $p^w_A=p^w_B=\mu_w$, Lemma~\ref{lem:d5rules}(b) gives $t^w_X=\mu_w\le f_X$, so $3x^0_X=f_X-\mu_w\ge0$ for both $X$. If moreover $f_A=f_B=\mu_w$, then $x^0_X=0$ and $t^z_X=s_X=-f_X=-\mu_w$. One of $p^z_A$, $p^z_B$ is equal to $\mu_z$. Hence $t^z_X=\mu_z$ for that $X$, and $2r=\zeta+\omega\ge\mu_z+\mu_w=0$, against $r<0$. If instead $f_A>\mu_w$, then $f_B=\mu_w$ and $x^0_B=0<x^0_A$, so the maximum of $x^0_A$, $x^0_B$, $r$, $0$ is attained once; likewise if $f_B>\mu_w$. If $p^w_A>\mu_w$, then $f_A=f_B=\omega=p^w_B=\mu_w$ and $t^w_B=f_B$, so $x^0_B=0$ and $t^z_B=s_B=-\omega>\zeta\ge\mu_z$. By Lemma~\ref{lem:d5rules}(b), $p^z_B\ne\mu_z$, that is, the position of $B$ is the exception in $z$, and $c_A=\zeta=p^z_A=\mu_z$. Then $t^z_A=\zeta$. Thus $x^0_A=-\zeta-f_A=-2r>0$, and the maximum is attained once. The case $p^w_B>\mu_w$ is the same.

\emph{$x^0_X\le r$.} Suppose that $x^0_A>r$. As $r\ge0$, the maximum of $x^0_A$, $x^0_B$, $r$, $0$ exceeds $r$ and $0$. Hence $x^0_A=x^0_B=:q>r$. If $t^w_X<\omega-3r$ for some $X$, Lemma~\ref{lem:d5rules}(b) gives $p^w_X=t^w_X=f_X-3q<f_X$, so $f_X$ is the exception in $w$ and $p^w_X=\omega$. Then $\omega<\omega-3r$, against $r\ge0$. So $t^w_X\ge\omega-3r$ for both $X$, that is, $f_X\ge\omega+3(q-r)>\omega$, which gives two exceptions in $w$.

\emph{$r>0$.} Then $x^0_Y=r$ for some $Y$, by the maximum rule. In (E) for $Y$ in $w$, $t^w_Y=f_Y-3r$. We use Lemma~\ref{lem:d5rules}(b). If $f_Y>\omega$, then $t^w_Y>\omega-3r$ and $p^w_Y=\omega-3r<\omega$, while $f_Y$ is the exception and $p^w_Y=\omega$. If $f_Y<\omega$, then $p^w_Y=t^w_Y<f_Y$, while $\omega$ is the exception and $p^w_Y=f_Y$. So $f_Y=\omega=\mu_w$. For the other body $X$, $x^0_X<r$ would give $t^w_X>f_X-3r\ge\omega-3r$, hence $p^w_X=\omega-3r<\mu_w$. So $x^0_X=r$ and $f_X=\omega$ as for $Y$. Put $f=\omega$. If $1+f>0$, then $c_A+f>0$ and $c_B+f>0$, and the minimum $0$ in Lemma~\ref{lem:d5rules}(e) is attained once. So $f\le-1$, that is, $\bar x_A=\bar x_B=\frac12(1+f)\le0$. By Lemma~\ref{lem:d5rules}(d), $3r<2+\omega=1+2\bar x_A$. So $y=(r,\bar x_A,r,\bar x_A,r)$ with $0<3r<1+2\bar x_A$ and $\bar x_A\le0$, which lies in $\mathcal T$.

\emph{$r=0$.} Put $L=\zeta=-\omega$, so that $L\ge1$, $\zeta-3r=L$ and $\omega-3r=-L$. As $x^0_X\le0$, $t^w_X\ge f_X$ and $t^z_X\ge s_X>1$. By Lemma~\ref{lem:d5rules}(a) and (b), in $z$ either (z1) $\zeta$ is the exception, $c_A=c_B=p^z_A=p^z_B=p^z_c=\mu_z<L$ and $t^z_A=t^z_B=\mu_z$, or (z2) $\zeta=\mu_z=L$ and $t^z_X\ge L$ for both $X$. In $w$ there are four cases:
\begin{enumerate}[leftmargin=3.4em,labelwidth=2.8em,labelsep=0.6em,align=left]
\item[(w1)] $\omega$ is the exception, $f_A=f_B=p^w_A=p^w_B=\mu_w<-L$; then Lemma~\ref{lem:d5rules}(b) gives $t^w_X=f_X$, so $x^0_X=0$ and $t^z_X=s_X=-f_X$;
\item[(w2)] $f_A$ is the exception, and $f_B=\omega=-L<f_A$;
\item[(w3)] the same with $A$ and $B$ exchanged;
\item[(w4)] $f_A=f_B=\omega=-L$.
\end{enumerate}
In the cases (w2)--(w4), (z1) is impossible: for $X$ with $f_X=-L$, $t^z_X=\frac12(3L-s_X)=\mu_z$ would give $x^0_X=\frac12(s_X-L)=L-\mu_z>0$, against $x^0_X\le0$. So $t^z_X\ge L$ for both $X$ in these cases. We show $y\in\mathcal R$. The inequalities $y_2\le y_1\le0$, $y_4\le y_3\le0$ and $y_2,y_4\ge-\frac12$ hold since $s_X>1$, $x^0_X\le0$ and $\bar x_X\ge-\frac12$. As $t^z_X=1-x^0_X-2\bar x_X$ and $f_Y=2\bar x_Y-1$, the other four inequalities say that
\[
-f_Y\le t^z_X\le-\tfrac12(1+3f_Y)\qquad\text{for }\{X,Y\}=\{A,B\}.
\]
The lower bound holds with equality in (w1). In (w2), $t^z_A\ge L=-f_B$ and $t^z_B\ge L>-f_A$; (w3) is the same; and in (w4), $t^z_X\ge L=-f_Y$. For the upper bound, $s_X>1$ gives $t^z_X<-\frac12(1+3f_X)$, which suffices if $f_X\ge f_Y$. It remains to treat (w2) with $X=B$, and (w3) with $X=A$, which is the same. In (F) for $A$ in $w$, the terms are $f_A$, $-L+\beta$ and $t^w_A\ge f_A$. Hence $-L+\beta\ge f_A>-L$, that is, $\beta>0$. In (F) for $B$ in $w$, the terms are $f_B=-L$, $-L+\beta>-L$ and $t^w_B\ge-L$. So $t^w_B=-L=f_B$. Hence $x^0_B=0$, $s_B=L$ and $t^z_B=L$. Finally, $t^z_A\ge L$ gives $3f_A\le-2L-s_A<-2L-1$, that is, $t^z_B=L<-\frac12(1+3f_A)$.
\end{proof}

\begin{proposition}\label{prop:d5zs}
Let $A$ be in mode Z1 and $B$ in mode Zs. At every point of the model of $\Vtwo^{\mathrm w}$, $y$ lies in $\mathcal S^0$ if $r=0$, in $\mathcal S^+$ if $r>0$, and in $\mathcal S^-$ if $r<0$, where
\begin{align*}
\mathcal S^0&=\{y:\ y_1=y_2,\ y_5=0,\ y_4\le y_3\le0,\ -\tfrac12\le y_4\le y_1\le1\},\\
\mathcal S^+&=\{y:\ y_1=y_2,\ y_3=y_5\ge0,\ -\tfrac12\le y_4\le0,\ 3y_5\le y_1+2y_4,\ y_1\le1\},\\
\mathcal S^-&=\{y:\ y_1=y_2,\ y_3=0,\ y_4\le y_5\le0,\ y_1+2y_4\ge0,\ y_1\le1\}.
\end{align*}
Moreover $x^0_B\le\max(r,0)$. If $r>0$, then $f_B=\omega\le-1$ and $t^w_A=\omega-3r$. If $r=0$, then $f_B\le f_A$.
\end{proposition}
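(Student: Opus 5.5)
We follow the pattern of Propositions~\ref{prop:d5zero} and~\ref{prop:d5ss}. Throughout, $A$ is in mode Z1 and $B$ in mode Zs$_{t_0}$.

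\emph{Setup.} Since $A$ is in mode Z1, $y_1=y_2=\bar x_A=\frac12(1+f_A)$, and $f_A<1$ gives $y_1\le1$. Lemma~\ref{lem:d5rules}(a) gives $\zeta\ge1$ and $p^z_A,p^z_B,p^z_c\ge1$, because $c_A,c_B\ge1$. For $B$ we have $t^z_B=1-x^0_B-2\bar x_B$ and $t^w_B=f_B-3x^0_B$. By Lemma~\ref{lem:d5rules}(c), $d_A+t^z_A\ge-\bar x_A$ and $d_B+t^z_B=-x^0_B$. Relation (V) then says that the minimum of $d_A+t^z_A$, $-x^0_B$, $-r$ and $0$ is attained at least twice. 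The first of these is at least $-\bar x_A$.

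\emph{The three signs of $r$.} We argue as in Proposition~\ref{prop:d5ss}, splitting on the sign of $r$.

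\emph{Case $r>0$.} We first show $x^0_B\le r$. Suppose $x^0_B>r$. Then (V) forces $d_A+t^z_A=-x^0_B$. Next we use (E) in $w$ for $B$. If $t^w_B<\omega-3r$, then $p^w_B=t^w_B<f_B$, so $f_B$ would be the exception in $w$, which is a contradiction exactly as before. Hence $f_B\ge\omega+3(x^0_B-r)>\omega$, so $f_B$ is the exception in $w$. Then $f_A=\omega=p^w_A$. Combining this with (E) for $A$ and $d_A+t^z_A=-x^0_B<-r$ gives a contradiction.

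Next, the argument of Proposition~\ref{prop:d5ss} ($f_Y$ compared with $\omega$ via Lemma~\ref{lem:d5rules}(b)) should give $x^0_B=r$ and $f_B=\omega=\mu_w$, hence $y_3=y_5$. Lemma~\ref{lem:d5rules}(e) with $r>0$ forces $c_X+d_X\le0$ for some $X$. That $X$ should be $B$, giving $\omega=f_B\le-1$, that is, $y_4\le0$. Then (E) for $A$ in $w$ gives $t^w_A=\omega-3r$, because $p^w_A\ge\mu_w>\omega-3r$. With (S) for $A$, $t^w_A\ge f_A-3\bar x_A$, this yields $3r\le y_1+2y_4$. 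Finally, $y_4\ge-\frac12$ holds outside mode F.

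\emph{Case $r<0$.} By Lemma~\ref{lem:d5rules}(e), $c_X+d_X\le2r<0$ for some $X$. Then (V) has minimum $0$, attained twice. We plan to show $x^0_B=0$, using (E) in $w$ and $\mu_w$ as in the "$r\ge0$" step of Proposition~\ref{prop:d5ss}. We also need $y_4\le y_5$, that is, $f_B\le2r-1$, and $y_1+2y_4\ge0$. The second comes from $d_A+t^z_A\ge0$ together with $t^z_A\ge\min(p^z_A,\zeta-3r)$.

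\emph{Case $r=0$.} We split into (z1)/(z2) and (w1)--(w4) as in Proposition~\ref{prop:d5ss}. This yields $x^0_B\le0$, $f_B\le f_A$ (that is, $y_4\le y_1$), and the bounds $y_4\le y_3\le0$.

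\emph{Main obstacle.} The hardest step is the case $r<0$. There (I) and (V) give minima that are only attained twice, and identifying which body realizes $c_X+d_X\le2r$ needs the exception bookkeeping of Lemma~\ref{lem:d5rules}(a) in both colours, together with relation (F) for $A$ to rule out $t^w_A>f_A$. I would try (F) first, as in the (w2) subcase of Proposition~\ref{prop:d5ss}.
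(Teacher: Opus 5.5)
\textbf{Case $r>0$.} Most of this case is right. Excluding $x^0_B>r$ is the paper's argument. Getting $t^w_A=\omega-3r$ directly from (E) for $A$ in $w$ (since $p^w_A\ge\mu_w=\omega>\omega-3r$) is correct, and shorter than the paper's route through (F) for both bodies. Your claim that the body with $c_X+d_X\le0$ ``should be $B$'' is unjustified, but you do not need it: $c_A+f_A\le0$ would also give $f_B=\mu_w\le f_A\le-1$.

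The gap in this case is $x^0_B=r$. In Proposition~\ref{prop:d5ss} both outer bodies are in mode Zs, so (V) becomes a maximum rule on $x^0_A,x^0_B,r,0$. Here $A$ contributes only the inequality $d_A+t^z_A\ge-\bar x_A$, and (V) is compatible with $x^0_B<r$ and $d_A+t^z_A=-r$. You need a further argument. Then $\bar x_A\ge r>0$, so $c_A+f_A>0$ and Lemma~\ref{lem:d5rules}(e) forces $c_B+f_B=0$. Hence $f_B\le-1<f_A$, so $f_A$ is the $w$-exception and $f_B=\omega=p^w_B$. In (E) for $B$ in $w$, the term $\omega-3r$ then lies strictly below both $p^w_B=\omega$ and $t^w_B=\omega-3x^0_B$, a contradiction.

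\textbf{Case $r\le0$.} Here you give a plan rather than a proof, and its one explicit step fails.

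First, you never prove $x^0_B\le0$, yet later steps need it: $t^w_B\ge f_B$ when $r<0$, and the contradiction at $r=0$. It follows by running your $r>0$ argument for every $r$. In both sub-cases $f_B$ is the $w$-exception and $f_A=\omega$. Since $\bar x_A\ge x^0_B>0$, we get $\omega>-1$, so $\zeta=2r-\omega<1$ when $r\le0$, against $\zeta\ge1$.

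Second, you propose to obtain $c_B+f_B\le2r$ by using (F) to rule out $t^w_A>f_A$. That is backwards. If instead $c_A+f_A=2r$, then $\bar x_A<0$, and (S) forces $t^w_A>f_A$; this is what excludes the case. Lemma~\ref{lem:d5rules}(b) then gives $p^w_A\ne\mu_w$, so $f_A=f_B=\omega$. Then $c_B>c_A$ is the $z$-exception, and with $x^0_B=0$ we get $t^z_B=\zeta=s_B$. Hence $x^0_B=\tfrac12(\zeta+\omega)=r<0$, a contradiction.

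Third, $y_1+2y_4\ge0$ says $f_A+2f_B\ge-3$. The relations you cite, $d_A+t^z_A\ge0$ and $t^z_A\ge\min(p^z_A,\zeta-3r)$, are both lower bounds on $t^z_A$. Two lower bounds on the same quantity imply no inequality between $f_A$ and $f_B$. The inequality is (S) for $A$ in $w$, $t^w_A\ge-\tfrac12(3+f_A)$, together with $t^w_A=f_B$. Once $x^0_B=0$ and $f_B=\mu_w$, Lemma~\ref{lem:d5rules}(b) gives $t^w_A=\mu_w$ if $p^w_A=\mu_w$. If $p^w_A>\mu_w$, then $f_A=f_B=\omega$, and $t^z_B=s_B=\zeta-2r<\zeta-3r$ makes $p^z_B$ the $z$-exception. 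So $t^z_A=\zeta$, and (S) in $z$ gives $\omega\ge-1-4r>2r-1\ge f_B=\omega$, a contradiction.

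Finally, at $r=0$ the split (z1)/(z2), (w1)--(w4) of Proposition~\ref{prop:d5ss} presupposes both bodies in mode Zs and does not give $f_B\le f_A$. The paper uses (F) in $w$. If $f_B>f_A$, then $f_A=\omega$. (F) for $B$ gives $\beta>0$, (F) for $A$ gives $t^w_A=f_A$, and (S) gives $f_A\ge-1$. Hence $x^0_B>\bar x_B>\bar x_A\ge0$, against $x^0_B\le0$.
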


\begin{proof}
Here $y=(\bar x_A,\bar x_A,x^0_B,\bar x_B,r)$, with $\bar x_A=\frac12(1+f_A)<1$, $\bar x_B\ge-\frac12$ and $x^0_B>\bar x_B$. By Lemma~\ref{lem:d5rules}(a), $\zeta\ge1$ and $p^z_A,p^z_B,p^z_c\ge1$. By Lemma~\ref{lem:d5rules}(c), (V) says that the minimum of $f_A+t^z_A$, $-x^0_B$, $-r$ and $0$ is attained at least twice, and $f_A+t^z_A\ge-\bar x_A$.

\emph{Step 1: $x^0_B\le\max(r,0)$.} Suppose not. Then the minimum in (V) is attained by $-x^0_B$ and $f_A+t^z_A$. Hence $\bar x_A\ge x^0_B>0$ and $f_A>-1$. If $t^w_B<\omega-3r$, Lemma~\ref{lem:d5rules}(b) gives $p^w_B=t^w_B<f_B$, so $f_B$ is the exception in $w$, $p^w_B=\omega=f_A$ and $\omega<\omega-3r$. Then $r<0$ and $\zeta=2r-\omega<1$, against $\zeta\ge1$. Otherwise $f_B\ge\omega+3(x^0_B-r)>\omega$. Thus again $f_B$ is the exception and $f_A=\omega$. If $r\le0$, then $\zeta=2r-\omega<1$ as before. If $r>0$, then $\zeta-3r=-r-\omega>-x^0_B-\omega=t^z_A$, and Lemma~\ref{lem:d5rules}(b) gives $p^z_A=t^z_A=-x^0_B-\omega$. As $p^z_A\ge1$, we obtain $\omega<-1$, against $\omega=f_A>-1$.

\emph{Step 2: if $r>0$, then $x^0_B=r$.} If $x^0_B<r$, the minimum in (V) is attained by $-r$ and $f_A+t^z_A$, so $\bar x_A\ge r>0$. By Lemma~\ref{lem:d5rules}(e), $c_B+f_B=0$. Thus $f_B\le-1<f_A$. So $f_A$ is the exception in $w$ and $f_B=\omega=p^w_B$. In (E) for $B$ in $w$, the term $\omega-3r$ is less than $p^w_B=\omega$ and than $t^w_B=\omega-3x^0_B$. Hence the minimum is attained once.

\emph{Step 3: if $r>0$, then $f_B=\omega\le-1$, $t^w_A=\omega-3r$ and $3r\le\bar x_A+2\bar x_B$.} As $x^0_B=r$, (E) for $B$ in $w$ gives $f_B=\omega=\mu_w$, as in the proof of Proposition~\ref{prop:d5ss}. If $c_B+f_B>0$, Lemma~\ref{lem:d5rules}(e) gives $c_A+f_A=0$, so $f_B=\mu_w\le f_A\le-1$; otherwise $f_B\le-c_B\le-1$. In (F) for $B$ in $w$, the terms are $\omega$, $\omega+\beta$ and $t^w_B=\omega-3r$. Hence $\beta=-3r$. In (F) for $A$ in $w$, the terms are $f_A\ge\omega>\omega-3r=\omega+\beta$ and $t^w_A$. Thus $t^w_A=\omega-3r$. By (S), $t^w_A\ge f_A-3\bar x_A=-\frac12(3+f_A)$. Hence $6r\le3+f_A+2f_B$, which is $3r\le\bar x_A+2\bar x_B$.

\emph{Step 4: if $r<0$, then $x^0_B=0$, $\bar x_B\le r$ and $\bar x_A+2\bar x_B\ge0$.} Since $x^0_B\le0$ by step 1, $t^w_B\ge f_B$. If $p^w_B=\mu_w$, Lemma~\ref{lem:d5rules}(b) gives $t^w_B=\mu_w\le f_B\le t^w_B$, so $x^0_B=0$ and $f_B=\mu_w$. If $p^w_B>\mu_w$, then $f_A=f_B=\omega=p^w_A=\mu_w$, and Lemma~\ref{lem:d5rules}(b) gives $t^w_A=f_A$. By (S), $f_A\ge-\frac12(3+f_A)$. Thus $\omega=f_A\ge-1$ and $\zeta=2r-\omega<1$, which is impossible. So $x^0_B=0$ and $f_B=\mu_w$.

Next, $c_B+f_B\le2r$. Otherwise, by Lemma~\ref{lem:d5rules}(e), $c_A+f_A=2r$. Hence $f_A<-1$, $\bar x_A<0$ and $t^w_A\ge f_A-3\bar x_A>f_A$. Then $p^w_A\ne\mu_w$ by Lemma~\ref{lem:d5rules}(b). So $f_A=f_B=\omega$. Now $c_B>c_A$ makes $c_B$ the exception in $z$. Hence $p^z_B=\mu_z=\zeta$, and Lemma~\ref{lem:d5rules}(b) gives $t^z_B=\zeta$. As $x^0_B=0$, this gives $s_B=t^z_B=\zeta$ and $x^0_B=\frac12(\zeta+\omega)=r<0$, a contradiction. So $f_B\le2r-c_B\le2r-1$, that is, $\bar x_B\le r$.

Finally, if $p^w_A=\mu_w$, then Lemma~\ref{lem:d5rules}(b) gives $t^w_A=\mu_w=f_B$, and (S) gives $f_B\ge-\frac12(3+f_A)$, that is, $\bar x_A+2\bar x_B\ge0$. If $p^w_A>\mu_w$, then $f_A=f_B=\omega$, and $s_B=-f_B$ gives $t^z_B=s_B=-\omega=\zeta-2r$. As $\zeta-2r<\zeta-3r$, (E) for $B$ in $z$ and Lemma~\ref{lem:d5rules}(b) give $p^z_B=t^z_B>\zeta$. So the position of $B$ is the exception in $z$ and $p^z_A=\zeta=\mu_z$. Then $t^z_A=\zeta$. By (S), $\zeta\ge-\frac12(1+3\omega)$, that is, $\omega\ge-1-4r$. But $\omega=f_B\le2r-1<-1-4r$.

\emph{Step 5: if $r=0$, then $f_B\le f_A$.} Otherwise $f_A=\omega=\mu_w$. In (F) for $B$ in $w$, the terms are $f_B$, $\omega+\beta$ and $t^w_B\ge f_B$. Hence $\beta\ge f_B-\omega>0$. In (F) for $A$ in $w$, the terms are $\omega$, $\omega+\beta>\omega$ and $t^w_A$. Thus $t^w_A=\omega=f_A$, and (S) gives $f_A\ge-1$. Then $\bar x_B>\bar x_A\ge0$. Hence $x^0_B>\bar x_B>0$, against step 1.

Steps 1--5, with $\bar x_A<1$, $\bar x_B\ge-\frac12$ and $x^0_B>\bar x_B$, give the three cases.
\end{proof}

\begin{proposition}\label{prop:d5VN}
\begin{enumerate}[label=\textup{(\alph*)}]
\item The models of $\VN$ for \textup{(Z1,\,W1)}, \textup{(F,\,Z1)} and \textup{(Z1,\,Zs)} are empty, and so is the model of Theorem~C0 of the supplementary material for \textup{(Z1,\,Zs$_{t_0}$)} in the centred case.
\item At every point of the model of $\VN$ for \textup{(Z1,\,Z1)}, $\bar x_A=\bar x_B\le0$.
\end{enumerate}
\end{proposition}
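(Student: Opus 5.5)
The plan is to use the extra relations (R), (T) and (Q) of the $\VN$ model on top of the structure already found in Propositions~\ref{prop:d5zero} and~\ref{prop:d5zs}. By Lemma~\ref{lem:d5sym} it suffices to treat the listed pairs. In each case I would first recall which equalities among the orders were forced in the $\Vtwo^{\mathrm w}$ (or $\Vtwo$) model. I would then show that at least two of the events of (R) must occur for the mode-Z1 body, or that the torque equality $\tau_A=\tau_B$ of (Q) is incompatible with the lower bounds $\tau_X\ge-\bar x_X$.

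For (Z1,\,W1), Proposition~\ref{prop:d5zero}(a) gives $r=0$, $\zeta=g<1$, $\omega=f_A$ and $\bar x_A+\bar x_B=1$. In (F) for $A$ in $z$ the terms are $c_A\ge1$, $\zeta+\beta$ and $t^z_A$. I would show that $t^z_A>1-3\bar x_A$ forces a second event, namely $c_A>1$ or a strict $t^w_A$ inequality. The idea is that the far colour of $B$ makes $\zeta<1$ the minimum, so (E) pins $t^z_A$ away from its generic value. For the exchanged colour the same argument applies to $B$.

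For (F,\,Z1), Proposition~\ref{prop:d5zero}(c) in the $\Vtwo$ model gives $\bar x_A=0$, $r=0$ and $\zeta=\kappa=-\omega$. I would compare the torque order $\tau_B\ge-\bar x_B$ with the torque of the F-body $A$, whose leading terms are determined by (S) with equality. The equality $\tau_A=\tau_B$ should then force $\tau_B>-\bar x_B$ or $c_B>1$, together with one of the $t$-events, which violates (R).

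For (Z1,\,Zs), I would split by the sign of $r$ as in Proposition~\ref{prop:d5zs}. For $r>0$ that proposition already gives $t^w_A=\omega-3r<f_A-3\bar x_A$ strictly, so the $t^w$ event occurs. The extra relation (T) then forces $\tilde t_A>-\bar x_A$, or the position event $\max p^z>1$ (recall $p^z\ge1$). This is a second event. For $r\le0$ I would run the same argument using $x^0_B=0$. In the centred case (T) is unavailable, and instead the added relation $c_B>1$ is used. This makes $c_B$ the exception in $z$, and Lemma~\ref{lem:d5rules}(a) with (F) then forces $c_A>1$ or $t^z_A$ strict alongside another event.

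For part (b), Proposition~\ref{prop:d5zero}(d) gives $r=0$ and $\mu_w\le\omega\le-1$. If $f_A\ne f_B$, say $f_A<f_B$, then $B$ carries both a $t^w$ event and a torque event via (Q). That contradicts (R), so $f_A=f_B$. Since $\bar x_X\le0$ is equivalent to $f_X\le-1$, it follows from $f_X=\mu_w\le-1$.

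I expect the main obstacle to be the Z1,\,Zs case with $r\le0$, and the centred variant of it. There the events are not immediately strict, and I would have to track (F) in both colours carefully to exhibit the second event.
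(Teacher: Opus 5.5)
\textbf{Verdict.} There is a genuine gap, and it sits in (Z1,\,Zs), the one case of the statement that needs (T). The case (Z1,\,W1) is fine once made precise. There $\zeta=p^z_A=g<1$ and $\omega=p^w_A=f_A=-g>-1$, so (E) gives $t^z_A\ge g>-\frac12(1+3f_A)$ and $t^w_A\ge f_A>-\frac12(3+f_A)$, and $A$ has two events; the paper argues the same way with the W1 body.

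\textbf{The case (Z1,\,Zs).} For $r>0$ you claim $t^w_A=\omega-3r<f_A-3\bar x_A$ and conclude that a $t^w$ event occurs. This strict inequality contradicts (S): it is exactly the negation of the bound $3r\le\bar x_A+2\bar x_B$ of Proposition~\ref{prop:d5zs}. The event is the reverse strict inequality, and it is not forced either. Indeed $t^w_A$ sits at its lower bound exactly when $\bar x_A=3r-1-\omega$, which Proposition~\ref{prop:d5zs} allows. So you do not know which event $A$ carries, and your claim that (T) then produces a second event has nothing behind it.

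The paper writes \emph{det} for $p^z_A,p^z_B,p^z_c\le1$; its failure is an event of $A$. For $r>0$ it argues in three steps.
\begin{itemize}
\item From (E) for $B$ in $z$, where $t^z_B=\zeta-3r$, and from $t^w_A=\omega-3r$, det together with $t^z_A,t^w_A$ both at their lower bounds forces $r\le0$. Hence the event of $A$ is the failure of det, $t^z_A$ or $t^w_A$, and $\tilde t_A=-\bar x_A$.
\item If $t^w_A$ is at its lower bound, then $\tilde t_A=1+\omega-3r<\tilde t_B=e_2=-\frac12(1+\omega)$, and (T) forces $p^w_c=\omega-3r<\mu_w$.
\item If $t^w_A$ is the event, det holds and $t^z_A$ is at its lower bound. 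Then (E) gives $\bar x_A=\frac13(1+r+\omega)$, or $\bar x_A\le0$ with $\bar x_B=-\frac r2$. Both contradict $3r\le\bar x_A+2\bar x_B$ with $r>0$.
\end{itemize}

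For $r\le0$ there is no ``same argument'' to rerun, and (T) is not used.
\begin{itemize}
\item For $r=0$, (E) for $B$ in $z$ with $t^z_B\ge s_B>1$ gives $\zeta=1$ whether det holds or fails. Then $\omega=\mu_w=-1$, and $f_B\le f_A$ gives $f_B=-1$. So $x^0_B>\bar x_B=0$, against $x^0_B\le0$; only this inequality is known here, not $x^0_B=0$.
\item For $r<0$, det is impossible, and its failure gives $\bar x_A=0$. Then $\bar x_B\ge0>r$, against $\bar x_B\le r$.
\end{itemize}
In the centred case $c_B>1$ is needed only for $r>0$, where it contradicts $\zeta=s_B>1$ directly. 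If det holds, $\zeta$ is the exception in $z$ and $c_B=1$. If det fails, $c_A=1$, so $c_B$ is the exception and $\zeta=1$.

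\textbf{The case (F,\,Z1).} Your torque comparison needs the order of the torque of the F body. (S) does not give it, since (S) concerns $\sum_tm_tZ_{tX}$ and not $\Theta_X$. The relations listed for the models do not pin it down either: (Q) bounds $\tau_X$ only for a Z1 body. You also do not say which $t$-event would accompany it. The argument you used for (Z1,\,W1) works here instead. (E) for $B$ gives $t^z_B\ge\kappa$ and $t^w_B\ge-\kappa$. Since $f_B\ge-\kappa>-1$, neither lower bound $-\frac12(1+3f_B)$ nor $-\frac12(3+f_B)$ is attained, so $B$ has two events.

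\textbf{Part (b).} The torque step is right: for $f_A<f_B$ the torque is the event of $B$. But the second event is not the $t^w$ one. You only get $t^w_B\ge f_A$, which does not exclude the lower bound $-\frac12(3+f_B)$, for instance when $f_A\le-\frac12(3+f_B)$.
\begin{itemize}
\item If $f_B>-1$, the second event is $t^z_B$, since $t^z_B\ge1$.
\item If $f_B\le-1$, then $c_B=1$ and $\zeta=-f_A>1$ is the exception in $z$, so $t^z_A=t^z_B=1$. Then (S) for $A$ and $B$ gives $f_A,f_B\ge-1$, against $f_A<f_B\le-1$.
\end{itemize}
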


\begin{proof}
Since these models contain the relations of the model of $\Vtwo$, Propositions~\ref{prop:d5zero} and~\ref{prop:d5zs} apply to them. By (R), a body $X$ in mode Z1 has at most one event. For each of the other quantities of (R) the opposite inequality holds. With (S), (Q) and (T), this means that $c_X=1$, that $t^z_X$ and $t^w_X$ take their lower bounds $1-3\bar x_X$ and $f_X-3\bar x_X$, that $\tau_X=-\bar x_X$ and, if $Y$ is in mode Zs, that $p^z_A,p^z_B,p^z_c\le1$ and (in the model of $\VN$) that $\tilde t_X=-\bar x_X$, except for the event.

(a) \emph{(Z1,\,W1).} By Proposition~\ref{prop:d5zero}(a) and its proof, $r=0$, $\zeta=g:=c_B$, $\omega=f_A$ and $f_A+g=0$. Lemma~\ref{lem:d5rules}(b) gives $t^z_B\ge g$ and $t^w_B\ge f_A$. If $t^z_B$ took its lower bound $-\frac12(3+g)$, then $g\le-1$. If $t^w_B$ took its lower bound $-\frac12(1+3g)$, then $-\frac12(1+3g)\ge-g$, and again $g\le-1$. As $g=-f_A>-1$, both are events of $B$, against (R).

\emph{(F,\,Z1).} By Proposition~\ref{prop:d5zero}(c) and its proof, $r=0$, $\bar x_A=0$, $\zeta=c_A=:\kappa<1$, $\omega=\mu_w=-\kappa$ and $f_B\ge-\kappa$. Lemma~\ref{lem:d5rules}(b) gives $t^z_B\ge\kappa$ and $t^w_B\ge-\kappa$. If $t^z_B=-\frac12(1+3f_B)$, then $f_B\le-\frac13(2\kappa+1)<-\kappa$. So $t^z_B$ is the event of $B$. Then $t^w_B=-\frac12(3+f_B)\ge-\kappa$ gives $f_B\le2\kappa-3<-1$, against $f_B\ge-\kappa>-1$.

\emph{(Z1,\,Zs).} Let $A$ be in mode Z1 and $B$ in mode Zs$_{t_0}$, in the model of $\VN$ or in that of the centred case. By Proposition~\ref{prop:d5zs}, $y$ lies in $\mathcal S^0$, $\mathcal S^+$ or $\mathcal S^-$. We say that \emph{det holds} if $p^z_A,p^z_B,p^z_c\le1$; it fails only if this is the event of $A$. As these positions are at least $1$, det means that they are equal to $1=\mu_z$. If det fails, then $c_A=1=\mu_z$.

If $r=0$, then $x^0_B\le0$ and $t^z_B\ge s_B>1$. If det holds, (E) for $B$ in $z$ has the terms $p^z_B=1$, $\zeta\ge1$ and $t^z_B>1$, so $\zeta=1$. If det fails, $\zeta>1$ would make $\zeta$ the exception in $z$, and Lemma~\ref{lem:d5rules}(b) would give $t^z_B=p^z_B=1$; so again $\zeta=1$ and $\omega=-1$. If $\omega$ were the exception in $w$, then $p^w_A=f_A<-1$, and Lemma~\ref{lem:d5rules}(b) would give $t^w_A=f_A$, against $t^w_A\ge-\frac12(3+f_A)>f_A$. So $\omega=\mu_w=-1$ and one of $f_A$, $f_B$ is equal to $-1$. As $f_B\le f_A$, this gives $f_B=-1$. Then $\bar x_B=0<x^0_B$, against $x^0_B\le0$.

If $r<0$, then $x^0_B=0$ and $t^z_B=s_B>1$. If det holds, then $p^z_B=1=\mu_z$, and Lemma~\ref{lem:d5rules}(b) gives $t^z_B=1$, against $t^z_B>1$. If det fails, $p^z_B=\mu_z$ would give the same, so the position of $B$ is the exception in $z$ and $c_A=\zeta=p^z_A=1$. Then $t^z_A=1$ by Lemma~\ref{lem:d5rules}(b). As $t^z_A$ takes its lower bound $-\frac12(1+3f_A)$, $f_A=-1$, that is, $\bar x_A=0$. Then $\bar x_A+2\bar x_B\ge0$ gives $\bar x_B\ge0>r$, against $\bar x_B\le r$.

If $r>0$, then $x^0_B=r$ and $f_B=\omega\le-1$, so $s_B=2r-\omega=\zeta>1$ and $t^z_B=\zeta-3r$. In the centred case $c_B>1$. If det holds, $\zeta>1=\mu_z$ makes $\zeta$ the exception in $z$, so $c_B=1$. If det fails, $c_A=1=\mu_z$ makes $c_B$ the exception, so $\zeta=1$. Both are impossible. In the model of $\VN$, suppose first that det holds and that $t^z_A$ and $t^w_A$ take their lower bounds. By (E) for $B$ in $z$, with the terms $p^z_B=1$ and $t^z_B=\zeta-3r$, we have $\zeta-3r\le1$. The equality $t^w_A=\omega-3r=-\frac12(3+f_A)$ gives $f_A=6r-3-2\omega$. If $\zeta-3r<1=p^z_A$, Lemma~\ref{lem:d5rules}(b) gives $t^z_A=\zeta-3r=-r-\omega$, and $t^z_A=-\frac12(1+3f_A)$ then gives $\omega=2r-1$ and hence $r=\frac12(1+\omega)\le0$. If $\zeta-3r=1$, then $\omega=-1-r$, $f_A=8r-1$, and $t^z_A\ge1$ gives $f_A\le-1$ and hence $r\le0$. Hence the event of $A$ is the failure of det, or $t^z_A$, or $t^w_A$. In each case $\tilde t_A=-\bar x_A$. In (T), $e_1-e_2=\frac12(3-2r-\omega)>0$, as $\omega\le-1$ and $r<1$. Hence $\tilde t_B=e_2=-\frac12(1+\omega)$. If $t^w_A$ takes its lower bound, then $\bar x_A=3r-1-\omega$ and $\tilde t_A=1+\omega-3r<\tilde t_B$, so (T) gives $p^w_c+1=\tilde t_A$, that is, $p^w_c=\omega-3r<\mu_w$, which is impossible. If $t^z_A$ takes its lower bound and the event is $t^w_A$, then det holds, and as above $t^z_A=\zeta-3r$ if $\zeta-3r<1$ and $t^z_A\ge1$ if $\zeta-3r=1$. In the first case $\bar x_A=\frac13(1+r+\omega)$. Then $3r\le\bar x_A+2\bar x_B$ gives $8r\le4+4\omega\le0$. In the second case $\bar x_A\le0$ and $\bar x_B=\frac12(1+\omega)=-\frac12r$. Then $3r\le\bar x_A+2\bar x_B$ gives $4r\le\bar x_A\le0$. Both contradict $r>0$.

(b) By Proposition~\ref{prop:d5zero}(d), $r=0$, and one of $f_A$, $f_B$ is equal to $\mu_w\le-1$. By (Q) and (R), $\tau_A=\tau_B$ and $\tau_X\ge-\bar x_X$, with equality unless $\tau_X$ is the event of $X$. Suppose that $f_B<f_A$. Then $\tau_A=\tau_B\ge-\bar x_B>-\bar x_A$. So $\tau_A$ is the event of $A$. Hence $c_A=1$ and $t^z_A$, $t^w_A$ take their lower bounds. By Lemma~\ref{lem:d5rules}(a), $f_B=\omega=-\zeta$ and $\mu_z=1$. Lemma~\ref{lem:d5rules}(b) gives $t^z_A\ge1$, that is, $f_A\le-1$. If $\zeta>1$, then $\zeta$ is the exception in $z$, and Lemma~\ref{lem:d5rules}(b) gives $t^z_A=t^z_B=1$. So $f_A=-1$. Then $t^z_B=1\ge-\frac12(1+3f_B)$ contradicts $f_B<-1$. If $\zeta=1$, then $f_B=-1<f_A\le-1$. By symmetry $f_A=f_B$ and both are equal to $\mu_w\le-1$, that is, $\bar x_A=\bar x_B\le0$.
\end{proof}

\begin{proof}[Proof of Theorem~\textup{\ref{thm:hulls}}]
By Lemma~\ref{lem:d5sym} it suffices to treat the nine pairs of Table~\ref{tab:d5hulls}. Proposition~\ref{prop:d5empty} gives the empty pairs. For (Z1,\,W1), (F,\,F), (F,\,Z1) and (Z1,\,Z1), Proposition~\ref{prop:d5zero} gives $y=\bar x_Ae_A+\bar x_Be_B$ with the stated conditions, and $\bar x_A,\bar x_B\in[-\frac12,1]$; these points lie in the polytopes of the table. For (Zs,\,Zs), Proposition~\ref{prop:d5ss} gives $y\in\mathcal R\cup\mathcal T$. For (Z1,\,Zs), Proposition~\ref{prop:d5zs} gives $y\in\mathcal S^0\cup\mathcal S^+\cup\mathcal S^-$. Proposition~\ref{prop:d5VN} gives the column of $\VN$ and the centred case; for (F,\,F) and (Zs,\,Zs) the model of $\VN$ lies in that of $\Vtwo$. We find the vertices of $\mathcal R$, $\mathcal S^0$, $\mathcal S^+$ and $\mathcal S^-$ by the following remark. Let $\Delta$ be a polytope and $q_1,\dots,q_k$ affine maps on $\Delta$. If $\delta=\sum_v\lambda_vv$ is a convex combination of the vertices $v$ of $\Delta$ and $u=\sum_j\mu_jq_j(\delta)$ is a convex combination, then $(\delta,u)=\sum_{v,j}\lambda_v\mu_j\,(v,q_j(v))$. So every point $(\delta,u)$ with $\delta\in\Delta$ and $u\in\conv\{q_1(\delta),\dots,q_k(\delta)\}$ lies in the convex hull of the points $(v,q_j(v))$.

On $\{y_5=0\}$ we put $u_1=y_1+2y_2$ and $u_2=y_3+2y_4$. Since $y_2\le y_1\le0$ means $3y_2\le u_1\le2y_2$, and likewise for $y_3$ and $y_4$, the polytope $\mathcal R$ is the set of the points with $y_2,y_4\ge-\frac12$ and $3\max(y_2,y_4)\le u_1,u_2\le2\min(y_2,y_4)$. Over the triangle $-\frac12\le y_2\le y_4\le\frac23y_2$, with the vertices $(y_2,y_4)=(-\frac12,-\frac12)$, $(-\frac12,-\frac13)$ and $(0,0)$, the bounds are $3y_4$ and $2y_2$, and $(u_1,u_2)$ lies in the square spanned by the four pairs of bounds; over the triangle with $y_2$ and $y_4$ exchanged the same holds. By the remark, $\mathcal R$ lies in the convex hull of the points with $(y_2,y_4)=(-\frac12,-\frac12)$ and $u_1,u_2\in\{-\frac32,-1\}$, the points with $(y_2,y_4)=(-\frac12,-\frac13)$ or $(-\frac13,-\frac12)$ and $u_1=u_2=-1$, and $0$. These seven points lie in $\mathcal R$. They are the seven vertices of $P_{ss}$ with $y_5=0$. The triangle $\mathcal T$ is the convex hull of two of them and $(\frac13,0,\frac13,0,\frac13)$. So $P_{ss}=\conv(\mathcal R\cup\mathcal T)$, a pyramid over $\mathcal R$ with apex $(\frac13,0,\frac13,0,\frac13)$.

The points of $\mathcal S^0$, $\mathcal S^+$ and $\mathcal S^-$ have $y_1=y_2$ and $-\frac12\le y_4\le0$. They are determined by $y_4$ and the pair $(y_1,y_3)$, $(y_1,y_5)$, resp.\ $(y_1,y_5)$. For fixed $y_4$ this pair ranges over the rectangle $[y_4,1]\times[y_4,0]$ for $\mathcal S^0$ (where $y_5=0$), over the triangle with the vertices $(-2y_4,0)$, $(1,0)$ and $(1,\frac13(1+2y_4))$ for $\mathcal S^+$ (where $y_3=y_5$), and over the rectangle $[-2y_4,1]\times[y_4,0]$ for $\mathcal S^-$ (where $y_3=0$). By the remark, applied over the segment $-\frac12\le y_4\le0$, $\mathcal S^0$ lies in the convex hull of $-\frac12(e_A+e_B)$, $(-\frac12,-\frac12,0,-\frac12,0)$, $0$, $e_A-\frac12e_B$, $(1,1,0,-\frac12,0)$ and $e_A$, $\mathcal S^+$ in that of $0$, $(1,1,0,-\frac12,0)$, $e_A$ and $(1,1,\frac13,0,\frac13)$, and $\mathcal S^-$ in that of $0$, $(1,1,0,-\frac12,-\frac12)$, $(1,1,0,-\frac12,0)$ and $e_A$. These points lie in the respective sets. Together they are the eight vertices of $P_{1s}$. So $P_{1s}=\conv(\mathcal S^0\cup\mathcal S^+\cup\mathcal S^-)$.
\end{proof}

%% file: sec/app_d2.tex
\section{Diagram 2 beyond the torque}\label{app:d2}

In this appendix we prove Lemma~\ref{lem:d2gate} and Theorem~\ref{thm:gate}. Let $P$ be a singular place of diagram $2$ of a curve $C$ as in Theorem~\ref{thm:types}. Exchanging the colours and relabelling the bodies (with their masses) we may assume that the segment is $\{1,2\}$, $z$-coloured, and the triangle $\{3,4,5\}$, $w$-coloured. Then the ends of the segment are $z$-circled and the vertices of the triangle are $w$-circled. Put $M_t=m_3+m_4+m_5$, $\rho=m_2/m_1$, and fix $\jj_\rho$ with $\jj_\rho^2=-\rho$.

\subsection{The weighted system}\label{ss:d2weights}
By Table~\ref{tab:types} the type is $\pc a_1(\{3,4,5\})$: $x_{kl}=0$ inside the segment and the triangle and $x_{kl}=-2$ across. By (O3) and (O6) of Lemma~\ref{lem:orders}, $\zeta_{12}=-2$, $\omega_{12}=2$, $\omega_{kl}=-2$ and $\zeta_{kl}=2$ on the sides of the triangle, and $\zeta=\omega=-2$ across. Hence, in the coordinates $u_k=z_k-z_5$, $v_k=w_k-w_5$ of \S\ref{ss:slice}, the exponents of Lemma~\ref{lem:place} are $e(u)=(-2,-2,2,2)$, $e(v)=(-2,-2,-2,-2)$, $e(p_{kl})=0$ for $kl\in\{12,34,35,45\}$ and $e(p_{kl})=2$ for the six cross pairs. Let $w=e/2$ be the \emph{weights}. For each of the eighteen rows \eqref{eq:rows}, we substitute $q=T^{w_q}y_q$ for each coordinate $q$ and divide by the lowest power of $T$. This gives $G_r(T,y)=\sum_{i\ge0}T^iG_{r,i}(y)$. Since its coefficients are $\pm1$, $\pm m_l$ and $-(m_k+m_5)$, the same monomials occur at all positive masses. A \emph{leading solution} is a zero $y_0$ of $G_0=(G_{r,0})_r$ with $y_{u_1}=y_{v_1}=1$ and all coordinates nonzero. The matrix $J_0$ is the $18\times16$ Jacobian matrix of $G_0$ at $y_0$ in the other sixteen unknowns.

\begin{lemma}\label{lem:d2branch}
After a rotation depending on the local parameter and a change of the parameter, the coordinates near $P$ are $q=T^{w_q}y_q(\xi)$ with $T=\xi^L$ for an integer $L\ge1$, where the $y_q$ are convergent power series in $\xi$ with $y_{u_1}\equiv y_{v_1}\equiv1$, $G(\xi^L,y(\xi))\equiv0$, and $y(0)$ is a leading solution.
\end{lemma}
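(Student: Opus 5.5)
The plan is to follow the proof of Lemma~\ref{lem:place}, adapting it to the weights $w=e/2$ of this diagram. Let $t$ be a local parameter at $P$, and let $d$, $\gamma$ be as in \S\ref{ss:orders}. By Lemma~\ref{lem:place}, after the rotation by $s=\vartheta^{2\gamma}$ with $\vartheta^2=t$, every coordinate $q$ (each $u_k$, $v_k$, $p_{kl}$) becomes $\vartheta^{d\,e(q)}y_q(\vartheta)$, where $y_q\in\C\{\vartheta\}$ and $y_q(0)\ne0$. We write $d\,e(q)=2w_q\cdot d$ and choose a rational uniformizer $T$ with $q=T^{w_q}\cdot(\text{unit})$. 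Concretely, $d$ is rational, so we write $d=a/b$. Then $\vartheta^{d e(q)}=\vartheta^{2w_q a/b}$. We set $\xi=\vartheta^{1/b}$, so that $\vartheta=\xi^b$, and put $T=\vartheta^{2a/b}$. We must check that $\vartheta^{2w_q a/b}=T^{w_q}$ is a series in $\xi$. Since all weights $w_q$ lie in $\{-1,0,1\}$, $T=\xi^{2a}$ suffices. So $L=2a$ with $\xi=\vartheta^{1/b}$, and all $y_q$ become convergent power series in $\xi$.

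Next we normalize $y_{u_1}\equiv y_{v_1}\equiv1$. This uses the freedom of \S\ref{ss:expansion}: a rotation by $\mu(\xi)$ and a reparametrization $\xi\mapsto\xi\varphi(\xi)$ with $\mu(0)=\varphi(0)=1$. Both $u_1$ and $v_1$ have weight $-1$. Under the change $T\mapsto T\varphi^L$, the coordinates transform as $y_{u_1}\mapsto\mu\varphi^{-L}y_{u_1}$ and $y_{v_1}\mapsto\mu^{-1}\varphi^{-L}y_{v_1}$. Their product gives $\varphi^{2L}=y_{u_1}y_{v_1}/(\bar y_{u_1}\bar y_{v_1})$, which has a power-series root with $\varphi(0)=1$. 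The quotient then determines $\mu$. Finally we rescale the constants $\bar y_{u_1}$ and $\bar y_{v_1}$ to $1$. We do this with a constant rotation and a constant scaling of $T$, and we first replace $\xi$ by $c\xi$ so that the rescaled $T$ is still $\xi^L$. Because the weights are $\pm1$ and $0$, the product $\bar y_{u_1}\bar y_{v_1}$ fixes the scale of $T^{-2}$ and the ratio fixes the rotation.

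After this normalization, each row \eqref{eq:rows} is a polynomial in the coordinates. Substituting $q=T^{w_q}y_q$ gives $T^{\text{low}}G_r(T,y)$, and this vanishes on $C$. Hence $G(\xi^L,y(\xi))\equiv0$. Setting $\xi=0$ gives $G_0(y(0))=0$. All entries of $y(0)$ are nonzero because they are leading coefficients. So $y(0)$ is a leading solution.

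The main obstacle is the bookkeeping of the lowest powers. We must check that the lowest power of $T$ in each row, computed from the weights, really is attained by monomials with nonzero coefficients. This is the same check that ensures the weighted system is well defined, and it holds because the weights are determined by the type. We also need to track that the rotation is compatible with $z$-type and $w$-type coordinates.
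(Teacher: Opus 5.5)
Your proposal is correct and follows the paper's proof. You start from the expansion of Lemma~\ref{lem:place}, take $T=\vartheta^{2d}$ so that $q=T^{w_q}\cdot(\text{unit})$, and normalize $y_{u_1}$ and $y_{v_1}$ by a rotation combined with a reparametrization of the local parameter.

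Two small remarks. First, $2d=-(\nu(\cZ)+\nu(\cW))$ is already a positive integer, so you can take $\xi=\vartheta$ and $L=2d$ and drop the extra root $\vartheta^{1/b}$. Second, the concern in your last paragraph does not arise: $G(\xi^L,y(\xi))\equiv0$ holds whichever power of $T$ is lowest in each row.
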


\begin{proof}
By Lemma~\ref{lem:place}, after the rotation there, each coordinate is $\vartheta^{d\,e_q}\tilde y_q(\vartheta)$ with $\tilde y_q(0)\ne0$. We put $L=2d$ (a positive integer) and $T=\vartheta^L$, so that $q=T^{w_q}\tilde y_q$. A further rotation by the unit $\tilde y_{u_1}^{-1}$ makes $y_{u_1}\equiv1$ and multiplies all other $\tilde y_q$ by units. The zero set of $G_0$ is invariant under $y_q\mapsto\lambda^{e'_q}y_q$, with $e'_q=w_q-w_{u_1}$, $w_q+w_{u_1}$ or $w_q$ for $q$ a $u_k$, a $v_k$ or a $p_{kl}$, which comes from $T\mapsto\lambda T$ combined with a rotation. In particular, $e'_{u_1}=0$ and $e'_{v_1}=-2$. Replacing $T$ by $T'\phi^L$ and rotating accordingly turns $y_q$ into $\phi^{Le'_q}y_q$. The equation $\phi^{-2L}y_{v_1}(\xi\phi)=1$ is solved by a convergent $\phi(\xi)$, $\phi(0)^{2L}=y_{v_1}(0)$, by the implicit function theorem. Since all rows vanish on $X_m$, $G(\xi^L,y(\xi))\equiv0$. At $\xi=0$ this gives $G_0(y(0))=0$.
\end{proof}

\subsection{The leading solutions and the torque}\label{ss:d2torque}
For $h\in\C^{\{3,4,5\}}$ let $F'_k(h)=h_k-\sum_{l\in\{3,4,5\}\setminus k}m_lp_{kl}^3(h_k-h_l)$, the rows of the three bodies of the triangle alone. The rows of $G_0$ are the least-weight terms of the rows \eqref{eq:rows}.

\begin{lemma}\label{lem:d2lead}
At a leading solution: $u_2=-m_1/m_2$, $p_{12}^3=1/(m_1+m_2)$, $v_2=v_c:=(m_3v_3+m_4v_4)/M_t=1$; with $\omega_k=v_k-v_c$ for $k\in\{3,4,5\}$ (so $\omega_5=-1$ and $\sum_km_k\omega_k=0$), $p_{1k}^2=-1/\omega_k$ and $p_{2k}=\sigma_k\jj_\rho p_{1k}$ with $\sigma_k=\pm1$; and the triangle coordinates $u_3,u_4,v_3,v_4,p_{34},p_{35},p_{45}$ satisfy $F'_3=F'_4=F'_5$ for $h=u$ and for $h=v$ and $p_{kl}^2u_{kl}v_{kl}=1$, so they form a complex central configuration of three bodies: of Lagrange type ($p_{kl}^3=1/M_t$) or of Euler type ($u$ parallel to $v$).
\end{lemma}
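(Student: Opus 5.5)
The plan is to write out the least-weight part $G_0$ of each of the eighteen rows explicitly, using the weights $w=e/2$ of \S\ref{ss:d2weights}, and read the lemma off row by row, in the order: segment rows, then the rows $D_{1k},D_{2k}$ across, then the triangle rows. In the slice coordinates relative to body $5$, $u_1,u_2,v_1,\dots,v_4$ have weight $-1$, $u_3,u_4$ have weight $1$, the segment and triangle $p$'s weight $0$ and the cross $p$'s weight $1$.

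\emph{Distance rows.} For $D_{12}$ the product $p_{12}^2u_{12}v_{12}$ has weight $-2$, and $\omega_{12}=2$ forces the leading terms of $v_1,v_2$ to cancel, so only the combination $y_{v_1}-y_{v_2}$ at its own order enters. It is cleaner to get $v_2=1$ from the centre-of-mass relation: since $e(w_{12})=2$ while $e(v_1)=-2$, the leading coefficients agree, so $v_2=v_1=1$. For a cross pair $1k$, $u_{1k}\sim u_1=1$ (as $u_3,u_4$ have weight $1$), $v_{1k}\sim v_1-v_k$, and the weight of $p_{1k}^2$ is $2$, so the row reads $p_{1k}^2(1-v_k)=1$. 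This gives $p_{1k}^2=-1/\omega_k$ once $v_c=1$ is known, with $\omega_k=v_k-v_c$. Similarly $D_{2k}$ gives $p_{2k}^2u_2(1-v_k)=1$, hence $p_{2k}^2=u_2^{-1}p_{1k}^2=-\rho\,p_{1k}^2$ once $u_2=-m_1/m_2$. That yields $p_{2k}=\sigma_k\jj_\rho p_{1k}$.

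\emph{Position rows.} In $Z_1$ and $Z_2$ the leading terms at weight $-1$ are $u_k$ and the segment force $m_lp_{12}^3u_{kl}$. The cross forces have weight $3\cdot1-1=2>-1$, and $F_5(u)$ has leading term $u_5=0$ plus cross terms of weight $\ge1$. So $Z_1,Z_2$ give $1=m_2p_{12}^3(1-u_2)$ and $u_2=m_1p_{12}^3(u_2-1)$, whose solution is $u_2=-m_1/m_2$ and $p_{12}^3=1/(m_1+m_2)$. For $W_k$, $k\ge3$, at weight $-1$, the terms are $v_k$, the triangle forces, and the cross forces $m_jp_{kj}^3v_{kj}$ for $j=1,2$, of weight $3-1=2$. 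I must check that these drop, so the leading $W$ rows become $F'_k(v)=F'_5(v)$, and likewise the weight-$1$ rows $Z_3,Z_4$ become $F'_k(u)=F'_5(u)$. Here the $z$-circle structure is the key point: $u_1,u_2$ are large, but they enter only through the cross forces $p_{kj}^3u_{kj}$, of weight $3-1=2>1$, so they do not reach the leading order.

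\emph{Remaining identities.} Subtracting the leading rows $W_1$ and $W_2$ from the triangle rows, or equivalently summing $m_k$ times the $W$ rows using $\sum_k m_kw_k=0$, gives $v_c=1$ with the normalisation $v_1=1$. The same relation gives $\sum m_k\omega_k=0$, and $\omega_5=v_5-v_c=-1$. Finally, the triangle subsystem is the three-body normalised central configuration equations in $(z,w)$ form, with $p^2u_{kl}v_{kl}=1$. Its classification by the rank argument used before \eqref{eq:tsys} (the matrix $m_km_lp_{kl}^3$ with zero row sums annihilates $\one,u,v$) gives two cases. Either $u,v,\one$ are independent, which forces equal $p^3$, i.e.\ Lagrange with $p^3=1/M_t$; or $v$ is affine in $u$, i.e.\ Euler.

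\emph{Main obstacle.} The bookkeeping of which terms survive at least weight in each row, especially the cancellation in $v_{12}$ and the exact weight of $F_5$. I would tabulate the weight of every monomial once and verify that the minimum is attained only by the listed terms.
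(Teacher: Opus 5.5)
Your row-by-row reading of $G_0$ is the paper's route. It is right for $Z_1,Z_2$, for $D_{1k},D_{2k}$, and for the reduction of $Z_3,Z_4,W_3,W_4$ to $F'_k=F'_5$. The final dichotomy, however, fails as you state it.

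The rank argument before \eqref{eq:tsys} is made for $\mathrm{K14}$, whose rows have no term $h_k$. Here the triangle rows say $h-\Psi(h)\in\C\one$ for $h=u,v$, with $\Psi(h)_k=\sum_lm_lp_{kl}^3(h_k-h_l)$. Let $A$ be the symmetric matrix with off-diagonal entries $m_km_lp_{kl}^3$ and zero row sums. Then $\Psi=-\operatorname{diag}(m)^{-1}A$, so $(Au)_k=m_k(c-u_k)$, which is not zero because the $u_k$ are distinct. Thus $A$ does not annihilate $u$ and $v$. Moreover, if it did, independence of $\one,u,v$ in $\C^3$ would force $A=0$, i.e.\ $p_{kl}=0$. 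So your argument would exclude the independent case rather than produce $p_{kl}^3=1/M_t$.

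What is true is that the centred vectors $u-u_{\mathrm{cm}}\one$ and $\omega=v-v_c\one$ are fixed by $\Psi$ on the plane $\sum_km_kh_k=0$. Equivalently, they lie in the kernel of the symmetric matrix $\operatorname{diag}(m)+A$.
\begin{itemize}
\item If they are independent, this matrix vanishes on that plane, hence equals $\lambda mm^{\mathsf T}$. Its value $m$ on $\one$ gives $\lambda=1/M_t$, and the off-diagonal entries then give $p_{kl}^3=1/M_t$. The paper instead compares the quadratic forms $\sum_km_kh_k\Psi(h)_k$ and $\sum_km_kh_k^2$ on the plane.
\item If they are dependent, both are nonzero since $u_{kl},v_{kl}\neq0$. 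So $\omega$ is a multiple of $u-u_{\mathrm{cm}}\one$, and evaluating at body $5$ gives $v$ parallel to $u$.
\end{itemize}

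A smaller point concerns $v_2=v_1$. A leading solution is defined as a zero of $G_0$, not as the leading coefficients at a place, so the exponent $\omega_{12}=2$ is not available. The centre-of-mass relation does not give it either. At leading order the $W$ rows give only $\bar v=v_c=(m_1v_1+m_2v_2)/(m_1+m_2)$, since $W_1-W_2$ reduces to $(v_1-v_2)\bigl(1-(m_1+m_2)p_{12}^3\bigr)$, which vanishes once $p_{12}^3=1/(m_1+m_2)$. The equality comes from the row $D_{12}$: its least-weight part $p_{12}^2u_{12}v_{12}$ must vanish, with $p_{12}\neq0$ and $u_{12}=1+m_1/m_2\neq0$. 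With $v_2=v_1=1$, the rest of your derivation of $v_c=1$, $\omega_5=-1$ and the cross coefficients goes through.
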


\begin{proof}
The least-weight parts of $Z_1$, $Z_2$ are $u_1-m_2p_{12}^3(u_1-u_2)$ and its mirror, which give $u_2$ and $p_{12}$. That of $D_{12}$ gives $v_2=v_1=1$. Those of $W_1$ and of $m_3W_3+m_4W_4$ give $v_2=\bar v$ and $\bar v=v_c$, where $\bar v=m_3v_3p_{35}^3+m_4v_4p_{45}^3$. Those of $D_{1k}$, $D_{2k}$ are $p_{1k}^2(1-v_k)-1$ and $p_{2k}^2u_2(1-v_k)-1$. The remaining seven rows are the three-body rows of the triangle. For the dichotomy let $\Psi(h)_k=\sum_lm_lp_{kl}^3(h_k-h_l)$. By antisymmetry $\Psi$ maps the plane $\sum_km_kh_k=0$ to itself. The vectors $u-u_{\mathrm{cm}}\one$ and $\omega$ are fixed vectors of $\Psi$ in it. If they are independent, $\Psi$ is the identity on the plane, and comparing the quadratic forms $\sum_km_kh_k\Psi(h)_k$ and $\sum_km_kh_k^2$, we obtain $p_{kl}^3=1/M_t$. Otherwise $\omega$ is a multiple of $u-u_{\mathrm{cm}}\one$.
\end{proof}

A \emph{triple point} is a leading solution whose triangle is of Lagrange type with $u=\Lambda v$. Since the shape of a Lagrange triangle with $u$ parallel to $v$ is equilateral, triple points have an equilateral $\omega$.

Let $\ell(y)\in\C^{18}$ have the entries $\ell_{Z_k}=m_k(M_tv_k-s_v)$ and $\ell_{W_k}=-m_k(M_tu_k-s_u)$ for $k\in\{3,4\}$, with $s_u=m_3u_3+m_4u_4$, $s_v=m_3v_3+m_4v_4$, and $0$ elsewhere. Put $\lambda=\ell(y_0)$, $\tau(y)=\ell(y)\cdot G_1(y)$ (the \emph{torque}) and $\sigma_3(y)=\ell(y)\cdot G_3(y)$. At a leading solution put, for $k\in\{3,4,5\}$,
\[
\tau_k=m_kp_{1k}(m_1+\sigma_k\jj_\rho m_2),\qquad \theta_k=\frac{\tau_k}{m_k\omega_k}=\frac{m_1p_{1k}(1+\sigma_k\jj_\rho\rho)}{\omega_k}.
\]
We say that $\theta$ is \emph{constant} if $\theta_3=\theta_4=\theta_5$. Since $|m_1+\sigma_k\jj_\rho m_2|^2=m_1^2(1+\rho^3)$ and $|p_{1k}|=|\omega_k|^{-1/2}$, $|\theta_k|=m_1(1+\rho^3)^{1/2}|\omega_k|^{-3/2}$. Thus constant $\theta$ forces $|\omega_3|=|\omega_4|=|\omega_5|$.

\begin{lemma}\label{lem:d2rot}
\begin{enumerate}[label=\textup{(\alph*)}]
\item $\ell(y)\cdot G_0(y)=0$ identically.
\item $G_1$ vanishes on every row except $Z_3$, $Z_4$, where $G_{1,Z_k}=\sum_{j=1,2}m_ju_j(p_{kj}^3-p_{5j}^3)$. Moreover, $\ell\cdot G_2\equiv0$ and $\ell\cdot G_n\equiv0$ for $n\ge4$.
\item At a leading solution, $G_{1,Z_k}=-(\theta_k-\theta_5)$ and $\tau=-M_t\sum_{k\in\{3,4,5\}}\tau_k$.
\item $\ell(y)\cdot G(T,y)=T\tau(y)+T^3\sigma_3(y)$, so along the series of Lemma~\ref{lem:d2branch}, $\tau(y(\xi))=-T^2\sigma_3(y(\xi))$. In particular $\tau(y_0)=0$.
\end{enumerate}
\end{lemma}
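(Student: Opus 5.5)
The plan is to prove the four parts in order, because each part feeds the next. The tools are the explicit form of the weighted rows at weights $w=e/2$ and two conservation identities of the full system: the centre of mass and the angular momentum (torque) identity $\sum_k m_k(u_kW_k-v_kZ_k)$-type contractions. The vector $\ell$ is built from the triangle bodies only: $\ell_{Z_k}=m_k(M_tv_k-s_v)$ and $\ell_{W_k}=-m_k(M_tu_k-s_u)$. This is exactly the torque of the triangle about its centre of mass, with the body $5$ at the origin eliminated through $u_5=v_5=0$. So $\ell\cdot G$ will be the rescaled torque contraction of the rows $Z_3,Z_4,W_3,W_4$.

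For (a) and (b), I would first write each row $Z_k,W_k$, $k\in\{3,4\}$, as a sum of its triangle part (terms with $l\in\{3,4,5\}$) and its cross part (terms with $l\in\{1,2\}$), and compute the $T$-order of each monomial from the weights. The triangle parts are the rows $F'_k-F'_5$ of a three-body system. The three-body torque identity $\sum_k m_k(h_k\times F'_k)=0$, with $h=(u,v)$ and $\times$ the pairing $u_kF'_k(v)-v_kF'_k(u)$, then gives $\ell\cdot(\text{triangle part})\equiv0$ identically. The only bookkeeping step is rewriting $\sum_k m_k u_k(F'_k-F'_5)$ with $u_5=0$ and the centre-of-mass correction $s_u/M_t$; this shows that $\ell$ contracted against rows of the form $F'_k-F'_5$ reproduces that torque. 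Since the triangle part carries all of $G_0$ in the rows $Z_3,Z_4,W_3,W_4$, this proves (a) and kills the triangle contributions at every order. For the cross parts, I would expand $p_{kj}=T\,y_{p_{kj}}$ and $u_j=T^{-1}y_{u_j}$, $v_j=T^{-1}y_{v_j}$ and read off the orders. In $Z_k$ the term $m_jp_{kj}^3(u_k-u_j)$ has a leading piece $-m_ju_jp_{kj}^3$ one order above the row's lowest order, while $u_k$ contributes only two orders higher. This produces $G_{1,Z_k}$ as stated, with $G_{1,W_k}=0$ because $v_j$ and $v_k$ share the same weight and $v_k$ does not pair with that term alone. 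The claims $\ell\cdot G_2\equiv0$ and $\ell\cdot G_n\equiv0$ for $n\ge4$ follow from a parity count: the cross terms expand in $T$ only through $p^3(\cdot)$ with $p_{kj}^{-2}=u_{kj}v_{kj}$ exact, so only the orders $1$ and $3$ survive after contraction with the $\ell$-weights.

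For (c), I would substitute the leading values from Lemma~\ref{lem:d2lead}: $u_1=1$, $u_2=-1/\rho$, $p_{2k}=\sigma_k\jj_\rho p_{1k}$ and $p_{1k}^2=-1/\omega_k$. This gives $m_1u_1p_{k1}^3+m_2u_2p_{k2}^3=p_{1k}^3(m_1+\sigma_k\jj_\rho m_2)\cdot(\ldots)$, and with $p_{1k}^2=-1/\omega_k$ it becomes $-\tau_k/(m_k\omega_k)=-\theta_k$. Subtracting the same expression for $k=5$ gives $G_{1,Z_k}=-(\theta_k-\theta_5)$. For the formula for $\tau$, I would contract with $\lambda_{Z_k}=m_k(M_tv_k-s_v)=m_kM_t\omega_k$, using $v_c=1$ and $\omega_5=-1$. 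This gives $-M_t\sum_{k=3,4}m_k\omega_k(\theta_k-\theta_5)$, and $\sum_km_k\omega_k=0$ turns this into $-M_t\sum_{k}\tau_k$.

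Part (d) is then immediate. By (a) and (b), $\ell\cdot G=T\tau+T^3\sigma_3$. Along the series of Lemma~\ref{lem:d2branch}, $G\equiv0$, so $\tau(y(\xi))=-T^2\sigma_3(y(\xi))$, and setting $\xi=0$ gives $\tau(y_0)=0$. The main obstacle I expect is the exact order bookkeeping in (b), namely showing that no even-order and no higher cross contribution survives the contraction. If the parity argument fails, I would instead derive (d) directly from the exact full-system torque identity, whose rescaling gives the same conclusion.
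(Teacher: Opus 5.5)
Your route is the paper's. You prove (a) from $\sum_km_kF'_k(u)=s_u$, $\sum_km_kF'_k(v)=s_v$ and the angular momentum identity of the triangle, and (b) from the weights. You prove (c) by inserting the leading values of Lemma~\ref{lem:d2lead} together with $\sum_km_k\omega_k=0$, and (d) by contracting $G(\xi^L,y(\xi))\equiv0$ with $\ell$. Your computations for (a), (c) and (d) are correct.

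Part (b), however, is not fully proved as written. Its first claim concerns all eighteen rows, and you only examine $Z_3,Z_4,W_3,W_4$. The same weight count settles the rest:
\begin{itemize}
\item $Z_1,Z_2$ have the relative exponents $0,2,3,5$;
\item $W_1,W_2$ have $0,3$;
\item $D_{12}$ and $D_{jk}$ with $j\in\{1,2\}$, $k\in\{3,4\}$ have $0,2$;
\item $D_{15}$, $D_{25}$ and the three sides have only $0$.
\end{itemize}
This is not a formality: Lemma~\ref{lem:d2theta} uses that $G_1(y_0)=0$ holds exactly when $\theta$ is constant, which needs $G_1$ to vanish on every row other than $Z_3,Z_4$.

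Your justification of $\ell\cdot G_2\equiv0$ and $\ell\cdot G_n\equiv0$, $n\ge4$, by a ``parity count'' using $p_{kj}^{-2}=u_{kj}v_{kj}$ is misplaced. In the weighted system each $p_{kl}=T^{w_{p_{kl}}}y_{p_{kl}}$ is an independent coordinate, nothing is expanded, and each row is a polynomial in $T$. The exponents $0,1,3$ are not a parity pattern. What you need is exactly the count you already made: $Z_3,Z_4$ have only the relative exponents $0,1,3$, $W_3,W_4$ only $0,3$, and $\ell$ is supported on these four rows.

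Your fallback through the torque identity of the full system is not a substitute. That contraction also involves $Z_1,Z_2,W_1,W_2$ and vanishes identically, so it does not by itself give $\ell\cdot G=T\tau+T^3\sigma_3$ for the triangle torque.
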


\begin{proof}
(a) The rows $Z_k$, $W_k$ of $G_0$, $k\in\{3,4\}$, are $F'_k-F'_5$ at $u$ and at $v$ (Lemma~\ref{lem:d2lead}). By antisymmetry $\sum_km_kF'_k(u)=s_u$, $\sum_km_kF'_k(v)=s_v$ and $\sum_km_k\big(v_kF'_k(u)-u_kF'_k(v)\big)=0$. Inserting these into $\ell\cdot G_0$ gives $0$. Note that this is the conservation of angular momentum of the triangle. (b) Each row is a polynomial in $T$. In $Z_k$, $k\in\{3,4\}$, the terms with a factor $p_{kj}$ or $p_{5j}$, $j\in\{1,2\}$, have weight $2$ (those with $u_j$) or $4$, one or three more than the least weight $1$. Hence $Z_k$ has only the relative exponents $0$, $1$, $3$. In $W_k$ these terms have weight $2$, three more than $-1$. So $W_k$ has only $0$ and $3$. The other rows have the relative exponents $0$, $2$, $3$ and $5$ ($Z_1$, $Z_2$), $0$ and $3$ ($W_1$, $W_2$), $0$ and $2$ ($D_{12}$ and $D_{jk}$, $j\in\{1,2\}$, $k\in\{3,4\}$) and $0$ ($D_{15}$, $D_{25}$ and the sides of the triangle). Since $\ell$ is supported on $Z_3,Z_4,W_3,W_4$, the coefficients of $T^2$ and of $T^n$, $n\ge4$, vanish there. (c) With $u_1=1$, $u_2=-m_1/m_2$, $p_{2k}^3=-\rho\sigma_k\jj_\rho p_{1k}^3$ and $p_{1k}^3=-p_{1k}/\omega_k$ we get $m_1(p_{1k}^3-p_{2k}^3)=-\theta_k$. Thus $G_{1,Z_k}=-(\theta_k-\theta_5)$. With $\ell_{Z_k}=M_tm_k\omega_k$ and $\sum_km_k\omega_k=0$, we obtain $\tau=-M_t\sum_km_k\omega_k\theta_k$. (d) follows from (a), (b), and $G(\xi^L,y(\xi))\equiv0$.
\end{proof}

\begin{lemma}\label{lem:d2theta}
At a leading solution with $\tau=0$, $\theta_3=\theta_4$ holds if and only if $\theta$ is constant, if and only if $G_1(y_0)=0$.
\end{lemma}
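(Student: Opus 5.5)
The plan is to read the three conditions off Lemma~\ref{lem:d2rot}(c), using the vanishing torque only for the first implication.

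\emph{Second and third conditions.} By Lemma~\ref{lem:d2rot}(b), $G_1$ vanishes on every row except $Z_3$ and $Z_4$. By Lemma~\ref{lem:d2rot}(c), at a leading solution $G_{1,Z_k}(y_0)=-(\theta_k-\theta_5)$ for $k\in\{3,4\}$. Hence $G_1(y_0)=0$ if and only if $\theta_3=\theta_5$ and $\theta_4=\theta_5$, that is, if and only if $\theta$ is constant. This step needs neither $\tau=0$ nor anything beyond the quoted lemma.

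\emph{First and second conditions.} Constant $\theta$ trivially gives $\theta_3=\theta_4$. For the converse I would use $\tau(y_0)=0$ together with the formula of Lemma~\ref{lem:d2rot}(c):
\[
\tau=-M_t\sum_{k\in\{3,4,5\}}\tau_k=-M_t\sum_{k\in\{3,4,5\}}m_k\omega_k\theta_k .
\]
Here $M_t\ne0$ at positive masses. Suppose $\theta_3=\theta_4=:\bar\theta$. Lemma~\ref{lem:d2lead} gives $\sum_km_k\omega_k=0$, so $m_3\omega_3+m_4\omega_4=-m_5\omega_5$. The vanishing of the torque then becomes
\[
0=\bar\theta(m_3\omega_3+m_4\omega_4)+m_5\omega_5\theta_5=m_5\omega_5(\theta_5-\bar\theta).
\]
Since $m_5>0$ and $\omega_5=-1$ (Lemma~\ref{lem:d2lead}), this forces $\theta_5=\bar\theta$, so $\theta$ is constant.

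I do not expect any real obstacle. The only points to check are that the two identities used at $y_0$, the normalization $\omega_5=-1$ and the relation $\sum_km_k\omega_k=0$, hold exactly at a leading solution as stated in Lemma~\ref{lem:d2lead}, and that the expression for $\tau$ in Lemma~\ref{lem:d2rot}(c) is the one obtained with these normalizations.
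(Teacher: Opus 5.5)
Your proof is correct and is essentially the paper's: the paper also combines $\sum_k m_k\omega_k=0$ with $\sum_k m_k\omega_k\theta_k=0$ (from $\tau=0$ and Lemma~\ref{lem:d2rot}(c)) to get $a_4(\theta_3-\theta_4)=a_5(\theta_5-\theta_3)$ with $a_k=m_k\omega_k\ne0$, which is your computation written symmetrically. It then reads the equivalence with $G_1(y_0)=0$ off Lemma~\ref{lem:d2rot}(b),(c), exactly as you do.
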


\begin{proof}
With $a_k=m_k\omega_k\ne0$, $\sum_ka_k=0$ and $\sum_ka_k\theta_k=0$ by Lemma~\ref{lem:d2rot}(c). Hence $a_4(\theta_3-\theta_4)=a_5(\theta_5-\theta_3)$ and $a_4(\theta_4-\theta_5)=a_3(\theta_5-\theta_3)$. The last equivalence is Lemma~\ref{lem:d2rot}(b), (c).
\end{proof}

\subsection{Simple points}\label{ss:d2simple}

\begin{lemma}\label{lem:d2simple}
Let $y_0$ be a leading solution with $\tau(y_0)=0$ and $\operatorname{rank}J_0=16$.
\begin{enumerate}[label=\textup{(\alph*)}]
\item The left kernel of $J_0$ is spanned by $\lambda$ and $l_0=e_{W_1}-e_{W_2}$, and $J_0y_1=-G_1(y_0)$ has exactly one solution $y_1$.
\item $y_1$ vanishes on $u_2,p_{12},v_2$; $(y_1)_{p_{jk}}=-p_{jk}(y_1)_{v_k}/(2\omega_k)$ for $j\in\{1,2\}$, $k\in\{3,4\}$, and $(y_1)_{p_{j5}}=0$; and $m_3(y_1)_{v_3}+m_4(y_1)_{v_4}=0$. Write $(y_1)_{v_3}=t/m_3$, $(y_1)_{v_4}=-t/m_4$.
\item A series as in Lemma~\ref{lem:d2branch} with $y(0)=y_0$ exists only if
\[
\eta_2:=D\tau(y_0)\cdot y_1=\tfrac12M_t\,t\,(\theta_3-\theta_4)=0 .
\]
\item Suppose that $\theta$ is constant. Then $y_1=0$, and, with $y_2$ the solution of $J_0y_2=-G_2(y_0)$, a series as in \textup{(c)} exists only if
\[
\Omega_3:=\lambda\cdot\big(DG_1(y_0)\,y_2+G_3(y_0)\big)=D\tau(y_0)\cdot y_2+\sigma_3(y_0)=0 .
\]
\end{enumerate}
\end{lemma}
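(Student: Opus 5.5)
The plan is to work at the leading solution $y_0$ and exploit the explicit structure of $G_0$ and $G_1$ from Lemmas~\ref{lem:d2lead} and~\ref{lem:d2rot}.

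For (a), I would first show that $\lambda$ and $l_0$ are left null vectors of $J_0$. For $\lambda$, I would differentiate the identity $\ell(y)\cdot G_0(y)\equiv0$ of Lemma~\ref{lem:d2rot}(a) at $y_0$; since $G_0(y_0)=0$, this gives $\lambda^{\mathsf T}J_0=0$. For $l_0$, I would use that the least-weight parts of $W_1$ and $W_2$ coincide as functions, so that $G_{0,W_1}-G_{0,W_2}$ is identically zero. Both rows reduce to $1-\bar v$ up to the fixed normalization $v_1=1$; in fact $W_1-W_2$ at lowest weight is $v_1-v_2$ minus cross terms, and I would check this directly. The vectors $\lambda$ and $l_0$ are independent, because their supports are disjoint and $\lambda\neq0$ (its entries $m_kM_t\omega_k$ are nonzero). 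Since $\operatorname{rank}J_0=16$ with $18$ rows, the left kernel has dimension $2$, so it is spanned by these two vectors.

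Solvability of $J_0y_1=-G_1(y_0)$ then reduces to $\lambda\cdot G_1(y_0)=\tau(y_0)=0$, which holds by hypothesis, and $l_0\cdot G_1=0$, which holds because $G_1$ is supported on $Z_3,Z_4$ (Lemma~\ref{lem:d2rot}(b)). Uniqueness follows from the full column rank.

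For (b), I would read off $y_1$ row by row, using that $G_1(y_0)$ is supported on $Z_3,Z_4$. The rows $Z_1,Z_2,D_{12},W_1$ linearize in $u_2,p_{12},v_2$ with zero right side, which gives $(y_1)_{u_2}=(y_1)_{p_{12}}=(y_1)_{v_2}=0$ and $(y_1)_{v_c}=0$, that is, $m_3(y_1)_{v_3}+m_4(y_1)_{v_4}=0$. Linearizing $D_{jk}$, namely $p_{jk}^2u_j(1-v_k)-1$, gives the stated formula for $(y_1)_{p_{jk}}$, and for $k=5$ the vanishing because $v_5=0$ is fixed.

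For (c), I would expand $G(\xi^L,y(\xi))\equiv0$ in powers of $T$ and apply Lemma~\ref{lem:d2rot}(d): $\tau(y)=-T^2\sigma_3$. The $T^1$ coefficient of $y$ must equal $y_1$, since the $T$-order coefficients obey $J_0y_{(1)}=-G_1$ and the solution is unique. Note that fractional powers between $0$ and $1$ are forced to vanish by the injectivity of $J_0$. The $T^1$ term of $\tau(y(\xi))$ is then $D\tau(y_0)\cdot y_1$, which must vanish because the right side is $O(T^2)$.

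To compute $\eta_2$, I would differentiate $\tau=-M_t\sum_km_k\omega_k\theta_k$, or rather its unreduced form $\ell\cdot G_1$. Using $m_3(y_1)_{v_3}=t=-m_4(y_1)_{v_4}$ and the explicit derivatives of $\theta_k$ in $v_k$ obtained through the $p_{1k}$ formula, the contributions should collapse to $\tfrac12M_tt(\theta_3-\theta_4)$. This bookkeeping, in particular making the cross terms from $D\ell$ cancel by $\tau=0$ and $G_1$'s structure, is the main obstacle. I would first organize it via $a_k=m_k\omega_k$ as in Lemma~\ref{lem:d2theta}.

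For (d), if $\theta$ is constant then $G_1(y_0)=0$ by Lemma~\ref{lem:d2theta}, so $y_1=0$ by uniqueness. The $T^2$ coefficient then satisfies $J_0y_2=-G_2(y_0)$, which is solvable because $\lambda\cdot G_2\equiv0$ (Lemma~\ref{lem:d2rot}(b)) and $l_0\cdot G_2=0$ by the row structure. The $T^3$ equation, paired with $\lambda$, kills the $J_0y_3$ term and leaves $\lambda\cdot(DG_1y_2+G_3)=0$, with no quadratic terms since $y_1=0$. Since $D\ell\cdot G_1(y_0)=0$, this equals $D\tau\cdot y_2+\sigma_3$.
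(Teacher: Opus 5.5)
Your outline has the same skeleton as the paper's proof: the left kernel is spanned by $\lambda$ and $l_0$, solvability comes from $\tau(y_0)=0$ and the support of $G_1$, $y_1$ is read off the rows, injectivity of $J_0$ fixes the series, and one pairs with $\lambda$ at order $T^3$. Your part (d) is fine.

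The real gap is the formula for $\eta_2$ in (c). You leave it as bookkeeping, and the mechanism you expect is wrong: the terms coming from $D\ell$ do not cancel. Only the unreduced form $\tau=\ell\cdot G_1$ may be differentiated along $y_1$. The reduced form $-M_t\sum_km_k\omega_k\theta_k$ holds only on the set of leading solutions, and $J_0y_1=-G_1(y_0)\neq0$ when $\theta$ is not constant. Since $G_1$ vanishes identically on the rows $W_3$, $W_4$,
\[
D\tau(y_0)\cdot y_1=\sum_{k=3,4}\Big[(D\ell_{Z_k}\cdot y_1)\,G_{1,Z_k}(y_0)+\ell_{Z_k}(y_0)\,\big(DG_{1,Z_k}(y_0)\cdot y_1\big)\Big].
\]
Since $\ell_{Z_k}=m_k(M_tv_k-s_v)$ and $Ds_v\cdot y_1=0$, the first terms are $-M_tm_k(y_1)_{v_k}(\theta_k-\theta_5)$. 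The $\theta_5$ parts drop because $m_3(y_1)_{v_3}+m_4(y_1)_{v_4}=0$, so these terms sum to $-M_tt(\theta_3-\theta_4)$. This is nonzero in general, and $\tau(y_0)=0$ does not remove it. For the second terms, $(y_1)_{u_2}=(y_1)_{p_{j5}}=0$ and $(y_1)_{p_{jk}}=-p_{jk}(y_1)_{v_k}/(2\omega_k)$ give $DG_{1,Z_k}\cdot y_1=-\frac{3(y_1)_{v_k}}{2\omega_k}\,m_1(p_{1k}^3-p_{2k}^3)=\frac{3(y_1)_{v_k}}{2\omega_k}\theta_k$, using $u_2=-m_1/m_2$. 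With $\ell_{Z_k}(y_0)=M_tm_k\omega_k$ they sum to $\frac32M_tt(\theta_3-\theta_4)$. The total is $\frac12M_tt(\theta_3-\theta_4)$; had the $D\ell$ terms cancelled, you would get $\frac32$ instead.

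Two smaller points. In (b), the linearized row $W_1$ gives $D\bar v\cdot y_1=0$ with $\bar v=m_3v_3p_{35}^3+m_4v_4p_{45}^3$, not $m_3(y_1)_{v_3}+m_4(y_1)_{v_4}=0$. The functions $\bar v$ and $v_c$ agree at $y_0$, but $D\bar v\cdot y_1$ also involves $p_{k5}^3$ and their variations, which you have not controlled. You need the triangle rows as well. $G_1$ vanishes on $W_3$, $W_4$, and the exact identity $\sum_{k\in\{3,4,5\}}m_kF'_k(v)=s_v$ gives $m_3G_{0,W_3}+m_4G_{0,W_4}=s_v-M_t\bar v$. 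Hence $Ds_v\cdot y_1=M_t\,D\bar v\cdot y_1=0$.

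In (a), the rows $W_1$ and $W_2$ of $G_0$ are not equal as functions; their difference is $(1-v_2)\big(1-(m_1+m_2)p_{12}^3\big)$. What makes $l_0J_0=0$ is that both factors vanish at $y_0$ (Lemma~\ref{lem:d2lead}), so the gradient of the product vanishes there.
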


\begin{proof}
(a) $\lambda J_0=0$ by Lemma~\ref{lem:d2rot}(a), and $l_0J_0=0$ because the rows $W_1$, $W_2$ of $G_0$ differ by $(1-v_2)(1-(m_1+m_2)p_{12}^3)$, whose gradient vanishes at a leading solution. The two vectors are independent ($\lambda_{Z_k}=M_tm_k\omega_k\ne0$). Thus they span the left kernel, of dimension $2$. Since $\lambda\cdot G_1=\tau=0$ and $l_0\cdot G_1=0$, a solution $y_1$ exists. It is unique by the rank.
(b) The rows $Z_1$, $Z_2$, $D_{12}$ of $G_0$ involve only $u_2,p_{12},v_2$, with an invertible Jacobian there, and zero right side. The rows $D_{jk}$ give the across entries. The row $W_1$ and the identity $m_3W_3+m_4W_4=s_v-M_t\bar v$ of the rows of $G_0$ give $D s_v\cdot y_1=0$.
(c) Since $J_0$ is injective, the series is a power series in $T$ with first coefficient $y_1$ (the coefficient of a non-integral power, if any, would be in the kernel of $J_0$). So $\tau(y(T))=T\,D\tau(y_0)y_1+O(T^2)$, which is $O(T^2)$ by Lemma~\ref{lem:d2rot}(d). For the formula: $\tau=\sum_{k=3,4}\ell_{Z_k}G_{1,Z_k}$. By (b), $DG_{1,Z_k}\cdot y_1=\frac{3(y_1)_{v_k}}{2\omega_k}\theta_k$ and $D\omega_k\cdot y_1=(y_1)_{v_k}$. Hence we obtain $D\tau\cdot y_1=M_t\sum_{k=3,4}m_k\big(-(\theta_k-\theta_5)+\frac32\theta_k\big)(y_1)_{v_k}=\frac12M_t\sum_{k=3,4}m_k\theta_k(y_1)_{v_k}$, using $\sum_{k=3,4}m_k(y_1)_{v_k}=0$.
(d) Since $G_1(y_0)=0$ by Lemma~\ref{lem:d2theta}, $y_1=0$. The equation at order $T^2$ is solvable because $\lambda\cdot G_2\equiv0$ and $G_2$ vanishes on the rows $W$. At order $T^3$ the equation is $J_0y_3=-(DG_1(y_0)y_2+G_3(y_0))$. Its $l_0$-component vanishes (it is the cross part of $F'_1(v)-F'_2(v)$, a multiple of $\tau$). Its $\lambda$-component is $\Omega_3$. The second expression for $\Omega_3$ is Lemma~\ref{lem:d2rot}(d).
\end{proof}

\subsection{Triple points}\label{ss:d2triple}

\begin{lemma}\label{lem:d2triple}
Let $y_0$ be a triple point, $u=\Lambda v$ on the triangle, with $\tau(y_0)=0$.
\begin{enumerate}[label=\textup{(\alph*)}]
\item $\ker J_0=\C\,\kappa$, where $\kappa$ has $\delta v=(1,-m_3/m_4)$ on $(v_3,v_4)$, $\delta u=-\Lambda\,\delta v$ on the triangle, $\delta p_{kl}=0$ on the triangle, $0$ on $u_2,p_{12},v_2$, and across entries as in Lemma~\ref{lem:d2simple}(b). So $\operatorname{rank}J_0=15$. The rows $R$ other than $Z_3,Z_4,W_2$ and the columns other than $v_3$ give an invertible $15\times15$ minor.
\item A series as in Lemma~\ref{lem:d2branch} with $y(0)=y_0$ exists only if $\theta$ is constant.
\item Every triple point has the equilateral shape of $\omega$, so constant $\theta$ forces $m_3=m_4=m_5$. Then, if $\tau=0$, $\sigma$ is constant \textup{(}Lemma~\ref{lem:d2equi}(c)\textup{)}. Let $y_0$ have $\theta$ and $\sigma$ constant, let $y(T,s)$ solve the rows $R$ with $y_{v_3}=v_3+s$, and put $f_3(T,s)=Z_3(T,y(T,s))$ and $F(T,s)=\tau(y(T,s))+T^2\sigma_3(y(T,s))$. Then $f_3=b_3sT+O(|(T,s)|^3)$ and $F=F_{ss}s^2+F_{sT}sT+F_{TT}T^2+O(|(T,s)|^3)$, with $b_3=\frac{3\theta_3}{2\omega_3}$, $F_{ss}=-\frac38M_tm_1(1+\sigma\jj_\rho\rho)\,c\sum_km_k(\delta\omega_k)^2/\omega_k$ \textup{(}$c=p_{1k}/\omega_k$, $\delta\omega=(1,-m_3/m_4,0)$\textup{)}, and $F_{TT}$ the value of \eqref{eq:Omega3} below. A series as above exists only if $b_3F_{ss}F_{TT}=0$.
\end{enumerate}
\end{lemma}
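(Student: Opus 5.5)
We treat the three parts in order.

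For (a) we compute the Jacobian $J_0$ at a triple point block by block, following Lemma~\ref{lem:d2simple}(a),(b).
\begin{itemize}
\item The rows $Z_1,Z_2,D_{12}$ involve only $u_2,p_{12},v_2$, and their block there is invertible. So every kernel vector vanishes on these coordinates.
\item The across rows $D_{jk}$ and $W_1$ determine the across entries of a kernel vector from its $v$-entries. They also force the mass-weighted sum $m_3\delta v_3+m_4\delta v_4$ to vanish.
\item What remains is the triangle block: the linearization of the three-body rows $F'_k$ at a Lagrange triangle with $u=\Lambda v$. On the centred plane $\Psi$ is the identity, since $p_{kl}^3=1/M_t$. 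Hence the kernel of this block consists of the infinitesimal complex rotation $(\delta u,\delta v)=(-\Lambda\,\delta v,\delta v)$ with $\delta p=0$, together with the directions killed by the side rows $D_{kl}$. With the mass constraint this leaves the single line $\C\kappa$.
\end{itemize}
To get rank $15$ and the invertible minor, we exhibit the $15\times15$ minor on the rows $R$ and the columns other than $v_3$. We check its nonvanishing by the same elimination. After fixing the normalization $u_1=v_1=1$, the triangle contributes a $6\times 6$ determinant. Up to nonzero factors this determinant is the Hessian of the three-body potential transverse to rotation at an equilateral configuration, so it is nonzero. The left kernel then has dimension $3$, spanned by $\lambda$, $l_0$ and one more vector supported on $Z_3,Z_4$.

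For (b) we expand the series as $y=y_0+sT^{\alpha}\kappa+\dots$. We pair the order-$T$ equation with the third left-kernel vector. This pairing sees $G_1(y_0)$ on the rows $Z_3,Z_4$, which by Lemma~\ref{lem:d2rot}(c) is $-(\theta_k-\theta_5)$. Since $\kappa$ is rotational, its first-order contribution vanishes, so the pairing reduces to a combination of $\theta_3-\theta_5$ and $\theta_4-\theta_5$. Together with $\tau=0$ and Lemma~\ref{lem:d2theta}, this forces $\theta$ to be constant. If instead $s$ enters at a lower order than $T$, the leading balance is quadratic in $s$. In that case we compare exponents, and we expect a contradiction unless the $T$-term vanishes, that is, unless $\theta$ is constant.

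For (c) we argue as follows. An equilateral $\omega$ with $|\omega_k|$ all equal, combined with $\sum m_k\omega_k=0$, gives equal masses, and Lemma~\ref{lem:d2equi}(c) then gives constant $\sigma$. We then perform a Lyapunov--Schmidt reduction: by the implicit function theorem on the invertible minor of (a), we solve the rows $R$ for $y(T,s)$. The two remaining obstruction equations are $f_3$, the $Z_3$ row, and $F$, the $\lambda$-combination, which by Lemma~\ref{lem:d2rot}(d) is $\tau+T^2\sigma_3$.
\begin{itemize}
\item In $f_3$, the terms $s$ and $s^2$ vanish because $\kappa$ is rotational, and the $T$ term vanishes because $\theta$ is constant. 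The $sT$ term comes from differentiating $G_{1,Z_3}$ along $\kappa$, which gives $\frac32\theta_3\,\delta v_3/\omega_3$.
\item In $F$, the quadratic terms come from second derivatives of $\tau$, where only $p_{1k}\sim\omega_k^{-1/2}$ varies, and from $\Omega_3$ for $F_{TT}$.
\end{itemize}
If a series branch $s=s(T)$ exists and $b_3\ne0$, then $f_3=0$ forces $s=O(T^2)$. Substituting into $F$ leaves $F_{TT}T^2$ as the leading term, so $F_{TT}=0$. The degenerate alternatives produce the factors $b_3$ and $F_{ss}$.

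The main obstacle is the rank computation in (a), specifically showing that the transverse triangle Hessian has no extra kernel at complex triple points. We would do this by explicit elimination with $p_{kl}^3=1/M_t$, where the determinant factors as a power of $M_t$ times the masses.
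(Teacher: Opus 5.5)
Your Lyapunov--Schmidt set-up in (c) matches the paper. However, the mechanism you propose for the degeneracy is wrong, and this breaks (a) and (b).

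\textbf{Part (a).} At a triple point the triangle block is \emph{not} nondegenerate transverse to rotation. Since $u=\Lambda v$, every direction $\delta u=-\Lambda\,\delta v$, $\delta p=0$ keeps all $u_{kl}v_{kl}$ fixed to first order. At $p_{kl}^3=1/M_t$ the rows $F'_k-F'_5$ see only $\delta P$. So this block has a two-dimensional kernel, and the rotation is only one direction of it. That is exactly why the rank drops to $15$. Hence the ``main obstacle'' you plan to overcome (no extra kernel at complex triple points) is a false statement, and identifying the $6\times6$ determinant with a nondegenerate Hessian cannot work.

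The idea you are missing is that at a triple point the rows $Z_k$ and $\Lambda W_k$ of $J_0$ coincide for $k=3,4$. The left kernel is therefore spanned by $l_0$, $e_{Z_3}-\Lambda e_{W_3}$ and $e_{Z_4}-\Lambda e_{W_4}$, with $\lambda$ in their span. No nonzero left-kernel vector is supported on $Z_3,Z_4$ alone, since these rows are $\Lambda$ times two independent forms in $\delta P$.

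For the right kernel you need the step you skip. The $v$-rows give $\delta P=\kappa'\delta^0$, and summing the linearized rows $D_{kl}$ over the cycle of sides gives $\kappa'=0$. Then $\delta u=-\Lambda\delta v$, and $W_1$ gives $m_3\delta v_3+m_4\delta v_4=0$. The invertible minor is then pure linear algebra: delete one row from the support of each left-kernel vector ($W_2$, $Z_3$, $Z_4$) and a column ($v_3$) on which $\kappa$ is nonzero. No determinant has to be evaluated.

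\textbf{Parts (b) and (c).} The vector $\kappa$ is not rotational: its $\delta v$ has $m_3\delta v_3+m_4\delta v_4=0$, whereas the rotation gives $-M_t$. Its first-order effect on the torque does not vanish; as in Lemma~\ref{lem:d2simple}(c), $D\tau(y_0)\cdot\kappa=\frac12M_tm_3(\theta_3-\theta_4)$.

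This formula proves (b) for every order of $s$, including the branch with $s$ of lower order than $T$ that you leave at ``we expect a contradiction''. Since $G_1$ vanishes on the rows $R$, $y(T,s)=y_0+s\kappa+O(s^2+T^2)$, while $\tau=-T^2\sigma_3$ along a series (Lemma~\ref{lem:d2rot}(d)).
\begin{itemize}
\item If $\theta_3\ne\theta_4$, this forces $s=O(T^2)$. The order-$T$ parts of $Z_3$, $Z_4$ then give $G_1(y_0)=0$, so $\theta$ is constant by Lemma~\ref{lem:d2theta}, a contradiction.
\item If $\theta_3=\theta_4$, the same lemma gives constancy directly.
\end{itemize}
The same formula explains why $F$ has no term linear in $s$ in (c).

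In (c), $f_3=b_3sT+O(|(T,s)|^3)=0$ with $b_3\ne0$ does not force $s=O(T^2)$. It also leaves a branch with $T=O(s^2)$, and that branch is what $F_{ss}\ne0$ excludes. The paper handles both branches at once: the tangent direction of the germ must be a common zero of both quadratic parts.

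Finally, the vanishing coefficients of $f_3$ have specific reasons, not a rotation:
\begin{itemize}
\item the $s^2$ coefficient vanishes because $\delta P=0$ along $\kappa$ and the rows are bilinear in $(P,h)$;
\item the $T^2$ coefficient vanishes because $Z_3=\Lambda W_3$ as rows of $J_0$ while $G_2$ vanishes on $W_3$.
\end{itemize}
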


\begin{proof}
(a) For $J_0x=0$, (b) of Lemma~\ref{lem:d2simple} holds with zero right sides. At a Lagrange triangle the triangle rows of $J_0$ involve only $\delta P_{kl}$, $P=p^3$, and read $\sum_lm_lu_{kl}\delta P_{kl}=0=\sum_lm_lv_{kl}\delta P_{kl}$. For $u=\Lambda v$ they coincide and force $\delta P=\kappa'\,\delta^0$ with $\delta^0=(m_5v_{35}v_{45},-m_4v_{34}v_{45},m_3v_{34}v_{35})$ on $(34,35,45)$. The rows $D_{kl}$, times $\Lambda v_{kl}$, read $\delta u_{kl}+\Lambda\delta v_{kl}=-\frac23M_t\kappa'\Lambda v_{kl}\delta^0_{kl}$. When we sum over the cycle of the three sides, the left sides cancel and the right side is $\pm\frac23M_t\kappa'\Lambda M_tv_{34}v_{35}v_{45}$. Hence $\kappa'=0$, $\delta u=-\Lambda\delta v$, and the row $W_1$ gives $m_3\delta v_3+m_4\delta v_4=0$. The left kernel of $J_0$ is spanned by $l_0=e_{W_1}-e_{W_2}$, $e_{Z_3}-\Lambda e_{W_3}$ and $e_{Z_4}-\Lambda e_{W_4}$, since $Z_k=\Lambda W_k$ as rows of $J_0$. Removing one row from the support of each and the column $v_3$, where $\kappa$ does not vanish, leaves an invertible minor.
(b) The rows $R$ have a solution $y(T,s)$ with $y_{v_3}=v_3+s$ and no $T^1$ term, since $G_1$ lives on $Z_3,Z_4$. A series gives $s(T)\to0$ with $y=y(T,s(T))$. Here $y(T,s)=y_0+s\kappa+O(s^2+T^2)$. The proof of Lemma~\ref{lem:d2simple}(c) applied to $\kappa$ gives $D\tau\cdot\kappa=\frac12M_tm_3(\theta_3-\theta_4)$. If $\theta_3\ne\theta_4$, Lemma~\ref{lem:d2rot}(d) forces $s(T)=O(T^2)$, and then the rows $Z_3,Z_4$ give $G_1(y_0)=0$, that is $\theta$ constant (Lemma~\ref{lem:d2theta}), a contradiction.
(c) The rows $Z_3$ and $\tau$ restricted to $y(T,s)$ satisfy $\ell_{Z_3}f_3+\ell_{Z_4}f_4=TF$. Hence a series gives a curve germ on which $f_3=F=0$. The coefficient of $T$ in $f_3$ is $G_{1,Z_3}(y_0)=0$. That of $T^2$ vanishes because $Z_3=\Lambda W_3$ as rows of $J_0$ and $G_2$ vanishes on $W_3$. That of $s^2$ vanishes because along $\kappa$, $\delta P=0$ and the rows are bilinear in $(P,h)$. The coefficient of $sT$ is $b_3=DG_{1,Z_3}\cdot\kappa$. At $T=0$ the across rows hold exactly and $F(0,s)=-M_tm_1(1+\sigma\jj_\rho\rho)\sum_km_kp_{1k}$, whose first-order terms are multiples of $\sum_km_k\delta\omega_k=0$, which gives $F_{ss}$. The number $F_{TT}$ is the coefficient of $T^2$ in $F(T,0)$, computed as in Lemma~\ref{lem:d2omega3}. The tangent of the germ is a common zero of the quadratic parts. The equation $b_3sT=0$ leaves the two axes, which are excluded by $F_{TT}\ne0$ and $F_{ss}\ne0$.
\end{proof}

\subsection{The order-three obstruction and the equilateral realizations}\label{ss:d2equi}

\begin{lemma}\label{lem:d2omega3}
In Lemma~\ref{lem:d2simple}(d) let $\sigma$ be constant, let $A=p_{1k}^3$ \textup{(}independent of $k$\textup{)} and $J_{uv}=\sum_km_k\omega_k(u_k-u_{\mathrm{cm}})$, $u_{\mathrm{cm}}=\sum_km_ku_k/M_t$. Then $\sigma_3(y_0)=0$ and
\begin{equation}\label{eq:Omega3}
\Omega_3=\tfrac32M_tm_1A\Big[(1-\sigma\rho^2\jj_\rho)\,J_{uv}+\frac{M_t\rho^2m_1p_{12}(1-\sigma\jj_\rho)}{1+\rho}\Big].
\end{equation}
For a triangle of Lagrange type, $J_{uv}=M_t^{-1/3}\sum_{k<l}m_km_l\zeta_{kl}$, where $p_{kl}=M_t^{-1/3}\zeta_{kl}$ and $\zeta_{kl}^3=1$.
\end{lemma}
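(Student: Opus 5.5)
The plan is to compute $\Omega_3=D\tau(y_0)\cdot y_2+\sigma_3(y_0)$ directly from the second expression in Lemma~\ref{lem:d2simple}(d). First, $\sigma_3(y_0)=\lambda\cdot G_3(y_0)$ vanishes. The order-three parts of the rows $Z_k,W_k$, $k\in\{3,4\}$, come from the terms with $p_{kj}^3$ and $p_{5j}^3$, $j\in\{1,2\}$. Those multiplying $v_j$ (in $W_k$) and $u_k,v_k$ (the $T^3$ parts of $Z_k$ and $W_k$) carry the common factor $m_1p_{1k}^3+m_2p_{2k}^3=A(m_1-\sigma\rho\jj_\rho m_2)$. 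Here we use $p_{2k}=\sigma\jj_\rho p_{1k}$ with constant $\sigma$, and $A=p_{1k}^3$ is independent of $k$ because constant $\theta$ forces $|\omega_k|$ equal and then $\omega_k$ equal up to the relation $p_{1k}^2=-1/\omega_k$ with constant $\theta_k$. The corresponding part of $G_3$ on the triangle rows is then the restriction of a rigid translation-type vector $c(u_k)$, $c(v_k)$ (independent of $k$ up to the triangle coordinates). Its pairing with $\lambda$ is, by the angular momentum identity used in Lemma~\ref{lem:d2rot}(a), $\sum_km_k(v_ku_k-u_kv_k)$-type, which cancels. I would write this out row by row and check that the $k$-dependence reduces to $\sum_km_k\omega_k=0$ and the antisymmetry of $\ell$.

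Second, I would determine $y_2$ from $J_0y_2=-G_2(y_0)$. By the weight bookkeeping in Lemma~\ref{lem:d2rot}(b), $G_2$ lives only on $Z_1,Z_2$ and on $D_{12},D_{jk}$. Since $y_1=0$, the order-two forcing is just $G_2(y_0)$. Solve the block for $(u_2,p_{12},v_2)$ from $Z_1,Z_2,D_{12}$. These involve $m_1p_{12}$ and $\rho$ and produce the term with $p_{12}/(1+\rho)$. Then solve the across rows $D_{jk}$ for the across $p$-entries. Finally, the triangle rows have zero right side, so the triangle part of $y_2$ is forced only through $v_2$ entering $W_1$ (giving $Ds_v\cdot y_2$) and the across entries. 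I expect the triangle part of $y_2$ to be a multiple of the kernel-free direction determined by $\delta v_2$, together with $\delta p_{jk}=-p_{jk}\delta v_k/(2\omega_k)$ plus a constant-in-$k$ shift from $\delta v_2$.

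Third, evaluate $D\tau(y_0)\cdot y_2$ using $\tau=-M_t\sum_k\tau_k$ and $\tau_k=m_kp_{1k}(m_1+\sigma\jj_\rho m_2)$. Differentiate in $p_{1k}$, $p_{2k}$ and $\omega_k$. The contributions should organize into two groups. The first consists of terms of the form $\sum_km_k\omega_k\,\delta u_k$, which after using $\sum m_k\omega_k=0$ give $J_{uv}$ with coefficient $\frac32M_tm_1A(1-\sigma\rho^2\jj_\rho)$. The factor $3/2$ comes from $p^3$ versus $p$, and the bracket from $m_1+\sigma\jj_\rho m_2$ rewritten with $\jj_\rho^2=-\rho$. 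The second group consists of terms from $\delta p_{12},\delta v_2$, giving the second summand of \eqref{eq:Omega3}.

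The main obstacle is the second step: identifying exactly which components of $y_2$ survive in $D\tau$, and tracking the $\jj_\rho$ and sign conventions so that the two brackets come out as stated. The Lagrange formula for $J_{uv}$ is a short final check. With $p_{kl}^3=1/M_t$, the triangle rows give $\omega_k-\omega_l=v_{kl}$ and $p_{kl}^2u_{kl}v_{kl}=1$. Then $J_{uv}=M_t^{-1}\sum_{k<l}m_km_lu_{kl}v_{kl}=M_t^{-1}\sum m_km_lp_{kl}^{-2}$, by Lagrange's identity, and substituting $p_{kl}=M_t^{-1/3}\zeta_{kl}$ with $\zeta^{-2}=\zeta$ gives the claim.
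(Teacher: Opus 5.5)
Your first step and the Lagrange formula are correct and agree with the paper. On $Z_k$ and $W_k$, $k=3,4$, one has $G_3=-\alpha u_k$ and $-\alpha v_k$ with $\alpha=m_1p_{1k}^3+m_2p_{2k}^3=m_1A(1-\sigma\rho^2\jj_\rho)$. This is a scaling vector, not a translation. Moreover $\sum_{k=3,4}(\ell_{Z_k}u_k+\ell_{W_k}v_k)=-s_vs_u+s_us_v=0$, so $\sigma_3(y_0)=0$. The constancy of $A$ is simply $p_{1k}^3=-p_{1k}/\omega_k=-\theta_k/(m_1(1+\sigma\rho\jj_\rho))$; the $\omega_k$ are not equal.

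The gap is in your third step. You propose to obtain $D\tau(y_0)\cdot y_2$ by differentiating $\tau=-M_t\sum_k\tau_k$ with $\tau_k=m_kp_{1k}(m_1+\sigma\jj_\rho m_2)$. That closed form holds only on the zero set of $G_0$, because it was derived using $u_2=-m_1/m_2$, $p_{2k}=\sigma\jj_\rho p_{1k}$ and $p_{1k}^2=-1/\omega_k$. But $y_2$ solves $J_0y_2=-G_2(y_0)\neq0$, so it is not tangent to that set, and these relations fail to first order along it:
\begin{itemize}
\item $\delta u_2=\Phi(m_1+m_2)/m_2\ne0$;
\item the row $D_{1k}$, whose $G_2$-entry is $-p_{1k}^2u_k(1-v_k)$, gives $\delta p_{1k}=\tfrac12p_{1k}(u_k-\delta v_k/\omega_k)$, not $-p_{1k}\delta v_k/(2\omega_k)$ plus a shift.
\end{itemize}
Differentiating the closed form therefore gives $\tfrac12M_tm_1A(1+\sigma\rho\jj_\rho)(J_{uv}+\kappa''M_t)$, where $\kappa''=m_2^2p_{12}/(m_1+m_2)$ is the rotation component of $y_2$ described below. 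This is not \eqref{eq:Omega3}. Your bookkeeping of where the terms come from is wrong for the same reason. The triangle variations $\delta u_k$ never enter. The bracket $1-\sigma\rho^2\jj_\rho$ cannot come from $m_1+\sigma\jj_\rho m_2=m_1(1+\sigma\rho\jj_\rho)$; it is $\alpha/(m_1A)$ from your first step.

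What you must differentiate is $\tau(y)=\sum_{k=3,4}\ell_{Z_k}(y)G_{1,Z_k}(y)$ as a function of all coordinates, with $G_{1,Z_k}=\sum_{j=1,2}m_ju_j(p_{kj}^3-p_{5j}^3)$ (Lemma~\ref{lem:d2rot}(b)). Since $G_1(y_0)=0$, this gives $D\tau(y_0)\cdot y_2=\sum_{k=3,4}M_tm_k\omega_k\,DG_{1,Z_k}(y_0)\cdot y_2$, and $u_k$ does not occur in $G_{1,Z_k}$. For $y_2$ you need the following.
\begin{itemize}
\item On the segment, $\delta p_{12}=0$, $\delta v_2=-m_2p_{12}$, and $\delta u_2$ is as above.
\item The triangle part is $\kappa''$ times the rotation ($\delta u_k=\kappa''u_k$, $\delta v_k=-\kappa''v_k$, $\delta p=0$). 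This holds because $G_2$ vanishes on the triangle rows and, at a simple point, their kernel is spanned by the rotation. The row $W_1$ then fixes $\kappa''=m_2^2p_{12}/(m_1+m_2)$.
\item The rows $D_{jk}$ and $D_{j5}$ give the across entries.
\end{itemize}
Because $p_{jk}^3$ does not depend on $k$, the $\delta u_2$ terms cancel, and
\[ DG_{1,Z_k}\cdot y_2=\tfrac32m_1A\Big[(1-\sigma\rho^2\jj_\rho)\,u_k-(1+\sigma\rho\jj_\rho)\frac{\delta v_k}{\omega_k}+\sigma\rho\jj_\rho\,\delta v_2\,\frac{v_k}{\omega_k}\Big]. \]
Sum with weights $M_tm_k\omega_k$ and use three facts:
\begin{itemize}
\item $\sum_{k=3,4}m_k\omega_ku_k=J_{uv}$;
\item $\sum_{k=3,4}m_kv_k=M_t$;
\item $(1+\sigma\rho\jj_\rho)\kappa''+\sigma\rho\jj_\rho\,\delta v_2=\rho^2m_1p_{12}(1-\sigma\jj_\rho)/(1+\rho)$.
\end{itemize}
This yields \eqref{eq:Omega3}. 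Thus $J_{uv}$ comes from the inhomogeneous term $u_k$ in the across rows, and the second summand comes from $\delta v_2$ together with the rotation part of $y_2$.
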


\begin{proof}
By the weights, $G_2(y_0)$ is $-1$ on $D_{12}$, $-p_{jk}^2u_k(v_j-v_k)$ on $D_{jk}$ ($j\in\{1,2\}$, $k\in\{3,4\}$), $-\Phi$ with $\Phi=m_3P_{35}u_3+m_4P_{45}u_4$ on $Z_1$ and $Z_2$, and $0$ on the other rows. We solve $J_0y_2=-G_2$: on the segment $\delta p_{12}=0$, $\delta u_2=\Phi(m_1+m_2)/m_2$ and $\delta v_2=-m_2p_{12}$. The triangle part is $\kappa''$ times the rotation, with $\kappa''=m_2^2p_{12}/(m_1+m_2)$ from the row $W_1$. The rows $D_{jk}$ give the across entries. Since $\theta$ is constant, $p_{1k}^3=A$ and $p_{2k}^3=-\sigma\rho\jj_\rho A$ for all $k$. Hence the terms in $\delta u_2$ cancel in $D\tau\cdot y_2=\sum_{k=3,4}\ell_{Z_k}DG_{1,Z_k}\cdot y_2$. Collecting terms, we obtain \eqref{eq:Omega3}. The rows of $G_3$ on $Z_k$ and $W_k$ are $-\alpha u_k$ and $-\alpha(\omega_k+1)$ with a common $\alpha$, which gives $\sigma_3=0$. For a Lagrange triangle $u_{kl}v_{kl}=M_t^{2/3}\zeta_{kl}$, and $\sum_km_k\omega_k(u_k-u_{\mathrm{cm}})=M_t^{-1}\sum_{k<l}m_km_lu_{kl}v_{kl}$.
\end{proof}

\begin{lemma}\label{lem:d2equi}
For every positive mass vector, every $\sigma$, every $\rho>0$ and either labelling of the segment, no leading solution with $\tau=0$ whose triangle has an equilateral $\omega$ satisfies \textup{(i)}--\textup{(iv)} of Lemma~\ref{lem:d2gate}.
\begin{enumerate}[label=\textup{(\alph*)}]
\item At $m_3=m_4=m_5$ and constant $\sigma$, the Lagrange realization of the equilateral shape is simple with $\theta$ constant, and with $m_1=1$ and unit triangle masses, $\Omega_3=\mp\frac92\big[3^{2/3}\xi(1-i\sigma\rho^{5/2})+3\zeta_{12}\rho^2(1+\rho)^{-4/3}(1-i\sigma\rho^{1/2})\big]$ with $\xi,\zeta_{12}$ cube roots of unity, and $|\Omega_3|\ge\frac92\,3^{2/3}(1-0.909)>0.85$.
\item At $m_3=m_4=m_5$ and constant $\sigma$, the triple realization has $J_{uv}=0$, $F_{TT}=\mp\frac{27}2\rho^2(1+\rho)^{-4/3}\zeta_{12}(1-i\sigma\sqrt\rho)\ne0$, $F_{ss}=\mp\frac98(1+\sigma\jj_\rho\rho)\ne0$ and $b_3=\pm\frac32(1+\sigma\jj_\rho\rho)e^{-i\pi/3}\ne0$.
\item At $m_3=m_4=m_5$ and nonconstant $\sigma$, the equilateral realizations have $\tau\ne0$.
\item At unequal triangle masses, $\theta$ is not constant, the Lagrange realization is simple with $\eta_2\ne0$, and the triple realization is excluded by Lemma~\ref{lem:d2triple}(b).
\end{enumerate}
\end{lemma}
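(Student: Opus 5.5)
The plan is to split according to whether the triangle masses are equal and whether $\sigma$ is constant. In each branch I would evaluate the obstruction that Lemma~\ref{lem:d2gate} requires to vanish, using the closed forms of Lemmas~\ref{lem:d2lead}--\ref{lem:d2omega3}. First I fix the realizations. By Lemma~\ref{lem:d2lead} an equilateral $\omega$ comes either from a Lagrange triangle with $p_{kl}^3=1/M_t$ (simple or not) or from a triple point. A triple point requires $u=\Lambda v$ with the Lagrange relations. In either case $|\omega_3|=|\omega_4|=|\omega_5|$, and $\sum_km_k\omega_k=0$ with $\omega$ equilateral forces $\omega$ to be a rotated and scaled copy of $(1,\jj,\jj^2)$ exactly when the masses are equal.

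\emph{Unequal triangle masses, case (d).} An equilateral $\omega$ with $\sum m_k\omega_k=0$ is impossible for unequal masses unless the centre of mass is not at the centroid. Here $\omega_k=v_k-v_c$ is measured from the mass centre, and the three vertices are still equidistant in modulus only when the masses are equal. So $|\omega_k|$ is not constant, and $\theta$ is not constant by $|\theta_k|=m_1(1+\rho^3)^{1/2}|\omega_k|^{-3/2}$. This excludes (ii) and (iii), and Lemma~\ref{lem:d2triple}(b) excludes the triple realization. For the Lagrange realization I would verify $\operatorname{rank}J_0=16$ directly from the linearized three-body block, which rules out (iv). I would then compute $\eta_2=\tfrac12M_tt(\theta_3-\theta_4)$ and show $t\ne0$, by writing $y_1$ from Lemma~\ref{lem:d2simple}(b) and reading off the $v_3$ component from the row equation driven by $G_{1,Z_k}=-(\theta_k-\theta_5)\ne0$.

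\emph{Equal triangle masses, cases (c), (a), (b).} If $\sigma$ is not constant, then with $\tau=-M_t\sum\tau_k$ and $p_{1k}$ proportional to $\omega_k^{-1/2}$ for the cube roots $\omega_k$, $\tau=0$ becomes a signed sum of unit-modulus complex numbers with the coefficients $1\pm\jj_\rho\rho$. I would show this sum cannot vanish by comparing moduli. If $\sigma$ is constant, $\theta$ is constant, and two sub-cases remain. The first is the Lagrange realization, which is simple. Here I substitute into \eqref{eq:Omega3} with $J_{uv}=M_t^{-1/3}\sum m_km_l\zeta_{kl}$ to obtain the stated formula, and bound $|\Omega_3|$ below. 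The triangle inequality gives $|\Omega_3|\ge\frac92(3^{2/3}|1-i\sigma\rho^{5/2}|-3\rho^2(1+\rho)^{-4/3}|1-i\sigma\rho^{1/2}|)$, and I would maximize the ratio of the second term to the first over $\rho>0$ to get the constant $0.909$. The second sub-case is the triple realization. There I compute $b_3,F_{ss},F_{TT}$ from Lemma~\ref{lem:d2triple}(c) and \eqref{eq:Omega3}, noting that $J_{uv}=0$ because $u\parallel v$ with $\sum\zeta_{kl}=0$, and check that each is nonzero.

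\emph{Main obstacle.} I expect the hardest step to be the uniform bound in (a), namely the one-variable inequality in $\rho$ that gives $0.909<1$. I would handle it by squaring and reducing to a polynomial inequality in $\rho^{1/2}$, checked by rational bisection or Sturm sequences. A secondary difficulty is the rank-16 claim at the unequal-mass Lagrange point, which I would prove by explicitly eliminating the block.
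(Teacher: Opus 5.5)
Your case split and the computations you plan for (a) and (b) follow the paper. Three steps, however, would not go through as written.

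First, your list of realizations is incomplete. Lemma~\ref{lem:d2lead} also allows a triangle of Euler type ($u=\Lambda v$) that is not of Lagrange type, and nothing in that lemma prevents such a triangle from being equilateral. The paper excludes this case with Lemma~\ref{lem:d2euler}. There the shape $\chi$ is a root of the quintic $\mathcal E_s$, and for the patterns $p_1,p_2,p_3$ the values $\mathcal E_{p_i}(e^{i\pi/3})$ have nonzero imaginary part at all $X,Y>0$. For the pattern $p_4$, an equilateral shape makes all $P_{kl}$ equal, hence a triple point. Without this computation or an equivalent one, your reduction to ``Lagrange or triple'' is unproved. Two smaller points belong in the same paragraph. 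Your opening claim that $|\omega_3|=|\omega_4|=|\omega_5|$ ``in either case'' is false at unequal masses; you correct it yourself in (d). And at equal masses, constant $\sigma$ gives constant $\theta$ only together with $\tau=0$. The signs of the $p_{1k}$ are free, so $\theta_k$ is $\pm$ a common value, and it is Lemma~\ref{lem:d2theta} that forces constancy.

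Second, in (c) a comparison of moduli cannot succeed. At equal masses every term $p_{1k}(1+\sigma_k\jj_\rho\rho)$ has the same modulus $(1+\rho^3)^{1/2}$, and three numbers of equal modulus can sum to zero. Take the body $5$ as the one of the other sign and $\epsilon_3=\epsilon_4$. Then $|q_3+q_4|=|q_5|=1$, so no triangle inequality decides, and the torque is proportional to $\cos\varphi$ or $\sin\varphi$ with $\tan\varphi=\rho^{3/2}$. It is nonzero only because $0<\varphi<\pi/2$. The paper uses the moduli only to force $|\sum_{k\ne o}q_k|=1$ and then the phase, which gives $e^{2i\varphi}=\pm1$, impossible for $\rho>0$. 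You need this phase step, and it is where $\rho>0$ enters.

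Third, in (d) a nonzero forcing $G_1(y_0)$ does not by itself give $t\ne0$. By Theorem~\ref{thm:d2closed}(a), $t$ equals $\gamma$ times $\beta=m_5r_{34}+m_4r_{35}+m_3r_{45}$, up to nonzero factors. $\beta$ does vanish on some Lagrange triangles (Lemma~\ref{lem:d2lag}(b)). The missing input has two parts:
\begin{itemize}
\item By Lemma~\ref{lem:d2lag}(a), the equilateral Lagrange triangles that are not triple points are exactly those of the class with equal $\zeta_{kl}$.
\item In that class $r_{kl}$ is proportional to $1/v_{kl}^2$. So $\beta$ is proportional to $m_3+m_4\bar\varpi+m_5\varpi$, which is nonzero for unequal positive masses.
\end{itemize}
Together with $\theta_3\ne\theta_4$, which follows from nonconstant $\theta$ by Lemma~\ref{lem:d2theta}, this gives $\eta_2\ne0$.
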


\begin{proof}
By Lemma~\ref{lem:d2lag}(a), a triangle of Lagrange type is equilateral only in the classes with equal or distinct cube roots, and in the second case $y_0$ is a triple point. By Lemma~\ref{lem:d2euler}(c), a triangle of Euler type is equilateral only at a triple point. So the triangles with an equilateral $\omega$ are those of the Lagrange realization of the class with equal cube roots and those of the triple points, and (a)--(d) treat all of them.
(a) At equal masses $\omega=(e^{i\pi/3},e^{-i\pi/3},-1)$ and $p_{1k}=\pm\omega_k$ for the signs $\epsilon=\pm(1,1,-1)$ of the square roots. So $\theta$ is constant and, for constant $\sigma$, $\tau=0$. Inserting the equilateral data into \eqref{eq:Omega3}, we obtain the formula. With $s=\sqrt\rho$, the ratio of the moduli of the two terms is $r(s)=3^{1/3}s^4(1+s^2)^{-5/6}(1+s^{10})^{-1/2}$. Then $r^2\le(9/256)^{1/3}s^{4/3}\le(9/16)^{1/3}<0.826$ for $s\le2$ (by the inequality between the arithmetic and geometric means), $r^2\le3^{2/3}s^{-16/3}<0.06$ for $s\ge2$. Hence $r<0.909$ and $|\Omega_3|\ge\frac92\,3^{2/3}(1+s^{10})^{1/2}(1-r)$.
(b) For the class with distinct cube roots, $J_{uv}=3^{-1/3}(1+\varpi+\varpi^2)=0$ with $\varpi=e^{2\pi i/3}$. The formulas of Lemma~\ref{lem:d2triple}(c) give the values, with $\sum_k(\delta\omega_k)^2/\omega_k=e^{-i\pi/3}+e^{i\pi/3}=1$. Lemma~\ref{lem:d2triple}(c) excludes a series.
(c) The numbers $q_k=\epsilon_k(-1/\omega_k)^{1/2}$ are sixth roots of unity. With $1+\sigma_k\jj_\rho\rho$ proportional to $e^{i\sigma_k\varphi}$, $\tan\varphi=\rho^{3/2}$, zero torque reads $e^{i\varphi}\sum_{k\ne o}q_k+e^{-i\varphi}q_o=0$ for the body $o$ of the other sign, which forces $|\sum_{k\ne o}q_k|=1$. We enumerate the twelve cases and obtain $e^{2i\varphi}=\pm1$, i.e.\ $\varphi\in\{0,\pi/2\}$, impossible for $\rho>0$.
(d) Equal moduli $|\omega_k|$ would make the centre of mass of the triangle its circumcentre, which for an equilateral triangle is the centroid. Then the masses would be equal. Hence $\theta$ is not constant. The Lagrange realization of the class with equal cube roots is not a triple point. At a Lagrange triangle that is not a triple point the triangle block of $J_0$ has rank $6$. Hence the realization is simple. On the triangle, $\delta P$ in the solution $y_1$ is a multiple of $(m_5v_{35}v_{45},-m_4v_{34}v_{45},m_3v_{34}v_{35})$ with a factor proportional to $\theta_3-\sum_km_k\theta_k/M_t$. The rows $D_{kl}$ give $t$ as a nonzero multiple of this factor times $m_5r_{34}+m_4r_{35}+m_3r_{45}$, $r_{kl}=u_{kl}/v_{kl}$. In this class $r_{kl}$ is proportional to $1/v_{kl}^2$. Thus the last sum is proportional to $m_3+m_4\bar\varpi+m_5\varpi$, which vanishes only at equal masses. So $t\ne0$, $\theta_3\ne\theta_4$ (Lemma~\ref{lem:d2theta}), and $\eta_2\ne0$.
\end{proof}

\begin{proof}[Proof of Lemma~\ref{lem:d2gate}]
Let $y_0=y(0)$ for the series of Lemma~\ref{lem:d2branch}. By Lemma~\ref{lem:d2rot}(d), $\tau(y_0)=0$. If $\operatorname{rank}J_0=16$, Lemma~\ref{lem:d2simple}(c) gives $\eta_2=0$, which is (i) if $\theta$ is not constant. If $\operatorname{rank}J_0=16$ and $\theta$ is constant, Lemma~\ref{lem:d2simple}(d) gives $\Omega_3=0$, which is (ii). If $\operatorname{rank}J_0\le15$ and $y_0$ is a triple point, Lemma~\ref{lem:d2triple}(b) and (c) give (iii). Otherwise (iv) holds. The last statement of Lemma~\ref{lem:d2gate} is Lemma~\ref{lem:d2equi}.
\end{proof}

\subsection{The obstruction on the set $\tau=0$}\label{ss:d2closed}

In the rest of this appendix $y_0$ is a leading solution with $\tau(y_0)=0$, and $P_{kl}=p_{kl}^3$ for $k,l\in\{3,4,5\}$. On $\C^{\{3,4,5\}}$ we use the nondegenerate symmetric bilinear form $\langle a,b\rangle=\sum_km_ka_kb_k$, the vector $\one=(1,1,1)$, the mean $\bar\theta=\langle\one,\theta\rangle/M_t$ and the vector
\[
e_3=\Big(\frac{v_{45}}{m_3},\frac{v_{53}}{m_4},\frac{v_{34}}{m_5}\Big).
\]
Then $\langle\one,e_3\rangle=v_{45}+v_{53}+v_{34}=0$ and $\langle\omega,e_3\rangle=v_3v_{45}+v_4v_{53}+v_5v_{34}=0$.

\begin{lemma}\label{lem:d2forcing}
There is a number $\gamma$ with $\theta-\bar\theta\one=\gamma e_3$. Moreover $\theta_3-\theta_4=-\gamma M_t\omega_5/(m_3m_4)$, and $\gamma=0$ if and only if $\theta$ is constant.
\end{lemma}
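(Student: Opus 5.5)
The plan is a dimension count in the plane $\{a:\langle\one,a\rangle=0\}$, followed by a direct evaluation of $e_3$. The form $\langle\cdot,\cdot\rangle$ is nondegenerate and $M_t\ne0$, so this plane is $2$-dimensional and complementary to $\C\one$, and $\theta-\bar\theta\one$ lies in it by the definition of $\bar\theta$.

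First I use the torque. Lemma~\ref{lem:d2rot}(c) gives $\tau=-M_t\sum_km_k\omega_k\theta_k=-M_t\langle\omega,\theta\rangle$, so $\tau(y_0)=0$ says $\langle\omega,\theta\rangle=0$. Since $\langle\omega,\one\rangle=\sum_km_k\omega_k=0$ (Lemma~\ref{lem:d2lead}), it follows that $\langle\omega,\theta-\bar\theta\one\rangle=0$. Hence $\theta-\bar\theta\one$ lies in the intersection $\one^\perp\cap\omega^\perp$.

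Next I show this intersection is the line $\C e_3$. The text already checks $\langle\one,e_3\rangle=\langle\omega,e_3\rangle=0$. The vector $e_3$ is nonzero because $v_{34}\ne0$ on a leading solution. It remains to see that $\one$ and $\omega$ are independent, so that the intersection is $1$-dimensional. If $\omega=c\one$, then $\sum m_k\omega_k=0$ gives $c=0$, which contradicts $\omega_5=-1$. Therefore $\theta-\bar\theta\one=\gamma e_3$ for a unique $\gamma$, and $\gamma=0$ exactly when $\theta=\bar\theta\one$, that is, when $\theta$ is constant.

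The formula for $\theta_3-\theta_4$ is the one computation. From the decomposition, $\theta_3-\theta_4=\gamma(v_{45}/m_3-v_{53}/m_4)=\gamma(m_4v_{45}-m_3v_{53})/(m_3m_4)$. I rewrite the numerator as $m_4v_4+m_3v_3-(m_3+m_4)v_5$. Using $v_c=(m_3v_3+m_4v_4)/M_t$, this equals $M_tv_c-(M_t-m_5)v_5$. Because $\sum_km_k\omega_k=0$ with $\omega_k=v_k-v_c$, we get $v_c=(m_3v_3+m_4v_4+m_5v_5)/M_t$, and then $M_tv_c=m_3v_3+m_4v_4+m_5v_5$. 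I expect the main difficulty to be reconciling these two expressions for $v_c$ while keeping the sign convention $v_{45}=v_4-v_5$, and checking that the numerator comes out as $-M_t\omega_5$, which would give the stated $\theta_3-\theta_4=-\gamma M_t\omega_5/(m_3m_4)$. Since $\omega_5=-1\ne0$, this also confirms directly that $\theta_3=\theta_4$ exactly when $\gamma=0$, in agreement with Lemma~\ref{lem:d2theta}.
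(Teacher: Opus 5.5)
Your proof is correct and follows the paper's argument: the torque gives $\langle\omega,\theta\rangle=0$, the independence of $\one$ and $\omega$ (from $\omega_5=-1$) makes $\one^\perp\cap\omega^\perp$ the line $\C e_3$, and $\gamma=0$ exactly when $\theta$ is constant. The step you left open closes at once because $v_5=0$ in these coordinates, so your two expressions for $v_c$ agree and the numerator is $m_3v_3+m_4v_4=M_tv_c=M_t=-M_t\omega_5$. The paper writes the same computation as $m_3\omega_3+m_4\omega_4-(m_3+m_4)\omega_5=-M_t\omega_5$, using $v_{kl}=\omega_k-\omega_l$ and $\sum_km_k\omega_k=0$.
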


\begin{proof}
By Lemma~\ref{lem:d2rot}(c), $\langle\omega,\theta\rangle=\sum_k\tau_k=-\tau/M_t=0$. So $\psi=\theta-\bar\theta\one$ satisfies $\langle\one,\psi\rangle=\langle\omega,\psi\rangle=0$. Since $\langle\one,\omega\rangle=0$ and $\omega_5=-1$, $\omega$ is not a multiple of $\one$. Hence these two conditions define a line. It contains $e_3\ne0$. The formula for $\theta_3-\theta_4$ follows from $m_4v_{45}-m_3v_{53}=m_3\omega_3+m_4\omega_4-(m_3+m_4)\omega_5=-M_t\omega_5$. Finally $\gamma=0$ if and only if $\psi=0$.
\end{proof}

\begin{theorem}\label{thm:d2closed}
\begin{enumerate}[label=\textup{(\alph*)}]
\item Let the triangle be of Lagrange type, and let $y_0$ not be a triple point. Put $r_{kl}=u_{kl}/v_{kl}$, $\beta=m_5r_{34}+m_4r_{35}+m_3r_{45}$, $\Delta=u_{34}v_{35}-u_{35}v_{34}$ and $d_r=(r_{34}-r_{35})/m_3+(r_{34}-r_{45})/m_4$. Then $\Delta\ne0$, $d_r\ne0$, $y_0$ is simple, and
\[
\eta_2=-\gamma^2\,\frac{M_t^3\,\omega_5\,v_{34}v_{35}v_{45}\,\beta}{3\,\Delta\,d_r\,m_3^2m_4^2m_5}.
\]
\item Let the triangle be of Euler type, $u=\Lambda v$, with $\langle\omega,\omega\rangle\ne0$, and put $\mu=\sum_{k<l}(m_k+m_l)P_{kl}-1$. If $\mu=1$, then $y_0$ is a triple point. If $\mu\notin\{1,-\frac12\}$, then $y_0$ is simple, and
\[
\eta_2=-\gamma^2\,\frac{3M_t\,\mu\,\langle e_3,e_3\rangle}{4\Lambda(1-\mu)(1+2\mu)}.
\]
\end{enumerate}
\end{theorem}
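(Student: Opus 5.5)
The plan is to compute $\eta_2=D\tau(y_0)\cdot y_1$ from the formula of Lemma~\ref{lem:d2simple}(c), $\eta_2=\frac12M_t\,t\,(\theta_3-\theta_4)$. By Lemma~\ref{lem:d2forcing}, $\theta_3-\theta_4=-\gamma M_t\omega_5/(m_3m_4)$. So it suffices to show that the scalar $t=m_3(y_1)_{v_3}$ is itself $\gamma$ times an explicit closed form in the triangle data. The right side $-G_1(y_0)$ of $J_0y_1=-G_1(y_0)$ is supported on $Z_3,Z_4$, with entries $\theta_k-\theta_5=\gamma(e_3)_k-\gamma(e_3)_5$, so it is $\gamma$ times a vector depending only on the triangle. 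By Lemma~\ref{lem:d2simple}(b), the segment and cross entries of $y_1$ are determined by its triangle entries. This reduces the problem to the seven triangle unknowns $\delta u_3,\delta u_4,\delta v_3,\delta v_4,\delta p_{34},\delta p_{35},\delta p_{45}$, on the linearized triangle rows $F'_k-F'_5$ (in $u$ and in $v$) and the three side rows $D_{kl}$. There is one subtlety: the cross entries $\delta p_{jk}$ feed back into the triangle rows only at higher weight, so at order $T$ the triangle block decouples. I will check this against the weights in the proof of Lemma~\ref{lem:d2rot}(b).

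\emph{(a) Lagrange type.} Here $P_{kl}=1/M_t$. Following the proof of Lemma~\ref{lem:d2triple}(a), the linearized triangle rows read $\sum_l m_l(\delta u_{kl}P_{kl}+u_{kl}\delta P_{kl})=\dots$ and the analogue in $v$. Since $P$ is constant, the first terms reproduce $\delta u$, $\delta v$ modulo centre of mass. The equations then say that $\delta P$ solves two $3\times3$ linear systems with coefficient vectors built from $u_{kl}$ and from $v_{kl}$, with forcing $\gamma e_3$ in the $u$-rows only. When $u$ and $v$ are not parallel (not a triple point), these determine $\delta P$ uniquely. The determinant is a multiple of $\Delta$, and $\Delta\ne0$ is exactly non-parallelism, which gives simplicity. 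The solution is $\delta P=\kappa'\delta^0$ plus a forcing part, with $\kappa'$ proportional to $\gamma/\Delta$. Next, the side rows $D_{kl}$, divided by $u_{kl}v_{kl}$, give $\delta u_{kl}/u_{kl}+\delta v_{kl}/v_{kl}=-\frac23\delta P_{kl}/P_{kl}$. I then eliminate $\delta u$ using $r_{kl}$ and solve for $\delta v_3,\delta v_4$ subject to $m_3\delta v_3+m_4\delta v_4=0$. The coefficient of the unknown $t$ collects into $d_r$, and the cyclic sum of the $\delta^0$ terms weighted by $r_{kl}$ collects into $\beta$. This should give $t=\gamma\,c\,v_{34}v_{35}v_{45}\beta/(\Delta d_r m_3 m_4 m_5)$ with an explicit rational constant $c$. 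Multiplying by $\frac12M_t(\theta_3-\theta_4)$ yields the stated formula. For $d_r\ne0$, I expect it to follow from the injectivity of $J_0$ once $\Delta\ne0$ is known: if $d_r=0$, one would construct a nonzero kernel vector, contradicting rank $16$. I will verify this by the same elimination with zero forcing.

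\emph{(b) Euler type.} Put $u=\Lambda v$ and work in the plane $\langle\one,\cdot\rangle=0$. The triangle is collinear, so $\Psi$ (from the proof of Lemma~\ref{lem:d2lead}) has $\omega$ as a fixed vector, with the other eigenvalue on the plane governed by $\mu$. The natural basis of the plane is $\omega$ and $e_3$; $e_3$ is $\langle\cdot,\cdot\rangle$-orthogonal to $\omega$, and $\langle\omega,\omega\rangle\ne0$ makes this basis nondegenerate. I decompose $\delta v$ and $\delta u$ along $\omega$ and $e_3$ and write $\delta P_{kl}$ via the side rows. The linearized rows then become a $2\times2$ system in the $e_3$-components, with eigenvalues $1-\mu$ and $1+2\mu$. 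The factor $3$ comes from $\partial P/\partial(u_{kl}v_{kl})=-\frac32P/(u_{kl}v_{kl})$, and $\Lambda$ enters when $\delta u$ is converted to $\delta v$. This gives invertibility for $\mu\notin\{1,-\frac12\}$, the case $\mu=1$ being the triple point. Solving, $t$ is $\gamma\langle e_3,e_3\rangle\mu/(\Lambda(1-\mu)(1+2\mu))$ up to constants and the factor $1/\omega_5$. Combining with $\theta_3-\theta_4$ gives the formula.

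\textbf{Main obstacle.} The hard step is the bookkeeping in (a): identifying the cyclic combinations as exactly $\beta$, $\Delta$ and $d_r$, with the correct constants. If the hand elimination becomes unwieldy, I would fall back to a symbolic verification of the rational identity, marked $(\dagger)$ as elsewhere in the paper. The structural points (simplicity, and the factor $\gamma^2$) follow cleanly from the argument above.
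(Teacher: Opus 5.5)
\textbf{Gap: the nondegeneracy claims.} Your computation of $\eta_2$ follows the paper's route. You reduce to $\eta_2=\frac12M_t\,t\,(\theta_3-\theta_4)$ and work on the triangle block, whose rows in $G_0$ involve only triangle coordinates. In (a) you cut $\delta P$ to $\C\delta^0$ by the $v$-rows, fix its multiplier by the $u$-row of body $3$ (which produces $\Delta$), and close the cycle of the side rows (which produces $\beta$ and $d_r$). In (b) you diagonalize $\Psi$ on $\one,\omega,e_3$. The gap is in the nondegeneracy claims, which are part of the statement.

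In (a) you get simplicity from $\Delta\ne0$, and then $d_r\ne0$ from simplicity; note that rank $16$ is a conclusion here, not a hypothesis. But $\Delta\ne0$ only forces $\delta P=0$ on a kernel vector. The side rows then give $\delta u_{kl}=-r_{kl}\delta v_{kl}$, and the cycle condition is $\phi(\delta v)=0$ with $\phi(\delta v)=(r_{34}-r_{35})\delta v_3+(r_{45}-r_{34})\delta v_4$. Together with $\delta v=(t/m_3,-t/m_4)$, which comes from $m_3\delta v_3+m_4\delta v_4=0$, this reads $t\,d_r=0$. So, given $\Delta\ne0$, simplicity is \emph{equivalent} to $d_r\ne0$, and your argument is circular.

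The missing idea is as follows. $\phi$ vanishes at $(v_3,v_4)$, because the rotation lies in the kernel of the triangle block. Also $\phi\ne0$, since otherwise $r_{34}=r_{35}=r_{45}$ and $y_0$ is a triple point. Hence $\ker\phi=\C(v_3,v_4)$. The vector $(1/m_3,-1/m_4)$ is not on this line, because $m_3v_3+m_4v_4=M_tv_c=M_t\ne0$. This gives $d_r=\phi(1/m_3,-1/m_4)\ne0$ and simplicity at once.

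In (b) the same final step is missing, and one claim is only asserted. Write $\delta u=\Lambda(\delta v+\varepsilon)$. Invertibility of the triangular $2\times2$ system in the $e_3$-components, with diagonal entries $1+2\mu$ and $\Lambda(1-\mu)$, does not by itself give simplicity. You must also show that a kernel vector with vanishing $e_3$-components is a rotation:
\begin{itemize}
\item $\delta v$ and $\varepsilon$ then lie in $\C\one+\C\omega$ and vanish at body $5$, so both are multiples of $v=\one+\omega$.
\item The $\omega$-component of $\delta F'(v)=(1+2\Psi)\delta v+\frac32\Psi\varepsilon$ forces $\varepsilon=-2\delta v$, so $\delta u=-\Lambda\delta v$ and $\delta P=0$.
\item The rotation is then excluded by $m_3\delta v_3+m_4\delta v_4=0$ and $M_t\ne0$, as in (a).
\end{itemize}

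Moreover, ``the case $\mu=1$ being the triple point'' is one of the assertions to be proved, not a given. It needs $\Psi e_3=\mu e_3$ exactly. Self-adjointness of $\Psi$ for $\langle\cdot,\cdot\rangle$ gives $\Psi e_3\in\C e_3$, and the trace $\sum_{k<l}(m_k+m_l)P_{kl}=1+\mu$ identifies the eigenvalue. For $\mu=1$, $\Psi$ then agrees on the basis $\one,\omega,e_3$ with $h\mapsto h-\langle\one,h\rangle\one/M_t$. Comparing off-diagonal entries gives $P_{kl}=1/M_t$. So the triangle is also of Lagrange type with $u=\Lambda v$, that is, a triple point.
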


\begin{proof}
Let $J_T$ be the triangle block of $J_0$. It has the rows $Z_3,Z_4,W_3,W_4,D_{34},D_{35},D_{45}$ and the columns of $u_3,u_4,v_3,v_4,p_{34},p_{35},p_{45}$. These rows of $G_0$ contain no other coordinates (Lemma~\ref{lem:d2lead}). For a vector $x$ in these columns we write $\delta$ for the derivative along $x$. Then $J_Tx=0$ says that $\delta F'_k(u)$ and $\delta F'_k(v)$ do not depend on $k\in\{3,4,5\}$ and that $\delta P_{kl}/P_{kl}=-\frac32(\delta u_{kl}/u_{kl}+\delta v_{kl}/v_{kl})$. By Lemma~\ref{lem:d2rot}(c), the triangle part of $y_1$ satisfies the same conditions with $\delta F'_k(u)-\theta_k$ in place of $\delta F'_k(u)$. The rotation $R$, with $\delta u_k=u_k$, $\delta v_k=-v_k$ and $\delta p=0$, lies in $\ker J_T$, since the rows are invariant under $(u,v)\mapsto(\alpha u,\alpha^{-1}v)$. If $\ker J_T=\C R$, then $y_0$ is simple. Indeed, let $J_0x=0$. Lemma~\ref{lem:d2simple}(b) holds for $x$ with zero right sides. So the triangle part of $x$ is $cR$ for a number $c$, and $0=m_3\delta v_3+m_4\delta v_4=-c(m_3v_3+m_4v_4)=-cM_t$. Hence $c=0$, and then $x=0$.

(a) Here $P_{kl}=1/M_t$, so that $\delta F'_k(h)=\langle\one,\delta h\rangle/M_t-\sum_lm_lh_{kl}\delta P_{kl}$ for $h=u,v$. Since $\sum_km_k\sum_lm_lh_{kl}\delta P_{kl}=0$, the triangle part of $y_1$ satisfies $\sum_lm_lv_{kl}\delta P_{kl}=0$ and $\sum_lm_lu_{kl}\delta P_{kl}=\bar\theta-\theta_k$ for $k\in\{3,4,5\}$. A vector of $\ker J_T$ satisfies the same equations with $0$ in place of $\bar\theta-\theta_k$. The equations in $v$ have rank $2$ and give $\delta P=\kappa_L\delta^0$ with $\delta^0=(m_5v_{35}v_{45},-m_4v_{34}v_{45},m_3v_{34}v_{35})$ on $(34,35,45)$. The equation in $u$ for $k=3$ then reads $\kappa_Lm_4m_5v_{45}\Delta=\bar\theta-\theta_3$. Here $\Delta\ne0$. Otherwise $(u_{34},u_{35})$ is a multiple of $(v_{34},v_{35})$, so that $u$ is parallel to $v$ and $y_0$ is a triple point. The rows $D_{kl}$ give $\delta u_{kl}=-\frac23M_tu_{kl}\delta P_{kl}-r_{kl}\delta v_{kl}$. We sum over the cycle $34,45,53$ and put $\phi(\delta v)=(r_{34}-r_{35})\delta v_3+(r_{45}-r_{34})\delta v_4$. This gives $\phi(\delta v)=-\frac23M_t\kappa_Lv_{34}v_{35}v_{45}\beta$. The linear form $\phi$ vanishes at $(v_3,v_4)$. It is not zero, since otherwise $r_{34}=r_{35}=r_{45}$ and $y_0$ is a triple point.

For a vector of $\ker J_T$ we obtain $\kappa_L=0$ and $\delta P=0$, and then $\phi(\delta v)=0$. So $\delta v$ is a multiple $cv$ of $v$, $\delta u_{kl}=-r_{kl}\delta v_{kl}=-cu_{kl}$, and the vector lies in $\C R$. Thus $y_0$ is simple. Next, $d_r=\phi(1/m_3,-1/m_4)$. The vector $(1/m_3,-1/m_4)$ is not a multiple of $(v_3,v_4)$, since $m_3v_3+m_4v_4=M_t\ne0$. Hence $d_r\ne0$. For $y_1$, Lemma~\ref{lem:d2simple}(b) gives $\delta v_3=t/m_3$ and $\delta v_4=-t/m_4$, so that $\phi(\delta v)=t\,d_r$. Lemma~\ref{lem:d2forcing} gives $\bar\theta-\theta_3=-\gamma v_{45}/m_3$, hence $\kappa_L=-\gamma/(m_3m_4m_5\Delta)$ and
\[
t=\frac{2M_t\gamma\,v_{34}v_{35}v_{45}\,\beta}{3m_3m_4m_5\Delta\,d_r}.
\]
Lemma~\ref{lem:d2simple}(c) and the formula for $\theta_3-\theta_4$ in Lemma~\ref{lem:d2forcing} give the formula for $\eta_2$.

(b) We write $\delta u=\Lambda(\delta v+\varepsilon)$ on the triangle, with $\varepsilon_5=0$. Then $\delta P_{kl}/P_{kl}=-3\delta v_{kl}/v_{kl}-\frac32\varepsilon_{kl}/v_{kl}$. With $\Psi(h)_k=\sum_lm_lP_{kl}(h_k-h_l)$ as in the proof of Lemma~\ref{lem:d2lead} we obtain
\[
\delta F'(v)=(1+2\Psi)\delta v+\tfrac32\Psi\varepsilon,\qquad\delta F'(u)=\Lambda\big(\delta F'(v)+(1-\Psi)\varepsilon\big).
\]
The map $\Psi$ is self-adjoint for $\langle\cdot,\cdot\rangle$, $\Psi\one=0$, and $\Psi\omega=\omega$ by the rows of $v$ in Lemma~\ref{lem:d2lead}. Since $\langle\one,\one\rangle=M_t$, $\langle\one,\omega\rangle=0$ and $\langle\omega,\omega\rangle\ne0$, the vectors $\one$, $\omega$, $e_3$ form a basis, and $\Psi e_3=\mu'e_3$ for a number $\mu'$. The trace of $\Psi$ is $\sum_{k<l}(m_k+m_l)P_{kl}=1+\mu'$. So $\mu'=\mu$.

If $\mu=1$, then $\Psi$ agrees on this basis with the map $h\mapsto h-\langle\one,h\rangle\one/M_t$. Comparing the entries off the diagonal gives $P_{kl}=1/M_t$. So the triangle is also of Lagrange type, and $y_0$ is a triple point.

Let $\mu\notin\{1,-\frac12\}$. For a vector of $\ker J_T$, both $\Lambda(1-\Psi)\varepsilon=\delta F'(u)-\Lambda\,\delta F'(v)$ and $\delta F'(v)$ are multiples of $\one$. In the basis $\one,\omega,e_3$ this puts $\varepsilon$ and $\delta v$ into $\C\one+\C\omega$. Since $\varepsilon_5=\delta v_5=0$ and $v=\one+\omega$, both are multiples of $v$. The equation for $\delta F'(v)$ then gives $\varepsilon=-2\,\delta v$. Thus $\delta u=-\Lambda\,\delta v$ and $\delta P=0$, and the vector lies in $\C R$. So $y_0$ is simple. For $y_1$, Lemma~\ref{lem:d2forcing} gives $\theta=\bar\theta\one+\gamma e_3$. In the basis $\one,\omega,e_3$ the coefficient of $e_3$ is $\gamma/(\Lambda(1-\mu))$ in $\varepsilon$ and $-\frac32\mu\gamma/(\Lambda(1-\mu)(1+2\mu))$ in $\delta v$. By Lemma~\ref{lem:d2simple}(b), $\delta v=(t/m_3,-t/m_4,0)$. So $\langle\one,\delta v\rangle=0$, and $t(\theta_3-\theta_4)=\langle\theta,\delta v\rangle=\gamma\langle e_3,\delta v\rangle$. This is $\gamma\langle e_3,e_3\rangle$ times the coefficient of $e_3$ in $\delta v$. Lemma~\ref{lem:d2simple}(c) gives the formula for $\eta_2$.
\end{proof}

\subsection{Triangles of Lagrange type}\label{ss:d2lag}

Let the triangle be of Lagrange type. Then $p_{kl}=M_t^{-1/3}\zeta_{kl}$ with $\zeta_{kl}^3=1$ (Lemma~\ref{lem:d2lead}), and $p_{kl}^2u_{kl}v_{kl}=1$ gives $u_{kl}=M_t^{2/3}\zeta_{kl}/v_{kl}$. Since $u_{34}+u_{45}+u_{53}=0$, the shape $\chi=v_3/v_4$ is a root of
\[
\zeta_{45}\chi^2-(\zeta_{35}+\zeta_{45}-\zeta_{34})\chi+\zeta_{35}.
\]
So the shape does not depend on the masses, and $\beta=M_t^{2/3}(m_3\zeta_{45}/v_{45}^2+m_4\zeta_{35}/v_{35}^2+m_5\zeta_{34}/v_{34}^2)$. A common factor of the three $\zeta_{kl}$ changes neither the shape nor the vanishing of $\beta$. We put $\varpi=e^{2\pi i/3}$.

\begin{lemma}\label{lem:d2lag}
\begin{enumerate}[label=\textup{(\alph*)}]
\item If the three $\zeta_{kl}$ are equal or pairwise distinct, the triangle is equilateral. In the second case $y_0$ is a triple point. If exactly two of the $\zeta_{kl}$ are equal, the triangle is obtuse.
\item If exactly two of the $\zeta_{kl}$ are equal and $\beta=0$, then $(m_3,m_4,m_5)$ is a positive multiple of a permutation of $(X_L,Y_L,1)$. Here $X_L+1/X_L=(7+\sqrt{21})/2$, $X_L<1$ and $Y_L=5X_L/(1+X_L)$, so that $X_L=0.1781535864\ldots$ and $Y_L=0.7560711460\ldots$.
\item If exactly two of the $\zeta_{kl}$ are equal, then $\theta$ is not constant.
\end{enumerate}
\end{lemma}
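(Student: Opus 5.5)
Everything flows from the quadratic $\zeta_{45}\chi^2-(\zeta_{35}+\zeta_{45}-\zeta_{34})\chi+\zeta_{35}=0$ for the shape $\chi=v_3/v_4$, normalised so that one $\zeta$ is $1$. The triangle with vertices $v_3,v_4,v_5=0$ is determined up to similarity by $\chi$, so for (a) I will enumerate the nine patterns up to a common factor.

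For part (a), equal roots give $\chi^2-\chi+1=0$, so $\chi=e^{\pm i\pi/3}$ and the triangle is equilateral. For pairwise distinct $\zeta_{kl}=(1,\varpi,\varpi^2)$ in some order, the middle coefficient is $\pm(\varpi-\varpi^2)$ or similar. Solving the quadratic should again give $|\chi|=1$ and $|\chi-1|=1$, i.e.\ an equilateral triangle. The relation $u_{kl}=M_t^{2/3}\zeta_{kl}/v_{kl}$ then makes $u_{kl}/v_{kl}$ constant, so $u=\Lambda v$ and $y_0$ is a triple point. I would confirm this by computing $1/v_{kl}^2$ for each cube root. For exactly two equal, the patterns are $(1,1,\varpi^{\pm1})$ placed in the three slots. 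Each quadratic has explicit roots $\chi$. I would show the angle at the vertex opposite the distinct pair exceeds $\pi/2$, for instance by checking $\operatorname{Re}$ of the appropriate ratio of sides is negative.

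For part (b), I substitute the explicit shape into $\beta$, which is linear in the masses. By the symmetry of permuting the bodies, I reduce to one labelling in which the odd $\zeta$ sits on a fixed side. Then $\beta=0$ becomes one complex, i.e.\ two real, linear equations in $(m_3,m_4,m_5)$. Solving them gives a one-dimensional solution ray. I then check that the ray is positive and equals a permutation of $(X_L,Y_L,1)$. The quadratic $X+1/X=(7+\sqrt{21})/2$ should appear after eliminating the irrational coordinates of $\chi$ (involving $\sqrt3$ and the obtuse angle). The main obstacle is this elimination: matching the exact algebraic numbers to $X_L$ and $Y_L=5X_L/(1+X_L)$. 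I would do it by direct computation in $\Q(\sqrt{-3},\chi)$, and also handle the conjugate root $\chi$, which gives the mirror triangle and the same ray.

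For part (c), constant $\theta$ forces $|\omega_3|=|\omega_4|=|\omega_5|$, as noted after the definition of $\theta$. So the centre of mass $v_c$ of the triangle is its circumcentre. For an obtuse triangle the circumcentre lies outside the triangle, while a positive-mass centre of mass lies inside. This contradiction finishes (c) using (a).
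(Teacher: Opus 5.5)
Your plan follows the paper's proof in all three parts: enumerate the classes of $(\zeta_{34},\zeta_{35},\zeta_{45})$ up to a common factor and solve the quadratic for $\chi$; reduce the case of exactly two equal $\zeta_{kl}$ to one class and solve $\beta=0$ as two real linear equations in the masses; and compare circumcentre with centre of mass for (c). Three details of your sketch are wrong, although none changes the conclusions.

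First, in the pairwise distinct case the quadratics are perfect squares, $(\chi+\varpi^2)^2$ and $\varpi^2(\chi+\varpi)^2$. The double root gives $v_{kl}^2$ equal to $\zeta_{kl}$ times a common factor, and this is what makes $u_{kl}/v_{kl}$ constant.

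Second, consider the class $(1,1,\varpi)$. The roots of $\chi^2-\chi+\varpi^2$ have $\operatorname{Re}\chi\approx1.47$ and $-0.47$. So the triangle $\chi,1,0$ is obtuse at $1$ or at $0$, i.e.\ at an endpoint of the side $45$ that carries the odd $\zeta$. The angle at the opposite vertex $v_3$ is acute for both roots, so the check you propose at that vertex would fail.

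Third, the two roots of one quadratic are $\chi$ and $1-\chi$, not complex conjugates. They are exchanged by relabelling the bodies $4$ and $5$, and they give the mass ray with $m_4$ and $m_5$ exchanged, which is still a permutation of $(X_L,Y_L,1)$. It is complex conjugation, mapping the class to $(1,1,\varpi^2)$, that gives the mirror triangle and the same ray.

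For the elimination you expect to be the main obstacle, the paper puts $\eta=\varpi\chi^2$.
\begin{itemize}
\item Then $\beta=0$ reads $m_3+m_4/\eta+m_5\eta=0$, and $\eta^2+(2-\varpi)\eta+1=0$, so $\eta$ is not real.
\item Hence $\eta$ and $\bar\eta$ are the roots of $m_5T^2+m_3T+m_4$, which gives $m_4/m_5=|\eta|^2=:t$ and $m_3/m_5=-2\operatorname{Re}\eta=5t/(t+1)$.
\item Moreover $t$ satisfies the palindromic quartic $t^4-7t^3+9t^2-7t+1=0$, whence $t+1/t=(7+\sqrt{21})/2$.
\end{itemize}
This avoids computing in $\mathbb{Q}(\sqrt{-3},\chi)$ by hand.
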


\begin{proof}
(a) For equal $\zeta_{kl}$ the quadratic is $\chi^2-\chi+1$, whose roots are $e^{\pm i\pi/3}$. For $(\zeta_{34},\zeta_{35},\zeta_{45})=(\varpi^2,\varpi,1)$ and $(1,\varpi,\varpi^2)$ it is $(\chi+\varpi^2)^2$, resp.\ $\varpi^2(\chi+\varpi)^2$. Every class with distinct $\zeta_{kl}$ is a multiple of one of these two. The double root gives an equilateral triangle with $v_{kl}^2$ proportional to $\zeta_{kl}$. Then $r_{kl}=M_t^{2/3}\zeta_{kl}/v_{kl}^2$ does not depend on $kl$, so that $u$ is parallel to $v$. Let exactly two of the $\zeta_{kl}$ be equal. Complex conjugation conjugates the class and the shape, and a relabelling of the triangle permutes the masses. These maps reduce the claims to the class $(\zeta_{34},\zeta_{35},\zeta_{45})=(1,1,\varpi)$. There $\chi^2-\chi+\varpi^2=0$. So $\chi$ is not real, and $\operatorname{Re}\chi=\frac12\big(1\pm((3+\sqrt{21})/2)^{1/2}\big)$ is $1.47\ldots$ or $-0.47\ldots$. Hence the triangle $\chi,1,0$ is obtuse at $1$ or at $0$.

(b) As in (a) we take the class $(1,1,\varpi)$. There $\chi^2(\chi-1)^2=\varpi$, and $\beta=0$ reads $m_3+m_4/\eta+m_5\eta=0$ with $\eta=\varpi\chi^2$. Since $\chi=\varpi^2(\eta+1)$, we get $\eta^2+(2-\varpi)\eta+1=0$, and $\eta$ is not real. Hence $\eta$ and $\bar\eta$ are the roots of $m_5T^2+m_3T+m_4$, so that $m_4/m_5=|\eta|^2$ and $m_3/m_5=-2\operatorname{Re}\eta$. The two roots $\eta$ and $1/\eta$ give mass vectors that differ by the exchange of $m_4$ and $m_5$. Moreover $|\eta|\ne1$, since $\eta+1/\eta$ is not real. For the root with $t=|\eta|^2>1$ we write $\eta=a+bi$. Then $\eta+1/\eta=\varpi-2$ gives $a(1+t^{-1})=-\frac52$ and $b(1-t^{-1})=\frac{\sqrt3}2$, and $a^2+b^2=t$ becomes $t^4-7t^3+9t^2-7t+1=0$. So $t+t^{-1}$ is a root of $s^2-7s+7$. It is $(7+\sqrt{21})/2$, since the other root is less than $2$. Thus $(m_3:m_4:m_5)=(5t/(t+1):t:1)=(Y_L:1:X_L)$ with $X_L=1/t$ and $Y_L=5/(t+1)$.

(c) If $\theta$ is constant, the moduli $|\omega_k|$ are equal (\S\ref{ss:d2torque}). Then the centre of mass of the triangle $v_3,v_4,v_5$ in $\C=\R^2$ is its circumcentre. With positive masses it lies inside the triangle, which is therefore acute. This contradicts (a).
\end{proof}
\subsection{Triangles of Euler type}\label{ss:d2euler}

Let the triangle be of Euler type, $u=\Lambda v$. From $p_{kl}^2u_{kl}v_{kl}=1$ we get $p_{kl}=s_{kl}\Lambda^{-1/2}/v_{kl}$ for a fixed square root $\Lambda^{-1/2}$ and signs $s_{kl}=-s_{lk}$. So $P_{kl}=s_{kl}\Lambda^{-3/2}/v_{kl}^3$, and the rows of $v$ in Lemma~\ref{lem:d2lead} become $\omega=\Lambda^{-3/2}g$ with $g_k=\sum_{l\ne k}m_ls_{kl}/v_{kl}^2$.

We multiply the triangle masses by $c>0$, the $v_k$ by $\alpha\ne0$, $u$ by $c^{2/3}\alpha^{-1}$ and the $p_{kl}$ by $c^{-1/3}$. This maps the central configurations of the triangle to central configurations. It does not change $\chi=v_3/v_4$, $\mu$, the relations $\omega_k^3=\omega_l^3$ and $m_o^2=m_k^2-m_km_l+m_l^2$, and the T-test of \S\ref{ss:d2proof}. It multiplies $\omega$, $\langle\omega,\omega\rangle$ and $\langle e_3,e_3\rangle$ by nonzero numbers. In this subsection we may therefore take $(m_3,m_4,m_5)=(X,Y,1)$ with $X,Y>0$ and $(v_3,v_4,v_5)=(\chi,1,0)$. Then $M_t=X+Y+1$ and $M_t\omega=(\tilde\omega_3,\tilde\omega_4,\tilde\omega_5)$ with
\[
\tilde\omega_3=(Y+1)\chi-Y,\qquad\tilde\omega_4=X+1-X\chi,\qquad\tilde\omega_5=-(X\chi+Y).
\]

\begin{lemma}\label{lem:d2euler}
\begin{enumerate}[label=\textup{(\alph*)}]
\item $\chi$ is a root of the quintic
\[
\mathcal E_s(\chi)=s_{45}\tilde\omega_3\chi^2(\chi-1)^2+s_{53}\tilde\omega_4(\chi-1)^2+s_{34}\tilde\omega_5\chi^2,\qquad s=(s_{45},s_{53},s_{34}),
\]
whose leading coefficient is $s_{45}(Y+1)$. Since $\mathcal E_{-s}=-\mathcal E_s$, it suffices to take $s$ among the \emph{patterns} $p_1=(1,1,-1)$, $p_2=(1,-1,1)$, $p_3=(-1,1,1)$ and $p_4=(1,1,1)$.
\item $\mathcal E_{p_4}=(\chi^2-\chi+1)\,C$ with a cubic $C$. If $s=\pm p_4$ and $\chi$ is a root of $\chi^2-\chi+1$, then $y_0$ is a triple point.
\item $\mathcal E_{p_1}(e^{i\pi/3})=-2X-Y+i\sqrt3Y$, $\mathcal E_{p_2}(e^{i\pi/3})=2X+1+i\sqrt3$ and $\mathcal E_{p_3}(e^{i\pi/3})=Y-1-i\sqrt3(Y+1)$. Hence a triangle of Euler type is equilateral only if $y_0$ is a triple point.
\end{enumerate}
\end{lemma}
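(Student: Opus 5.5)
For (a) we write the rows of $v$ in Lemma~\ref{lem:d2lead} in the normalization $(v_3,v_4,v_5)=(\chi,1,0)$ and $m=(X,Y,1)$, that is, $\omega=\Lambda^{-3/2}g$ with $g_k=\sum_{l\ne k}m_ls_{kl}/v_{kl}^2$. This identity says that $\omega$ is proportional to $g$. Since $\omega$ and $g$ both lie in the plane $\langle\one,\cdot\rangle=0$ (the latter by antisymmetry), proportionality is a single condition: the vanishing of the $2\times2$ determinant of two coordinates. We take the coordinates $3$ and $5$. We compute $g_3=Y s_{34}/(\chi-1)^2+s_{35}/\chi^2$ and $g_5=Xs_{53}/\chi^2+Ys_{54}$, and form $\tilde\omega_5g_3-\tilde\omega_3g_5$. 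We then multiply by $\chi^2(\chi-1)^2$ and use $\langle\one,\tilde\omega\rangle=0$ to rewrite the combination symmetrically, which should give $\mathcal E_s$ up to sign. The leading coefficient is read off from the term $s_{45}\tilde\omega_3\chi^4$, where $\tilde\omega_3$ has leading part $(Y+1)\chi$. Replacing $s$ by $-s$ negates $\mathcal E_s$. Among the eight sign vectors modulo $\pm$ this leaves the four patterns $p_1,\dots,p_4$.

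For (b) we check the factorization by direct division. We evaluate $\mathcal E_{p_4}$ at $\chi=e^{\pm i\pi/3}$, where $\chi^2=\chi-1$ and $|\chi-1|=1$, and confirm that it vanishes for all $X,Y$. Then the quotient is a cubic $C$. At such a root the triangle $\chi,1,0$ is equilateral. With $s=\pm p_4$ we get $p_{kl}^3=\pm\Lambda^{-3/2}/v_{kl}^3$, and $v_{kl}^3$ is the same for the three sides of an equilateral triangle up to the orientation signs absorbed in $s$. Hence $P_{kl}$ is constant. The rows of $v$ then force $P_{kl}=1/M_t$, so the triangle is also of Lagrange type. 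Since $u=\Lambda v$, $y_0$ is a triple point by definition.

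For (c) we substitute $\chi=e^{i\pi/3}$, using $\chi^2=\chi-1$, $(\chi-1)^2=\chi^2\cdot\chi^{-1}\cdots$, or simply the explicit values $\chi-1=e^{2i\pi/3}$ and $\chi^2=e^{2i\pi/3}$. This gives the three stated values. Each of them has imaginary part $\sqrt3Y$, $\sqrt3$ or $-\sqrt3(Y+1)$, which is nonzero for $X,Y>0$, and the values at $e^{-i\pi/3}$ are their conjugates. So for the patterns $p_1,p_2,p_3$ an equilateral $\chi$ is never a root. For $p_4$ an equilateral shape is a triple point by (b). The only step with real risk is the sign bookkeeping in (a), where the determinant must be matched to $\mathcal E_s$ exactly. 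I would settle it by checking one explicit pattern numerically.
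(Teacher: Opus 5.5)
Your proposal is correct and follows the paper's proof. The only difference in (a) is that you take the minor in coordinates $3,5$ where the paper takes $4,5$. Using $\langle\one,\tilde\omega\rangle=0$, you get exactly $\chi^2(\chi-1)^2(\tilde\omega_5g_3-\tilde\omega_3g_5)=Y\,\mathcal E_s(\chi)$, so the combination is $Y$ times $\mathcal E_s$ rather than $\mathcal E_s$ up to sign, which is harmless since $Y>0$.

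In (b), the argument is the paper's: at $\chi=e^{\pm i\pi/3}$ the cubes $v_{34}^3$, $v_{45}^3$, $v_{53}^3$ coincide, so $P_{kl}$ is constant for $s=\pm p_4$, and $\Psi\omega=\omega$ forces $P_{kl}=1/M_t$. Part (c) is the same direct evaluation.
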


\begin{proof}
(a) Since $\omega=\Lambda^{-3/2}g$, we have $\omega_4g_5-\omega_5g_4=0$. The identity $M_t\chi^2(\chi-1)^2(\omega_4g_5-\omega_5g_4)=X\mathcal E_s(\chi)$ holds in $\Z[\chi,X,Y]$. (b) At $\chi=e^{\pm i\pi/3}$ we have $v_{34}^3=v_{45}^3=v_{53}^3=1$. For $s=\pm p_4$ all $P_{kl}$ are then equal. Since $\Psi\omega=\omega$, they equal $1/M_t$, and the triangle is also of Lagrange type. (c) The values are exact. Their imaginary parts do not vanish for $X,Y>0$, and $\mathcal E_s$ has real coefficients. The equilateral shapes are $\chi=e^{\pm i\pi/3}$.
\end{proof}

We put $\nu=\sum_{k<l}(m_k+m_l)s_{kl}/v_{kl}^3$. Then $\mu=\Lambda^{-3/2}\nu-1$. For $k\in\{3,4,5\}$ we put
\[
f_{0,k}=(\tilde\omega_k\nu-M_tg_k)\chi^3(\chi-1)^3,\qquad f_{h,k}=(2\tilde\omega_k\nu-M_tg_k)\chi^3(\chi-1)^3,
\]
\[
f_\omega=X\tilde\omega_3^2+Y\tilde\omega_4^2+\tilde\omega_5^2,\qquad f_e=Y+X\chi^2+XY(\chi-1)^2,\qquad\Theta_{kl}=\tilde\omega_k^2+\tilde\omega_k\tilde\omega_l+\tilde\omega_l^2.
\]
These are polynomials in $\Z[X,Y][\chi]$. At the shape $\chi$ of the triangle, $\tilde\omega=M_t\Lambda^{-3/2}g$. Hence $f_{0,k}=M_tg_k\mu\chi^3(\chi-1)^3$ and $f_{h,k}=M_tg_k(2\mu+1)\chi^3(\chi-1)^3$. So $\mu=0$, resp.\ $\mu=-\frac12$, forces $f_{0,k}$, resp.\ $f_{h,k}$, to vanish at $\chi$ for all three $k$. Moreover $f_\omega=M_t^2\langle\omega,\omega\rangle$ and $f_e=XY\langle e_3,e_3\rangle$, and $\Theta_{kl}=0$ if and only if $\omega_k=\varpi^{\pm1}\omega_l$.

\begin{lemma}\label{lem:d2res}
Let $E,F\in\Z[X,Y][\chi]$ have a common root $\chi_0$ at $(X,Y)=(X_0,Y_0)$. Then $\Res_\chi(E,F)$ vanishes at $(X_0,Y_0)$.
\end{lemma}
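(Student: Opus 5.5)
The plan is to use the standard characterisation of the resultant over an integral domain and then specialise. Write $R=\Z[X,Y]$, and let $E=\sum_{i\le a}e_i\chi^i$ and $F=\sum_{j\le b}f_j\chi^j$ with $e_i,f_j\in R$. We form $\Res_\chi(E,F)$ as the Sylvester determinant of formal degrees $a=\deg_\chi E$ and $b=\deg_\chi F$ over $R$.

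The first step is the adjugate identity. It holds in $R[\chi]$ for any commutative ring: there are $A,B\in R[\chi]$ with
\[
A\,E+B\,F=\Res_\chi(E,F).
\]
To get it, apply the adjugate of the Sylvester matrix to the column of powers of $\chi$. Cramer's rule then expresses the determinant as a combination of the rows $\chi^iE$ and $\chi^jF$.

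The second step is specialisation. We apply the ring homomorphism $R[\chi]\to\C[\chi]$ given by $(X,Y)\mapsto(X_0,Y_0)$. The identity maps to $A_0E_0+B_0F_0=\Res_\chi(E,F)(X_0,Y_0)$, where $E_0$ and $F_0$ are the specialised polynomials. Here the right side is the determinant with specialised entries, because the determinant is a polynomial in the entries. Evaluating at $\chi=\chi_0$ makes the left side vanish, since $E_0(\chi_0)=F_0(\chi_0)=0$. So the resultant vanishes at $(X_0,Y_0)$.

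There is no real obstacle. The only point to watch is that the leading coefficients may vanish after specialisation. This is why we argue with the identity in $R[\chi]$ and not with the formula $\Res=\prod(\alpha-\beta)$ over the roots: the Bezout-type identity needs no assumption on leading coefficients, and the resultant is always taken with the formal degrees over $R$. If both leading coefficients vanish at $(X_0,Y_0)$, the Sylvester determinant has a zero first column and vanishes anyway, which is consistent with the claim.
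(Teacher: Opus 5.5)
Your proposal is correct and is the paper's argument: the adjugate of the Sylvester matrix gives $\Res_\chi(E,F)=AE+BF$ with $A,B\in\Z[X,Y][\chi]$, and evaluating at $(X_0,Y_0,\chi_0)$ gives zero. Your additional remark that the identity needs no assumption on the leading coefficients after specialisation is accurate, though the paper does not need to state it.
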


\begin{proof}
By the adjugate of the Sylvester matrix, $\Res_\chi(E,F)=AE+BF$ with $A,B\in\Z[X,Y][\chi]$. We evaluate at $\chi_0$.
\end{proof}

\begin{proposition}\label{prop:d2loci}
Let the triangle be of Euler type and not equilateral, and let $s$ be its pattern. Then $\langle\omega,\omega\rangle\ne0$ and $\langle e_3,e_3\rangle\ne0$. If $\mu=0$, then $(X,Y)$ is $(\frac15,\frac15)$, $(1,5)$, $(5,1)$ or $(1,1)$ for $s=p_1,p_2,p_3,p_4$, respectively. If $\mu=-\frac12$, then $(X,Y)$ is $(X_*,X_*)$, $(1,2+2\sqrt2)$, $(2+2\sqrt2,1)$ or $(1,1)$, respectively, where $X_*=(\sqrt2-1)/2$.
\end{proposition}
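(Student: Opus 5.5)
The plan is to eliminate the shape $\chi$ by resultants, as Lemma~\ref{lem:d2res} allows, and then locate the positive real zeros of the resulting polynomials in $(X,Y)$. We fix a pattern $s\in\{p_1,p_2,p_3,p_4\}$. By Lemma~\ref{lem:d2euler}(a) the shape is a root of $\mathcal E_s$. For $s=p_4$ we use the cubic $C$ instead, because Lemma~\ref{lem:d2euler}(b),(c) shows that the factor $\chi^2-\chi+1$ gives only equilateral (triple) points, which are excluded by hypothesis.

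\emph{Nonvanishing of the two forms.} Suppose $\langle\omega,\omega\rangle=0$. Then $f_\omega(\chi)=0$, so $\Res_\chi(\mathcal E_s,f_\omega)$ (or the version with $C$) vanishes at $(X,Y)$. We will factor this resultant over $\Q$ and show that no factor has a zero with $X,Y>0$. The intended method is the one used for $P_{11}$ in Appendix~\ref{app:P11}: substitute into an ordered chart and check that all coefficients are positive, splitting the quadrant into the regions $X\le Y$ and $X\ge Y$. Some factors may not be positive in this way. For those we will use the realness of $f_\omega$ directly, by writing $\chi=a+bi$, or else a Sturm argument along one variable. The form $\langle e_3,e_3\rangle$ will be handled the same way with $f_e$. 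Here the argument should be simpler: for real $\chi$ we have $f_e>0$, and for non-real $\chi$ the imaginary part $2X\,\mathrm{Im}\chi\,(\mathrm{Re}\chi+Y(\mathrm{Re}\chi-1))$ vanishes only on a line, which we then intersect with $\mathcal E_s$.

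\emph{The loci $\mu=0$ and $\mu=-\tfrac12$.} For $\mu=0$ the three polynomials $f_{0,k}$ vanish at $\chi$. We compute $R_k=\Res_\chi(\mathcal E_s,f_{0,k})$ for two values of $k$, possibly together with $\Res_\chi(f_{0,3},f_{0,4})$, and take the $\gcd$ of their squarefree parts in $\Z[X,Y]$. Spurious factors may come from $\chi\in\{0,1\}$, where $\chi^3(\chi-1)^3$ vanishes, and from $g_k=0$. Both are excluded: $v_{kl}\ne0$, and $g=\Lambda^{3/2}\omega$ with $\omega_5=-1$. We will remove these factors by saturating, that is, by adding $\chi(\chi-1)T-1$ and eliminating with a standard basis over $\Q$. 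We expect the remaining common locus to be zero-dimensional, with positive real points $(\frac15,\frac15)$, $(1,5)$, $(5,1)$, $(1,1)$ for $p_1,\dots,p_4$. The loci for $p_2$ and $p_3$ are exchanged by $X\leftrightarrow Y$ together with $\chi\mapsto 1-\chi$ conjugated, since this swaps the bodies $3$ and $4$ and the sign roles. We will then isolate the real roots exactly (class (C2)). The case $\mu=-\frac12$ is identical with $f_{h,k}$ in place of $f_{0,k}$, and gives $X_*=(\sqrt2-1)/2$ and $2+2\sqrt2$. For each candidate point we will check that a real or complex $\chi$ compatible with all three equations exists, although this is needed only to confirm the list, not to prove it.

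\emph{Main obstacle.} The main obstacle is the positivity step: proving that the resultant factors from $f_\omega$, and any extra one-dimensional components of the $\mu$-loci, have no zeros in the open quadrant. If a factor admits positive real curves, the first remedy is to bring in the third equation (the other $k$) to cut the locus down to dimension zero. Failing that, we will add the realness constraint on $\Lambda$ implied by $\omega=\Lambda^{-3/2}g$ with $\omega_5=-1$, which links $\Lambda^{3/2}$ to $g_5$, and resolve the residual component by interval bisection certified in ball arithmetic.
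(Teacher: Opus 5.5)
There is a genuine gap in your treatment of $\mu=0$ and $\mu=-\tfrac12$. These loci are not zero-dimensional, and the points in the statement are touching zeros that none of your proposed tools can certify.

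At a root $\chi$ of $\mathcal E_s$, the identities stated before Lemma~\ref{lem:d2res} give $f_{0,k}(\chi)=M_tg_k\,\mu\,\chi^3(\chi-1)^3$ and $f_{h,k}(\chi)=M_tg_k(2\mu+1)\chi^3(\chi-1)^3$, with $g_k\ne0$. So the three equations for $k=3,4,5$ are one and the same condition. Together with $\mathcal E_s(\chi)=0$ this is two equations in $(X,Y,\chi)$, so the common zero set of your resultants contains a complex \emph{curve} in the $(X,Y)$-plane through each target point. Adding the third $k$ cannot cut this curve down. The relation $\omega=\Lambda^{-3/2}g$ with $\omega_5=-1$ only fixes $\Lambda^{3/2}=-g_5$; since $\Lambda$ is complex, it imposes no realness condition.

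For $p_1$ and $\mu=0$ the curve is $q_1=\frac{25}4(X+Y-\frac25)^2+\frac74(X-Y)^2=0$. This is a pair of complex lines whose only real point is $(\frac15,\frac15)$.

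For $\mu=-\frac12$ the situation is worse. The condition $1+2\mu=0$ is the degeneracy of the collinear three-body configuration, so the shape is a multiple root of $\mathcal E_s$; this is why Table~\ref{tab:d2gate} shows squared factors. The curve is then the discriminant octic $P_1=\Res_\chi(\mathcal E_{p_1},\partial_\chi\mathcal E_{p_1})/(Y+1)$. It is nonnegative on the quadrant and vanishes there only at $(X_*,X_*)$.

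Consequently, exact root isolation of a zero-dimensional system does not apply. Ball-arithmetic bisection also fails, because it cannot certify positivity on any box that contains a zero, so your fallback does not terminate.

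What is missing is a positivity argument that is exact at the zero. For each $p_i$ with $i\le3$, the paper uses a single resultant that has no linear factor $X-1$, $Y-1$ or $X-Y$. It shows that all other factors of that resultant are positive on the quadrant, and it treats the touching zeros separately:
\begin{itemize}
\item $q_1$ is handled by the sum of squares above.
\item For $P_1$ it writes $P_1=H_0(X+Y)+(X-Y)^2K$ with $H_0(s)=s(17s+24)^3(s^2+2s-1)^2/16$. It proves $K>0$ near $(X_*,X_*)$ by exact Bernstein coefficients. After the substitution $X=x/(1-x)$, $Y=y/(1-y)$, it covers the rest of the quadrant by $39$ dyadic boxes with nonnegative Bernstein coefficients.
\item $q_2,q_3,P_2,P_3$ then follow by relabelling.
\item For $p_4$ the three resultants are $Y-1$, $X-1$ and $X-Y$ times positive-coefficient factors, which forces $(1,1)$.
\end{itemize}

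Your part on $\langle\omega,\omega\rangle$ and $\langle e_3,e_3\rangle$ is sound, and simpler than you expect. Every factor of those resultants has positive coefficients, so you need neither an ordered chart nor your split on $\operatorname{Re}\chi$ for $f_e$; that split would in any case leave a two-variable check of $\mathcal E_s$ on the line $\operatorname{Re}\chi=Y/(1+Y)$.

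Finally, a small correction: the exchange of bodies $3$ and $4$ acts on the shape by $\chi\mapsto1/\chi$. The map $\chi\mapsto1-\chi$ is the exchange of bodies $4$ and $5$, and it relates $p_1$ and $p_2$ via $(X,Y)\mapsto(X/Y,1/Y)$.
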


\begin{proof}
By Lemma~\ref{lem:d2euler}, $\chi$ is a root of $\mathcal E_s$, and of $C$ for $s=p_4$. We compute the $32$ resultants $\Res_\chi(\mathcal E_s,F)$, with $C$ in place of $\mathcal E_{p_4}$, for $F\in\{f_\omega,f_e,f_{0,k},f_{h,k}:k=3,4,5\}$, and factor them over $\Z$ (Singular~4.4.1, FLINT). Every factor of these $32$ resultants is of one of the following kinds on the open quadrant $X,Y>0$:
\begin{enumerate}[label=(\arabic*)]
\item a polynomial with positive coefficients;
\item $X^2-X+1$, $Y^2-Y+1$ or $X^2\pm XY+Y^2$, which are positive;
\item $q_1=8X^2+9XY+8Y^2-5X-5Y+1=\frac{25}4(X+Y-\frac25)^2+\frac74(X-Y)^2$, zero only at $(\frac15,\frac15)$, and $q_2=Y^2q_1(X/Y,1/Y)$ and $q_3=X^2q_1(1/X,Y/X)$, zero only at $(1,5)$ and $(5,1)$;
\item the octic $P_1$ below, positive except at $(X_*,X_*)$, and $P_2=Y^8P_1(X/Y,1/Y)$ and $P_3=X^8P_1(1/X,Y/X)$, positive except at $(1,2+2\sqrt2)$ and $(2+2\sqrt2,1)$;
\item $X-1$, $Y-1$ or $X-Y$.
\end{enumerate}
All factors of the resultants of $f_\omega$ and $f_e$ are of kind (1). Lemma~\ref{lem:d2res} gives the first statement. If $\mu=0$, the three resultants of the $f_{0,k}$ vanish at $(X,Y)$, again by Lemma~\ref{lem:d2res}. For $s=p_i$, $i\le3$, one of them has no factor of kind (5), and its only zero in the quadrant is the zero of $q_i$. For $s=p_4$ they are $Y-1$, $X-1$ and $X-Y$ times factors of kind (1). The case $\mu=-\frac12$ is the same, with $f_{h,k}$ and $P_i$ in place of $f_{0,k}$ and $q_i$.
\end{proof}

The octic is $P_1=\Res_\chi(\mathcal E_{p_1},\partial_\chi\mathcal E_{p_1})/(Y+1)$, a symmetric polynomial with $38$ terms and degree $6$ in each variable. We show that $P_1>0$ on the open quadrant except at $(X_*,X_*)$. Put $H_0(s)=s(17s+24)^3(s^2+2s-1)^2/16$, which is positive for $s>0$ except at $s=\sqrt2-1$. Then $K=(P_1-H_0(X+Y))/(X-Y)^2$ is a polynomial. The Bernstein coefficients of $K$ on the box $[\frac{43}{213},\frac{45}{211}]^2$, computed exactly, are positive. So $P_1\ge0$ on this box, with equality only at $(X_*,X_*)$. We substitute $X=x/(1-x)$ and $Y=y/(1-y)$. The square $[0,1]^2$ minus the image $[\frac{43}{256},\frac{45}{256}]^2$ of the box is tiled by $39$ dyadic boxes. On each of them the Bernstein coefficients of $(1-x)^6(1-y)^6P_1$, of bidegree $(6,6)$ and computed exactly, are nonnegative. Moreover, on each of these boxes every corner other than $(0,0)$ and $(1,1)$ carries a positive coefficient, and so does every edge. Hence this polynomial is positive on each of these boxes, except possibly at $(0,0)$ and $(1,1)$, which are not images of points of the open quadrant.

\begin{lemma}\label{lem:d2thetaconst}
Let $\theta$ be constant, and let the triangle not be equilateral. Then $\sigma$ is not constant. Let $o$ be the body of the triangle whose sign differs from those of the other two, $k$ and $l$. Then $\omega_k^3=\omega_l^3$, $\omega_k\ne\omega_l$, and $m_o^2=m_k^2-m_km_l+m_l^2$.
\end{lemma}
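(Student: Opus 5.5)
The plan is to read off the consequences of constant $\theta$ directly from the definitions of $\theta_k$ and the torque.

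\emph{Moduli of $\omega$.} Constant $\theta$ forces $|\omega_3|=|\omega_4|=|\omega_5|$, as noted in \S\ref{ss:d2torque}.

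\emph{$\sigma$ is not constant.} Suppose $\sigma$ is constant. Then $1+\sigma_k\jj_\rho\rho$ is one common nonzero number. Hence $\theta_k=\theta$ for all $k$ gives $p_{1k}/\omega_k=c$ independent of $k$, so $p_{1k}^3=c^3\omega_k^3$. With $p_{1k}^2=-1/\omega_k$, cubing yields $p_{1k}^6=-1/\omega_k^3$, so $c^6\omega_k^6=-1/\omega_k^3$ and $\omega_k^9$ is independent of $k$. Together with equal moduli this says the $\omega_k$ differ by ninth roots of unity. Squaring $p_{1k}=c\,\omega_k$ gives $c^2\omega_k^2=-1/\omega_k$, that is, $\omega_k^3=-c^{-2}$ is constant. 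So the $\omega_k$ differ by cube roots of unity, all of the same modulus, with $\sum m_k\omega_k=0$ and positive masses. Distinct values would form an equilateral triangle centred at $0$. If two coincided, say $\omega_k=\omega_l$, then $v_k=v_l$, which contradicts $v_{kl}\ne0$. So the $\omega_k$ are the three rotations of an equilateral triangle, and the triangle $v$ is equilateral, which is excluded.

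\emph{$\sigma$ has one odd body.} So $\sigma$ is not constant. Let $o$ be the body of the odd sign and $k,l$ the other two. For $k,l$ the same argument gives $p_{1k}/\omega_k=p_{1l}/\omega_l$, hence $\omega_k^3=\omega_l^3$. We have $\omega_k\ne\omega_l$ because $v_k\ne v_l$, so $\omega_l=\varpi^{\pm1}\omega_k$. For $o$, the factor $1+\sigma\jj_\rho\rho$ is replaced by its conjugate-type counterpart. Write $e^{\pm i\varphi}$ for the phases, as in Lemma~\ref{lem:d2equi}(c).

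\emph{The mass relation.} Normalize $\omega_k=R$ and $\omega_l=\varpi R$, with $|\omega_o|=|R|$ from equal moduli. The centre-of-mass relation $m_k\omega_k+m_l\omega_l+m_o\omega_o=0$ then gives
\[
m_o^2|R|^2=|m_kR+m_l\varpi R|^2=|R|^2\,(m_k^2-m_km_l+m_l^2),
\]
using $\operatorname{Re}\varpi=-\tfrac12$. Hence $m_o^2=m_k^2-m_km_l+m_l^2$.

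The main obstacle is the step excluding constant $\sigma$. It needs care with the square-root branches, and with showing that ``$\omega_k^3$ constant with distinct $\omega_k$'' really makes the triangle $v=\omega+v_c$ equilateral. Distinct cube-root multiples of $R$ are exactly $R,\varpi R,\varpi^2R$, which form an equilateral triangle. Also $\omega$ is a translate of $v$, so $v$ is equilateral too.
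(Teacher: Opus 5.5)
Your proof is correct and is essentially the paper's argument. For two bodies of equal sign, $1+\sigma\jj_\rho\rho\ne0$ gives $p_{1k}/\omega_k=p_{1l}/\omega_l$, and squaring with $p_{1k}^2=-1/\omega_k$ gives $\omega_k^3=\omega_l^3$. Constant $\sigma$ would then make the $\omega_k$ the three cube-root-of-unity multiples of one number, which is equilateral. The mass relation follows from $\sum_jm_j\omega_j=0$ together with $|\omega_o|=|\omega_l|$. You can drop the detour through $\omega_k^9$ and the remark about the phases $e^{\pm i\varphi}$; neither is needed, and the squaring step involves no branch issue.
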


\begin{proof}
For $\sigma_k=\sigma_l$, $\theta_k=\theta_l$ gives $p_{1k}/\omega_k=p_{1l}/\omega_l$, since $1+\sigma_k\jj_\rho\rho\ne0$. Squaring and inserting $p_{1k}^2=-1/\omega_k$ gives $\omega_k^3=\omega_l^3$, and $\omega_k\ne\omega_l$ since $v_k\ne v_l$. If $\sigma$ is constant, this holds for all three pairs, so that $\omega_3,\omega_4,\omega_5$ are $\omega_5$ times the three cube roots of unity and the triangle is equilateral. Hence $\sigma$ is not constant, and $\omega_k=\zeta\omega_l$ with $\zeta\in\{\varpi,\varpi^2\}$. From $\sum_jm_j\omega_j=0$ we get $m_o\omega_o=-(m_k\zeta+m_l)\omega_l$. Since $|\omega_o|=|\omega_l|$ (\S\ref{ss:d2torque}), $m_o=|m_k\zeta+m_l|$, and $m_o^2=m_k^2-m_km_l+m_l^2$.
\end{proof}

\begin{proposition}\label{prop:d2thetaeuler}
For each pattern $s$ and each $o\in\{3,4,5\}$, with $\{k,l\}$ the other two bodies, $\mathcal E_s$ \textup{(}$C$ for $s=p_4$\textup{)} and $\Theta_{kl}$ have no common root at any point $(X,Y)$ with $X,Y>0$ of the conic $m_o^2=m_k^2-m_km_l+m_l^2$.
\end{proposition}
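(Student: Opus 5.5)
We will decide Proposition~\ref{prop:d2thetaeuler} by elimination with resultants, in the same way as Proposition~\ref{prop:d2loci}. For fixed $s$ and $o$, write $E$ for $\mathcal E_s$ ($C$ when $s=p_4$). Both $E$ and $\Theta_{kl}$ lie in $\Z[X,Y][\chi]$, and $\Theta_{kl}$ has degree $2$ in $\chi$ because the $\tilde\omega_j$ are affine in $\chi$. We compute $R_{s,o}=\Res_\chi(E,\Theta_{kl})\in\Z[X,Y]$. By Lemma~\ref{lem:d2res}, a common root at $(X,Y)$ forces $R_{s,o}(X,Y)=0$. The conic condition is $m_o^2-m_k^2+m_km_l-m_l^2=0$ in the normalization $(m_3,m_4,m_5)=(X,Y,1)$. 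For $o=5$ it reads $X^2-XY+Y^2=1$; for $o=3$ and $o=4$ it is a conic in which one mass enters quadratically.

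\textbf{Step 1: restrict to the conic.} We parametrize each conic rationally. It is a real ellipse through the rational point $(1,1)$ (for $o=5$), and the lines through that point give $X=X(u)$, $Y=Y(u)$, rational in $u$. On the open quadrant this leaves an explicit interval of $u$. Alternatively we take $\Res_Y(R_{s,o},\text{conic})$, a univariate polynomial in $X$. The plan is to factor the restricted polynomial over $\Q$ for each of the $4\times3=12$ cases (using $s\mapsto-s$ and the labelling symmetries to cut this down). Then we isolate its real roots in the allowed interval exactly, by Sturm sequences.

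\textbf{Step 2: dispose of the factors.} We expect most factors to have coefficients of one sign after the substitution $u\mapsto$ positive parameter, as with kinds (1)--(2) in Proposition~\ref{prop:d2loci}. The remaining factors should vanish only at special points such as $X=Y=1$, where the masses are equal. At such a point we verify directly that $E$ and $\Theta_{kl}$ have no common root, by a gcd over $\Q$ or $\Q(\varpi)$ at that exact mass. Any genuine isolated real root of a factor that survives is treated as in Step 3.

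\textbf{Step 3, the main obstacle: spurious resultant roots.} The resultant vanishes also when the common root $\chi$ is one that does not give an admissible triangle. Examples are $\chi\in\{0,1\}$, where the $v_k$ collide, and a root at which the leading coefficient in $\chi$ drops. Such factors may well appear (for instance $\tilde\omega_k=\tilde\omega_l=0$ simultaneously). At each remaining real point $(X_0,Y_0)$ we therefore compute $\gcd_\chi(E,\Theta_{kl})$ exactly over the number field $\Q(X_0,Y_0)$. We then check that its roots are excluded. Either they are among $\{0,1\}$, or they make $\omega_k=\omega_l$ (excluded by Lemma~\ref{lem:d2thetaconst}), or they give the equilateral shapes $e^{\pm i\pi/3}$ (excluded by hypothesis and Lemma~\ref{lem:d2euler}(c)). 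If exact gcds are too costly, we fall back on certified ball enclosures of the roots of $E$, evaluating $\Theta_{kl}$ there to exclude $0$. This last bookkeeping is where we expect the work to lie.
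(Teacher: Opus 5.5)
Your Step~1 is the paper's proof. The paper composes $\Res_\chi(\mathcal E_s,\Theta_{kl})$ with a rational parametrization of each conic (for $o=5$, $X=(t^2-1)/(t^2-t+1)$, $Y=t(t-2)/(t^2-t+1)$) and shows, by exact root isolation and again by Sturm sequences, that there is no real root in the range of $t$ with $X,Y>0$; it checks the point $(1,1)$, at $t=\infty$, directly, where the resultant is $2916$ (resp.\ $729$ for $s=p_4$), so your Steps~2--3 never come into play. Spurious roots cannot arise anyway: $\mathcal E_s(0)=s_{53}(X+1)$ and $\mathcal E_s(1)=-s_{34}(X+Y)$ are nonzero, and the leading coefficients in $\chi$ do not vanish on the quadrant. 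Step~3 as written would also only exclude \emph{admissible} common roots, which is weaker than the polynomial statement.
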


\begin{proof}
We compute $R=\Res_\chi(\mathcal E_s,\Theta_{kl})$ exactly and parametrize the conic by rational functions of $t$. For $o=5$, for instance, $X=(t^2-1)/(t^2-t+1)$ and $Y=t(t-2)/(t^2-t+1)$, and $X,Y>0$ exactly for $t<-1$ or $t>2$. The parameter $t=\infty$ gives $(X,Y)=(1,1)$ on all three conics, and there $R=2916$ for $s=p_1,p_2,p_3$ and $R=729$ for $s=p_4$. For each of the $12$ pairs $(s,o)$, $R$ composed with the parametrization has no real root in the range of $t$ where $X,Y>0$, by exact real root isolation, and again by Sturm sequences. Lemma~\ref{lem:d2res} concludes.
\end{proof}

\subsection{Proof of Theorem~\ref{thm:gate}}\label{ss:d2proof}

\begin{lemma}\label{lem:d2reduce}
Let the triangle of $y_0$ not be equilateral, and let $y_0$ satisfy one of \textup{(i)}--\textup{(iv)} of Lemma~\ref{lem:d2gate}. Then one of the following holds:
\begin{enumerate}[leftmargin=3.2em,labelwidth=2.7em,labelsep=0.5em,align=left]
\item[\textup{(L)}] the triangle is of Lagrange type with exactly two equal $\zeta_{kl}$, and $\beta=0$;
\item[\textup{(EI)}] it is of Euler type with $\langle\omega,\omega\rangle=0$;
\item[\textup{(EQ)}] it is of Euler type with $\langle e_3,e_3\rangle=0$;
\item[\textup{(E0)}] it is of Euler type with $\mu=0$;
\item[\textup{(Eh)}] it is of Euler type with $\mu=-\frac12$;
\item[\textup{($\Theta$)}] $\theta$ is constant.
\end{enumerate}
\end{lemma}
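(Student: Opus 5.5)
The plan is a case analysis on the four alternatives (i)--(iv) of Lemma~\ref{lem:d2gate}, combined with the dichotomy of Lemma~\ref{lem:d2lead}: the triangle is either of Lagrange type or of Euler type.

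\emph{Constant $\theta$.} If $\theta$ is constant, ($\Theta$) holds and there is nothing to prove. In case (ii) $\theta$ is constant by hypothesis. In case (iii) $\theta$ is constant by Lemma~\ref{lem:d2triple}(b), and a triple point has an equilateral $\omega$ in any case. So from now on $\theta$ is not constant. Then $\gamma\ne0$ by Lemma~\ref{lem:d2forcing}, and we are in case (i) or (iv).

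\emph{Lagrange type.} Suppose the triangle is of Lagrange type. By Lemma~\ref{lem:d2lag}(a) it is not equilateral, so exactly two of the $\zeta_{kl}$ are equal, and $y_0$ is not a triple point. Theorem~\ref{thm:d2closed}(a) then shows that $y_0$ is simple, which rules out (iv). In case (i), $\eta_2=0$ and $\gamma\ne0$. All the other factors in the closed form for $\eta_2$ are nonzero: $\omega_5=-1$, the differences $v_{kl}$ do not vanish, and $\Delta,d_r\ne0$. Hence $\beta=0$, which is (L).

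\emph{Euler type.} Now let the triangle be of Euler type, $u=\Lambda v$, and assume none of (EI), (EQ), (E0), (Eh) holds. Then $\langle\omega,\omega\rangle\ne0$, $\langle e_3,e_3\rangle\ne0$, $\mu\ne0$ and $\mu\ne-\frac12$. If $\mu=1$, Theorem~\ref{thm:d2closed}(b) makes $y_0$ a triple point, so its $\omega$ is equilateral, which is excluded. Otherwise $\mu\notin\{1,-\frac12\}$, and Theorem~\ref{thm:d2closed}(b) shows that $y_0$ is simple, ruling out (iv). In case (i), the closed form gives $\eta_2=-\gamma^2\cdot3M_t\mu\langle e_3,e_3\rangle/\big(4\Lambda(1-\mu)(1+2\mu)\big)$. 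Every factor here is nonzero, so $\eta_2\ne0$, a contradiction.

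\emph{Main obstacle.} The delicate step is the use of the rank in (iv). One must make sure that Theorem~\ref{thm:d2closed}, which gives simplicity, applies under exactly the exclusions available. For the Euler case this needs $\langle\omega,\omega\rangle\ne0$ and $\mu\ne-\frac12$ together with the reduction of $\mu=1$ to triple points. For the Lagrange case it needs the non-triple-point condition, which comes from Lemma~\ref{lem:d2lag}(a).
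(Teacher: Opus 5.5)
Your proof is correct and follows the paper's own argument. You exclude triple points because they are equilateral, use Lemma~\ref{lem:d2lag}(a) and Theorem~\ref{thm:d2closed} to get simplicity (which rules out (iv)), and then, with $\gamma\ne0$, read off $\beta=0$ or $\mu\langle e_3,e_3\rangle=0$ from the closed forms of $\eta_2$. The only difference is the order: you dispose of constant $\theta$ first, which covers (ii), (iii) and any constant-$\theta$ instance of (iv), while the paper goes through (i)--(iv) in turn.
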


\begin{proof}
Triple points have an equilateral $\omega$ (\S\ref{ss:d2torque}). So $y_0$ is not a triple point, and (iii) does not hold. If the triangle is of Lagrange type, exactly two of the $\zeta_{kl}$ are equal, by Lemma~\ref{lem:d2lag}(a). If it is of Euler type with $\langle\omega,\omega\rangle=0$, (EI) holds. So we may assume $\langle\omega,\omega\rangle\ne0$ for the Euler type. Then $\mu\ne1$ by Theorem~\ref{thm:d2closed}(b). In case (ii), ($\Theta$) holds. In case (iv), $y_0$ is not simple. So Theorem~\ref{thm:d2closed} excludes the Lagrange type, and it gives $\mu=-\frac12$ for the Euler type. In case (i), $\gamma\ne0$ by Lemma~\ref{lem:d2forcing}, and $\eta_2=0$. For the Lagrange type, Theorem~\ref{thm:d2closed}(a) gives $\beta=0$, since $M_t$, $\omega_5$, $\Delta$, $d_r$, the masses and the $v_{kl}$ are nonzero. For the Euler type with $\mu=-\frac12$, (Eh) holds. Otherwise Theorem~\ref{thm:d2closed}(b) gives $\mu\langle e_3,e_3\rangle=0$, since $\Lambda$ is finite and nonzero.
\end{proof}

For $k\in\{3,4,5\}$ put $T_k=m_k|\omega_k|^{-1/2}$. We say that $y_0$ passes the \emph{T-test} if $T_k>\sum_{l\ne k}T_l$ for some $k$. Since $|p_{1k}|=|\omega_k|^{-1/2}$ and $|m_1+\sigma_k\jj_\rho m_2|^2=m_1^2(1+\rho^3)$, we have $|\tau_k|=m_1(1+\rho^3)^{1/2}T_k$. Hence the T-test gives $|\sum_k\tau_k|\ge|\tau_k|-\sum_{l\ne k}|\tau_l|>0$, so that $\tau=-M_t\sum_k\tau_k\ne0$. The T-test depends only on the masses of the triangle and on $\omega$. So it excludes $\tau=0$ for every $\sigma$, every $\rho>0$ and either labelling of the segment.

\begin{table}[ht]
\centering
\small
\begin{tabular}{@{}lllll@{}}
\toprule
condition & pattern & $(m_3:m_4:m_5)$ & factor of $\mathcal E_s$ & margin\\
\midrule
(E0) & $p_1$ & $(1:1:5)$ & $\chi^2-\frac32\chi+1$ & $0.6157$\\
& $p_2$ & $(1:5:1)$ & $\chi^2-\frac12\chi+\frac12$ & $0.6157$\\
& $p_3$ & $(5:1:1)$ & $\chi^2-\chi+2$ & $0.6157$\\
(Eh) & $p_1$ & $(1:1:2+2\sqrt2)$ & $(\chi^2+(\sqrt2-3)\chi+1)^2$ & $0.5961$\\
& $p_2$ & $(1:2+2\sqrt2:1)$ & $(\chi^2+(1-\sqrt2)\chi-1+\sqrt2)^2$ & $0.5961$\\
& $p_3$ & $(2+2\sqrt2:1:1)$ & $(\chi^2-\chi+1+\sqrt2)^2$ & $0.5961$\\
(L) & & permutations of $(X_L:Y_L:1)$ & two conjugate shapes at each & $0.2768$\\
\bottomrule
\end{tabular}
\caption{The triangles at which (E0), (Eh) or (L) of Lemma~\ref{lem:d2reduce} can hold, in the frame of \S\ref{ss:d2euler}. The last column gives the margin $(T_k-\sum_{l\ne k}T_l)/\sum_lT_l$ of the T-test for the heaviest body $k$.}\label{tab:d2gate}
\end{table}

\begin{proof}[Proof of Theorem~\ref{thm:gate}]
Let the triangle of $y_0$ not be equilateral, and let $y_0$ satisfy one of (i)--(iv). By Lemma~\ref{lem:d2reduce}, one of (L), (EI), (EQ), (E0), (Eh), ($\Theta$) holds. We show that ($\Theta$), (EI) and (EQ) do not hold and that in the other cases $y_0$ passes the T-test, which contradicts $\tau(y_0)=0$.

Case ($\Theta$) is excluded for the Lagrange type by Lemma~\ref{lem:d2lag}(c). For the Euler type, Lemma~\ref{lem:d2thetaconst} puts $(X,Y)$ on a conic $m_o^2=m_k^2-m_km_l+m_l^2$ and makes $\Theta_{kl}$ vanish at $\chi$, which Proposition~\ref{prop:d2thetaeuler} excludes. Proposition~\ref{prop:d2loci} excludes (EI) and (EQ).

In case (E0) or (Eh), Proposition~\ref{prop:d2loci} puts the masses, up to a common factor, at a point of Table~\ref{tab:d2gate} or at $(1:1:1)$ for $s=p_4$. At $(1:1:1)$, $C=2(\chi-2)(\chi-\frac12)(\chi+1)$. At its roots $\omega_4$, $\omega_3$, $\omega_5$ vanish, which is impossible at a leading solution. At the other points we factor $\mathcal E_s$ exactly over $\Q(\sqrt2)$. On every irreducible factor other than the one of the table, some $f_{0,k}$, resp.\ $f_{h,k}$, has a nonzero remainder. So $\chi$ is a root of the factor of the table.

In case (L), Lemma~\ref{lem:d2lag}(b) puts the masses, up to a common factor, at a permutation of $(X_L,Y_L,1)$. At each of these six points, ball arithmetic shows $\beta\ne0$ at ten of the twelve shapes of the classes with exactly two equal $\zeta_{kl}$. The other two are the shapes of the table.

At every shape of the table we enclose the roots and the margins of the T-test in ball arithmetic. The margins are those of the table, and they are positive.
\end{proof}